\numberwithin{equation}{section}
\renewcommand{\rmdefault}{ptm}
\renewcommand{\baselinestretch}{1.15}
\newcommand{\togh}{\stackrel{d_{GH}}{\longrightarrow}}
\newcommand{\hs}{\text{ }\text{ }}
\newcommand{\hhs}{\text{ }\text{ }\text{ }\text{ }}
\newcommand{\hhhs}{\text{ }\text{ }\text{ }\text{ }\text{ }\text{ }}
\newcommand{\rv}{{\rm v}}
\newcommand{\KK}{\mathds{K}}
\newcommand{\NN}{\mathds{N}}
\newcommand{\RR}{\mathds{R}}
\newcommand{\Sn}{\mathds{S}}
\newcommand{\Diff}{\text{Diff}}
\newcommand{\Tw}{\text{Tw}}
\newcommand{\Cyl}{\text{Cyl}}
\renewcommand{\ker}{\text{ker}}
\newcommand{\Ker}{\text{Ker}}
\newcommand{\im}{\text{im}}
\renewcommand{\Im}{\text{Im}}
\newcommand{\degree}{\text{deg}}
\newcommand{\Ran}{\text{Ran}}
\newcommand{\Span}{\text{span}}
\newcommand{\tr}{\text{tr}}
\newcommand{\WM}{\text{WM}}
\newcommand{\Spec}{\text{Spec}}
\newcommand{\Rm}{{\rm Rm}}
\newcommand{\Ric}{{\rm Ric}}
\newcommand{\Vol}{{\rm Vol}}
\newcommand{\diam}{{\rm diam}}
\newcommand{\divergence}{\text{div}}
\newcommand{\inj}{\text{inj}}
\newcommand{\Sing}{{\rm Sing}}
\newcommand{\tv}{{\rm v}}
\newcommand{\Lip}{{\rm Lip}}
\newcommand{\dC}{\mathds{C}}
\newcommand{\dG}{\mathds{G}}
\newcommand{\dk}{\mathds{k}}
\newcommand{\dN}{\mathds{N}}
\newcommand{\dP}{\mathds{P}}
\newcommand{\dQ}{\mathds{Q}}
\newcommand{\dR}{\mathbb{R}}
\newcommand{\dS}{\mathds{S}}
\newcommand{\dZ}{\mathds{Z}}
\newcommand{\Hess}{\text{Hess}}
\newcommand{\dist}{\text{dist}}
\newcommand{\Exp}{\text{exp}}
\newcommand{\Cl}{\text{Cl}}
\newcommand{\bt}{\text{\bf{t}}}
\newcommand{\cA}{\mathcal{A}}
\newcommand{\cB}{\mathcal{B}}
\newcommand{\cBA}{\mathcal{BA}}
\newcommand{\cC}{\mathcal{C}}
\newcommand{\cD}{\mathcal{D}}
\newcommand{\cE}{\mathcal{E}}
\newcommand{\cF}{\mathcal{P}}
\newcommand{\cG}{\mathcal{G}}
\newcommand{\cH}{\mathcal{H}}
\newcommand{\cI}{\mathcal{I}}
\newcommand{\cL}{\mathcal{L}}
\newcommand{\cLM}{\mathcal{LM}}
\newcommand{\cM}{\mathcal{M}}
\newcommand{\cN}{\mathcal{N}}
\newcommand{\cOC}{\Omega\mathcal{C}}
\newcommand{\cP}{\mathcal{P}}
\newcommand{\cPM}{\mathcal{PM}}
\newcommand{\cR}{\mathcal{R}}
\newcommand{\cS}{\mathcal{S}}
\newcommand{\wcS}{\widehat{\mathcal{S}}}
\newcommand{\cU}{\mathcal{U}}
\newcommand{\cV}{\mathcal{V}}
\newcommand{\cW}{\mathcal{W}}
\newcommand{\fg}{\mathfrak{g}}
\newcommand{\cVP}{\mathcal{VP}}
\newcommand{\codim}{\text{codim}}
\newcommand{\Hom}{\text{Hom}}
\newcommand{\Prim}{\text{Prim}}
\newcommand{\simp}{\text{simp}}
\newcommand{\op}{\text{op}}
\newcommand{\norm}[1]{\left\|#1\right\|}
\newcommand{\ps}[2]{\left\langle#1,#2\right\rangle}
\newcommand{\ton}[1]{\left(#1\right)}
\newcommand{\qua}[1]{\left[#1\right]}
\newcommand{\cur}[1]{\left\{#1\right\}}
\newcommand{\abs}[1]{\left|#1\right|}
\newcommand{\wto}{\rightharpoonup}
\newcommand{\Sr}{S_{\bar r}}
\newcommand{\tSr}{\tilde S _{\bar r}}
\newcommand{\be}{Besicovitch }
\newcommand{\B}[2]{B_{#1}\ton{#2}}
\newcommand{\tB}[2]{\tilde B_{#1}\ton{#2}}
\newcommand{\supp}[1]{\operatorname{supp}\ton{#1}}
\newcommand{\N}{\mathbb{N}}
\newcommand{\Z}{\mathbb{Z}}
\newcommand{\R}{\mathbb{R}}
\renewcommand{\paragraph}[1]{\ \newline \ \textbf{#1\ }}
\newcommand{\hol}{H\"older }
\newcommand{\al}{Ahlfors }
\newtheorem{theorem}{Theorem}[section]
\newtheorem{proposition}[theorem]{Proposition}
\newtheorem{lemma}[theorem]{Lemma}
\newtheorem{sublemma}[theorem]{Sublemma}
\newtheorem{definition}[theorem]{Definition}
\theoremstyle{definition}
\newtheorem{remark}[theorem]{Remark}
\theoremstyle{remark}
\newtheorem{example}[theorem]{Example}
\begin{document}

\title{Rectifiability of Singular Sets in Noncollapsed Spaces\\
 with Ricci Curvature bounded below}

\author{Jeff Cheeger}
\address{J. Cheeger, ~Courant Institute, 251 Mercer St., New York, NY 10011}
\email{cheeger@cims.nyu.edu}
\author{Wenshuai Jiang}
\address{W. Jiang, ~School of mathematical Sciences,  Zhejiang University}
\email{wsjiang@zju.edu.cn}
\author{Aaron Naber}\thanks{The first author was supported by NSF Grant DMS-1406407 and a Simons Foundation Fellowship. The second author was partially supported by the EPSRC on a Programme Grant entitled "Singularities of Geometric Partial Differential Equations" reference number EP/K00865X/1, the Fundamental Research Funds for the Central Universities (No. 2017QNA3001) and NSFC (No. 11701507). The third author was supported by NSF grant DMS-1809011}

\address{A. Naber, ~Department of mathematics, 2033 Sheridan Rd., Evanston, IL 60208-2370}
\email{anaber@math.northwestern.edu}


\date{\today}

\begin{abstract}

This paper is concerned with the structure of Gromov-Hausdorff limit 
spaces $(M^n_i,g_i,p_i)\stackrel{d_{GH}}{\longrightarrow} (X^n,d,p)$ of 
Riemannian manifolds satisfying a uniform lower Ricci curvature bound $\Ric_{M^n_i}\geq -(n-1)$ 
as well as the noncollapsing assumption $\Vol(B_1(p_i))>\rv>0$.  
In such cases, there is a filtration of the singular set, $S_0\subset S_1\cdots S_{n-1}:= S$,
where $S^k:= \{x\in X:\text{ no tangent cone at $x$ is }(k+1)\text{-symmetric}\}$;
equivalently no tangent cone splits off a Euclidean factor $\R^{k+1}$ isometrically.  Moreover, by \cite{ChCoI},
 $\dim S^k\leq k$.  
However, little else has been understood about the structure of the singular set $S$.

Our first result for such limit spaces $X^n$ states that $S^k$ is $k$-rectifiable. 
 In fact, we will show that for $k$-a.e. $x\in S^k$, {\it every} tangent cone $X_x$ at $x$ is $k$-symmetric i.e.
that $X_x= \dR^k\times C(Y)$ where $C(Y)$ might depend on the particular $X_x$.
 We use this to show that there exists $\epsilon=\epsilon(n,\tv)$,
and a $(n-2)$-rectifible set $S^{n-2}_\epsilon$, with finite 
$(n-2)$-dimensional Hausdorff measure $\cH^{n-2}(S_\epsilon^{n-2})<C(n,\rv)$, such that
 $X^n\setminus S^{n-2}_\epsilon$ is bi-H\"older equivalent to a smooth riemannian manifold.  
This improves the regularity results of \cite{ChCoI}.  Additionally, we will see that tangent cones are unique
 of a subset of Hausdorff $(n-2)$ dimensional measure zero. 

In the case of limit spaces $X^n$ satisfying a $2$-sided Ricci curvature bound $|\Ric_{M^n_i}|\leq n-1$, we 
can use these structural results to give a new proof of a conjecture from \cite{ChCoI}
stating that that $\Sing(X)$ is $n-4$ rectifiable with uniformly bounded measure.  We will can also
 conclude from this structure that tangent cones are unique away from a set of $(n-4)$-dimensional
Hausdorff measure zero.

Our results on the stratification $S^k$ are obtained from stronger results on the quantitative 
stratification introduced in \cite{CheegerNaber_Ricci}.
These results are in turn a consequence of our analysis on 
the structure of neck regions in $X^n$, a concept first introduced in \cite{NaVa_YM} and \cite{JiNa_L2} 
in the context of bounded Ricci curvature.  Our analysis is based on several 
new ideas, including a sharp cone-splitting theorem and a geometric transformation theorem, 
which will allow us to control the degeneration of harmonic functions on these neck regions.

\end{abstract}

\maketitle

\newpage
\tableofcontents

\newpage

\section{Introduction and statement of results}
\label{s:introduction}

This paper is concerned with the structure of noncollapsed limit spaces with a lower bound
on Ricci curvature:
\begin{align}
\label{e:lrb}
\Ric_{M^n_i}\geq -(n-1)\, ,
\end{align}
\vskip-6mm

\begin{align}
\label{e:lvb}
\Vol(B_1(p_i))>\rv>0\, .
\end{align}

\noindent
Our  results represent both a 
qualitative and quantitative improvement of 
over what was previously known about noncollapsed Gromov Hausdorff limits spaces with Ricci curvature bounded below.  For two sided Ricci curvature bounds
\begin{align}
\label{e:2rb}
|\Ric_{M^n_i}|\leq n-1\, .
\end{align}
we are able to combine our techniques with the Codimension 4 Conjecture, proved in \cite{CheegerNaber_Codimensionfour}, in order to give a new proof that the singular is rectifiable with 
with a definite bound on its $(n-4)$ dimensional Hausdorff measure, a result originally proved by the second two authors in \cite{JiNa_L2}.

\vskip2mm

\subsection{The classical stratification.}
Let $C(Y)$ denote the metric cone on the metric space $Y$.
We begin by recalling the following definition. 
\begin{definition}
\label{d:ksymm}
The metric space $X$ is called {\it $k$-symmetric} if 
$X$ is isometric to $\dR^k\times C(Z)$ for some $Z$.
\end{definition}
\begin{remark}
We say $X$ is $k$-symmetric at $x\in X$ if  there is an isometry of $X$ with $ \dR^k\times C(Z)$ 
which carries $x$ to a vertex of the cone $ \dR^k\times C(Z)$.	
\end{remark}

In \cite{ChCoI} a
 {\it filtration} on the singular set $S$ was defined. Namely,
\begin{align}
\label{e:stratification}
\emptyset\subset S^0\subseteq\cdots\subseteq  S^{n-1}:= S\subseteq X^n\, , 
\end{align}
where
\begin{align}
\label{e:sing_set}
S^k:= \{x\in X\, :\, \text{no tangent cone at } x \,\, {\rm is\,\,} (k+1)\text{-symmetric}\}\, .
\end{align}
The set $S^k\setminus S^{k-1}$ 
is called the $k$th {\it stratum} of the singular set.  A key result of \cite{ChCoI} is the Hausdorff dimension bound
\begin{align}\label{e:sing_set_haus_est}
\dim\, S^k\leq k\, ,\;\;\;\;\; \text{for all}\,\, k\, .
\end{align}
In  \cite{ChCoI},
\cite{CheegerNaber_Codimensionfour}, by showing that $S^{n-1}\setminus S^{n-2}=\emptyset$, respectively $S^{n-1}\setminus
S^{n-4}=\emptyset$, the following sharper estimates were proved:
\begin{align}
\label{e:de}
&\dim \, S \leq n-2 ,\, \text{ if \,\,}\Ric_{M^n_i}\geq -(n-1)\, . \\
&\dim \, S \leq n-4, \,  \text{ if \,\,}|\Ric_{M^n_i}|\leq (n-1)\, .
\end{align}
\vskip1mm

Note that for noncollapsed limit spaces satisfying the lower Ricci bound  \eqref{e:lrb},
{\it the singular set can be dense and one can have 
$\cH^{n-2}(S\cap B_1(p))=\infty$}; see Example \ref{example:finiteness_sharp}.  For general strata, essentially 
nothing else beyond the dimension estimate in
  \eqref{e:sing_set_haus_est} was previously known about the structure of the sets $S^k$.  
In the present paper,
we will show that $S^k$ 
is $k$-rectifiable {\it for all} $k$ and in addition, that for $\cH^k$-a.e. $x\in S^k$,  {\it every} tangent cone at $x$ is $k$-symmetric; see Theorem \ref{t:main_eps_stationary}, Theorem \ref{t:main_stratification}.
\footnote{At the above mentioned points, uniqueness of
tangent cones can actually  fail to hold for $k<n-2$; see Example \ref{example:symmetry_uniqueness_sharpness}.  
Namely, the non-Euclidean factor need not be unique.
 However, as a consequence of Theorem \ref{t:main_stratification}, it will follow that the tangent cones are unique 
 \hbox{$\cH^{n-2}$-a.e.}.
 This should be seen as a first step toward a conjecture of \cite{CoNa2,Na_14ICM}, stating that tangent cones are unique away from a set of codimension three.  Theorems \ref{t:main_eps_stationary} and \ref{t:main_stratification} give the precise results in this context.}

For the case in which the lower Ricci bound
 \eqref{e:lrb} is strengthened to the $2$-sided  Ricci bound \eqref{e:2rb},
 the singular set is closed. In this case,
we will give new proofs of conjectures stated in \cite{ChCoI}. Specifically,  the singular set
$S=S^{n-4}$ is \hbox{$(n-4)$-rectifiable} and has an {\it a priori} bound on its $(n-4)$-dimensional
Hausdorff measure:  
$$
\cH^{n-4}(S\cap B_1(p))\leq C(n,\rv)\, .
$$
The first proofs of those conjectures were given by the second two authors in \cite{JiNa_L2}, who even proved {\it a priori} $L^2$ curvature estimates on $M^n$; for earlier results in which integral bounds on curvature 
were {\it assumed}; see \cite{Cheeger}, \cite{CCTi_eps_reg}.   The proofs in the present paper
 are based on new estimates, which assume
only a lower bound on Ricci. In that  case,
the stronger estimates proved \cite{JiNa_L2}, which require assuming a
$2$-sided bound, can fail to hold.

\subsection{The quantitative stratification}

The quantitative stratification involves sets $S^k_{\epsilon,r}$, whose definition will be recalled below.  The quantitative stratification was  introduced in \cite{CheegerNaber_Ricci}
in the context of Ricci curvature,
in order to state and prove new {\it effective} 
estimates on noncollapsed manifolds with Ricci curvature bounded below, 
and in particular Einstein manifolds. 
These quantitative stratification ideas have been since used in  
a variety contexts, see 
\cite{ChNa1,ChNaHa1,ChNaHa2,ChNaVa,
NaVa_CriticalSets,lammbre,Chu16,Wang16,NaVa_YM}, \cite{EdEng}, to prove similar results in other areas including minimal submanifolds,
 harmonic maps, mean curvature flow, harmonic map flow,
crtical sets of linear elliptic pde's, bi-harmonic maps,
 stationary Yang-Mills and free boundary problems.
\vskip2mm

Next, we recall
some relevant definitions; compare \eqref{e:stratification}.
Let $X$ denote a metric  space.

\begin{definition}\label{d:epssymm}
Given $\epsilon>0$ we say a ball $B_r(x)\subset X$ is {\it $(k,\epsilon)$-symmetric} 
if there exists a $k$-symmetric metric cone $X'=\dR^k\times C(Z)$, with $x'$ a vertex of $\dR^k\times C(Z)$,
such that $d_{GH}(B_r(x),B_r(x'))<\epsilon r$.
\end{definition}
\begin{remark} \label{rm:epssym}
If $\iota: B_r(x')\to B_r(x)$ is the $\epsilon r$-GH map and $\cL_{x,r}:= \iota(\dR^k\times\{x'\})$, 
then we may say $B_r(x)$ is $(k,\epsilon)$-symmetric with respect to $\cL_{x,r}$.
\end{remark}

\begin{definition}
\label{d:stratification}
${}$
\vskip1mm

\begin{enumerate}
\item For $\epsilon,r>0$ we define the $k^{th}$ $(\epsilon,r)$-stratum to be $S^k_{\epsilon,r}\setminus S^k_{\epsilon,r}$ 
where $S^{-1}:=\emptyset$ and for $k\geq 0$,
\begin{align}
S^k_{\epsilon,r}:=\{x\in B_1(p)\, :\, \text{ for no }r\leq s<1\text{ is $B_{s}(x)$ a }(k+1,\epsilon)\text{-symmetric ball}\}.
\end{align}
\item For $\epsilon>0$ we define the $k^{th}$ $\epsilon$-stratum 
to be $S^k_\epsilon\setminus S^{k-1}_\epsilon$, where  $S^{-1}:= \emptyset$ and for $k\geq 0$,
\begin{align}
\label{e:qs1}
S^k_{\epsilon}:= \bigcap_{r>0} S^k_{\epsilon,r}(X):= \{x\in B_1(p):\text{ for no }0< r<1\text{ is $B_{r}(x)$ a }(k+1,\epsilon)\text{-symmetric ball}\}.
\end{align}
\end{enumerate}
\end{definition}
\begin{remark}
The the standard and quantitative stratification are related as follows:
\begin{align}
\label{e:equal}
S^k= \bigcup_{\epsilon>0} S^k_{\epsilon}\, .
\end{align}
One can see this through a simple and instructive (though not a priori obvious) contradiction argument.
\end{remark}
\vskip1mm

To summarize, the sets $S^k$ 
are defined by grouping together all points  $x\in X$ {\it all of whose tangent cones fail to have 
$k+1$ independent translational symmetries}.
 The sets $S^k_\epsilon$
 are defined by grouping together all points $x\in X$ such that all balls  {it fail by a definite amount to have
 at most $k+1$ independent translational symmetries}. The sets
 $S^k_{\epsilon,r}$ are defined by grouping together points of $x\in X$ such that {\it all balls $B_s(x)$ of radius at least $r$ fail by a definite amount have at most $k+1$ translational symmetries}.
\vskip2mm

\subsection{Significance of the quantitative stratification}
According to \eqref{e:qs1}, \eqref{e:equal},
 the quantitative stratification carries more information than the standard stratification. Thus, estimates
 proved for the quantitative stratification have immediate consequences for
  the standard stratification. The latter, however, are significantly weaker.  In order 
to illustrate this, we 
introduce the following notation.\\

{\bf Notation:} Let $B_r(A)=\bigcup_{a\in A} B_r(a)$ denote tubular neighborhood of $A\subset X$ with radius $r$.

 In \cite{CheegerNaber_Ricci}, the Hausdorff dimension estimates \eqref{e:sing_set_haus_est} on $S^k$ were improved to the Minkowski type estimate,
\begin{align}
\label{e:sing_set_mink_delta_est}
\Vol(B_r(S^k_{\epsilon,r}\cap B_1(p)))\leq c(n,\rv,\epsilon,\eta) \cdot r^{n-k-\eta}\, ,\text{  }\qquad ({\rm for \,\,all}\,\, \eta>0)\, .
\end{align}
This is further sharpened in the present paper; see 
Theorem   \ref{t:main_quant_strat_stationary}, where the $\eta$ in 
\eqref{e:sing_set_mink_delta_est} is removed.  
\vskip1mm

A complementary point to \eqref{e:sing_set_mink_delta_est}, which is {\it crucial for various applications}, 
accounts for much of the significance of the 
quantitative stratification.  Namely, for solutions of various geometric equations we have on the complement
of the tubular neighborhood \eqref{e:sing_set_mink_delta_est} (see also 
Theorem   \ref{t:main_quant_strat_stationary} for the improved version) that the solution has 
a definite amount of regularity, as measured by the so called {\it regularity scale}.  Essentially, this means
that if $x$ lies in the complement of $B_r(S^k_{\epsilon,r}\cap B_1(p))$, then on $B_{r/2}(x)$ the solution satisfies {\it uniform} scale invariant estimates on its derivatives.  A key element of this is the existence of an $\epsilon$-regularity theorem, stated in scale 
invariant form.  For balls of radius $2$ the $\epsilon$-regularity theorem typically states: 
There exists $k$ (whose value 
depends on the particular equation being considered)
such that.
\vskip2mm

{\it If $B_2(x)$ is $(k,\epsilon)$-symmetric then $B_1(x)$ has bounded regularity.}
\vskip2mm

In the context of the present paper see Theorem \ref{t:eps_reg} for the appropriate $\epsilon$-regularity theorem for spaces with $2$-sided Ricci curvature bounds.  
Such results let us turn estimates on the quantitative stratification into classical regularity estimates 
on the solution itself.  See Theorem \ref{t:main_bounded_finite_measure}, as well as the 
$L^p$ estimates proved in \cite{CheegerNaber_Ricci}, \cite{CheegerNaber_Codimensionfour}.

\vskip2mm

\subsection{Main results on the quantitative stratification}
 In this subsection, we give our main results on the quantitative stratification for limit spaces
 satisfying the lower Ricci bound \eqref{e:lrb} and the noncollapsing condition \eqref{e:lvb}.
Our first result gives us $k$-dimensional Minkowski estimates on the quantitative stratification.  That is, we can remove the constant $\eta>0$ in \eqref{e:sing_set_mink_delta_est}.
\vskip2mm

\begin{theorem}[Measure bound for $S_{\epsilon,r}^k$]
\label{t:main_quant_strat_stationary}
For each $\epsilon>0$ there exists $C_\epsilon=C_\epsilon(n,\rv,
\epsilon)$ such that the following holds.
Let $(M^n_i,g_i,p_i)\stackrel{d_{GH}}{\longrightarrow}(X,d,p)$ satisfy $\Vol(B_1(p_i))\geq \rv>0$ and $\Ric_{M^n_i}\geq -(n-1)$.  Then 
\begin{align}
\label{e:n-k}
\Vol\Big(B_r\big(S^k_{\epsilon,r}\big)\cap B_1(p)\Big)\leq c(n,\rv,\epsilon)\cdot
 r^{n-k}\, .
\end{align}
\end{theorem}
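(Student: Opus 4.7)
The plan is to deduce the sharp Minkowski estimate from a \emph{neck decomposition} of $B_1(p)$, in the spirit of \cite{NaVa_YM,JiNa_L2} but adapted to the lower Ricci setting. The philosophy is that $S^k_{\epsilon,r}$ is covered by the centers of a controlled collection of ``neck regions'' (plus a small residual scale-$r$ covering), and that each neck center is essentially $k$-rectifiable with an \emph{a priori} bound on its $k$-dimensional measure, thereby upgrading the Hausdorff estimate \eqref{e:sing_set_haus_est} and the Minkowski-with-$\eta$-loss estimate \eqref{e:sing_set_mink_delta_est} to a true $k$-content bound.

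Concretely, the first step is to establish a neck-structure theorem of the following form: any ball $B_1(p)$ can be decomposed (after possibly refining to finitely many sub-balls) as
\[
B_1(p) \cap S^k_{\epsilon,r} \subseteq \bigcup_{i} \mathcal C_i \;\cup\; \bigcup_{\alpha} B_{r_\alpha}(x_\alpha),
\]
where each $\mathcal C_i$ is the center of a neck region $\mathcal N_i$ (so that balls $B_s(y)$ centered at $y\in\mathcal N_i$ with $s\geq$ distance to $\mathcal C_i$ are $(k,\delta)$-symmetric with respect to a $k$-plane $\mathcal L_{y,s}$ close to $\mathcal C_i$), and the residual balls $B_{r_\alpha}(x_\alpha)$ satisfy $\sum r_\alpha^k\leq C(n,\rv,\epsilon)$ and are either of radius $r$ or are themselves $(k+1,\epsilon)$-symmetric at their scale. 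The second, and central, step is a neck measure estimate
\[
\cH^k(\mathcal C_i)\;\leq\; C(n,\rv,\epsilon)\, r_i^k
\]
for each neck region of outer scale $r_i$. This is where the sharp cone-splitting theorem and the geometric transformation theorem enter: the cone-splitting shows that if many small balls along $\mathcal C_i$ were $(k,\epsilon')$-symmetric with ``$(k{+}1)$-st direction'' varying enough, then $\mathcal N_i$ would be $(k{+}1,\epsilon)$-symmetric, contradicting the neck definition; so the $k$-planes $\mathcal L_{y,s}$ must be almost parallel across scales, which in turn lets one apply harmonic-function/best-plane techniques (\'a la Cheeger--Colding, but now with the sharp transformation controlling how the harmonic splitting maps change from scale to scale) to run a Reifenberg-type argument and extract the $\cH^k$ bound.

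Step three is a covering/iteration: one runs the neck decomposition, then applies it again on each residual ball $B_{r_\alpha}(x_\alpha)$ that is not already at scale $r$ and not $(k{+}1,\epsilon)$-symmetric; iterating down to scale $r$ produces a covering of $S^k_{\epsilon,r}\cap B_1(p)$ by balls $B_r(y_\beta)$ whose multiplicity $\#\{\beta\}$ is bounded, via the $\sum r_\alpha^k$ control and the $\cH^k(\mathcal C_i)$ bound, by $C(n,\rv,\epsilon)\,r^{-k}$. Since each $B_r(y_\beta)$ has Euclidean-upper-bounded volume $\leq C(n,\rv) r^n$ by Bishop--Gromov, this yields \eqref{e:n-k}.

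The main obstacle is the sharp bound $\cH^k(\mathcal C_i)\leq C r_i^k$ with no $\eta$ loss: the naive ``last bad scale'' argument used to obtain \eqref{e:sing_set_mink_delta_est} loses a small power at every dyadic scale. Replacing it requires the Reifenberg/best-plane machinery to be run with a \emph{summable} energy drop, and it is precisely the sharp cone-splitting (giving a quantitative gain in symmetry from spread-out almost-symmetric subballs) and the transformation theorem (controlling how harmonic splitting functions compose under rescaling along the neck) that combine to produce such a summable defect. Once these analytic inputs are in place, the Minkowski bound follows from the covering scheme described above.
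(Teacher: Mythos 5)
Your proposal follows essentially the same route as the paper: a neck decomposition of $B_1(p)$ into $(k,\delta,\eta)$-neck regions and $(k+1)$-symmetric balls with $\sum_a r_a^k+\sum_b r_b^k\leq C$, an Ahlfors-regularity/$k$-content bound on the neck center sets (proved via sharp cone-splitting, the transformation theorem, and the resulting summable Hessian defects for harmonic splitting maps), and a final covering argument bounding the number of $r$-balls needed by $C\,r^{-k}$ before applying Bishop--Gromov. This matches the paper's Lemmas \ref{l:1}--\ref{l:3} and the surrounding argument, so no further comparison is needed.
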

\vskip2mm

Showing that one can replace  $(n-k-\eta)$  in \eqref{e:sing_set_mink_delta_est}
 by $n-k$ in \eqref{e:n-k} requires techniques which are fundamentally  different from those used to establish 
 \eqref{e:sing_set_mink_delta_est} and arguments which are
significantly harder.  This is because such estimates are tied 
in with the underlying structure of the singular set itself.  On the other hand, the new techniques enable 
us to prove much more.  Our next result states that the set $S^k_{\epsilon}$ is rectifiable.  Let us recall the definition of rectifiablity for our context.

\begin{definition}
\label{d:rectifiable} 
A metric space $Z$ is {\it $k$-rectifiable} if there exists a countable collection of 
$\cH^k$-measurable subsets $Z_i\subset Z$,
and bi-Lipschitz maps $\phi_i:Z_i\to \R^k$ such that $\cH^k(Z\setminus \bigcup_i\, Z_i)=0$.
\end{definition}

For further details on rectifiability, especially for subsets of Euclidean space, 
see  \cite{Fed}.  Our main theorem on the structure of the quantitative stratification $\cS^k_\epsilon$ is now the following:

\begin{theorem}[$\epsilon$-Stratification]
\label{t:main_eps_stationary}
Let $(M^n_i,g_i,p_i)\stackrel{d_{GH}}{\longrightarrow}(X,d,p)$ satisfy $\Vol(B_1(p_i))\geq \rv>0$ and 
$\Ric_{M^n_i}\geq -(n-1)$.
Then for each $\epsilon>0$ there exists $C_\epsilon=C_\epsilon(n,\rv,
\epsilon)$ such that
\begin{align}\label{e:main_eps_stationary:mink}
\Vol\Big(B_r\big(S^k_{\epsilon}(X)\big)\cap B_1(p)\Big)\leq C_\epsilon \cdot
r^{n-k}\, .
\end{align}
In particular, 
\begin{align}
\label{e:hme}
\cH^{k}(S^k_\epsilon \cap B_1(p))\leq C_\epsilon\, .
\end{align}
Moreover, the set $S^k_\epsilon$ is $k$-rectifiable, and 
for $\cH^k$-a.e. $x\in S^k_{\epsilon}$ {\it every} tangent cone at $x$ is 
$k$-symmetric.
\end{theorem}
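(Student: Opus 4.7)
The plan is to deduce all parts of Theorem \ref{t:main_eps_stationary} from a \emph{neck decomposition theorem} for $S^k_{\epsilon,r}\cap B_1(p)$. Following the framework advertised in the abstract, I would define a $(k,\delta,\eta)$-\emph{neck region} on $B_2(x_0)$ to consist of a closed \emph{center set} $\cC\subset B_2(x_0)$ together with a radius function $r:\cC\to[0,\infty)$ such that, for every $y\in\cC$ and every scale $s\in[r(y),1]$, the ball $B_s(y)$ is $(k,\delta)$-symmetric with respect to a $k$-plane $\cL_{y,s}$ through $y$ in the sense of Remark \ref{rm:epssym}, while for no such $y,s$ is $B_s(y)$ a $(k+1,\eta)$-symmetric ball. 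With $\eta=\eta(n,\rv,\epsilon)$ chosen small, the center set $\cC$ will capture exactly the points of $S^k_\epsilon$ that appear at the scales of the neck, and on the complement $B_2(x_0)\setminus B_{r(\cdot)}(\cC)$ one obtains uniform regularity from the $(k,\delta)$-symmetry via a standard $\epsilon$-regularity argument.

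The first major step is the \emph{neck decomposition}: cover $S^k_{\epsilon,r}\cap B_1(p)$ by a controlled collection of rescaled neck regions $\{B_{s_i}(x_i)\}$ (with center sets $\cC_i$) together with a residual union of balls $\{B_{t_j}(y_j)\}$ with $t_j\geq r$, in such a way that
\begin{align*}
\sum_i s_i^k \,+\, \sum_j t_j^k \,\leq\, C(n,\rv,\epsilon)\, .
\end{align*}
The construction is inductive on scale: at each ball one either detects a new $(k+1,\eta)$-symmetry, in which case a volume/pigeonhole argument closes off the ball into the residual family, or else continues building the center set $\cC$. The \emph{sharp cone-splitting theorem} promised in the abstract is the crucial ingredient that prevents an accidental $(k+1,\eta)$-symmetry from arising inside the neck, and in addition it gives a sharp bound on how spread-out the $k$-plane directions $\cL_{y,s}$ can be along $\cC$; this sharpness is precisely what removes the loss $\eta$ in \eqref{e:sing_set_mink_delta_est} and yields the clean Minkowski estimates \eqref{e:n-k} and \eqref{e:main_eps_stationary:mink}.

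The second major step is to show that the center set $\cC$ of a single neck region is $k$-rectifiable and Ahlfors upper regular, $\cH^k(\cC\cap B_s(y))\leq C s^k$. The tool here is the \emph{geometric transformation theorem}: at each $y\in\cC$ and each scale $s\geq r(y)$, the $(k,\delta)$-symmetry of $B_s(y)$ yields a $(k,\delta)$-splitting map $u_{y,s}:B_s(y)\to\R^k$ built out of nearly parallel harmonic functions, and the transformation theorem controls how $\nabla u_{y,s}$ twists as one changes either $y$ or $s$. Combining this control across all scales shows that $u_{y,s}$ restricts to a bi-Lipschitz chart from $\cC\cap B_s(y)$ into $\R^k$, and a Reifenberg-type argument then upgrades this to $k$-rectifiability and to the sharp $k$-content bound.

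The four conclusions now follow. The Minkowski estimate \eqref{e:main_eps_stationary:mink} is obtained by summing $s_i^k$ over the neck decomposition and sending $r\to 0$; \eqref{e:hme} is immediate; rectifiability of $S^k_\epsilon$ is inherited from that of each $\cC_i$, after arranging that the residual set has vanishing $\cH^k$-measure. Finally, at a $\cH^k$-density point $x$ of some $\cC_i$ the approximate tangent $k$-plane of $\cC_i$ persists under blow-up, and the abundance of nearby center-set points along an approximate $\R^k$ forces any tangent cone $X_x$, via the cone-splitting theorem, to split off an isometric $\R^k$-factor, hence to be $k$-symmetric. I expect the main obstacle to be the second step, i.e.\ proving that the center set of a neck region is bi-Lipschitz to $\R^k$ with the sharp $k$-Ahlfors bound, since this is where one must quantitatively propagate symmetry information from infinitely many scales through the geometric transformation theorem and close things up without any $\eta$-loss.
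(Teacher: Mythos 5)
Your overall architecture coincides with the paper's: a neck decomposition of $B_1(p)$ into neck regions plus $(k+1,\eta)$-symmetric balls with bounded $k$-content, combined with a structure theorem for the center set of a single neck (Ahlfors regularity of the packing measure and $k$-rectifiability via harmonic splitting maps), and you correctly flag the second step as the crux. However, there is a genuine gap precisely there. You assert that the splitting map $u_{y,s}$ "restricts to a bi-Lipschitz chart from $\cC\cap B_s(y)$ into $\R^k$". This is false as stated: on a neck region modeled on a cone $C(Y_{\epsilon,r})$ with two nearly-singular rays (Example \ref{example:cone_neck}), every harmonic function has $|\nabla u|\to 0$ at the vertex, so no splitting map is bi-Lipschitz on all of $\cC$. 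The correct statement (Proposition \ref{p:bilipschitz_structure}) is that $u$ is bi-H\"older on all of $\cC$ but $(1+\epsilon)$-bi-Lipschitz only on a subset $\cC_\epsilon$ with $\mu(\cC\setminus\cC_\epsilon)\le\epsilon\,\mu(\cC)$. Identifying $\cC_\epsilon$ requires the Nondegeneration Theorem, i.e.\ showing $|T_{x,s}-I|<\epsilon$ at those $x$ where the summed entropy pinching $\sum_{r_j\ge r_x}\cE^k_{r_j}(x)$ is small, and showing via a Fubini argument over the packing measure that this holds at $\mu$-most points. That Fubini argument already uses Ahlfors regularity of $\mu$, while Ahlfors regularity is what you are trying to prove; the circularity is broken by an induction on the minimal radius $\inf r_x$ (weak Ahlfors bound with constant $B$ $\Rightarrow$ bi-Lipschitz on most of $\cC$ $\Rightarrow$ sharp Ahlfors bound with a dimensional constant $A(n)$). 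None of this machinery — the nondegeneration step, the summability of the pinching, or the bootstrap — appears in your proposal, and the transformation theorem alone does not supply it, since the transformation matrices $T_{x,s}$ can genuinely blow up.

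Two smaller points. Your neck-region definition omits the volume (or entropy) pinching condition $|\cV_{\delta^{-1}}(x)-\cV_{\delta r_x}(x)|<\delta^2$ at center points; this is not cosmetic, since it is the source of the summable energy across scales that drives both the nondegeneration argument and the removal of the $\eta$-loss, and it is also what makes the tangent-cone argument work. Finally, for the last conclusion, your density-point plus cone-splitting argument with a fixed $\delta$ only yields that every tangent cone is $(k,\epsilon(\delta))$-symmetric; to get exact $k$-symmetry $\cH^k$-a.e.\ you must run the decomposition for a sequence $\delta_i\to 0$, discard an $\cH^k$-null bad set $\tilde S_{\delta_i}$ for each $i$, and take the union — as the paper does — rather than work with a single neck decomposition.
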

\vskip2mm

\begin{remark}
The techniques used in proving the above results provide an even stronger estimate than the Minkowski estimate of \eqref{e:main_eps_stationary:mink}. Namely, they lead to a uniform {\it $k$-dimensional packing content estimate}:   Let 
$\{B_{r_i}(x_i)\}$ denote {\it any} collection of {\it disjoint} balls such that $x_i\in S^k_{\epsilon,\,r}$.  Then 
\begin{align}
\label{e:minkowski}
\sum r_i^k \leq C_\epsilon\, .
\end{align}
\end{remark}
\vskip2mm

\begin{remark}
\label{r:sharp}
The structural results above are actually sharp.  In Example \ref{example:k_rect_sharp} we will explain a
construction from \cite{LiNa17_example} of a noncollapsed limit space $X^n$ 
such that
\begin{align}
S=S^k=S^k_\epsilon \notag\\
0<\cH^k(S)<\infty\, ,\notag
\end{align}
for which $ S^k_\epsilon$ is both $k$-rectifiable and bi-Lipschitz to a $k$-dimensional (fat) Cantor set.
In particular the singular set has no manifold points. However, it is still an open question to show that in the presence
 of a $2$-sided bound on Ricci curvature, the singular set must contain manifold points.\footnote{If $M^n$ is K\"ahler with a polarization then it has been shown in \cite{DS1}, \cite{Tian} that the singular set is {\it topologically} a variety.  However, the smoothness or even the bi-Lipschitz structure of the singular set is still unknown even in this case. }
\end{remark}
\vskip1mm

\subsection{Results for the classical stratification}
We now state our main results for the classical stratification $S^k$.  
They follow as special cases of the preceding 
results on the quantitative stratification.  
\vskip2mm

Since  $S^k=\bigcup_\epsilon S^k_\epsilon$,
the following theorem is essentially an immediate consequence of Theorem \ref{t:main_eps_stationary}.
\begin{theorem}[Stratification]
\label{t:main_stratification}
Let $(M^n_i,g_i,p_i) \stackrel{d_{GH}}{\longrightarrow}(X^n,d,p)$ satisfy $\Vol(B_1(p_i))\geq \rv>0$ and \newline $\Ric_{M^n_i}\geq -(n-1)$.  Then $S^k$ is $k$-rectifiable and for $\cH^k$-a.e. $x\in S^k$ every tangent cone at $x$ is $k$-symmetric.
\end{theorem}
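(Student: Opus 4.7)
The plan is to deduce Theorem \ref{t:main_stratification} directly from Theorem \ref{t:main_eps_stationary} via the countable exhaustion $S^k = \bigcup_{\epsilon>0} S^k_\epsilon$. Since the quantitative strata are nested decreasingly in $\epsilon$, it suffices to consider the countable sequence $\epsilon_j = 1/j$, so that
\begin{align*}
S^k \;=\; \bigcup_{j=1}^\infty S^k_{1/j}\, .
\end{align*}
First I would verify this identity: the inclusion $\bigcup_j S^k_{1/j} \subseteq S^k$ is immediate, since a point whose every tangent cone was $(k{+}1)$-symmetric would, by a contradiction/compactness argument using the stability of $(k{+}1,\epsilon)$-symmetry under GH-convergence, have $(k{+}1,\epsilon)$-symmetric balls at arbitrarily small scales for every $\epsilon$; conversely, if $x \notin S^k_{1/j}$ for every $j$, one can extract a sequence of scales $r_i \to 0$ at which $B_{r_i}(x)$ is $(k{+}1, 1/j_i)$-symmetric with $j_i \to \infty$, and a subsequential tangent cone at $x$ is then $(k{+}1)$-symmetric.

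Next, for rectifiability, Theorem \ref{t:main_eps_stationary} gives that each $S^k_{1/j}$ is $k$-rectifiable, i.e. admits a countable cover by bi-Lipschitz images of subsets of $\R^k$ up to an $\cH^k$-null set. Taking the union over $j$ of these countable families yields a countable bi-Lipschitz cover of $S^k$ up to a countable union of $\cH^k$-null sets, which is $\cH^k$-null. Hence $S^k$ is $k$-rectifiable.

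For the tangent-cone statement, Theorem \ref{t:main_eps_stationary} provides, for each $j$, a subset $G_j \subseteq S^k_{1/j}$ with $\cH^k(S^k_{1/j} \setminus G_j) = 0$ such that every tangent cone at every $x \in G_j$ is $k$-symmetric. Setting $G := \bigcup_j G_j \subseteq S^k$, one has
\begin{align*}
S^k \setminus G \;\subseteq\; \bigcup_{j=1}^\infty \bigl( S^k_{1/j} \setminus G_j \bigr)\, ,
\end{align*}
which is a countable union of $\cH^k$-null sets and hence $\cH^k$-null. Thus $\cH^k$-a.e. point of $S^k$ lies in $G$, where by construction every tangent cone is $k$-symmetric.

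Since all the substantive analytic and geometric work is packaged inside Theorem \ref{t:main_eps_stationary} (the Minkowski bounds, the neck decomposition, and the $\cH^k$-a.e. tangent-cone symmetry at the quantitative level), there is essentially no obstacle here. The only point requiring any care is the equality $S^k = \bigcup_j S^k_{1/j}$, which the excerpt explicitly flags as "a simple and instructive (though not a priori obvious) contradiction argument"; modulo this soft compactness step, the theorem follows formally by a countable union.
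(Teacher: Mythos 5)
Your proposal is correct and is essentially the paper's own argument: the paper likewise writes $S^k=\bigcup_{j\ge 1}S^k_{2^{-j}}$ (your choice of $\epsilon_j=1/j$ is immaterial) and deduces both rectifiability and the $\cH^k$-a.e. tangent-cone symmetry as countable unions of the conclusions of Theorem \ref{t:main_eps_stationary}. Your additional verification of the identity $S^k=\bigcup_\epsilon S^k_\epsilon$ is exactly the contradiction/compactness argument the paper alludes to in the remark following \eqref{e:equal}.
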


\begin{remark}
\label{r:nomb}
Note that unlike in the Hausdorff measure bound on $S^k_\epsilon\subset S^k$ given in \eqref{e:hme},
we are not asserting a finite measure bound on all of $S^k$.   Example \ref{example:finiteness_sharp} shows that
such a bound need not hold.  However, as will become clear in the proof of Theorem
\ref{t:main_stratification},  to prove results which concern
 the structure of the sets $S^k$, 
it is  crucial to be able to break   the stratification into the well behaved finite measure subsets 
$S^k_\epsilon$.
\end{remark}
\vskip1mm

We end this subsection with two results which are essentially direct applications of Theorems \ref{t:main_eps_stationary}, \ref{t:main_stratification} (see also \cite{ChCoI} for the bi-H\"older estimate).

\begin{theorem}[Manifold Structure]
\label{t:main_homeo}
Let $(M^n_i,g_i,p_i)\stackrel{d_{GH}}{\longrightarrow}(X^n,d,p)$ satisfy 
$\Vol(B_1(p_i))\geq \rv>0$ and \newline $\Ric_{M^n_i}\geq -(n-1)$.  Then there exists a subset $S_\epsilon\subseteq X^n$ which is 
$(n-2)$-rectifiable with 
$\cH^{n-2}\Big(S_\epsilon\cap B_1(p)\Big)\leq C(n,\rv,\epsilon)$
and such that $X^n\setminus S_\epsilon$ is bi-H\"older homeomorphic to a smooth riemannian manifold.
\end{theorem}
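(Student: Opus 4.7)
The plan is to take $S_\epsilon := S^{n-2}_\epsilon(X)$, and then to obtain the rectifiability and measure assertions as direct consequences of Theorem \ref{t:main_eps_stationary}, while the bi-H\"older assertion follows by combining the definition of $S^{n-2}_\epsilon$ with the bi-H\"older regularity theorem of Cheeger--Colding \cite{ChCoI}.

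First I would set $k = n-2$ in Theorem \ref{t:main_eps_stationary} and conclude immediately that $S_\epsilon = S^{n-2}_\epsilon$ is $(n-2)$-rectifiable and satisfies
\begin{align}
\cH^{n-2}\bigl(S_\epsilon \cap B_1(p)\bigr) \leq C_\epsilon = C(n,\rv,\epsilon).
\end{align}
This handles the size and structure of the exceptional set.

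Next I would analyze the complement. By the very definition of $S^{n-2}_\epsilon$, for each $x \in B_1(p) \setminus S_\epsilon$ there exists some scale $r_x \in (0,1)$ such that $B_{r_x}(x)$ is $(n-1,\epsilon)$-symmetric. In the noncollapsed setting $\Vol(B_1(p_i)) \geq \rv$, an $(n-1,\epsilon)$-symmetric ball is GH-close to a ball in $\dR^{n-1}\times C(Z)$ where $C(Z)$ is a $1$-dimensional noncollapsed metric cone; since the cone $[0,\infty)$ is excluded by the noncollapsing condition (no boundary tangent cones can arise as noncollapsed limits of closed manifolds), one forces $C(Z) = \dR$, so $B_{r_x}(x)$ is in fact $\epsilon'$-GH close to a Euclidean ball of radius $r_x$, with $\epsilon' = \epsilon'(n,\rv,\epsilon) \to 0$ as $\epsilon \to 0$. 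Equivalently, by Colding's volume convergence theorem, the renormalized volume of $B_{r_x}(x)$ is within $\epsilon'$ of the Euclidean value.

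With this $\epsilon$-regularity statement in hand, the Cheeger--Colding bi-H\"older regularity theorem (see \cite{ChCoI}) gives, for $\epsilon$ sufficiently small depending on $n$ and $\rv$, a bi-H\"older homeomorphism $\Phi_x : B_{r_x/2}(x) \to B'_{r_x/2}(0^n) \subset \dR^n$ whose bi-H\"older exponent depends only on $n$. I would then cover $B_1(p)\setminus S_\epsilon$ by countably many such charts $\{(U_\alpha,\Phi_\alpha)\}$ extracted by a Vitali-type argument. The key point, already established in \cite{ChCoI}, is that the transition maps $\Phi_\alpha \circ \Phi_\beta^{-1}$ on overlaps are themselves bi-H\"older, and moreover the underlying space $X \setminus S_\epsilon$ admits (via harmonic coordinates built from the splitting functions produced by the almost-Euclidean balls) an underlying smooth manifold structure $(\mathcal{M},g)$ to which the charts $\Phi_\alpha$ are comparable. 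Gluing these charts produces the desired global bi-H\"older homeomorphism from $X\setminus S_\epsilon$ to a smooth Riemannian manifold.

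The main obstacle in this outline is the passage from $(n-1,\epsilon)$-symmetry to the noncollapsed almost-Euclidean hypothesis needed to apply the bi-H\"older theorem; this requires ruling out a codimension-$1$ boundary cone in the tangent structure, which in the noncollapsed Ricci-limit setting is known but must be cited carefully. Once this reduction is made, the remainder is a routine covering and gluing argument based on the now-standard machinery of \cite{ChCoI}.
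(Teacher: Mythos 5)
Your proposal is correct and follows essentially the same route as the paper: take $S_\epsilon=S^{n-2}_\epsilon$, get rectifiability and the measure bound from Theorem \ref{t:main_eps_stationary}, upgrade $(n-1,\epsilon)$-symmetry on the complement to almost-Euclidean $(n,\epsilon')$-symmetry (the paper cites this step, which is the quantitative form of $S^{n-1}\setminus S^{n-2}=\emptyset$ from \cite{ChCoI}, rather than re-deriving the exclusion of the half-line factor), and then apply a Reifenberg-type bi-H\"older regularity theorem — the paper uses its canonical harmonic-map Reifenberg Theorem \ref{t:reifenbergcanonical}, which also supplies the smooth structure, where you invoke the original Cheeger--Colding version plus a chart-gluing argument.
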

\vskip1mm


\begin{theorem}[Tangent Uniqueness]
\label{t:tangentcone_uniqueness}
Let $(M^n_i,g_i,p_i) \stackrel{d_{GH}}{\longrightarrow}(X^n,d,p)$ satisfy $\Vol(B_1(p_i))\geq \rv>0$ and \newline $\Ric_{M^n_i}\geq -(n-1)$.  Then there exists a subset $\tilde S\subseteq X$ with $\cH^{n-2}(\tilde S)=0$ such that for each $x\in X\setminus \tilde S$ the tangent cones are unique and isometric to $\dR^{n-2}\times C(S^1_r)$ for some $0<r\leq 1$.
\end{theorem}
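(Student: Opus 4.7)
The plan is to combine Theorem \ref{t:main_stratification} applied at the top stratum with the monotonicity of the pointed volume density. Since $S=S^{n-2}$ by \cite{ChCoI}, I would decompose
\[
X=\cR \cup (S^{n-2}\setminus S^{n-3}) \cup S^{n-3},
\]
and set
\[
\tilde S := S^{n-3}\cup \bigl\{\, x\in S^{n-2}\setminus S^{n-3}\,:\, \text{some tangent cone at $x$ fails to be $(n-2)$-symmetric}\,\bigr\}.
\]
The Hausdorff dimension bound $\dim S^{n-3}\le n-3$ from \cite{ChCoI} gives $\cH^{n-2}(S^{n-3})=0$, while Theorem \ref{t:main_stratification} at $k=n-2$ shows that for $\cH^{n-2}$-a.e.\ $x\in S^{n-2}$ every tangent cone is $(n-2)$-symmetric. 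Hence $\cH^{n-2}(\tilde S)=0$, and it remains to analyze tangent cones at $x\in X\setminus \tilde S$.

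The next step is to classify the possible tangent cones off of $\tilde S$. If $x$ is regular, volume convergence together with the noncollapsing hypothesis force every tangent cone at $x$ to be $\dR^n = \dR^{n-2}\times C(S^1_1)$. If instead $x\in (S^{n-2}\setminus S^{n-3})\setminus \tilde S$, then by construction every tangent cone $T_xX$ is $(n-2)$-symmetric, so $T_xX=\dR^{n-2}\times C(Y_x)$ for some compact metric space $Y_x$. Since tangent cones of noncollapsed Ricci limits with $\Ric\ge -(n-1)$ are themselves noncollapsed Ricci-limit spaces with $\Ric\ge 0$, splitting off the Euclidean factor shows the $2$-dimensional cone $C(Y_x)$ is also such a space. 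A $2$-dimensional noncollapsed Ricci cone with $\Ric\ge 0$ is a flat cone of cone angle at most $2\pi$, so $Y_x$ must be a round circle $S^1_{r(T_xX)}$ with $0<r(T_xX)\le 1$.

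Finally, I would pin down the radius $r(T_xX)$ using Bishop--Gromov. The normalized volume density
\[
\theta(x):=\lim_{\rho\to 0}\frac{\Vol(B_\rho(x))}{\omega_n \rho^n}
\]
exists at every $x\in X$ by the monotonicity of $\Vol(B_\rho(x))/V^n_{-1}(\rho)$, and by volume convergence along rescalings it equals the volume density at the vertex of any tangent cone at $x$. A direct computation gives
\[
\theta(\dR^{n-2}\times C(S^1_r)) = r,
\]
so $r(T_xX) = \theta(x)$ independently of the choice of tangent cone. Therefore every tangent cone at $x\in X\setminus \tilde S$ is isometric to $\dR^{n-2}\times C(S^1_{\theta(x)})$, proving both uniqueness and the asserted form.

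The main obstacle here is not analytic: the deep content is packaged into Theorem \ref{t:main_stratification}, whose real strength is asserting that \emph{every} tangent cone at $\cH^{n-2}$-a.e.\ point of $S^{n-2}$ is $(n-2)$-symmetric (not merely that some is), together with the \cite{ChCoI} dimension bound $\dim S^{n-3}\le n-3$. Once these are in hand, the classification of $2$-dimensional noncollapsed $\Ric\ge 0$ cones and the density-monotonicity argument are standard and require no new estimates.
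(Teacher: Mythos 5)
Your proposal is correct and follows essentially the same route as the paper: reduce to $S=S^{n-2}$, invoke Theorem \ref{t:main_stratification} at $k=n-2$ to get that off an $\cH^{n-2}$-null set every tangent cone is $(n-2)$-symmetric with circle cross-section, and then fix the radius via the limiting volume ratio (this last step is exactly Theorem \ref{t:cross_tangent_cone}(2), which the paper cites in place of your direct density computation). Your explicit handling of $S^{n-3}$ and of regular points only spells out what the paper leaves implicit; there is no substantive difference.
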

\vspace{1mm}

\subsection{$2$-sided Bounds on Ricci Curvature}
\label{ss:boundedricci_results}

In this subsection, we state  a result for noncollapsed limit spaces with a $2$-sided bound on  Ricci curvature, Theorem \ref{t:main_bounded_finite_measure}.  
Recall that under the assumption of a $2$-sided bound the singular set $S$ is closed and can be described as the set of points no neighborhood of which 
is diffeomorphic to an open subset of $\dR^n$.
Our result follows quickly by combining the quantitative stratification results of for limit spaces satisfying \eqref{e:lrb}
with the $\epsilon$-regularity theorem
of \cite{CheegerNaber_Codimensionfour}; see Subsection \ref{ss:2sided} for a review of this material.  
A stronger version of Theorem \ref{t:main_bounded_finite_measure} was first proved in \cite{JiNa_L2}, where additionally $L^2$ bounds on the curvature were produced, but by using estimates and techniques which definitely require a $2$-sided bound on Ricci curvature:

\begin{theorem}[Two Sided Ricci]
\label{t:main_bounded_finite_measure}
Let $(M^n_i,g_i,p_i)\stackrel{d_{GH}}{\longrightarrow}(X^n,d,p)$ satisfy $\Vol(B_1(p_i))\geq \rv>0$ and $|\Ric_{M^n_i}|\leq  (n-1)$.  Then $S$ is $(n-4)$-rectifiable and there exists $C=C(n,\rv)$\,\,
such that
\begin{align}
\Vol\Big(B_r(\big(S\big)\cap B_1(p)\Big)\leq Cr^4\, .
\end{align}
In particular, $\cH^{n-4}(S\cap B_1)\leq C$.  Furthermore, for $\cH^{n-4}$-a.e. $x\in X$, the tangent cone at $x$ is unique and isometric to $\dR^{n-4}\times C(S^3/\Gamma)$, where $\Gamma\subseteq O(4)$ acts freely.
\end{theorem}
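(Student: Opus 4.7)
The plan is to reduce Theorem \ref{t:main_bounded_finite_measure} to the quantitative stratification result of Theorem \ref{t:main_eps_stationary}, using the codimension-four $\epsilon$-regularity theorem of \cite{CheegerNaber_Codimensionfour} to transport information about $S^{n-4}_\epsilon$ back to the full singular set $S$.

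The first step is to invoke the $\epsilon$-regularity theorem of \cite{CheegerNaber_Codimensionfour} (stated as Theorem \ref{t:eps_reg} in this paper): there exists $\epsilon_0 = \epsilon_0(n,\rv) > 0$ such that, under the two-sided bound $|\Ric| \leq n-1$, if $B_2(x)$ is $(n-3,\epsilon_0)$-symmetric then $x$ is a regular manifold point of $X$. Consequently $S \subseteq S^{n-4}_{\epsilon_0}$, and Theorem \ref{t:main_eps_stationary} applied with $k = n-4$ and $\epsilon = \epsilon_0$ yields the Minkowski tube estimate
\begin{align}
\Vol\Big(B_r\big(S^{n-4}_{\epsilon_0}\big) \cap B_1(p)\Big) \leq C(n,\rv)\cdot r^4,
\end{align}
the $(n-4)$-rectifiability of $S^{n-4}_{\epsilon_0}$, and the bound $\cH^{n-4}(S^{n-4}_{\epsilon_0}\cap B_1(p)) \leq C(n,\rv)$. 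Each of these conclusions descends immediately to the subset $S$.

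It remains to identify tangent cones. Theorem \ref{t:main_eps_stationary} further gives, for $\cH^{n-4}$-a.e.\ $x \in S$, that \emph{every} tangent cone at $x$ is $(n-4)$-symmetric, hence of the form $\dR^{n-4}\times C(Y_x)$ for some compact metric space $Y_x$. Under the two-sided Ricci bound, $C(Y_x)$ is a four-dimensional noncollapsed Ricci-flat cone. At any non-vertex point $y \in C(Y_x)$ with distance $s$ to the tip, the ball $B_s(y)$ inside $\dR^{n-4}\times C(Y_x)$ is $(n-3,\epsilon_0)$-symmetric thanks to the extra cone-scaling direction, so the $\epsilon$-regularity theorem forces $C(Y_x)$ to be smooth off its tip. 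Standard rigidity for four-dimensional Ricci-flat metric cones then identifies $Y_x = S^3/\Gamma_x$ for some $\Gamma_x \subset O(4)$ acting freely.

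For uniqueness of the tangent cone, once one tangent cone at $x$ is smoothly close to $\dR^{n-4}\times C(S^3/\Gamma_x)$ at some scale, the $\epsilon$-regularity theorem propagates this closeness to all smaller scales. The renormalized volume density $\theta(x) = \lim_{r\to 0} \Vol(B_r(x))/(\omega_n r^n)$ exists by Bishop-Gromov monotonicity and is inversely proportional to $|\Gamma_x|$, which is therefore uniquely determined by $x$; a standard continuity argument across scales in the moduli of smooth four-dimensional Ricci-flat cones then promotes uniqueness of $|\Gamma_x|$ to uniqueness of $\Gamma_x$ up to conjugacy. The main obstacle lies in this last uniqueness step; all earlier steps are straightforward combinations of Theorem \ref{t:main_eps_stationary} with the codimension-four $\epsilon$-regularity theorem.
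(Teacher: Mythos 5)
Your proposal is correct and follows essentially the same route as the paper: reduce to Theorem \ref{t:main_eps_stationary} with $k=n-4$ via the $\epsilon$-regularity theorem of \cite{CheegerNaber_Codimensionfour}, identify the cross-section of the tangent cone as a smooth space form $S^3/\Gamma$, and deduce uniqueness from the fact that the volume density pins down $|\Gamma|$. The "standard continuity argument across scales" you flag as the main obstacle is exactly what the paper supplies via Theorem \ref{t:cross_tangent_cone}: the space of cross-sections of tangent cones at a fixed point is path connected, and since the space forms with prescribed volume form a discrete set, $\Gamma$ (not just its order) is uniquely determined.
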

\vskip2mm

\subsection{The remainder of the paper}
\label{ss:outline}
The  paper can viewed as having four parts.
The first part consists of the present section and Section 
\ref{s:outline_proof}; the second consists of Sections \ref{s:ouline_proof_neckstructure}--\ref{s:examples};
 the third part 
consists of
Sections 
 \ref{s:Sharp_splitting}--\ref{s:nondegeneration}; 
the fourth part 
 consists of Sections \ref{s:neck_region} and \ref{s:decomposition}.   In detail:
\vskip2mm

Section \ref{s:outline_proof} contains the definition and concept of "neck region", including an explanation of the role played by each of the 
conditions in the definition, the statements of the Neck Structure Theorem \ref{t:neck_region2} and
 the Neck Decomposition Theorem \ref{t:decomposition2} and some basic examples.
 In addition, this section contains the proofs
of our main results on the quantitative stratification, under the assumption that the Neck Structure Theorem \ref{t:neck_region2} and the Neck Decomposition Theorem \ref{t:decomposition2} hold.  
Part three of the paper is devoted to  developing the new tools which are
needed for the proofs the neck theorems, while the proofs themselves are given in part four.
\vskip2mm

The second part of the paper begins with Section \ref{s:examples}, in which
we give some examples beyond those given in Section \ref{s:outline_proof}.
One of these 
 concerns neck regions. The remaining  examples illustrate the sharpness of
our results on the quantitative stratification.

In Section \ref{s:Prelim}, 
 we collect background results
 which are needed parts three and four. 
  Some of these results are by now rather standard in the smooth riemannian geometry context (as opposed to the
 context of synthetic lower Ricci bounds). In such cases,  we will just
 give references. 
 For  the more technical results which are less well known, 
 we will give the proofs or at least outlines. 
\vskip2mm

In Section \ref{s:ouline_proof_neckstructure}, 
we give a brief outline of part three
(Sections  \ref{s:Sharp_splitting}--\ref{s:nondegeneration}) and of 
part four (Sections \ref{s:neck_region} and \ref{s:decomposition}). 
 \vskip2mm

In Sections \ref{s:Sharp_splitting}--\ref{s:nondegeneration}, which form the third part of the paper, 
we prove {\it sharp estimates} on quantitative 
cone-splitting. The statements of these theorems involve the local pointed entropy.
Like harmonic splitting maps and heat kernel estimates, the entropy can be 
viewed as analytical tool which, once it has been controlled by the geometry, 
enables one to draw additional (and in this case sharp)
  geometrical conclusions from the original geometric hypotheses.
The results on necks, 
especially the Neck Structure Theorem \ref{t:neck_region2}, depend 
on the new sharp estimates. 
The estimates enable us to take full advantage of the 
behavior of the geometry over an {\it arbitrary number of consecutive scales}.
This is crucial for the proofs of the neck theorems.
\vskip2mm

Sections \ref{s:neck_region} and Section \ref{s:decomposition} constitute the fourth part of
the paper.
In Section \ref{s:neck_region} we prove the Neck Structure Theorem \ref{t:neck_region2}.  
The proof depends on the results of Sections
\ref{s:Sharp_splitting}--\ref{s:nondegeneration}.   In
Section \ref{s:decomposition}, via an induction argument, we prove the Neck  Decomposition 
Theorem \ref{t:decomposition2}.  
Remarkably, for the most part the proof only involves (highly nontrivial) covering arguments,
and only at a certain point is an appeal to Theorem \ref{t:neck_region2} made.

\vskip-3mm

\begin{remark}
\label{r:incomplete} (Future directions.) 
Although in the present paper we have stated our results for fixed 
$k$,
the complete description of the geometry should include {\it simultaneously}
 all $k=0,1, \ldots, n-1$.  
In the general case, it should also involve behavior on multiple scales,
thereby generalizing
 the bubble tree decompositions in 
 \cite{Anderson-Cheeger2}, 
\cite{B90}, and Section
 $4$ of \cite{CheegerNaber_Codimensionfour}.
\end{remark}



\section{Proofs of the Stratification Theorems  modulo Results on Neck Regions}
\label{s:outline_proof}

In this section we will begin by introducing the notion of neck regions and state our main theorems for them: namely the Structure Theorem \ref{t:neck_region2} and Decomposition Theorem \ref{t:decomposition2}.  Proving these results will constitute the bulk of this paper, and we will outline their proofs in the next section.  After introducing them in this section, we will end in the last subsection by assuming them and using them to prove all of the results on quantitative and classical stratifications which were stated in Section \ref{s:introduction}.  In few places, we will appeal to results which are reviewed in 
Section \ref{s:Prelim}.

 \subsection{Background and motivation}
\label{ss:background}
Let $\Vol^{-\kappa}(B_r)$ denote the volume of an $r$-ball in a simply connected space of constant curvature $M^n_{-\kappa}$.  Define the {\it volume ratio} by
 \begin{align}
 \label{e:volratio}
 \cV^\kappa_r(x) := \frac{\Vol(B_r(x))}{\Vol^{-\kappa}(B_r)}\, ,
 \end{align}
 The Bishop-Gromov inequality states that If $\Ric_{M^n}\geq -(n-1)\kappa$, then
 $\cV^\kappa_r(x)$ is {\it monotone nonincreasing} in $r$:
 \begin{align}
 \label{e:monotone}
 \frac{d}{dr}\cV^\kappa_r(x)\leq 0\, .
 \end{align}

 In addition to being monotone, the quantity $ \cV_r(x)$  {\it coercive} in the following sense.
 Given $\epsilon>0$, there exists $0<\delta=\delta(\epsilon)$, such that  if
$r^2\kappa<\delta $  and
 \begin{align}
 \label{e:vc}
 \big| \cV^\kappa_{r}(x)-\cV^\kappa_{r/2}(x)\big|<\delta\, ,
 \end{align}
 then $B_r(x)$ is 
$\epsilon$-Gromov Hausdorff close to a ball $B_r(y^*)\subset C(Y)$, for some metric cone with cross-section $Y$ and vertex $y^*$.  This statement 
is the "almost volume cone implies almost metric cone theorem" of [ChCo2];
 see Section \ref{ss:prelim:volume_cone} for a more complete review. 
\vskip2mm

\begin{remark}
\label{r:notation}
Whenever we have specified a definite lower bound, say $\Ric_{M^n}\geq -(n-1)\xi$,
we will write  $\cV_r(x)$ for $\cV^\kappa_r(x)$. Similarly, for a sequence $M^n_i\stackrel{d_{GH}}{\longrightarrow} X^n$ if
$\liminf_{i\to\infty} \Ric_{M^n_i}\geq 0$, we will write $\cV_r(x)$ for $\cV^0_r(x)$.
\end{remark}
\vskip1mm

 
 The noncollapsing assumption \eqref{e:lvb} and the monotonicity
 \eqref{e:monotone} of $\cV_r(x)$ directly imply 
 \begin{align}
 \label{e:sum1}
 \sum_i \, | \cV_{\delta 2^{-i}}(x)-\cV_{\delta^{-1} 2^{-(i+1)}}(x)|\leq C(n,\rv,\delta)\, .
 \end{align}
 As an immediate consequence, for any $\delta>0$ 
 \begin{align}
 \label{e:rtozero}
 \lim_{r\to 0} | \cV_{\delta r}(x)-\cV_{\delta^{-1}r}(x) |=0\, .
 \end{align}
 This, together with the "almost volume cone implies almost metric cone" theorem, was used in \cite{ChCoI} to prove that for noncollapsed limit spaces satisfying \eqref{e:lrb}, \eqref{e:lvb}, {\it every tangent cone is a metric cone}.  
 
For applications which concern $S^k$, the 
  "cone-splitting principle" is also crucial. In abstract form, where we are using Definition \ref{d:ksymm}, it can be stated as follows:
 \vskip3mm
 
 \noindent
{\bf The cone-splitting principle.} {\it  Let $X$ be a metric space which is $0$-symmetric with respect to two distinct points $x_0,x_1\in X$.  Then $X$ is $1$-symmetric with respect to these points.}
\vskip3mm

The estimate \eqref{e:sum1} together with the
cone-splitting principle was used in \cite{CheegerNaber_Ricci} to prove the weak Minkowski estimate \eqref{e:sing_set_mink_delta_est}. 
\vskip3mm


\noindent {\bf Notation:}  A {\it scale} is just a  number of the form $r_j = 2^{-j}$. 
\vskip1mm
\noindent Note then that \eqref{e:sum1} actually yields the following:
\vskip3mm

\noindent
{\bf Effective version of \eqref{e:rtozero}.}  {\it Given $\epsilon>0$, on all but a definite
 number $N_\epsilon$ of scales,  relation \eqref{e:vc} will hold and  $B_r(x)$ will be $(0,\epsilon)$-symmetric. }
 \vskip3mm
 \begin{remark} (Lack of Sharpness)
  \label{r:suspect}
 The effective version of \eqref{e:rtozero}, together with a quantitative version of cone-splitting, 
was used in \cite{CheegerNaber_Ricci} to obtain effective estimates on the sets $S^k_\epsilon$, 
notably \eqref{e:sing_set_mink_delta_est}.  Clearly this makes use of more information
 than the classical dimension reduction arguments of \cite{ChCoI}, 
which require only \eqref{e:rtozero}.  Nonetheless, a lot of information is being disregarded
 when passing from \eqref{e:sum1} to the above effective version of \eqref{e:rtozero}.  
The ability to take full advantage of
  \eqref{e:sum1}  eventually leads to the main volume and rectifiability estimates of this paper. However,
this requires a number of new ideas in order to not lose any information, all of which is essential.
  \end{remark}
 \vskip1mm

 
\subsection{Neck regions}
\label{ss:neck_regions_intro}

As explained in Section \ref{s:introduction}, 
our results on the classical stratification $S^k$ follow from
structural results for the quantitative stratification $S^k_\epsilon$, and these results follow from results on neck regions and neck decompositions.  Neck decompositions of the type employed here were first introduced in \cite{JiNa_L2} and \cite{NaVa_YM}, where they played a key role in the proofs of the a priori $L^2$ curvature bound for spaces with a $2$-sided bound on Ricci 
curvature and the energy identity, respectively;\footnote{In those papers, only the top stratum of the neck regions could be controlled, and only under much more restrictive hypotheses.} compare
 also  \cite{NaVa_Rect_harmonicmap}.  As these papers illustrate, neck
decompositions
 are  of interest their own right.
In particular, their uses 
go beyond applications to  structural results on singular sets, which are the main focus of the present paper. 
\vskip2mm

We will need the following notion of a tubular neighborhood 
of variable radius.
\noindent
\begin{definition}[Tube of  variable radius]
\label{d:tube2}
If $\mathcal D\subseteq X$ is a closed subset and  $x \to r_x$ (the radius function)
is  a nonnegative continuous  function defined 
on $\mathcal D$, then  the corresponding tube of variable radius is:
$$
\overline B_{r_x}(\mathcal D) :=\bigcup_{x\in\mathcal D} \overline B_{r_x}(x)\, .
$$
\end{definition}
\vskip2mm

Recall from Definition \ref{d:epssymm} and Remark \ref{rm:epssym},
 the notion of $(k,\epsilon)$-symmetry with respect to a subspace.  We now give our definition of a neck region:

\begin{definition}[Neck Regions]
\label{d:neck2}
Let $(M^n_i,g_i,p_i)\togh (X,d,p)$ satisfy $\Ric_{M^n_i}
\geq -(n-1)\delta^2$, $\Vol(B_1(p_i))>\rv>0$ and let $\eta>0$.  Let $\cC=\cC_0\cup \cC_+\subseteq B_2(p)$ denote a closed subset with $p\in \cC$, and let $r_x:\cC\to \dR^+$ be continuous such that $r_x:=  0$ on $\cC_0$ and $r_x>0$ on $\cC_+$.  The
set $\cN=B_2(p)\setminus \overline B_{r_x}(\cC)$ is a $(k,\delta,\eta)$-neck region if 
for all $x\in \mathcal C$, the following hold:
\vskip2mm
	
\begin{enumerate}
\item[(n1)] 	$\{B_{\tau^2_n r_x}(x)\}\subseteq B_2(p)$ are pairwise disjoint, where $\tau_n = 10^{-10n}\omega_n$.
\vskip1mm

\item[(n2)] $|\cV_{\delta^{-1}}(x)-\cV_{\delta r_x}(x)|<\delta^2$. 
\vskip1mm

\item[(n3)] For each $r_x\leq r\leq \delta^{-1}$ the ball $B_r(x)$ is 
$(k,\delta^2)$-symmetric, wrt $\cL_{x,r}$, but not $(k+1,\eta)$-symmetric. 
\vskip1mm

\item[(n4)] For each $r\geq r_x$ with $B_{2r}(x)\subseteq B_2(p)$ we have $\cL_{x,r}\subseteq B_{\tau_n r}(\cC)$ and $\cC\cap B_r(x)\subseteq B_{\tau_n r}(\cL_{x,r})$.
\vskip1mm

\item[(n5)] $|\Lip\, r_x|\leq \delta$.
\end{enumerate}
\end{definition}


\begin{remark}[Vitali covering terminology] Throughout the paper a covering as in (n1), but possibly
with some other constant $\gamma<1/6$ in place of $\tau_n$, will be referred to as a {\it Vitali covering}.
\end{remark}

\begin{remark}
\label{r:properties}
The set $\cC$ will be referred to as {\it the set of centers}  of $\mathcal N$. Below we
provide some explanation for the various conditions, (n1)--(n5), in Definition \ref{d:neck2}.
\vskip2mm

\begin{enumerate}
\item[1)] The effective disjointness property of $(n1)$ guarantees that we do not overly cover, 
which would prevent property (3) of the Neck Structure Theorem \ref{t:decomposition2} from holding.
The center set $\cC$ is used not soley as approximation to the singular set but also as an approximation
to the relevant Hausdorff  measure; see the packing measure definied in Definition \ref{d:mu_measure2}.  
Without $(n1)$ would have no hope of controlling this packing measure, see Theorem \ref{t:neck_region2}.  
Another simple consequence is that the set $\cC_+$ consists of a discrete set of points.
\vskip2mm

\item[2)] Condition $(n2)$ has the consequence that even if the neck region involves
infinitely many scales, there is a summable energy condition over the whole region.  
This summable energy is key for both the rectifiability and measure estimates of 
Theorem \ref{t:neck_region2}.\vskip2mm

\item[3)]  One consequence of $(n3)$ is that if $x\in \cC$ then  $x\in S^k_{\eta,r_x}$; in particular $\cC_0\subseteq S^k_{\eta}$.  Both the assumed  $k$-symmetry and the assumed
 lack of $(k+1)$-symmetry play a key role in the Geometric Transformation Theorem \ref{t:transformation2}.  
These conditions act as a form of rigidity which stops harmonic splitting
 maps from degenerating in uncontrollable ways.
\vskip2mm

\item[4)] Condition $(n4)$ plays the role of a Reifenberg condition on the singular set.  It is strong enough to 
prove bi-H\"older control on $\cC$, but not bi-Lipschitz, which  requires in addition, $(n2)$,
 and is the main goal of this paper.
\vskip2mm

\item[5)] Condition (n5) says that if $x\in \cC$ then $r_x$ looks roughly constant on $B_{10^4 r_x}(x)$.
  It turns out that constructing neck regions with this condition is quite painful, but it is especially important 
for the Nondegeration Theorem \ref{t:nondegeneration}.  It allows us to take integral estimates on neck 
regions and use them to control the behavior of the center points themselves.
\vskip2mm

\item[6)]
If $\cN$ is a neck region in a smooth riemannian manifold $M^n$, then $\cC_0=\emptyset$.
\vskip2mm

\item[7)]
If $\cN\subseteq B_2(p)$ is a $(k,\delta,\eta)$-neck region and $B_{2s}(q)\subseteq B_2(p)$ 
with $q\in \cC$, then $\cN\cap B_{2s}(q)\subseteq B_{2s}(q)$ defines a $(k,\delta,\eta)$-neck region.

\end{enumerate}
\end{remark}
\vskip1mm

 \begin{remark}(Important convention)
 \label{r:br1}
 Often throughout the paper, we wil state a result for balls of radius $1$ and use it (often without
 comment) for balls of radius $r<1$, where the more general follows immediately from the special case by scaling.
 \end{remark}
 \vskip1mm
 
We will want to view $\cC$ as a discrete approximation of a $k$-dimensional set.  Similarly, we want to associate to it a measure which is a discrete approximation of the $k$-dimensional hausdorff measure on $\cC$:

 \begin{definition}
 \label{d:mu_measure2}
 	Let $\cN :=  B_2(p)\setminus \overline B_{r_x}(\cC)$ denote a $k$-neck region.
  The associated packing measure is the measure
	\begin{align}
  \mu:=  \mu_\cN := \sum_{x\in \cC_+} r_x^{k}\delta_{x} + \cH^{k}|_{\cC_0}\, ,
\end{align}
where $\cH^{k}|_{\cC_0}$ denotes the $k$-dimensional Hausdorff measure restricted to $\cC_0$.
 \end{definition}

Our main result on the structure of $k$-neck regions is the following.  The proof, which
 will be outlined in the Section \ref{s:ouline_proof_neckstructure},
depends on several new ideas. It constitutes the bulk of the paper:

\begin{theorem}[Neck Structure Theorem]
\label{t:neck_region2}
Fix $\eta>0$ and  $\delta\leq \delta(n,\rv,\eta)$.
Then if $\cN = B_2(p)\setminus \overline B_{r_x}(\cC)$ is a $(k,\delta,\eta)$-neck region, the following hold:

\begin{enumerate}
	\item For each $x\in \cC$ and $B_{2r}(x)\subset B_2(p)$ the induced packing measure $\mu$ is Ahlfors regular:
\begin{align}	
\label{e:ar}
	 A(n)^{-1}r^{k}<\mu(B_r(x)) <A(n)r^{k} \, .
\end{align}
	
	\item $\cC_0$ is $k$-rectifiable.
\end{enumerate}
\end{theorem}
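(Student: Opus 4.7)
The strategy is to reduce Theorem \ref{t:neck_region2} to the rectifiable Reifenberg framework of Naber--Valtorta, which extracts Ahlfors regularity together with rectifiability of the support of a measure from a Dini-type $L^2$ flatness condition. The hard analytic input, coming from later sections, will be a pointwise estimate of a $k$-dimensional Jones-type number of $\mu$ by the cone-annulus volume drop.

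\textbf{Step 1 (Upper bound on $\mu$).} First I would establish the easy half $\mu(B_r(x))\le A(n)r^{k}$ directly. Condition (n4) forces $\cC\cap B_r(x)\subseteq B_{\tau_n r}(\cL_{x,r})$, so centers near $x$ concentrate in a tubular $k$-neighborhood of a fixed affine $k$-plane; combined with the effective disjointness (n1) and the obvious $r_y\le r$ on $\cC_+\cap B_r(x)$, a standard Euclidean packing argument in such a tube gives the bound.

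\textbf{Step 2 ($L^2$ best-plane estimate).} The core step, whose proof is the content of Sections \ref{s:Sharp_splitting}--\ref{s:nondegeneration}, is the pointwise inequality
\[
 D^{k}_{\mu}(y,s) \;:=\; s^{-k-2}\inf_{V}\int_{B_s(y)} d(z,V)^{2}\,d\mu(z) \;\le\; C(n)\bigl[\cV_{\gamma^{-1}s}(y)-\cV_{\gamma s}(y)\bigr]
\]
for every $y\in\cC$ and $r_y\le s\le 1$, with $\gamma=\gamma(n)<1$, where the infimum runs over affine $k$-planes $V$. The approach is: conditions (n2), (n3) together with the sharp cone-splitting theorem place the rescaled geometry on $B_s(y)$ arbitrarily close to $\dR^k\times C(Z)$ with the $\dR^k$-factor along $\cL_{y,s}$; the Geometric Transformation Theorem controls the degeneration of the harmonic $k$-splitting maps used to detect this factor; and the Nondegeneration Theorem upgrades GH-closeness of $\cC$ to $\cL_{y,s}$ into $L^2$-closeness against the packing measure $\mu$ itself. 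The lack of $(k+1)$-symmetry required by (n3) pins the optimal plane $V$ to be exactly $k$-dimensional, preventing cancellations that would ruin the bound.

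\textbf{Step 3 (Dini summation) and Step 4 (Reifenberg).} Bishop--Gromov monotonicity of $\cV$ together with (n2) yields
\[
 \int_{r_y}^{1}\bigl[\cV_{\gamma^{-1}s}(y)-\cV_{\gamma s}(y)\bigr]\,\frac{ds}{s}\;\le\;C(n)\,\delta^{2},
\]
so integrating the bound of Step 2 against $d\mu$ and using the upper bound of Step 1 yields the Dini condition
\[
 \int_{\cC}\int_{r_y}^{1} D^{k}_{\mu}(y,s)\,\frac{ds}{s}\,d\mu(y)\;\le\;C(n,\rv)\,\delta^{2}.
\]
Choosing $\delta\le\delta(n,\rv,\eta)$ small enough, this is precisely the hypothesis required by the rectifiable Reifenberg theorem. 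That theorem then simultaneously gives the matching lower bound $\mu(B_r(y))\ge A(n)^{-1}r^{k}$ (completing (1)) and the $k$-rectifiability of $\mathrm{supp}(\mu|_{\{r_y=0\}})=\cC_0$, since there $\mu$ coincides with $\cH^{k}|_{\cC_0}$, establishing (2).

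\textbf{Main obstacle.} Steps 1, 3, and 4 are essentially covering-theoretic and standard once Step 2 is available. The genuine difficulty is Step 2: the sharp $L^{2}$-estimate on the distance of the centers to $\cL_{y,s}$ in terms of the volume drop is exactly what the weak Minkowski argument of \cite{CheegerNaber_Ricci} fails to see, since it can only afford a loss of $r^{-\eta}$ at each scale. Closing this gap uniformly over an arbitrary number of consecutive scales is what forces the introduction of the pointed entropy, the sharp cone-splitting, the Geometric Transformation Theorem, and the Nondegeneration Theorem developed in Sections \ref{s:Sharp_splitting}--\ref{s:nondegeneration}.
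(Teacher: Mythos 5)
Your Steps 1, 3 and 4 are fine as far as they go, but the whole argument hinges on the inequality you posit in Step 2, and that inequality is exactly the point at which this route breaks down; the paper explicitly abandons the rectifiable-Reifenberg strategy for this reason (see the ``Digression'' remark in Section \ref{s:ouline_proof_neckstructure}). There are two concrete problems. First, the quantity $D^k_\mu(y,s)$ is not intrinsically defined: there is no affine $k$-plane $V$ inside $X$, only the image $\cL_{y,s}$ of $\dR^k\times\{y'\}$ under a Gromov--Hausdorff approximation, and this identification carries an error of order $\delta^2 s$ \emph{at every scale}, uncorrelated with the volume drop $\cV_{\gamma^{-1}s}(y)-\cV_{\gamma s}(y)$ at that scale. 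A ball can be $(k,\delta^2)$-symmetric with essentially zero volume pinching between $\gamma s$ and $\gamma^{-1}s$ (the pinching in (n2) is only summable over all scales, not small scale-by-scale), so the right-hand side of your Step 2 inequality can vanish while the left-hand side is of size $\delta^4$; the inequality is false as stated. What the Dini integral then controls is $\sum_j D^k_\mu(y,2^{-j})\approx\sum_j\delta^4$, which diverges over infinitely many scales. Second, even granting a metric-space substitute for the best plane, the curvature of the ambient space produces errors in comparing the planes $\cL_{y,s}$ and $\cL_{y,s/2}$ across scales that are \emph{linear} in the GH error rather than quadratic, so they fail the square-summability that the Naber--Valtorta theorem requires. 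This is precisely why the $r^{-\eta}$ loss of \cite{CheegerNaber_Ricci} could not previously be removed.

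The paper's actual proof is structurally different: instead of building a parametrization by hand via Reifenberg, it fixes a single \emph{canonical} map, a harmonic $\delta'$-splitting $u:B_4(p)\to\dR^k$, and proves (Proposition \ref{p:bilipschitz_structure}) that $u$ is $(1+\epsilon)$-bi-Lipschitz on a subset $\cC_\epsilon$ carrying $(1-\epsilon)$ of the packing measure and bi-H\"older on all of $\cC$. The Sharp Cone-Splitting, Geometric Transformation and Nondegeneration theorems are used to show that the squared Hessian of $u$, summed over scales, is controlled by the entropy pinching, and the harmonicity of $u$ is what converts a \emph{squared} $L^2$ Hessian bound into control of the gradient matrix $T_{x,r}$, the gain that no Reifenberg-type telescoping can reproduce. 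Ahlfors regularity is then obtained by pushing the Vitali family $\{B_{r_x}(x)\}$ forward to $\dR^k$ under $u$ (upper bound) and by the covering argument of Lemma \ref{l:claim21} with the ellipsoids $T_{x,r_x}^{-1}(B_{r_x}(u(x)))$ (lower bound), all wrapped in an induction on the minimal ball size (Lemma \ref{l:smoothcase}) that supplies the weak Ahlfors hypothesis (S3) needed to start the bi-Lipschitz machinery; the general case follows by approximating limit neck regions by smooth ones (Theorem \ref{t:approximate_neck}). If you want to keep your outline, you must replace Step 2 entirely by this splitting-map analysis; there is no known direct bound of a Jones-type number of $\mu$ by the single-scale volume drop in this setting.
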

\vskip2mm

\begin{remark}
\label{r:ahlforsreg}
One can view the Ahlfors regularity condition \eqref{e:ar} as an effective consequence of rectifiability.
  Indeed, for simplicity imagine  $u(\cC_0)\cup\{B_{r_x}(u(x))\}$ is a covering of $B_2(0^k)$,
it is a simple but highly instructive exercise to see that \eqref{e:ar} would follow immediately.
 Conversely, much of the work of this paper will be devoted to showing  that if \eqref{e:ar} holds, 
then such a mapping $u$ exists.  More precisely, the mapping $u$ will be taken to be a
 harmonic splitting function. If \eqref{e:ar} holds then we will see that $u$ is automatically
 bi-Lipschitz, at least on most of $\cC$.  One must do this carefully in order to close the loop.
Thus, we will show  essentially simultaneously through an inductive argument,
that \eqref{e:ar} holds and that $u$ is bi-Lipschitz.  The proof of this, which is quite involved, takes up
most of the paper; see Section \ref{s:ouline_proof_neckstructure} for a detailed outline.
\end{remark}
Before continuing let us mention the simplest example of a $k$-neck region:

\begin{example}[Simplest]
\label{ex:basic_neck}
Consider the metric cone space $X=\dR^k\times C(S^1_r)$, where $S^1_r$ is a circle of radius $r< 1$.  Denote by $0\in C(S^1_r)$ the cone point, so that $\cL:=  \dR^k\times\{0\}$ is the singular set of $X$.  Choose any function $r_x:B_2(0^k)\subseteq \cL\to \dR^+$ such that $|\nabla r_x|\leq \delta$ and let $\cC\subseteq B_2(0^k)\times\{0\}$ be any closed subset such that $\{B_{\tau^2_n r_x}(x)\}$ is a maximal disjoint set.  Then for $r<1-C(n)\eta$ it is an easy but instructive exercise to check that $B_2\setminus \overline B_{r_x}(\cC)$ is a $(k,\delta,\eta)$-neck region.  Note that it is trivial that $\cC_0$ is $k$-rectifiable, as $\cC_0\subseteq \dR^k$ canonically.  Similarly, the Ahlfors regularity condition \eqref{e:ar} may be verified as $\{\overline B_{r_x}(x)\}$ forms a Vitali covering of $B_2(0^k)$.
\end{example}


\vskip2mm

For additional and more complicated examples, see Section \ref{s:examples}.

\subsection{Neck decompositions}
\label{sss:outline:neck_decomposition}
In order to prove our theorems on stratifications, we also need
to suitably control the part of $X^n$ which does not consist of neck regions. This is provided by the following result. 
\begin{theorem}[Neck Decomposition]
\label{t:decomposition2}
		Let $(M^n_i,g_i,p_i)\togh (X^n,d,p)$ satisfy $\Vol(B_1(p_i))>\rv>0$ 
		and $\Ric_{M^n_i}\geq -(n-1)$.  Then for each $\eta>0$ and $\delta\le \delta(n,\rv,\eta)$ we can write:
	\begin{align}
		&B_1(p)\subseteq \bigcup_a \big(\cN_a\cap B_{r_a}\big) \cup \bigcup_b B_{r_b}(x_b) \cup \cS^{k,\delta,\eta}\, ,\\
		&\cS^{k,\delta,\eta}\subseteq \bigcup_a \big(\cC_{0,a}\cap B_{r_a}\big)\cup \tilde \cS^{k,\delta,\eta}\, ,
	\end{align}
	where:
	\begin{enumerate}
		\item 
		For all $a$, the set, $\cN_a=B_{2r_a}(x_a)\setminus \overline B_{r_x}(\cC)$,
		is a $(k,\delta,\eta)$-neck region.
		\vskip2mm
		
		\item The balls $B_{2r_b}(x_b)$ are $(k+1,2\eta)$-symmetric; hence
		$x_b\not\in S^k_{2\eta,r_b}$.		\vskip2mm
		
		\item $\sum_a r_a^{k} + \sum_b r_b^{k} + \cH^{k}
		\big(\cS^{k,\delta,\eta}
		\big) \leq C(n,\rv,\delta,\eta)$.		
		\vskip2mm
		
		\item $\cC_{0,a}\subseteq B_{2r_a}(x_a)$ is the $k$-singular set associated to $\cN_a$.	
	\vskip2mm
		
		\item $\tilde{\cS}^{k,\delta,\eta}
		$ satisfies $\cH^{k}\big(\tilde \cS^{k,\delta,\eta}
		\big)=0$.		\vskip2mm
		
		\item $\cS^{k,\delta,\eta}$
		is $k$-rectifiable.		
		\vskip2mm
		
		\item For any $\epsilon$ if $\eta\le \eta(n,\rv,\epsilon)$ and $\delta\le \delta(n,\rv,\eta,\epsilon)$ we have 
		$S_\epsilon^k
		\subset \cS^{k,\delta,\eta}
		$.
	\end{enumerate}
\end{theorem}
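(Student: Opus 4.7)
The plan is to construct the decomposition by an inductive stopping-time argument that simultaneously builds a maximal $(k,\delta,\eta)$-neck region at the current scale and charges every ball where this construction fails either to a type (b) ball or to a controlled recursion. Starting from $B_1(p)$, I would attempt to build one outer neck with $p$ as an initial tentative center, stopping at each candidate center whenever one of (n2)--(n5) is about to fail or when $(k{+}1,2\eta)$-symmetry appears, and then recursing on each stopped ball. Theorem~\ref{t:neck_region2} applied to every neck produced this way gives Ahlfors regularity of the packing measure $\mu_{\cN_a}$ on its center set $\cC_a=\cC_{+,a}\cup \cC_{0,a}$, and this is the engine that turns the covering produced by the algorithm into the summable packing bound (3).

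Concretely, first I would run a downward-in-scale Vitali/stopping-time selection: at each dyadic scale $r=2^{-j}$, maintain a family of tentative centers $\cC^{(j)}$ so that $\{B_{\tau_n^2 r}(x)\}_{x\in\cC^{(j)}}$ is disjoint while the balls $\{B_r(x)\}$ cover an ``alive'' set. A point $x$ is removed from the alive family at scale $r$ for exactly one of three reasons: (A) $B_r(x)$ is $(k{+}1,2\eta)$-symmetric, so $B_r(x)$ is declared a type (b) ball; (B) the sharp volume pinching (n2) just fails at $x$, so a definite amount of volume energy is lost; or (C) the Reifenberg condition (n4) or the Lipschitz condition (n5) is about to be violated while (A) and (B) do not apply. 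In cases (B) and (C) I would restart the algorithm on $B_r(x)$ (rescaled to unit radius), producing a subsidiary decomposition. The tentative centers that are never removed across all scales, together with their limit points $\cC_0$, then define the outer neck $\cN_a$; the limit points feed directly into $\cS^{k,\delta,\eta}$ via $\cC_{0,a}$, and the nested null-depth intersections feed into $\tilde\cS^{k,\delta,\eta}$.

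The quantitative bound in (3) would come by combining three ingredients. First, Theorem~\ref{t:neck_region2} gives $\mu_{\cN_a}(B_{2r_a}(x_a))\le A(n)r_a^k$ for every neck, so the sum of $r^k$ over all sub-balls stopped inside a fixed $\cN_a$ is bounded by $A(n)r_a^k$. Second, case (B) events are charged using the telescoping estimate \eqref{e:sum1}: at any point at most $C(n,\rv,\delta)$ scales can trigger (B), and a Fubini/pigeonhole against the ambient neck's packing measure converts this into a summable $r^k$-weighted bound. Third, although iterating these bounds naively compounds constants, each recursion is seeded only by a (B) or (C) stop, so the nested sums telescope to a single $C(n,\rv,\delta,\eta)$. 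Rectifiability of $\cS^{k,\delta,\eta}$ follows by writing it as the countable union of the $\cC_{0,a}$'s (each $k$-rectifiable by Theorem~\ref{t:neck_region2}) plus $\tilde\cS^{k,\delta,\eta}$, the set of points not absorbed by any $\cN_a$ or $B_{r_b}(x_b)$ at finite depth, which has $\cH^k(\tilde\cS^{k,\delta,\eta})=0$ by a standard Vitali density argument once (3) is known. Finally, (7) is essentially immediate: for $\eta$ small enough, no $(k{+}1,2\eta)$-symmetric ball contains a point of $S^k_\epsilon$, so a point of $S^k_\epsilon$ never exits via case (A) and must therefore lie in $\bigcup_a\cC_{0,a}\cup \tilde\cS^{k,\delta,\eta}=\cS^{k,\delta,\eta}$.

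The hardest part, in my view, is maintaining the Lipschitz condition (n5) (and hence the Reifenberg condition (n4)) honestly across the full downward-scale induction. Any stopped nearby point tends to create a dip or step in the radius function $r_x$ that threatens $|\Lip\,r_x|\le \delta$, and repairing this dip via Vitali reselection at a slightly larger radius must not inflate the covering constants or introduce new $(k{+}1)$-symmetric balls. Equally delicate is the bookkeeping required to ensure that the (possibly infinitely) many nested layers of recursion collapse to a single uniform constant $C(n,\rv,\delta,\eta)$ in (3), rather than a constant that secretly depends on the recursion depth; this is exactly the point where Ahlfors regularity from Theorem~\ref{t:neck_region2}, applied at each level of the recursion, is indispensable.
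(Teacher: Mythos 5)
Your proposal captures the right general shape (a stopping-time covering at dyadic scales, recursion on stopped balls, and Theorem \ref{t:neck_region2} as the engine for the packing bound), but it is missing the two mechanisms that actually make the paper's argument close, and without them the scheme as written does not terminate or sum.

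First, your recursion has no monotone quantity forcing it to stop. When a ball is stopped for reason (B) or (C) you "restart the algorithm on $B_r(x)$," but nothing prevents the rescaled ball from immediately stopping again for the same reason, ad infinitum, and nothing prevents the constants from compounding at each level. The paper's termination device is a \emph{volume induction}: the inductive step (Proposition \ref{p:inductive_decomposition}) produces, besides necks and $(k{+}1,2\eta)$-symmetric balls, only $v$-balls on which $\inf_{B_{4r_v}(x_v)}\cV_{r_v}$ has increased by a definite $v^0(n,\rv,\delta,\eta)>0$ over the ambient infimum $\bar V$. Since $\cV\le 1$, the outer recursion runs at most $v_0^{-1}+1$ times, which is exactly what turns the compounded constants into a single $C(n,\rv,\delta,\eta)$. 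Your per-point observation that (B) can trigger on at most $C(n,\rv,\delta)$ scales is a statement about individual points and does not by itself control the $k$-content of the union of stopped balls.

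Second, your stopping rules never explain how a candidate center acquires the $(k,\delta^2)$-symmetry required by (n3): a ball can fail to be $(k{+}1,2\eta)$-symmetric and simultaneously fail to be $(k,\delta^2)$-symmetric, and your case (C) just recurses without gaining anything. The paper's mechanism is the content cone-splitting Theorem \ref{t:content_splitting}: balls are classified by whether the set $\cF_{r,\xi}$ of volume-pinched points has large $k$-content ($c$-balls, on which a neck can be built because the pinched points force $k$-symmetry on all scales), small but nonempty content ($d$-balls), or is empty ($e$-balls, which become $v$-balls). The decisive quantitative point, absent from your proposal, is that in both the $c$-ball and $d$-ball decompositions the newly created $c$-ball content is at most $C(n,\rv)\epsilon$ times the content of the parent, so the inner $c$/$d$ recursion contributes a convergent geometric series $\sum_j(C\epsilon)^j$ once $\epsilon\le\epsilon(n,\rv)$. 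This smallness, together with the Ahlfors regularity of Theorem \ref{t:neck_region2} and the finite volume induction, is what yields (3); asserting that "the nested sums telescope" does not substitute for it. (A smaller issue: for (7) you must also show that points of the open neck regions $\cN_a$ themselves, not only the $b$-balls, are excluded from $S^k_\epsilon$; this uses cone-splitting at the scale $d(y,\cC_a)$.)
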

\vskip2mm

\begin{remark}
\label{r:neck_smooth}
In the case of a smooth manifold $M^n$, 
we have $\cS^{k,\delta,\eta}=\emptyset$; compare (6) of Remark \ref{r:properties}.
 In that case,  $M^n$  decomposes into only two types of regions, $k$-neck regions and the $k+1$-symmetric balls $B_{r_b}$.
 \end{remark}


Let us use the following two examples to explain our decomposition theorem. 

\begin{example}[$k$-Symmetric Space Example]
\label{ex:cone_decomposition}
Assume $X^n=C(S^{n-s-1}/\Gamma)\times \mathbb{R}^s$ is $s$-symmetric with $0\le s\le n-2$ and $B_1(p)\subset X^n$, where $p=(y_c,0^s)$ is a cone vertex.  Assume further that $C(S^{n-s-1}/\Gamma)$ is not $(1,\eta_0)$-symmetric. For each integer $0\le k\le n-2$ and $\eta\le \eta_0$ and $\delta=0$, we are able to choose a decomposition as in Theorem \ref{t:decomposition2}. To see this, we will divide it into three cases:\\

Case 1: $0\le k\le s-1$.  We can choose our decomposition to be the single ball $B_{r_b}(x_b)=B_{2}(p)$, which 
is $k+1$-symmetric.\\ 

Case 2: $k=s$.  We can choose $B_{r_a}(x_a)=B_2(p)$ with $\cN_a=B_{r_a}(x_a)\setminus \{y_c\}\times \mathbb{R}^s$ and $\cS^{k,0,\eta}=\{y_c\}\times \mathbb{R}^s\cap B_2(p)$. Then 
\begin{align}
B_1(p)\subseteq \cN_a\cup \cS^{k,0,\eta}\, ,
\end{align}
and $\cS^{k,0,\eta}$ is $k$-rectifiable. In this case, $\cN_a$ is a $(k,0,\eta)$-neck region. \\

Case 3: $k\ge s+1$.  For each $r>0$, let us consider a Vitali covering $\{B_{\epsilon_0 r}(x_{r,j}),j=1,\cdots, N_r\}$ 
of $ B_2(p)\cap B_{2r}(\{y_c\}\times \mathbb{R}^s)\setminus B_{r}(\{y_c\}\times \mathbb{R}^s)$, where $\epsilon_0\leq \epsilon_0(n,\Gamma)$ so that $B_{2\epsilon_0 r}(x_{r,j})$ is $n$-symmetric.  Then the cardinality $N_r\le C(n,\Gamma)r^{-s}$. Each $B_{2\epsilon_0 r}(x_{r,j})$ is $n$-symmetric and we will belong to the $b$-ball in the decomposition. Let us define $\cS^{k,0,\eta}=\tilde{{\cS}}^{k,0,\eta}=\{y_c\}\times \mathbb{R}^s$. Then we have 
\begin{align}\label{ex:decomposition}
B_1(p)\subseteq \bigcup_{1\ge r_b=2^{-b}>0}\bigcup_{i=1}^{N_{r_b}}B_{\epsilon_0 r_b}(x_{r_b,i}) \cup {\cS}^{k,0,\eta}\, .
\end{align}
We have $\cH^k(\tilde{\cS}^{k,0,\eta})=0$, and the $k$-content of $b$-balls satisfies 
\begin{align}
\sum_{1\ge r_b>0}\sum_{i=1}^{N_{r_b}}r_{b}^k\le \sum_{1\ge r_b>0}C(n,\Gamma)r_b^{k-s}\le C(n,\Gamma).
\end{align}
Hence \eqref{ex:decomposition} is the desired decomposition. $\square$

\end{example}
 
\begin{example}[The boundary of a simplex]
 \label{e:1}

Let $X^n=\partial \sigma^{n+1}$ denote the boundary of the standard 
 $(n+1)$-simplex in $\dR^{n+1}$ normalized 
 so that all edges have length $1$.
 Let $\Sigma^k$  denote the closed $k$-skeleton of $X^n$. By appropriately smoothing the sequence of boundaries
 $\partial B_{r_i}(\sigma^{n+1})$, of the tubular neighborhoods,  $B_{r_i}(\sigma^{n+1})$,
 and letting $r_i\to 0$, one see that $X^n$ is a limit space with $\Ric_{M^n_i}\geq 0$, indeed $sec_{M^n_i}\geq 0$.  Note that $S^k=\Sigma^k$ is   $k$-rectifiable and $\cH^k(S^k\cap B_1(p))< c(n)$, for all $0\le k\le n-2$. 
 
  For each $0\le k\le n-2$ and $0<\delta, \eta\le \eta(n)$, we will
 build a decomposition for $X^n$ as Theorem \ref{t:decomposition2}. 
The idea is similar with Case 3 of Example \ref{ex:cone_decomposition}. The decomposition consists of two parts,
corresponding to the $a$-balls and $b$-balls of \ref{t:decomposition2}, respectively.


(1) Neck regions: We will construct neck regions with center in $S^k\setminus S^{k-1}$. For each $0<r\le 1$,
consider a Vitali covering $\{B_{\delta^2 r}(x_{a,r}),x_{a,r}\in S^k\setminus S^{k-1}\}$ 
of the annuli $A_{r,2r}(S^{k-1})\cap B_{\delta^3r}(S^k)$.  One checks that 
$\cN_{a,r}=B_{\delta^2 r}(x_{a,r})\setminus S^k$ is a $(k,\delta,\eta)$-neck
 region for $\eta\le \eta(n)$. The neck regions $\{\cN_{a,r_i},r_i=2^{-i},i=1,\cdots \}$ 
are the desired neck regions of the decomposition. Moreover, by noting that 
$\cH^k(S^k)\le C(n)$, we obtain the $k$-content estimate
\begin{align}
\sum_{a}\sum_i r_{a,i}^k\le C(n,\delta).
\end{align}
\vskip2mm

(2) $(k+1)$-symmetric balls: Consider a Vitali covering 
$\{B_{\delta^4r}(x_{d,r}),x_{d,r}\in A_{r,2r}(S^{k-1})
\setminus B_{\delta^3r}(S^k)\}$ of $A_{r,2r}(S^{k-1})\setminus B_{\delta^3r}(S^k)$. 
The cardinality of this covering is less than $C(n,\delta) r^{-k+1}$. 
From the construction we have that $B_{\delta^4r}(x_{d,r})\cap S^k=\emptyset$, 
which implies that $B_{\delta^4 r}(x_{d,r})$ is $(k+1)$-splitting. For each $\eta>0$, 
by the almost volume cone and almost metric cone theorem, we have that for each 
$y\in B_{\delta^4 r}(x_{d,r})$ the ball $B_{\gamma \delta^4 r}(y)$ would be
 $(0,\eta^2)$-symmetric for some $\gamma(n,\delta,\eta)\le 1$. Therefore,
$B_{\gamma \delta^4 r}(y)$ is $(k+1,\eta/2)$-symmetric which implies that 
each $B_{\delta^4 r}(x_{d,r})$ can be covered by finite many
 $(k+1,\eta/2)$-symmetric balls.  Hence, we can choose at most $N_r=C(n,\delta,\eta)r^{-k+1}$
 $(k+1,\eta/2)$-symmetric balls $B_{\gamma\delta^4 r}$, whose union  covers $A_{r,2r}(S^{k-1})\setminus B_{\delta^3r}(S^k)$. 
By combining them all for $r=r_i=2^{-i}\le 1$, 
we get the desired $b$-balls in our decomposition such that the content estimate 
\begin{align}
\sum_{0<r_i=2^{-i}\le 1}\sum_{j=1}^{N_{r_i}} (\gamma \delta^4r_i)^k\le 
\sum_{0<r_i=2^{-i}\le 1} C(n,\eta,\delta)r_i\le C(n,\eta,\delta). 
\end{align}
\vskip2mm

Define $\tilde{\cS}^{k,\delta,\eta}:=S^{k-1}$, then 

\begin{align}
		&X^n\subseteq \bigcup_a \big(\cN_a\cap B_{r_a}\big) \cup \bigcup_b B_{r_b}(x_b) \cup \cS^{k,\delta,\eta}\, ,\\
		&\cS^{k,\delta,\eta}\subseteq \bigcup_a \big(\cC_{0,a}\cap B_{r_a}\big)\cup \tilde \cS^{k,\delta,\eta}\, ,
	\end{align}
with $\cC_{0,a}=B_{r_a}\cap S^k$. This completes the description
of the decomposition for $X^n=\partial \sigma^{n+1}$. $\square$

\end{example}

 \begin{remark}[Role of the $\sum_b r^k_b$ bound] 
\label{r:r_b}
In light of the fact that the $b$-balls are approximately $(k+1)$-symmetric,
the crucial role of
the a priori bound on $\sum_b r_b^k$ in the Neck Decomposition Theorem
\ref{t:decomposition2} might not be immediately obvious if one
thinks only of the application to 
 $\mathcal H^{k}(S^k_\epsilon\cap B_1(x))$. Recall, however that
 that our volume bounds for the quantative stratification pertain to tubes of
fixed radius $r$, while the function $r_x$ from which figures in the definition of
the $a$-balls, goes to zero as $x\to S^{k-1}$. This suggests that it should not
suffice to consider only $a$-balls in obtaining the applications to the volumes of tubes
around the quantitative strata from the theorems on neck, particularly, 
 the Neck Decomposition Theorem
\ref{t:decomposition2}.  This should be kept in mind when reading the details of 
the proofs which are given in the next subsection.
\end{remark}

\subsection{Proofs of the  stratification theorems assuming the neck theorems}

In this subsection we will prove the main stratification Theorems \ref{t:main_quant_strat_stationary}, \ref{t:main_eps_stationary} 
and the classical stratification Theorems \ref{t:main_stratification}, \ref{t:main_homeo}, \ref{t:main_bounded_finite_measure}, under the assumption that the Neck Structure and Decompositions of 
Theorems \ref{t:neck_region2}, \ref{t:decomposition2} hold.  We will outline the proof of the Neck Structure theorem in the next section.
\vskip2mm

The main result  concerns the $(\epsilon,r)$-stratification of Theorem \ref{t:main_quant_strat_stationary}. The other theorems follow fairly quickly from it and the Decomposition Theorem \ref{t:decomposition2}.


\begin{proof}(of Theorem \ref{t:main_quant_strat_stationary})
From (6), (7) of the Neck Decomposition Theorem it follows that 
$S_\epsilon$ is rectifiable.
Thus, it remains to prove estimate \eqref{e:n-k} in Theorem \ref{t:main_quant_strat_stationary} which
states:
\begin{align}
\label{e:stratification_M_i}
\Vol\Big(B_r\Big(S^k_{\epsilon,r}\cap B_1(p)\Big)\Big)\leq C_\epsilon r^{n-k}\, .
\end{align}
\vskip2mm

By the Volume Convergence Theorem of  \cite{Co1,Cheeger01}
and the definition of the sets $S^k_{\epsilon,r}$, to obtain the
estimate in \eqref{e:stratification_M_i} for the case of limit spaces,
it easily suffices to prove   \eqref{e:stratification_M_i}
for the case of manifolds $M^n$.
We will now give the proof in that case.
\vskip2mm

Given $\epsilon>0$ let $\eta\le \eta(n,\rv,\epsilon)$ and 
$\delta\le \delta(n,\rv,\epsilon,\eta)$ be chosen sufficiently small, to be fixed later. Recall that for the case of manifolds, the Decomposition Theorem \ref{t:decomposition2} states:
\vskip-2mm

\begin{align}
\label{e:decomp_stratification}
	B_1(p)\subset \bigcup_a \Big(\cN_a\cap B_{r_a}(x_a)\Big)\cup\bigcup_b B_{r_b}(x_b)\, ,
\end{align}
where $\cN_a\subset B_{2r_a}(x_a)$ is a $(k,\delta,\eta)$-neck and
 $B_{2r_b}(x_b)$ is $(k+1,2\eta)$-symmetric. 
In addition, Theorem  \ref{t:decomposition2} provides the $k$-content estimate:
\vskip-4mm

\begin{align}
 \label{e:content}
 \sum_a r_a^k+\sum_b r_b^k\le C(n,\rv,\delta,\eta)\, .
\end{align}
\vskip1mm

The proof of Theorem \ref{t:main_quant_strat_stationary} 
amounts to combining  the estimates of Lemma, \ref{l:1} and Lemma  \ref{l:3}
below.
The proof of Lemma \ref{l:1} relies on the Ahlfors regularity of the packing measures $\mu_a$ on the balls $B_{r_a}(x_a)$, see \eqref{e:ar} of
Theorem \ref{t:neck_region2}. 

\begin{lemma}
\label{l:1}
Let $\eta\le \eta(n,\rv,\epsilon)$, $\delta\le \delta(n,\rv,\epsilon)$ and $\chi\le \chi(\epsilon,n,\rv)$.  If the neck region $\cN_a$ satisfies $r_a\geq \chi^{-1} r$, then 
\begin{align}
\label{e:1}
\Vol\Big(B_{r}\left(S_{\epsilon,r}^k\cap \cN_a\right)\Big)\le C(n,\rv,\chi)r_a^k\cdot r^{n-k}
\, .
\end{align}
\begin{align}
\label{e:volume_Na_r_ar1}
	\Vol\left(B_{r}\left(S_{\epsilon,r}^k
	\cap \bigcup_{r_a\ge \chi^{-1}r}
	\cN_a\right)\right) 
	\leq C(n,\rv,\delta,\eta,\chi) r^{n-k}\, .
\end{align}
\end{lemma}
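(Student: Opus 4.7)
The bound \eqref{e:1} should come from a two-step procedure: first, localize the points of $S^k_{\epsilon,r}\cap \cN_a$ to a thin $Cr$-neighborhood of the center set $\cC$; second, use the Ahlfors regularity of the packing measure $\mu_{a}$ (Theorem \ref{t:neck_region2}) to cover $\cC\cap B_{2r_a}(x_a)$ by at most $C(n)(r_a/r)^{k}$ balls of radius $r$, then apply Bishop--Gromov. The estimate \eqref{e:volume_Na_r_ar1} then follows by summing \eqref{e:1} over the neck regions using the $k$-content bound from Theorem \ref{t:decomposition2}.

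\textbf{Step 1 (localization to the centers).} I claim there exists $C_{0}=C_{0}(n,\rv,\epsilon)$ such that
\[
S^{k}_{\epsilon,r}\cap \cN_{a}\;\subseteq\; B_{C_{0}r}(\cC).
\]
Assume for contradiction that $y\in \cN_{a}$ with $d:=d(y,\cC)\ge C_{0}r$, and let $x\in \cC$ realize the distance. Pick the scale $t=10d$; since $r_{a}\ge \chi^{-1}r$ and $\chi\le \chi(\epsilon,n,\rv)$ is small, we have $t\le \delta^{-1}$, so (n3) applies and $B_{t}(x)$ is $(k,\delta^{2})$-symmetric with axis $\cL_{x,t}$. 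By (n4), $\cL_{x,t}\subseteq B_{\tau_{n}t}(\cC)$, so $d(y,\cL_{x,t})\ge (1-10\tau_{n})d\ge d/2$. Pulling back through the $\delta^{2}t$-GH approximation to $\mathbb{R}^{k}\times C(Z)$, the image $y^{*}$ of $y$ sits at distance $\ge d/3$ from $\mathbb{R}^{k}\times\{0^{*}\}$, hence is a smooth radial point of the cone; small balls $B_{s}(y^{*})$ with $s\ll d$ split off an extra $\mathbb{R}$-factor (the radial direction of $C(Z)$) and are therefore $(k+1,\epsilon/2)$-symmetric. By standard GH-compactness, for $\delta\le \delta(n,\rv,\epsilon)$ this transfers to $B_{s}(y)$ being $(k+1,\epsilon)$-symmetric at some scale $s\in[\beta(\epsilon)d,\,d/100]$. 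Choosing $C_{0}:=\beta(\epsilon)^{-1}$ forces $s\ge r$, contradicting $y\in S^{k}_{\epsilon,r}$.

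\textbf{Step 2 (packing via Ahlfors regularity and Bishop--Gromov).} Select a maximal collection $\{y_{i}\}_{i=1}^{N}\subseteq \cC\cap B_{2r_{a}}(x_{a})$ with $\{B_{r/2}(y_{i})\}$ pairwise disjoint. By the Ahlfors regularity \eqref{e:ar} of Theorem \ref{t:neck_region2},
\[
N\cdot A(n)^{-1}(r/2)^{k}\;\le\;\sum_{i}\mu_{a}(B_{r/2}(y_{i}))\;\le\;\mu_{a}(B_{3r_{a}}(x_{a}))\;\le\;A(n)(3r_{a})^{k},
\]
so $N\le C(n)(r_{a}/r)^{k}$. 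By maximality the balls $\{B_{r}(y_{i})\}$ cover $\cC\cap B_{2r_{a}}(x_{a})$, hence Step 1 gives
\[
B_{r}(S^{k}_{\epsilon,r}\cap \cN_{a})\;\subseteq\;\bigcup_{i=1}^{N}B_{(C_{0}+2)r}(y_{i}).
\]
Bishop--Gromov under $\Ric\ge-(n-1)$ yields $\Vol(B_{(C_{0}+2)r}(y_{i}))\le c(n,\epsilon)r^{n}$, so
\[
\Vol(B_{r}(S^{k}_{\epsilon,r}\cap \cN_{a}))\;\le\; N\cdot c(n,\epsilon)r^{n}\;\le\; C(n,\rv,\chi)\,r_{a}^{k}\,r^{n-k},
\]
proving \eqref{e:1}. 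Summing over $a$ with $r_{a}\ge \chi^{-1}r$ and invoking the content bound $\sum_{a}r_{a}^{k}\le C(n,\rv,\delta,\eta)$ from Theorem \ref{t:decomposition2}(3) gives \eqref{e:volume_Na_r_ar1}.

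\textbf{Main obstacle.} The delicate step is Step 1: quantitatively converting $(k,\delta^{2})$-symmetry at the center $x$ at the larger scale $t\sim d$ into $(k+1,\epsilon)$-symmetry at $y$ at a smaller scale $s\in [r,d/100]$. One must choose $\delta$ small depending on $\epsilon$ (and $\chi$ correspondingly small) so that the cone axis $\cL_{x,t}$ genuinely approximates $\cC$ within $B_{t}(x)$ and so that the additional splitting at the off-axis point $y^{*}$ transfers through the GH approximation; the argument is a contradiction/compactness argument built on the conical structure produced by (n2)--(n4) together with the standard fact that metric cones split off a Euclidean factor away from their vertex. Everything else is routine covering and Bishop--Gromov.
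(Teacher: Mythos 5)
Your overall architecture --- localize $S^k_{\epsilon,r}\cap\cN_a$ to an $O(r)$-neighborhood of the center set, count $r$-balls covering $\cC_a$ via the Ahlfors regularity of $\mu_a$, apply Bishop--Gromov, and sum over $a$ using the $k$-content bound --- is exactly the paper's, and your Step 2 together with the derivation of \eqref{e:volume_Na_r_ar1} is essentially identical to the paper's argument.

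The gap is in Step 1, in the inference ``small balls $B_s(y^*)$ split off an extra $\mathbb{R}$-factor \ldots and are therefore $(k+1,\epsilon/2)$-symmetric.'' By Definition \ref{d:epssymm}, $(k+1,\epsilon)$-symmetry requires $B_s(y)$ to be $\epsilon s$-close to a ball centered at a \emph{vertex} of $\mathbb{R}^{k+1}\times C(W)$; it is not enough that the ball splits off $\mathbb{R}^{k+1}$. At an off-axis point $y^*$ of an exact cone $\mathbb{R}^k\times C(Z)$, the ball $B_{\beta d}(y^*)$ is indeed close to $\mathbb{R}^{k+1}\times W$, but $W$ reflects the geometry of the cross-section $Z$ at scale $\beta$ near the corresponding point, and this need not be conical at any prescribed scale (e.g.\ $Z$ may contain a smoothed cone point as in Example \ref{example:cone_neck}); no $\beta(\epsilon)$ chosen uniformly over all such cones makes the conclusion true at the fixed scale $\beta d$, nor at a predetermined finite list of scales. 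What is missing is precisely the $(0,\cdot)$-symmetry of $B_s(y)$ at \emph{some} scale $s>2r$, and this is what the paper supplies: condition (n2) together with monotonicity of the volume ratio gives a bounded total volume drop at $y$ between scales $2r$ and $d(y,\cC_a)\ge\chi^{-1}r$, so once $\chi\le\chi(n,\rv,\epsilon)$ a pigeonhole over the available dyadic scales produces an $s>2r$ with small volume drop, whence Theorem \ref{t:almostmetriccone} makes $B_s(y)$ $(0,\epsilon^2)$-symmetric; cone-splitting (Theorem \ref{t:cone_splitting}), using the $(k,\delta^2)$-symmetry of the neck and the fact that $y$ is far from $\cL$, then upgrades this to $(k+1,\epsilon/2)$-symmetry. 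Your ``main obstacle'' paragraph gestures at (n2) and a compactness argument, but the step as written neither uses the volume pinching nor establishes the vertex condition, and it is exactly this step that forces $\chi$ (your $C_0^{-1}$) to depend on $\epsilon$.
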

\begin{proof}
First we  will prove \eqref{e:1}.  Let $\cC_a\subset B_{2r_a}(x_a)$ be the associated center points of the neck region $\cN_a$ and let $\mu_a$ be the associated packing measure. \\ 

\noindent
\textbf{Claim:} If $y\in S_{\epsilon,r}^k\cap \cN_a$ then  $d(y,\cC_a)\le \chi^{-1} r\, .$\\

Let us prove the claim. We will show that if $y\in \cN_a$ with $d(y,\cC_a)\ge \chi^{-1}r$, then there exists $B_{s}(y)$ with $s\ge 2r$ such that $B_s(y)$ is $(k+1,\epsilon/2)$-symmetric, which implies that $y\notin S_{\epsilon,r}^k$. Hence it will prove the claim.

For $y\in \cN_a$ with $d(y,\cC_a)\ge \chi^{-1}r$, by the almost volume cone implies  almost metric cone Theorem \ref{t:almostmetriccone}, if $\chi\le \chi(n,\epsilon,\rv)$ we have $B_{s}(y)$ is $(0,\epsilon^2)$-symmetric for some $s>2r$. On the other hand, by the almost splitting Theorem \ref{t:splitting_function} we have for $\delta\le \delta(n,\rv,\epsilon)$ that $B_{2s}(y)$ is $\epsilon^2s$-close to a product space $\mathbb{R}^{k+1}\times Z$. These are good enough to imply that $B_s(y)$ is $(k+1,\epsilon/2)$-symmetric. Hence $y\notin S_{\epsilon,r}^k$. The proof of the claim is completed. 
\vskip2mm

 Now choose a maximal disjoint collection of balls $\{B_{r}(x_j),~~x_j\in \cC_a, j=1,\cdots, K_a\}$ with centers in $\cC_a$.  By the Ahlfors regularity for $\mu_a$,
 \eqref{e:ar} of Theorem \ref{t:neck_region2}, we have 
\begin{align}
	K_aC(n,\chi)r^k\le \sum_{j=1}^{K_a}\mu_a(B_{2\chi^{-1} r}(x_j))\le C(n,\chi)\sum_{j=1}^{K_a}\mu_a(B_r(x_j))\le C(n,\chi)\mu_a(B_{2r_a}(x_a))\le C(n,\chi)r_a^k\, .
\end{align}
Thus, $K_a\le C(n,\chi)r^{-k}r_a^k$, which clearly implies \eqref{e:1} by using the claim.
\vskip2mm

Relation  \eqref{e:volume_Na_r_ar1} follows by
summing \eqref{e:1}  over all neck regions 
and using \eqref{e:content}. Namely,
\begin{align}
\label{e:volume_Na_r_ar2}
	\Vol\left(B_{r}\left(S_{\epsilon,r}^k\cap \bigcup_a
	\cN_a\right)\right) \leq \sum_a
	\Vol\Big(B_{r}\left(S_{\epsilon,r}^k\cap \cN_a\right)\Big) \leq C(n,\rv,\chi)\sum r_a^kr^{n-k}\leq C(n,\rv,\delta,\eta,\chi) r^{n-k}\, .
\end{align}
This completes the proof of of Lemma \ref{l:1}.
\end{proof}
\vskip4mm

\begin{lemma}
\label{l:2}
 Let $\gamma\le \gamma(n,\rv,\epsilon)$, $\eta\le \eta(n,\rv,\epsilon)$.
 If the $b$-ball $B_{r_b}(x_b)$ satisfies $r\le \gamma \cdot r_b$, then 
 \begin{align}
 \label{e:empty}
 S_{\epsilon,r}^k\cap B_{3r_b/2}(x_b)=\emptyset\, .
 \end{align}
 \end{lemma}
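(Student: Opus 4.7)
The plan is to show that for any $y\in B_{3r_b/2}(x_b)$ we can produce a single scale $s$ with $r\le s<1$ at which $B_s(y)$ is $(k+1,\epsilon)$-symmetric; by definition of $S^k_{\epsilon,r}$ this immediately gives $y\notin S^k_{\epsilon,r}$, proving \eqref{e:empty}. The argument will run in parallel to the claim used in the proof of Lemma \ref{l:1}: produce an almost-splitting off an $\R^{k+1}$ factor at a mesoscopic scale, produce an almost-cone structure at a small scale via the pigeonhole principle on Bishop--Gromov defects, and combine the two.

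First I would transfer the $(k+1,2\eta)$-symmetry of $B_{2r_b}(x_b)$ to $y$. Since $y\in B_{3r_b/2}(x_b)$, a standard GH-change-of-basepoint argument shows that $B_{r_b/8}(y)$ is $C(n)\eta r_b$-close to a ball of the same radius in $\R^{k+1}\times C(Z)$ based at a point whose $\R^{k+1}$-coordinate is bounded. In particular, $B_{r_b/8}(y)$ is $C(n)\eta$-close to a product $\R^{k+1}\times Z'$ for some metric space $Z'$. Then, taking $\eta\le\eta(n,\rv,\epsilon)$ small enough, the Almost Splitting Theorem (Theorem \ref{t:splitting_function}, as invoked in the claim in Lemma \ref{l:1}) yields a harmonic $(k+1)$-splitting map on $B_{r_b/16}(y)$ with error much smaller than $\epsilon$; equivalently, $B_s(y)$ is $(k+1,\epsilon^2/2)$-symmetric \emph{with respect to some $(k+1)$-plane $\cL_y$} for all $s\le r_b/16$ in a window to be specified.

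Next I would use the noncollapsing and Bishop--Gromov monotonicity to find a small scale with small cone-defect. By \eqref{e:sum1} we have $\sum_i|\cV_{\delta 2^{-i}}(y)-\cV_{\delta^{-1}2^{-(i+1)}}(y)|\le C(n,\rv,\delta)$. Choosing $\gamma\le\gamma(n,\rv,\epsilon,\delta)$ sufficiently small, the number $N=\log_2(\gamma^{-1})$ of dyadic scales between $r$ and $\gamma r_b$ can be made arbitrarily large compared to $C(n,\rv,\delta)\delta^{-2}$, so by pigeonhole there exists $s$ with $r\le s\le \gamma r_b$ such that $|\cV_{\delta^{-1}s}(y)-\cV_{\delta s}(y)|<\delta^2$. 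Applying the Almost Volume Cone Implies Almost Metric Cone Theorem (Theorem \ref{t:almostmetriccone}) with $\delta\le\delta(n,\rv,\epsilon)$ small, $B_s(y)$ is $(0,\epsilon^2)$-symmetric.

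Finally I would combine the two: $B_s(y)$ is simultaneously $\epsilon^2 s$-close to a metric cone $C(W)$ based at $y$, and (after rescaling to scale $s\le r_b/16$) $\epsilon^2$-close to a product $\R^{k+1}\times Z'$. A standard quantitative cone-splitting argument (of the same kind used in Lemma \ref{l:1}'s claim) then shows that for $\epsilon\le\epsilon(n)$ the ball $B_s(y)$ must be $(k+1,\epsilon)$-symmetric: being a cone with a vertex at $y$ and admitting an $\R^{k+1}$ splitting with $\cL_y$ passing within $\epsilon^2 s$ of $y$ forces the cone vertex and the $\R^{k+1}$-axis to align up to error $\epsilon$. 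Since $r\le s<1$, this contradicts $y\in S^k_{\epsilon,r}$, proving the lemma.

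The main obstacle I expect is the last bookkeeping step: the splitting comes with respect to a subspace $\cL_y$ whose position relative to the cone vertex produced in the cone-splitting must be controlled. This is purely a matter of chasing Gromov--Hausdorff errors and has already been handled implicitly in Lemma \ref{l:1}; here it amounts to choosing $\eta$ and $\gamma$ small enough (in that order) in terms of $n$, $\rv$, and $\epsilon$, and then $\delta$ small enough in terms of all three.
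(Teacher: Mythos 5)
Your overall route is the same as the paper's: transfer the $(k+1,2\eta)$-symmetry of $B_{2r_b}(x_b)$ to a ball around $y$, find a scale at which $B_s(y)$ is $(0,\epsilon^2)$-symmetric via Bishop--Gromov pinching and the almost-volume-cone theorem, and combine the two by quantitative cone-splitting. However, your pigeonhole step is flawed as written. You claim that "the number $N=\log_2(\gamma^{-1})$ of dyadic scales between $r$ and $\gamma r_b$ can be made arbitrarily large" by shrinking $\gamma$. The number of dyadic scales between $r$ and $\gamma r_b$ is $\log_2(\gamma r_b/r)$, not $\log_2(\gamma^{-1})$, and the hypothesis $r\le\gamma r_b$ gives no lower bound on this quantity: if $r=\gamma r_b$ the range contains no scales at all and the pigeonhole produces nothing. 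A second, related problem is that even if you did find a good cone scale $s$ with $r\le s\ll r_b$, the $(k+1)$-splitting you produced lives at scale $\sim r_b/16$; Gromov--Hausdorff closeness to a product at that scale carries no information at an arbitrarily much smaller scale $s$, and harmonic splitting maps can degenerate on small sub-balls, so the cone-splitting combination requires the two scales to be comparable up to a factor depending only on $n,\rv,\epsilon$.

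Both issues are repaired simultaneously by running the pigeonhole over the range $[\gamma r_b, r_b/16]$ instead: the number of dyadic scales there is $\approx\log_2\gamma^{-1}$, which you control by choosing $\gamma=\gamma(n,\rv,\epsilon)$ small relative to the number $N_\epsilon$ of bad scales coming from \eqref{e:sum1}, and the resulting good scale $s\ge\gamma r_b$ is comparable to $r_b$, so the cone-splitting with the $(k+1,\eta')$-splitting at scale $r_b/16$ is legitimate. The hypothesis $r\le\gamma r_b$ is then used exactly once, at the end, to conclude $s\ge\gamma r_b\ge r$ so that $s$ is an admissible scale for excluding $y$ from $S^k_{\epsilon,r}$. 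This is precisely what the paper's proof does (it locates the $(0,\epsilon^2)$-symmetric ball at scale $\gamma r_b$ itself).
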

 \begin{proof}
It suffices to show that for $y\in B_{3r_b/2}(x_b)$, the ball
 $B_s(y)$ is $(k+1,\epsilon/2)$-symmetric for some $s\ge \gamma r_b$. 
\vskip2mm

To see this fix $\eta'=\eta'(\eta,\rv,\epsilon)>0$ to be chosen below.  If $\eta\le \eta(\eta',n,\rv)$, then since $B_{2r_b}(x_b)$ is $(k+1,2\eta)$-symmetric, it follows that
 $B_{r_b/4}(y)$ is $(k+1,\eta')$-splitting. Also, by the almost metric cone and
\eqref{e:vc}, \eqref{e:sum1}, \eqref{e:rtozero}, it follows that 
for some $\gamma=\gamma(n,\rv,\epsilon)$,
 the ball $B_{\gamma r_b}(y)$ is 
 $(0,\epsilon^2)$-symmetric. For  $\eta'(n,\rv,\epsilon)$ sufficiently small, this implies that $B_{\gamma r_b}(y)$ is 
 $(k+1,\epsilon/2)$-symmetric. This completes the proof
 of \eqref{e:empty}, and thus, of Lemma \ref{l:2}.
 \end{proof}
 \vskip1mm
 
 \begin{lemma}
 \label{l:3}
 Let
 $\Omega:= \{x_1,\cdots, x_N\}$ denote
 a minimal $r/4$-dense subset 
 of $S_{\epsilon,r}^k\setminus \cup_{r_a\ge \chi^{-1}r} B_{r}\left(S_{\epsilon,r}^k\cap \cN_a\right)$ for $\chi$ the constant in Lemma \ref{l:1}.  Then for $\gamma$  the constant in Lemma \ref{l:2}, the following hold:
 \begin{enumerate}

 \item[(1)] Any ball  $B_{r/4}(x_j)$ satisfies:
\begin{align}
\label{e:pakcing_Br_j}
	\sum_{B_{r_a}\subset B_{4\chi^{-1} r}(x_j)}r_a^k+\sum_{B_{r_b}\subset B_{4\gamma^{-1}r}(x_j)}r_b^k\ge C(n,\rv,\gamma,\chi)r^k\, .
\end{align}
\item[(2)] The cardinality of $\Omega$ satisfies $N\le r^{-k}C(n,\rv,\delta,\eta,\gamma,\chi)$.
 \vskip2mm
\item[(3)] The measure estimate: $
\Vol\Big(B_r\left(S_{\epsilon,r}^k\setminus \cup_{r_a\ge 
\chi^{-1}r} B_{r}\left(S_{\epsilon,r}^k\cap \cN_a\right)\right)\Big)\le  C(n,\rv,\delta,\eta,\gamma,
	\chi)r^{n-k}\, .
$

\end{enumerate}
\end{lemma}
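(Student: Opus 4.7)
The plan is to establish (1), (2), (3) in turn, using the decomposition \eqref{e:decomp_stratification} and the two preceding lemmas as input.

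For (1), I would fix $x_j \in \Omega$ and cover $B_{r/4}(x_j)$ by the $a$-balls and $b$-balls from \eqref{e:decomp_stratification}. By the hypothesis that $x_j$ lies outside $\bigcup_{r_a \geq \chi^{-1}r} B_r(S^k_{\epsilon,r} \cap \cN_a)$, any neck ball $\cN_a \cap B_{r_a}(x_a)$ meeting $B_{r/4}(x_j)$ must satisfy $r_a < \chi^{-1}r$; the triangle inequality then forces $B_{r_a}(x_a) \subset B_{4\chi^{-1}r}(x_j)$. By Lemma \ref{l:2}, any $b$-ball meeting $B_{r/4}(x_j)$ must have $r_b < \gamma^{-1}r$ (otherwise $x_j \notin S^k_{\epsilon,r}$), and so $B_{r_b}(x_b) \subset B_{4\gamma^{-1}r}(x_j)$. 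Noncollapsing gives $\Vol(B_{r/4}(x_j)) \geq c(n,\rv) r^n$, while Bishop--Gromov yields $\Vol(B_{r_a}(x_a)) \leq C(n) r_a^n$ and likewise for $b$-balls. Writing $r_a^n \leq (\chi^{-1}r)^{n-k} r_a^k$ and $r_b^n \leq (\gamma^{-1}r)^{n-k} r_b^k$, the inequality
\[
c(n,\rv) r^n \leq \sum_{B_{r_a} \subset B_{4\chi^{-1}r}(x_j)} C(n) r_a^n \;+\; \sum_{B_{r_b} \subset B_{4\gamma^{-1}r}(x_j)} C(n) r_b^n
\]
rearranges to \eqref{e:pakcing_Br_j}.

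For (2), I would, if necessary, replace $\Omega$ by a maximal $r/8$-separated subset (still $r/4$-dense) so that the balls $\{B_{r/16}(x_j)\}$ are pairwise disjoint. Summing \eqref{e:pakcing_Br_j} over $j$ gives
\[
N \cdot C(n,\rv,\gamma,\chi)\, r^k \leq \sum_j \Big(\sum_{B_{r_a} \subset B_{4\chi^{-1}r}(x_j)} r_a^k + \sum_{B_{r_b} \subset B_{4\gamma^{-1}r}(x_j)} r_b^k\Big).
\]
For a fixed $a$-ball, the condition $B_{r_a}(x_a) \subset B_{4\chi^{-1}r}(x_j)$ forces $d(x_a, x_j) \leq 4\chi^{-1}r$; since the disjoint balls $B_{r/16}(x_j)$ with such $x_j$ all lie in $B_{5\chi^{-1}r}(x_a)$, Bishop--Gromov bounds the multiplicity by $C(n,\chi)$, and similarly the $b$-ball multiplicity is $\leq C(n,\gamma)$. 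Combined with the global content bound \eqref{e:content}, this yields $N \leq C(n,\rv,\delta,\eta,\gamma,\chi)\, r^{-k}$.

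For (3), any $y$ in the set $S^k_{\epsilon,r} \setminus \bigcup_{r_a \geq \chi^{-1}r} B_r(S^k_{\epsilon,r} \cap \cN_a)$ satisfies $d(y,x_j) \leq r/4$ for some $j$ by density of $\Omega$, so the $r$-tube about this set is contained in $\bigcup_j B_{5r/4}(x_j)$. Bishop--Gromov gives $\Vol(B_{5r/4}(x_j)) \leq C(n) r^n$, and combining with (2) produces the claimed $C\, r^{n-k}$ bound. The main obstacle is the bookkeeping in (1): carefully translating the geometric hypotheses on $x_j$ and Lemma \ref{l:2} into uniform radius upper bounds $r_a < \chi^{-1}r$ and $r_b < \gamma^{-1}r$ for \emph{every} ball in the cover. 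Once that is in place, the rest is a standard volume-packing argument.
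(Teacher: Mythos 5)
Your proof follows essentially the same route as the paper's: part (1) by covering $B_{r/4}(x_j)$ with the decomposition balls, using the hypothesis on $x_j$ and Lemma \ref{l:2} to force $r_a<\chi^{-1}r$ and $r_b\le\gamma^{-1}r$ so the covering balls localize in $B_{4\chi^{-1}r}(x_j)$ and $B_{4\gamma^{-1}r}(x_j)$, then comparing volumes via noncollapsing and Bishop--Gromov with $r_a^n\le(\chi^{-1}r)^{n-k}r_a^k$; part (2) by disjointness of small balls about the $x_j$, bounded multiplicity, and the global content bound \eqref{e:content}; part (3) by covering the $r$-tube with $N$ balls of radius comparable to $r$. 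The argument is correct modulo the same standard technicalities present in the paper's own proof, so no further comment is needed.
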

\begin{proof}
First we will prove (1).
\vskip2mm

 Since $x_j \in S_{\epsilon,r}^k\setminus \cup_{r_a\ge \chi^{-1}r} B_{r}\left(S_{\epsilon,r}^k\cap \cN_a\right)$, we have $B_{r}(x_j)\cap \cN_a=\emptyset$ for any $r_a\ge \chi^{-1}r$.  In addition, for any $r_a<r\chi^{-1}$ if $B_{r/4}(x_j)\cap \cN_a\ne \emptyset$ then we have $B_{r_a}(x_a)\subset B_{4\chi^{-1}r}(x_j)$. If $B_{r/4}(x_j)\cap B_{r_b}\ne\emptyset$, by Lemma \ref{l:2} we have $r_b\le \gamma^{-1}r$ which implies $B_{r_b}(x_b)\subset B_{4\gamma^{-1} r}(x_j)$. Therefore, by   \eqref{e:decomp_stratification} of the Decomposition Theorem, we have 
\begin{align}
	B_{r/4}(x_j)\subset \Big(\bigcup_{B_{r_a}\subset B_{4\chi^{-1} r}(x_j)} B_{r_a}(x_a)\Big)\cup 
	\Big(\bigcup_{B_{r_b}\subset B_{4\gamma^{-1} r}(x_j)} B_{r_b}(x_b)\Big)\, .
\end{align}
\vskip1mm

\noindent
Thus, 
\begin{align}
\label{}
	C(n,\rv)r^n&\le \Vol(B_{r/4}(x_j))\le \sum_{B_{r_a}\subset B_{4\chi^{-1}r}(x_j)} \Vol(B_{r_a}(x_a))\,\, +\,\, \sum_{B_{r_b}\subset B_{4\gamma^{-1}r}(x_j)} \Vol(B_{r_b}(x_b))\\ \nonumber
	&\le C(n,\rv)\left(\sum_{B_{r_a}\subset B_{4\chi^{-1}r}(x_j)} r_a^n+\sum_{B_{r_b}\subset B_{4\gamma^{-1}r}(x_j)} r_b^n\right)\notag\\
	&{}\notag\\
	&\le C(n,\rv,\gamma,\chi)r^{n-k}\left(\sum_{B_{r_a}\subset B_{4\chi^{-1}r}(x_j)} r_a^k+\sum_{B_{r_b}\subset B_{4\gamma^{-1}r}(x_j)} r_b^k\right)\, .
\end{align} 
\vskip3mm

\noindent
This will imply \eqref{e:pakcing_Br_j} i.e. (1).  Furthermore, from \eqref{e:content} and the fact that the balls
$B_{r/10}(x_j)$ are disjoint, we have 
\begin{align}
N C(n,\rv) r^n &\le \sum_{j=1}^N\Vol(B_{r/4}(x_j))\le C(n,\rv,\gamma,\chi) r^{n-k}\sum_{j=1}^N\left(\sum_{B_{r_a}\subset B_{4\chi^{-1}r}(x_j)} r_a^k+\sum_{B_{r_b}\subset B_{4\gamma^{-1}r}(x_j)} r_b^k\right)\,\\
&\le C(n,\rv,\gamma,\chi) r^{n-k}\left(\sum_a r_a^k+\sum_b r_b^k\right)\le C(n,\rv,\delta,\gamma,\chi)r^{n-k}\, ,
\end{align}
which implies (2).

\vskip2mm
For (3),  let us consider the covering $\{B_{2r}(x_j),j=1,\cdots, N\}$ of 
$$
S_{\epsilon,r}^k\setminus \bigcup_{r_a\ge \chi^{-1}r} B_{r}\left(S_{\epsilon,r}^k\cap \cN_a\right)\, .
$$
By the definition of $\Omega$ this is  also a covering of
 $$
 B_r\left(S_{\epsilon,r}^k\setminus \bigcup_{r_a\ge r} B_{r}\left(S_{\epsilon,r}^k\cap \cN_a\right)\right)\, .
$$
Thus, we have 
\begin{align}\label{e:volume_Na_raa}
	\Vol\Big(B_r\left(S_{\epsilon,r}^k\setminus \cup_{r_a\ge \chi^{-1}r} B_{r}\left(S_{\epsilon,r}^k\cap \cN_a\right)\right)\Big)\le \sum_{j=1}^N \Vol\Big(B_{2r}(x_j)\Big)\le C(n,\rv)Nr^n\le C(n,\rv,\delta,\eta,\gamma,
	\chi)r^{n-k}\, .
\end{align}
This completes the proof 
Lemma \ref{l:3}.
\end{proof}
\vskip2mm

Now we  can complete the proof of Theorem 
\ref{t:main_quant_strat_stationary} as follows.
Fix $\gamma=\gamma(n,\rv,\epsilon),\eta=\eta(n,\rv,\epsilon)$ and $\delta=\delta(n,\rv,\epsilon,\eta), \chi=\chi(n,\rv,\epsilon)$ as in the previous Lemmas. 
Combining the estimates in \eqref{e:volume_Na_r_ar1} and \eqref{e:volume_Na_raa}
gives volume estimate \eqref{e:stratification_M_i},
which completes the proof of Theorem \ref{t:main_quant_strat_stationary}.
\end{proof}
\vskip4mm


\begin{proof}(Proof of $\epsilon$-Stratification Theorem)
Since $S_\epsilon^k\subset S_{\epsilon,r}^k$, the estimate for $S_\epsilon^k$ follows directly from Theorem \ref{t:main_quant_strat_stationary}. On the other hand, by the Decomposition Theorem \ref{t:decomposition2}, for $\eta\le \eta(n,\rv,\epsilon)$ and $\delta\le \delta(n,\rv,\epsilon,\eta)$, we have  $S_\epsilon^k\subset \cS^{k,\delta,\eta}$, where by
Theorem \ref{t:decomposition2} the set $\cS^{k,\delta,\eta}$
is $k$-rectifiable. 
\vskip2mm

For $\cH^k$-a.e. $x\in S_\epsilon^k$ let us show that every tangent cone at $x$ is $k$-symmetric. In fact we will show that for any $\delta$ there exists a subset $\tilde{S}_\delta\subset S_\epsilon^k$ with $\cH^k(\tilde{S}_\delta)=0$ such that every tangent cone of $x\in S_\epsilon^k\setminus \tilde{S}_\delta$ is $(k,\delta)$-symmetric. Indeed, we can choose $\tilde{S}_\delta=\tilde{\cS}^{k,\delta,\eta}$ as in Theorem \ref{t:decomposition2}, which satisfies the desired estimate by definition due to the neck structure. Now we consider 
$\tilde{S}=\cup_{i=1}^{\infty} \tilde{S}_{2^{-i}}$,
where $\cH^k(\tilde{S})=0$. For any $x\in S_\epsilon^k\setminus \tilde{S}$ we have that every tangent cone of $x$ is $(k,\delta)$-symmetric for any $\delta$, which in particular implies that every tangent cone of $x$ is $k$-symmetric. This completes the proof of Theorem \ref{t:main_eps_stationary}.
\end{proof}
\vskip4mm

\begin{proof}(of Theorem \ref{t:main_stratification})
Theorem \ref{t:main_stratification} follows directly from Theorem \ref{t:main_eps_stationary} and the fact that $S^k(X)=\cup_{j\ge 1}S^k_{2^{-j}}(X)$, which is a countable union of rectifiable sets.
\end{proof}
\vskip4mm

\begin{proof} (of Theorem \ref{t:main_homeo})
Let us choose $S_\epsilon=S_\epsilon^{n-2}$. Then for any $x\in X^n\setminus S_\epsilon$, we have for some $r_x>0$ that $B_{2r_x}(x)$ is $(n-1,\epsilon)$-symmetric and hence $B_{r_x}(x)$ is $(n,\epsilon')$-symmetric for $\epsilon\le \epsilon(\epsilon',n,\rv)$. By Reifenberg Theorem \ref{t:reifenbergcanonical} (see also \cite{ChCoI}) that $B_{r_x/2}(x)$ is bi-H\"older to $B_{r_2/2}(0^n)\subset\mathbb{R}^n$ for $\epsilon'$ small. This is enough to conclude the theorem.
\end{proof}
\vskip4mm

\begin{proof} (of  Theorem \ref{t:tangentcone_uniqueness})
As was shown in \cite{ChCoI}, $S=S^{n-2}$.  From Theorem \ref{t:main_stratification} we now know that for $\cH^{n-2}$-a.e. $x\in S^{n-2}$ every tangent cone is $(n-2)$-symmetric. For such $x$ any tangent cone is isometric to $\mathbb{R}^{n-2}\times C(S_\beta^1)$, where $S^1_\beta$ denotes the circle of length $\beta<2\pi$.  By Theorem \ref{t:cross_tangent_cone},  $\beta$ is determined by the limiting volume ratio, $\lim_{r\to 0}\cV_r(x)$.  This suffices to complete the proof.
\end{proof} 
\vskip4mm

\begin{proof}(of Theorem \ref{t:main_bounded_finite_measure})
The theorem follows from the $\epsilon$-regularity theorem,
Theorem  \ref{t:eps_reg} and the stratification of Theorem \ref{t:main_eps_stationary}.
\vskip2mm	
	
	  To see this, note that if $y\notin S^{n-4}_\epsilon(X)$ then there exists some $r_y>0$ such that $B_{r_y}(y)$ is $(n-3,\epsilon)$-symmetric. 
	  According to Theorem \ref{t:eps_reg} we then have the harmonic radius bound $r_h(y)\ge c(n)r_y>0$, which in particular implies $y\notin S(X)$.  Thus, we have shown that $S\subseteq S^{n-4}\subset S^{n-4}_\epsilon$. The volume estimates of 
	  $B_r\big(S\big)\cap B_1(p)$ now follows from Theorem \ref{t:main_eps_stationary}. 
\vskip2mm
	
The proof of the tangent cone uniqueness result is similar  to that of Theorem \ref{t:tangentcone_uniqueness}. By Theorem \ref{t:main_stratification}, there exists a $(n-4)$-Hausdorff measure zero set $\tilde{S}\subset S^{n-4}=S(X)$ such that every tangent cone at $x\in X\setminus \tilde{S}$ is $(n-4)$-symmetric. In particular, this means the every tangent cone is isometric to $\mathbb{R}^{n-4}\times C(Y^3)$ for some metric space $Y^3$. By the main result of\cite{CheegerNaber_Codimensionfour}, which states that the singular set of a
noncollapsed limit space with a $2$-sided bound on Ricci curvature has codimension $4$,  it follows that $Y^3$ is a $3$-dimensional  smooth manifold with 
$\Ric_Y=2g_Y$. This  implies that $Y^3$ is a space form $S^3/\Gamma$ for some discrete subgroup $\Gamma$ of $O(4)$ acting freely. 
By Theorem \ref{t:cross_tangent_cone},
the order of subgroup $\Gamma$ is completely determined by the volume ratio at $x$.  Since the space of cross sections of tangent cones at one point is path connected (see Theorem \ref{t:cross_tangent_cone}) it follows that $\Gamma$ is unique. Thus the tangent cone at $x$ is unique. This finishes the proof of Theorem \ref{t:main_bounded_finite_measure}.
	\end{proof}

\vspace{.2cm}

\section{Additional examples}
\label{s:examples}
This is the first of three sections which constitute the second part of the paper.

Basic examples of neck regions and the neck decomposition was given
in Example \ref{ex:basic_neck}, \ref{ex:cone_decomposition}, \ref{e:1}.
In the present brief section, we will provide
 some additional examples. They show the
sharpness of our results and illustrate how more naive versions of the statements can fail to hold.
\vskip-2mm

\subsection{Example 1:  Conical Neck Region}
\label{example:cone_neck}
A key result in this paper states that the packing measure of a neck region 
$\cN=B_2(p)\setminus \overline B_{r_x}(\cC)$ is uniformly Ahlfors regular;
see Example \ref{ex:basic_neck}
 and Theorem \ref{t:neck_region2}.  The key technical result needed to prove
 this the statement that if $u:B_2(p)\to \dR^k$ is a harmonic splitting function, then for {\it $\cH^k$}-most
  points of $\cC_\epsilon\subseteq \cC$, $u$ 
  is a 
 $(1+\epsilon)$-bi-Lipschitz map onto its image; see Proposition \ref{p:bilipschitz_structure}. 
 In the simplest example of a neck region, 
Example \ref{ex:basic_neck}, we could take $\cC_\epsilon=\cC$.  The present example shows that in general,
 this is not the case.
 \vskip2mm
 
 In fact, the map $u$ can degenerate on parts of a neck region. This explains
 the statement of the structural result given in  Proposition \ref{p:bilipschitz_structure}. 
Although it deals with a what at first glance might seem like a
relatively minor technical point, this example  is useful to remember
when one is faced with traversing the maze of technical results which come later in the paper.
 In particular, it demonstrates why simpler sounding statements just do not hold.
\vskip2mm

Let $Y_r := \text{Susp}(S^1_r)$ denote the suspension of a circle
of radius $r$.  Note that if $r=1$ then $Y_1=S^2$.  For $r<1$, the space
$Y_r$ will have two singular points $p,q\in Y$ at antipodal points. It will
look like an american football.  By using a warped product construction,  one can easily check  that
$Y_r$  can be smoothed to obtain $Y_{\epsilon,r}$, which is diffeomorphic to $S^2$, satisfies $Y_{\epsilon}=Y_{\epsilon,r}$ outside of $B_{\epsilon}(p)\cup B_{\epsilon}(q)$ and  and has sectional curvature $\geq 1$.
\vskip2mm

Let $X^3=C(Y_{\epsilon,r})$ denote the cone over $Y_{\epsilon,r}$.  Note that $X^3$ has a unique singular point at $0\in X^3$.  Using the techniques of \cite{CoNa2},
 one can check that $X^3$ itself arises as a Ricci limit space.  Let $\gamma_p$, $\gamma_q$ denote the rays in $X^3$ through the cross-section points $p,q\in Y_{\epsilon,r}$.  Though $X^3$ is smooth along these rays, for $\epsilon$ very small $X^3$ is looking increasingly singular.  For each $x\in \gamma_p \cup \gamma_q$ let $r_x = r_0\cdot d(x,0)$, where $r_0>>\epsilon$ is fixed and small.  Finally let $\cC = \{0\}\cup \{x_i\}\subseteq (\gamma_p \cup \gamma_q)\cap B_2(p)$ be a maximal subset such that $B_{\tau^2 r_i}(x_i)$ are disjoint.  Note then that for any $\delta,\eta>0$ one can check for $\epsilon<<\delta$ that $\cN :=  B_2(p)\setminus \overline B_{r_x}(\cC)$ defines a $(1,\delta,\eta)$-neck region.
\vskip2mm

Now let $u:B_2(p)\to \dR$ denote a harmonic $\epsilon$-splitting map.  By using separation of variables one can check that  $|\nabla u(x)|\to 0$ as $x\to 0$ approaches the vertex of the cone.
Indeed, this holds for {\it any} harmonic function on $B_2(p)$.
  In particular, it is certainly not possible that $u$ defines a 
  $(1+\delta)$-bi-Lipschitz map on all of $\cC$.  One can check that as $\epsilon\to 0$, $u$ remains
   bi-Lipschitz on $\cC$ away from an increasingly small ball around $0$.  This shows the sharpness of the 
   bi-Lipschitz structure of Proposition \ref{p:bilipschitz_structure}.
\vskip-2mm

\subsection{Example 2:  Sharpness of $k$-rectifiable Structure}\label{example:k_rect_sharp}

One of the primary results of this paper is to show that the $k$th-stratum 
$S^k\setminus S^{k-1}$ of the singular set is $k$-rectifiable.   The following example from  \cite{LiNa17_example} shows that this statement is sharp in the sense that their need
not exist any points in the singular set $S$ in a neighborhood of  which $S$ is a manifold.
\vskip2mm

In \cite{LiNa17_example} the following examples are produced:  For each real number $s\in [0,n-2]$ there exists $(M^n_j,g_j)\stackrel{d_{GH}}{\longrightarrow} (X,d)$, with $\diam \,(M^n_j) \leq 1$,  $\Vol \,(M^n_j) > v>0 $,
 such that the singular set, $S$ satisfies 
\begin{align}
&\dim\, S=s\, ,\notag\\
&S \text{ is a $s$-Cantor set}\, .
\end{align}
If $s=k\in \dN$ is an integer one can further arrange it so that $S = S^k = 
S^k_\epsilon$ satisfies $0<\cH^k(S)<\infty$ is both $k$-rectifiable and a $k$-Cantor set.  
In particular, we see from these examples that the structure theory of Theorem \ref{t:main_stratification} is sharp, and one cannot hope for better. \\
\vskip2mm

We will briefly explain the example above from \cite{LiNa17_example} for the case, $0\leq s\leq 1$. Higher dimensional examples are built in an analogous manner. 
\vskip2mm

 Let $Z=\overline B_1(0^2)\times [0,1]\subseteq \dR^3$ denote the closed 3-cylinder.  Observe that $Z$ is an Alexandrov space with boundary, and that its singular set is the codimension 2 circles $S(Z) = \partial B_1(0^2)\times \{0,1\}$.  
\vskip2mm

Double $Z$  to obtain an Alexandrov space without boundary $\tilde Z$ with codimension 2 singular sets $S(\tilde Z) = S^1\times\dZ_2 \subseteq \tilde Z$.  It is not difficult to see that $\tilde Z$ may be smoothed to obtain a manifolds by rounding off the doubled boundary points.  Intuitively, the key point of this example is that these circle singular sets are sets of infinite positive sectional curvature in every direction, as opposed to the easier construction of a codimension 2 singular set built by looking at 
$\dR\times C(S^1_r)$ as in the neck example.  Note that because of this,
the singular set $S(\tilde Z)$ is not totally geodesic. However,   the regular set of 
$\tilde Z$ is convex. In fact, by \cite{CoNa1}, this must be the case.
\vskip2mm

Now choose an arbitrary open set 
$U=\bigcup (a_i,b_i)\subseteq S(Z)$.  By using the fact that $S(Z)$ consists of 
completely convex points of $Z\subseteq \dR^3$, one can construct a subset $Y\subseteq \dR^3$ by (informally speaking) "sanding off" each interval $(a_i,b_i)$ to obtain smooth boundary points such that $Y$ is still convex. Hence, after this procedure
has been carried out, $Z$ is still an Alexandrov space.
\vskip2mm

 At this point, we have $S(Y) = S(Z)\setminus U$, and we can again double $\tilde Y$ to obatain Alexandrov space without boundary such that $S(\tilde Y)$ is isometric to $S(Y)$.  By choosing $U$ as in the standard Cantor constructions, we can make $S(\tilde Y)$ a $s$-Cantor set for $0\leq s\leq 1$, as claimed.  Since $S(Y)$ is contained in a circle,  
 this set is $1$-rectifiable.*
\vskip-5mm

\subsection{Example 3:  Sharpness of $k$-symmetries of Tangent Cones}\label{example:symmetry_uniqueness_sharpness}

One of the main statements in Theorem \ref{t:main_stratification} is that for $\cH^k$-a.e. $x\in S^k$, all  tangent cones are $k$-symmetric. Recall
however that we do not assert that tangent cones are unique.   In this example, we show that indeed tangent cones is need {\it not} be unique
 for $k$-a.e. $x\in S^k$.
\vskip2mm

The examples are rather straight forward.  For instance, let $Y$ be a
 noncollapsed limit space such that there is an isolated singularity at $p\in Y$.  
 Assume $p\in S^0_\epsilon(Y)$ is such that the tangent cone at $p$ is not unique;
  see for instance \cite{ChCoI}, \cite{CoNa2},  for such examples.  Nonetheless, every tangent cone is $0$-symmetric as this is a noncollapsed limit.  Put $X=\dR^k\times Y$.
 Then the singular set of $X$ satisfies $S = S^k(X) = S^k_\epsilon =  \dR^k\times\{p\}$.  In this case, as claimed, every tangent cone in $S^k(X)$ is 
 $k$-symmetric, but none are unique.
 \vskip2mm

\subsection{Example 4:  Sharpness of $S^k_\epsilon$-finiteness}\label{example:finiteness_sharp}
 Theorem \ref{t:main_stratification} states that  $S^k$ is $k$-rectifiable. Theorem \ref{t:main_eps_stationary} states that the quantitative stratification $S^k_\epsilon$ has uniformly bounded $k$-dimensional Hausdorff measure.  Well known examples demonstrate that this need not hold for $S^k$.  Thus,  the best one can say is that $S^k(Y)$ is a countable union of finite measure rectifiable sets,
as  stated in Theorem \ref{t:main_stratification}.
\vskip2mm

 Start with solid regular tetrahedron $Z^3_0$, centered at the origin. Attach to each face
$F^2_i$, a tetrahedron with very small altitude and base $F^2_i$. Call the resulting convex polytope $Z^3_1$.  Proceed inductively in this fashion to obtain a sequence of
convex polytopes $Z^3_2$, $Z^3_3\, \ldots$, in such a way the sequence of altitudes
goes sufficiently rapidly to zero so that the following will hold. The sequence Hausdorff converges to a convex subset
$Z^3_\infty$ and $\partial Z^3_i\stackrel{GH}{\longrightarrow}\partial Z^3_\infty$
as well.  Moreover, $\partial Z^3$ has a dense set of singular points, although for
all $\epsilon>0$, only a finite number fail to have a neighborhood which is $\epsilon$-regular. 
The polytopes $Z^3_i$ can be "sanded" to produced a sequence of smooth convex surfaces 
$M^2_i\stackrel{GH}{\longrightarrow}\partial Z^3_\infty$. Thus,
$\partial Z^3_\infty$ the $GH$-limit of a sequence of smooth manifolds with nonnegative curvature.
  Of course, higher dimensional example can be gotten e.g. by
taking isometric products with $\dR^k$.
 \vskip2mm

\section{Preliminaries}
\label{s:Prelim}

In this section, we will review the technical background material which is required for the proofs our main theorems. 
For the less standard material, particularly the results concerning entropy, we will give a detailed indication of the proofs. In all cases we will give complete references. 

\vskip2mm

  \subsection{Almost volume cones are almost metric cones}
\label{ss:prelim:volume_cone}
Let
the metric on the unique simply connect $n$-manifold with constant curvature $:=  -\kappa$
be written in geodesic polar coordinates as $dr^2+f_{\kappa}^2 g^{S^{n-1}}$, where $\tilde g^{S^{n-1}}$ denotes
the metric on the unit $(n-1)$-sphere.
Let $\Ric_{M^n_i}\geq -\kappa$ and assume
$M^n_i\stackrel{d_{GH}}{\longrightarrow} X^n$, where $X^n$ is noncollapsed.
 Then if $X^n$ is equipped with $n$-dimensional Hausdorff measure,  the convergence is actually in the measured 
Gromov-Hausdorff sense; 
 \cite{ChCoI}. If we extend the definition of  the volume ratio 
$\cV_r(x) = \cV^\kappa_r(x)$,  to points $x\in X^n$,
 then as in \eqref{e:volratio}, we have $\frac{d}{dr}\cV^\kappa_r(x)\leq 0$. 
 \vskip2mm
 
 Suppose  $\frac{d}{dr}\cV^\kappa_r(x) =0$ for some fixed $r$ and initially, and let suppose
 the metric on $X^n$ is smooth in a neighborhood of $Y:=  \partial B_r(x)$,
 with $g^Y$ the induced metric on $Y$. Then in geodesic polar coordinates
 on $B_r(x)$
 the Riemannian metric is given by
   \begin{align}
  \label{e:wp}
  dr^2+ f_{\kappa}^2 g^Y\, .
 \end{align}
The proof of this fact given in \cite{ChCoAlmost} uses the characterization of the warped product metrics
as those for which there is a potential function whose Hessian is a multiple of the metric. 
\vskip1mm

\noindent
{\bf Notation.} Below, the Hessian of  $f$ is sometimes denoted by
by $\Hess_f$ and sometimes by $\nabla^2 f$.
\vskip2mm

If  $\kappa=0$ then the warped product is a metric cone, in which case,
\begin{align}
\label{e:hess}
\Hess \, r^2-2 g=0\, .
\end{align}
This should be compared to the corresponding formula for the derivative of the time derivative of the 
entropy given in \eqref{e:Dt_W}. 
\vskip2mm

 The discussion can be extended to the case in which the smoothness assumption on 
 $\partial B_r(x):=  Y$ is dropped, provided the expression in \eqref{e:wp} is replaced by the expression 
 for the distance function $d$ on $B_r(x)$.  Let $d^Y$ denote the distance function on $Y$. Then for the case $\kappa=0$, $(B_r(x),d)$ is isometric to a ball in the metric cone with cross-section $(Y,d^Y)$
 with $\diam(Y)\leq \pi$ and $d$ is given by the {\it law of cosines} formula, 
 \begin{align}
 \label{e:mc}
 d^2((r_1,y_1),(r_2,y_2))=r_1^2+r_2^2-2r_1r_2\cdot \cos d^Y(y_1,y_2)\, .
 \end{align}
 
 By using Gromov's compactness theorem, the following "almost volume cone implies almost metric cone theorem"
 is easily seen to be equivalent to what has just been discussed. 

  \begin{theorem}[\cite{ChCoAlmost}]
  \label{t:almostmetriccone}
  	Let $(M^n,g,p)$ denote a Riemannian manifold with $\Ric_{M^n}\ge -\delta$. Given $\epsilon>0$,   if $\delta\le \delta(n,\epsilon)$ and  $\cV_2(p)\ge (1-\delta)\cV_{1}(p)$, then $B_1(p)$ is 
$(0,\epsilon)$-symmetric.
	 \end{theorem}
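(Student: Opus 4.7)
The plan is to prove Theorem \ref{t:almostmetriccone} by a contradiction/compactness argument, reducing the quantitative statement to a rigidity theorem: constancy of the volume ratio on an interval of scales forces a metric cone structure. Suppose the conclusion fails. Then there exist $\epsilon_0>0$ and a sequence $(M_i^n,g_i,p_i)$ with $\Ric_{M_i^n}\ge -\delta_i$, $\delta_i\to 0$, satisfying $\cV_2(p_i)\ge (1-\delta_i)\cV_1(p_i)$, yet such that for each $i$ the ball $B_1(p_i)$ is not $\epsilon_0$-Gromov-Hausdorff close to a ball $B_1(x^*)$ in any metric cone $C(Z)$ with vertex $x^*$.

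First I would apply Gromov's precompactness theorem to extract a subsequence $(M_i^n,g_i,p_i)\togh (X,d,p)$. The hypothesis $\cV_2(p_i)\ge (1-\delta_i)\cV_1(p_i)$ combined with Bishop-Gromov gives a uniform lower bound on $\Vol(B_1(p_i))$ (after rescaling to normalize $\cV_1$ if needed), so by Colding's volume continuity theorem the limit $X$ is noncollapsed and the convergence is measured. Since $\delta_i\to 0$, the limit supports $\Ric\ge 0$ in the synthetic sense, hence $\cV^0_r(p)$ is monotone nonincreasing. Passing to the limit in the almost-equality yields $\cV^0_2(p)=\cV^0_1(p)$, so $\cV^0_r(p)$ is in fact constant on $[1,2]$.

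Next I would invoke the rigidity portion of Bishop-Gromov: constancy of $\cV^0_r(p)$ on an open interval forces the radial distance $r(x)=d(p,x)$ to obey the identity $\Hess(r^2/2)=g$ in a suitably weak integral sense, which is precisely the cone characterization \eqref{e:hess}. By the law-of-cosines representation \eqref{e:mc}, this identifies $(B_1(p),d)$ isometrically with the unit ball of the metric cone $C(Y)$ over $Y=\partial B_1(p)$ with vertex $p$ (noting $\diam(Y)\le \pi$ is automatic from the cone structure at scale $1$). Consequently, for $i$ sufficiently large, $B_1(p_i)$ is $(\epsilon_0/2)$-GH close to $B_1(x^*)\subset C(Y)$, contradicting the standing assumption. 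A standard compactness argument then produces the quantitative threshold $\delta=\delta(n,\epsilon)$.

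The main obstacle will be the rigidity step: deducing the cone structure from the equality case of Bishop-Gromov on a potentially singular limit space. In the smooth setting, $\Hess(r^2/2)=g$ on the regular set lets one run a Cheeger-Gromoll type warped product argument; but in the limit one must work intrinsically with the renormalized limit measure and with the integral/weak form of the Hessian identity, which is exactly where the machinery of \cite{ChCoI, ChCoAlmost} enters. A more quantitative alternative that would bypass the compactness step altogether is to use the hypothesis to bound $\int_{B_2(p)}(2n-\Delta(r^2))$ by a multiple of $\delta$, then apply Bochner's formula to obtain an $L^2$ control on $|\Hess(r^2/2)-g|$, and finally integrate radial gradient flow to construct an explicit $\epsilon$-GH map into a cone; this would yield an effective $\delta(n,\epsilon)$ but with considerably heavier bookkeeping.
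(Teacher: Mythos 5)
Your overall strategy — argue by contradiction, extract a Gromov–Hausdorff limit, pass the volume pinching to the limit to get $\cV_2(p)=\cV_1(p)$, and invoke the rigidity ``volume cone implies metric cone'' via the Hessian identity \eqref{e:hess} and the law of cosines \eqref{e:mc} — is precisely the reduction the paper itself sketches in Subsection \ref{ss:prelim:volume_cone} before citing \cite{ChCoAlmost}, so at that level you and the paper take the same route.

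There is, however, one concretely false step: the assertion that $\cV_2(p_i)\ge(1-\delta_i)\cV_1(p_i)$ ``combined with Bishop--Gromov gives a uniform lower bound on $\Vol(B_1(p_i))$.'' It does not. The cones $C(S^{n-1}_\eta)$ with $\eta\to 0$ (smoothed near the vertex so as to be manifolds with $\Ric\ge 0$) have $\cV_r(p)$ essentially independent of $r\in[1,2]$, hence satisfy your hypothesis for every $\delta$, while $\Vol(B_1(p))=\eta^{n-1}\omega_n\to 0$; the parenthetical ``after rescaling'' cannot repair this, since rescaling the metric moves the radii at which the pinching is assumed. Consequently your limit may be collapsed: volume continuity and the identification of the limit measure with $\cH^n$ are then unavailable, and the Bishop--Gromov rigidity would have to be run for the renormalized limit measure on a lower-dimensional limit, where ``equality implies cone'' is not a black box — it is deduced from the quantitative theorem on the approximating manifolds, not the other way around. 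This is exactly why \cite{ChCoAlmost} argue directly and effectively on $M^n$ (the alternative you sketch in your last paragraph, via an $L^2$ Hessian bound for a Poisson regularization of $r^2$ and the segment inequality), rather than by compactness. Since every application of Theorem \ref{t:almostmetriccone} in this paper occurs under the standing noncollapsing hypothesis $\Vol(B_1(p_i))>\rv>0$, your argument does prove the version actually used here; but the theorem as stated carries no noncollapsing assumption, and your compactness proof does not cover the collapsed case.
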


In  the proof of tangent cone uniqueness we also 
used the following well known result in which relation (2) below follows from volume convegence; compare Theorem \ref{t:tangentcone_uniqueness} and Theorem \ref{t:tangentcone_uniqueness}.

\begin{theorem}
\label{t:cross_tangent_cone}
	Let $(M^n_i,g_i,p_i)\to (X,d,p)$ satisfy $\Ric_{M^n_i}\ge -(n-1)$ and $\Vol(B_1(p_i))\ge \rv>0$ then the cross section space $C_x:=  \{(Y,d_Y): C(Y) \mbox{ is a tangent cone at $x$}\}$ of tangent cones at $x\in X$ satisfies 
	\begin{enumerate}
		\item $(C_x,d_{GH})$ is path connected.
		\vskip1mm
		
		\item For every $Y\in C_x$ we have $\Vol(Y)=\lim_{r\to 0} \frac{n\Vol(B_r(x))}{r^n}$.
	\end{enumerate} 
\end{theorem}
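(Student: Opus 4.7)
The proof naturally splits into part (2), which follows from the almost volume cone implies almost metric cone theorem combined with volume convergence, and part (1), the topological assertion, which requires constructing continuous paths between arbitrary pairs of tangent cones by interpolating the rescaling parameter.

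For part (2), I would fix $Y \in C_x$ and a sequence $r_j \downarrow 0$ with $(X, r_j^{-1}d, x) \togh C(Y)$, with $x$ mapping to a vertex $y^*$. Colding's volume convergence then gives
\[
\lim_{j \to \infty} r_j^{-n}\Vol(B_{r_j}(x)) \;=\; \Vol\bigl(B_1(y^*)\bigr) \;=\; \tfrac{1}{n}\Vol(Y).
\]
On the other hand, Bishop--Gromov monotonicity of $\cV_r(x)$ combined with the noncollapsing bound $\Vol(B_1(p_i)) \geq \rv$ forces $r^{-n}\Vol(B_r(x))$ to have a limit as $r \to 0$ which is independent of subsequence. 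Hence $\Vol(Y) = \lim_{r\to 0} n r^{-n}\Vol(B_r(x))$ and in particular the right-hand side depends only on $x$, not on $Y$; this proves (2) and simultaneously shows every cross-section in $C_x$ has the same volume.

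For part (1), I would introduce the rescaling curve $\phi(t) := (X, t^{-1}d, x)$ in the space of pointed metric spaces equipped with pointed Gromov--Hausdorff topology. A direct comparison, by rescaling balls by the factor $\lambda = s/t$, shows that for each fixed $R \geq 1$,
\[
d_{GH}\!\left(B_R(x,\phi(t)),\, B_R(x,\phi(s))\right) \;\leq\; C(R)\,\bigl|1 - s/t\bigr|,
\]
so $\phi$ is uniformly continuous in the logarithmic parameter $\log t$, uniformly in $t$ itself. Given tangent cones $Y_0, Y_1$ produced by sequences $r_{0,j}, r_{1,j} \downarrow 0$, I would set $t_j(\sigma) := r_{0,j}^{1-\sigma}\, r_{1,j}^{\sigma}$ for $\sigma \in [0,1]$ and $\gamma_j(\sigma) := \phi(t_j(\sigma))$. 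The uniform continuity above gives equicontinuity of $\{\gamma_j\}$ on each fixed pointed ball, while Gromov precompactness (from $\Ric \geq -(n-1)$ and noncollapsing) supplies the required pointwise precompactness. An Arzel\`a--Ascoli extraction, combined with a diagonal argument over a countable dense $\sigma$-net and an increasing sequence of radii $R$, yields a subsequence $\gamma_{j_k}$ converging to a continuous path $\gamma : [0,1] \to \mathcal{M}$ with $\gamma(0) = Y_0$ and $\gamma(1) = Y_1$. Since $t_j(\sigma) \to 0$ for every fixed $\sigma$, the limit $\gamma(\sigma)$ is by definition a tangent cone at $x$, and Theorem \ref{t:almostmetriccone} ensures it has the form $C(Y_\sigma)$ for some cross-section $Y_\sigma$, so $\gamma$ is a continuous path in $C_x$.

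The main technical point is implementing the Arzel\`a--Ascoli step correctly in the noncompact space of pointed Gromov--Hausdorff metric spaces: one must check that the equicontinuity estimate passes to the limit, and that the diagonal extraction can be carried out uniformly across all fixed radii $R$ so as to produce a single curve continuous in the pointed GH topology. Once this is in place, both conclusions follow routinely; there is no genuine analytic obstacle beyond careful bookkeeping of the logarithmic rescaling parameter and the verification that $t_j(\sigma) \to 0$ for all $\sigma \in [0,1]$, which guarantees that the interpolating path never leaves the space of tangent cones.
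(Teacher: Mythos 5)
First, a point of comparison: the paper does not actually prove this theorem; it is stated as a "well known result" with the remark that relation (2) follows from volume convergence, so there is no in-paper argument to measure yours against. Your proof of part (2) is correct and is exactly the intended route: Bishop--Gromov monotonicity plus noncollapsing gives existence of $\lim_{r\to 0}r^{-n}\Vol(B_r(x))$ independent of subsequence, and volume convergence applied to the rescaled sequence identifies this limit with $\cH^n(B_1(y^*))=\tfrac1n\Vol(Y)$ for any cross-section $Y$.

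Part (1), however, has a genuine gap at the Arzel\`a--Ascoli step. Your continuity estimate only gives
\[
d_{GH}\bigl(B_R(x,\gamma_j(\sigma)),B_R(x,\gamma_j(\sigma'))\bigr)\;\le\;C(R)\,\Bigl|1-(r_{1,j}/r_{0,j})^{\sigma'-\sigma}\Bigr|,
\]
and whenever $Y_0\neq Y_1$ the ratios $r_{1,j}/r_{0,j}$ must degenerate to $0$ or $\infty$ along a subsequence (if they stayed bounded away from $0$ and $\infty$, scale invariance of cones would force $C(Y_0)=C(Y_1)$). Consequently $|1-(r_{1,j}/r_{0,j})^{h}|$ does not tend to $0$ as $h\to 0$ uniformly in $j$: the curves $\gamma_j$ are reparametrizations of $\phi$ over log-time intervals of length $|\log(r_{0,j}/r_{1,j})|\to\infty$, so they admit no common modulus of continuity, and Arzel\`a--Ascoli does not apply. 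Reparametrizing by arc length does not help, since the total lengths blow up. This is precisely the difference between connectedness and path-connectedness: the general "slowly varying precompact curve" argument yields that the limit set $C_x$ is compact and connected (via a separation/intermediate-value argument on $\phi$), but the $\omega$-limit set of such a curve can fail to be path-connected (topologist's-sine-curve--type limit sets), so path-connectedness cannot follow from the continuity of $\phi$ alone. You would either need to invoke additional structure specific to noncollapsed Ricci limits, or note that for every application in this paper (constancy of $\beta$, respectively of $|\Gamma|$, which are continuous functions of the cross-section into a discrete set by part (2)) plain connectedness of $C_x$ suffices, and that is what your framework actually delivers.
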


The following result was proved in \cite{Cheeger01}, \cite{ChCoI}, \cite{Co1}.
It implies in particular that at a point in the regular set, $\cR:=  X^n\setminus S$,
the tangent cone  is unique and isometric to 
$\dR^n$. In fact, since the conclusion applies to all balls $B_r(x)\subset B_3(p)$,
it is actually a kind of quantitative $\epsilon$-regularity theorem.
\begin{theorem}\label{t:subball_n_symmetric}
	Let $(M^n_i,g_i,p_i)\to (X,d,p)$ satisfy $\Ric_{M^n_i}\ge -(n-1)\delta$ and $\Vol(B_1(p_i))\ge \rv>0$. Let $\epsilon>0$,  $\delta\le \delta(n,\rv,\epsilon)$ and assume $B_4(p)$ is $(n,\delta)$-symmetric.  Then each $B_r(x)\subset B_3(p)$ is also $(n,\epsilon)$-symmetric.
\end{theorem}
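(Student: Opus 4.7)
The plan is a contradiction and rescaling argument. Suppose the conclusion fails: then there exist $\epsilon_0 > 0$, sequences $\delta_j \downarrow 0$ and noncollapsed limits $(X_j, d_j, p_j)$ satisfying the hypotheses with parameter $\delta_j$, together with sub-balls $B_{r_j}(x_j) \subset B_3(p_j)$ which are not $(n, \epsilon_0)$-symmetric. Since $B_4(p_j)$ is $(n, \delta_j)$-symmetric, Gromov compactness together with the volume convergence theorem for noncollapsed Ricci limits (Colding) give a subsequence for which $B_4(p_j) \to B_4(0^n) \subset \dR^n$ in the pointed Gromov--Hausdorff sense with converging $\cH^n$-measures. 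After extracting further subsequences, I may assume $x_j \to x_\infty \in \overline B_3(0^n)$ and $r_j \to r_\infty \in [0, 3]$.

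If $r_\infty > 0$, then $B_{r_j}(x_j) \to B_{r_\infty}(x_\infty) \subset \dR^n$, which is $n$-symmetric, immediately contradicting the assumed non-symmetry of $B_{r_j}(x_j)$ for large $j$. Otherwise I would rescale by $r_j^{-1}$ and set $\tilde d_j := r_j^{-1} d_j$. The rescaled Ricci lower bound becomes $-(n-1) r_j^2 \delta_j \to 0$, so the rescaled spaces satisfy a uniform lower Ricci bound converging to zero. For each fixed $s > 0$, once $s r_j \leq 1$ the ball $B_{s r_j}(x_j)$ lies inside $B_4(p_j)$, and volume convergence at the original scale gives $\Vol(B_{s r_j}(x_j))/(\omega_n (s r_j)^n) \to 1$; equivalently the rescaled volume ratio $\tilde{\cV}_s(x_j)$ tends to the Euclidean value $1$. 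Passing to a pointed Gromov--Hausdorff limit $(Y, y_\infty)$ of the rescaled sequence and invoking Colding's volume continuity once more yields $\cV^Y_s(y_\infty) \equiv 1$ for every $s > 0$.

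The main step, and the hardest part of the proof, is to conclude from this that $Y$ is isometric to $\dR^n$. The identity $\cV^Y_s(y_\infty) \equiv 1$ saturates Bishop--Gromov monotonicity at every scale, so by the rigid (limit) version of Theorem \ref{t:almostmetriccone} the space $Y$ must be a metric cone $C(Z)$ with vertex $y_\infty$; comparing volumes of unit balls then forces $\cH^{n-1}(Z) = \cH^{n-1}(S^{n-1})$. Since $C(Z)$ is itself a noncollapsed Ricci limit, $Z$ is a noncollapsed $(n-1)$-dimensional Ricci limit with effective lower bound $\Ric_Z \geq n-2$ inherited from the Ricci-flat cone structure, and the corresponding maximal-volume rigidity (Cheeger--Colding) forces $Z \cong S^{n-1}$, hence $Y \cong \dR^n$. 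Consequently $B_1^{\tilde d_j}(x_j) = B_{r_j}(x_j)$ is $(n, \epsilon_0/2)$-symmetric for all large $j$, giving the desired contradiction. The entire technical burden is concentrated in this rigidity step: upgrading the volume-ratio identity in the noncollapsed limit to an actual Euclidean isometry requires both the rigid volume-cone-implies-metric-cone theorem and the rigidity of the cross section within the class of noncollapsed Ricci limits.
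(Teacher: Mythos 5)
Your argument is correct, but note first that the paper does not prove this statement at all: it is quoted as a known result with references to \cite{Co1}, \cite{Cheeger01}, \cite{ChCoI}, so the comparison is with the standard proof there rather than with anything in this paper. That standard proof is direct and quantitative: volume convergence gives $\cV_{\rho}(x)\ge 1-\psi(\delta\,|\,n,\rv)$ at some fixed scale $\rho$ for every $x\in B_3(p)$, Bishop--Gromov monotonicity (together with the upper bound $\cV_r\le 1+\psi(\delta)$) propagates this to $\cV_r(x)\ge 1-\psi(\delta)$ for all $r\le\rho$, and Colding's ``almost maximal volume implies almost Euclidean'' theorem converts this back into $(n,\epsilon)$-symmetry of $B_r(x)$. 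No contradiction argument, no blow-up, and no rigidity theorem is needed. You instead run a compactness--contradiction scheme and concentrate the work in a rigidity statement for the blow-up limit $Y$ (volume cone implies metric cone in the equality case, plus maximal-volume rigidity of the cross section as an RCD/limit space with $\Ric\ge n-2$). This is a valid alternative, and the soft formulation spares you from tracking the dependence $\delta(n,\rv,\epsilon)$ explicitly, but it invokes strictly heavier machinery than the result requires; indeed, even within your own scheme the rigidity step could be replaced by applying the quantitative almost-volume theorem directly to the sequence $B_{r_j}(x_j)$ once you know $\cV_{r_j}(x_j)\to 1$, with no need to identify $Y$ at all. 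One small imprecision: the claim that ``volume convergence at the original scale'' gives $\Vol(B_{sr_j}(x_j))/(\omega_n(sr_j)^n)\to 1$ for $sr_j\to 0$ is not volume convergence alone; it is volume convergence at a fixed scale combined with Bishop--Gromov monotonicity and the Bishop upper bound to pass to the shrinking scale. You clearly have this mechanism in mind, but it should be said where it is used.
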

\vspace{.25cm}

\subsection{Quantitative cone-splitting}
\label{ss:qcs}
  As recalled in Section \ref{s:outline_proof}, if in a metric cone has $2$ distinct vertices, then
the cone isometrically splits off a line which contains these two vertices.  If there are several such cone vertices, then this statement can be iterated to produce further splittings. A quantitative version
of cone-splitting was introduced in 
\cite{CheegerNaber_Ricci}. Prior to stating this theorem, it is convenient 
to introduce a quantitative notion of $(k+1)$ points $x_0,\ldots,x_k$ being 
$k$-independent.

In $\dR^n$ we say that points $\{x_0,\ldots,x_k\}$ are {\it $k$-independent} 
if the $\{x_i\}_0^k$ is not contained in any $(k-1)$-plane.  
Here is a quantitative version
of this notion.  

\begin{definition}[$(k,\alpha)$-independence]
\label{d:independent_points1}
In a metric space $(X,d)$ a set of points $U=\{x_0,\ldots ,x_k\}\subset B_{2r}(x)$ is $(k,\alpha)$-independent if for any subset $U'=\{x_0',\ldots, x_k'\}\subset \dR^{k-1}$ we have 
\begin{align}
\label{e:alpha}
d_{GH}(U,U')\geq \alpha\cdot r\, .
\end{align}
\end{definition}

\vskip2mm
\noindent
\begin{remark}
\label{r: kalpha_independent_Rn} 
Let $X\subset \mathbb{R}^n$, if there exists no $(k,\alpha)$-independent set in $B_r(x)\cap X$, then $B_r(x)\cap X\subset B_{4\alpha r}(\mathbb{R}^{k-1})$ for some $k-1$-plane $\mathbb{R}^{k-1}\subset \mathbb{R}^n$. To see this, if $B_r(x)\cap X$ is not a subset of $B_{3\alpha r}(\mathbb{R}^{k-1})$ for any $k-1$-plane, then one can find $(k,\alpha)$-independent set in $B_r(x)\cap X$ by induction on $k$. 
\end{remark}

The following  Quantitative Cone-Splitting Theorem was introduced in \cite{CheegerNaber_Ricci}.  

\begin{theorem}[Cone-Splitting]
\label{t:cone_splitting}
	Let $(M^n,g,p)$ satisfy $\Ric_{M^n}\ge -\delta$. Let $\epsilon,\tau>0$ and  
$\delta\le \delta(n,\epsilon,\tau)$ and assume:
	\begin{itemize}
		\item[(1)] $B_{2}(p)$ is $(k,\delta)$-symmetric with respect to $\cL^k_\delta\subseteq B_2(p)$ as in Remark \ref{rm:epssym}.
		\vskip2mm
        \item[(2)]There exists $x\in B_1(p)\setminus B_\tau \cL^k_\delta$ such that $B_2(x)$ is $(0,\delta)$-symmetric.
        \end{itemize}
\vskip2mm

Then $B_1(p)$ is $(k+1,\epsilon)$-symmetric.
\end{theorem}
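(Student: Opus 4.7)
The plan is to argue by contradiction and compactness, reducing to the rigid cone-splitting principle recalled just above the statement. Suppose the conclusion fails. Then there exist $\epsilon,\tau>0$, a sequence $\delta_i\to 0$, and manifolds $(M_i^n,g_i,p_i)$ with $\Ric_{M_i^n}\geq -\delta_i$ satisfying the two hypotheses with $\delta=\delta_i$, but for which $B_1(p_i)$ is \emph{not} $(k+1,\epsilon)$-symmetric. By Gromov precompactness, after passing to a subsequence, $(M_i^n,g_i,p_i)\togh (X,d,p_\infty)$, and the subspaces $\cL^k_{\delta_i}$ Hausdorff-converge to a closed set $\cL^k_\infty\subseteq B_2(p_\infty)$ with $p_\infty\in\cL^k_\infty$. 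The first hypothesis, combined with the compactness of the space of pointed $(k,\delta)$-symmetric balls as $\delta\to 0$, implies that $B_2(p_\infty)$ is isometric to $B_2(0^k,y^*)\subseteq \dR^k\times C(Y)$ for some metric space $Y$ with cone vertex $y^*$, and that under this isometry $\cL^k_\infty$ is identified with $\dR^k\times\{y^*\}$.

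Next, choose $x_i\in B_1(p_i)\setminus B_\tau(\cL^k_{\delta_i})$ as in hypothesis (2). After passing to a further subsequence, $x_i\to x_\infty\in \overline{B_1(p_\infty)}$, and by the Hausdorff convergence of the $\cL^k_{\delta_i}$ we have $d(x_\infty,\cL^k_\infty)\geq \tau$. Since $B_2(x_i)$ is $(0,\delta_i)$-symmetric and $\delta_i\to 0$, the limit ball $B_2(x_\infty)$ is isometric to a ball of radius $2$ in a metric cone centered at $x_\infty$; in particular $x_\infty$ is a vertex of that cone. Meanwhile, the product decomposition $B_2(p_\infty)=\dR^k\times C(Y)$ already exhibits $B_2(p_\infty)$ as a metric cone with every point of $\cL^k_\infty$ as a vertex. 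In particular $p_\infty$ is a cone vertex, and $x_\infty$ is also a cone vertex, with the two vertices separated by distance $\geq \tau$.

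Now I apply the cone-splitting principle iteratively. Because $B_2(p_\infty)$ is $0$-symmetric about both $p_\infty$ and $x_\infty$, the principle gives a line in $B_2(p_\infty)$ through $p_\infty$ and $x_\infty$ which is isometrically split off. Since $d(x_\infty,\cL^k_\infty)\geq\tau>0$, the direction of this line is not contained in the $k$-plane $\cL^k_\infty$, so together with the $k$ directions of $\cL^k_\infty$ one obtains $k{+}1$ affinely independent directions in which $B_2(p_\infty)$ splits off lines. Equivalently, inside $\dR^k\times C(Y)$, the fact that $C(Y)$ admits a second cone vertex (the $C(Y)$-component of $x_\infty$), distinct from $y^*$, implies that $C(Y)$ isometrically splits a line through $y^*$, so $C(Y)=\dR\times C(Z)$ for some metric space $Z$ (this is the standard fact that a metric cone with two distinct cone vertices splits off a line). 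Combining, $B_2(p_\infty)$ is isometric to $B_2$ in $\dR^{k+1}\times C(Z)$ with $p_\infty$ sitting at a cone vertex; that is, $B_2(p_\infty)$ is $(k+1,0)$-symmetric.

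Finally, Gromov-Hausdorff convergence $B_2(p_i)\togh B_2(p_\infty)$ then forces $B_2(p_i)$, and hence $B_1(p_i)$, to be $(k+1,\epsilon)$-symmetric for all sufficiently large $i$, contradicting our choice of sequence. Therefore the desired $\delta=\delta(n,\epsilon,\tau)$ exists. The only real step that requires care is the cone-splitting step itself, which must be invoked twice (once to get a new line, once to repackage $\dR^k\times C(Y)$ as $\dR^{k+1}\times C(Z)$ with a shared cone vertex); all other steps are routine compactness and the stability of almost-symmetry under GH limits.
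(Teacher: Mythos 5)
Your argument is correct and is essentially the standard one: the paper does not actually prove Theorem \ref{t:cone_splitting} but recalls it from \cite{CheegerNaber_Ricci}, where it is established by exactly this contradiction-and-compactness reduction to the rigid cone-splitting principle. The only two points worth flagging are that the rigid step you invoke must be the \emph{local} version of cone-splitting (the cone structures at $p_\infty$ and $x_\infty$ live only on overlapping balls, not on all of $X$, so one splits the line on $B_1(p_\infty)$ rather than globally), and that passing the hypotheses to the limit uses the GH-closedness of the class of balls centered at vertices of $k$-symmetric cones; both are standard and are treated as known in the paper as well.
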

\vskip2mm
\begin{remark}\label{r:cone_splitting}
We can rephrase the above as follows:  If $U=\{x_0,\cdots,x_k\}\subset B_{2r}(x)$ is $(k,\alpha)$-independent and each $x_i$ is $(0,\delta)$-symmetric, by the Cone-Splitting Theorem \ref{t:cone_splitting}, the ball $B_{2r}(x_0)$ is $(k,\epsilon)$-symmetric for $\delta\le \delta(n,\alpha,k,\epsilon)$.
\end{remark}

A second  version of quantitative cone-splitting theorem  is implicit in \cite{CheegerNaber_Ricci}. It
is a direct consequence of Theorem \ref{t:cone_splitting}.  To define it let us define the notion of the pinching set:
  \vskip2mm

\begin{definition}[Points with small volume pinching]
\label{d:smallvolpinch} 
Let $(M^n,g,p)$ satisfy $\Ric_{M^n}\ge-(n-1)\xi $ and
put 
\begin{align}
\label{e:hatV}
\bar V :=  \inf_{x\in B_1(p)}\cV_{\xi^{-1}}(x))\, .
\end{align}
The set with small volume pinching is:
\begin{align}
\label{e:cFset1}
\cF_{r,\xi}(x):= \{y\in B_{4r}(x):~\cV_{\xi r}(y)\le \bar V+\xi\}\, .
\end{align} 
\end{definition}

Note that if $\xi^{-1}$ is large, then
 the point in $\cF_{r,\xi}(x)$ is an  "almost cone vertex" for each scale between $r$ and $\xi^{-1}$.
By Theorem \ref{t:almostmetriccone}, each point $y\in \cF_{r,\xi}(x)$ is an "almost cone vertex". Thus,  with Theorem \ref{t:cone_splitting} we immediately have the following: 

\begin{theorem}[Cone-Splitting based on $k$-content]
\label{t:content_splitting} 
Let $(M^n,g,p)$ satisfy $\Vol(B_1(p))\ge \rv>0$ with $\Ric_{M^n}\ge -(n-1)\xi$. Assume: 
 $0<\delta,\epsilon\le \delta(n,\rv)$,  $\gamma\le \gamma(n,\rv,\epsilon)$, $\xi\le \xi(\delta,\epsilon,\gamma,n,\rv)$ and
\begin{align}
	\Vol(B_{\gamma}(\cF_{1,\xi}(p)))\ge \epsilon \gamma^{n-k}\, .
\end{align}
\vskip1mm

Then there exists $q\in B_4(p)$ such that $B_{\delta^{-1}}(q)$ is $(k,\delta^2)$-symmetric.
\end{theorem}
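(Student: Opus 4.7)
The plan is to argue by compactness, reducing the statement to one about exact cone-splitting in a non-collapsed Ricci limit, and then to invoke the Cone-Splitting Theorem \ref{t:cone_splitting}.

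Suppose the theorem fails: fix $\delta,\epsilon,\gamma$ and a sequence $(M_i^n, g_i, p_i)$ with $\xi_i \to 0$ verifying all the hypotheses, yet for which no $q_i \in B_4(p_i)$ has $B_{\delta^{-1}}(q_i)$ being $(k,\delta^2)$-symmetric. Pass to a non-collapsed Gromov-Hausdorff limit $(X, d, p_\infty)$ with $\Ric_X \geq 0$. For any subsequential limit $y_\infty$ of points $y_i \in \cF_{1,\xi_i}(p_i)$, volume convergence together with the pinching $\cV_{\xi_i}(y_i) \leq \bar V + \xi_i$ forces $\cV_r(y_\infty)$ to be constant in $r$. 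The equality case of Theorem \ref{t:almostmetriccone} then shows $X$ is a metric cone with vertex $y_\infty$, so the limit set $\cF_\infty$ consists entirely of cone vertices of $X$, and by volume convergence $\Vol(B_\gamma(\cF_\infty)) \geq (\epsilon/2)\gamma^{n-k}$.

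Next I would extract $(k,\alpha)$-independent cone vertices from $\cF_\infty$. Suppose for the sake of contradiction that no $(k+1)$ points of $\cF_\infty$ are $(k,\alpha)$-independent in any ball of radius $10$, and take a maximal $(j,\alpha)$-independent subset $\{x_0,\ldots,x_j\} \subset \cF_\infty$ with $j\leq k-1$. Because each $x_i$ is an exact cone vertex, iterated exact cone-splitting produces an isometric factorization $X \cong \dR^j \times C(Y)$ with $\{x_0,\ldots,x_j\}$ lying in the $\dR^j$-factor, and maximality forces $\cF_\infty \subset B_{C(n)\alpha}(\dR^j)$. A Bishop-Gromov comparison in the product then gives $\Vol(B_\gamma(\cF_\infty)) \leq C(n,\rv)\alpha\,\gamma^{n-k+1}$, which contradicts the content lower bound once $\gamma \leq \gamma(n,\rv,\epsilon)$ and $\alpha \leq \alpha(n,\rv,\epsilon,\gamma)$ are small enough; hence such an independent subset must exist.

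Finally, transfer back to $M_i^n$. For $i$ large, GH-closeness produces $(k+1)$ points $x_0',\ldots,x_k' \in \cF_{1,\xi_i}(p_i)$ which are $(k,\alpha/2)$-independent. The defining pinching $|\cV_{\xi_i^{-1}}(x_j') - \cV_{\xi_i}(x_j')| \leq C\xi_i$, combined with Bishop-Gromov monotonicity and Theorem \ref{t:almostmetriccone}, shows that for any prescribed $\delta'>0$ and $\xi_i \leq \xi_i(n,\rv,\delta',\delta)$, every ball $B_s(x_j')$ with $s$ in a large window including $\delta^{-1}$ is $(0,\delta')$-symmetric. Remark \ref{r:cone_splitting}, which is a $k$-fold iteration of Theorem \ref{t:cone_splitting}, then promotes the $(k,\alpha/2)$-independence to $(k,\delta^2)$-symmetry of $B_{\delta^{-1}}(x_0')$, contradicting the choice of sequence. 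The main obstacle is the middle step: converting the content hypothesis into $(k,\alpha)$-independence of almost cone vertices in a metric-space setting. The Euclidean argument of Remark \ref{r: kalpha_independent_Rn} cannot be used directly; one must exploit the exact cone splitting at limit points to reduce to a flat $\dR^j$-factor, within which the sharp $(n-j)$-dimensional tube-volume bound can be applied to reach the contradiction.
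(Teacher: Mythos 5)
Your proof is correct. The paper itself offers no written argument for this theorem: it is declared an immediate consequence of Theorem \ref{t:cone_splitting} together with the observation that points of $\cF_{1,\xi}(p)$ are almost cone vertices, the intended route being the direct quantitative one — if $\cF_{1,\xi}(p)$ contained no $(k,\alpha)$-independent subset it would lie in a $C\alpha$-tube around the $\dR^{k-1}$-factor of an almost-splitting (the manifold analogue of Remark \ref{r: kalpha_independent_Rn}), forcing $\Vol(B_{\gamma}(\cF_{1,\xi}(p)))\le C(n,\rv)\gamma^{n-k+1}$ and contradicting the content hypothesis for $\gamma\le\gamma(n,\rv,\epsilon)$; once independent almost-vertices exist, Remark \ref{r:cone_splitting} finishes. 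You reach the same endpoint by first passing to a $\xi_i\to 0$ limit and using exact volume-cone rigidity, so that the limit pinching set consists of genuine vertices and the tube-volume count takes place in an exact product $\dR^{j}\times C(Y)$. This buys rigidity (no almost-splitting bookkeeping when iterating the splitting) at the cost of making the threshold $\xi(\delta,\epsilon,\gamma,n,\rv)$ non-effective, which the statement permits. One small quantitative slip: the tube estimate should read $\Vol(B_{\gamma}(\cF_\infty))\le C(n,\rv)(\gamma+C\alpha)^{n-j}\le C(n,\rv)\gamma^{n-k+1}$ for $\alpha\le\gamma$ and $j\le k-1$ — there is no extra factor of $\alpha$ in front — but the contradiction with $\epsilon\gamma^{n-k}$ still follows once $\gamma\le\gamma(n,\rv,\epsilon)$, so nothing breaks. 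Also note that when you finally apply Remark \ref{r:cone_splitting} at scale $\delta^{-1}$, the independence parameter of your points degrades from $\alpha/2$ at scale $4$ to roughly $\alpha\delta$ at scale $\delta^{-1}$, so the smallness required of the $(0,\delta')$-symmetry (hence of $\xi$) depends on $\delta$ as well; this is consistent with the stated dependence $\xi\le\xi(\delta,\epsilon,\gamma,n,\rv)$.
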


The import of Theorem \ref{t:content_splitting}, is that if  
the set of pinched points $\cF_{1,\xi}$ has a definite amount of $k$-content, then the ball must be $k$-symmetric.
The scale invariant version states that if
\begin{align}
\label{e:siv}
\Vol(B_{\gamma r}(\cF_{r,\xi}(p)))\ge \epsilon \gamma^{n-k}r^n\, ,
\end{align}
then $B_{\delta^{-1}r}(q)$ is $(k,\delta^2)$-symmetric for some $q\in B_r(p)$.

\subsection{Harmonic $\epsilon$-splitting functions}
\label{ss:delta_splitting}

The following definition, which encapsulates the technique of
 \cite{ChCoAlmost} for obtaining approximate splittings,
is essentially the one
 formalized in \cite{CheegerNaber_Codimensionfour}.
 
\begin{definition}[Harmonic $\delta$-Splitting Map]
 \label{d:splitting_function}
The map $u:B_r(p)\to \mathbb{R}^k$ is a harmonic
  $\delta$-splitting map if
	\begin{enumerate}
	\item $\Delta u = 0$.

    \item $\fint_{B_r(p)}\big|\langle\nabla u^i,\nabla u^j\rangle - \delta^{ij}\big|<\delta$.

	\item $\sup_{B_{r}(p)} |\nabla u| \leq 1+\delta$.
	\item $r^2\fint_{B_r(p)}\big|\nabla^2 u\big|^2<\delta^2$.
	\end{enumerate}
\end{definition}

For the case of limit spaces, we can define $\delta$-splitting maps as follows. 
If $B_r(p_i)\subset M_i\to B_r(p)\subset X$	and $\delta_i$-splitting maps $u_i:B_r(p_i)\to \mathbb{R}^k$ converge uniformly to $u: B_r(p)\to \mathbb{R}^k$ with $\delta_i\to \delta$, we call $u$ is $\delta$-splitting on $B_r(p)\subset X$. By the $W^{1,2}$-convergence in Proposition \ref{p:convergenc_function}, we have that the $\delta$-splitting $u$ satisfies 
(1)--(4) in the limit space.
\vskip2mm

The following is a slight extension of
the result in \cite{ChCoAlmost}. 

\begin{theorem}
\label{t:splitting_function}
	Let $(M^n,g,p)$ satisfy $\Ric_{M^n}\ge -\delta$. For any $\epsilon>0$, if $\delta\le \delta(n,\epsilon)$ then the following hold:
	\vskip2mm
	\begin{itemize}
				\item[(1)] If there exists a $\delta$-splitting function $u:B_2(p)\to \mathbb{R}^k$, then $B_{1}(p)$ is $\epsilon$-GH close to $\mathbb{R}^k\times X$\, .
				\vskip2mm
								 \item[(2)] If $B_{4}(p)$ is $\delta$-GH close to $\mathbb{R}^k\times X$, then there exists an $\epsilon$-splitting function $u:B_2(p)\to \mathbb{R}^k$.
	\end{itemize}
\end{theorem}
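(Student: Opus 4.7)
Both implications are proved by a standard contradiction/compactness argument, combined with the key structural results of Cheeger--Colding. The plan is to take a sequence of counterexamples, pass to a Gromov--Hausdorff limit, and then show that the limit must genuinely split, contradicting the assumption.

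\emph{Part (1).} Suppose the conclusion fails for some $\epsilon>0$. Then there is a sequence $(M^n_i,g_i,p_i)$ with $\Ric_{M^n_i}\ge -\delta_i$, $\delta_i\to 0$, and harmonic $\delta_i$-splittings $u_i\colon B_2(p_i)\to \dR^k$, yet $B_1(p_i)$ is not $\epsilon$-GH close to any product $\dR^k\times X$. After passing to a subsequence, $(M^n_i,p_i)\togh (Y,d,p_\infty)$ (noncollapsed or collapsed, this argument is the same), and by the $W^{1,2}$-convergence for harmonic functions (Proposition \ref{p:convergenc_function}), $u_i\to u_\infty\colon B_2(p_\infty)\to\dR^k$ uniformly with $W^{1,2}$-convergence of the gradients. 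Conditions (1)--(4) of Definition \ref{d:splitting_function} then pass to the limit with $\delta=0$, so $u_\infty$ is harmonic with $\Hess u_\infty\equiv 0$ and $\langle\nabla u_\infty^i,\nabla u_\infty^j\rangle=\delta^{ij}$ a.e. Applying the Cheeger--Colding splitting theorem component-by-component (each component of $u_\infty$ is a linear-growth harmonic function whose gradient has vanishing Hessian, forcing a Busemann line), $Y$ isometrically splits as $\dR^k\times X_\infty$ with the $\dR^k$-coordinates given by $u_\infty$. Hence $B_1(p_i)$ is $o(1)$-GH close to $B_1\subseteq \dR^k\times X_\infty$, a contradiction.

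\emph{Part (2).} Again argue by contradiction: take $(M^n_i,g_i,p_i)$ with $\Ric\geq -\delta_i$, $\delta_i\to 0$, such that $B_4(p_i)$ is $\delta_i$-GH close to $B_4((0,x^*))\subset \dR^k\times X_i$, yet no $\epsilon$-splitting $u\colon B_2(p_i)\to\dR^k$ exists. Pass to a GH limit, which by Gromov compactness and the GH-closeness assumption is a ball in $\dR^k\times X_\infty$. The construction of the splitting map now has two stages. First, one builds reasonable approximate boundary data: letting $b_j(y,z):=y_j$ be the coordinate functions on $\dR^k\times X_\infty$, pull these back (through the GH approximation map) and regularize to obtain $1$-Lipschitz functions $\tilde b_j^i$ on $B_4(p_i)$ that converge uniformly to $b_j$. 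Second, solve the Dirichlet problem on $B_3(p_i)$ with boundary data $\tilde b_j^i$ to obtain harmonic functions $u_j^i$. Conditions (1) and (3) follow from standard gradient estimates (Cheng--Yau) together with the fact that $u_j^i\to b_j$ uniformly, hence $|\nabla u_j^i|\le 1+o(1)$. Condition (2), $\fint |\langle\nabla u_i^a,\nabla u_i^b\rangle-\delta^{ab}|\to 0$, follows by the $W^{1,2}$-convergence of $u_j^i$ to $b_j$. The serious point is condition (4), the integrated Hessian bound. This is obtained from the Bochner identity
\[
\tfrac12\Delta|\nabla u_j^i|^2 = |\nabla^2 u_j^i|^2+\Ric(\nabla u_j^i,\nabla u_j^i),
\]
integrated against a good cutoff $\varphi$ supported in $B_{5/2}(p_i)$ and equal to $1$ on $B_2(p_i)$; integration by parts gives
\[
\int_{B_2(p_i)} |\nabla^2 u_j^i|^2 \le \int \tfrac12 \Delta\varphi\cdot(|\nabla u_j^i|^2-1)+\delta_i\int\varphi|\nabla u_j^i|^2,
\]
and the right-hand side tends to $0$ because $|\nabla u_j^i|^2\to 1$ in $L^1$ (as $u_j^i\to b_j$ in $W^{1,2}$ with $|\nabla b_j|\equiv 1$ on the product) and $\Delta\varphi$ is bounded.

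The main obstacle, as usual in this kind of argument, is the Hessian estimate in Part~(2): one must verify that the $W^{1,2}$-convergence of the approximating harmonic functions is strong enough to upgrade into the integral Hessian control. This is resolved through the Bochner trick above, but requires the noncollapsing or lower volume control implicit in the hypotheses to ensure $L^1\to L^1$ convergence of $|\nabla u_j^i|^2$ on $B_{5/2}(p_i)$. Part (1) is essentially a clean limit argument, so once the $W^{1,2}$-convergence of harmonic functions in Proposition \ref{p:convergenc_function} is granted, it is immediate.
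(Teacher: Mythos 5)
The paper gives no proof of Theorem \ref{t:splitting_function}: it is stated as background, ``a slight extension of the result in \cite{ChCoAlmost},'' and the reader is referred there. Your reconstruction is essentially correct and is the standard argument, so there is nothing to check it against inside the paper itself.

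Two remarks on the comparison with \cite{ChCoAlmost} and on precision. First, the original proof of the implication ``$\delta$-splitting $\Rightarrow$ GH-close to a product'' is \emph{direct and quantitative}: the integral Hessian bound is converted into almost-Pythagorean identities for the distance by integrating along geodesics via the segment inequality, with explicit dependence $\epsilon=\epsilon(\delta|n)$. Your compactness/contradiction version (as in \cite{CheegerNaber_Codimensionfour}) is a legitimate alternative and is shorter, but it buys no effective dependence of $\delta$ on $\epsilon$ and it shifts the burden onto a rigidity statement in the limit. On that point your phrasing is loose: there is no line and no global $\Ric\ge 0$ in the limit, so the Cheeger--Gromoll/Busemann splitting theorem does not literally apply. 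What you need is the \emph{local} rigidity statement that a harmonic function on a ball of a Ricci limit space with $\Hess u\equiv 0$ and $\langle\nabla u^i,\nabla u^j\rangle=\delta^{ij}$ a.e.\ generates, via its gradient flow, an isometric splitting of a smaller concentric ball; this is itself a theorem (proved in \cite{ChCoAlmost}, or in the RCD literature), and you should cite it rather than the global splitting theorem. Second, in Part (2) your parenthetical worry about noncollapsing is unnecessary: all integrals are volume-normalized averages, and the segment-inequality/Bochner argument works for collapsed limits as well, which matters since the theorem as stated assumes no volume lower bound. With these two clarifications the proposal is sound.
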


\subsection{A cutoff function with bounded Laplacian}
\label{ss:cutoff}
The existence of a cutoff function which satisfies the standard estimates and has a definite pointwise bound on its Laplacian is
important technical tool.
 In particular, such a cutoff function is required for the discussion of the local pointed entropy; see Subsection \ref{ss:lpe}.\footnote{The original proof of 
the existence of the required cutoff function  employed solutions of the Poisson equation,
$\Delta u = 1$ and a delicate argument 
based on the quantitative
maximum principle.  One can also give a proof by using heat flow as in \cite{MondinoNaber_Rectifiability}.}

\begin{theorem}[\cite{ChCoAlmost}]
\label{t:cutoff}
	Let $(M^n,g,p)$ be a Riemannian manifold with $\Ric_{M^n}\ge -(n-1)r^2$. Then for any 
there exists cutoff function $\phi_r: M^n\to [0,1]$ with support in $B_r(p)$ such that $\phi_r:=  1$ in $B_{r/2}(p)$. Moreover,
	\begin{align}
	\label{e:cutoff}
	r^2|\nabla \phi_r|^2+r^2|\Delta\phi_r|\le C(n)\, .
	\end{align}
\end{theorem}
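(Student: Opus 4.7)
The plan is to realize $\phi_r$ as a smooth one-variable composition $\phi_r = \psi\circ u$, where $u$ solves a Poisson equation with pointwise bounded right hand side, and $\psi$ is a standard one-dimensional cutoff. With such a construction the identities $\nabla\phi_r = \psi'(u)\nabla u$ and $\Delta \phi_r = \psi''(u)|\nabla u|^2 + \psi'(u)\Delta u$ reduce the desired estimate \eqref{e:cutoff} to three pointwise bounds on $u$: an upper bound, a definite lower bound on the inner ball, and a gradient bound $|\nabla u|\le C(n) r$. By scaling it is enough to treat $r=1$ (so $\Ric\ge -(n-1)$), produce $u$ on $B_1(p)$ with $\text{supp}(\psi\circ u)\Subset B_1(p)$, and then rescale.

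First I would solve the Dirichlet problem $\Delta u = -(n+2)$ on $B_1(p)$ with $u|_{\partial B_1(p)}=0$. The maximum principle gives $u>0$ in the interior, and the Bishop--Gromov--Laplacian comparison theorem, applied to the radial solution $\bar u$ of the same PDE on the model space of constant curvature $-1$, yields dimensional upper and lower barriers: an upper bound $\sup_{B_1(p)} u \le C_1(n)$ and, crucially, a definite interior lower bound $\inf_{B_{3/4}(p)} u \ge c_1(n)>0$. Both follow by standard comparison against $\bar u\circ d(p,\cdot)$, using that $\Delta d(p,\cdot)$ is majorized by the model Laplacian.

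Next comes the heart of the proof, the pointwise gradient estimate $|\nabla u|^2 \le C_2(n)$ on $B_{3/4}(p)$. I would apply the Cheng--Yau local gradient estimate for functions with bounded Laplacian on manifolds with $\Ric\ge -(n-1)$, which is derived from Bochner's formula
\begin{equation*}
\tfrac{1}{2}\Delta|\nabla u|^2 \;\ge\; |\nabla^2 u|^2 + \Ric(\nabla u,\nabla u) + \langle\nabla u,\nabla\Delta u\rangle \;\ge\; \tfrac{(\Delta u)^2}{n} - (n-1)|\nabla u|^2,
\end{equation*}
together with a maximum principle argument on the auxiliary quantity $F=\varphi\,|\nabla u|^2+A u$, where $\varphi$ is a Cheng--Yau cutoff supported in $B_1(p)$ and $A=A(n)$ is chosen so that the bad term $-(n-1)|\nabla u|^2$ is absorbed by the $A\Delta u = -(n+2)A$ contribution. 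Evaluating at an interior maximum and using the already established $L^\infty$ bound on $u$ yields the claimed pointwise control.

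Finally, fix a smooth $\psi:\mathbb{R}\to [0,1]$ with $\psi\equiv 1$ on $[c_1(n),\infty)$, $\psi\equiv 0$ on $(-\infty,c_1(n)/2]$, and $|\psi'|+|\psi''|\le C(n)$. Setting $\phi_1 := \psi\circ u$ (extended by $0$ outside $B_1(p)$), the lower bound gives $\phi_1\equiv 1$ on $B_{1/2}(p)\subset B_{3/4}(p)$; continuity of $u$ with $u|_{\partial B_1}=0$ confines the support to a compact subset of $B_1(p)$. The derivative identities together with $|\nabla u|^2\le C_2(n)$ and $|\Delta u|=n+2$ yield $|\nabla\phi_1|^2+|\Delta\phi_1|\le C(n)$, and scaling gives \eqref{e:cutoff}. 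The main obstacle is the pointwise gradient estimate: under only a lower Ricci bound, Bochner produces a sign-indefinite term, and turning the resulting differential inequality into a truly pointwise (rather than integral) bound is exactly where the Cheng--Yau cutoff and the quantitative non-degeneracy of $u$ from the Laplacian comparison step must be combined carefully.
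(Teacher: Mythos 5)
Your overall architecture (solve a Poisson equation, trap $u$ between barriers, prove a pointwise gradient bound, compose with a one--dimensional cutoff) is exactly the strategy of the cited reference \cite{ChCoAlmost}, which the paper itself does not reprove but describes in a footnote as ``solutions of the Poisson equation and a delicate argument based on the quantitative maximum principle.'' The lower barrier step is fine: since the radial model solution $\bar u$ is decreasing, Laplacian comparison gives $\Delta(\bar u\circ d_p)\ge \underline{\Delta}\,\bar u=-(n+2)$ in the barrier sense, so $\bar u\circ d_p$ is a subsolution and the maximum principle yields $u\ge \bar u\circ d_p\ge c_1(n)$ on $B_{3/4}(p)$. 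However, the same comparison does \emph{not} give the upper bound $\sup_{B_1}u\le C_1(n)$: the inequality $\Delta d_p\le \underline{\Delta} d$ is one--sided, and for a decreasing radial transplant it only produces a subsolution, never a supersolution; there is no pointwise lower bound on $\Delta d_p$, so no radial transplant can serve as an upper barrier. The upper bound is true, but it must come from elsewhere, e.g.\ from $\int_{B_1}|\nabla u|^2=(n+2)\int_{B_1}u$ plus the Dirichlet Poincar\'e inequality (Theorem \ref{t:poincare}) to get an $L^2$ bound, followed by De~Giorgi--Nash--Moser local boundedness (valid under volume doubling and Poincar\'e), or from a Green's function/exit-time estimate.

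The more serious gap is the gradient estimate. You propose $F=\varphi|\nabla u|^2+Au$ and claim the Bochner term $-(n-1)|\nabla u|^2$ is ``absorbed by the $A\Delta u=-(n+2)A$ contribution.'' A term quadratic in $|\nabla u|$ cannot be absorbed by a constant unless one already knows $|\nabla u|$ is bounded, which is what is being proved; the analogy with Cheng--Yau is misleading, because in the genuine Cheng--Yau estimate for $\log$ of a positive harmonic function the bad Ricci term is absorbed by the \emph{quartic} term $\tfrac1n|\nabla\log u|^4$ coming from $(\Delta\log u)^2/n$, and no such term is available for $\Delta u=\mathrm{const}$. To close this step you should either (i) note that Bochner gives $\Delta|\nabla u|^2\ge -2(n-1)|\nabla u|^2$ (since $\nabla\Delta u=0$), so $|\nabla u|^2$ is a subsolution of a linear equation with bounded zeroth--order coefficient and Moser iteration yields $\sup_{B_{1/2}}|\nabla u|^2\le C(n)\fint_{B_{3/4}}|\nabla u|^2\le C(n)$ --- this is precisely how the paper handles the analogous bound in the proof of Theorem \ref{t:sharp_conical}(3) --- or (ii) use the quadratic auxiliary function $\varphi|\nabla u|^2+Au^2$, whose Laplacian contributes $+2A|\nabla u|^2$ and genuinely absorbs the Ricci term, or (iii) invoke the Cheeger--Colding quantitative maximum principle as in the original proof. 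With those two repairs (and a remark on boundary regularity of the metric ball so that the Dirichlet problem is solvable and $u$ is continuous up to $\partial B_1(p)$), your construction goes through.
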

\vskip2mm

\subsection{Heat kernel estimates and heat kernel convergence}
\label{ss:heat_kernel_poincare}
Let $\rho_t(x,y)$ denote the heat kernel on $M^n$. 
For each $x$ we have
$$
\int_{M^n}\rho_t(x,y)\, d\mu(y)=1\, .
$$
Define the function
$f_t(x,y)$ by
\begin{align}
\label{e:logheat}
\rho_t(x,y)=(4\pi t)^{-n/2}e^{-f_t(x,y)}\, .
\end{align}

Next we recall some classical heat kernel estimates for manifolds with lower Ricci curvature bounds, as well as the heat kernel convergence result for Gromov-Hausdorff convergence. We summarize the heat kernel estimates in the following theorem; see \cite{LiYau_heatkernel86}, \,\cite{SoZha},\, \cite{SY_Redbook},\, \cite{Hamilton_gradient},\, \cite{Kot_hamilton_gradient}.

\begin{theorem}[Heat Kernel Estimates]
\label{t:heat_kernel}
Let $(M^n,g,p)$ satisfy $\Ric_{M^n_i}\ge -(n-1)\delta^2$ and 
$\Vol(B_r(p))\ge \rv \cdot r^n>0$,  for $r\le \delta^{-1}$.  Then for any $0<t\le 10\delta^{-2}$ and $\epsilon>0$ with $x,y\in B_{10 \delta^{-1}}(p)$: 
\vskip1mm
	 
	\begin{enumerate}
     \item $-C(n,\rv,\epsilon)+\frac{d^2(x,y)}{(4+\epsilon)t}\leq f_t \leq C(n,\rv,\epsilon)+\frac{d^2(x,y)}{(4-\epsilon)t }$
     \vskip2mm
     
	\item $t|\nabla f_t|^2\leq C(n,\rv,\epsilon)+\frac{d^2(x,y)}{(4-\epsilon)t}$ .
	\vskip2mm
	
	\item $ -C(n,\rv,\epsilon)-\frac{d^2(x,y)}{(4+\epsilon)t}\le  t\Delta f_t \leq C(n,\rv,\epsilon)+\frac{d^2(x,y)}{(4-\epsilon)t }.$
	\end{enumerate}
\end{theorem}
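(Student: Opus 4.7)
The plan is to deduce all three estimates from classical heat-kernel bounds for manifolds with lower Ricci curvature, using the noncollapsing hypothesis to absorb all volume-dependent constants into constants depending only on $n$, $\rv$, $\epsilon$. As a first reduction I would rescale by setting $\tilde g = \delta^2 g$: then $\Ric_{\tilde g} \geq -(n-1)$ and the time range $0 < t \leq 10\delta^{-2}$ becomes $0 < \tilde t \leq 10$. On this rescaled manifold Bishop-Gromov, together with the noncollapsing assumption $\Vol(B_r(p)) \geq \rv\, r^n$ up to the unit scale, gives the two-sided volume control $c(n,\rv)\, r^n \leq \Vol(B_r(x)) \leq C(n)\, r^n$ for $r$ in any fixed bounded range and $x\in B_{10\delta^{-1}}(p)$ (after un-rescaling). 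This is what will turn volume-normalized Gaussian kernels into the purely dimensional constants appearing in the theorem.

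For estimate (1), I would invoke the standard Li-Yau/Schoen-Yau two-sided Gaussian bound in the form
\[
\frac{c^{-1}}{\Vol(B_{\sqrt{t}}(x))}\exp\Bigl(-\frac{d(x,y)^2}{(4-\epsilon)t}-C\Bigr) \leq \rho_t(x,y) \leq \frac{c}{\Vol(B_{\sqrt{t}}(x))}\exp\Bigl(-\frac{d(x,y)^2}{(4+\epsilon)t}+C\Bigr),
\]
valid on manifolds with $\Ric\ge -K$. Taking logarithms, writing $f_t = -\log\rho_t - \tfrac{n}{2}\log(4\pi t)$, and replacing the volume ratios by dimensional constants via the preceding paragraph directly yields the two-sided bound (1).

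For estimate (2), I would apply the Hamilton/Souplet-Zhang gradient estimate to the positive solution $\rho_t(x,\cdot)$ of the heat equation: with an $A$-priori bound $\rho_{t/2} \leq A$ on a slightly earlier time slice (itself furnished by (1)), one has
\[
t\,|\nabla\log\rho_t|^2 \leq C(n,\rv,\epsilon)\bigl(1 + \log(A/\rho_t)\bigr).
\]
Using the upper bound from (1) to bound $\log(A/\rho_t) \leq C + d(x,y)^2/((4-\epsilon)t)$ converts this to $t|\nabla f_t|^2 \leq C + d^2/((4-\epsilon)t)$, which is exactly (2).

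Estimate (3) is the most delicate. The starting point is the identity
\[
\Delta f_t = \partial_t f_t + |\nabla f_t|^2 + \frac{n}{2t},
\]
derived from $\partial_t \rho_t = \Delta \rho_t$. For the lower bound on $t\Delta f_t$, I would use the Li-Yau differential Harnack $|\nabla\log\rho|^2 - \alpha\,\partial_t\log\rho \leq n\alpha^2/(2t) + C(n,\alpha,K)$ with $\alpha$ close to $1$: this gives a lower bound on $t\partial_t f_t$, and combined with (2) yields the claimed lower bound on $t\Delta f_t$. For the upper bound, the rearranged Li-Yau gives $\Delta f_t \leq \tfrac{\alpha-1}{\alpha}|\nabla f_t|^2 + C/t$; choosing $\alpha=1+\epsilon'$ close to $1$ and substituting (2) produces $t\Delta f_t \leq C + \epsilon' \cdot d^2/((4-\epsilon)t)$. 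The main technical obstacle is the constant bookkeeping here: one must choose $\epsilon'$ small enough (and adjust the $\epsilon$ used in applying (2)) so that the final coefficient of $d^2/t$ matches the stated $1/(4-\epsilon)$ while the $K t$ error terms, bounded by $\delta^2 t \leq 10$, are absorbed into $C(n,\rv,\epsilon)$. Once this matching is done, the theorem follows from direct invocations of the cited results.
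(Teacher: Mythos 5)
Your treatment of (1) and (2) follows the same route as the paper (Li--Yau two-sided Gaussian bounds plus noncollapsing for (1); a Hamilton-type gradient estimate $t|\nabla\log\rho_t|^2\le C(1+\log(A/\rho_t))$ fed by the upper bound of (1) for (2)), and the upper bound in (3) via the rearranged Li--Yau inequality combined with (2) is also correct. The genuine gap is the \emph{lower} bound in (3). Writing $\log\rho_t=-\tfrac n2\log(4\pi t)-f_t$ and using $\partial_t\rho=\Delta\rho$, the Li--Yau inequality
\[
|\nabla\log\rho_t|^2-\alpha\,\partial_t\log\rho_t\le \frac{n\alpha^2}{2t}+C
\]
gives $\partial_t\log\rho_t\ge \tfrac1\alpha|\nabla\log\rho_t|^2-\tfrac{n\alpha}{2t}-C$, i.e.\ a \emph{lower} bound on $\partial_t\log\rho_t$ and hence an \emph{upper} bound on $\partial_t f_t$ (and, via $\Delta f_t=\partial_t f_t+|\nabla f_t|^2+\tfrac{n}{2t}$, the upper bound in (3)). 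Your claim that Li--Yau "gives a lower bound on $t\,\partial_t f_t$" has the direction reversed: what you need for the lower bound $t\Delta f_t\ge -C-\tfrac{d^2}{(4+\epsilon)t}$ is precisely the opposite inequality, namely an upper bound $\partial_t\log\rho_t\le \tfrac Ct+\tfrac{d^2}{(4+\epsilon)t^2}$ (equivalently an upper bound on $\Delta\rho_t/\rho_t$, since $\Delta f_t=|\nabla f_t|^2-\partial_t\log\rho_t\ge -\partial_t\log\rho_t$). Li--Yau, and likewise the integrated Harnack inequality, only control the heat kernel's rate of \emph{decrease} in time, not its rate of increase, so no choice of $\alpha$ closes this step.

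This missing direction is exactly why the paper invokes \cite{Hamilton_gradient} and \cite{Kot_hamilton_gradient} in addition to the Li--Yau Harnack for part (3): one needs a Hamilton--Kotschwar-type second-order (or time-derivative) estimate for the heat kernel, sharp enough to produce the Gaussian exponent $d^2/((4+\epsilon)t)$ rather than the lossy bound one would get from crude parabolic regularity (e.g.\ $|\partial_t\rho_t|\le \tfrac Ct\sup_{s\in[t/2,t]}\rho_s$ combined with (1) only yields an exponent of order $d^2/(2t)$). To repair the argument you should replace the Li--Yau step for the lower bound by such an estimate, keeping the rest of your proof as is.
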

\vskip2mm

The estimates in (1) are Li-Yau heat kernel upper and lower bound estimates. (2) follows from (1) and a local gradient estimate, see for instance \cite{SoZha}. (3) follows from the Li-Yau Harnack inequality, (2) and \cite{Hamilton_gradient},\, \cite{Kot_hamilton_gradient}.   
\vskip2mm

The following result is well known in the context of 
Ricci limit spaces and even for RCD spaces.  One direct proof is got using gradient flow convergence of Cheeger energy in \cite{Ambrosio_Calculus_Ricci}; see also \cite{AmHo,GiMoSa}; see \cite{Ambrosio_Calculus_Ricci,AmHo,AmHoTe,ZhaZhu17} for more general results in the RCD setting.   In our application the limit space $X$ is a metric cone in which case the heat kernel convergence was proved in \cite{Ding_heat}.	
\begin{proposition}[Heat kernel convergence]\label{p:heat_kernel_convergence}
Suppose $(M_i,g_i,x_i,\mu_i)\to (X,d,x_\infty,\mu)$ with 
$\Ric_{M^n_i}\ge -(n-1)$ and $\mu_i=\Vol(B_1(x_i))^{-1}\Vol(\, \cdot\,)$.   Then the heat kernel $\rho^i_t(x,y)$ converges uniformly to heat kernel $\rho_t^\infty(x,y)$ on any compact subset of $\dR_{+}\times X\times X$.
\end{proposition}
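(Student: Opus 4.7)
The plan is to combine the uniform heat kernel estimates of Theorem \ref{t:heat_kernel} with a standard compactness/uniqueness argument. First I would extract a subsequential uniform limit of $\rho^i_t$ on compact subsets of $\mathbb{R}_+ \times X \times X$ via Arzelà--Ascoli; then I would identify that limit as the heat kernel on $X$. Because the heat kernel on the limit is unique (the Cheeger--Dirichlet energy on noncollapsed Ricci limits is a regular strongly local Dirichlet form with unique associated semigroup, as established in the references cited in the statement), full convergence follows.

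For the compactness step, fix a compact set $K \subset \mathbb{R}_+ \times X \times X$ and choose $\bar x_i \in M_i$, $\bar y_i \in M_i$ converging to $\bar x, \bar y \in X$. From the noncollapsing hypothesis and Bishop--Gromov, the balls $B_R(x_i)$ have uniform volume lower bounds of the form $\rv \cdot r^n$ for $r \leq \mathrm{const}$ on any fixed compact set. The Li--Yau upper and lower bounds (Theorem \ref{t:heat_kernel} (1)) therefore give uniform two-sided Gaussian estimates
\begin{equation*}
c(K) \leq (4\pi t)^{n/2}\rho^i_t(x,y) \leq C(K)
\end{equation*}
on $K$. The gradient estimate (2), combined with the time-derivative control furnished by (3) and the heat equation $\partial_t \rho^i_t = \Delta_y \rho^i_t$, yields uniform equicontinuity of $(t,x,y) \mapsto \rho^i_t(x,y)$ on $K$ in all three variables. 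By Arzelà--Ascoli and a diagonal subsequence I pass to a continuous limit $\tilde\rho_t(x,y)$ on $\mathbb{R}_+ \times X \times X$.

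Next I would identify $\tilde\rho_t$ with the heat kernel $\rho^\infty_t$ on $(X,d,\mu)$. The key fact, available in the noncollapsed Ricci limit setting, is $W^{1,2}$-Mosco convergence of the Cheeger energies along $M^n_i \to X$; this is what Proposition \ref{p:convergenc_function} encodes on the function side and is the content of the results of \cite{Ambrosio_Calculus_Ricci, AmHo, GiMoSa} at the energy level. Mosco convergence of energies implies strong convergence of the associated semigroups $P^i_t f_i \to P^\infty_t f$ for appropriate test functions $f_i \to f$, and this semigroup convergence passes to the integral kernels once uniform Gaussian bounds are in hand, identifying $\tilde\rho_t$ as the kernel of $P^\infty_t$. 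Since the heat kernel on $X$ is unique, the whole sequence converges, not merely a subsequence.

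The main obstacle is the identification step, namely showing that the subsequential limit $\tilde\rho_t$ actually is the intrinsic heat kernel on $X$ rather than merely some function solving a weak heat equation; this is where the Mosco/$\Gamma$-convergence of Cheeger energies is indispensable, and in our intended application the issue is sidestepped because $X$ is a metric cone, where the heat kernel convergence has already been established directly in \cite{Ding_heat}. Finally, the uniformity of convergence on compact subsets of $\mathbb{R}_+ \times X \times X$ (as opposed to just pointwise a.e.\ convergence) is automatic from the Arzelà--Ascoli argument once the equicontinuity is in place, so no additional work beyond the two steps above is required.
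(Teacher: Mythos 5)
Your outline is sound and matches the route the paper itself takes: the paper does not actually prove this proposition but cites it as well known, pointing precisely to the Mosco/gradient-flow convergence of the Cheeger energies in \cite{Ambrosio_Calculus_Ricci,AmHo,GiMoSa} for the identification step and to \cite{Ding_heat} for the metric-cone case, which is all that is needed in the application. Your two-step argument --- equicontinuity and Arzel\`a--Ascoli from the Li--Yau, gradient, and Laplacian bounds of Theorem \ref{t:heat_kernel}, followed by identification of the subsequential limit via Mosco convergence of the Cheeger energies and uniqueness of the limit heat kernel --- is exactly the standard argument behind those citations, so there is nothing to correct.
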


%
\begin{remark}
\label{r:W12_heatkernel}
By the heat kernel Laplacian estimate in Theorem \ref{t:heat_kernel} and $W^{1,2}$-convergence Proposition \ref{p:convergenc_function},
 it follows that for any fixed $t$, we have $\rho_t^i(x_i,\cdot)\to \rho_t^\infty(x_\infty,\cdot)$ in the $W^{1,2}$-sense as in Definition \ref{d:W12convergence}. 
 \end{remark}
\begin{remark}\label{r:heat_kernel_cone}
If the limit space is a noncollapsed metric cone $X=C(Y)$ with cone vertex $x_\infty$, then for the normalized measure $\mu_\infty=\Vol(B_1(x_\infty))^{-1}\Vol(\,\cdot \,)$ we have:
$$
\rho_t^\infty(x_\infty,\cdot)=	\frac{\Vol(S^{n-1})}{n}
\cdot \frac{e^{-d^2(x_\infty,x)/4t}}{(4\pi t)^{n/2}}\,.
$$
This follows easily by  computing the $s$-derivative of
\begin{align}
	\eta(t,s,x):= \int_{C(X)}\rho_{t-s}^\infty(x,y)\cdot \Big(\rho_s^\infty(x_\infty,y)-\frac{\Vol(S^{n-1})}{n}
	\cdot \frac{e^{-d^2(x_\infty,y)/4t}}{(4\pi t)^{n/2}}
	\Big)\, d\mu_\infty(y),
\end{align}
to conclude that $\eta(t,t,x)=\eta(t,0,x)=0$.  
\end{remark}
\vskip2mm

\subsection{The local pointed entropy, $\cW_t(x)$ and its relation to cone structure}
\label{ss:lpe}${}$
As discussed in Subsection \ref{ss:prelim:volume_cone}, ``Almost volume cones are almost metric cones'',
previously known results on quantitative cone-splitting were stated  in Theorem
 \ref{t:cone_splitting} and Theorem \ref{t:content_splitting},
 As with the definition of neck regions, 
 the hypotheses of
 these results, as well a the definition of neck regions, involve
  the volume ratio $\cV_r(x)$.
 For our purposes, it is crucial to have a sharp version 
 of quantitative cone-splitting.  As mentioned in previous sections,
 it turns out that many 
 technical details are simpler if we use 
 in place of $\cV_r(x)$,
 a less elementary  monotone quantity, the 
 {\it local pointed entropy} $\cW_t(x)$. Therefore,
 it is necessary to have a result stating that (with suitable interpretation)
   $\cW_t(x)$ and $\cV_r(x)$ have
essentially the same behavior.  This is the content of Theorem \ref{t:cW_local},
which also includes the fact that $\cW_t(x)$ is monotone in $t$.
 The  sharp cone-splitting estimate, the statement of which involves
 entropy, is given in Theorem
 \ref{t:sharp_splitting}.
 \vskip2mm
 
 In the present subsection, we derive the needed background
 results on the local pointed entropy, which is a local version of
 Perelman's $\cW$-entropy, 
generalized in \cite{Ni} to smooth manifolds. In order to emphasize the basics, we by discussing the technically
simpler concept of the {\it pointed entropy}.
\vskip1mm

If as in \eqref{e:logheat} we write $\rho_t(x,dy) = (4\pi t)^{-n/2}\cdot e^{-f_{x,t}(y)}$, then by definition the  {\it weighted Laplacian} $\Delta_f$ is the second order operator associated to the weighted Dirichlet energy 
 $$
 \int_{M^n}(4\pi t)^{-n/2}|\nabla f|^2 \, e^{-f} \, dv_g(y)= \int |\nabla f|^2 \,\rho_t(x,dy)\, .
 $$
 Then
$$
\Delta_f=\Delta-\langle \nabla f,\nabla\, \cdot \,\rangle\, .
$$
\vskip1mm

Set
\begin{align}
W_t=2t\, \Delta_f f+t|\nabla f|^2+f-n.
\end{align}

The  {\it pointed entropy}, $\cW_t(x)$, is for each $x$ a {\it global quantity} defined as follows.
\begin{definition}[Pointed entropy]
\begin{align}
\label{e:entropydef1}
\cW_t(x) := \int_{M^n} W_t\cdot \rho_t(x,dy)\, .
\end{align}
\end{definition}
\vskip3mm

\noindent
Bochner's formula states:
\begin{align}
\label{e:bochner}
\frac{1}{2}\Delta |\nabla u|^2= |\nabla^2 u|^2+\Ric(\nabla u,\nabla u)\, .
\end{align}


The following lemma is proved by direct computation. It shows in particular that $\cW_t(x)$ is monotone
decreasing if $\Ric_{M^n}\geq 0$. Moreover, if in addition $\cW_t(x)$ is constant on $[0,r]$, then 
the ball $B_r(x)$ is isometric to $B_r(0)\subset \R^n$; see (\ref{e:hess}).
\begin{lemma}
\label{l:entropy_computation}
\begin{align}
\label{e:entropy_computation}
	\partial_t\cW_t(x)=-2t\int_M\Big(|\nabla^2f-\frac{1}{2t}g|^2+\Ric(\nabla f,\nabla f)\Big)\,
	 \rho_t(x,dy)\le 0\, .
\end{align}
\end{lemma}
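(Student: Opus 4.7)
The plan is to prove the identity by a direct pointwise computation of the Perelman-type quantity $v := W_t\rho_t$, followed by integration and use of heat kernel decay to discard boundary/growth terms. The key structural simplification is to express $W_t$ without the weighted Laplacian: since $\rho_t = (4\pi t)^{-n/2}e^{-f}$ satisfies $\partial_t\rho_t = \Delta\rho_t$, a direct calculation gives the evolution equation
\begin{equation*}
\partial_t f = \Delta f - |\nabla f|^2 - \frac{n}{2t}\, ,
\end{equation*}
so that $\Delta_f f = \Delta f - |\nabla f|^2$ and therefore
\begin{equation*}
W_t = 2t\,\Delta f - t|\nabla f|^2 + f - n\, .
\end{equation*}

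Next I would set $v := W_t\rho_t$ and verify the identity
\begin{equation*}
(\partial_t - \Delta)v = \rho_t\,\bigl[(\partial_t - \Delta + 2\nabla f\!\cdot\!\nabla)W_t\bigr]\, ,
\end{equation*}
which follows by expanding $\Delta(W_t\rho_t)$, using $\partial_t\rho_t = \Delta\rho_t$, and noting $\nabla\rho_t = -\rho_t\nabla f$. The heart of the proof is then the pointwise algebraic identity
\begin{equation*}
(\partial_t - \Delta + 2\nabla f\!\cdot\!\nabla)W_t \,=\, -2t\Bigl[\bigl|\nabla^2 f - \tfrac{1}{2t}g\bigr|^2 + \Ric(\nabla f,\nabla f)\Bigr]\, .
\end{equation*}
To check this I would substitute the formula for $W_t$, use $\partial_t f = \Delta f - |\nabla f|^2 - n/(2t)$, and collect terms; after cancellation one is left with $2\Delta f - t\,\Delta|\nabla f|^2 + 2t\,\nabla f\!\cdot\!\nabla\Delta f - n/(2t)$. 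Invoking Bochner's formula \eqref{e:bochner} to rewrite $\Delta|\nabla f|^2 = 2|\nabla^2 f|^2 + 2\Ric(\nabla f,\nabla f) + 2\nabla f\!\cdot\!\nabla\Delta f$ produces the claimed expression, since $|\nabla^2 f - g/(2t)|^2 = |\nabla^2 f|^2 - \Delta f/t + n/(4t^2)$.

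Finally, to pass from the pointwise identity to the integrated statement I would integrate both sides over $M^n$ with respect to $dv_g$: since $\partial_t\cW_t(x) = \int_{M^n}\partial_t v\,dv_g = \int_{M^n}(\partial_t - \Delta)v\,dv_g + \int_{M^n}\Delta v\,dv_g$, the second integral vanishes once one justifies the divergence theorem in this noncompact setting. The heat kernel estimates in Theorem \ref{t:heat_kernel} (Gaussian upper bound on $\rho_t$ together with the bounds $t|\nabla f|^2 \lesssim 1 + d^2/t$ and $t|\Delta f|\lesssim 1 + d^2/t$) show that $v$ and $\nabla v$ have sufficient Gaussian decay for this. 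The inequality $\partial_t\cW_t\le 0$ then follows immediately because $|\nabla^2 f - g/(2t)|^2 \ge 0$ and $\Ric(\nabla f,\nabla f)\ge 0$ under the nonnegative Ricci assumption in force. The main obstacle is purely mechanical: the Bochner cancellation yielding the compact right-hand side; the analytic justification of the vanishing of $\int\Delta v$ is routine given the heat kernel estimates already recorded.
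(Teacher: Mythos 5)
Your proof is correct and follows essentially the same route as the paper: the paper's entire argument is the pointwise evolution identity \eqref{e:Dt_W}, which is exactly your identity $(\partial_t-\Delta+2\nabla f\!\cdot\!\nabla)W_t=-2t\bigl[|\nabla^2f-\tfrac{1}{2t}g|^2+\Ric(\nabla f,\nabla f)\bigr]$ rewritten with $\Delta_f$, and the integration against $\rho_t$ (discarding the divergence term) is the same step you perform via $v=W_t\rho_t$. Your Bochner bookkeeping and the justification of $\int_{M^n}\Delta v\,dv_g=0$ are fine, and in fact supply details the paper leaves implicit.
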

\begin{proof}
Equation \ref{e:entropy_computation} is easily implied by the following computation; compare \eqref{e:hess}:
\begin{align}
\label{e:Dt_W}
	\frac{d}{dt}W_t=\Delta_fW_t-\langle\nabla f,\nabla W_t\rangle -2t\Big(\,|\nabla^2f-\frac{1}{2t}g|^2
+\Ric_{M^n}(\nabla f,\nabla f)\Big)\, .
\end{align}
\end{proof}
\vskip-1mm

Next assume $(M^n,g,p)$ satisfies $\Ric_{M^n}\ge -(n-1)\delta^2$ and $\Vol(B_r(p))\ge \rv r^n>0$ for $r\le \delta^{-1}$.  In this case we will  define a local monotone quantity that  will play a role analogous to the one played by pointed entropy.
\vskip2mm

Let $\varphi: M^n\to [0,1]$ be a cutoff function as in \eqref{e:cutoff}, with support in $B_{2\delta^{-1}}(p)$, satisfying $\varphi:=  1$ in $B_{\delta^{-1}}(p)$ and $|\Delta \varphi|+|\nabla \varphi|^2\le C(n)\delta^2$. 
\vskip2mm

Set
\begin{align}
	\cW_{t,\varphi}(x):=  \int_{M^n} W_t\varphi\, \rho_t(x,dy)-\int_0^t\left(4s\int_M (n-1)\delta^2 |\nabla f|^2\varphi \rho_s(x,dy)\right) ds.
\end{align}
Then, by direct computation,
\begin{align}
\label{e:et}
	\partial_t \cW_{t,\varphi}(x)=-2t\int_M\Big(|\nabla^2f-\frac{1}{2t}g|^2+\Ric(\nabla f,\nabla f)+2(n-1)\delta^2 |\nabla f|^2\Big)\varphi \cdot \rho_t(x,dy)+\int_M W_t\Delta \varphi \, \rho_t(x,dy).
\end{align}
By using the heat kernel estimate in Theorem \ref{t:heat_kernel}, we can control the last term on the right-hand side of \eqref{e:et}.
Namely, for any $x\in B_{\delta^{-1}/2}(p)$ and $t\le \delta^{-2}$, we have
\begin{align}
\label{e:et1}
	\Big|\int_M W_t\Delta \varphi \cdot \rho_t(x,dy)\Big|&\le \int_M |W_t| ~|\Delta \varphi |\,\rho_t(x,dy)\notag\\
	&\le  C(n,\rv)\delta^2\int_{A_{\delta^{-1},2\delta^{-1}}(p)} \left(1+\frac{d^2(x,y)}{4t}\right)\,\rho_t(x,dy)\le C(n,\rv)\delta^2 \cdot e^{-1/100\delta^2 t}\, .
\end{align}
\vskip2mm

This motivates the following definition of  the local $\cW^\delta_t$ pointed entropy.
\begin{definition}[Local $\cW^\delta_t$ pointed entropy]
\label{d:local_entropy}
Let $(M^n,g,p)$ satisfy $\Ric_{M^n}\ge -(n-1)\delta^2$ and $\Vol(B_r(p))\ge \rv r^n>0$ for $r\le \delta^{-1}$. 
For any $t\le \delta^{-2}$ and $x\in B_{\delta^{-1}/2}(p)$, the local $\cW^\delta_t$ 
pointed entropy is defined by:
\begin{align}
\cW_t^\delta(x):=  \cW_{t,\varphi}^\delta(x):= \cW_{t,\varphi}(x)-C(n,\rv)\delta^2\int_0^t 
e^{-1/100\delta^2s}\, ds.	
\end{align}
\end{definition}
\vskip2mm

\begin{remark}[Scaling]
Put $\tilde{g}=r^{-2}g$. If
$\Ric_{M^n}\ge -(n-1)\delta^2$ then $\Ric_{\tilde M^n}\ge -(n-1)\delta^2 r^2$.
Let $\widetilde{\cW}_t^{\delta r}(x)$ denote 
the local $\cW$-entropy associated with $\tilde{g}$. Then
$$
\cW_{tr^2}^{\delta}(x)=\widetilde{\cW}_{t}^{\delta r}(x)\, .
$$
\end{remark}
\vskip2mm


The following theorem is the main result of this subsection.
According to relation (1),
the local $\cW^\delta_t$ pointed entropy is monotone. By relation (2), it 
  has essentially the same behavior as the volume ratio $\cV_r(x)$.

\begin{theorem}
\label{t:cW_local}
Let $(M^n,g,p)$ denote a pointed Riemannian manifold with 
$\Ric_{M^n}\ge -(n-1)\delta^2$ and $\Vol(B_r(p))\ge \rv \cdot r^n>0$, for $r\le \delta^{-1}$.  Then for all $x\in B_{\delta^{-1}/2}(p)$ and $t\le \delta^{-2}$,
the local $\cW_{t}^\delta$-entropy satisfies:
\vskip3mm

\begin{itemize}
\item[(1)]  $\partial_t\cW_{t}^{\delta}(x)
	\le -2t\int_M\Big(|\nabla^2f-\frac{1}{2t}g|^2+\Ric(\nabla f,\nabla f)+2(n-1)\delta^2|\nabla f|^2\Big)\varphi\,  \rho_t(x,dy)\le 0$.	
	\vskip2mm
	
\item[(2)] Given $\epsilon>0$,
assume that  $\delta\le \delta(n,\rv,\epsilon)$,  $0<t\le 10$,  and
\begin{align}
\label{e:ve}
|\cV_{\sqrt{t}\delta^{-1}}(x)-\cV_{\sqrt{t}\delta}(x)|\le \delta\, .
\end{align}
\end{itemize}

\noindent
Then
$$
|\cW_{t}^\delta(x)-\log \cV_{\sqrt{t}}^{\delta^2}(x)|\le \epsilon\, .
$$
	
\end{theorem}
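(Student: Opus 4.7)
For part (1), my plan is to differentiate the definition of $\cW_t^\delta$ directly. Starting from identity \eqref{e:et}, I would write
\[
\partial_t \cW_t^\delta(x) = \partial_t \cW_{t,\varphi}(x) - C(n,\rv)\delta^2 e^{-1/(100\delta^2 t)},
\]
and then use the cutoff error estimate \eqref{e:et1} to absorb the term $\int W_t\,\Delta\varphi\,\rho_t\,d y$ by the subtracted correction. This leaves
\[
\partial_t \cW_t^\delta(x) \le -2t\int_M\Big(|\nabla^2 f - \tfrac{1}{2t}g|^2 + \Ric(\nabla f,\nabla f) + 2(n-1)\delta^2|\nabla f|^2\Big)\varphi\,\rho_t\,dy,
\]
which is nonpositive because $\Ric\ge -(n-1)\delta^2$ forces the integrand to be bounded below by $|\nabla^2 f - \tfrac{1}{2t}g|^2 + (n-1)\delta^2|\nabla f|^2\ge 0$.

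For part (2), I would argue by contradiction and compactness. Using the scaling relation $\cW^{\delta}_{tr^2} = \widetilde{\cW}^{\delta r}_{t}$ it suffices to treat $t=1$. Assuming the estimate fails uniformly, I would extract a sequence $(M^n_i,g_i,p_i,x_i)$ with $\delta_i\to 0$ satisfying all hypotheses but with $|\cW^{\delta_i}_1(x_i)-\log\cV^{\delta_i^2}_1(x_i)|>\epsilon_0$. Gromov compactness combined with the noncollapsing gives a pointed GH limit $(X,d,x_\infty)$ to which $\Vol$ converges. The volume pinching hypothesis at the scales $\delta_i\to 0$ and $\delta_i^{-1}\to\infty$ together with Theorem \ref{t:almostmetriccone} force $X$ to be a metric cone $C(Y)$ with $x_\infty$ at the vertex.

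On such a limit, Proposition \ref{p:heat_kernel_convergence} (with the $W^{1,2}$ strengthening of Remark \ref{r:W12_heatkernel}) gives convergence $\rho^i_t(x_i,\cdot)\to \rho^\infty_t(x_\infty,\cdot)$, and Remark \ref{r:heat_kernel_cone} provides the explicit formula $\rho^\infty_t(x_\infty,y)=\cV(x_\infty)^{-1}(4\pi t)^{-n/2}e^{-d^2/4t}$. Writing $\rho^\infty_t = (4\pi t)^{-n/2}e^{-f}$ on the cone and using that $\Delta d^2=2n$ away from the vertex, I would verify by direct substitution that the integrand $W_t = 2t\Delta_f f + t|\nabla f|^2 + f - n$ is identically equal to $\log\cV(x_\infty)$; hence $\cW_1(x_\infty) = \log\cV(x_\infty)$. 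The Ricci correction term $\int_0^t 4s(n-1)\delta_i^2|\nabla f|^2\varphi\,\rho_s\,dy\,ds$ and the subtracted exponential correction both tend to $0$ with $\delta_i$, and the cutoff $\varphi$ supported in $B_{\delta_i^{-1}}(p_i)$ exhausts the limit, so $\cW^{\delta_i}_1(x_i)\to\log\cV(x_\infty)$. Since also $\log\cV^{\delta_i^2}_1(x_i)\to\log\cV(x_\infty)$ by volume convergence, this contradicts the assumed gap.

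The main obstacle I anticipate is justifying convergence of the entropy integrals through a possibly highly singular GH limit. Although $W^{1,2}$-convergence controls the $|\nabla f|^2$ term, the expression $W_t$ also involves $\Delta_f f = \Delta f - |\nabla f|^2$, whose limit on the singular cone must be interpreted carefully. My plan is to exploit the Gaussian-type upper and lower bounds of Theorem \ref{t:heat_kernel} on $f$, $|\nabla f|^2$, and $t\Delta f$ to obtain uniform integrability against $\varphi\rho_t$ on the relevant annular regions, pass to the limit on compact sets away from $x_\infty$, and handle a shrinking ball near $x_i$ by the quantitative heat kernel bounds. Together with the explicit cone computation, this should close the argument.
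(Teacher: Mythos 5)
Your proposal is correct and follows essentially the same route as the paper: part (1) is read off from the identities \eqref{e:et}--\eqref{e:et1} together with $\Ric(\nabla f,\nabla f)+2(n-1)\delta^2|\nabla f|^2\ge 0$, and part (2) is proved by the same contradiction/compactness scheme — rescale to $t=1$, pass to a metric cone limit via the volume pinching, invoke Proposition \ref{p:heat_kernel_convergence} with Remarks \ref{r:W12_heatkernel} and \ref{r:heat_kernel_cone}, and compute that the limiting entropy equals $\log\cV(x_\infty)$ (your pointwise identity $W_1\equiv\log\cV$ on the cone is a harmless variant of the paper's integral computation). The convergence issues you flag are handled in the paper exactly as you propose, via the Gaussian bounds of Theorem \ref{t:heat_kernel} and the $W^{1,2}$-convergence machinery.
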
 
\begin{proof}
It suffices to prove (2).  

Assume that (2) does not hold  for some 
$\epsilon_0>0$. Then there exists $\delta_i\to 0$ and $(M^n_i,g_i,p_i)$ 
satisfying 
$\Vol(B_r(p_i))\ge \rv r^n>0$ for $r\le \delta_i^{-1}$,
$\Ric_{M^n_i}\ge -(n-1)\delta_i^2$ and such that for some 
$x_i\in B_{\delta^{-1}/2}(p_i)$, we have 
$$
|{\cV}_{\sqrt{t_i}\delta_i}(x_i)-{\cV}_{\delta^{-1}\sqrt{t_i}}(x_i)|\le \delta_i
$$ 
with $0<t_i\le 10$,
but 
$$
|\cW_{t_i}^{\delta_i}(x_i)-\log\cV_{\sqrt{t_i}}(x_i)|\ge \epsilon_0\, .
$$  
\vskip2mm

The rescaled spaces,
$({M^n}_i,\tilde{g}_i,x_i)=(M^n_i,t_i^{-1}g_i,x_i)$,
satisfy
$
\Ric_{M^n_i}\ge -(n-1)\delta_i^2$, and
\begin{align}
|\tilde{\cV}_{\delta_i}(x_i)-\tilde{\cV}_{\delta_i^{-1}}(x_i)|&\le \delta_i\, ,\notag\\
|\tilde{\cW}^{\delta_i \sqrt{t}_i}_1(x_i)-\log{\tilde{\cV}}_{1}(x_i)|
&\ge \epsilon_0\, .\notag
\end{align}
 
 Denote the  heat kernel at time $t=1$ of $({M^n}_i,x_i,\tilde{g}_i)$ by
 $$
 \tilde{\rho}_1(x_i,y)=(4\pi)^{-n/2}e^{-\tilde{f}}\, .
 $$ 
 By the heat kernel estimate in Theorem \ref{t:heat_kernel}, it follows that
 for $\delta_i$ sufficiently small, we  have 
 $$
 |\tilde{\cW}_1(x_i)-\tilde{\cW}_1^{\delta_i\sqrt{t}_i}(x_i)|<\epsilon_0/4\, ,
 $$ 
 where
\begin{align}
	\tilde{\cW}_1(x_i)=\int_{B_{\delta^{-1}_i/2}(x_i)} \Big(|\nabla \tilde{f}|^2+\tilde{f}-n\Big)\, \tilde{\rho}_1(x_i,dy)\, .
\end{align}
 Therefore, for $\delta_i$ sufficiently small,  
 \begin{align}
 \label{e:last}
 |\tilde{\cW}_{1}(x_i)-\log\tilde{\cV}_{1}(x_i)|\ge \epsilon_0/2\, ,
\end{align}
   We will deduce a contradiction to this estimate by letting $i\to\infty$. 
   \vskip2mm

   Thus by Gromov's compactness theorem, there exists a subsequence of 
   $(M^n_i,t_i^{-1}g_i,x_i)$ converging to some metric cone $(C(X^n),d,x_\infty)$. 
   By volume convergence result in \cite{Co1,Cheeger01} (see also Theorem \ref{t:cross_tangent_cone}) we have 
   $$
   \frac{\Vol(X)}{\Vol(S^{n-1})}=\lim_{i\to\infty} \tilde{\cV}_1(x_i)\, .
   $$

By using the heat kernel convergence in 
Proposition \ref{p:heat_kernel_convergence}, together with Remark \ref{r:heat_kernel_cone}, Remark \ref{r:W12_heatkernel} and  the heat kernel estimate in Theorem \ref{t:heat_kernel}, we  conclude that
\begin{align}
	\lim_{i\to\infty}\tilde{\cW}_{1}(x_i)= \int_{C(X)} \Big(|\nabla f_\infty|^2+f_\infty-n\Big)\, \rho_1(x_\infty,dy)\, ,
\end{align}
where $$
\rho_1(x_\infty,y)=(4\pi )^{-n/2}e^{-f_\infty}=\frac{\Vol(S^{n-1})}{\Vol(X)}(4\pi )^{-n/2}e^{-d^2(x_\infty,y)/4}\, .
$$
 A simple computation gives 
\begin{align}
	\int_{C(X)} \Big(|\nabla f_\infty|^2+f_\infty-n\Big)\, \rho_1(x_\infty,dy)=\log\frac{\Vol(X)}{\Vol(S^{n-1})}, .
\end{align}
Since $\tilde{\cW}_{1}(x_i)$ and 
$\log\tilde{\cV}_1(x_i)$ have the same limit, this is a contradiction. 
\end{proof}
\vskip2mm

\subsection{
$(k,\alpha,\delta)$-entropy pinching}
\label{ss:entropy_pinching}
Recall that in Definition \ref{d:independent_points1},
we introduced the notion of a collection of a $(k,\alpha)$-independent
set of points $x_0,\ldots,x_k$. We will use a refinement of this notion to define 
the pinching of the local pointed entropy $\cW_t(x)$. This will be used in the Sharp Cone-Splitting theorem, Theorem
\ref{t:sharp_splitting}.

\begin{definition}
\label{d:kalphadeltapinching}
The $(k,\alpha, \delta)$-entropy pinching, $\cE^{k,\alpha,\delta}_r(x)$ is:

\begin{align}
\label{e:outline:k_pinching}	
\cE^{k,\alpha,\delta}_r(x):= \inf_{\{x_i\}_0^k} \sum 
\big|\cW^\delta_{2r^2}(x_i)-\cW^\delta_{r^2}(x_i)\big|\, ,
\end{align}
where the infimum is taken over all $(k,\alpha)$-independent subsets and the parameter $\delta$ is corresponding to Ricci curvature lower bound.
\end{definition}
\vskip2mm

From the discussion above, it follows that if $\cE^{k,\alpha,\delta}_1(p)<\delta=\delta(\epsilon,\alpha)$, 
then there exists a $(k,\epsilon)$-splitting map $u:B_1(p)\to \dR^k$.  The sharp version
of this relationship is the content of  
Theorem \ref{t:sharp_splitting}, the Sharp Cone-Splitting Theorem. This theorem 
 states that there exists $C(n,\rv,\alpha)$ and
 a splitting map 
$u$  for which the integral of the norm
squared of the Hessian has the following sharp {\it linear} bound in terms of
 the $k$-pinching:\footnote{The proof of \eqref{e:outline:sharp_splitting} is one instance in which choosing to use the pointed entropy as our monotone quantity helps to make the argument run more smoothly than if we had chosen to work with the volume ratio $\cV_r(x)$.}
\begin{align}
\label{e:outline:sharp_splitting}
\fint_{B_1(p)} |\nabla^2 u|^2 \leq C(n,\rv,\alpha)\cdot \cE^{k,\alpha,\delta}_1(p)\, ,
\end{align}
\vskip2mm

\subsection{Poincar\'e inequalities}

We recall various Poincar\'e inequalities which hold on manifolds with Ricci lower bound; see also \cite{Bu,Cheeger_DiffLipFun,Cheeger01,ChCoIII}. We will need
the ones which follow:

\begin{theorem}[Poincar\'e Inequalities]\label{t:poincare}
	Let $(M^n,g,x)$ satisfy $\Ric_{M^n}\ge -(n-1)$. Then for any $0<r\le 10$, the following 
	Poincar\'e inequalities hold:
	\vskip3mm
	\begin{itemize}
		\item[(1)] $\fint_{B_r(x)}f^2\le C(n)\cdot r^2\fint_{B_r(x)}|\nabla f|^2$\qquad ({\rm for\,\,
		 all} $f\in C^\infty_0(B_r(x))$)\, .
		\vskip2mm
		\item[(2)] $\fint_{B_r(x)}\Big|f-\fint_{B_r(x)}f\Big|^2\le C(n)\cdot 
		r^2\fint_{B_r(x)}|\nabla f|^2$ \qquad ({\rm for\,\, all} $f\in C^\infty(B_r(x))$\, .
	\end{itemize}
\end{theorem}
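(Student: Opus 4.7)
The plan is to deduce both inequalities from the Cheeger--Colding segment inequality, itself a direct consequence of Bishop--Gromov volume comparison. Under $\Ric_{M^n}\ge -(n-1)$, for any Borel $g\ge 0$ on $B_{2r}(x)$ with $r\le 10$,
\[
\int_{B_r(x)}\!\!\int_{B_r(x)} F_g(y,z)\,dy\,dz \;\le\; C(n)\, r\, \Vol(B_r(x))\int_{B_{2r}(x)} g\, ,
\]
where $F_g(y,z):=\inf_\gamma\int_0^{d(y,z)} g(\gamma(s))\,ds$ ranges over minimizing unit-speed geodesics. The essential geometric input is the Laplacian comparison $\Delta r\le (n-1)\coth r$, which controls the Jacobian of the exponential map along radial segments.

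I first establish (2). For $f\in C^\infty(B_r(x))$ and $y,z\in B_r(x)$, Cauchy--Schwarz along any minimizing geodesic $\gamma_{y,z}$ yields $|f(y)-f(z)|^2 \le d(y,z)\,F_{|\nabla f|^2}(y,z)\le 2r\,F_{|\nabla f|^2}(y,z)$. Averaging in $(y,z)\in B_r(x)\times B_r(x)$ and invoking the segment inequality with $g=|\nabla f|^2$ gives
\[
\fint_{B_r}\!\fint_{B_r} |f(y)-f(z)|^2\,dy\,dz \;\le\; C(n)\, r^2 \fint_{B_r}|\nabla f|^2\, .
\]
Writing $f(y)-\bar f=\fint_z(f(y)-f(z))\,dz$ and applying Jensen dominates the left-hand side by $\fint_{B_r}|f-\bar f|^2$, giving (2).

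For (1), the same machinery applies with a slight twist. Given $f\in C^\infty_0(B_r(x))$, extend by zero to $B_{2r}(x)$ and pair points $y\in B_r(x)$ with points $z\in B_{2r}(x)\setminus \operatorname{supp}(f)$, on which $f(z)=0$, so that $|f(y)|^2\le 3r\,F_{|\nabla f|^2}(y,z)$. Averaging in $z$, integrating in $y$, applying the segment inequality on $B_{2r}(x)$, and invoking the Bishop--Gromov doubling $\Vol(B_{2r})\le A(n)\Vol(B_r)$ then produces (1). Since the compact support of $f$ is strictly interior to $B_r(x)$, the averaging set for $z$ has definite volume relative to $B_r$, keeping the constant at $C(n)$.

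The substantive content is entirely in the segment inequality; everything else is routine bookkeeping. The only mildly delicate step is the passage from (2) to (1), where one must handle the annular averaging carefully so that the doubling constant does not blow up, but the compact support hypothesis combined with Bishop--Gromov doubling is exactly what makes this go through uniformly in the geometry.
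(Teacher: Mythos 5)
Your argument for (1) is essentially the paper's: the Dirichlet inequality does follow directly from the segment inequality, because after extending $f\in C^\infty_0(B_r(x))$ by zero the gradient is supported in $B_r(x)$, so the enlarged ball appearing on the right-hand side of the segment inequality costs nothing beyond a doubling constant. (You should still justify that the set of admissible base points $z$ has volume $\ge c(n)\Vol(B_r(x))$ — this needs a Bishop--Gromov argument locating a ball of definite volume in $B_{2r}(x)\setminus B_{3r/2}(x)$, not just the remark that the support is interior — but that is standard.)

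The proof of (2) has a genuine gap. The segment inequality bounds $\int_{B_r}\int_{B_r}F_g$ by $C(n)\,r\,\Vol(B_r(x))\int_{B_{2r}(x)}g$, with the integral of $g$ over the \emph{doubled} ball, precisely because minimizing geodesics between points of $B_r(x)$ need not stay in $B_r(x)$. Two problems follow. First, for $f\in C^\infty(B_r(x))$ the quantity $F_{|\nabla f|^2}(y,z)$ is not even defined, since $|\nabla f|$ must be integrated along geodesics that may exit the domain of $f$; there is no zero-extension available here. Second, even if $f$ were given on $B_{2r}(x)$, what you obtain is the \emph{weak} Poincar\'e inequality
\[
\fint_{B_r(x)}\Big|f-\fint_{B_r(x)}f\Big|^2\le C(n)\,r^2\fint_{B_{2r}(x)}|\nabla f|^2\, ,
\]
and your displayed inequality with $\fint_{B_r(x)}|\nabla f|^2$ on the right does not follow from the segment inequality by "routine bookkeeping." The passage from the weak form to the strong (same-ball) form is a separate theorem: it requires a chaining/covering argument on Whitney-type balls using volume doubling, as in Haj\l asz--Koskela or Jerison, and this is exactly the step the paper invokes after deriving the weak inequality. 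You have omitted it entirely, and it is the only nontrivial part of (2).
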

\vskip1mm

	The Dirichlet Poincar\'e inequality (1) follows directly from {\it segment inequality} in \cite{ChCoAlmost} and \cite{Cheeger01}. For the Neumann Poincar\'e Inequality (2), by using segment inequality, we have a weak Poincar\'e inequality \cite{ChCoIII}. 
	By the volume doubling and a covering argument in \cite{HaKo} or \cite{Jer}, we can
	obtain the Neumann Poincar\'e inequality (2).


\subsection{$W^{1,2}$-convergence}
\label{ss:prelmiW12convergence}

 Below, the notation $(Z_i,d_i,z_i)\stackrel{d_{GH}}{\longrightarrow}(Z,d,z)$
 should always be understood as convergence  in the  measured Gromov-Hausdorff sense.
 In this subsection, we  will assume without explicit mention that the metric measure space $(Z,d,\mu)$ is separable and complete and that $\mu$ is a Borel measure $\mu$ which is finite on bounded subset of $Z$. 
\begin{definition}[Uniform convergence]
Let  $(Z_i,d_i,z_i)\stackrel{d_{GH}}{\longrightarrow}
 (Z,d,z)$. If $f_i$ are Borel functions on $Z_i$, then we say
  $f_i\to f: Z\to \mathbb{R}$ {\it uniformly} if for any compact subset $K_i\subset Z_i\to K\subset Z$ and $\epsilon_i$-GH approximation $\Psi_i: K\to K_i$ with $\epsilon_i\to 0$, the function $f_i\circ \Psi_i$ converges to $f$ uniformly on $K$. 
\end{definition}

As motivation for what follows, recall that on a fixed metric measure space, for $1<p<\infty$, weak convergence together with convergence of norms implies strong convergence.
\begin{definition}[Weak $L^p$ convergence]
	Let $(Z_i,d_i,z_i,\mu_i)\stackrel{d_{GH}}{\longrightarrow} (Z,d,z,\mu)$. If $ f_i$ are Borel function on $Z_i$, we say $f_i\to f:Z\to \mathbb{R}$ in the 
	{\it weak sense} if for any uniformly converging sequence of compactly supported Lipschitz functions $\varphi_i\to \varphi$, we have 
	\begin{align}\label{e:weak_L2}
		\lim_{i\to \infty}\int f_i\varphi_i d\mu_i= \int f \varphi d\mu.
	\end{align}
	Moreover if $f_i,f$ have uniformly bounded $L^p$ integral then we say $f_i\to f$ in weak $L^p$ sense.
\end{definition}
Any uniformly bounded $L^p$ sequence $f_i$ has a weak limit $f$.	See also \cite{GiMoSa} for a definition of the weak convergence by embedding $Z_i,Z$ to a common metric space $Y$.
\begin{definition}[$L^p$ and $W^{1,p}$ convergence]\label{d:W12convergence} 
Let  $(Z_i,d_i,z_i,\mu_i)\stackrel{d_{GH}}{\longrightarrow}
 (Z,d,z,\mu)$ 
 and let $f_i$ denote Borel functions on $Z_i$. For $p<\infty$ we say $f_i\to f:Z\to \mathbb{R}$ 
 {\it in the $L^p$ sense} if $f_i\to f$ in the weak $L^p$ sense and 
 $$\int_{Z_i}|f_i|^p\to \int_Z |f|^p\, .$$
If $f_i\to f$ in $L^p$ sense and  $$
\int_{Z_i}|\nabla f_i|^p\to \int_{Z}|\nabla f|^p\, 
$$
 we say $f_i\to f$ in the {\it $W^{1,p}$-sense}. 
 \vskip4mm
 
 The following can easily be checked. Thus, the proof will be omitted.
\end{definition}
\begin{proposition}
\label{p:fgconvergence}
${}$
\begin{enumerate}
	\item If $f_i$ converges to a constant $A$ in $L^2$ sense then $f_i^2-A$ converges in $L^1$ to zero. 
	\vskip2mm
	\item If $f_i$ and $g_i$ converge to $f$ and $g$ in $L^2$ sense respectively then $f_ig_i\to fg$ in $L^1$ sense. 
	\vskip2mm
\item Uniform convergence implies $L^p$ convergence for any $0<p<\infty$.	
\end{enumerate}
\end{proposition}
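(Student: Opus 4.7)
My plan is to deduce all three assertions from the general principle that $L^2$-convergence in the sense of Definition \ref{d:W12convergence}, when localized to a fixed ball, behaves like strong convergence in a Hilbert space (a Radon--Riesz style argument). I would dispose of (3) first, since it is almost immediate: given a compactly supported Lipschitz $\varphi_i\to\varphi$ converging uniformly on a compact set $K$, the uniform convergence $f_i\to f$ on $K$ together with the measure convergence $\mu_i\to\mu$ built into mGH convergence yields $\int f_i\varphi_i\,d\mu_i\to\int f\varphi\,d\mu$, which is the weak $L^p$-convergence. Applying the same reasoning to $|f_i|^p\to|f|^p$ (which is uniform on $K$) delivers $\int|f_i|^p\,d\mu_i\to\int|f|^p\,d\mu$, finishing (3).

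For (1), which I interpret as the assertion that $f_i^2\to A^2$ in the $L^1$ sense (the displayed $f_i^2-A$ reading as a typo for $f_i^2-A^2$), the key step is to upgrade weak $L^2$-convergence to local norm convergence. I would expand
\[
\int_{B_R}(f_i-A)^2\,d\mu_i=\int_{B_R}f_i^2\,d\mu_i-2A\int_{B_R}f_i\,d\mu_i+A^2\mu_i(B_R),
\]
and observe that the three terms on the right tend to $A^2\mu(B_R)$, $-2A^2\mu(B_R)$, and $A^2\mu(B_R)$ respectively (using $L^2$-norm convergence, weak $L^2$-convergence tested against a Lipschitz cutoff approximating $\mathbf{1}_{B_R}$, and measure convergence), so that $\|f_i-A\|_{L^2(B_R)}\to 0$. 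Cauchy--Schwarz then gives
\[
\int_{B_R}|f_i^2-A^2|\,d\mu_i\le \|f_i-A\|_{L^2(B_R)}\,\|f_i+A\|_{L^2(B_R)}\to 0,
\]
the second factor being uniformly bounded. Testing $f_i^2-A^2$ against any uniformly convergent Lipschitz $\varphi_i$ in the same way produces the weak $L^1$-convergence, completing (1).

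For (2), I would use polarization to write $f_ig_i=\tfrac14\bigl[(f_i+g_i)^2-(f_i-g_i)^2\bigr]$; by the linearity of $L^2$-convergence this reduces the statement to showing that $h_i\to h$ in $L^2$ implies $h_i^2\to h^2$ in $L^1$, which follows from exactly the Cauchy--Schwarz argument used in (1) once we have $\|h_i-h\|_{L^2(B_R)}\to 0$. The main obstacle is that the cross term $\int h_i h\,d\mu_i$ appearing in the expansion of $\int(h_i-h)^2$ does not literally make sense, because $h$ lives on $Z$ while $h_i$ lives on $Z_i$. I would resolve this by approximating $h$ in $L^2(B_R)$ by a compactly supported Lipschitz function $\psi$, pulling $\psi$ back along the $\varepsilon_i$-GH maps to obtain $\psi_i$ on $Z_i$, applying weak $L^2$-convergence to $\int h_i\psi_i\,d\mu_i$, and then using the uniform bound $\sup_i\|h_i\|_{L^2(B_R)}<\infty$ (from norm convergence) to pass $\psi\to h$ via a diagonal argument. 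This is the only place the argument genuinely engages with the varying-space nature of mGH convergence; once it is in place the rest of the proof is routine.
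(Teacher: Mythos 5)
The paper itself gives no proof of this proposition (it is stated with ``the following can easily be checked. Thus, the proof will be omitted''), so there is no argument of the authors to compare yours against; your route — localizing to a ball and running a Radon--Riesz upgrade from weak-plus-norm convergence to strong convergence via Lipschitz approximation — is the standard and correct one for (1) and (3), and your reading of (1) as $f_i^2-A^2\to 0$ is the right interpretation (it is what is actually used in the proof of Proposition \ref{p:transfor_prop}, where $A=\sqrt 2$).

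There is, however, a circularity in your reduction for (2). You write $f_ig_i=\tfrac14[(f_i+g_i)^2-(f_i-g_i)^2]$ and invoke ``the linearity of $L^2$-convergence'' to conclude $f_i\pm g_i\to f\pm g$ in the $L^2$ sense of Definition \ref{d:W12convergence}. Weak $L^2$-convergence of the sum is indeed automatic, but the paper's notion of $L^2$-convergence also requires norm convergence, and
\[
\int_{Z_i}|f_i+g_i|^2\,d\mu_i=\int_{Z_i}|f_i|^2\,d\mu_i+2\int_{Z_i}f_ig_i\,d\mu_i+\int_{Z_i}|g_i|^2\,d\mu_i ,
\]
so the norm convergence of $f_i+g_i$ is \emph{equivalent} to the convergence of the cross term $\int f_ig_i\,d\mu_i$, which is (the integral form of) the statement you are trying to prove. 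As written, the polarization step assumes the conclusion. The repair is straightforward and uses only the tool you already introduce at the end of your proof of (2): first establish, for $f_i$ and $g_i$ \emph{separately}, the strong approximation property (for every $\epsilon>0$ there is a compactly supported Lipschitz $\psi$ with pullbacks $\psi_i$ such that $\limsup_i\|f_i-\psi_i\|_{L^2(B_R)}<\epsilon$), which follows from expanding $\int|f_i-\psi_i|^2$ and using norm convergence, weak convergence against $\psi_i$, and measure convergence. Once both $f_i$ and $g_i$ are strongly approximable, the product statement follows directly from $\int|f_ig_i\varphi_i-\psi_i\chi_i\varphi_i|\le C(\|f_i-\psi_i\|_{2}+\|g_i-\chi_i\|_{2})$ together with the weak-$*$ convergence of $\mu_i$ tested against $\psi\chi\varphi$ — no polarization needed — or, if you prefer polarization, the strong approximability of each factor now legitimately yields the norm convergence of $f_i\pm g_i$. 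A second, more minor point: your direct expansion of $\int_{B_R}(f_i-A)^2$ in (1) uses $\int_{B_R}f_i^2\,d\mu_i\to A^2\mu(B_R)$ and $\int_{B_R}f_i\,d\mu_i\to A\mu(B_R)$, which require either $\mu(\partial B_R)=0$ or a sandwich with Lipschitz cutoffs between $B_{R-\epsilon}$ and $B_R$; this is routine but should be said, since the norm convergence in the definition is stated on the whole space rather than on $B_R$.
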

\vskip2mm

The proof of the following Proposition \ref{p:convergenc_function},
on $W^{1,2}$-convergence for functions with $L^2$ Laplacian bound, 
depends on the Mosco convergence of the Cheeger energy; see Theorem 4.4 of \cite{AmHo}.  In our application the limit $X$ is a metric cone and $u_i$ is Lipschitz in which case the proposition is simply proved by using the result in \cite{Ding_heat} without involving RCD notions;  
for related discussion in the  metric measure space contex, see\cite{AmHoTe,Cheeger_DiffLipFun,GiMoSa,MondinoNaber_Rectifiability,ZhaZhu17}.
\begin{proposition}[$W^{1,2}$-convergence]\label{p:convergenc_function}
				Let $(M^n_i,g_i,x_i,\mu_i)\to (X,d,x_\infty,\mu)$ with $\Ric_{M^n_i}\ge -(n-1)$ and $\mu_i=\Vol(B_1(x_i))^{-1}\Vol$.    Let $u_i: B_{R}(x_i)\to \mathbb{R}$ be smooth functions satisfying  for some $C$
$$
\fint_{B_R(x_i)}|u_i|^2+\fint_{B_R}|\nabla u_i|^2+\fint_{B_{R}(x_i)}|\Delta u_i|^2\le C\, . 
$$
 If $u_i$ converge in the $L^2$-sense to a $W^{1,2}$-function $u_\infty:B_R(x_\infty)\to\dR$, then 
	\begin{enumerate}
		\item $u_i\to u_\infty$ in $W^{1,2}$-sense over $B_R(x_\infty)$.
		\vskip1mm
		\item $\Delta u_i\to \Delta u_\infty$ in weak $L^2$ sense. 
		\vskip1mm
		\item If $\sup_{B_R(x_i)}|\nabla u_i|\le L$ for some uniform constant then $u_i\to u_\infty$ in $W^{1,p}$-sense for any $0<p<\infty$.
	\end{enumerate}
\end{proposition}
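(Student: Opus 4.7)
The plan is to prove the three claims together by combining integration by parts with the Mosco convergence of Cheeger energies under noncollapsed measured Gromov--Hausdorff convergence, which is essentially the content of \cite{AmHo,GiMoSa} (and in our application $X$ is a metric cone, so the more elementary argument of \cite{Ding_heat} also applies). As a preliminary step I would use the uniform $L^2$-bounds to extract a subsequence with $\Delta u_i \rightharpoonup \psi$ weakly in $L^2(B_R(x_\infty))$; the Mosco $\liminf$ inequality applied to the given $L^2$-convergence $u_i\to u_\infty$ then yields $u_\infty\in W^{1,2}_{\mathrm{loc}}(B_R(x_\infty))$ together with $\int_{B_R}|\nabla u_\infty|^2\le \liminf_i\int_{B_R}|\nabla u_i|^2<\infty$.

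For claim (2), one identifies $\psi$ with the distributional Laplacian of $u_\infty$. For any compactly supported Lipschitz $\varphi$ on $B_R(x_\infty)$ and any uniformly convergent Lipschitz lift $\varphi_i\to\varphi$ with controlled Lipschitz constants, integration by parts gives
\begin{align*}
\int \varphi_i \, \Delta u_i \, d\mu_i \;=\; -\int \langle \nabla u_i, \nabla \varphi_i\rangle \, d\mu_i,
\end{align*}
whose left-hand side converges to $\int \varphi\,\psi\, d\mu$ by weak $L^2$-convergence, while the Mosco convergence (combined with $u_i\to u_\infty$ in $L^2$) forces the right-hand side to tend to $-\int \langle \nabla u_\infty,\nabla \varphi\rangle \, d\mu$; hence $\psi=\Delta u_\infty$ in the weak sense. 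For claim (1) one still needs the norm convergence $\int \varphi_i|\nabla u_i|^2\,d\mu_i \to \int \varphi |\nabla u_\infty|^2 \, d\mu$, and I would obtain this from the identity
\begin{align*}
\int \varphi_i |\nabla u_i|^2 \, d\mu_i \;=\; -\int \varphi_i u_i \Delta u_i \, d\mu_i \;-\; \int u_i \langle \nabla u_i, \nabla \varphi_i\rangle \, d\mu_i,
\end{align*}
applied with a nonnegative Lipschitz cutoff $\varphi$ supported in a slight enlargement of the region of interest and equal to $1$ on it. The first term on the right converges to $-\int \varphi u_\infty \Delta u_\infty \, d\mu$ by pairing $L^2$-convergence of $u_i$ with the weak $L^2$-convergence of $\Delta u_i$ just identified as $\Delta u_\infty$.

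The second term is the main technical obstacle: rewriting it as $\tfrac12 \int \langle \nabla u_i^2,\nabla \varphi_i\rangle \, d\mu_i$, one would like to integrate by parts once more and invoke the $L^1$-convergence $u_i^2\to u_\infty^2$ supplied by Proposition~\ref{p:fgconvergence}, but this requires pointwise control on $\Delta \varphi_i$. I would handle this by taking $\varphi$ to be built from the Cheeger--Colding cutoff of Theorem~\ref{t:cutoff}, for which $|\Delta \varphi|$ is a priori bounded, and then removing the approximation by a standard exhaustion. Combining the two terms closes the loop and yields $\int \varphi_i |\nabla u_i|^2 \,d\mu_i \to \int \varphi |\nabla u_\infty|^2 \, d\mu$, which together with the weak $L^2$-convergence of $\nabla u_i$ gives claim (1). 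Finally, claim (3) follows from (1) by interpolation: the assumption $\sup |\nabla u_i|\le L$ passes to $|\nabla u_\infty|\le L$ by lower semicontinuity of the minimal weak upper gradient, and the pointwise inequality $|a-b|^p \le (2L)^{p-2}|a-b|^2$ (valid when $|a|,|b|\le L$ and $p\ge 2$) upgrades (1) to $L^p$-convergence for every $p\ge 2$, while the case $1\le p<2$ follows from H\"older's inequality applied against the $L^2$-bound.
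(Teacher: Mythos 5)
Your route (direct integration by parts combined with Mosco convergence of the Cheeger energies) is genuinely different from the paper's. The paper exploits the fact that $u_i$ solves $\Delta u_i=f_i$ and hence \emph{minimizes} the functional $E_i(w)=\int_{B_R(x_i)}\bigl(\tfrac12|\nabla w|^2+wf_i\bigr)$ among competitors with its boundary values; it then traps $E_i(u_i)$ between the Mosco $\liminf$ bound and a recovery sequence $v_i$ (Lemma 10.7 of \cite{Cheeger_DiffLipFun}) with $v_i=u_i$ on $\partial B_R(x_i)$, forcing $E_i(u_i)\to E_\infty(u_\infty)$ and hence energy convergence on the \emph{whole} ball. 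Your argument never uses the minimizing property, and this costs you at two places.

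First, the boundary. Your norm-convergence identity is tested only against cutoffs $\varphi$ compactly supported in $B_R$, so at best you obtain $\int\varphi_i|\nabla u_i|^2\to\int\varphi|\nabla u_\infty|^2$ for each such $\varphi$, i.e.\ $W^{1,2}$-convergence on compact subsets of $B_R(x_\infty)$. Nothing in your argument excludes gradient energy concentrating at $\partial B_R$ and disappearing in the limit, which is exactly what the paper's matched-boundary recovery sequence rules out. As stated, claim (1) "over $B_R(x_\infty)$" is therefore not proved. Second, the cross term. Rewriting $\int u_i\langle\nabla u_i,\nabla\varphi_i\rangle$ as $-\tfrac12\int u_i^2\,\Delta\varphi_i$ and invoking a uniform bound on $|\Delta\varphi_i|$ does not identify the limit: you need $\Delta\varphi_i\rightharpoonup\Delta\varphi$ weakly, and for the Cheeger--Colding cutoffs $\varphi_i=\psi(h_i)$ this requires $|\nabla h_i|^2$ to converge, i.e.\ $W^{1,2}$-convergence of the $h_i$ --- an instance of the very proposition you are proving. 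The clean fix is to avoid the second integration by parts altogether: write the term as $\tfrac12\int\langle\nabla(u_i^2),\nabla\varphi_i\rangle$, note that $u_i^2\rightharpoonup u_\infty^2$ weakly in $W^{1,2}$ (bounded energy plus $L^2$-convergence), and pair this against a lift $\varphi_i\to\varphi$ that converges \emph{strongly} in $W^{1,2}$ (a recovery sequence); the same strengthening is needed in your step (2), where "uniformly convergent with controlled Lipschitz constants" only gives weak $W^{1,2}$-convergence of $\varphi_i$, which is not enough to pass to the limit in the bilinear form $\int\langle\nabla u_i,\nabla\varphi_i\rangle$ when both factors converge only weakly. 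With these repairs your argument gives (2) and the local version of (1); to recover (1) on all of $B_R$ you would still need the paper's minimization step or an equivalent no-concentration argument at the boundary. Your step (3) is fine in substance, though the truncation should be phrased as norm convergence $\int|\nabla u_i|^p\to\int|\nabla u_\infty|^p$ rather than via a pointwise difference of gradients living on different spaces.
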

\begin{proof}(Outline following \cite{AmHo,Ding_heat})
\,\, We will argue
under a uniform Lipschitz assumption; the general case is similar but a bit more technical. 
\vskip2mm

In view of the uniform Lipschitz condition $\sup_{B_R(x_i)}|\nabla u_i|\le L$ it follows by an Ascoli type argument, that we have uniform converge, $u_i\to u_\infty$. Also, $f_i:=  \Delta u_i$ converges in weak $L^2$ sense to some $L^2$ function $f_\infty$. Consider the energy 
$$
E_i(u_i):=  \int_{B_{R}(x_i)} \Big(\frac{1}{2}|\nabla u_i|^2+u_if_i\Big)\, ,
$$
By the lower semicontinuity of the Cheeger energy, we have 
$$
\lim_{i\to\infty}\inf E_i(u_i)\ge E_\infty(u_\infty)\, .
$$
Moreover, using Lemma 10.7 of \cite{Cheeger_DiffLipFun} one can construct {\it some} Lipschitz sequence $v_i$ in $B_R(x_i)$ which converges uniformly to 
$u_\infty$ with $v_i=u_i$ on $\partial B_R(x_i)$ and 
$$
\lim_{i\to\infty}\sup \int_{B_{R}(x_i)} |\nabla v_i|^2\le \int_{B_{R}(x_\infty)} |\nabla u_\infty|^2\, .
$$ 
From the fact that
 $u_i$ minimizes the energy $E_i$, it then follows  that $E_i(u_i)\to E_\infty(u_\infty)$, which gives us the $W^{1,2}$-convergence.  The weak convergence of $\Delta u_i$ also follows from the energy convergence.  That is, we need to show $\Delta u_\infty=f_\infty$, i.e. for any Lipschitz $h$ with $h=u_\infty$ on $\partial B_R(x_\infty)$ that $E_\infty(h)\ge E_\infty(u_\infty)$. Assume there exists Lipschitz $h_\infty$ with $h_\infty=u_\infty$ on $\partial B_R(x_\infty)$ and $\epsilon_0>0$ such that $E_\infty(h_\infty)<E_\infty(u_\infty)-\epsilon_0$. Then we can construct by using Lemma 10.7 of \cite{Cheeger_DiffLipFun} a sequence of Lipschitz function $h_i$ in $B_R(x_i)$ with $h_i=u_i$ on $\partial B_R(x_i)$ and 
 $$
\lim_{i\to\infty}\sup \int_{B_{R}(x_i)} |\nabla h_i|^2\le \int_{B_{R}(x_\infty)} |\nabla h_\infty|^2\, .
$$ 
Since $E_i(u_i)\to E_\infty(u_\infty)$, this implies for large $i$ that 
\begin{align}
E_i(h_i)<E_i(u_i)-\epsilon_0/2,
\end{align}
which contradicts with the fact that $u_i$ minimizing $E_i(u_i)$ on all Lipschitz functions with the same boundary condition. Hence we conclude that $\Delta u_\infty=f_\infty$. This completes the (outline) proof of Proposition \ref{p:convergenc_function}.
\end{proof}

The following lemma was proved in \cite{Ding_harmonic} for metric cone limits and in \cite{AmHo} for the general case. 
\begin{lemma}\label{l:construct_converging_sequence}
	Let $(M_i,g_i,x_i,\mu_i)\to (X,d,x_\infty,\mu)$ with $\Ric_{M^n_i}\ge -(n-1)$ and 
	$\mu_i=\Vol(B_1(x_i))^{-1}\Vol(\, \cdot\, )$. Let $f,F\in L^2(X)$ have compact support,
and assume $\Delta F=f$ and $f$ is Lipschitz.  Then for any $R>0$ there exists solutions $\Delta F_i=f_i$ on $B_{R_i}(x_i)$ with $R_i\to R$ such that $F_i$ and $f_i$ converge uniformly to $F$ and $f$ in any compact subset of $B_{R}(x_\infty)$ respectively .  
\end{lemma}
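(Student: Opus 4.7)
The plan is to solve the Dirichlet problem $\Delta F_i = f_i$ on $B_{R_i}(x_i)$ with boundary data taken from a Lipschitz approximation of $F$, and then use energy comparison plus interior regularity to upgrade convergence to the uniform convergence claimed. The argument is a direct adaptation of the scheme already used in the outline proof of Proposition \ref{p:convergenc_function}.

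First I would construct $f_i$ and a boundary-data competitor $\tilde F_i$. Since $\Delta F = f$ with $f$ Lipschitz and $F \in L^2(X)$ compactly supported, interior regularity forces $F$ to be globally Lipschitz on $X$ (it is locally Lipschitz by standard elliptic estimates and vanishes outside a fixed compact set). Lemma~10.7 of \cite{Cheeger_DiffLipFun}, invoked already in the proof of Proposition \ref{p:convergenc_function}, produces compactly supported Lipschitz functions $f_i,\tilde F_i: M_i \to \mathbb R$ converging uniformly to $f,F$ respectively, with asymptotically matching Cheeger energies
$$\limsup_{i\to\infty}\int_{M_i}|\nabla \tilde F_i|^2\, d\mu_i \leq \int_X |\nabla F|^2\, d\mu.$$
Using Bishop–Gromov, choose $R_i \to R$ so that $\partial B_{R_i}(x_i) \to \partial B_R(x_\infty)$ in the measured GH sense and the boundary spheres carry no concentrated mass.

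Next, let $F_i$ be the unique weak solution of
$$\Delta F_i = f_i \ \text{ in } B_{R_i}(x_i), \qquad F_i - \tilde F_i \in W^{1,2}_0\bigl(B_{R_i}(x_i)\bigr),$$
equivalently the minimiser of $E_i(u) := \int_{B_{R_i}(x_i)} \bigl(\tfrac12 |\nabla u|^2 + u f_i\bigr)\, d\mu_i$ among admissible competitors. Testing against $\tilde F_i$ gives a uniform $W^{1,2}$-bound for $F_i$, so along a subsequence $F_i \to F_\infty$ in $L^2$ with $F_\infty - F \in W^{1,2}_0(B_R(x_\infty))$. By Mosco/lower-semicontinuity of the Cheeger energy together with the energy inequality $E_i(F_i) \leq E_i(\tilde F_i)$, one gets $E_\infty(F_\infty) \leq E_\infty(F)$; uniqueness of the Dirichlet minimiser forces $F_\infty = F$, which in turn promotes $L^2$-convergence to $W^{1,2}$-convergence and yields weak $L^2$-convergence $\Delta F_i \to f$.

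Finally, I would upgrade $L^2$-convergence to uniform convergence on any compact $K \subset B_R(x_\infty)$. The maximum principle applied to $F_i - \tilde F_i$ gives a uniform $L^\infty$ bound
$$\|F_i\|_{L^\infty(B_{R_i}(x_i))} \leq \|\tilde F_i\|_{L^\infty} + C(n,R)\,\|f_i\|_{L^\infty},$$
and the Cheng–Yau type interior gradient estimate for $\Delta F_i = f_i$ under $\Ric_{M^n_i} \geq -(n-1)$ (applicable because $f_i$ is uniformly Lipschitz) provides a uniform Lipschitz bound for $F_i$ on $K$. Arzelà–Ascoli combined with the $L^2$-identification of the limit then delivers uniform convergence on $K$, and the same argument applied to $f_i$ (which is already uniformly Lipschitz) gives the uniform convergence of $f_i$ to $f$.

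The main obstacle I expect is the boundary matching: producing a recovery competitor $\tilde F_i$ whose trace on $\partial B_{R_i}(x_i)$ is compatible with $F|_{\partial B_R(x_\infty)}$ and whose Dirichlet energy matches in the limit. This is precisely where the flexibility to wiggle $R_i$ around $R$ and the Cheeger approximation of Lemma~10.7 of \cite{Cheeger_DiffLipFun} are essential; once this competitor is in hand, the remainder is a routine variational and regularity argument.
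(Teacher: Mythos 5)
Your proposal is correct and follows essentially the same route as the paper's proof: approximate $F$ and $f$ by Lipschitz competitors via Lemma~10.7 of \cite{Cheeger_DiffLipFun}, solve the Dirichlet problem on slightly perturbed balls, identify the limit with $F$ by combining the minimising energy inequality with lower semicontinuity of the Cheeger energy and uniqueness of the Dirichlet minimiser, and upgrade to uniform interior convergence by elliptic gradient estimates. The only cosmetic difference is that the paper shrinks the inner radius to $R-\epsilon_i$ and extends by the competitor on the annulus rather than wiggling $R_i$, and it justifies the Lipschitz regularity of $F$ via the generalized Bochner formula of \cite{EKS15}; neither changes the argument.
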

\begin{proof}(Outline)
From a  generalized Bochner formula in \cite{EKS15} and standard elliptic estimate it follows that $F$ is Lipschitz. By Lemma 10.7 of \cite{Cheeger_DiffLipFun} one can construct Lipschitz functions $\hat{F}_i, f_i$ on $B_R(x_i)$ converging uniformly and in $W^{1,2}$-sense to $F$ and $f$ respectively. 
\vskip2mm

 For  $\epsilon>0$, define ${F}_{i,\epsilon}$ on $B_R(x_i)$ such that 
 $\Delta {F}_{i,\epsilon}=f_i$ on $B_{R-\epsilon}(x_i)$ with ${F}_{i,\epsilon}=\hat{F}_i$ on $\partial B_{R-\epsilon}(x_i)$, and ${F}_{i,\epsilon}=\hat{F}_i$ on $B_R\setminus B_{R-\epsilon}(x_i)$. By the definition of $F_{i,\epsilon}$ we have 
\begin{align}\label{e:Fiepsilon}
\int_{B_R(x_i)}|\nabla F_{i,\epsilon}|^2+2f_iF_{i,\epsilon}\le \int_{B_R(x_i)}|\nabla \hat{F}_i|^2+2f_i\hat{F}_i\, .
\end{align}

 Assume the limit of $F_{i,\epsilon}$ is $F_{\infty,\epsilon}$ whose existence is asserted by Proposition \ref{p:convergenc_function}. Moreover $F_{\infty,\epsilon}-F\in W^{1,2}_0(B_R)$. By applying the lower semicontinuity of Cheeger energy to $\int_{B_R(x_i)}|\nabla F_{i,\epsilon}|^2$, we have 
 \begin{align}\label{e:FiFinfty}
 \lim_{i\to\infty} \inf \int_{B_R(x_i)}|\nabla F_{i,\epsilon}|^2+2f_iF_{i,\epsilon}\ge \int_{B_R}|\nabla F_{\infty,\epsilon}|^2+2fF_{\infty, \epsilon}.
 \end{align}
 Since $F_{\infty,\epsilon}-F\in W^{1,2}_0(B_R)$ and $\Delta F=f$ on $B_R$, we have that 
 \begin{align}\label{e:Finftyepsilon}
 \int_{B_R}|\nabla F_{\infty,\epsilon}|^2+2fF_{\infty, \epsilon}\ge \int_{B_R}|\nabla F|^2+2fF.
 \end{align}

On the other hand, noting that \eqref{e:Fiepsilon} and $\int_{B_R(x_i)}|\nabla \hat{F}_i|^2\to \int_{B_R(x_\infty)}|\nabla F|^2\, $ we get
 \begin{align}\label{e:FiF}
  \lim_{i\to\infty} \sup \int_{B_R(x_i)}|\nabla F_{i,\epsilon}|^2+2f_iF_{i,\epsilon}\le \int_{B_R}|\nabla F|^2+2fF.
 \end{align}
Combining \eqref{e:FiFinfty}, \eqref{e:Finftyepsilon} and \eqref{e:FiF}, we have that 
\begin{align}
\int_{B_R}|\nabla F|^2+2fF=\int_{B_R}|\nabla F_{\infty,\epsilon}|^2+2fF_{\infty, \epsilon}.
\end{align}
Since $\Delta F=f$ on $B_R$ and $F-F_{\infty,\epsilon}\in W^{1,2}_0(B_R)$, this implies that that $F_{\infty,\epsilon}=F$.  Let us choose $\epsilon_i\to 0$ and define $F_i=F_{i,\epsilon_i}$. We have proved that $F_i\to F_{\infty,\epsilon}$ pointwisely and in $W^{1,2}$-sense. The uniform convergence in any compact subset of $B_R(x_\infty)$ follows from the standard interior gradient estimate for equation $\Delta F_{i}=f_i$ in $B_{R-\epsilon_i}$. Hence the proof of Lemma \ref{l:construct_converging_sequence} is completed.
 \end{proof}

\subsection{The Laplacian on a Metric Cone}
\label{ss: laplacian_metric_cone}

Next, we will recall the existence of the Laplacian operator on metric cones with suitable cross-sections. 
The explicit formulas, \eqref{e;har_cone}, \eqref{e:rsp}, in Theorem \ref{t:spectrum_metric_cone},
\cite{Cheeger79,Cheeger83}, were initially derived in the context of 
spaces with iterated conical singularities \cite{Cheeger79,Cheeger83}. This context is in certain ways more special 
and in other ways 
more general than at of the present subsection.
Theorem \ref{t:spectrum_metric_cone} below
 was originally proved for metric measure spaces satisfying a doubling condition and Poincar\'e inequality in 
\cite{Cheeger_DiffLipFun} and
 for Gromov-Hausdorff limits of smooth manifolds in 
\cite{ChCoIII} and \cite{Ding_heat}. 
It is also understood in the context of
 RCD spaces \cite{Ambrosio_Ricci,Ambrosio_Calculus_Ricci}. As usual,  $\cH^n$ denotes $n$-dimensional
Hausdorff measure.
\begin{theorem}
\label{t:laplacian_metric_cone}  
	Let $(M_i^n,g_i,p_i)\to (X,d_X,p):=  (C(Y),d_X,p)$ satisfy $\Ric_{M^n_i}\ge -\delta_i\to 0$ and $\Vol(B_1(p_i))\ge \rv>0$. Then: 
	\begin{enumerate}
		\item There exists nonpositive, linear, self-adjoint, Laplacian operator $\Delta_X: {\rm Dom}(X)\subset L^2(X)\to L^2(X)$ with ${\rm Dom } \sqrt{-\Delta_X}=W^{1,2}(X)$.
		\vskip2mm
		
		\item For compact supported Lipschitz functions $f$ on $X$,   
		$|\nabla f|=|\Lip f|$.
		$$
		\int_X |\nabla f|^2d\cH^n=\langle \sqrt{-\Delta_X}f,\sqrt{-\Delta_X}f\rangle \, .$$
		\vskip2mm
		
		\item There exists a nonpositive, linear, self-adjoint, Laplacian operator $\Delta_Y: {\rm Dom}(Y)\subset L^2(Y)\to L^2(Y)$  with ${\rm Dom }\sqrt{-\Delta_Y}=W^{1,2}(Y)$.
		\vskip2mm
		
		\item In geodesic polar coordinate $x=(r,y)$, the Laplace operator $\Delta_X$ and 
		$\Delta_Y$ satisfy in the $W^{1,2}(X)$ distribution sense,
\begin{align}
\label{e:lpc}	
	\Delta_X=\frac{\partial^2}{\partial r^2}+\frac{n-1}{r}\frac{\partial}{\partial r}+\frac{1}{r^2}\Delta_Y\, .
\end{align}
		\end{enumerate}
\end{theorem}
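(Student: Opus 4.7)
The plan is to use the metric-measure framework of \cite{Cheeger_DiffLipFun} (or equivalently, the Mosco/Dirichlet-form machinery alluded to in Subsection \ref{ss:prelmiW12convergence}), using as input the doubling and Poincar\'e properties that pass to the noncollapsed GH limit $X = C(Y)$. Throughout the plan, $\mathcal{H}^n$ is used on $X$ and $\mathcal{H}^{n-1}$ on $Y$, with the cone identity $d\mathcal{H}^n = r^{n-1}\,dr\,d\mathcal{H}^{n-1}_Y$ which follows from volume convergence applied to annular regions.

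First, I would establish that $(X,d_X,\mathcal{H}^n)$ is a doubling metric measure space satisfying a $(1,2)$-Poincar\'e inequality. Doubling follows from Bishop--Gromov on the approximators and the volume convergence of \cite{Co1,Cheeger01}; the Poincar\'e inequality passes from the $M_i^n$ (Theorem \ref{t:poincare}) to $X$ by the standard segment-inequality argument of \cite{ChCoAlmost,Cheeger_DiffLipFun}. By \cite{Cheeger_DiffLipFun} this produces a canonical minimal weak upper gradient $|\nabla f|$, a Dirichlet energy $\mathcal{E}_X(f) = \int_X |\nabla f|^2\,d\mathcal{H}^n$ on $W^{1,2}(X)$, and a nonpositive self-adjoint generator $\Delta_X$ with $\mathrm{Dom}(\sqrt{-\Delta_X}) = W^{1,2}(X)$; this gives (1). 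The analogous construction on $(Y,d_Y,\mathcal{H}^{n-1})$ gives (3), once doubling and Poincar\'e on $Y$ are observed. For the latter, I would either quote the fact that $Y$ itself arises as a noncollapsed GH limit (the unit sphere bundle of the rescalings at the vertex, via the almost-metric-cone theorem \ref{t:almostmetriccone}), or derive doubling/Poincar\'e on $Y$ directly from the cone formula $\mathcal{H}^n(B_r(p)) = r^n\,\mathcal{H}^{n-1}(Y)/n$ and $\mathcal{H}^{n-1}$ of geodesic balls on $Y$, using the Fubini decomposition of $\mathcal{H}^n$.

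For (2), the identity $|\nabla f| = \mathrm{Lip}\,f$ for compactly supported Lipschitz $f$ is the main nontrivial analytic ingredient; I would invoke the rigidity of weak upper gradients on noncollapsed Ricci limits from \cite{ChCoIII} (or equivalently the RCD identity from \cite{Ambrosio_Calculus_Ricci}), which gives that minimal upper gradients agree with pointwise Lipschitz constants $\mathcal{H}^n$-a.e. With this in hand the quadratic-form identity is tautological.

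The main obstacle is the polar formula \eqref{e:lpc} in (4), which is where the cone structure must be used in an essential way. My plan is to proceed by separation of variables on a dense subspace. For $f(r,y) = \varphi(r)\psi(y)$ with $\varphi \in C_c^\infty((0,\infty))$ and $\psi$ Lipschitz on $Y$, the cone metric gives pointwise
\begin{equation*}
|\nabla f|^2 = |\varphi'(r)|^2\,\psi(y)^2 + r^{-2}\,\varphi(r)^2\,|\nabla_Y \psi|^2,
\end{equation*}
so that $\mathcal{E}_X(f)$ decomposes via Fubini as a sum of an $r$-energy against the weight $r^{n-1}$ and the $Y$-energy $\mathcal{E}_Y(\psi)$ against $r^{n-3}$. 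Integrating by parts in each variable separately against a test function of the same product form, one reads off that $\Delta_X f$ acts as the right-hand side of \eqref{e:lpc} in the $W^{1,2}(X)$ distributional sense. The remaining (and most delicate) step is density: one must show separable $f$ (or more general combinations $\sum_k \varphi_k(r)\psi_k(y)$ with $\psi_k$ eigenfunctions of $\Delta_Y$) are dense in $W^{1,2}(X)$. Here I would use the spectral decomposition of $\Delta_Y$ provided by (3) together with the heat kernel convergence at the vertex from Proposition \ref{p:heat_kernel_convergence} and Remark \ref{r:heat_kernel_cone}, which was proved in precisely this metric-cone setting in \cite{Ding_heat}; the explicit Gaussian form of $\rho_t^\infty(x_\infty,\cdot)$ shows that the heat semigroup $e^{t\Delta_X}$ preserves radial functions, and combined with Lemma \ref{l:construct_converging_sequence} (which lets us lift solutions of $\Delta F = f$ back to the $M_i^n$ and read off limits) this yields the spectral decomposition $e^{t\Delta_X} = \sum_k e^{-\lambda_k t / \cdots}$ in separated form, from which \eqref{e:lpc} extends to all of $\mathrm{Dom}(\Delta_X)$ by self-adjointness.
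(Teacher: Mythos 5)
The paper does not prove this theorem; it is recalled as background, with (1) and (2) attributed to \cite{ChCoIII} and \cite{Cheeger_DiffLipFun} (Dirichlet-form construction under doubling and a Poincar\'e inequality, plus the nontrivial identification $|\nabla f|=\Lip f$ a.e.) and (3), (4) attributed to \cite{Ding_heat}. Your sketch is a faithful reconstruction of exactly those arguments — doubling and Poincar\'e passed to the limit, Cheeger's energy and its self-adjoint generator, and separation of variables against the weight $r^{n-1}\,dr\,d\cH^{n-1}_Y$ — so in substance you are following the same route as the sources the paper cites.

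Two caveats. First, your primary justification for (3), that ``$Y$ itself arises as a noncollapsed GH limit,'' is not warranted: the cross-section of a limit cone is an RCD space with $\Ric\ge n-2$ in the synthetic sense (as the paper notes via \cite{Ket,Stu}), but it is not in general a Gromov--Hausdorff limit of smooth $(n-1)$-manifolds with the corresponding Ricci bound, so you cannot feed it back into the smooth-approximation machinery. Your fallback — inducing doubling and the Neumann Poincar\'e inequality on $Y$ from those on $X$ via the Fubini decomposition of $\cH^n$ — is the correct route and is precisely what is done in Lemma 4.3 of \cite{Ding_heat} (see also the paper's Remark \ref{r:harmonic_metric_cone}). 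Second, the density step in (4) is where essentially all of the work in \cite{Ding_heat} lies, and your argument for it is thin: the observation that $e^{t\Delta_X}$ preserves radial functions does not by itself produce the decomposition of $W^{1,2}(X)$ into separated modes $\varphi(r)\phi_k(y)$; one needs the compactness of $(I-\Delta_Y)^{-1}$ (i.e.\ Theorem \ref{t:spectrum_metric_cone}(1), which itself rests on the Poincar\'e inequality on $Y$) to expand an arbitrary $W^{1,2}$ function fiberwise and then verify energy convergence of the partial sums, as in the proof of Proposition \ref{p:harmonic_cone}. With those two repairs the plan is sound.
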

Originally, relations (1) and (2) were proved in \cite{ChCoIII} and \cite{Cheeger_DiffLipFun}. Relations, (3) and (4)  were proved in \cite{Ding_heat}.

The Sobolev space $W^{1,2}(X)$ is the closure of Lipschitz functions under a $W^{1,2}$-norm defined in \cite{Cheeger_DiffLipFun}; see the Section 2  of \cite{Cheeger_DiffLipFun} for the precise definition, which ensures that 
the $W^{1,2}$-norm behaves lower semicontinuously under $L^2$ convergence. 
 It then becomes a highly nontrivial theorem
that in actuality, $|\nabla f|=\Lip\, f$, the pointwise Lipschitz constant almost everywhere.
These results were proved in \cite{Cheeger_DiffLipFun} under the assumption that the measure 
is doubling and a Poincar\'e inequality holds.

The cross section $Y$ may itself be viewed as a space which satisfies the  lower Ricci curvature bound $\Ric_Y\ge n-2$ in a generalized sense.The consequences  were initially established directly for cross-sections of limit cones. Subsequently, it was shown that
in the precise formal sense,  $Y$ is an RCD space with $\Ric\geq n-2$; see \cite{Ket,Stu}. 


\begin{theorem}\label{t:spectrum_metric_cone}
	Let $(M_i^n,g_i,p_i)\to (X,d_X,p)=(C(Y),d_X,p)$ satisfy $\Ric_{M^n_i}\ge -\delta_i\to 0$ and $\Vol(B_1(p_i))\ge \rv>0$. Then: 
	\begin{enumerate}
	     \item $(I-\Delta_Y)^{-1}: L^2(Y)\to L^2(Y)$ is a compact operator,
	     \vskip2mm
	     
		\item Laplacian $\Delta_Y$ has discrete spectrum $0=\lambda_0<\lambda_1\le \lambda_2\le \cdots $,
		\vskip2mm
		
		\item If $\phi_i$ be an eigenfunction associated to $\lambda_i$, then $\phi_i$ is Lipschitz. 
		\vskip2mm
		
		\item The following functions are harmonic on $X$:
\begin{align}
\label{e;har_cone}		
		u(r,y)=r^{\alpha_{i}}\phi_i\, ,
\end{align}		
	where
\begin{align}
\label{e:rsp}
\alpha_i=-\frac{n-2}{2}+ \sqrt{\Big(\frac{n-2}{2}\Big)^2+\lambda_i}\, .
\end{align}				 
		\vskip2mm
\item
The harmonic functions $r^{\alpha_{i}}\phi_i$ are Lipschitz on $C(Y)$.
\vskip2mm		
		\item The first nonzero eigenvalue satisfies $\lambda_1\ge n-1$.
	\vskip2mm
		
\end{enumerate}

\end{theorem}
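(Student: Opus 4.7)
The plan is to prove the six items by combining the separation-of-variables structure from Theorem~\ref{t:laplacian_metric_cone} with functional-analytic and regularity tools already available in the paper's framework. Items (1)--(4) are essentially analytic consequences of the polar decomposition \eqref{e:lpc}, while (5)--(6) are intertwined: the sharp Lichnerowicz lower bound $\lambda_1 \geq n-1$ is exactly what one needs for the harmonic separation functions to be Lipschitz across the vertex.

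First, for (1) and (2), since $(C(Y),p)$ is a noncollapsed limit with $\Vol(B_1(p_i))\geq \rv$, Bishop--Gromov forces $\diam(Y)\leq \pi$, so $Y$ is compact; the volume doubling and Poincar\'e inequality (Theorem \ref{t:poincare}) on $C(Y)$ descend to $Y$ using the polar structure. I would then invoke the Rellich--Kondrachov compact embedding $W^{1,2}(Y)\hookrightarrow L^2(Y)$, which holds for a compact PI space by the Cheeger--Hajlasz--Koskela machinery (already referenced through \cite{Cheeger_DiffLipFun}). Combined with the self-adjoint, nonpositive $\Delta_Y$ on $W^{1,2}(Y)$ from Theorem \ref{t:laplacian_metric_cone}(3), the spectral theorem for compact self-adjoint operators yields compactness of $(I-\Delta_Y)^{-1}$ and discreteness of the spectrum; simplicity of $\lambda_0=0$ uses that $Y$ is connected (since $C(Y)$ is).

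Next, for (4), a direct computation using the polar formula \eqref{e:lpc} gives, for $\phi_i$ with $\Delta_Y\phi_i = -\lambda_i\phi_i$,
\[ \Delta_X\bigl(r^{\alpha}\phi_i\bigr) \;=\; r^{\alpha-2}\bigl[\alpha^2 + (n-2)\alpha - \lambda_i\bigr]\phi_i, \]
which vanishes precisely when $\alpha$ solves the quadratic whose positive root is $\alpha_i$ as in \eqref{e:rsp}. For (3), once (4) is in hand, $u_i:=r^{\alpha_i}\phi_i$ is harmonic on the annulus $A_{1/2,2}(p)$; because $C(Y)$ is a noncollapsed Ricci limit with $\Ric\geq 0$, Cheng--Yau style gradient estimates (obtained by approximation on the $M_i^n$ via Proposition \ref{p:convergenc_function} and Lemma \ref{l:construct_converging_sequence}) give a uniform $L^\infty$ bound on $|\nabla u_i|$ on $A_{3/4,3/2}(p)$, and restriction to the slice $\{r=1\}\cong Y$ yields a Lipschitz bound on $\phi_i$.

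The main obstacle is (6), the Lichnerowicz bound $\lambda_1\geq n-1$. My preferred route is a Liouville argument on the cone: suppose for contradiction $\lambda_1 < n-1$, so that $\alpha_1 < 1$. Then $u_1 := r^{\alpha_1}\phi_1$ is a nonconstant harmonic function on $C(Y)$ satisfying $|u_1|(x)\leq C\, d(x,p)^{\alpha_1}$, i.e.\ of strictly sublinear growth. I would approximate $u_1$ by solutions of the Dirichlet problem on exhausting annuli in the $M_i^n$ using Lemma \ref{l:construct_converging_sequence}, apply Yau's gradient estimate $|\nabla u|\leq C(n)|u|/r$ on each $M_i^n$ (valid under $\Ric\geq -\delta_i$), and pass to the limit via Proposition \ref{p:convergenc_function} to conclude $|\nabla u_1|\leq C|u_1|/r$, which combined with sublinear growth forces $u_1$ to be constant, contradicting $\lambda_1>0$. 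A secondary approach would exploit that $Y$ inherits a synthetic Ricci lower bound $\Ric_Y\geq (n-2)g_Y$ (as the cross-section of a noncollapsed cone limit) and run the classical Bochner--Lichnerowicz argument in the RCD sense; I view this as the more delicate option, and believe the cone Liouville route is cleaner within this paper's framework. Finally, for (5), the case $i=0$ is trivial ($\phi_0$ constant). For $i\geq 1$, (6) yields $\alpha_i\geq 1$, and scaling the annular gradient bound from (3) gives $|\nabla u_i|(r,y)\lesssim r^{\alpha_i-1}$, which is uniformly bounded as $r\to 0$; combining this with the law-of-cosines distance \eqref{e:mc} and $u_i(p)=0$ yields a global Lipschitz estimate on all of $C(Y)$.
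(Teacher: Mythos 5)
Your proposal is correct, but it is worth noting that the paper does not actually argue this theorem: Remark \ref{r:harmonic_metric_cone} disposes of all six items by citation, deferring (1)--(2) to the Neumann Poincar\'e inequality on $Y$ induced from $X$ together with \cite{ChCoIII} and Lemma 4.3 of \cite{Ding_heat}, (4) to Theorem \ref{t:laplacian_metric_cone}, and (3), (5), (6) to \cite{Ding_heat}. What you have written is a self-contained reconstruction of those references, and it follows the standard route: compactness of the resolvent from doubling plus Poincar\'e on the compact cross-section, the quadratic $\alpha^2+(n-2)\alpha=\lambda_i$ from the polar Laplacian \eqref{e:lpc}, and --- the one genuinely substantive step --- the Liouville argument for $\lambda_1\ge n-1$, which is essentially how the eigenvalue bound is obtained in \cite{Ding_heat} as well. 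Your logical ordering $(1),(2)\Rightarrow(4)\Rightarrow(3)\Rightarrow(6)\Rightarrow(5)$ is sound and avoids circularity, since the Liouville step only needs $\phi_1\in L^\infty(Y)$ and $W^{1,2}_{loc}$ harmonicity of $r^{\alpha_1}\phi_1$, not its Lipschitz continuity at the vertex.

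One point requires care in (6): plain $W^{1,2}$ convergence is not enough to transport the Cheng--Yau bound to the limit. The averaged estimate $\fint_{B_R}|\nabla u_1|^2\le C R^{2\alpha_1-2}$ is perfectly consistent with $u_1=r^{\alpha_1}\phi_1$ for $\alpha_1<1$ and yields no contradiction; you need the \emph{pointwise} conclusion $\sup_{B_{R/2}}|\nabla u_1|\le C(n)R^{\alpha_1-1}$ on every fixed ball, and for that you must invoke the uniform (locally Lipschitz) convergence of the approximating harmonic functions --- i.e.\ part (3) of Proposition \ref{p:convergenc_function} under the uniform gradient bound supplied by Cheng--Yau on $M_i^n$, or the uniform convergence in Lemma \ref{l:construct_converging_sequence} --- so that the $L^\infty$ gradient bound is lower semicontinuous along the sequence. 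Relatedly, Lemma \ref{l:construct_converging_sequence} is stated for compactly supported data, so you should phrase the approximation as solving Dirichlet problems on balls $B_{R_i}(p_i)$ with boundary data approximating $u_1|_{\partial B_R(p)}$, taking $i\to\infty$ for each fixed $R$ before sending $R\to\infty$ (this also absorbs the $\sqrt{\delta_i}$ error term in the gradient estimate). With these adjustments the argument closes.
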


\begin{remark}\label{r:harmonic_metric_cone}
(1) follows from a Neumann Poincar\'e inequality on $Y$, which is induced from the Neumann Poincare inequality on $X$. See a proof of Lemma 4.3 in \cite{Ding_heat} and see also \cite{ChCoIII}.

 (2) follows from (1). See also Theorem 1.8 of \cite{ChCoIII} which only uses Neumann Poincare inequality and volume doubling.

(3) follows from the fact that the harmonic function $u(r,y)=r^{\alpha_i}\phi_i$ is locally Lipschitz, which was proved in \cite{Ding_heat}. 

(4) follows from the statement (4) of Theorem \ref{t:laplacian_metric_cone}.

(5) and (6) were proved in \cite{Ding_heat}.

\end{remark}

\subsection{$\epsilon$-regularity for $2$-sided Ricci bounds}
\label{ss:2sided}
\begin{definition}
\label{d:harmonic_radius}
	For $x\in M^n$ we define the harmonic radius $r_h(x)>0$ to be the maximum over all $r>0$ such that there exists a mapping $\psi=(\psi_1,\cdots,\psi_n):B_r(x)\to \mathbb{R}^n$ with the following properties:
	\begin{itemize}
\item[(1)] $\Delta \psi_i=0$ for $i=1,\cdots, n$.
\vskip1mm
		\item[(2)] $\psi$ is a diffeomorphism onto its image with $B_r(0^n)\subseteq \psi(B_r(x))$, and hence defines a coordinate chart.
		\vskip1mm
		
		\item[(3)] The coordinate metric $g_{ij} = \langle \nabla\psi_i,\nabla \psi_j\rangle$ on $B_{r}(x)$ satisfies $||g_{ij}-\delta_{ij}||_{C^1(B_r(x))}<10^{-n}$.
	\end{itemize}
\end{definition}

The formula for the Ricci tensor in harmonic coordinates can be viewed as a (nonlinear)
elliptic equation on the metric $g_{ij}$ in which the Ricci tensor is the inhomogeneous term.
If the Ricci curvature is bounded, $|\Ric_{M^n}|\leq n-1$, then via elliptic regularity we obtain for
for any $p<\infty$ and $0<\alpha<1$ the a priori estimates 
\begin{align}
||g_{ij}-\delta_{ij}||_{C^{1,\alpha}(B_{r/2}(x))}\, &\le C(n,\alpha)\, ,\\
||g_{ij}-\delta_{ij}||_{W^{2,p}(B_{r/2}(x))}&<C(n,\lambda,p)\, .
\end{align}

The following, $\epsilon$-regularity theorem from \cite{CheegerNaber_Codimensionfour}
can be viewed as a consequence of the proof of the codimension $4$-conjecture proved
in that paper. It  states in quantitative form 
that if a ball has a sufficient amount of symmmetry, then the ball is in the domain of harmonic
coordinate system in which the metric satisfies definite bounds.

\begin{theorem}[\cite{CheegerNaber_Codimensionfour}]\label{t:eps_reg}
There exists $\epsilon(n,\rv)>0$ such that if 
$\Vol(B_1(p))>\rv>0$, $|\Ric_{M^n}|\le n-1$ and  $B_2(p)$ is $(n-3,\epsilon)$-symmetric then $r_h(p)>1$.
\end{theorem}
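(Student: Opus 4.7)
\smallskip

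\textbf{Proof strategy for Theorem \ref{t:eps_reg}.}  The plan is a proof by contradiction combined with a limit argument that reduces the problem to the Codimension $4$ Theorem of \cite{CheegerNaber_Codimensionfour} and then upgrades Gromov--Hausdorff convergence to $C^{1,\alpha}$ convergence via Anderson's harmonic--coordinate $\epsilon$-regularity.  Suppose no such $\epsilon$ exists; then there is a sequence $(M_i^n,g_i,p_i)$ with $|\Ric_{M_i^n}|\leq n-1$, $\Vol(B_1(p_i))\geq \rv$, and $B_2(p_i)$ is $(n-3,\epsilon_i)$-symmetric with $\epsilon_i\to 0$, yet $r_h(p_i)\leq 1$.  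By Gromov compactness together with the volume convergence theorem of \cite{Co1,Cheeger01}, a subsequence converges to a noncollapsed pointed limit $(M_i^n,g_i,p_i)\togh (X^n,d,p)$ with $\Vol(B_1(p))\geq \rv$.

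The next step is to identify the limit space on $B_2(p)$.  Since $B_2(p_i)$ is $(n-3,\epsilon_i)$-symmetric and $\epsilon_i\to 0$, the limit ball $B_2(p)$ is isometric to $B_2((0,y^*))\subseteq \dR^{n-3}\times C(Z)$, where $(0,y^*)$ is a vertex of the cone factor and $Z$ has Hausdorff dimension $2$.  If $C(Z)$ were not isometric to $\dR^3$, then the cone vertex $y^*$ would be singular, so that $\dR^{n-3}\times\{y^*\}\subseteq\Sing(X)$, forcing $\dim\Sing(X)\geq n-3$.  This contradicts the Codimension $4$ Theorem of \cite{CheegerNaber_Codimensionfour}, which, under the two-sided Ricci bound and the noncollapsing assumption \eqref{e:lvb}, gives $\dim\Sing(X)\leq n-4$.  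Hence $C(Z)$ is isometric to $\dR^3$ (equivalently $Z$ is the round $S^2$), so $X$ is isometric to $\dR^n$ on $B_2(p)$ and in particular $p$ is a regular point with harmonic radius $r_h(p)\geq 2$.

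Finally, the plan is to turn Euclidean regularity of the limit into a quantitative harmonic-radius bound on the approximators.  Under the two-sided Ricci bound $|\Ric_{M^n_i}|\leq n-1$, Anderson's $\epsilon$-regularity in harmonic coordinates (equivalently, the smooth convergence theorem) asserts that if $B_2(p_i)$ is sufficiently Gromov--Hausdorff close to a ball in $\dR^n$, then $r_h(p_i)$ is bounded below.  Since $(M^n_i,g_i,p_i)$ converges to the Euclidean ball and the harmonic radius is lower semicontinuous in this smooth regime, $\liminf_{i\to\infty} r_h(p_i)\geq r_h(p)\geq 2>1$, contradicting $r_h(p_i)\leq 1$.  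The main conceptual obstacle is the appeal to the Codimension $4$ Theorem in the limit step, which is the one place the two-sided Ricci bound is genuinely used; once that reduction is in hand, the remainder is a standard contradiction-compactness argument combined with Anderson-type smooth convergence.
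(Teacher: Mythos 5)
Your proof is correct and is essentially the intended argument: the paper states Theorem \ref{t:eps_reg} without proof, citing \cite{CheegerNaber_Codimensionfour}, and the derivation there is exactly your contradiction/compactness scheme — the limit ball is a cone ball in $\dR^{n-3}\times C(Z)$, the Codimension $4$ Theorem forces $C(Z)\cong\dR^3$ (else an $(n-3)$-dimensional singular set appears), and Anderson's harmonic-coordinate $\epsilon$-regularity then yields the uniform lower bound on $r_h(p_i)$, contradicting $r_h(p_i)\le 1$.
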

\vskip5mm


\section{Outline of Proof of Neck Structure Theorem}
\label{s:ouline_proof_neckstructure}


The idea of a neck region is derived primarily from \cite{JiNa_L2} and is
motivated by ideas from \cite{NaVa_Rect_harmonicmap}.  Given the 
Neck Structure Theorem \ref{t:neck_region2},
the proof of the Neck Decomposition of Theorem \ref{t:decomposition2} follows along lines
similar to what was done in a more restricted context in \cite{JiNa_L2}.  More precisely, much of the
proof of the Neck Decomposition of Theorem \ref{t:decomposition2} involves an elaborate and highly nontrivial covering argument.  At a few places, an appeal is made to Theorem \ref{t:neck_region2} to provide sharp estimates; however none of the technology which goes into the proof of Theorem \ref{t:neck_region2} plays a role
in the proof of Theorem \ref{t:decomposition2}.
Thus, the bulk of this paper
 is focused on proving the Neck Structure Theorem \ref{t:neck_region2}. This  requires a completely 
new set of ideas and tools, quite distinct from those of the abovementioned citations.  Our purpose in this section 
is to introduce these new ideas 
in order to sketch a clean picture of the proof of Theorem \ref{t:neck_region2}.  Some of our explanations
will be repeated  subsequent  sections.
\vskip2mm

The proof of Theorem \ref{t:neck_region2} involves a nonlinear induction scheme.  
In it, we will assume a weaker version of the Ahlfor's regularity condition \eqref{e:ar} already
 holds, and use it to prove the stronger version.  Precisely, our main inductive lemma is the following:

\begin{lemma}[Inductive Lemma]
\label{l:inductive2}
Fix $\eta,B>0$ and  $\delta\leq \delta(n,\rv,\eta,B)$.
Let $\cN = B_2(p)\setminus \overline B_{r_x}(\cC)$ denote a $(k,\delta,\eta)$-neck region 
and assume for each $x\in \cC$ and $B_{2r}(x)\subset B_2(p)$ 
\begin{align}\label{e:B_induction}
B^{-1}r^{k}<\mu(B_r(x)) <B\,r^{k}\, .
\end{align}
Then:
\begin{enumerate}
	\item For each $x\in \cC$ and $B_{2r}(x)\subset B_2(p)$ we have the improved estimate $A(n)^{-1}r^{k}<\mu(B_r(x)) <A(n)r^{k}$.
\vskip1mm
	
	\item $\cC_0$ is $k$-rectifiable.
\end{enumerate}
\end{lemma}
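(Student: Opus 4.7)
The plan is to extract a single harmonic splitting map $u:B_4(p)\to\mathbb{R}^k$ and show that, under the hypothesis \eqref{e:B_induction}, $u$ restricts to a $(1+\epsilon)$-bi-Lipschitz parametrization of a $\mu$-large portion of $\cC$ by a subset of $\mathbb{R}^k$. Since $u(B_r(x))\subseteq B_{(1+\delta)r}(u(x))\subset\mathbb{R}^k$, this will produce sharp packing bounds with a purely dimensional constant $A(n)$, thereby trading the qualitative constant $B$ for the quantitative constant $A(n)$, and will simultaneously yield rectifiability of $\cC_0$.

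\textbf{Step 1 (Splitting map).} By condition $(n3)$, each ball $B_r(x)$ with $x\in\cC$ and $r_x\le r\le\delta^{-1}$ is $(k,\delta^2)$-symmetric. Applying Theorem \ref{t:splitting_function} on $B_4(p)$ produces a harmonic $\delta'$-splitting map $u:B_4(p)\to\mathbb{R}^k$, with $\delta'=\delta'(\delta)\to 0$ as $\delta\to 0$.

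\textbf{Step 2 (Hessian control via entropy summation).} Condition $(n2)$ gives the one-scale pinching $|\cW^\delta_{\delta^{-2}}(x)-\cW^\delta_{\delta^2 r_x^2}(x)|\le\delta$ for every $x\in\cC$, and by Theorem \ref{t:cW_local} this is essentially equivalent to the volume pinching. Using the Sharp Cone-Splitting principle announced in \eqref{e:outline:sharp_splitting}, at each scale $r\ge r_x$ one controls
\begin{equation*}
\fint_{B_r(x)}|\nabla^2 u|^2 \le C(n,\rv)\cdot \cE^{k,\alpha,\delta}_r(x).
\end{equation*}
Integrating against the packing measure $\mu$ and summing telescopically over dyadic scales, the total Hessian energy
\begin{equation*}
\int_{\cC}\sum_{r_x\le 2^{-j}\le 1} 2^{-jk}\cdot\fint_{B_{2^{-j}}(x)}|\nabla^2 u|^2\,d\mu(x)
\end{equation*}
is bounded by the total entropy drop, which by $(n2)$ is $\le C\delta$. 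The weak Ahlfors regularity \eqref{e:B_induction}, with constant $B$, enters only to convert energy-on-the-domain into energy-against-$\mu$ via a Vitali covering and the effective disjointness $(n1)$; crucially, once this bound has been obtained, the constant $B$ will not reappear in subsequent estimates.

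\textbf{Step 3 (Geometric transformation and nondegeneration).} Because $u$ can degenerate at interior centers (see Example \ref{example:cone_neck}), we cannot use $u$ unchanged. Instead, at each $x\in\cC$ and each admissible scale $r$ we construct a lower triangular matrix $T_{x,r}\in GL(k)$ such that $T_{x,r}\circ u$ is a genuine $\delta''$-splitting map on $B_r(x)$. The transformations satisfy $T_{x,r/2}=T_{x,r}\cdot A_{x,r}$ with $A_{x,r}$ close to the identity whenever the Hessian energy on $B_r(x)$ is small; the Nondegeneration Theorem, to be proved in Section \ref{s:nondegeneration} using precisely the $|\mathrm{Lip}\,r_x|\le\delta$ hypothesis $(n5)$, guarantees that the product $\prod A_{x,r}$ does not blow up in $r\to r_x$ except on a $\mu$-small exceptional set $\cC\setminus\cC_\epsilon$ with $\mu(\cC\setminus\cC_\epsilon)\le\epsilon\,\mu(\cC)$.

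\textbf{Step 4 (Bi-Lipschitz structure and closing the induction).} For $x,y\in\cC_\epsilon$ with $d(x,y)=r$, applying the transformed splitting map on $B_{2r}(x)$ gives $|u(x)-u(y)|=(1\pm\epsilon)r$, so $u:\cC_\epsilon\to\mathbb{R}^k$ is $(1+\epsilon)$-bi-Lipschitz. Since the Reifenberg-type condition $(n4)$ ensures $u(\cC\cap B_r(x))\subseteq B_{(1+\tau_n)r}(u(x))\subseteq\mathbb{R}^k$, we obtain
\begin{equation*}
\mu(\cC_\epsilon\cap B_r(x))\le (1+\epsilon)^k\cdot\omega_k\cdot r^k,
\end{equation*}
and an analogous lower bound by pushing forward the Lebesgue measure through $u^{-1}$. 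For the bad set $\cC\setminus\cC_\epsilon$ we iterate the same argument on the neck regions obtained by restricting $\cN$ to smaller balls centered at points of $\cC\setminus\cC_\epsilon$ (Remark \ref{r:properties}(7)): because the fraction $\epsilon$ of bad mass is definite and independent of $B$, a standard geometric-series argument absorbs the exceptional contributions and yields the sharp two-sided bound with a purely dimensional constant $A(n)$. This proves (1). Rectifiability (2) follows because $\cC_0$ is covered, up to $\mu$-null subsets, by countably many bi-Lipschitz images of subsets of $\mathbb{R}^k$ via the maps $u$ built on each rescaling.

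\textbf{Main obstacle.} The delicate point is Step 3: one must bound the transformations $T_{x,r}$ \emph{independently of $B$}, because only then does the improvement $B\rightsquigarrow A(n)$ survive the iteration. This requires the Nondegeneration Theorem to turn an integral Hessian bound (weighted by $\mu$, hence a priori contaminated by $B$) into a pointwise non-collapse statement for $T_{x,r}$; condition $(n5)$ is indispensable here because it permits us to pass from $L^2$ information on the neck region to pointwise information at centers.
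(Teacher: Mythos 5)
Your overall architecture is the same as the paper's: fix one harmonic splitting map $u:B_4(p)\to\dR^k$, use the entropy summability from $(n2)$ plus a Fubini argument weighted by $\mu$ (this is where $B$ enters), invoke the Transformation and Nondegeneration Theorems to get a set $\cC_\epsilon$ with $\mu(\cC\setminus\cC_\epsilon)\le\epsilon\,\mu(\cC)$ on which $u$ is $(1+\epsilon)$-bi-Lipschitz, and then read off the sharp packing bounds in $\dR^k$. Steps 1--3 and the rectifiability claim match Proposition \ref{p:bilipschitz_structure} and Lemmas \ref{l:claim32}--\ref{l:claim34} closely. Two remarks on Step 4: the geometric-series iteration over the bad set is unnecessary --- since $\mu(\cC_\epsilon\cap B_{15/8})\ge(1-\epsilon)\mu(\cC\cap B_{15/8})$, the upper bound $\mu(\cC_\epsilon\cap B_{15/8})\le C(n)$ coming from disjointness of $\{B_{\tau^3 r_x}(u(x))\}$ in $\dR^k$ already gives $\mu(\cC\cap B_{15/8})\le C(n)/(1-\epsilon)$ with no $B$-dependence and no iteration.

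The genuine gap is your lower bound. You assert it is ``analogous'' to the upper bound, obtained ``by pushing forward the Lebesgue measure through $u^{-1}$,'' but the two bounds rest on different mechanisms. The upper bound needs only injectivity (disjointness of the images of the Vitali balls), which the bi-Lipschitz estimate on $\cC_\epsilon$ supplies. The lower bound needs a \emph{surjectivity} statement: that the images $\{T_{x,r_x}^{-1}(B_{r_x}(u(x)))\}_{x\in\cC\cap B_1(p)}$ cover a definite ball $B_{1/8}(0^k)\subset\dR^k$, so that $\sum_{x}r_x^k\gtrsim c(k)$. Bi-Lipschitzness of $u$ on $\cC_\epsilon$ gives no such thing --- a priori $u(\cC_\epsilon)$ could be a Cantor-type set of small $k$-dimensional measure, and $u^{-1}$ is only defined on the image, so there is no measure to push forward over a full ball. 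The paper proves the covering in Lemma \ref{l:claim21} by a minimal-radius continuity argument: if some $w\in B_{1/8}(0^k)$ were uncovered, one takes the infimal scale $\bar s=s_{\bar x}$ at which $w$ enters a transformed ball, uses the Reifenberg condition $(n4)$ to find a nearby center $y\in\cC\cap B_{2\bar s}(\bar x)$, and uses the H\"older control $|T_{y,2\bar s}T_{\bar x,2\bar s}^{-1}-I|\le C\epsilon$ from the Transformation Theorem to contradict minimality. This uses the transformed maps at \emph{all} scales and at \emph{all} centers (not just those in $\cC_\epsilon$), and is the part of the Ahlfors regularity that cannot be reduced to the bi-Lipschitz estimate. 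You should supply this covering argument; without it the lower bound in (1), and hence the improvement $B\rightsquigarrow A(n)$ from below, is not established.
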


Outlining the proof of the inductive lemma will  be the main goal of this section.
In  Section \ref{s:neck_region} we rigorously prove Theorem \ref{t:neck_region2} from the Inductive Lemma.

\subsection{Harmonic Splittings on Neck Regions}

In order to prove the Inductive Lemma \ref{l:inductive2}, 
and hence Theorem \ref{t:neck_region2}, let us first make 
the following observation.  Let 
$\cC'\subseteq B_2\subseteq \dR^k$ be a closed subset 
with $r'_x:\cC'\to \dR$ a radius function s.t. 
$\{\overline B_{\tau_n r'_x}(x)\}$ are all disjoint 
and $B_2\subseteq \cC'_0\cup \bigcup B_{r_x}(\cC'_+)$, 
where as usual $\cC'_0=\{r'_x=0\}$ and $\cC'_+=\{r'_x>0\}$. 
 Consider the packing measure 
$\mu' = \cH^k\cap \cC'_0+\omega_k\sum_{\cC'_+} r_i^k \delta_{x_i}$.
 It is a straigtforward though instructive exercise to see that $\mu'$
 automatically satisfies the Ahflors regularity condition \eqref{e:ar}. For this, one 
notes that Lebesgue measure coincides with Hausdorff measure. 
 Therefore, the strategy to prove the Inductive Lemma \ref{l:inductive2} 
will be to find a mapping $u:\cC\to \dR^k$ which is bi-H\"older onto its image 
and $(1+\epsilon)$-bi-Lipschitz on most of $\cC$.  Then, with $\cC' := u(\cC)$ 
and $r'_x:=r_x$, we can turn the covering $\{B_{r_x}(\cC)\}$ into a well 
behaved covering of $B_2(0^k)\subseteq \dR^k$, and therefore conclude the asserted Ahlfor's regularity.
For further discussion of the role of the Ahlfors regularity of the packing measure, 
see Definition \ref{r:ahlforsreg}.
\vskip2mm

\noindent
\begin{remark}[Digression]  At this point, we will digress in order to explain what 
will not work in the present context. This  will motivate the strategy used here and relate it to that in
 the the previous literature.  In \cite{NaVa_Rect_harmonicmap}
 a quite similar strategy was implemented in order to study the singular sets of nonlinear harmonic maps. 
 In that case, the map $u$ was built by hand, using a Reifenberg construction. Showing that the construction
worked required
new estimates on nonlinear harmonic maps and a new rectifiable 
Reifenberg theorem. It is natural to examine the possibility of implementing a similar approach in 
the present context, by using metric Reifenberg constructions in the spirit of \cite{ChCoI}.  
However, these ideas break down in the context of lower Ricci curvature bounds. 
 Essentially, this is because the underlying space itself is curved. This give rise
to  error terms which are quantitatively worse than those which arise in connection the bi-Lipschitz Reifenberg 
techniques of \cite{NaVa_Rect_harmonicmap}. As a result, those techniques fail in the present context.
  Therefore, of necessity, our construction of the map $u$ will be completely different from that of 
\cite{NaVa_Rect_harmonicmap}. 
Instead of relying on a Reifenberg type construction,
 our mapping $u$ will be more canonical in nature. It will solve an equation.
\end{remark}

To make the above more precise, recall from Definition \ref{d:splitting_function},
the notion a harmonic splitting function.
It follows from \cite{ChCoAlmost}, see Theorem \ref{t:splitting_function}, 
that if $B_{2r}(p)$ is $(k,\delta)$-symmetric then there exists an $\epsilon$-splitting 
map $u:B_r(p)\to \dR^k$.  In particular, splitting maps exist on neck regions.
  In general, splitting functions can degenerate on sets of infinite codimension 2 content.  
In particular, the degeneration set of $u$ may in general be much larger than the center point 
set $\cC$ of a neck region.  However, as we will see, something rather miraculous takes place.  Namely,
if we are on a neck region, then the map $u$ can degenerate in at most a weak sense
 on all of $\cC$, and on most of $\cC$,
cannot degenerate at all.
  Precisely, we will prove the following:

 \begin{theorem}[Harmonic Splittings on Neck Regions]
\label{t:harmonic_splitting_neckregion}
Let $B,\epsilon, \eta>0$ with $\delta'\leq \delta'(n,\rv,B,\epsilon,\eta)$ and 
$\delta\leq \delta(n,\rv,\eta,B,\epsilon)$.  Let $\cN = B_2\setminus \overline B_{r_x}(\cC)$ 
be a $(k,\delta,\eta)$-neck region satisfying \eqref{e:B_induction} with $u:B_4\to \dR^k$ a $\delta'$-splitting map.  
Then there exists $\cC_\epsilon \subset \cC\cap B_{15/8}(p)$ such that:
	\begin{enumerate}
\item $\mu\big(\cC_\epsilon \cap B_{15/8}(p)\big)\ge (1-\epsilon) \mu \big(\cC\cap B_{15/8}(p)\big)$.
	\vskip1mm

\item $u$ is $(1+\epsilon)$-bi-Lipschitz on $\cC_\epsilon$, i.e. $(1+\epsilon)^{-1}\cdot d(x,y)\le |u(x)-u(y)|
\le (1+\epsilon)\cdot d(x,y)$ for any $x,y\in  \cC_\epsilon$.	
\vskip1mm

\item $u$ is $(1+\epsilon)$-bi-H\"older on 
$\cC$, i.e. $(1+\epsilon)^{-1}\cdot d(x,y)^{1-\epsilon}\le |u(x)-u(y)|\le (1+\epsilon)\cdot d(x,y)$ for any $x,y\in  \cC$.	
	\end{enumerate} 
\end{theorem}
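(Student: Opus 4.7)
The strategy rests on two ingredients: the Sharp Cone-Splitting Theorem \ref{t:sharp_splitting}, which gives a linear $L^2$ bound on $\nabla^2 u$ in terms of the one-scale entropy pinching $\cE^{k,\alpha,\delta}_r$, and the summability of this pinching across all scales of the neck, which is forced by (n2) together with the monotonicity of the local entropy in Theorem \ref{t:cW_local}. Combining these will yield a total scale-summed Hessian bound, integrated against the packing measure $\mu$, and a maximal-function argument on $\mu$ will then isolate the set $\cC_\epsilon$ on which the gradient of $u$ is essentially constant across all scales --- this will be the bi-Lipschitz set. The weaker bi-Hölder control on all of $\cC$ will follow from iterating the $(k,\delta^2)$-symmetry of (n3) across scales via the Geometric Transformation Theorem \ref{t:transformation2}.

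I would first establish (3), the bi-Hölder estimate, since it does not require the Ahlfors hypothesis \eqref{e:B_induction}. By (n3), every ball $B_r(x)$ with $x\in\cC$ and $r_x\le r\le 1$ is $(k,\delta^2)$-symmetric, so the Transformation Theorem \ref{t:transformation2} supplies an orthogonal normalization $A_{x,r}\in O(k)$ such that $A_{x,r}\circ u$ is a genuine splitting on $B_r(x)$ to accuracy tending to $0$ with $\delta$ and $\delta'$. The matrices $A_{x,r}$ may rotate as $r$ varies, but Theorem \ref{t:splitting_function} controls the discrepancy between consecutive dyadic scales. Telescoping along the dyadic ladder between $d(x,y)$ and $1$ produces, after bookkeeping of the cumulative rotation, the $(1+\epsilon)$-bi-Hölder bound on all of $\cC$.

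For (1) and (2), I would apply the Sharp Cone-Splitting Theorem at each $x\in\cC$ and each dyadic scale $r_j=2^{-j}\in[r_x,1]$, using (n4) to extract a $(k,\alpha)$-independent tuple of centers inside $\cC\cap B_{r_j}(x)$; this yields $r_j^{2}\fint_{B_{r_j}(x)}|\nabla^2 u|^2\le C(n,\rv)\,\cE^{k,\alpha,\delta}_{r_j}(x)$. Integrating against $d\mu(x)$, summing over $r_j$, and applying Fubini together with \eqref{e:B_induction}, the right-hand side telescopes via Theorem \ref{t:cW_local}(1) into the total entropy drop at each center, which is bounded pointwise by $C\delta^2$ by (n2) and Theorem \ref{t:cW_local}(2). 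Chebyshev's inequality on $\mu$ then produces $\cC_\epsilon$ with $\mu(\cC\setminus\cC_\epsilon)<\epsilon\,\mu(\cC\cap B_{15/8}(p))$ on which the scale-averaged Hessian stays below $\epsilon'(\epsilon)$ at every dyadic scale $r\ge r_x$. Poincaré's inequality turns this into the statement that the Jacobian frame of $u$ is within $\epsilon$ of a fixed orthogonal matrix across all scales; the Nondegeneration Theorem \ref{t:nondegeneration}, which exploits the Lipschitz radius condition (n5), then upgrades this integral control to the pointwise $(1+\epsilon)$-bi-Lipschitz statement on $\cC_\epsilon$.

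The main obstacle is the Fubini exchange in the telescoping step. The quantity $\cE^{k,\alpha,\delta}_r(x)$ is an infimum over tuples of $(k,\alpha)$-independent points that a priori depend on both $x$ and $r$, so reducing the double sum and integral to one-point entropy drops controlled by (n2) requires the Reifenberg condition (n4) to guarantee that the optimal tuples always lie in $\cC$ itself. A further subtlety is that $\mu$ mixes a continuous $\cH^k$-part on $\cC_0$ with an atomic part on $\cC_+$, so the integral-to-pointwise passage yielding the bi-Lipschitz bound must be uniform in both components; the Lipschitz radius condition (n5), acting through Theorem \ref{t:nondegeneration}, is what enables this transfer and distinguishes the present argument from Reifenberg-type constructions such as those of \cite{NaVa_Rect_harmonicmap}.
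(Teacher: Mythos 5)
Your overall route is the one the paper follows for Proposition \ref{p:bilipschitz_structure}: a Fubini exchange against the packing measure using the Ahlfors hypothesis \eqref{e:B_induction} together with (n2) and Theorem \ref{t:cW_local}, a Chebyshev selection of $\cC_\epsilon$, extraction of $(k,\alpha)$-independent tuples from the Ahlfors lower bound so that $\cE^{k,\alpha,\delta}_{r_j}(x)$ is dominated by the $\mu$-average entropy drop on $B_{r_j}(x)$, the Nondegeneration Theorem to conclude that $u$ remains an $\epsilon$-splitting on every scale at points of $\cC_\epsilon$, and then the splitting-to-GH-isometry step on $\cC$ (Lemmas \ref{l:fubini_integral}, \ref{l:splitting_neck}, \ref{l:GH_approximation_splitting_map}). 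Two of your steps are, however, misstated in ways that matter.

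First, for the bi-H\"older estimate (3) you take the normalizing matrices supplied by the Transformation Theorem to be orthogonal, $A_{x,r}\in O(k)$, and telescope ``cumulative rotations.'' The matrices $T_{x,r}$ are not orthogonal: they are general (lower-triangular) matrices whose operator norm can grow like $r^{-C\sqrt{\epsilon_0}}$ (Lemma \ref{l:Cholesky} and \eqref{e:matrix_holdergrowth}), and in Example \ref{example:cone_neck} they must blow up, since $|\nabla u|\to 0$ at the cone vertex. If orthogonal matrices sufficed, your telescoping would yield $\big|\,|u(x)-u(y)|-d(x,y)\big|\le\epsilon\, d(x,y)$ for all $x,y\in\cC$, i.e.\ bi-Lipschitz on all of $\cC$, which is false. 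The H\"older exponent arises precisely from combining the GH estimate for $T_{x,r}u$ at the single scale $r=d(x,y)$ with the polynomial norm growth $|T_{x,r}|\le r^{-\epsilon}$.

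Second, you apply the Sharp Cone-Splitting Theorem to the \emph{fixed} map $u$ to claim $r_j^{2}\fint_{B_{r_j}(x)}|\nabla^2 u|^2\le C\,\cE^{k,\alpha,\delta}_{r_j}(x)$. That theorem only asserts the existence of \emph{some} splitting map on $B_{r_j}(x)$ satisfying this bound; the restriction of the given $u$ need not satisfy it (again the cone example), and obtaining it for $T_{x,r_j}u$ is exactly the content of the Transformation Theorem. This does not sink your argument, because you also invoke the Nondegeneration Theorem, whose hypothesis is precisely the smallness of $\sum_j\cE^{k,\alpha}_{r_j}(x)$ that your Fubini/Chebyshev step establishes on $\cC_\epsilon$; the correct chain feeds the pinching bound directly into that theorem rather than routing it through an a priori Hessian bound for $u$ itself.
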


Theorem \ref{t:harmonic_splitting_neckregion} is an abreviated version of Proposition \ref{p:bilipschitz_structure}, which is the result which will be proved in the body of the paper.

The proof of Theorem \ref{t:harmonic_splitting_neckregion} relies on
 three main new points: 
The Sharp Splitting Theorem \ref{t:sharp_splitting}, the Geometric Transformation 
Theorem \ref{t:transformation},
and the Nondegeneration Theorem of \ref{t:nondegeneration}.
 The remainder of this outline will discuss these
results and explain how they lead to the proof of Theorem \ref{t:harmonic_splitting_neckregion}.
For convenience, we prestate these results below as Theorems  \ref{t:sharp_splitting2}, 
\ref{t:transformation2}, \ref{t:nondegeneration2}, respectively.
\vskip3mm

\subsection{Sharp cone-splitting}
\label{ss:outline:sharp_splitting}

It is a now classical point 
that if $B_2(p)$ is $(k,\delta)$-symmetric then there exists a harmonic $(k,\epsilon)$-splitting function 
$u:B_1(p)\to \dR^k$; see Theorem \ref{t:splitting_function} of \cite{ChCoAlmost}.  In this paper, it will be crucial  to have a quantitatively sharp understanding of 
how good a splitting exists.  

Recall that in Definition \ref{d:independent_points1} we introduced the notion of a
 collection of a $(k,\alpha)$-independent
set of points $x_0,\ldots,x_k$.
Also, in  Definition \ref{d:kalphadeltapinching} we defined the notion of 
$(k,\alpha,\delta)$-entropy pinching. The following is a slight specialization of Theorem \ref{t:sharp_splitting}.
The crucial point is the
precise linear relationship between the $k$-pinching of a ball and the squared Hessian of a splitting map. 
This is what, under appropriate circumstances,
eventually allows the result to be summed over an arbitrary number of scales, without having 
the resulting estimate blow up uncontrollably.

\begin{theorem}[Sharp Cone-Splitting]\label{t:sharp_splitting2}
Given $\epsilon,\alpha>0$ there exists $\delta(n,\rv,\alpha,\epsilon)$ and $C(n,\rv,\alpha)>0$ with the following properties. Let $(M^n,g,p)$ satisfy $\Ric_{M^n}\geq -(n-1)\delta^2$ and $\Vol(B_{\delta^{-1}}(p))>\rv \delta^{-n}>0$, and let $B_{4\delta^{-1}}(p)$ be $(k,\delta^2)$-symmetric.  Then there exists a $(k,\epsilon)$-splitting map $u:B_2(p)\to \mathbb{R}^k$ satisfying:
 \begin{align}
 \label{e:outline:sharp_splitting}
 \fint_{B_2(p)} \left(|\nabla^2 u|^2+\Ric(\nabla u,\nabla u)+2(n-1)\delta^2 |\nabla u|^2\right)\leq C\, \cE_1^{k}(p)\, .
 \end{align}
\end{theorem}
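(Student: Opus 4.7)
The strategy is to exploit the monotonicity formula for the local entropy (Theorem \ref{t:cW_local}(1))
\[
\cW_{r^2}^\delta(x) - \cW_{2r^2}^\delta(x) \;\geq\; \int_{r^2}^{2r^2} 2t \int_M \bigl(|\nabla^2 f_t - \tfrac{g}{2t}|^2 + \Ric(\nabla f_t, \nabla f_t) + 2(n-1)\delta^2|\nabla f_t|^2\bigr)\varphi\, \rho_t(x, dy)\,dt,
\]
where $\rho_t(x,\cdot) = (4\pi t)^{-n/2}e^{-f_t(x,\cdot)}$ is the heat kernel at $x$. The bracketed integrand is exactly the quantity appearing in \eqref{e:outline:sharp_splitting} with $f$ in place of $u$, so the task reduces to converting log-heat-kernel functions at $k+1$ entropy-optimal base points into an honest $(k,\epsilon)$-splitting map.

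\textbf{Base points and candidate splitting functions.} Using the $(k,\delta^2)$-symmetry of $B_{4\delta^{-1}}(p)$ along a $k$-plane $\cL^k$, I would pick $(k,\alpha)$-independent points $x_0, \dots, x_k \in \cL^k \cap B_1(p)$ attaining (within a factor of two) the infimum in
\[
\cE^{k,\alpha,\delta}_1(p) = \inf \sum_{i=0}^k \bigl|\cW_2^\delta(x_i) - \cW_1^\delta(x_i)\bigr|.
\]
Such a choice exists because $\cL^k \cap B_1(p)$ contains a $(k,\alpha)$-independent family and because by Theorem \ref{t:cW_local}(2) the local entropy at every point of $\cL^k$ is controlled by the volume pinching (which is small by the $(k,\delta^2)$-symmetry). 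Set $f^i(y) := f_t(x_i, y)$ and
\[
w^i := 2(f^i - f^0) \qquad (i = 1, \dots, k).
\]
In the model $\mathbb{R}^k \times C(Y)$ with the $x_i$ on the Euclidean factor, $2f^i = \tfrac{1}{2}|x_i - \cdot|^2 + c(Y)$, so each $w^i$ reduces to an affine function of the Euclidean coordinates and the Gram matrix $\bigl(\langle \nabla w^i, \nabla w^j\rangle\bigr)$ is bounded below by $c(n, \alpha) > 0$ as a consequence of $(k,\alpha)$-independence.

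\textbf{Hessian bound from entropy pinching.} The algebraic identity $\nabla^2 w^i = (2\nabla^2 f^i - g) - (2\nabla^2 f^0 - g)$ together with the triangle inequality gives
\[
|\nabla^2 w^i|^2 \;\leq\; 2|2\nabla^2 f^i - g|^2 + 2|2\nabla^2 f^0 - g|^2,
\]
and analogous triangle-type bounds hold for the $\Ric(\nabla w^i, \nabla w^i)$ and $|\nabla w^i|^2$ terms. Applying the entropy inequality above at each $x_i$ and summing over $i = 0, \dots, k$ bounds the time-integral over $[1,2]$ of $\sum_i (|2\nabla^2 f^i_t - g|^2 + \Ric(\nabla f^i_t, \nabla f^i_t) + 2(n-1)\delta^2|\nabla f^i_t|^2)\,\varphi\,\rho_t(x_i,\cdot)$ by $\cE^{k,\alpha,\delta}_1(p)$. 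The Gaussian lower bound $\rho_t(x_i, y) \geq c(n,\rv,\alpha)$ for $y \in B_2(p)$ and $t \in [1,2]$ (Theorem \ref{t:heat_kernel}(1)) lets one replace the heat-kernel weight by the volume measure, and the mean-value theorem in $t$ selects a time $t^* \in [1,2]$ at which
\[
\fint_{B_2(p)} \sum_{i=0}^k \bigl(|2\nabla^2 f^i_{t^*} - g|^2 + \Ric(\nabla f^i_{t^*}, \nabla f^i_{t^*}) + 2(n-1)\delta^2|\nabla f^i_{t^*}|^2\bigr) \;\leq\; C(n,\rv,\alpha)\,\cE^{k,\alpha,\delta}_1(p).
\]

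\textbf{From $w^i$ to $u$, and the main obstacle.} Each $w^i$ is already almost harmonic (the trace-$g$ terms in $\nabla^2 f^i$ and $\nabla^2 f^0$ cancel in the difference up to error controlled by the Hessian defect, and the heat-kernel Laplacian estimate Theorem \ref{t:heat_kernel}(3) gives a pointwise bound on $\Delta f^i$). A constant-coefficient linear change of basis $A = A(n,\alpha)$ derived from the cone-model Gram matrix followed by harmonic replacement on $B_2(p)$ produces the harmonic $u: B_2(p) \to \mathbb{R}^k$ satisfying conditions (1)-(3) of Definition \ref{d:splitting_function} once $\delta \leq \delta(n,\rv,\alpha,\epsilon)$, and the inequalities of the previous step, together with the preservation of the $\Ric$ and $|\nabla u|^2$ terms under this linear change, transport the bound to \eqref{e:outline:sharp_splitting}. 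The main obstacle, and what makes the statement genuinely sharp rather than merely an $(k,\epsilon)$-splitting existence result, is to keep every step quantitatively \emph{linear} in $\cE^{k,\alpha,\delta}_1(p)$: a naive contradiction-compactness argument à la Theorem \ref{t:splitting_function} would yield $(k,\epsilon)$-splittings with no quantitative relation to the pinching, whereas the later use of this theorem to sum Hessian defects across infinitely many consecutive scales in a neck region \emph{requires} linearity in order to produce a finite answer. This forces one to perform the Gram-Schmidt and harmonic-replacement steps via explicit elliptic $L^2$-estimates on $B_2(p)$ (Bochner to control $\|\Delta w^i\|_{L^2}$ by $\|\nabla^2 w^i\|_{L^2}$, then standard elliptic $L^2$-estimates to bound $\|u - A w\|_{W^{1,2}}$ by $\|\Delta(Aw)\|_{L^2}$), rather than through any qualitative limit procedure.
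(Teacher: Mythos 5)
Your proposal follows essentially the same route as the paper: entropy monotonicity controls the Hessian defect of the log heat kernel, a mean-value argument in $t$ selects a good time slice, differences of these (regularized) squared-distance functions at $(k,\alpha)$-independent points produce almost-linear harmonic candidates, and a bounded Gram--Schmidt matrix finishes the job, with every step kept linear in $\cE^{k}_1(p)$.

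The one organizational difference is the order of the harmonicization: the paper first regularizes each $f^i$ separately by solving the Poisson problem $\Delta h_i = 2n$ with boundary data $\hat f_i = 4sf^i + \mathrm{const}$ (Theorem \ref{t:sharp_conical}), so that the differences $\tilde u^i = (h_i - h_0 - d(x_0,x_i)^2)/2d(x_i,x_0)$ are \emph{exactly} harmonic and no further replacement is needed, whereas you keep the raw $w^i = 2(f^i - f^0)$ and do one harmonic replacement at the end; both work, and both require the same cutoff-plus-Bochner computation to upgrade the $L^2$ bound on $\Delta$ of the correction to the interior Hessian bound (note Bochner is needed in that direction, not to bound $\|\Delta w\|_{L^2}$ by $\|\nabla^2 w\|_{L^2}$, which is just pointwise Cauchy--Schwarz on the trace). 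One remark of yours is slightly off: you insist that no qualitative limit procedure can appear, but the paper does use a compactness/contradiction argument (Lemma \ref{l:claim11}, via $W^{1,2}$-convergence to the model cone) precisely to establish the uniform bound $|A|\le C(n,\rv,\alpha)$ on the Gram--Schmidt matrix and the nondegeneracy of the Gram matrix; this is harmless for linearity in $\cE$ because the pinching enters only through the Hessian estimate, which is preserved under multiplication by a matrix of bounded norm. You will in practice need the same compactness (or an explicit quantitative GH argument) to justify your claimed lower bound $c(n,\alpha)$ on the Gram matrix on the actual manifold rather than merely on the model.
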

\vspace{.2cm}

\vskip3mm

\subsection{Sharp Transformation Theorem}\label{ss:outline:sharp_transformation}

The results of the last subsection tell us, in terms of the $k$-pinching, 
how good we can expect the {\it best} splitting map to be on a typical ball. 
The proof of Theorem \ref{t:harmonic_splitting_neckregion} depends on fixing a 
single splitting map on the original ball $B_2(p)$ and seeing how it behaves on smaller balls.  
\vskip2mm

To this end, let us begin by describing a simple situation.  If $u:B_2(0^n)\to \dR^k$
 is a $k$-splitting map in $\dR^n$, then as with any solution of an ellipic pde $u$ has
 {\it pointwise} bounds on the Hessian.  Among other things, this implies that if we restrict
 to some subball $B_r(x)\subseteq B_1$, then $u:B_r(x)\to \dR^k$ is still a splitting map.  
More than that, we know by the smoothness estimates that the matrix 
$T^{-1}:= \langle\nabla u_i,\nabla u_j\rangle(x)$ is close $\delta_{ij}$.  
Thus, if we look at the map $T\circ u$, so that $\langle\nabla Tu_i,\nabla Tu_j\rangle(x)=\delta_{ij}$, then 
we even know that $Tu|_{B_r}$ is becoming an increasing improved splitting map, as $Tu$
 is scale invariantly converging to an isometric linear map at a polynomial rate.  Unfortunately, on spaces with 
only lower Ricci curvature bounds, such statements are highly false. 
 For instance there could be points where $|\nabla u|=0$, so that $u|_{B_r}$ 
is not even a splitting map on small balls, much less a better one, see for instance Example \ref{example:cone_neck}.
\vskip2mm

However, it turns out  that although the restriction of $u:B_r(x)\to \dR^k$ to a sub-ball may not well behaved, 
if we are on a neck region and $x\in \cC$, then $u$ may only degenerate in a very special way.  Namely, 
though $u|_{B_r}$ may not be a splitting map, there is a $k\times k$-matrix $T$ such that 
$Tu = T^j_i u_j:B_r(x)\to \dR^k$ is a splitting map.  What is more important, and as it turns 
out a lot harder to prove, is that after transformation $Tu$ is the {\it best} splitting map on the ball, 
in that it satisfies the estimates from the Sharp Cone-SplittingTheorem \ref{t:sharp_splitting}.

\begin{remark}
\label{r:scale} Note that in comparison to the $\dR^n$ 
case above the matrix $T$ depends on the scale, and not just the point, as $T=T_{x,r}$ may blow up in norm. 
 Additionally, we of course cannot ask that $Tu$ be converging polynomially to a splitting map, as no such 
splitting map may exist at all, only that $Tu$ is the best splitting map which does exist.
\end{remark}

  Our precise result is the following, which is a slight specialization of Theorem \ref{t:transformation}.

\begin{theorem}[Geometric Transformation]
\label{t:transformation2}
Given  $\alpha,\eta,\epsilon>0$, there exists $C=C(n,\rv,\eta,\alpha)$ and
\\
 \hbox{$\gamma=\gamma(n,\rv,\eta)>0$} such that 
 if \,\,$\delta<\delta(n,\rv,\eta)$,  then the following holds:
 \vskip1mm
 
Let $(M^n,g,p)$ satisfy $\Ric_{M^n}\geq -(n-1)\delta^2$, $\Vol(B_1(p))>\rv>0$ 
 and asssume:
\begin{itemize}
\item[i)]
 $u:B_2(p)\to \dR^k$ is a $(k,\delta)$-splitting function.
 \vskip1mm

\item[ii)] For all 
$r\leq s\leq \delta^{-1}$ the ball $B_{s}(p)$ is $(k,\delta^2)$-symmetric but not
$(k+1,\eta)$-symmetric. 
\end{itemize}	
	
Then for all $s\in [r,1]$, 
there exists a $k\times k$-matrix $T=T_{p,s}$ such that:

\begin{enumerate}
\item (Weak Estimate)  $Tu:B_{s}(p)\to \mathbb{R}^k$ is a $(k,\epsilon)$-splitting map.
\vskip1mm
	
\item (Strong Estimate) For $r_j := 2^{-j}$,  
\begin{align}
s^2\fint_{B_s(p)} |\nabla^2 Tu|^2 \leq C\,\sum_{s\le r_{j}\le 1}  \left(\frac{s}{r_j}\right)^{\gamma}\cE^{k}_{r_{j}}(p) + C\delta^2\,s^\gamma\, .
\end{align}
\end{enumerate}
\end{theorem}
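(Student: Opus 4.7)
The plan is to define $T_{p,s}$ by normalizing the bilinear form $A_{p,s}^{ij} := \fint_{B_s(p)} \langle \nabla u^i, \nabla u^j\rangle$ to the identity on dyadic scales $s = 2^{-j}$, for example $T_{p,s} := A_{p,s}^{-1/2}$, and then to prove the weak and strong estimates by induction on $j$ from $s=1$ down to $s=r$. At the top scale, since $u$ is already $(k,\delta)$-splitting, $T_{p,1}$ is close to the identity. At each subsequent scale I will compare $T_{p,s}u$ against the optimal splitting produced by the Sharp Cone-Splitting Theorem~\ref{t:sharp_splitting2}, and control the passage $s \to s/2$ using the non-$(k+1,\eta)$-symmetry as a rigidity input.

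\textbf{Weak estimate.} To show that $Tu$ is a $(k,\epsilon)$-splitting on every $B_s(p)$, I would argue by compactness/contradiction, at each scale individually. Since $B_s(p)$ is $(k,\delta^2)$-symmetric, Theorem~\ref{t:sharp_splitting2} yields some $(k,\epsilon')$-splitting $v_s : B_{s}(p) \to \mathbb{R}^k$. The goal is to identify $Tu$ with $v_s$ up to a bounded linear map (plus a constant) in the $W^{1,2}$-sense on $B_s(p)$. The key point is that if $Tu$ has a gradient component orthogonal to the span of $\nabla v_s$, then by the cone-splitting principle (Theorem~\ref{t:cone_splitting}) together with Theorem~\ref{t:splitting_function} one would build an additional translational symmetry for $B_s(p)$, contradicting the non-$(k+1,\eta)$-symmetry assumption. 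Thus the two splittings agree up to a matrix, and $Tu$ inherits the $\epsilon$-splitting estimates from $v_s$.

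\textbf{Strong estimate and the main obstacle.} The hard part is establishing a one-step decay: if the Hessian of $T_{p,2s}u$ on $B_{2s}(p)$ is controlled, then on $B_{s}(p)$ the Hessian of $T_{p,s}u$ decays by a definite geometric factor $2^{-\gamma}$ modulo a fresh contribution from $\cE^{k}_s(p)$. Writing $h := T_{p,s}u - T' v_s$ for an appropriate matrix $T'$, the function $h$ is harmonic with very small averaged gradient, and the Sharp Cone-Splitting Theorem~\ref{t:sharp_splitting2} controls $v_s$ by $\cE^k_s(p)$. The residual estimate on $h$ is where non-$(k+1,\eta)$-symmetry enters quantitatively: on any limit cone $\mathbb{R}^k \times C(Y)$ arising from the neck, the cross section $Y$ does not itself split a line, so by the spectrum description in Theorem~\ref{t:spectrum_metric_cone} the lowest homogeneity $\alpha$ of a harmonic function transverse to the $\mathbb{R}^k$-factor is strictly positive. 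A standard compactness/contradiction argument transfers this spectral gap on limit cones to a uniform $\gamma = \gamma(n,\rv,\eta) > 0$ at finite scales, giving the one-step decay for the $L^2$ Hessian of $h$.

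\textbf{Iteration.} Finally, I would iterate the one-step decay from scale $1$ down to scale $s$. The matrix transition $T_{p,2s} \to T_{p,s}$ is controlled by the change in $A_{p,\cdot}$ between the two scales, which in turn is controlled by $s^2 \fint_{B_s(p)} |\nabla^2 u|^2$, and hence by $\cE^{k}_s(p)$ together with the previous scale's Hessian bound. Summing the one-step estimate along the dyadic sequence and using the geometric damping $(s/r_j)^{\gamma}$ to absorb earlier contributions yields the bound
\[
s^2\fint_{B_s(p)} |\nabla^2 T_{p,s}u|^2 \;\leq\; C \sum_{s \leq r_j \leq 1} \Big(\frac{s}{r_j}\Big)^{\gamma} \cE^{k}_{r_j}(p) + C\delta^2 s^{\gamma}\, ,
\]
where the error term $C\delta^2 s^\gamma$ accounts for the initial scale as well as for the curvature contributions $\Ric(\nabla u,\nabla u) + 2(n-1)\delta^2|\nabla u|^2$ in \eqref{e:outline:sharp_splitting}. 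The main difficulty remains the quantitative spectral gap used in the one-step decay; everything else is a careful bookkeeping of matrix renormalizations and their summation over scales.
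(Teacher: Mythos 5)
Your treatment of the strong estimate is essentially the paper's: a one-step $L^2$-Hessian decay for the part of $T_su$ orthogonal to the best splitting, proved via the spectral gap $\alpha_{k+1}>1+\beta$ on limit cones (Proposition \ref{p:eigenvalue_cone}, Proposition \ref{p:decay_cone}), transferred to finite scales by contradiction/compactness, and then summed along a geometric sequence. That part of your outline is sound.

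The gap is in your weak estimate. Your single-scale argument claims that if $T_{p,s}u$ has a gradient component orthogonal to the optimal splitting $v_s$ on $B_s(p)$, then cone-splitting produces an extra translational symmetry, contradicting non-$(k+1,\eta)$-symmetry. This implication is false at a single scale: producing a $(k+1)$-splitting requires the orthogonal component to be a \emph{splitting function}, i.e.\ to have small Hessian, and you have no a priori Hessian control on $T_{p,s}u$ restricted to $B_s(p)$ — that is precisely what you are trying to prove. Indeed, on the model $\dR^k\times C(Z)$ the higher eigenmodes $r^{\alpha_i}\phi_i$ with $\alpha_i>1$ are harmonic, orthogonal to the linear functions, and of unit normalized gradient, yet they coexist perfectly well with non-$(k+1,\eta)$-symmetry. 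What actually rules out such a component in $T_{p,s}u$ is not a single-scale orthogonality argument but a multi-scale growth argument: the component would have to grow at rate $\alpha_{k+1}>1+\beta$ going up in scale, while $u$ is a $\delta$-splitting at scale $1$. The paper implements this by contradiction at the \emph{first failing scale} $s_i$: the transformations exist for all $r>s_i$, the uniqueness of the Cholesky decomposition (Lemma \ref{l:Cholesky}) shows $|T_r\circ T_{2r}^{-1}-I|\le C\sqrt{\epsilon_0}$ so that $|T_r^{-1}\circ T_{s_i}|\le (r/s_i)^{C\sqrt{\epsilon_0}}$, hence the blow-up $s_i^{-1}T_{s_i}(u-u(x_i))$ has gradient growing at most like $d(x,x_i)^{C\sqrt{\epsilon_0}}$; the Liouville-type Lemma \ref{l:harmonic_holder} (again the eigenvalue gap) then forces the limit to be exactly linear, contradicting failure at scale $s_i/10$. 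Relatedly, your choice $T_{p,s}=A_{p,s}^{-1/2}$ should be replaced by (or shown equivalent to) the lower-triangular normalization, since the uniqueness of the Cholesky factorization is what the paper uses to compare $T_{p,s}$ and $T_{p,2s}$ and obtain the slow-variation estimate on which the whole blow-up rests. Without some version of this multi-scale input, your weak estimate — and hence the matrix bookkeeping in your iteration for the strong estimate — does not close.
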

\vspace{.2cm}

First, note that the weak estimate above is the main ingredient in 
the proof of the bi-H\"older estimate of Theorem \ref{t:harmonic_splitting_neckregion}.3.  
To see this, obweve that since the transformation exists on every scale one can see it must change slowly.
In particular $|T^{-1}_{2r}\circ T_{r}|\leq 1+\epsilon$ and hence $|T_r|\leq r^{-\epsilon}$.  
On the other hand if one takes $x,y\in \cC$ and considers $r=d(x,y)$, then by the weak estimate we have 
\begin{align}\label{e:outline:distance_distortion}
	|d(T_r u(x),T_r u(y))- d(x,y)| <\epsilon r\, .
\end{align}
By using the norm control on $T_r$ stated above, this exactly gives the bi-H\"older estimate; for 
the details,  see Section \ref{s:nondegeneration}.
\vskip2mm

The proof of the weak estimate itself is given by a contradiction argument in the spirit of 
\cite{CheegerNaber_Codimensionfour}.  Roughly, if the result fails at some $x\in \cC$ then one 
looks for the first radius $s>r_x$ for which it fails.  Blowing up on $B_s(x)$ and passing to the limit,
 $T_{2s}u\to v:\dR^k\times C(Y)\to \dR^k$ one obtains a harmonic map $v$ which is a
 $(k,\epsilon)$-splitting map on $B_2(x)$, but for which by assumption, there is, in particular, 
no transformation so that $Tv$ is a $(k,\epsilon/2)$-splitting map on $B_1(x)$.  By using the 
transformation estimates of the previous paragraph, one gets that $\sup_{B_r(x)}|\nabla v|\leq r^{\epsilon}$ 
for all $r\geq 1$. Therefore, $v$ has slightly faster than linear growth.  Then, using that $X^n$ is not $(k+1,\eta)$-symmetric
 one can prove a Louiville type theorem stating that the map, $v$, must be exactly linear from one of the factors. 
In that case, it is  clear that after a transformation,
$v$ is precisely $(k,0)$-symmetric on $B_1(x)$.  Therefore, we get a contradiction. For the precise details,
see Section \ref{s:Transformation_Theorem}.
\vskip2mm

The proof of the strong estimate in Theorem \ref{t:transformation2} is much more involved.  
One again uses a contradiction argument, but this time to prove a more refined estimate.  
Roughly, if $\ell_r:B_r(x)\to \dR^k$ is the {\it best} $k$-splitting on $B_r$, in the sense of the 
Sharp Splitting of Theorem \ref{t:sharp_splitting2}, then one shows that $r^2\fint_{B_r}|\nabla^2(T_ru- \ell_r)|^2$ is decaying polynomially.  This involves a careful analysis and blow up argument. For the details, see Section
 \ref{s:Transformation_Theorem}.
\vskip2mm

\subsection{Nondegeneration Theorem}\label{ss:outline:nondegeneration}

As was discussed, the weak estimate of Theorem \ref{t:transformation2} is enough to prove the bi-H\"older estimate
 in Theorem \ref{t:harmonic_splitting_neckregion}.3.  Next we want to see that the strong estimate of 
Theorem \ref{t:transformation2} is enough to prove the bi-Lipschitz estimate in 
Theorem \ref{t:harmonic_splitting_neckregion}.3, but this takes a bit more work 
and a couple more points to address.  To accomplish this we want to show that at
 most points $x\in \cC$ we have for any $r_x<r<1$ that $|T_{x,r}-I|<\epsilon$.  At such points, 
$u:B_r(x)\to \dR^k$ remains a $(k,2\epsilon)$-splitting on all scales, even without transformation. 
 By using \eqref{e:outline:distance_distortion} as in the bi-H\"older estimate, we conclude that $u$ is a bi-Lipschitz map 
at such points. It is worth noting that this estimate does {\it not} hold at all points. This can be seen from 
Example \ref{example:cone_neck}. Thus sshowing that it holds at most points is the most we can hope for.
\vskip2mm

To accomplish this we introduce our Nondegeneration Theorem:

\begin{theorem}[Nondegeneration of $k$-Splittings]
\label{t:nondegeneration2}
Given $\epsilon,\eta,\alpha>0$ there exists
$ \delta(n,\rv,\eta,\alpha,\epsilon)>0$ such that the following holds. 
 Let $\delta < \delta(n,\rv,\eta,\alpha,\epsilon)$ with $(M^n,g,p)$ satisfying 
$\Ric_{M^n}\geq -(n-1)\delta^2$, $\Vol(B_1(p))>v>0$.  Let $u:B_{2}(p)\to \dR^k$ 
denote a $(k,\delta)$-splitting function.  Assume for $B_r(x)\subseteq B_1(p)$:

\begin{itemize}
\item[(1)] $B_{\delta^{-1}s}(x)$, is $(k,\delta^2)$-symmetric but $B_s(x)$ is not $(k+1,\eta)$-symmetric
 for all $r\leq s\leq 1$. 
\vskip1mm

 \item[(2)]$\sum_{r_j\geq r} \cE^{k,\alpha}_{r_j}(p) < \delta$ where $r_j = 2^{-j}$.
 \end{itemize}   
Then   $u:B_s(x)\to \dR^k$ is an $\epsilon$-splitting function  for every $r\leq s\leq 1$. 
\end{theorem}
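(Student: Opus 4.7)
The plan is to apply the Geometric Transformation Theorem \ref{t:transformation2} at the point $x$ to obtain, for each dyadic scale $s_j=2^{-j}\in[r,1]$, a $k\times k$ matrix $T_j:=T_{x,s_j}$ making $T_j u$ a $(k,\epsilon/8)$-splitting on $B_{s_j}(x)$ together with the strong Hessian bound. The theorem then reduces to showing that each $T_j$ stays within $\epsilon/2$ of the identity, since in that case the four defining conditions of an $\epsilon$-splitting for $u$ itself on $B_{s_j}(x)$ transfer from the corresponding conditions for $T_j u$ via volume doubling and the uniform gradient bound $|\nabla u|\le 1+\delta$.

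At the seed scale $s_0=1$, the hypothesis that $u$ is a $(k,\delta)$-splitting on $B_2(p)\supseteq B_1(x)$ combined with volume doubling forces $|T_0-\mathrm{Id}|\le C\sqrt{\delta}$ directly from the definition. To propagate this down to scale $r$, I would telescope across dyadic scales by controlling the one-step ratio $M_j:=T_j T_{j-1}^{-1}$. The Neumann--Poincar\'e inequality (Theorem \ref{t:poincare}) applied to the scalar functions $\langle\nabla(T_{j-1}u)^a,\nabla(T_{j-1}u)^b\rangle$ on $B_{s_{j-1}}(x)$, combined with the uniform gradient bound and the fact that both $T_{j-1}u$ and $T_j u$ normalize their Gram matrices on $B_{s_{j-1}}$ and $B_{s_j}$ respectively, yields
\begin{equation*}
  |M_j M_j^t - \mathrm{Id}| \;\le\; C\sqrt{h_{j-1}}+C\epsilon^2, \qquad h_j := s_j^2\fint_{B_{s_j}(x)}|\nabla^2 T_j u|^2.
\end{equation*}
Choosing the orthogonal factor in the polar decomposition of each $T_j$ compatibly with the seed $T_0$ (legitimate because $T_0$ is already $\sqrt{\delta}$-close to $\mathrm{Id}$ and the resulting orthogonal drift is summable) then upgrades this to $|M_j-\mathrm{Id}|\le C\sqrt{h_{j-1}}+C\epsilon^2$.

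Plugging in the strong estimate from Theorem \ref{t:transformation2}, using the subadditivity $\sqrt{a+b}\le\sqrt a+\sqrt b$, and interchanging the order of summation via Fubini gives
\begin{equation*}
  \sum_{j\ge 1}|M_j-\mathrm{Id}| \;\le\; C\sum_l \sqrt{\cE^{k}_{r_l}(x)}\cdot\sum_{j\ge l} 2^{\gamma(l-j)/2} + C\delta \;\le\; C\sum_l\sqrt{\cE^{k}_{r_l}(x)}+C\delta,
\end{equation*}
which, telescoped through $T_j=M_j\cdots M_1 T_0$, produces the uniform bound $|T_j-\mathrm{Id}|\le C\bigl(\sum_l\sqrt{\cE^{k}_{r_l}(x)}+\sqrt\delta\bigr)$ for all $j$.

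The main obstacle is the final analytic step: upgrading the assumed summability $\sum_l\cE^{k,\alpha}_{r_l}(p)<\delta$ of the entropy pinching to the Dini-type bound $\sum_l\sqrt{\cE^{k}_{r_l}(x)}<\epsilon/(2C)$ needed to close the loop. This is where the $(k,\alpha)$-independence built into the definition of $\cE^{k,\alpha}$ and the comparison between the base point $p$ and the nearby center $x$ are essential; one expects to invoke the Sharp Cone-Splitting Theorem \ref{t:sharp_splitting2} to absorb the square root by refining the strong estimate so that its right-hand side depends linearly (rather than through a geometric convolution of square roots) on $\cE^k_{r_l}$. Once this refinement is in place, $|T_{x,s}-\mathrm{Id}|<\epsilon/2$ uniformly in $s\in[r,1]$ and the conclusion follows.
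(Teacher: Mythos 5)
Your reduction to the matrix bound $|T_{x,s}-\mathrm{Id}|<\epsilon/2$ is the right first step, and you have correctly located the crux: your telescoping produces $\sum_j\sqrt{h_j}$, whereas hypothesis (2) only controls $\sum_j \cE^{k,\alpha}_{r_j}$, and $\sum_j\cE_{r_j}<\delta$ implies no bound whatsoever on $\sum_j\sqrt{\cE_{r_j}}$ (take $\cE_{r_j}\sim \delta/j^2$). But your proposed repair cannot work, and it misdiagnoses where the square root enters. The strong estimate of Theorem \ref{t:transformation2} is already \emph{linear} in the pinching $\cE^{k}_{r_l}$ --- that is precisely what ``sharp'' means in the Sharp Cone-Splitting Theorem --- so no refinement of it will remove the square root. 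The square root is created by your matrix-comparison step: estimating the oscillation of the Gram entries $\langle\nabla(T_{j-1}u)^a,\nabla(T_{j-1}u)^b\rangle$ via the Neumann--Poincar\'e inequality costs $\bigl(\fint|\nabla^2T_{j-1}u|^2\bigr)^{1/2}$, since $|\nabla\langle\nabla u^a,\nabla u^b\rangle|\le C|\nabla^2u|\,|\nabla u|$ and Cauchy--Schwarz intervenes. This is the generic $W^{1,2}$ telescope, and avoiding it is exactly the content of the theorem.

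The paper closes the gap by exploiting harmonicity in a way your argument never uses. One normalizes the transformations against the heat kernel measure rather than the ball average, i.e. $\int\langle\nabla(T_{s_i}u)^a,\nabla(T_{s_i}u)^b\rangle\varphi^2\rho_{s_i^2}(x,dy)=\delta^{ab}$ (Proposition \ref{p:global_integral_heatkernel}), and then invokes the identity $\partial_t\int\langle\nabla u^a,\nabla u^b\rangle\rho_t(x,dy)=2\int\bigl(\langle\nabla^2u^a,\nabla^2u^b\rangle+\Ric(\nabla u^a,\nabla u^b)\bigr)\rho_t(x,dy)$, valid for harmonic $u$, to control the change of the weighted Gram matrix across one scale \emph{linearly} in the squared Hessian (Proposition \ref{p:telescope_harmonic}); uniqueness of the Cholesky decomposition then yields $|T_{s_{i+1}}T_{s_i}^{-1}-I|\le C\tilde\chi_i$ with $\sum_i\tilde\chi_i\le C\sum_j\cE^{k}_{s_j}(x)+C\delta\le C\delta$, with no square root anywhere. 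Without this harmonic-function-specific monotonicity, your telescoping cannot be closed under hypothesis (2) as stated, so the proposal has a genuine gap at the central point of the theorem.
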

\vspace{.2cm}

The proof of the above comes down to showing that in the context of the assumptions that $|T_{x,r}-I|<\epsilon$. 
 It turns out that the implication 
\begin{align}\label{e:outline:nondegeneration}
	\sum_{r_j>r_x} \cE^{k,\alpha}(x,r_j)<\delta \implies |T_{x,r}-I|<\epsilon \, ,
\end{align}
is fairly subtle.  It is much easier to show 
$\sum_{r_j>r_x} \sqrt{\cE^{k,\alpha}}(x,r_j)<\delta \implies |T_{x,r}-I|<\epsilon $.
 However, the square gain is crucial to our applications.  The square gain depends heavily on the fact that
 $u$ is harmonic, it does not hold for a general (nonharmonic) splitting function.  
The proof of \eqref{e:outline:nondegeneration} depends on the more local estimate:
\begin{align}
|T_{2r}\circ T_r^{-1}-I|< Cr^2\fint_{B_{2r}(x)} |\nabla^2 T_{2r} u|^2 \leq C\,\cE^k(x,2r)\, ,	
\end{align}
where as previously discussed,
the last inequality is the main result of the Transformation Theorem \ref{t:transformation2}.

The first inequality is where the square gain occurs. As above, if the right hand side 
was the $L^2$ norm instead of the {\it squared} $L^2$ norm, the inequality would be much more standard and 
would follow from a typical telescope type argument.  That one can control $T_{2r}\circ T_r^{-1}$ by the {\it squared} 
Hessian is a point very much special to harmonic functions. It is crucial to the whole paper. 

The key point is 
the  following monotonicity formula, which holds for any harmonic function:
\begin{align}
\label{e:above}
\frac{d}{dt} \int \langle\nabla u_i,\nabla u_j\rangle \rho_t(x,dy) = 2\int \langle\nabla^2 u_i,\nabla^2 u_j\rangle + \Ric(\nabla u_i,\nabla u_j) \rho_t(x,dy)\, .
\end{align}

Roughly speaking, since $\rho_t$ is a probability measure which is essentially supported on $B_{\sqrt{t}}(x)$,
 the left hand side of \ref{e:above}
 measures the rate of change of $T_{ij}(x,\sqrt{t})$. Given that we want
 to use this when $|\nabla u|\approx 1$ and $|\nabla^2 u|\approx 0$, we find
that the left hand side  behaves as a linear quantity, while the right hand side behaves as 
a quadratic quantity. This leads to a a crucial gain in the analysis.  For additional
 details, see Section \ref{s:nondegeneration}. 
\vskip2mm

\subsection{Completing the proof of Theorem \ref{t:harmonic_splitting_neckregion}}

Completing the outline proof of Theorem \ref{t:harmonic_splitting_neckregion} requires
  a brief discussion of why the assumption of \eqref{e:outline:nondegeneration} holds for most $x\in \cC$. 
 Recall in Theorem \ref{t:harmonic_splitting_neckregion} we are assuming the Ahlfors regularity of 
\eqref{e:B_induction}, so that a key point is that one has the estimate
\begin{align}\label{e:outline:k_pinching_bound}
	\cE^k(x,r) \leq C r^{-k}\int_{B_{r}(x)} |\cW_{2r}(y)-\cW_r(y)|\, d\mu\, .
\end{align}
This is because the Ahlfors regularity allows us to find $k+1$independently spaced points, $x_0,\ldots,x_k$, 
for which the quantities $|\cW_{2r}(x)-\cW_r(x_j)|$ are all roughly the same as the average drop 
$r^{-k}\int_{B_{r}(x)} |\cW_{2r}(y)-\cW_r(y)|\, d\mu$. 

 Now recall from the definition of a 
neck region that for {\it every} $x\in \cC$ we have 
\begin{align}
	|\cW_1(x)-\cW_{r_x}(x)| = \sum_{r_j=2^{-j}\geq r_x} |\cW_{2r_j}(x)-\cW_{r_j}(x)| < \delta\, . 
\end{align}
Then one has
\begin{align}
\int_{B_1(p)} \Big(\sum_{r_j=2^{-j}>r_x} r_j^{-k}\int_{B_{r_j}(x)}
|\cW_{2r_j}(y)-\cW_{r_j}(y)|\Big)\,
	d\mu(y) &\leq C\int_{B_1}\int_{B_1
}\sum_{r_j=2^{-j}>r_x} r_j^{-k}|\cW_{2r_j}(y)-\cW_{r_j}(y)\ 1_{B_{r_j}(x)}(y)\,d\mu(y)
 d\mu(x)\notag\\
&= C\int_{B_1}\int_{B_1}\sum_{r_j=2^{-j}>r_x} r_j^{-k}|\cW_{2r_j}-\cW_{r_j}|(y) 1_{B_{r_j}(y)}(x)\,d\mu[x] d\mu[y]\notag\\
&\leq C\int_{B_1}\sum_{r_j=2^{-j}>r_y} |\cW_{2r_j}(y)-\cW_{r_j}(y)|\cdot\frac{\mu(B_{r_j}(y))}{r_j^k}\,d\mu(y)\notag\\
&\leq C\, B\int_{B_1}\sum_{r_j=2^{-j}>r_y} |\cW_{2r_j}(y)-\cW_{r_j}(y)|\,d\mu(y)\notag\\
&\leq C\, B\int_{B_1} |\cW_{2}(y)-\cW_{r_y}(y)|\,d\mu(y)\notag\\
&\leq C\,B^2\,\delta\, .
\end{align}
It follows from this and \eqref{e:outline:k_pinching_bound} that {\it most} 
of $\cC$ satisfies the assumption of \eqref{e:outline:nondegeneration}, as claimed.

\vspace{1cm}

\section{Sharp cone-splitting}
\label{s:Sharp_splitting}

This is the first of three sections which constitute the third part of the paper.

In this section, we prove  Theorem \ref{t:sharp_splitting},
the {\it sharp} existence theorem for $\epsilon$-splitting functions.
The hypothesis involves the $k$-pinching of the local
pointwise entropy; see Definition \ref{d:kalphadeltapinching}. 
 The key point, the one which presents the real difficulty in the proof  is the 
{\it linear bound} of the squared $L^2$-norm of the Hessian of the splitting function in terms of the entropy pinching. 
This is what we mean by {\it sharp},.
The main argument is given in the proof of  Proposition \ref{p:sharp_splitting}.
  As explained  in Section \ref{s:ouline_proof_neckstructure}, the form of the bound is crucial  
for the proof of the Transformation Theorem, Theorem \ref{t:transformation}.

\begin{theorem}[Sharp Cone-Splitting]
\label{t:sharp_splitting}
Given $\epsilon,\alpha>0$ there exists $\delta(n,\rv,\alpha,\epsilon)$ and $C(n,\rv,\alpha)>0$ 
with the following properties. Let $(M^n,g,p)$ satisfy $\Ric_{M^n}\geq -(n-1)\delta^2 r^2$ and 
$\Vol(B_{\delta^{-1}}(p))>\rv \delta^{-n}>0$ with $r\leq 1$, and let $B_{4\delta^{-1}}(p)$ be $(k,\delta^2)$-symmetric. 
 Then there exists a $(k,\epsilon)$-splitting map $u:B_2(p)\to \mathbb{R}^k$ satisfying:
 \begin{align}
 \label{e:ssf}
 \fint_{B_2(q)}\left(|\nabla^2 u|^2+\Ric(\nabla u,\nabla u)+2(n-1)\delta^2 r^2 |\nabla u|^2\right) \leq C(n,\rv,\alpha)\cdot \cE_1^{(k,\alpha,\delta)}(q)\, .
 \end{align}
\end{theorem}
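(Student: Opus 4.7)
The plan is to pick $k+1$ reference points realizing (up to a factor of 2) the entropy pinching infimum, build a harmonic splitting from them by the classical Cheeger--Colding recipe, and then extract the sharp Hessian bound directly from the monotonicity of the local pointed entropy $\cW_t^\delta$.

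First I would fix a $(k,\alpha)$-independent family $\{x_0,\ldots,x_k\}\subset B_1(q)$ with
$\sum_{j=0}^k \bigl|\cW_{2}^{\delta}(x_j) - \cW_{1}^{\delta}(x_j)\bigr| \leq 2\,\cE_1^{(k,\alpha,\delta)}(q) =: 2P$.
Such a family exists because the $(k,\delta^2)$-symmetry of $B_{4\delta^{-1}}(p)$ gives a Gromov--Hausdorff approximation to $\dR^k\times C(Y)$, in which any $(k,\alpha)$-independent configuration in the $\dR^k$ factor pulls back to one in $M^n$. Following \cite{ChCoAlmost}, I would then produce harmonic functions $b_j$ on $B_{\delta^{-1}}(p)$ which $W^{1,2}$-approximate the shifted distance functions $d(x_j,\cdot) - d(x_j,p)$, and assemble $u=(u^1,\ldots,u^k):B_2(q)\to\dR^k$ by Gram--Schmidt orthonormalizing $\{b_j-b_0\}_{j=1}^k$; the Gram matrix is $\alpha$-uniformly invertible by $(k,\alpha)$-independence, so $u$ is harmonic and, by Theorem \ref{t:splitting_function}, a $(k,\epsilon)$-splitting map once $\delta\leq\delta(n,\rv,\alpha,\epsilon)$.

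The heart of the argument is the sharp quantitative Hessian estimate. Set $f_t^j := -\log\rho_t(x_j,\cdot) - \tfrac{n}{2}\log(4\pi t)$. The monotonicity formula of Theorem \ref{t:cW_local}(1), integrated over $t\in[1,2]$, gives for each $j$
\begin{align*}
\bigl|\cW_{2}^{\delta}(x_j)-\cW_{1}^{\delta}(x_j)\bigr| \;\geq\; \int_{1}^{2} 2t\int_{M} \Big(\bigl|\nabla^2 f_t^j - \tfrac{1}{2t}g\bigr|^2 + \Ric(\nabla f_t^j,\nabla f_t^j) + 2(n-1)\delta^2 r^2 |\nabla f_t^j|^2\Big)\,\varphi\,\rho_t(x_j,dy)\,dt.
\end{align*}
Summing in $j$ bounds the full combination by $2P$. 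The Li--Yau lower bound of Theorem \ref{t:heat_kernel} yields $\rho_t(x_j,y)\geq c(n,\rv)>0$ for $y\in B_4(q)$ and $t\in[1,2]$, so Fubini provides a $t_*\in[1,2]$ with
$\int_{B_4(q)} \bigl( |\nabla^2 f_{t_*}^j - \tfrac{1}{2t_*}g|^2 + \Ric(\nabla f_{t_*}^j,\nabla f_{t_*}^j) + 2(n-1)\delta^2 r^2 |\nabla f_{t_*}^j|^2\bigr)\,dy \leq C(n,\rv)\,P$
for every $j$.

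The final step transfers this bound from the non-harmonic functions $f_{t_*}^j$ to the harmonic splitting $u$. The guiding identity is that on the model $\dR^k\times C(Y)$ one has the \emph{exact} equality $f_{t_*}^j - f_{t_*}^0 = \langle x_0 - x_j,\,\cdot\,\rangle/(2t_*) + \mathrm{const}$, so $\nabla^2(f_{t_*}^j - f_{t_*}^0)\equiv 0$ there; in our almost-symmetric setting the $b_j-b_0$ differ from suitable multiples of $f_{t_*}^j - f_{t_*}^0$ only by a $W^{2,2}$-small correction controlled by $P$, and the Gram--Schmidt orthonormalization preserves this control because the Gram matrix is $\alpha$-uniformly bi-Lipschitz. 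Combining the $W^{2,2}$-comparison with the Lebesgue bound on $f_{t_*}^j$ yields
$\fint_{B_2(q)}\!\bigl(|\nabla^2 u|^2 + \Ric(\nabla u,\nabla u) + 2(n-1)\delta^2 r^2|\nabla u|^2\bigr)\leq C(n,\rv,\alpha)P$, as required. The main obstacle is this last $W^{2,2}$-comparison: a careless passage from $f_{t_*}^j - f_{t_*}^0$ to the harmonic $b_j - b_0$ loses a square root, reducing the bound to $C\sqrt{P}$. Preserving the linear dependence requires exploiting that both sides of the comparison satisfy the Bochner inequality with matching sign conventions, so that one can cancel (rather than Cauchy--Schwarz) the leading Hessian terms; this is precisely where using the pointed entropy $\cW_t^\delta$ rather than the volume ratio $\cV_r$ pays off, since the very same modified Hessian $|\nabla^2 f - g/(2t)|^2 + \Ric(\nabla f,\nabla f) + 2(n-1)\delta^2 r^2|\nabla f|^2$ appears on both sides of the estimate.
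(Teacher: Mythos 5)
Your overall architecture matches the paper's: choose a $(k,\alpha)$-independent family nearly realizing the pinching infimum, use the monotonicity of $\cW_t^\delta$ together with the Li--Yau lower bound to extract, at a good time slice $t_*\in[1,2]$, a ball-averaged bound on $\bigl|\nabla^2(4t_*f_{t_*}^j)-2g\bigr|^2+\Ric(\nabla(4t_*f^j_{t_*}),\nabla(4t_*f^j_{t_*}))+2(n-1)\delta^2r^2|\nabla(4t_*f^j_{t_*})|^2$ that is linear in $P:=\cE_1^{(k,\alpha,\delta)}(q)$, and then orthonormalize differences of reference functions with an $\alpha$-controlled matrix. The model identity on $\dR^k\times C(Y)$ you invoke is also the correct one. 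The divergence, and the gap, is in how the harmonic splitting functions are produced and how the Hessian bound is transferred to them.

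You take $b_j$ to be harmonic approximations of the shifted \emph{linear} distance functions $d(x_j,\cdot)-d(x_j,p)$ and then assert that $b_j-b_0$ is $W^{2,2}$-close, \emph{linearly} in $P$, to a multiple of $f_{t_*}^j-f_{t_*}^0$. That assertion is essentially the sharp estimate being proved, and nothing in the proposal establishes it. The entropy pinching controls the Hessian deficit of the \emph{squared}-distance regularization $4t_*f_{t_*}^j\approx d_{x_j}^2$; passing from $d_{x_j}^2$ to $d_{x_j}$ (or to a harmonic replacement of $d_{x_j}$) is a nonlinear operation, and the classical Cheeger--Colding chain through excess estimates returns only $C\sqrt{P}$. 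Your closing remark correctly flags the square-root danger, but ``cancel the leading Hessian terms using matching sign conventions in Bochner'' is not an argument: $b_j-b_0$ is harmonic while $f_{t_*}^j-f_{t_*}^0$ is not, they solve different equations, and applying Bochner to each separately produces no cancellation between them.

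The paper's resolution (Theorem \ref{t:sharp_conical} and Proposition \ref{p:sharp_splitting}) is to never leave the squared-distance level. One sets $\hat f_j:=4s f^j_{s}+c_j$ for a good $s\in[t,2t]$ and defines $h_j$ as the solution of the Poisson problem $\Delta h_j=2n$ with boundary value $\hat f_j$. Since $h_j-\hat f_j$ vanishes on the boundary and $|\Delta\hat f_j-2n|^2\le n|\nabla^2\hat f_j-2g|^2$, integration by parts plus the Poincar\'e inequality gives $\fint|\nabla(h_j-\hat f_j)|^2\le C\fint|\nabla^2\hat f_j-2g|^2\le CP$ \emph{linearly}, and a Bochner argument with a cutoff upgrades this to the full weighted Hessian bound for $h_j$. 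The splitting functions are then the \emph{exactly harmonic} differences $\tilde u^j=\bigl(h_j-h_0-d(x_0,x_j)^2\bigr)/\bigl(2d(x_j,x_0)\bigr)$ (the quantitative law-of-cosines linearization of Example \ref{e:R2}), which inherit the linear bound by Cauchy--Schwarz because $d(x_j,x_0)\gtrsim\alpha$. Finally, the bound $|A|\le C(n,\rv,\alpha)$ on the orthonormalizing matrix is obtained by a compactness/contradiction argument (Lemma \ref{l:claim11}), not merely from invertibility of the Gram matrix. If you replace your $b_j$ by these Poisson regularizations of $d_{x_j}^2$, the rest of your outline goes through.
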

\vskip1mm

\begin{remark}[Sharpness] The example of the $2$-dimensional
cone $C(S^1_\beta)$ shows that the estimate in 
 Theorem \ref{t:sharp_splitting} is actually sharp. In 
 checking this, it is useful to employ Theorem \ref{t:cW_local}
 which states the equivalence the volume pinching and
 the entropy pinching.
 \end{remark}

\subsection{Approximation of the squared radius 
 with sharp Hessian estimates}
\label{ss:conical_map}
The first step in the proof of Theorem \ref{t:sharp_splitting} is to  construct a regularization 
$h$ of the squared distance function $d^2$.
 As in \cite{ChCoAlmost},
 the function $h$ will be
taken to satisfy the Poisson equation $\Delta h=2n$. Note that
for the case of metric cones, we have preciesely 
$h=d^2$, $\nabla^2 h=2g$.  We will obtain sharp Hessian bounds for $h$ 
in terms of the entropy drop.  The splitting map $u$ will be constructed explicitly from $h$, this will 
obtain an estimate on $\nabla^2 u$ in terms of the entropy pinching. 
Recall that the $k$-pinching
$\cE^{k,\alpha,\delta}_r(x)$
is defined to be the minimal entropy pinching over all 
\hbox{$(k,\alpha)$}-independent points; 
see  Definition \ref{d:kalphadeltapinching}.


\begin{theorem}[Sharp Poisson regularization of $d^2$]
\label{t:sharp_conical}
	Let $(M^n,g,p)$ satisfy $\Ric\geq -(n-1)\delta^2 $ with $\Vol(B_{\delta^{-1}}(p))>\rv \delta^{-n}>0$. 
 For any $\epsilon>0$ and $B_{ r}(x)\subseteq B_5(p)$ if $\delta\le \delta(n,\rv,\epsilon)$ is such that 
$B_{ r\delta^{-1}}(x)$ is $(0,\delta^2)$-symmetric, then there exists a function $h: B_{2r}(x)\to \mathbb{R}$ such that:
	\vskip2mm
\begin{itemize}
\item[(1)]
$\Delta h=2n$
\vskip3mm
\item[(2)] $\fint_{B_{2 r}(x)}\Big(|\nabla^2h-2g|^2+\Ric(\nabla h,\nabla h)+2(n-1)\delta^2 \cdot |\nabla h|^2\Big) \le C(n,\rv)\cdot 
|\cW_{ r^2}^{\delta}(x) -\cW_{2 r^2}^{\delta}(x)|$\, .
\vskip3mm	
\item[(3)] $\fint_{B_{2 r}(x)}\Big||\nabla h|^2-4h\Big|^2\le C(n,\rv) r^4\cdot 
|\cW_{ r^2}^{\delta}(x)-\cW_{2 r^2}^{\delta}(x)|$.
\vskip3mm
\item[(4)] $|\nabla h|\le C(n,\rv) r$\, .
\vskip3mm
	\item[(5)] $\sup_{B_{2 r}(x)}|h-d_x^2|\le \epsilon  r^2$\, .
\end{itemize}
\end{theorem}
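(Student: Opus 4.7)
The plan is to construct $h$ as the solution of the Dirichlet problem $\Delta h = 2n$ on a slightly larger ball $B_{5r/2}(x)$ with boundary data approximating $d_x^2$ (say $h|_{\partial B_{5r/2}(x)} = 25r^2/4$). After rescaling we may take $r=1$. Properties (1) and (4)--(5) follow from the now classical Cheeger--Colding machinery: Laplacian comparison under $\Ric\ge -(n-1)\delta^2$ gives $\Delta d_x^2 = 2n + O(\delta)$ pointwise, so $h - d_x^2$ is almost harmonic with small boundary oscillation, and a maximum-principle/mean-value argument yields $\sup_{B_2(x)}|h-d_x^2|\le\epsilon$. The Cheng--Yau gradient estimate applied to $h$ then produces $|\nabla h|\le C$.

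The essential new content is the sharp Hessian bound (2). Since $\Delta h = 2n$ is constant and $|\nabla^2 h|^2 = |\nabla^2 h - 2g|^2 + 4n$, a direct Bochner computation yields the identity
\begin{align*}
\tfrac{1}{2}\Delta\bigl(|\nabla h|^2 - 4h\bigr) \;=\; |\nabla^2 h - 2g|^2 + \Ric(\nabla h,\nabla h).
\end{align*}
Setting $G:=|\nabla h|^2-4h$ (which vanishes identically on an exact cone since $h=d_x^2$ there), and integrating against a cutoff $\phi$ with $\phi\equiv 1$ on $B_2(x)$, we see that (2) is implied by an $L^2$ bound on $G$ that is linear in $\big|\cW^{\delta}_{r^2}(x)-\cW^{\delta}_{2r^2}(x)\big|$.

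To obtain this $L^2$ bound, I would compare $h$ to the heat-kernel-derived approximation $H(y):=4t_0 f_{t_0}(x,y)+C_0$ for a suitable $t_0\in[r^2,2r^2]$ and constant $C_0$, where $\rho_{t_0}(x,\cdot)=(4\pi t_0)^{-n/2}e^{-f_{t_0}}$. On an exact cone one has $H\equiv d_x^2$ and $\Delta H\equiv 2n$. In general, the monotonicity formula of Lemma~\ref{l:entropy_computation} and Theorem~\ref{t:cW_local}, combined with Li--Yau heat kernel estimates (Theorem~\ref{t:heat_kernel}), yields
\begin{align*}
\int_{B_{5/2}(x)}\!\Bigl(|\nabla^2 H - 2g|^2 + \Ric(\nabla H,\nabla H) + 2(n-1)\delta^2|\nabla H|^2 + (\Delta H - 2n)^2\Bigr)d\mu\;\leq\; C(n,\rv)\,\big|\cW^{\delta}_{r^2}(x)-\cW^{\delta}_{2r^2}(x)\big|
\end{align*}
after averaging $t_0$ over a short interval. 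Writing $h=H+\psi$ with $\Delta\psi = 2n-\Delta H$ in $B_{5/2}(x)$ and $\psi=0$ on the boundary (adjusting the boundary data for $h$ to match $H$), standard $W^{2,2}$ elliptic regularity bounds $\psi$ in $W^{2,2}$ by the $L^2$ norm of the RHS, which is again controlled by the entropy drop; combining the two estimates yields (2). Property (3) follows from (2) by integrating the Bochner identity against a cutoff, applying integration by parts, and using the $L^\infty$ bound on $h$ and $|\nabla h|$ to absorb lower-order terms, with the expected $r^4$ scaling.

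The principal obstacle is ensuring the \emph{linear} (rather than square-root) dependence on the entropy drop in the Hessian bound for $H$. The monotonicity formula naturally controls the Hessian error of $f_{t_0}$ integrated against the heat-kernel weight $\rho_{t_0}$, whereas (2) demands an $L^2$ average over $B_{5/2}(x)$ with the ambient volume measure. Reconciling the two requires a careful weighted rearrangement: Li--Yau estimates show $\rho_{t_0}$ and the volume measure are comparable on $B_{5/2}(x)$ up to multiplicative constants depending only on $n$ and $\rv$, which is enough to transfer the sharp pinching estimate without dropping to its square root -- exactly analogous to why the entropy is preferred over the volume ratio throughout this paper.
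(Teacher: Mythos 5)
Your construction and the main estimate follow essentially the same route as the paper: regularize $d_x^2$ by $H=4t_0f_{t_0}+C_0$ coming from the heat kernel, use the entropy monotonicity (after averaging over $t_0\in[r^2,2r^2]$) together with the Li--Yau lower bound on $\rho_{t_0}$ to convert the heat-kernel-weighted Hessian pinching into a ball average linear in $|\cW^{\delta}_{r^2}(x)-\cW^{\delta}_{2r^2}(x)|$, and then pass to $h=H+\psi$ with $\Delta\psi=2n-\Delta H$, $\psi|_{\partial}=0$, controlling $\psi$ by integration by parts, Poincar\'e and the Bochner formula with a cutoff (this is what your ``standard $W^{2,2}$ regularity'' amounts to here). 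Your final paragraph correctly identifies why the linearity survives the change of measure.

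Two steps as written need repair, though neither destroys the structure. First, the claimed reduction of (2) to an $L^2$ bound on $G=|\nabla h|^2-4h$ runs the wrong way: integrating $\tfrac12\Delta G=|\nabla^2h-2g|^2+\Ric(\nabla h,\nabla h)$ against a cutoff controls the Hessian integral only by $\fint|G|\,|\Delta\phi|\le C\bigl(\fint G^2\bigr)^{1/2}$, i.e.\ by the \emph{square root} of the entropy drop, so the sharp (2) cannot be deduced from (3). The correct order (and the one your own second route actually implements) is the paper's: prove (2) directly from the Hessian bound on $H$ plus the Bochner estimate for $\psi$, and then deduce (3) from (2) by applying the Poincar\'e inequality to $\nabla G=2(\nabla^2h-2g)(\nabla h,\cdot)$, with the additive constant $C_0$ chosen so that the mean of $G$ is controlled. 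Second, your route to (5) via a pointwise comparison $\Delta d_x^2=2n+O(\delta)$ is not available: under a lower Ricci bound one only has $\Delta d_x^2\le 2n+O(\delta)$ in the distributional sense, and no pointwise lower bound holds near the cut locus even on an almost-cone. The paper instead proves (5) by a compactness/contradiction argument, using heat-kernel convergence so that in the limit cone $4f_\infty+\text{const}=d_{x_\infty}^2$ exactly; you should argue the same way.
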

\begin{proof}
Set $t= r^2$, and as usual write $\rho_t(x,y)=(4\pi t)^{-n/2}e^{-f}$ for the heat kernel. 
The Hessian estimates on $h$ will follow from the Hessian estimates on the function $4tf$, which is in turn given by the local $\cW$-entropy pinching.  
Thus, we will begin by deriving the relevant  estimates on $\nabla^2 f$.
\vskip2mm

By Theorem \ref{t:cW_local}, we have 
\begin{align}
		2\int_t^{2t}s\int_{M^n}\Big(|\nabla^2f &-\frac{1}{2s}g|^2
		+\Ric(\nabla f,\nabla f)+2(n-1)\delta^2|\nabla f|^2\Big)\,\varphi\, \rho_s(x,dy)\notag\\
		&{}\notag\\
		&\le |\cW_{t}^{\delta}(x)-\cW_{2t}^{\delta}(x)|\notag\\
		 & := \eta\, .
	\end{align}
Hence, there exists $t\le s\le 2t$ such that 
\begin{align}
		2ts\int_M\Big(|\nabla^2f-\frac{1}{2s}g|^2+\Ric(\nabla f,\nabla f)+2\delta^2(n-1)|\nabla f|^2\Big)\varphi \rho_s(x,dy)\le \eta.
\end{align}
In particular,
\begin{align}
		\int_{M^n}\Big(|\nabla^2(4sf)-2g|^2+\Ric(\nabla (4sf),\nabla (4sf))+2\delta^2(n-1)|\nabla (4sf)|^2\Big)\varphi \rho_s(x,dy)\le 8\eta.
\end{align}
	By using the heat kernel lower bound estimates in Theorem \ref{t:heat_kernel} 	and the volume noncollapsing assumption, we get 
	\begin{align}
		\fint_{B_{5\sqrt{s}}(x)}\Big(|\nabla^2(4sf)-2g|^2+\Ric(\nabla (4sf),\nabla (4sf))+2(n-1)\delta^2|\nabla (4sf)|^2\Big)\le C(n,\rv)\eta.
	\end{align}
	
Set $\tilde{f}:=  4sf$ and consider the $1$-form
	\begin{align}
		\nabla |\nabla \tilde{f}|^2-4\nabla\tilde{f}
		&=2\nabla^2\tilde{f}(\nabla\tilde{f},\, \cdot\, )-4\nabla\tilde{f}\notag\\
		&=2(\nabla^2\tilde{f}-2g)(\nabla\tilde{f},\,\cdot\,)\, .
	\end{align}
	By using the Poincar\'e inequality in Theorem \ref{t:poincare} and the gradient estimate $|\nabla\tilde{f}|^2 =16s^2|\nabla f|^2$
we get \\
\begin{align}
		\fint_{B_{4\sqrt{s}}(x)}\Big||\nabla\tilde{f}|^2-4\tilde{f}-\fint_{B_{4\sqrt{s}}(x)}(|\nabla\tilde{f}|^2-4\tilde{f})\Big|^2\le C(n)s\fint_{B_{4\sqrt{s}}(x)}|\nabla^2(4sf)-2g|^2|\nabla\tilde{f}|^2\le C(n,\rv)\eta {s}^2.
	\end{align}
\vskip1mm
	
Put $$
\hat{f}:=  \tilde{f}+\frac{1}{4}\fint_{B_{4\sqrt{s}}(x)}(|\nabla\tilde{f}|^2-4\tilde{f})\, .
$$
Then
\begin{align}\label{e:estimates_hatf}
		\fint_{B_{4\sqrt{s}}(x)}\Big||\nabla\hat{f}|^2-4\hat{f}\Big|^2\le C(n,\rv)\eta {s}^2\, ,
\end{align}
\begin{align}
\label{e:estimates_half1}
		\fint_{B_{4\sqrt{s}}(x)}\Big(|\nabla^2\hat{f}-2g|^2+\Ric(\nabla \hat{f},\nabla \hat{f})+2\delta^2(n-1)|\nabla \hat{f}|^2\Big)\le C(n,\rv)\eta.
	\end{align}

We now define
 the function $h$ to be the solution of the Poisson equation,
\begin{align}
	\Delta h=&2n\qquad ({\rm on}\,\, B_{4\sqrt{s}}(x))\, ,\notag\\
	h=&\hat{f}\qquad ({\rm on}\,\,\partial B_{4\sqrt{s}}(x))\, .
\end{align}
We will show that $h$ satisfies the desired estimates.\footnote{To be precise, here we might have to change the domain by an arbitrarily small amount such that the boundary is smooth, and in particular satisfies an exterior sphere condition. This does not effect the argument which follows.}
\vskip2mm


By integrating by parts, we have 
\begin{align}
		\fint_{B_{4\sqrt{s}}(x)}|\nabla h-\nabla\hat{f}|^2&=\fint_{B_{4\sqrt{s}}(x)}(h-\hat{f})(\Delta \hat{f}-2n)\notag\\
		&\le \left(\fint_{B_{4\sqrt{s}}(x)}|h-\hat{f}|^2\right)^{1/2}\cdot \left(\fint_{B_{4\sqrt{s}}(x)}|\Delta\hat{f}-2n|^2\right)^{1/2}\notag\\ \label{e:gradien_L2_laplacian}
		&\le C(n)\left(\fint_{B_{4\sqrt{s}}(x)}|h-\hat{f}|^2\right)^{1/2}\cdot \left(\fint_{B_{4\sqrt{s}}(x)}|\nabla^2\hat{f}-2g|^2\right)^{1/2}.
	\end{align}
	
Since $h-\hat{f}=0$ on $\partial B_{4\sqrt{s}}(x)$ we have by the Poincar\'e inequality in Theorem \ref{t:poincare} 
	\begin{align}
	\label{e:gradL2}
		\fint_{B_{4\sqrt{s}}(x)}|h-\hat{f}|^2\le C(n)s\fint_{B_{4\sqrt{s}}(x)}|\nabla h-\nabla\hat{f}|^2.
	\end{align}
By combining \eqref{e:gradL2} with \eqref{e:gradien_L2_laplacian}, we get 
	\begin{align}\label{e:gradL2hatf}
		s\fint_{B_{4\sqrt{s}}(x)}|\nabla h-\nabla\hat{f}|^2+\fint_{B_{4\sqrt{s}}(x)}|h-\hat{f}|^2\le C(n)s^2\fint_{B_{4\sqrt{s}}(x)}|\nabla^2\hat{f}-2g|^2.
	\end{align}
Choose a cutoff function $\phi$ as in \eqref{e:cutoff} with support in $B_{4\sqrt{s}}(x)$ and $\phi:=  1$ in $B_{3\sqrt{s}}(x)$ such that 
$s|\Delta\phi|+s|\nabla \phi|^2\le C(n)$. Then we have 
	\begin{align}
		\fint_{B_{4\sqrt{s}}(x)}|\Delta\phi| |\nabla h-\nabla\hat{f}|^2&\ge \fint_{B_{4\sqrt{s}}(x)}\Delta\phi |\nabla h-\nabla\hat{f}|^2\notag\\
		&\ge \fint_{B_{4\sqrt{s}}(x)}\phi \Delta |\nabla h-\nabla\hat{f}|^2\notag\\
		&\ge \fint_{B_{4\sqrt{s}}(x)}2\phi \left(|\nabla^2h-\nabla^2\hat{f}|^2+\Ric(\nabla(h-\hat{f}),\nabla(h-\hat{f}))+\langle \nabla (\Delta h-\Delta\hat{f}),\nabla (h-\hat{f})\rangle\right).
	\end{align}
	
	Therefore we have 
	
	\begin{align}
		&\fint_{B_{4\sqrt{s}}(x)}\phi\left(|\nabla^2h-\nabla^2\hat{f}|^2+\Ric(\nabla(h-\hat{f}),\nabla(h-\hat{f}))\right)\notag\\
		&\le \frac{1}{2}\fint_{B_{4\sqrt{s}}(x)}|\Delta\phi| |\nabla h-\nabla\hat{f}|^2-\fint_{B_{4\sqrt{s}}(x)}\phi \langle \nabla (\Delta h-\Delta\hat{f}),\nabla (h-\hat{f})\rangle \notag \\
		&\le  C(n)\left(\fint_{B_{4\sqrt{s}}(x)}|\Delta\phi|\cdot  |\nabla h-\nabla\hat{f}|^2+\fint_{B_{4\sqrt{s}}(x)}\phi |\Delta\hat{f}-2n|^2+\fint_{B_{4\sqrt{s}}(x)} |\Delta \hat{f}-2n|\cdot  |\nabla h-\nabla\hat{f}|\cdot |\nabla \phi|\right)\notag\\
		&\le C(n)\left(\fint_{B_{4\sqrt{s}}(x)}(|\Delta\phi|+|\nabla \phi|^2)\cdot  |\nabla h-\nabla\hat{f}|^2+\fint_{B_{4\sqrt{s}}(x)}(\phi+1) |\Delta\hat{f}-2n|^2\right)\notag\\
		&\le C(n)\fint_{B_{4\sqrt{s}}(x)}|\nabla^2\hat{f}-2g|^2\,,
	\end{align}
	where we have used \eqref{e:gradL2hatf} in the last inequality and $|\Delta \hat f-2n|^2\le n|\nabla^2\hat f-2g|^2$.
By using \eqref{e:gradL2hatf} and $s\le 10^2$,
we have 
	\begin{align}
		\fint_{B_{3\sqrt{s}}(x)}&\left(|\nabla^2h-\nabla^2\hat{f}|^2+\Ric(\nabla(h-\hat{f}),\nabla(h-\hat{f}))+2\delta^2(n-1)|\nabla h-\nabla\hat{f}|^2\right)\notag\\
		&\le \fint_{B_{4\sqrt{s}}(x)}\phi\left(|\nabla^2h-\nabla^2\hat{f}|^2+\Ric(\nabla(h-\hat{f}),\nabla(h-\hat{f}))+2\delta^2(n-1)|\nabla h-\nabla\hat{f}|^2\right)\notag\\
		&\le  C(n)\fint_{B_{4\sqrt{s}}(x)}|\nabla^2\hat{f}-2g|^2\, .
	\end{align}
	
	
	By the Schwarz inequality and \eqref{e:estimates_hatf} we get 
\begin{align}
\fint_{B_{3\sqrt{s}}(x)}|\nabla^2h-2g|^2+
		&\Ric(\nabla h,\nabla h)+2\delta^2(n-1)|\nabla h|^2\notag\\
		\le &2\fint_{B_{3\sqrt{s}}(x)}\left(|\nabla^2h-\nabla^2\hat{f}|^2+\Ric(\nabla(h-\hat{f}),\nabla(h-\hat{f}))+2\delta^2(n-1)|\nabla h-\nabla \hat{f}|^2\right)\notag\\
		&{}\,\,\,\,\,\,\,\,\,\,\,\,+2\fint_{B_{3\sqrt{s}}(x)}\left(|\nabla^2\hat{f}-2g|^2+\Ric(\nabla \hat{f},\nabla \hat{f})+2\delta^2(n-1)|\nabla \hat{f}|^2\right)\notag\\
		\le & C(n,\rv)\eta.
	\end{align}
This gives (1). 
\vskip3mm
	
 To see (3), note that $2t\ge s\ge t= r^2$ and 
	\begin{align}
		\fint_{B_{4\sqrt{s}}(x)}|\nabla h|^2\le 2\sup_{B_{4{\sqrt{s}}(x)} }|\nabla {f}|^2+ 2\fint_{B_{4\sqrt{s}}(x)}|\nabla h-\nabla \hat{f}|^2\le C(n,\rv)s\, .
	\end{align}
From this, the gradient estimate on $h$ in (3)  follows by a standard Moser iteration argument.
\vskip3mm

To prove (2),
since $2t\ge s\ge t= r^2$,
we can use estimates for $\hat{f}$ in \eqref{e:estimates_hatf} and the gradient estimates $|\nabla h|+|\nabla \hat{f}|\le C(n,\rv)\sqrt{s}$ in $B_{3\sqrt{s}}(x)$. 
By Cauchy-Schwarz
	 inequality, we have 
	\begin{align}
		\fint_{B_{3\sqrt{s}}(x)}\Big||\nabla h|^2-4h\Big|^2&\le C(n)\cdot\left(\fint_{B_{3\sqrt{s}}(x)}\Big||\nabla \hat{f}|^2-4\hat{f}\Big|^2+\fint_{B_{3\sqrt{s}}(x)}\Big||\nabla h|^2-|\nabla\hat{f}|^2\Big|^2+\fint_{B_{3\sqrt{s}}(x)}|h-\hat{f}|^2\right)\notag\\
		&\le C(n,\rv)\eta s^2+\fint_{B_{3\sqrt{s}}(x)}|\nabla h-\nabla\hat{f}|^2\cdot |\nabla h+\nabla \hat{f}|^2\notag\\
		&\le C(n,\rv)\eta s^2+C(n,\rv)s\fint_{B_{3\sqrt{s}}(x)}|\nabla h-\nabla\hat{f}|^2\notag\\
		&\le C(n,\rv)\eta s^2.
	\end{align}
This gives (3). 
\vskip2mm


To complete the proof, we need to show (4). By the gradient estimates for $h,\hat{f},d_x^2$ and \eqref{e:gradL2hatf}, it suffices to prove $|\hat{f}-d_{x}^2|\le \epsilon  r^2$.  
For this, we will use heat kernel convergence and $W^{1,2}$-convergence of functions as in Proposition \ref{p:heat_kernel_convergence} and argue by contradiction.
	
	By scaling, we can assume $ r=1$ and $\Ric\ge -(n-1)\delta^2$. Therefore assume there exists $\epsilon_0>0$ and $(M_i,g_i,x_i)$ satisfying $\Vol(B_{\delta_i^{-1}}(x_i))\ge \rv \delta_i^{-n}$, $\Ric\ge -(n-1)\delta_i^2\to 0$, the ball $B_{\delta_i^{-1}}(x_i)$ is $(0,\delta_i^2)$-symmetric, but that the function $\hat{f}_i$ defined as above satisfies 
	\begin{align}
		\sup_{B_{10}(x_i)}|\hat{f}_i-d_{x_i}^2|\ge \epsilon_0.
	\end{align}
	
Now let  $i\to \infty$.  By Gromov's compactness theorem. there exists  metric cone, $(C(Y),d,x_\infty)$, which is the Gromov-Hausdorff-limit of $(M_i,g_i,x_i)$. By the heat kernel convergence in Proposition \ref{p:heat_kernel_convergence} and Remark \ref{r:heat_kernel_cone}, the heat kernel $\rho_1(x_i,\cdot)=(4\pi)^{-n/2}e^{-f_i}$ converges to heat kernel $\rho_1(x_\infty, \cdot)=(4\pi)^{-n/2}e^{-d_{x_\infty}^2/4+A_X}$ uniformly on any compact subset, where 
$$
A_X=\log \frac{\Vol(S^{n-1})}{\Vol(X)}\, .
$$
\vskip2mm

From  the heat kernel Laplacian estimate and $W^{1,2}$-convergence in Proposition \ref{p:convergenc_function},  it follows that the sequence $f_i$ converges to $f_\infty=d_{x_\infty}^2/4-A_x$ uniformly and in the local 
$W^{1,2}$ sense. Thus, $\hat{f}_i$ converges uniformly to a limit function 
$$\tilde{f}_\infty := 4f_\infty+{4}\fint_{B_{10}(x_\infty)}(|\nabla f_\infty|^2-f_\infty)=d_{x_\infty}^2\, .
$$ 
 Since $d_{x_i}^2$ converges to $d_{x_\infty}^2$ uniformly in any compact set, while $\sup_{B_{10}(x_i)}|\hat{f}_i-d_{x_i}^2|\ge \epsilon_0$ for any $i$, this gives a contradiction.  This completes the proof of 
 Theorem \ref{t:sharp_conical}
\vskip2mm

\end{proof}

\subsection{The $k$-splitting associated to $k$ independent points}
In this subsection, we construct a $k$-splitting map from $k$-independent points which satisfy the estimates of
the splitting Theorem \ref{t:sharp_splitting}.
By rescaling and taking the infimum over all $(k,\alpha)$-independent sets of
points we will see that the proof of Theorem \ref{t:sharp_splitting} is a direct consequence of the following main result, Proposition
\ref{p:sharp_splitting}. The proof of this proposition will occupy the remainder of this section.
\begin{proposition}
\label{p:sharp_splitting}
	Let $(M^n,g,p)$ satisfy $\Ric_{M^n}\ge -(n-1)\delta^2$ with $\Vol(B_{\delta^{-1}}(p))\ge \rv \delta^{-n}>0$.  For $\epsilon,\alpha>0$ and $\delta\le \delta(n,\rv,\alpha,\epsilon)$ let $\{x_0,x_1,\cdots,x_k\}\subset B_{r}(x)\subset B_{10}(p)$ be $(k,\alpha)$-independent points with 
$$
\cE_{r}^{k,\delta}(\{x_i\}):=  \sum_{i=0}^k|\cW_{r^2/2}^\delta(x_i)-\cW_{2r^2}^\delta(x_i)<\delta\, .
$$  

Then there exists $C(n,\rv,\alpha)>0$ and a $(k,\epsilon)$-splitting map $u=(u^1,\cdots,u^k): B_{8r}(x)\to \mathbb{R}^k$ such that:
	\vskip3mm
	\begin{itemize}
	\item[(1)] $r^2\fint_{B_{8r}(x)}\Big(|\nabla^2u|^2+\Ric(\nabla u,\nabla u)+2(n-1)\delta^2|\nabla u|^2\Big) \le C\cdot \cE_{r}^k(\{x_i\}) .$
	\vskip3mm
	\item[(2)] $\fint_{B_{8r}(x)}\Big|\langle \nabla u_i,\nabla u_j\rangle -\delta_{ij}\Big|^2\le C\cdot \cE_{r}^k(\{x_i\}).$
	\vskip3mm
	\item[(3)] $|\nabla u|\le 1+\epsilon$ .
	\end{itemize}
	\end{proposition}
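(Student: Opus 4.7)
The strategy is to build $u$ explicitly from Poisson regularizations of the squared distance functions $d^2_{x_i}$ furnished by Theorem \ref{t:sharp_conical}, and then orthonormalize via Gram--Schmidt. After rescaling we take $r = 1$. Since each $|\cW^\delta_{1/2}(x_i)-\cW^\delta_2(x_i)| < \delta$ is small, Theorem \ref{t:cW_local} converts this into volume pinching at each $x_i$, and Theorem \ref{t:almostmetriccone} then yields that $B_{\delta^{-1/2}}(x_i)$ is $(0,\delta')$-symmetric with $\delta'\to 0$ as $\delta \to 0$. The Cone-Splitting Theorem \ref{t:cone_splitting} applied to the $(k,\alpha)$-independent family $\{x_0,\ldots,x_k\}$ therefore produces an almost $(k,\delta'')$-symmetric ball containing all the $x_i$, inside which the $x_i$ lie approximately in the $\dR^k$-factor of a model cone $\dR^k \times C(Z)$. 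Apply Theorem \ref{t:sharp_conical} at each $x_i$ on a scale slightly larger than $8$, producing $h_i : B_{10}(x) \to \dR$ (the enlarged ball is needed so that Moser iteration can be run later) with $\Delta h_i = 2n$, $|\nabla h_i| \le C(n,\rv)$, the sharp averaged Hessian bound, and $\fint \bigl||\nabla h_i|^2 - 4h_i\bigr|^2 \le C\eta_i$, where $\eta_i := |\cW^\delta_{1/2}(x_i) - \cW^\delta_2(x_i)|$ and $\sum_i \eta_i \le \cE := \cE^{k,\alpha,\delta}_1(\{x_j\})$.

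Set $v^i := (h_0 - h_i)/2$ for $i = 1, \ldots, k$. Each $v^i$ is harmonic, and the Hessian estimates of Theorem \ref{t:sharp_conical} give
\[
\fint \bigl(|\nabla^2 v^i|^2 + \Ric(\nabla v^i, \nabla v^i) + 2(n-1)\delta^2 |\nabla v^i|^2\bigr) \le C(n,\rv)\,\cE.
\]
For the inner products, write $w_{ab} := h_a - h_b$; each $w_{ab}$ is harmonic with $\fint |\nabla^2 w_{ab}|^2 \le C\cE$, so the pointwise identity $\nabla(|\nabla w_{ab}|^2) = 2\nabla^2 w_{ab}(\nabla w_{ab},\cdot)$ combined with the uniform gradient bound and the Neumann--Poincar\'e inequality (Theorem \ref{t:poincare}) yields $\fint \bigl||\nabla w_{ab}|^2 - c_{ab}\bigr|^2 \le C\cE$, where $c_{ab}$ is the average. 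By polarization,
\[
8\langle \nabla v^i, \nabla v^j\rangle \;=\; |\nabla w_{0i}|^2 + |\nabla w_{0j}|^2 - |\nabla w_{ij}|^2,
\]
so $\langle \nabla v^i, \nabla v^j\rangle$ is $L^2$-close to its average $A_{ij}$ with error $C\cE$.

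A compactness/blow-up argument identifies $A$, up to error vanishing with $\delta$, with the Euclidean Gram matrix $\bigl(\langle a_i-a_0,\, a_j-a_0\rangle\bigr)_{ij}$ of the $\dR^k$-projections of the $x_i$ in the limit cone. Indeed, passing $\delta\to 0$ along a contradiction sequence and invoking Proposition \ref{p:convergenc_function} together with Lemma \ref{l:construct_converging_sequence}, the regularizations $h_i$ converge uniformly and in $W^{1,2}$ to the squared distance $d^2_{x_i^\infty}$ on $\dR^k\times C(Z)$, whereupon $v^i$ becomes the affine map with gradient $a_i-a_0$. The $(k,\alpha)$-independence of $\{x_i\}$ is preserved under Gromov--Hausdorff convergence and forces this Gram matrix to satisfy $A \geq c(n,\alpha) I_k$ uniformly. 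Choose a symmetric $B$ with $B^T A B = I_k$ and $\|B\| + \|B^{-1}\| \le C(n,\rv,\alpha)$, and set $u := Bv$. Then $\Delta u = 0$ and $\fint \langle \nabla u^i, \nabla u^j\rangle = \delta_{ij}$; the $L^2$-closeness of $\langle \nabla v^i, \nabla v^j\rangle$ to $A_{ij}$ directly yields (2), while (1) follows from $\nabla^2 u = B\nabla^2 v$ together with the observation that the bilinear form $\Ric(\cdot,\cdot) + 2(n-1)\delta^2\langle\cdot,\cdot\rangle$ is positive semi-definite, so Cauchy--Schwarz controls the mixed terms by the diagonal ones estimated above.

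For the pointwise bound (3), Bochner's formula for the harmonic $u^i$ gives $\Delta |\nabla u^i|^2 \geq -2(n-1)\delta^2 |\nabla u^i|^2$, so $|\nabla u^i|^2$ is almost subharmonic. Combined with $\fint_{B_{10}(x)} \bigl||\nabla u^i|^2 - 1\bigr|^2 \le C(n,\rv,\alpha)\,\cE \le C\delta$ and the uniform Lipschitz bound on $v$ (whence on $u$), Moser iteration on the pair $B_8(x) \subset B_{10}(x)$ upgrades the $L^2$-smallness to $\sup_{B_8(x)} |\nabla u^i|^2 \le 1 + \epsilon$ once $\delta \le \delta(n,\rv,\alpha,\epsilon)$. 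The main obstacle in this plan is the spectral lower bound $A \geq c(n,\alpha) I_k$: it is at this step that the full weight of $(k,\alpha)$-independence must be converted into quantitative non-degeneracy of the gradients of $v^i$, via a blow-up argument invoking cone-splitting and $W^{1,2}$-convergence of the Poisson regularizations, in order to certify that the Gram--Schmidt matrix $B$ has operator norm depending only on $(n,\rv,\alpha)$ and not deteriorating as $\delta \to 0$.
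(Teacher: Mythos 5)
Your proposal is correct and follows essentially the same route as the paper: Poisson regularization of the squared distance functions via Theorem \ref{t:sharp_conical}, harmonic difference quotients, a compactness/blow-up argument using $(k,\alpha)$-independence to bound the orthonormalizing matrix (the paper's Lemma \ref{l:claim11}, with a lower-triangular rather than symmetric normalizer), and a Bochner/Moser argument for the pointwise gradient bound. The only differences are cosmetic (the paper normalizes $\tilde u^i$ by $2d(x_i,x_0)$ and obtains (2) directly from the Hessian bound plus Poincar\'e rather than by polarization), so no further comment is needed.
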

	
	Before giving the proof of Proposition \ref{p:sharp_splitting}, let us look at the following example to see how to
 build a splitting function from squared distance functions to distict vertices of a cone. 
	
\begin{example} (Cone-splitting; the case  $\R^2=C(S^1)$))
\label{e:R2}
Cone-splitting, and more specifically 
the relation between squared distance functions $h_\pm$ from distinct cone points and a splitting functions $u$, is perhaps most easily illustrated
 by  the case of $\R^2$.  Denote the square of the distance functions from the points $(\pm1,0)$
by
 $
 h_{\pm}(x,y)=(x\pm1)^2 +y^2\, .
 $
 Then the linear function (splitting function) $u=x$ satisfies
 \begin{align}
 \label{e:csr2}
 u=\frac{1}{4}\cdot (h_+-h_-)\, .
 \end{align} 

The expression in \eqref{e:csr2}, which builds a linear splitting function from squared distance functions, will reappear in the general quantitative context in \eqref{e:udef} of Proposition \ref{p:sharp_splitting}.
\end{example}

\begin{proof}[Proof of Proposition \ref{p:sharp_splitting} ]
It follows from Theorem \ref{t:sharp_conical} that for any $\epsilon>0$ there exists $\delta_0(n,\rv,\epsilon)$ such that for $\delta\le \delta_0$ and each point $x_i$, there is a map $h_i: B_{20r}(x_i)\to \mathbb{R}$ such that:
\begin{itemize}
		\item[(1)] $\Delta h_i=2n$.
		\vskip2mm
\item[(2)] $\fint_{B_{20r}(x_i)}\Big(|\nabla^2h_i-2g|^2+\Ric(\nabla h_i,\nabla h_i)+2(n-1)\delta^2|\nabla h_i|^2\Big) \le C(n,\rv)|\cW_{r^2/2}^\delta(x_i)
-\cW_{2r^2}^\delta|(x_i)|\, .$
	\vskip2mm

	\item[(3)] $\fint_{B_{20r}(x_i)}\Big||\nabla h_i|^2-4h_i\Big|^2\le C(n,\rv)r^4|\cW_{r^2/2}^\delta(x_i)-\cW_{2r^2}^\delta(x_i)|\, .$

	\vskip3mm
	\item[(4)] $|\nabla h_i|\le C(n,\rv)r$ on $B_{20r}(x_i)$\, .
	\vskip2mm
	\item[(5)] $\sup_{B_{20r}(x_i)}|h_i-d_{x_i}^2|\le \epsilon r^2$.
\end{itemize}
\vskip3mm

Note that  $B_{10r}(x)\subset B_{20r}(x_i)$.
 We define the $k$-splitting functions as in Example \ref{e:R2}:

\begin{align}
\label{e:udef}
\tilde{u}^i:=  \frac{h_i-h_0-d(x_0,x_i)^2}{2d(x_i,x_0)}\, .
\end{align}

Note that by (1), we have $\Delta \tilde{u}^i=0$ in $B_{10r}(x)$.  By the Cauchy-Schwartz inequality we also have:
\vskip4mm

\begin{itemize}
	\item[(a)] $r^2\fint_{B_{10r}(x)}\Big(|\nabla^2\tilde{u}^i|^2+\Ric(\nabla \tilde{u}^i,\nabla \tilde{u}^i)+2(n-1)|\nabla \tilde{u}^i|^2\Big)\le C(n,\rv,\alpha)\cdot \cE^k_{r}(\{x_i\}),$
	\vskip3mm
	\item[(b)] $\sup_{B_{10r}(x)}|\nabla \tilde{u}^i|\le C(n,\rv,\alpha)$,
	\vskip3mm
	\item[(c)] $\sup_{B_{10r}(x)}\left|\tilde{u}^i-\frac{d_{x_i}^2-d_{x_0}^2-d(x_0,x_i)^2}{2d(x_i,x_0)}\right |\le C(\alpha,n)\cdot \epsilon r$.
\end{itemize}
\vskip4mm


\begin{lemma}
\label{l:claim11} There exists $k\times k$ lower triangle matrix $A$ with $|A|\le C(n,\rv,\alpha)$ such that 
$u:=  (u^1,\cdots,u^k):= A(\tilde{u}^1,\cdots,\tilde{u}^k)$ satisfies 
$$
\fint_{B_{8r}(x)}\langle \nabla u^i,\nabla u^j\rangle=\delta^{ij}\, .
$$
\end{lemma}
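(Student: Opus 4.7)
The plan is to view Lemma \ref{l:claim11} as a quantitative Gram--Schmidt procedure applied to the almost-linear functions $\tilde u^i$, where the non-degeneracy of the resulting Gram matrix will be forced by the $(k,\alpha)$-independence of the points $\{x_i\}_0^k$.

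First, I would form the symmetric matrix
\[
G_{ij} := \fint_{B_{8r}(x)} \langle \nabla \tilde u^i,\nabla \tilde u^j\rangle,
\]
and show that $G \geq c(n,\rv,\alpha)\, I$ in the positive-definite sense. The heuristic is that $\tilde u^i$ is, by property (c) of the $\tilde u^i$ (uniform closeness to $\tfrac{1}{2d(x_i,x_0)}(d_{x_i}^2 - d_{x_0}^2 - d(x_0,x_i)^2)$), an approximate linear coordinate in the direction from $x_0$ to $x_i$, and $(k,\alpha)$-independence says these directions cannot almost lie in a $(k-1)$-plane. To implement this rigorously I would argue by contradiction and compactness: if the claim fails we obtain a sequence $(M^n_i,g_i,x^{(i)})\togh (X_\infty,d_\infty,x_\infty)$ with $\delta_i\to 0$, along which $\tilde u^{(i)}$ converges (in the $W^{1,2}$-sense provided by Proposition \ref{p:convergenc_function}, using the Laplacian and Hessian bounds from (a),(b) above) to functions $\tilde u^i_\infty$ on a metric cone $X_\infty$. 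By property (c) these limits agree with the normalized differences of squared distance functions from the limit points $x_i^\infty$, and so by the law of cosines on a metric cone (\ref{e:mc}) they are affine functions in the $\R^k$-factor that splits off. The $(k,\alpha)$-independence is preserved in the limit, which forces the Gram matrix of the $\nabla \tilde u^i_\infty$ to have all eigenvalues $\geq c(n,\alpha)>0$; passing back to the sequence gives $G \geq c(n,\rv,\alpha) I$ for $\delta$ small.

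Next I would produce $A$ by the standard Gram--Schmidt algorithm applied to $\tilde u^1,\dots,\tilde u^k$ with respect to the $L^2$-inner product of gradients on $B_{8r}(x)$. Explicitly, set $u^1 = \tilde u^1/\sqrt{G_{11}}$, and inductively
\[
u^j = \frac{1}{\sqrt{\sigma_j}}\Bigl(\tilde u^j - \sum_{\ell<j} \Bigl(\fint_{B_{8r}(x)} \langle \nabla \tilde u^j,\nabla u^\ell\rangle\Bigr) u^\ell\Bigr),
\]
where $\sigma_j$ is chosen so that $\fint|\nabla u^j|^2 = 1$. The resulting change-of-basis matrix $A$ is by construction lower triangular and satisfies $A G A^T = I$. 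The bound $|A|\leq C(n,\rv,\alpha)$ is then immediate from $G\geq c(n,\rv,\alpha)I$ together with the upper bound $|G|\leq C(n,\rv,\alpha)$ coming from (b).

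The main obstacle, and the only nontrivial step, is the lower bound $G\geq c(n,\rv,\alpha)I$. It is precisely here that $(k,\alpha)$-independence (Definition \ref{d:independent_points1}) is used essentially; everything else is an elementary linear algebra package applied on top of the analytic estimates (a)--(c) for $\tilde u^i$. I would therefore first prove, as a separate sublemma, that for $\delta\leq\delta(n,\rv,\alpha)$ and $(k,\alpha)$-independent points $\{x_i\}\subset B_r(x)$, one has $G\geq c(n,\rv,\alpha)I$, via the contradiction/compactness argument above using Theorem \ref{t:cW_local} (to see that the limit is $k$-symmetric) and Theorem \ref{t:splitting_function}; and then deduce the lemma from it by a direct Gram--Schmidt computation.
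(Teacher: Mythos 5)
Your proposal is correct and follows essentially the same route as the paper: a contradiction/compactness argument with $\delta_\beta\to 0$, convergence to a cone $\R^k\times C(Z)$ on which the $\tilde u^i$ limit (via property (c) and $W^{1,2}$-convergence) to independent linear functions, with $(k,\alpha)$-independence supplying the quantitative non-degeneracy needed to bound the orthonormalizing lower-triangular matrix. The only difference is organizational — you first isolate the Gram-matrix lower bound $G\ge c(n,\rv,\alpha)I$ and then run Gram--Schmidt on the manifold, whereas the paper runs Gram--Schmidt on the limit cone and transfers $A_\infty$ back by $W^{1,2}$-convergence — which is an equivalent packaging of the same argument.
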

Assume provisionally that the lemma holds. Then since $|A|\le C(n,\rv,\alpha)$,
by using estimate (a) and (b) and the Poincar\'e inequality,
 it follows easily  that $u$ satisfies $(1)$, $(2)$ of the proposition.
 Estimate $(3)$ follows exactly as in \cite{CheegerNaber_Codimensionfour}.
 Therefore, to complete the proof of Theorem \ref{t:sharp_splitting} it suffices to prove 
 Lemma \ref{l:claim11}.
\vskip2mm
\begin{proof}(Of Lemma \ref{l:claim11}) We will argue by contradiction.
 By rescaling $B_r(x)$ to $B_1(x)$ we can take $r=1$. Then we can assume there exists $(M^n_\beta,g_\beta,x_\beta)$ and $(k,\alpha)$-independent points $\{x_{\beta,0},x_{\beta,1},\cdots, x_{\beta,k}\}\subset B_1(x_\beta)$ with $\delta_\beta\to 0$ as $\beta\to \infty$. 
Also, for each $\beta$, we can construct regularized maps $h_{\beta,i}$ and harmonic functions $\tilde{u}_{\beta,i}$ on $B_{10}(x_\beta)$ as in \eqref{e:udef},
satisfying (a) (b) and (c) with $\epsilon_\beta\to 0$ in (c).

Now, assume  that either there exists no $k\times k$ lower triangle matrix $A_\beta$ such that 
$$
u_\beta:=A_\beta(\tilde{u}_{\beta}^{1},\cdots,\tilde{u}_{\beta}^{k})
$$
 satisfies 
 $$
 \fint_{B_{8}(x_\beta)}\langle \nabla u_{\beta}^{i},\nabla u_{\beta}^{j}\rangle =\delta^{ij}
 $$ 
 or if there exist such matrices $A_\beta$, then $|A_\beta|\to \infty$.

By the definition of independent points and the Cone-Splitting Theorem \ref{t:cone_splitting}, there is a Gromov-Hausdorff limit space of the sequence $M^n_\beta$ which is a metric  cone $\mathbb{R}^k\times C(X)$. Moreover, the set  $\{x_{\beta,i}\}$ converges to a set of $(k,\alpha)$ independent points 
$\{x_{\infty,i}\}\subset \mathbb{R}^k\times \{v\}$ where $v$ is the vertex of 
$C(X)$.


By (c) above, the $\tilde{u}_{\beta}^{i}$ converge to the linear functions
$$
\tilde{u}_{\infty}^{i}=\frac{d_{x_{\infty,i}}^2-d_{x_{\infty,0}}^2-d(x_{\infty,0},x_{\infty,i})^2}{2d(x_{\infty,i},x_{\infty,0})}\, .
$$ 
Recall that  $\{x_{\infty,i}\}\subset \mathbb{R}^k\times \{v\}$ is a collection of
  $(k,\alpha)$-independent points. Thus, the linear functions $\{\tilde{u}_{\infty}^{i},i=1,\cdots, k\}$ form a basis of linear space of $\mathbb{R}^k$, and there exists lower triangular matrix, 
  $A_\infty$ with $|A_\infty|\le C(n,\rv,\alpha)$, such that 
$$
u_\infty:= (u_{\infty}^{1},\cdots,u_{\infty}^{k}):= A_\infty(\tilde{u}_{\infty}^{1},\cdots,\tilde{u}_{\infty}^{k}) 
$$ 
satisfies 
$$
\fint_{B_{8}(x_\infty)}\langle \nabla u_{\infty}^{i},\nabla u_{\infty}^{j}\rangle=\delta^{ij}\, .
$$


 For $\beta$ large enough,  the $W^{1,2}$-convergence of harmonic functions stated in Proposition \ref{p:convergenc_function} implies for some $A_\beta$ with $|A_\beta-A_\infty|\to0$ that the set of functions  
 $$
 \hat{u}_\beta:=(\hat{u}_{\beta}^{1},\cdots,\hat{u}_{\beta}^{k})
 := A_\beta(\tilde{u}_{\beta}^{1},\cdots,\tilde{u}_{\beta}^{k})
 $$  
 is orthogonal in the integral sense over $B_{8}(x_\beta)$ as Lemma \ref{l:claim11}, which leads to a contradiction. This completes the proof of the Lemma \ref{l:claim11}.
 \end{proof}
 
 As we have seen, this also completes the
   Proposition \ref{p:sharp_splitting} and hence, of Theorem \ref{t:sharp_splitting}. 
 \end{proof}
   \vskip4mm


\section{The Geometric Transformation Theorem}
\label{s:Transformation_Theorem}
We begin with some motivation.
The results of the last section specify  how good the
{\it best} splitting will be on a sufficiently entropy pinched ball.
 However, in the eventual application to the Neck Structure Theorem \ref{t:neck_region2}, 
the proof  will depend on {\it fixing a single splitting map on the original ball $B_2(p)$} and
 showing that it behaves sufficiently well on most smaller balls.  Recall from subsection
 \ref{ss:outline:sharp_transformation} the following motivating example:
\vskip3mm

\begin{example}
\label{ex:Rn}   If $u:B_2(0^n)\to \dR^k$ is a $k$-splitting map in $\dR^n$, then as with any
 solution of an ellipic pde $u$ has {\it pointwise} bounds on the Hessian.  Among other things,
 this implies that if we restrict to some sub-ball $B_r(x)\subseteq B_1(p)$, 
then $u:B_r(x)\to \dR^k$ is still a splitting map. 
 In fact, if $T^{-1}:= \langle \nabla u^i,\nabla u^j\rangle$, then $T\circ u|_{B_r(x)}$ 
becomes an increasingly good splitting map, since $u$ converges to a  linear map at a polynomial rate.  
\end{example}

As discussed in subsection \ref{ss:outline:sharp_transformation} 
we wish to generalize, to the extent possible, the above example to 
spaces with lower Ricci curvature bounds.  In this case we cannot hope
 that $u|_{B_r(x)}$ remains a splitting map, but we will see that we can choose a matrix $T=T(x,r)$ such that $T\circ u|_{B_r(x)}$ is comparable to the best splitting maps on $B_r(x)$, in the sense of the last section.

\subsection{Statement of the Geometric Transformation Theorem}
\label{ss:transformation_statement}
The result referred to in \cite{CheegerNaber_Codimensionfour} as the
Transformation Theorem is 
a key component of the proof of the Codimension $4$ Conjecture in that paper.
 For given $\epsilon>0$, the statement of the Transformation Theorem  \ref{t:transformation} concerns 
a $(n-2,\delta(\epsilon))$-splitting functions  $u:B_1(x)\to \dR^{n-2}$.
Namely, although the restriction
of $u$ to a smaller ball
a ball $B_r(x)$ might not be an $(n-2,\epsilon)$-splitting function, the Transformation Theorem \ref{t:transformation}
gives conditions guaranteeing the existence of a 
suitable upper triangular $(n-2)\times (n-2)$ matrix $T$, 
 with positive diagonal entries, such that $Tu:B_r(x)\to \dR^{n-2}$
 is an $(n-2,\epsilon)$-splitting function, $Tu:B_r(x)\to \dR^{n-2}$.  The conditions of \cite{CheegerNaber_Codimensionfour} are special to the codimension two stratum.
\vskip1mm

In the present long and somewhat technical section, we  show that  with a {\it different hypothesis}, the conclusion of the
Transformation Theorem of \cite{CheegerNaber_Codimensionfour}
can be sharpened.  To begin, our conditions and criteria will hold for any any stratum.  We will see that beginning with a $(k,\epsilon)$-splitting function $u$, that so long as $B_r(x)$ remains $k$-symmetric that there is a transformed function $Tu$ satisfing the Hessian estimates given by Theorem \ref{t:sharp_splitting}.  More precisely, the main result of this section is the following.\footnote{As usual, $\cE_s^k(p)=\cE_s^{k,\alpha,\delta}(p)$
 denotes the $k$-pinching; see Definition \ref{d:kalphadeltapinching}.}

\begin{theorem}[Geometric Transformation]
\label{t:transformation}
Given  $\alpha,\eta,\epsilon, \delta>0$, there exists $C=C(n,\rv,\eta,\alpha)$ and
\\
 \hbox{$\gamma=\gamma(n,\rv,\eta)>0\, , \delta(n,\rv,\eta)>0$}, such that 
 if \,\,$\delta<\delta(n,\rv,\eta)$,  then the following holds:
 \vskip1mm
 
Let $(M^n,g,p)$ satisfy $\Ric_{M^n}\geq -(n-1)\delta^2$, $\Vol(B_1(p))>\rv>0$ 
 and asssume:
\begin{itemize}
\item[i)]
 $u:B_2(p)\to \dR^k$ is a $(k,\delta)$-splitting function.
 
\item[ii)] For all 
$r\leq s\leq \delta^{-1}$ the ball $B_{s}(p)$ is $(k,\delta^2)$-symmetric but not
$(k+1,\eta)$-symmetric. 
\end{itemize}	
\vskip2mm
	
 Then for all $s\in [r,1]$, 
there exists a $k\times k$-matrix $T=T_{p,s}$ such that:

\begin{itemize}
\item[(1)]  $Tu:B_{s}(p)\to \mathbb{R}^k$ is a $(k,\epsilon)$-splitting map.
		
\item[(2)] For $r_j := 2^{-j}$,  
\begin{align}
s^2\fint_{B_s(p)}\left(|\nabla^2 Tu|^2+\Ric(\nabla Tu,\nabla Tu)+2\delta^2(n-1) 
|\nabla Tu|^2\right) \leq C\cdot \sum_{s\le r_{j}\le 1}  \left(\frac{s}{r_j}\right)^{\gamma}\cE^{k}_{r_{j}}(p) + C\,\delta^2\, .
\end{align}
\end{itemize}
\end{theorem}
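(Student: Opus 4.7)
I would define $T_{p,s}$ as the inverse of the (upper-triangular) Gram--Schmidt factor of the matrix $M(s)_{ij}:=\fint_{B_s(p)}\langle\nabla u^i,\nabla u^j\rangle$, so that by construction $\fint_{B_s(p)}\langle\nabla(T_{p,s}u)^i,\nabla(T_{p,s}u)^j\rangle=\delta_{ij}$ while $T_{p,s}u$ is still harmonic. The hypothesis that $u$ is a $(k,\delta)$-splitting at the top scale makes $M(1)\approx I$, so $T_{p,1}\approx I$; the content of (1) is that this good behavior persists to every smaller scale, while (2) is the sharp quadratic Hessian bound, which is precisely what can be summed over infinitely many scales.

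\textbf{Weak estimate (1), by blowup/Liouville.} Assume (1) fails with some $\epsilon_0>0$. Take a sequence of counterexamples $(M^n_i,g_i,p_i,u_i)$ with $\delta_i\to 0$ and let $s_i\in[r_i,1]$ be the largest scale at which $T_{p_i,s_i}u_i$ is not a $(k,\epsilon_0)$-splitting on $B_{s_i}(p_i)$. Rescale $B_{s_i}$ to unit size. The assumption that every scale between $s_i$ and $\delta_i^{-1}$ is $(k,\delta_i^2)$-symmetric, together with Gromov compactness and the Cone-Splitting Theorem \ref{t:cone_splitting}, produces a limit $(\mathbb{R}^k\times C(Y),x_\infty)$; Proposition \ref{p:convergenc_function} gives $W^{1,2}$-convergence of $v_i:=T_{p_i,2s_i}u_i$ to a harmonic map $v_\infty:\mathbb{R}^k\times C(Y)\to\mathbb{R}^k$ with $\fint_{B_1}\langle\nabla v_\infty^i,\nabla v_\infty^j\rangle=\delta_{ij}$. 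Because (1) holds at each scale above $s_i$, iterating $|T_{p_i,r_{j+1}}T_{p_i,r_j}^{-1}|\leq 1+\epsilon_0$ yields $\sup_{B_r}|\nabla v_\infty|\leq C r^{C\epsilon_0}$ for all $r\geq 1$, i.e.\ $v_\infty$ has slightly-faster-than-linear growth. Expanding $v_\infty$ via the cone Laplacian as in Theorem \ref{t:spectrum_metric_cone}, this growth bound (for $\epsilon_0$ small) forces only the constant mode and the linear modes associated with $\lambda=n-1$ to appear. Linear modes on $C(Y)$ produce an extra isometric $\mathbb{R}$-factor, so by non-$(k+1,\eta)$-symmetry there are exactly $k$ such modes; combined with the Gram--Schmidt normalization this identifies $v_\infty$ with the orthogonal projection onto the $\mathbb{R}^k$ factor, and hence $T_{p_i,2s_i}u_i$ is a $(k,\epsilon_0/2)$-splitting on $B_{s_i}$ for large $i$, contradicting the choice of $s_i$.

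\textbf{Strong estimate (2), the main obstacle.} For each dyadic scale $r_j=2^{-j}\geq s$ let $\ell_{r_j}$ denote the optimal splitting on $B_{2r_j}(p)$ supplied by Theorem \ref{t:sharp_splitting}; it satisfies $r_j^2\fint_{B_{r_j}}(|\nabla^2\ell_{r_j}|^2+\Ric(\nabla\ell_{r_j},\nabla\ell_{r_j})+2(n-1)\delta^2|\nabla\ell_{r_j}|^2)\leq C\,\cE^k_{r_j}(p)$. By (1), $T_{p,2r_j}u$ is also a $(k,\epsilon)$-splitting on $B_{2r_j}$, so after composing $\ell_{r_j}$ with an orthogonal matrix the difference $w_j:=T_{p,2r_j}u-\ell_{r_j}$ is a harmonic map whose $\mathbb{R}^k$-linear part along the Euclidean factor of the tangent cone vanishes to leading order. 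The central technical step is a blow-down of $w_j$ showing it converges to a harmonic map on the limit $\mathbb{R}^k\times C(Y)$ which is $L^2$-orthogonal to the linear functions on the $\mathbb{R}^k$-factor; by non-$(k+1,\eta)$-symmetry there are no further linear harmonic modes, so the next admissible exponent $\alpha_*$ in Theorem \ref{t:spectrum_metric_cone} satisfies $\alpha_*\geq 1+\gamma_0$ for some $\gamma_0=\gamma_0(n,\rv,\eta)>0$. A Campanato-type decay argument with this spectral gap then produces
\[
 r^2\fint_{B_r(p)}|\nabla^2 w_j|^2 \leq C\,(r/r_j)^{2\gamma_0}\,\cE^k_{r_j}(p),\qquad r\leq r_j,
\]
and, combined with the sharp bound on $\ell_{r_j}$, yields $r^2\fint_{B_r}|\nabla^2 T_{p,2r_j}u|^2\leq C(r/r_j)^{\gamma_0}\cE^k_{r_j}(p)$ with an analogous control of the Ricci penalty. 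Finally, I would telescope $T_{p,s}=T_{p,s}T_{p,2s}^{-1}\cdot T_{p,2s}T_{p,4s}^{-1}\cdots T_{p,1}$ and sum the estimate over all $r_j\in[s,1]$; the $\delta^2$ correction is produced by the cutoff term built into $\cW_t^\delta$. The hard part is making the spectral gap $\gamma_0>0$ explicit under only the qualitative non-$(k+1,\eta)$-symmetry hypothesis: this requires the compactness/contradiction argument above to produce a \emph{uniform} lower bound on $\alpha_*-1$ depending only on $n,\rv,\eta$, with no room to let $\eta\to 0$, and is the mechanism by which the infinitely many scales can be summed without the Hessian estimate blowing up.
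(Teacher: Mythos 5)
Your proposal is correct and follows essentially the same route as the paper: the Gram--Schmidt/Cholesky normalization of $T_{p,s}$, the blow-up--Liouville contradiction argument for the weak estimate (Proposition \ref{p:transfor_prop} together with Lemma \ref{l:harmonic_holder}), and for the strong estimate the subtraction of the optimal splitting from Theorem \ref{t:sharp_splitting}, the spectral-gap decay on the limit cone (Proposition \ref{p:decay_cone}, Lemma \ref{l:decay_tildeu}), and the telescoping sum over dyadic scales. No substantive differences from the paper's argument.
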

\vskip3mm

\subsection{Outline of the proof}
\label{ss:indication}
Essentially, the  $k\times k$ matrix $T_{p,s}$ is gotten by orthonormalization of the
gradients on the given scale. The points is to  show  that this procedure produces an $\epsilon$-splitting as in (1) and, what is more challenging, that this splitting satisfies the sharp estimate in (2). 
\vskip2mm

The statements (1) and (2) are both proved by contradiction arguments, in which the assumption
that the conclusion fails is shown to lead to a statement about metric cones which, by explicit computation, can be shown to be false. Before giving a brief description of the
arguments, we mention that there
three technical points which will have to be taken into account
 when the arguments are carried out.
 \vskip2mm
 
 The first technical point concerns our being able to 
pass the assumption that the conclusion of the theorem fails to a statement about limit cones.
For this, we use  
$W^{1,2}$ convergence result in Proposition \ref{p:convergenc_function}.
\vskip2mm

The second technical point pertains to checking that the resulting statement which concerns limit cones
is actually false. At the formal level, one can do explicit calculations which employ separation of variables.
  If we could assume that the cross
section $Y^{n-1}$ of the limit cone $C(Y^{n-1})$ were smooth, then the
 relevant computations would be straightforward exercises, using that $Y^{n-1}$ 
would be a space with $\Ric_Y\geq (n-2)$.  Making rigorous this understanding in 
our context will take a fair amount of technical work. 
\vskip2mm

The third technical point concerns the fact that
the Hessian of  the norm squared of a harmonic function
need not be well defined on a limit cone.  However,  the Laplacian {\it is} well defined  and
 it will suffice to state all of our estimates on limit cones
which correspond to Hessian estimates on manifolds in weak form
using Bochner's formula \eqref{e:bochner}. 
\vskip2mm

The proof of conclusion (1) of Theorem \ref{t:transformation}
is similar to the proof of 
 the Transformation Theorem of
 \cite{CheegerNaber_Codimensionfour}. It is a  quantitative implementation of the following fact. 
 On
a metric cone $\R^k\times C(Z)$, which is a definite amount away from splitting off $\R^{k+1}$,
a harmonic function which is assumed to grow  only slightly more 
than linearly, must in fact, be linear and have linear growth. The reason is the following.
\vskip2mm

Consider a  metric cone $C(Y)$ which is a Gromov-Hausdorff
limit with the lower bound on Ricci going to zero.  The Laplacian $\Delta_Y$ on 
the cross-section has a discrete spectrum and
an orthonormal basis of eigenfunctions, $\phi_i$,  with corresponding eigenvalues
$-\Delta_Y \phi_i=\lambda_i\phi_i$, satisfying $\lambda_0=0$, 
$\lambda_i\geq n-1$, for $i\geq 1$.  It follows from i) of Theorem \ref{t:transformation},
that in our case, 
$n-1=\lambda_1=\cdots =\lambda_k$ and from ii) that there exists $\tau(n,\rv,\eta)>0$
such that
$\lambda_{k+1}\geq (n-1)+\tau(n,\rv,\eta)$ i.e. there is a definite gap in the spectrum.
\vskip2mm

Let $u(r,x)$ denote a harmonic function on $C(Y)=\R^k\times C(Z)$
which is normalized to satisfy
 $u(0,x)=0$. Then we have an expansion in terms of homogeneous 
harmonic functions, see also Proposition \ref{p:harmonic_cone}
\begin{align}
\label{e:har_sum}
 u(r,x)=\sum_i c_i r^{a_i}\cdot \phi_i(y)\, ,
 \end{align}

 \noindent
where $a_0=0$, $a_1=\cdots=a_k= 1$ and $a_{k+1}\geq 1+\theta(n,\rv,\eta)$,
for some $\theta(n,\rv,\eta)>0$. From this, it follows that in our case, a harmonic
function on $\R^k\times C(Z)$ which grows only a bit more than linearly is actually linear. This is the fact about cones which enables us to prove (1)
via a contradiction argument.
\vskip2mm

Although the idea behind the proof of (2) is equally simple, finding the right
sharp quantitative estimate on cones is more subtle. Intuitively,
in this case we consider the behavior of an arbitrary harmonic function,
 $u(r,z)$ as in \eqref{e:har_sum}.  Note that as $r\to 0$ that the nonlinear terms in the expansion decay faster than the linear terms.  Thus $u(r,z)$ becomes increasingly linear  as $r\to 0$.  The technically precise version of this decay estimate on limit cones is given in \eqref{e:decay_psi_cone} of 
 Proposition \ref{p:decay_cone}.
The corresponding decay
estimate for manifolds is given in \eqref{e:decay_hessian_transformation} of
 Proposition \ref{p:decay_hessian_transformation}.
The latter contains a pinching term on the right-hand side which compensates for
the fact that we are not dealing with an actual metric cone.  In particular, the best we can hope for in general is that $u|_{B_r(x)}$ looks increasingly like the 'best' linear function on $B_r(x)$, in the sense of Theorem \ref{t:sharp_splitting}.
\vskip2mm


The remainder of this section can be viewed as consisting of five  parts.
\vskip2mm

In Subsection \ref{ss:eigenvalue_limit_cone},
 we derive the results on cones needed to prove (1) of Theorem \ref{t:transformation}.
 The section is essentially  technical and routine.
\vskip2mm

In Subsection \ref{ss:transformation_lemma} we give the proof of (1).
\vskip2mm

In Subsection \ref{ss:Reifenberg}, which is brief, we digress to prove a Reifenberg theorem for which
the map is canonical. The proof is an easy consequence of the arguments Subsections
\ref{ss:eigenvalue_limit_cone}, \ref{ss:transformation_lemma}.
While this result is not used elsewhere in the paper, it is of some interest
in and of itself. Moreover, it provides  motivation for the arguments in Section 
\ref{s:decomposition}
used to prove rectifiability of the strata $S^k$ for all $k$.
\vskip2mm

In Subsection \ref{ss:hessian_decay_limit_cone} we state and prove the key
the decay estimate for cones,
\eqref{e:decay_psi_cone}
 of Proposition \ref{p:decay_cone}.
\vskip2mm

In Subsection \ref{ss:hessian_decay_manifold}
we prove the corresponding decay estimate \eqref{e:decay_hessian_transformation}
of Proposition \ref{p:decay_hessian_transformation}.
 \vskip2mm

 In Subsections \ref{ss:decay_hessian_transformation}, \ref{ss:Proving_Transformation},
  we complete  the proof of (2) of 
Theorem \ref{t:transformation}.

\subsection{Harmonic functions and eigenvalue estimates on
 limit cones}
\label{ss:eigenvalue_limit_cone}
 Let 
 $$
 (M^n_i,g_i,x_i)\stackrel{d_{GH}}{\longrightarrow}(C(Y),d,x_\infty)
 =\mathbb{R}^k\times C(Z)$$ with  $\Ric_{M^n_i}\ge -\delta_i\to 0$ and $\Vol(B_1(x_i))\ge \rv>0$.  
 As in subsection \ref{ss: laplacian_metric_cone} there exist Laplacians $\Delta_{C(Y)}$, 
 $\Delta_Y$ on the cone and its cross section.  
 
 The cross-section $Y$ is an $RCD$ space with positive Ricci curvature $\Ric_{Y^{n-1}}\geq n-2$; see \cite{Ket,Stu}.  
Results on the spectrum of $Y$ which hold for smooth spaces with this lower Ricci bound
 are know to hold fo $Y^{n-1}$. Spectral results which hold for smooth spaces with this lower Ricci bound are known to hold for $Y^{n-1}$.
 In particular, the spectrum of $\Delta_Y$ is discrete, see Section \ref{ss: laplacian_metric_cone} and Theorem \ref{t:spectrum_metric_cone}. Denote the spectrum of $\Delta_Y$ by
 $0=\lambda_0<\lambda_1\le \lambda_2\le \cdots$ with an associated orthonormal basis of eigenfunctions $\phi_0=\frac{1}{\sqrt{\Vol(Y)}}, \phi_1,\phi_2,\cdots \, ,$.    
 \vskip2mm
 
 The  main results of this subsection are  Proposition \ref{p:eigenvalue_cone}
 and Proposition \ref{p:harmonic_cone}.\footnote{In the case in which the cross-section is smooth, 
the second of these results is derived from the first; see \cite{Cheeger79}. Under our assumptions,
 it will be convenient to derive the first from the second.}

\begin{proposition}[Eigenvalue estimates on limit cone]
\label{p:eigenvalue_cone}
Let $(M^n_i,g_i,x_i)\togh (X,d,x_\infty)=(C(Y),d,x_\infty)=(\mathbb{R}^k\times C(Z),d,x_\infty)$ satisfy 
$\Ric_{M^n_i}\ge-\delta_i\to 0$ and $\Vol(B_1(x_i))\ge \rv>0$.  If $B_{1}(x_i)$ is not $(k+1,\eta)$-symmetric, then 
\begin{align}
\label{e:eigenvalue_cone}
	0=\lambda_0<n-1=\lambda_1=\cdots=\lambda_k<\lambda_{k+1}\le \lambda_{k+2}\le \cdots\, .
\end{align}
Moreover, there exists $\tau(\eta,n,\rv)>0$ such that
$$
\lambda_{k+1}>\lambda_k+\tau\,  .
$$
\end{proposition}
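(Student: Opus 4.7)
The plan is to exploit the correspondence from Theorem \ref{t:spectrum_metric_cone} between eigenfunctions $\phi_i$ of $\Delta_Y$ with eigenvalue $\lambda_i$ and homogeneous harmonic functions $r^{\alpha_i}\phi_i$ on $C(Y)$, where $\alpha_i = -\tfrac{n-2}{2} + \sqrt{(\tfrac{n-2}{2})^2 + \lambda_i}$. The crucial observation is that $\alpha = 1$ precisely when $\lambda = n-1$; hence the linear Lipschitz harmonic functions on $C(Y)$ vanishing at the vertex correspond bijectively to the $\lambda = n-1$ eigenspace of $\Delta_Y$.

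First, since $X = C(Y) = \mathbb{R}^k \times C(Z)$, the $k$ coordinate functions on the $\mathbb{R}^k$ factor are linearly independent, harmonic, Lipschitz, and homogeneous of degree $1$, vanishing at the vertex. Under the correspondence they produce $k$ linearly independent eigenfunctions of $\Delta_Y$ at $\lambda = n-1$. Combined with the lower bound $\lambda_1 \geq n-1$ from Theorem \ref{t:spectrum_metric_cone}(6), this forces $\lambda_1 = \cdots = \lambda_k = n-1$.

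To obtain the quantitative gap $\lambda_{k+1} \geq n-1 + \tau$ (which subsumes the strict inequality as a special case), I would argue by contradiction and compactness. Suppose no such $\tau(\eta, n, \rv) > 0$ exists. Then there is a sequence of setups satisfying the hypotheses, producing cones $X^{(a)} = \mathbb{R}^k \times C(Z^{(a)})$ with $\lambda^{(a)}_{k+1}(Y^{(a)}) \to n-1$, each the GH limit of manifolds whose unit balls are not $(k+1,\eta)$-symmetric. By Gromov's compactness theorem and a diagonal extraction, I obtain a single sequence $(M_i^n, g_i, x_i)$ with $\Ric \geq -\delta_i \to 0$, $\Vol(B_1(x_i)) \geq \rv$, $B_1(x_i)$ not $(k+1,\eta)$-symmetric, converging to a cone $X^{(\infty)} = \mathbb{R}^k \times C(Z^{(\infty)})$ whose unit ball is not $(k+1,\eta/2)$-symmetric, yet with $\lambda_{k+1}(Y^{(\infty)}) = n-1$. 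The extra eigenfunction $\phi_{k+1}$ yields a Lipschitz linear harmonic function on $X^{(\infty)}$ which is linearly independent of those arising from the $\mathbb{R}^k$ factor. An Obata-type rigidity argument, available on $Y^{(\infty)}$ which carries synthetic $\Ric \geq n-2$ via the RCD version of Obata--Cheng, upgrades this extra eigenfunction to an isometric splitting, so that $X^{(\infty)} = \mathbb{R}^{k+1} \times C(Z'')$ is $(k+1,0)$-symmetric, contradicting non-$(k+1,\eta/2)$-symmetry.

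The main obstacle is the spectral continuity step: showing $\lambda^{(a)}_{k+1}(Y^{(a)}) \to \lambda_{k+1}(Y^{(\infty)})$ under the measured GH convergence of cross-sections induced by convergence of the cones. The upper bound is routine via test-function arguments, but lower semicontinuity requires Mosco convergence of the Cheeger energies, the uniform compact embedding $W^{1,2}(Y^{(a)}) \hookrightarrow L^2(Y^{(a)})$ coming from uniform volume doubling and the Neumann Poincaré inequality on the cross-sections (inherited from the cone structure), and the uniform Lipschitz control on normalized eigenfunctions from Theorem \ref{t:spectrum_metric_cone}(3). These inputs are available via the framework reviewed in Subsection \ref{ss: laplacian_metric_cone} and \cite{Ding_heat, AmHo}. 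Once they are assembled, the diagonal argument closes cleanly, producing the desired $\tau$.
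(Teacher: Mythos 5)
Your identification of $\lambda_1=\cdots=\lambda_k=n-1$ via the $k$ coordinate functions and the expansion of Proposition \ref{p:harmonic_cone} coincides with the paper's argument for that part, but your proof of the quantitative gap $\lambda_{k+1}\ge n-1+\tau$ takes a genuinely different route. The paper does \emph{not} run a second compactness argument over a sequence of counterexample cones. Instead it works with the single given sequence $M_i\to C(Y)$: assuming $|\alpha_{k+1}-1|\le\delta$, it uses the exact scale invariance $t^{1-\alpha}\int|\nabla u|^2\rho_t = \mathrm{const}$ for the homogeneous harmonic function $u=r^{\alpha_{k+1}}\phi_{k+1}$ to show that the heat-kernel-weighted Dirichlet energy is nearly constant between times $1$ and $2$; transplanting $u$ back to $B_1(x_i)$ via Lemma \ref{l:construct_converging_sequence} and applying Bochner's formula then yields approximating harmonic functions with $\fint_{B_1(x_i)}|\nabla^2 u_i|^2\le\epsilon$ (Lemma \ref{sl:1}). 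Together with the $k$ splitting functions from the $\mathbb{R}^k$ factor this produces a $(k+1,C(n)\epsilon)$-splitting map on $B_1(x_i)$, contradicting non-$(k+1,\eta)$-symmetry directly on the manifolds and producing $\tau(n,\rv,\eta)$ in one pass. Your route instead passes to a diagonal limit cone with $\lambda_{k+1}=n-1$ exactly and invokes Obata-type rigidity on the cross-section. This is a legitimate alternative, but it outsources two nontrivial inputs that the paper never develops: (i) lower semicontinuity of $\lambda_{k+1}$ along a sequence of \emph{distinct} cross-sections $Y^{(a)}\to Y^{(\infty)}$ (Mosco/spectral convergence for converging compact RCD spaces, rather than the single-sequence convergence of Proposition \ref{p:convergenc_function} and \cite{Ding_heat}), and (ii) Obata rigidity in the RCD$(n-2,n-1)$ setting (Ketterer's theorem). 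Both are available in the literature, so your argument can be closed, but the paper's method is more self-contained, avoids the rigidity theorem entirely, and extracts the quantitative $\tau$ without the outer compactness layer; what your approach buys in exchange is a conceptually transparent reduction of the gap to a classical eigenvalue rigidity statement on the cross-section.
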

\vspace{.2cm}

\begin{proposition}
\label{p:harmonic_cone}
	Let $(M_i^n,g_i,x_i)\togh (C(Y),d,x_\infty)$ 
satisfy $\Ric_{M^n_i}\ge-\delta_i\to 0$ and $\Vol(B_1(x_i))\ge \rv>0$.  
Then $r^{\alpha_i}\phi_i$ is harmonic where $\lambda_i=\alpha_i(n-2+\alpha_i)$
	 with $\alpha_i\ge 0$ and $-\Delta_Y\phi_i=\lambda_i\phi_i$. 
 Moreover, any harmonic function $u(r,Y): B_{1}(x_\infty)\to \mathbb{R}$ 
satisfies\footnote{We mention that on any RCD space, which includes this context, a harmonic function is automatically Lipschitz, see for instance \cite{Ambrosio_Calculus_Ricci}, \cite{Ambrosio_Ricci}} $$u=\sum_{i=0}^\infty b_ir^{\alpha_i}\phi_i\, ,
 $$
  where the convergence is in $W^{1,2}$ sense on $B_{1}(x_\infty)$.  
\end{proposition}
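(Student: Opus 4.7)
The plan is to split the statement cleanly into two pieces. The first assertion, that $r^{\alpha_i}\phi_i$ is harmonic with $\lambda_i=\alpha_i(n-2+\alpha_i)$, is essentially a direct consequence of item (4) of Theorem \ref{t:spectrum_metric_cone} combined with the explicit formula \eqref{e:lpc} in Theorem \ref{t:laplacian_metric_cone}. Indeed, substituting $u(r,y)=r^\alpha \phi(y)$ with $-\Delta_Y\phi=\lambda \phi$ into \eqref{e:lpc} gives in the distributional sense
\begin{equation*}
\Delta_X u = r^{\alpha-2}\bigl(\alpha(\alpha-1) + (n-1)\alpha - \lambda\bigr)\phi,
\end{equation*}
so harmonicity forces $\alpha^2+(n-2)\alpha-\lambda=0$. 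The nonnegative root is exactly $\alpha_i=-\tfrac{n-2}{2}+\sqrt{(\tfrac{n-2}{2})^2+\lambda_i}$, yielding the identity $\lambda_i=\alpha_i(n-2+\alpha_i)$. I would dispose of this part in a single short paragraph.

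The bulk of the work is the expansion of an arbitrary harmonic $u:B_1(x_\infty)\to\mathbb{R}$. My approach is separation of variables, done carefully in the weak sense. Since $\{\phi_i\}$ is a complete orthonormal basis of $L^2(Y)$ by Theorem \ref{t:spectrum_metric_cone}.(2), for a.e.\ $r\in(0,1)$ I would set
\begin{equation*}
c_i(r):=\int_Y u(r,y)\,\phi_i(y)\,d\cH^{n-1}(y).
\end{equation*}
The goal is to show $c_i(r)=b_i r^{\alpha_i}$ for every $i$. Using the cone formula \eqref{e:lpc} together with the fact that $u$ is harmonic and Lipschitz (harmonic functions on RCD spaces are locally Lipschitz; moreover $u$ has bounded gradient by the interior gradient estimate for limits of harmonic functions with $W^{1,2}$ control), I would test $\Delta_X u=0$ against cutoffs of the form $\chi(r)\phi_i(y)$, where $\chi\in C_c^\infty((0,1))$. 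Integration against the measure $r^{n-1}\,dr\,d\cH^{n-1}$ and two integrations by parts in $r$ (using the self-adjointness of $\Delta_Y$ on $\phi_i$) produce the distributional ODE
\begin{equation*}
c_i''(r)+\frac{n-1}{r}c_i'(r)-\frac{\lambda_i}{r^2}c_i(r)=0 \qquad \text{on } (0,1).
\end{equation*}
This is a linear Euler-type ODE with indicial roots $\alpha_i^{\pm}=-\tfrac{n-2}{2}\pm\sqrt{(\tfrac{n-2}{2})^2+\lambda_i}$, so $c_i(r)=b_i r^{\alpha_i^+}+\tilde b_i r^{\alpha_i^-}$. The boundedness (in fact Lipschitz character) of $u$ near $x_\infty$ gives $|c_i(r)|\leq C$, which forces $\tilde b_i=0$ since $\alpha_i^-\leq -(n-2)<0$ for $i\geq 1$, and the $i=0$ case (for which the two roots are $0$ and $-(n-2)$) is handled identically. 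Thus $c_i(r)=b_i r^{\alpha_i}$.

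To finish, I would establish the $W^{1,2}(B_1(x_\infty))$ convergence. Writing $u_N(r,y):=\sum_{i=0}^N b_i r^{\alpha_i}\phi_i(y)$, Parseval on each sphere $\{r\}\times Y$ gives
\begin{equation*}
\int_{B_1(x_\infty)}\!|u-u_N|^2\,d\mathcal H^n = \int_0^1 r^{n-1}\sum_{i>N} b_i^2 r^{2\alpha_i}\,dr \;\longrightarrow\; 0,
\end{equation*}
since the series is dominated by $\int_Y u(1,y)^2\,d\cH^{n-1}$ (each radial integral is bounded by the $L^2$-norm of $u(1,\cdot)$ via the monotonicity $r^{2\alpha_i}\leq 1$ for $r\leq 1$). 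The gradient estimate is analogous after differentiating term by term and invoking that on a metric cone $|\nabla(r^{\alpha_i}\phi_i)|^2=\alpha_i^2 r^{2\alpha_i-2}\phi_i^2+r^{2\alpha_i-2}|\nabla_Y\phi_i|^2$, whose integral over $B_1(x_\infty)$ equals $\tfrac{\alpha_i^2+\lambda_i}{2\alpha_i+n-2}b_i^2\,\|\phi_i\|_{L^2(Y)}^2$; summability of this series is the statement $u\in W^{1,2}$. The main technical obstacle I anticipate is the rigorous derivation of the ODE for $c_i(r)$ in the distributional sense without access to the smooth structure of $Y$; the cleanest way to overcome it is to use the explicit form of $\Delta_X$ from Theorem \ref{t:laplacian_metric_cone}.(4) together with test functions factored as $\chi(r)\phi_i(y)$, so that the $\Delta_Y$ piece acts only on $\phi_i$ where the spectral theorem applies directly.
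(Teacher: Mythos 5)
Your proposal is correct, but the mechanism you use for the main step is genuinely different from the paper's. The paper never derives an ODE for the Fourier coefficients of $u$: it expands only the boundary trace $u(1,\cdot)=\sum b_i\phi_i$, defines the candidate $v=\sum b_i r^{\alpha_i}\phi_i$ as a $W^{1,2}$-limit of harmonic partial sums (harmonicity of $v$ coming from closedness of $\Delta$), proves the averaged boundary matching $\lim_{r\to 1}(1-r)^{-2}\int_{A_{r,1}}|v-u(1,\cdot)|^2=0$, and then concludes $u=v$ by a cutoff/energy argument near $\partial B_1$ showing $\int|\nabla(u-v)|^2=0$. You instead run classical separation of variables on $u$ itself: the coefficients $c_i(r)=\int_Y u(r,\cdot)\phi_i$ satisfy the Euler equation $c_i''+\tfrac{n-1}{r}c_i'-\tfrac{\lambda_i}{r^2}c_i=0$, and the Lipschitz bound at the vertex kills the $r^{\alpha_i^-}$ mode. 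So the paper's uniqueness input is boundary matching at $r=1$ plus an energy identity, while yours is regularity at the cone tip plus ODE theory; both are valid, and your route has the advantage of identifying $u$ with its series pointwise in $i$ before any convergence discussion, at the cost of having to justify the distributional derivation of the radial ODE (which you correctly isolate as the delicate point, and which does go through by testing the weak formulation $\int\langle\nabla u,\nabla(\chi(r)\phi_i)\rangle=0$ with the product structure of $\nabla$ from Theorem \ref{t:laplacian_metric_cone}.(4)). Two small points to tighten: (i) your bound $\alpha_i^-\le -(n-2)<0$ degenerates when $n=2$; for $i\ge 1$ use directly $\alpha_i^-\le-\sqrt{\lambda_i}<0$, and for $i=0$, $n=2$ the second solution is $\log r$, which is also excluded by boundedness; (ii) the sentence ``summability of this series is the statement $u\in W^{1,2}$'' only gives Bessel's inequality $\sum b_i^2\|\nabla(r^{\alpha_i}\phi_i)\|^2\le\|\nabla u\|^2$, so to get \emph{strong} $W^{1,2}$ convergence you should add either the paper's explicit Cauchy estimate (using $\sum\lambda_ib_i^2\le\int_Y|\nabla u(1,\cdot)|^2<\infty$ from the Lipschitz trace) or the observation that $\nabla u_N$ is $L^2$-Cauchy and $u_N\to u$ in $L^2$, whence $\nabla u_N\to\nabla u$ by closability of the gradient.
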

\begin{proof} (Of Proposition \ref{p:harmonic_cone}) By Theorem \ref{t:laplacian_metric_cone} and Remark \ref{r:harmonic_metric_cone} the function $r^{\alpha_i}\phi_i$ is harmonic.	So let us begin the proof of the second part of the proposition. Since $u$ is bounded, in particular $u\in L^2(\partial B_{1}(x_\infty))$. Then we have the expansion in $L^2(Y)$,
\begin{align}
\label{e:u}
	 u(1,y)=\sum_{i=0}^\infty b_i \phi_i\, ,
\end{align} 
where $b_i =\int_Y \phi_i(y) u(1,y)$.  Define 
$$
v_k(r,y)=\sum_{i=0}^k b_ir^{\alpha_i}\phi_i(y)
$$ in $B_{1}(x_\infty)$. 
Denote the limit
 in $L^2(B_{1}(x_\infty))$ as $k\to \infty$ of $v_k$ by $v$.
Since the operator $\Delta$ is closed, it follows that $v$ is also harmonic. We have
\begin{align}
\label{e:v}
v=\sum_{i=0}^\infty b_ir^{\alpha_i}\phi_i(y)\in L^2(B_{1})\, .
\end{align} 
	
To finish the whole proof, we require to show	the above convergence \eqref{e:v} is in $W^{1,2}$-sense and $v=u$. 
Denote the annulus 	$A_{r,1}(x_\infty)$ by
$$
A_{r,1}(x_\infty) :=B_{1}(x_\infty)\setminus \bar B_r(x_\infty)\, .
$$
The following Lemma \ref{l:limit} will suffice to complete the proof of
Proposition \ref{p:harmonic_cone}.  The argument will be given
 after the proof of the 
Lemma \ref{l:limit} is completed.
\vskip3mm

\begin{lemma}
\label{l:limit}
 With the notation above, we have
 	$v_k\to v$ in $W^{1,2}(B_{1}(x_\infty))$ and
\begin{equation}
\label{e:limit}
\lim_{r\to 1}\, \frac{1}{(1-r)^2}\int_{A_{r,1}(x_\infty)}|v-u(1,y)|^2= 0\, .
\end{equation}
\end{lemma}
\begin{proof}	
 To begin with, we will show that $v_k$ converges to $ v$ 
 in $W^{1,2}(B_{1}(x_\infty))$.
\vskip2mm

From the fact that $u$ is Lipschitz  it follows that
  $\sum_i b_i^2\lambda_i<\infty$. Namely,
\begin{align}
	\int_Y |\nabla u(1,y)-\nabla v_k(1,y)|^2&=\int_Y |\nabla u(1,y)|^2+\int_Y |\nabla v_k(1,y)|^2-2\int_Y\langle \nabla u(1,y),\nabla v_k(1,y)\rangle\notag \\
	&=\int_Y |\nabla u(1,y)|^2+\int_Y |\nabla v_k(1,y)|^2+2\int_Y u(1,y)\Delta v_k(1,y)\notag\\
	&=\int_Y |\nabla u(1,y)|^2-\sum_{i=0}^k \lambda_i b_i^2\, .
\end{align}
This  implies 
$$
\sum_{i=0}^\infty \lambda_i b_i^2\le \int_Y|\nabla u(1,y)|^2\,.
$$ Since $\alpha_i^2\le \lambda_i$, we have 
\begin{align}
	\int_{B_{1}}|\nabla v_k(r,y)|^2=\sum_{i=0}^k b_i^2\int_{B_{1}}|\nabla (r^{\alpha_i}\phi_i)|^2=\sum_{i=0}^kb_i^2 \frac{\lambda_i+\alpha_i^2}{n+2\alpha_i-2} \le C(n)\sum_{i=0}^k\lambda_i b_i^2.
\end{align}
By applying the same computation to $v_k-v_\ell$ we get
\begin{align}
	\int_{B_{1}}|\nabla (v_k-v_\ell)(r,y)|^2 \le C(n)\sum_{i=\ell}^k\lambda_i b_i^2\, .
\end{align}
Therefore,  $\{v_k\}$ is a Cauchy sequence in $W^{1,2}(B_{1}(x_\infty))$. Since the space
$W^{1,2}(B_{1}(x_\infty)$ is complete, it follows that 
$v_k\to v\in W^{1,2}(B_{1}(x_\infty))$.	This
concludes the proof of the first part of Lemma \ref{l:limit}.
\vskip4mm

To complete the proof of Lemma \ref{l:limit},  we need to prove \eqref{e:limit}. 
We will begin by  showing that $v=\sum_{i=0}^\infty b_ir^{\alpha_i}\phi_i(y)$ is also in $L^2(\partial B_r(x_\infty))$ for $0<r<1$. 
\vskip2mm

Since $\sum_i b_i^2<\infty$ 
it follows  that $\{v_k(r,y)\}$ 
is a Cauchy sequence in $L^2(\partial B_r(x_\infty))$. Denote the limit of $v_k(r,y)$ in $L^2(\partial B_r(x_\infty))$ by $\tilde{v}(r,y)$.  By Fubini's theorem we have that 
\begin{align}
\int_{B_1(x_\infty)}|v_k(r,y)-\tilde{v}(r,y)|^2d\cH^n&=\int_0^1\int_{Y}|v_k(r,y)-\tilde{v}(r,y)|^2dYdr=\int_0^1\lim_{j\to\infty}\int_Y|v_k(r,y)-v_j(r,y)|^2dYdr\\
&\le \int_0^1\sum_{\ell=k}^\infty b_\ell^2 dr\le \sum_{\ell=k}^\infty b_\ell^2\, .
\end{align}
Letting $k\to\infty$ we get
$$
\int_{B_{1}(x_\infty)}|v(r,y)-\tilde{v}(r,y)|^2d\cH^n =0\,.
$$
In particular, this implies that $v(r,y)=\sum_{i=0}^\infty b_ir^{\alpha_i}\phi_i(y)$ is in $L^2(\partial B_r)$. By Fubini's theorem we can compute 
	\begin{align}
		\frac{1}{(1-r)^2}\int_{A_{r,1}(x_\infty)}|v(s,y)-u(1,y)|^2=\frac{\int_r^{1} s^{n-1}\int_Y|v(s,y)-u(1,y)|^2}{(1-r)^2}.
	\end{align}
	Since $v(r,y)$ is the $L^2$ limit of $v_k(r,y)$ on $\partial B_r$, we have
	\begin{align} 
		\frac{1}{(1-r)^2}\int_{A_{r,1}(x_\infty)}|v(r,y)-u(1,y)|^2&=\frac{\int_r^{1} s^{n-1}\sum_{i=0}^\infty b_i^2 |s^{\alpha_i}-1|^2ds}{(1-r)^2}\notag \\
		&\le  \sum_{i=1}^\infty b_i^2\cdot \frac{\left(1-{r}^{\alpha_i}\right)^2}{1-{r}}\notag\\
		&\le C(n)\sum_{i=1}^\infty (\alpha_i+1)^2 b_i^2\left(1-{r}\right),
	\end{align}
	where we have used $\alpha_i\ge 0$ to deduce $\left(1-{r}^{\alpha_i}\right)\le (\alpha_i+1) \left(1-{r}\right)$.
	Since $\sum_{i=0}^\infty b_i^2\alpha_i^2<\infty$, this implies $\lim_{r\to 1}\frac{1}{(1-r)^2}\int_{A_{r,1}(x_\infty)}|v-u(1,y)|^2= 0$. This completes the proof of  Lemma \ref{l:limit}.
\end{proof}
\vskip4mm

Now, we can complete  the proof of the Proposition \ref{p:harmonic_cone}. For $u$, $v$,
as in \eqref{e:u}, \eqref{e:v}, 
 it suffices to prove that $u=v$.  Since $u$ is Lipschitz Lemma \ref{l:limit} implies
  $$
  \lim_{r\to 0}\frac{1}{(1-r)^2}\int_{A_{r,1}(x_\infty)}|v-u|^2\to 0\, .
  $$ 
  Choose a cutoff function $\varphi_r$ with support in $B_{1}(x_\infty)$ and $\varphi_r:=  1$ in $B_r(x_\infty)$ such that $|\nabla \varphi_r|\le C(n)/(1-r)$.  Then
\begin{align}
	\int_{B_{1}(x_\infty)}|\nabla (u-v)|^2\varphi_r^2=-2\int_{B_{1}}(u-v)\varphi_r\langle \nabla (u-v),\nabla \varphi_r\rangle \le \frac{1}{2}\int_{B_{1}}|\nabla (u-v)|^2\varphi_r^2+C(n)\int_{A_{r,1}}|u-v|^2 |\nabla \varphi_r|^2.
\end{align}
By letting $r\to 1$ we have that $\int_{B_{1}}|\nabla(u-v)|^2=0$, which implies $u-v$ is a constant. Moreover, since $\frac{1}{(1-r)^2}\int_{A_{r,1}(x_\infty)}|v-u|^2\to 0$ as $r\to 1$, we have that $u=v$. This completes the proof of Proposition \ref{p:harmonic_cone}. 
\end{proof}
\vskip3mm


Next we will prove Proposition \ref{p:eigenvalue_cone}. As explained at the beginning of this section,
the  idea is the following:
\vskip2mm

By Theorem \ref{t:spectrum_metric_cone}, we know that $\lambda_0=0$ and $\lambda_1\ge n-1$.
Consider a harmonic function $u=r^{\alpha_i}\phi_i$ on $X=C(Y)=\mathbb{R}^k\times C(Z)$, where $\phi_i$ is an eigenfunction of $Y$ with eigenvalue $\lambda_i$ and $\alpha_i\ge 0$ satisfies $\lambda_i=\alpha_i(n-2+\alpha_i)$. If $u$ is a linear function on the $\mathbb{R}^k$ component then we have $\alpha_i=1$, or equivalently $\lambda_i=n-1$.  Therefore, we have $\lambda_0=0$ and $\lambda_1=\lambda_2=\cdots \lambda_k=n-1$. To finish the proof, we will need to show that
$$
\lambda_{k+1}>n-1+\tau(n,\rv,\eta)>n-1\, .
$$
\vskip2mm

Consider the harmonic function $u=r^{\alpha_{k+1}}\phi_{k+1}$ where $-\Delta \phi_{k+1}=\lambda_{k+1}\phi_{k+1}$. We will use a contradiction argument to show that $\alpha_{k+1}>1+\alpha(n,\rv,\eta)>1$, which implies $\lambda_{k+1}>\lambda_k+\tau(n,\rv,\eta)$.  The moral is simple, we will show if $\alpha_{k+1}$ is close to $1$ then $u=r^{\alpha_{k+1}}\phi_{k+1}$ 
is close to a new linear splitting function. Then
if $\alpha_{k+1}$ is too close to $1$,  this contradicts the assumption that $B_{1}(x_i)$ is not $(k+1,\eta)$-symmetric.

\begin{proof} (Of Proposition \ref{p:eigenvalue_cone})
 Let $u=r^{\alpha_m}\phi_m$ denote a harmonic function in $X$. By scaling invariance, we have 
\begin{align}\label{e:alpham_ts}
t^{1-\alpha_m}\int |\nabla u|^2\rho_t(x_\infty,dx)
= s^{1-\alpha_m}\int |\nabla u|^2\rho_s(x_\infty,dx)\, .
\end{align}
\vskip2mm

\noindent
\begin{lemma}
\label{sl:1}
 For any $\epsilon>0$, we will show that if $|\alpha_m-1|\le \delta\le \delta(n,\rv,\eta,\epsilon)$ then there exit harmonic functions $u_i: B_1(x_i)\subset M_i\to \mathbb{R}$ converging in $W^{1,2}$-sense (see Definition \ref{d:W12convergence}) to $u$ with 
\begin{align}
\fint_{B_{1}(x_i)}|\nabla^2u_i|^2 \le \epsilon  ~~\mbox{  for $i\ge i(n,\rv,\epsilon,\eta)$.}
\end{align}
\end{lemma}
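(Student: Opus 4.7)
\emph{Proof plan.} The strategy is to exploit the exact scaling identity \eqref{e:alpham_ts} on the limit cone to compute the time derivative of the heat-kernel weighted Dirichlet energy of $u$, and then to transfer the resulting integral Hessian bound to the approximating manifolds via Bochner's formula and $W^{1,2}$-convergence.

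First I would construct the approximating sequence. Since $u = r^{\alpha_m}\phi_m$ is Lipschitz on compact subsets of $C(Y)$ by Theorem \ref{t:spectrum_metric_cone}, Lemma \ref{l:construct_converging_sequence} applied with the zero source yields harmonic functions $u_i : B_R(x_i) \to \R$, with $R$ slightly larger than $1$, converging to $u$ uniformly and in the $W^{1,2}$-sense on compact subsets of $B_R(x_\infty)$. Combined with the heat-kernel convergence of Proposition \ref{p:heat_kernel_convergence} and Remark \ref{r:W12_heatkernel}, this gives $\int |\nabla u_i|^2\varphi\rho_t(x_i,dy)\to \int|\nabla u|^2\varphi\rho_t(x_\infty,dy)$ for every fixed $t \in [1/2,1]$, where $\varphi$ is a smooth cutoff supported in $B_2(x_i)$, equal to $1$ on $B_{3/2}(x_i)$, with $|\nabla\varphi|^2 + |\Delta\varphi|\le C(n)$ as in \eqref{e:cutoff}.

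Next, the scaling identity \eqref{e:alpham_ts} reads $\int|\nabla u|^2\rho_t(x_\infty,dy) = C_u\, t^{\alpha_m-1}$ with $C_u := \int|\nabla u|^2\rho_1(x_\infty,dy)$, so that
\begin{align*}
\int|\nabla u|^2\rho_1(x_\infty,dy) - \int|\nabla u|^2\rho_{1/2}(x_\infty,dy) = C_u\bigl(1 - 2^{1-\alpha_m}\bigr),
\end{align*}
and this is $O(|\alpha_m - 1|)$ as $\alpha_m \to 1$. On the manifold side, Bochner's formula \eqref{e:bochner} applied to the harmonic function $u_i$, combined with the identity $\frac{d}{dt}\int f\,\rho_t = \int \Delta f\,\rho_t$ and the Ricci lower bound $\Ric_{M_i}\ge -\delta_i^2$, gives
\begin{align*}
2\int|\nabla^2 u_i|^2\,\varphi\rho_t \le \frac{d}{dt}\int|\nabla u_i|^2\,\varphi\rho_t + 2\delta_i^2\int|\nabla u_i|^2\,\varphi\rho_t + E_\varphi(t),
\end{align*}
where the cutoff error $E_\varphi(t)$ comes from $\int |\nabla u_i|^2 \Delta\varphi\,\rho_t$ and satisfies $|E_\varphi(t)| \le C(n,\rv)e^{-c/\delta_i^2}$ by the Gaussian tail decay of the heat kernel on $\supp(|\Delta\varphi|)$, in the same spirit as \eqref{e:et1}. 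Integrating in $t \in [1/2, 1]$, passing $i \to \infty$ via Step 1, and using the scaling identity above, I obtain
\begin{align*}
2\lim_{i\to\infty}\int_{1/2}^{1}\int|\nabla^2 u_i|^2\varphi\rho_t(x_i,dy)\,dt \le C_u\bigl(1 - 2^{1-\alpha_m}\bigr) \le C(n,\rv,\eta)\,\delta,
\end{align*}
where I use that $\lambda_m$, and hence $C_u$, is bounded in terms of $n,\rv,\eta$ (the only relevant values of $m$ are those with $\lambda_m$ close to $n-1$).

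Finally, by the mean value theorem in $t$ there exists a subsequence and $t_\ast \in [1/2, 1]$ with $\int|\nabla^2 u_i|^2\rho_{t_\ast}(x_i, dy) \le C(n,\rv,\eta)\,\delta$. The heat-kernel lower bound $\rho_{t_\ast}(x_i, y) \ge c(n,\rv)$ for $y \in B_1(x_i)$ from Theorem \ref{t:heat_kernel}(1), together with the volume noncollapsing $\Vol(B_1(x_i))\ge c(n,\rv)$, converts this to the desired bound $\fint_{B_1(x_i)}|\nabla^2 u_i|^2 \le C(n,\rv,\eta)\,\delta \le \epsilon$ once $\delta \le \delta(n,\rv,\eta,\epsilon)$. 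The main technical obstacle is the careful control of the cutoff error $E_\varphi(t)$ uniformly in $t \in [1/2,1]$ as $i \to \infty$ and the associated passage to the limit in the Bochner-type identity; this is handled exactly as in the computation \eqref{e:et}--\eqref{e:et1} used to define the local $\cW^\delta_t$-entropy, where the Gaussian tail of $\rho_t$ on the annulus $B_2\setminus B_{3/2}(x_i)$ makes $E_\varphi(t)$ negligible as $\delta_i \to 0$.
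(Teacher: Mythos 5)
Your overall strategy is the same as the paper's: use the scaling identity \eqref{e:alpham_ts} to show the heat-kernel weighted Dirichlet energy of $u$ is nearly constant in $t$, transfer this to the $u_i$ via Lemma \ref{l:construct_converging_sequence}, Proposition \ref{p:convergenc_function} and heat-kernel convergence, convert the small $t$-derivative into a Hessian bound through Bochner's formula, and finish with the mean value theorem in $t$ and the heat-kernel lower bound. However, there is a genuine gap in your treatment of the cutoff error, and it is exactly the point where the choice of cutoff scale matters.

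You take $\varphi$ supported in $B_2(x_i)$, equal to $1$ on $B_{3/2}(x_i)$, and claim $|E_\varphi(t)|\le C(n,\rv)e^{-c/\delta_i^2}$ ``by Gaussian tail decay, in the spirit of \eqref{e:et1}.'' This is false. The error term $\int |\nabla u_i|^2\,\Delta(\varphi^2)\,\rho_t(x_i,dy)$ (plus the cross term $2\int\langle\nabla\varphi^2,\nabla|\nabla u_i|^2\rangle\rho_t$, which you omit entirely) is supported on the annulus $A_{3/2,2}(x_i)$, where for $t\in[1/2,1]$ the heat kernel is bounded \emph{below} by $c(n,\rv)>0$ by Theorem \ref{t:heat_kernel}, and $|\Delta\varphi|\le C(n)$ with no smallness. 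Since $\int_{A_{3/2,2}}|\nabla u_i|^2$ is of order one, $E_\varphi(t)$ is of order one, uniformly in $i$; nothing in your setup depends on $\delta_i$, so no factor $e^{-c/\delta_i^2}$ can appear. The comparison with \eqref{e:et1} is misleading: there the cutoff lives at scale $\delta^{-1}$, which is what produces both the factor $|\Delta\varphi|\le C\delta^2$ and the genuine Gaussian suppression $d^2/4t\gtrsim \delta^{-2}/t$. With your fixed-scale cutoff the Bochner inequality only yields $\fint_{B_1(x_i)}|\nabla^2u_i|^2\le C(n,\rv)(\delta + 1)$, which is useless.

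The repair is the one the paper uses: exploit the polynomial growth of $|\nabla u|$ (from the H\"older/linear growth of $r^{\alpha_m}\phi_m$) together with the Gaussian upper bound of Theorem \ref{t:heat_kernel} to place the cutoff at a \emph{large} scale $R=R(n,\rv,\epsilon)$, with $\varphi=\psi(h_i)$, $\varphi\equiv 1$ on $B_{R/2}$ and $|\nabla\varphi|^2+|\Delta\varphi|\le C(n)R^{-2}$. Then the boundary error is controlled by $C\,R^{-2}\int_{B_R}|\nabla u_i|^2\rho_t$ (with the Gaussian tail at distance $R/2$ giving additional decay), and the final estimate becomes $\fint_{B_1(x_i)}|\nabla^2u_i|^2\le C(n,\rv)(\delta+R^{-2})$. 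Choosing first $R=R(\epsilon,n,\rv)$ and then $\delta\le\delta(n,\rv,\eta,\epsilon)$ closes the argument. Note also that the construction of the converging harmonic functions must then be carried out on $B_{R_i}(x_i)$ with $R_i\to R$, not merely on a ball of radius slightly larger than $1$.
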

\vskip2mm

 Let us assume Lemma \ref{sl:1} and finish the proof of Proposition \ref{p:eigenvalue_cone}.
\vskip2mm

Note that $\alpha_1=\alpha_2=\cdots=\alpha_k=1<\alpha_{k+1}$, 
for any $\epsilon>0$, if $|\alpha_{k+1}-1|\le \delta\le \delta(n,\rv,\eta,\epsilon)$.
Then by Sublemma \ref{sl:1}, we have  
$k+1$ harmonic functions $u_{i}^1,u_{i}^2,\cdots, u_{i}^{k+1}: B_1(x_i)\to \mathbb{R}$ 
which converge in $W^{1,2}$-sense to $u^1=x^1,u^2=x^2,\cdots,u^k=x^k, u^{k+1}=
r^{\alpha_{k+1}}\phi_{k+1}$. Here $x^1,\cdots, x^k$ are the coordinate functions of 
$\mathbb{R}^k\subset \mathbb{R}^k\times C(Z)$, and $u^1,u^2,\cdots, u^{k+1}$ is perpendicular to 
each other with respect to the inner product 
\begin{align}
(u,v):= \fint_{B_1(x_\infty)}\langle \nabla u,\nabla v\rangle, \mbox{  for all $u,v\in W^{1,2}(B_1)$.}
\end{align}
Moreover, since $(u^\ell,u^\ell)=1$ for $\ell=1,\cdots, k$ and $|(u^{k+1},u^{k+1})-1|\le C(n)\delta$,
and  $u_i^\ell \to u^\ell$ in $W^{1,2}$-sense, we have for $i\ge i(n,\rv,\epsilon,\eta)$ that 
\begin{align}
\left|\fint_{B_1(x_i)}\langle \nabla u_i^a,\nabla u_i^b\rangle -\delta^{ab}\right|\le \epsilon, \mbox{ for all $a,b=1,\cdots, k+1$.}
\end{align}
On the other hand, Lemma \ref{sl:1} we have the Hessian estimate
\begin{align}
\fint_{B_1(x_i)}|\nabla^2u_i^a|^2\le \epsilon.
\end{align}
It follows that the map 
\begin{align}
u:= (u_i^1,\cdots, u_i^{k+1}): B_1(x_i)\to \mathbb{R}^{k+1}
\end{align}
is a $(k+1,C(n)\epsilon)$-splitting map.  If $\epsilon\le \epsilon(n,\rv,\eta)$,  this asserts that 
$\alpha_{k+1}\ge 1+\alpha(n,\rv,\eta)>1$, which
 contradicts the assumption that $B_1(x_i)$
 is not $(k+1,\eta)$-symmetric. This concludes the proof of Proposition \ref{p:eigenvalue_cone} 
under the assumption that Lemma \ref{sl:1} holds. 
\vskip2mm

\begin{proof}[of Lemma \ref{sl:1}] 
The proof of Lemma \ref{sl:1}
 requires the result on heat kernel
convergence of Proposition \ref{p:heat_kernel_convergence} 
and harmonic function convergence in Lemma \ref{l:construct_converging_sequence}.

By \eqref{e:alpham_ts}, we have for $|\alpha_m-1|\le \delta$ that 
  \begin{align}
 \label{e:rho_1_nabla_u}
\Big|\int|\nabla u|^2(x)\, \rho_1(x_\infty,x)dx-\int|\nabla u|^2(x)\, \rho_2(x_\infty,x)dx\Big|\le  
2\delta\int |\nabla u|^2(x)\, \rho_2(x_\infty,x)dx,
\end{align}
\vskip2mm

Since $u$ has polynomial growth and the heat kernel 
$\rho_t$ is exponentially decaying as in Theorem \ref{t:heat_kernel}, we can choose a 
big $R\ge R(n,\rv,\delta)$ and a cutoff function $\varphi=\psi(r^2)$ ,with support in $B_R$ 
and $\varphi:=  1$ in $B_{R/2}$, such that $|\nabla\varphi|^2+|\Delta \varphi|\le C(n)R^{-2}$ and 
 \begin{align}\notag
		\Big|\int_{B_{R}(x_\infty)}\varphi^2|\nabla u|^2(x)\rho_1(x_\infty,x)dx
-\int_{B_{R}(x_\infty)}\varphi^2|\nabla u|^2(x)\rho_2(x_\infty,x)dx\Big| 
		&\le  4\delta\int_{B_{R}(x_\infty)}\varphi^2 |\nabla u|^2(x)\rho_2(x_\infty,x)dx\,.
\end{align}
\vskip2mm

 By using Lemma \ref{l:construct_converging_sequence} and Proposition \ref{p:convergenc_function},
 we can now construct a sequence of harmonic functions, $u_i:B_{R}(x_i)\to \dR$, which converge in 
$W^{1,2}$-sense to $u:B_R(x_\infty)\to \dR$. 
 \vskip2mm

 Let  $\varphi=\psi(h_i)$ with $\Delta h_i=2n$ where $h_i$ approximates $d^2$ 
pointwise (see \cite{Cheeger01}). \footnote{Note we are not just applying Theorem \ref{t:cutoff} to produce a  cutoff function but are specifying its construction. This is to ensure $\psi(h_i)$ converge to  the cutoff function $\psi(r^2)$ in the limit space, which will be important. }
 By the heat kernel convergence in Proposition \ref{p:heat_kernel_convergence} we have for 
 $i\ge i(n,\rv,\delta)$,
 \begin{align}
  \label{e:he1}
		\Big|\int_{B_{R}(x_i)}\varphi^2|\nabla u_i|^2(x)\rho_1(x_i,x)dx
-\int_{B_{R}(x_i)}\varphi^2|\nabla u_i|^2(x)\rho_2(x_i,x)dx\Big|&\le 
 8\delta\int_{B_{R}(x_i)} \varphi^2|\nabla u_i|^2(x)\rho_2(x_i,x)dx\, .
\end{align}
\vskip2mm

   Since $\rho_t$ is the heat kernel this gives 
 \begin{align}
 	\Big|\int_1^2\int \Delta (\varphi^2|\nabla u_i|^2)\rho_t(x_i,dx)\Big|\le  
8\delta\int_{B_{R}(x_i)} \varphi^2|\nabla u_i|^2(x)\rho_2(x_i,dx).
 \end{align}
From  Bochner's formula and the Schwartz inequality, we get 
\begin{align}
	\int_1^2\int \varphi^2 |\nabla ^2u_i|^2 \rho_t(x_i,dx)dt
\le C(\delta+R^{-2}) \int_{B_R(x_i)}|\nabla u_i|^2(x)\rho_2(x_i,dx).
\end{align}
\vskip2mm

By the mean value theorem and the heat kernel lower bound estimate Theorem \ref{t:heat_kernel}, we have 
\begin{align}
	\fint_{B_1(x_i)}|\nabla^2u_i|^2\le C(n,\rv)(\delta+R^{-2})\int_{B_R(x_i)} |\nabla u_i|^2\rho_2(x_i,dx)
\le  C (\delta+R^{-2}).
\end{align}
By fixing $R=R(\epsilon,n,\rv)$  we conclude that 
$$
\fint_{B_{1}(x_i)}|\nabla^2u_i|^2\le \epsilon\, .
$$ This completes the proof of the Lemma \ref{sl:1}
\end{proof}

Hence, the proof of Proposition \ref{p:eigenvalue_cone} is complete as well. 
\end{proof}
\vskip3mm


\subsection{Part \text{(1)} of the Geometric Transformation Theorem}
\label{ss:transformation_lemma}
In this subsection, we will prove 
 estimate $(1)$ of Theorem \ref{t:transformation}.  We will see in subsequent subsections that the transformation $T$ safitifies the vastly improved estimate $(2)$.  As we explained, the proof of $(1)$ is based on a contradiction argument: 

\begin{proposition}[Transformation]
\label{p:transfor_prop}
Let $(M^n,g,x)$ satisfy $\Ric_{M^n}\ge -(n-1)\delta^2$ and $\Vol(B_1(x))\ge \rv>0$. 
Let $\epsilon >0$ and $\delta\le \delta(n,\rv,\eta,\epsilon)$.  Assume:

\begin{enumerate}
\item $B_{s}(x)$ is $(k,\delta^2)$-symmetric but not $(k+1,\eta)$ symmetric for each scale $r_0\le s\le 1$.

\item
 $u: B_2(x)\to \mathbb{R}^k$ is a $\delta$-splitting map and let $\epsilon>0$.  
\end{enumerate}

Then for each scale $r_0\le s\le 1$ there exists $k\times k$ lower triangle matrix $T_{s}$ such that:
\begin{enumerate}
\item $T_su: B_s(x)\to \mathbb{R}^k$ is a $(k,\epsilon)$-splitting map on $B_s(x)$.

\item  $\fint_{B_s}\langle \nabla (T_{s}u)^a,\nabla (T_{s}u)^b\rangle=\delta^{ab}$.

\item $|T_s\circ T_{2s}^{-1}-I|\le \epsilon$.
\end{enumerate}
\end{proposition}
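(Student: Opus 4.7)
The plan is to define $T_s$ by a canonical Gram--Schmidt procedure and then establish conclusions (1) and (3) together via a single contradiction/blow-up argument, using Propositions \ref{p:eigenvalue_cone} and \ref{p:harmonic_cone} to rule out the limiting cone behavior. Specifically, I would let $H_s^{ab} := \fint_{B_s(x)} \langle \nabla u^a, \nabla u^b \rangle$ and take $T_s$ to be the unique lower-triangular matrix with positive diagonal such that $T_s H_s T_s^{\top} = I$. Condition (2) is then built in. For the base scale $s = 1$, the $\delta$-splitting hypothesis gives $H_1 \approx I$, so $T_1 \approx I$.

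The plan for (1) and (3) is to argue by contradiction on the smallest scale of failure. Suppose for each $i$ there is $(M^n_i, g_i, x_i)$ satisfying the hypotheses with $\delta_i \to 0$ and a $\delta_i$-splitting $u_i$ for which the conclusion fails, and let $s_i \in [r_{0,i}, 1]$ be the smallest dyadic scale at which (1) or (3) fails. By minimality, at every scale $t s_i$ with $t \ge 1$ the conclusion holds: $T_{t s_i} u_i$ is a $(k,\epsilon)$-splitting and $|T_{t s_i}(T_{2 t s_i})^{-1} - I| \le \epsilon$. Rescale $B_{s_i}(x_i)$ to unit size and set $\tilde u_i(y) = s_i^{-1}\big( T_{s_i} u_i(s_i y) - T_{s_i} u_i(x_i)\big)$, and define the rescaled transformations $\tilde T_{t,i} := T_{t s_i,i}\, T_{s_i,i}^{-1}$ for $t \ge 1$. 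Telescoping the close-to-identity estimate gives $|\tilde T_{t,i}^{-1}| \le (1+\epsilon)^{\log_2 t} \le t^{C\epsilon}$, and since $\tilde T_{t,i} \tilde u_i$ is a $(k,\epsilon)$-splitting on $B_t$, we obtain
\begin{align*}
\sup_{B_t} |\nabla \tilde u_i| \le (1+\epsilon)\, |\tilde T_{t,i}^{-1}| \le 2\, t^{C\epsilon}, \qquad t \ge 1.
\end{align*}
This is the crucial "slightly faster than linear" growth bound.

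By Gromov compactness and Proposition \ref{p:convergenc_function}, after passing to a subsequence, $(M^n_i, s_i^{-2} g_i, x_i)$ converges to a metric cone $C(Y) = \mathbb{R}^k \times C(Z)$ that is not $(k+1,\eta)$-symmetric, and $\tilde u_i$ converges in $W^{1,2}_{\mathrm{loc}}$ to a harmonic map $v: C(Y) \to \mathbb{R}^k$ with $v(x_\infty) = 0$ and $|v(r,\cdot)| \le C r^{1 + C\epsilon}$ for all $r \ge 1$. Applying Proposition \ref{p:harmonic_cone} componentwise yields expansions $v^a = \sum_i b_i^a r^{\alpha_i} \phi_i$, and by Proposition \ref{p:eigenvalue_cone} we have $\alpha_0 = 0 < \alpha_1 = \cdots = \alpha_k = 1$ and $\alpha_{k+1} \ge 1 + \tau(n,\rv,\eta)$. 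Choosing $\epsilon$ small so that $C \epsilon < \tau/2$, the growth bound forces $b_i^a = 0$ for every $i \ge k+1$, whence $v^a = \sum_{j=1}^k A^a_j x^j$ is a linear function of the $\mathbb{R}^k$ coordinates. The normalization $\fint_{B_1} \langle \nabla(\tilde T_{1,i} \tilde u_i)^a, \nabla(\tilde T_{1,i} \tilde u_i)^b\rangle = \delta^{ab}$ together with $\tilde T_{1,i} \to I$ (which holds since $(3)$ holds for all $t \ge 1$) implies that the matrix $A$ is orthogonal. Thus $v$ is, after an orthogonal transformation close to $I$, a $(k,0)$-splitting on $B_1$, and the same orthogonal transformation applied on the pre-limit gives that $T_{s_i,i} u_i$ is a $(k, \epsilon/2)$-splitting on $B_1(x_i)$ for all large $i$, contradicting the assumed failure.

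The main obstacle is the growth estimate $\sup_{B_t} |\nabla \tilde u_i| \le 2 t^{C\epsilon}$: it is what couples $(3)$ (close-to-identity between consecutive scales) to $(1)$ (splitting at each scale) and makes the inductive/minimal-scale argument work. Once this is in place the spectral gap of Proposition \ref{p:eigenvalue_cone} does the rest. One technical point that needs care is the joint verification of $(1)$ and $(3)$ in the induction: one runs the contradiction argument for $(1)$ with $\epsilon/2$ in place of $\epsilon$, which then lets a separate short argument — comparing the Gram--Schmidt factors of $T_{2s} u$ on $B_{2s}$ versus $B_s$ using a Bochner estimate and the almost-cone structure on $B_{\delta^{-1} s}$ — promote the splitting estimate at scale $s$ to the $(3)$-type closeness of $T_s T_{2s}^{-1}$ to $I$, closing the loop with constants $\epsilon$ and $\delta(n,\rv,\eta,\epsilon)$ chosen compatibly.
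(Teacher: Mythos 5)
Your proposal is correct and follows essentially the same route as the paper: define $T_s$ by Cholesky/Gram–Schmidt normalization of the Gram matrix, run a contradiction at the first failing scale, telescope the consecutive-scale estimates into a polynomial growth bound for the rescaled map, blow up to a cone, and invoke the expansion and spectral gap of Propositions \ref{p:harmonic_cone} and \ref{p:eigenvalue_cone} to force the limit to be linear. The paper organizes the same ingredients slightly differently (the consecutive-scale estimate is isolated as Lemma \ref{l:Cholesky} via uniqueness of the Cholesky decomposition, with exponent $C\sqrt{\epsilon_0}$ rather than $C\epsilon$, and conclusion (3) is then read off from it at the end), but there is no substantive difference in the argument.
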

\vspace{.25cm}

The proof of Proposition \ref{p:transfor_prop} will rely on the eigenvalue estimate 
\eqref{e:eigenvalue_cone} of Proposition \ref{p:eigenvalue_cone}.
The key point is that almost linear growth harmonic function on the limit cone
must be linear.  We begin with the following:

\begin{lemma}[Harmonic function with almost linear growth]
\label{l:harmonic_holder}
	Let $(M_i^n,g_i,x_i)\to (C(Y),d,x_\infty)=(\mathbb{R}^k\times C(Z),d,x_\infty)$ satisfy $\Ric\ge-\delta_i\to 0$ and $\Vol(B_1(x_i))\ge \rv>0$.  Assume $B_{10}(x_i)$ is not $(k+1,\eta)$-symmetric. \newline Then there exists $\epsilon(n,\rv,\eta)>0$ such that any harmonic function $u$ on $C(Y)$ with almost linear growth $|u|(y)\le Cd(y,x_\infty)^{1+\epsilon}+C$ is a linear function induced from an $\dR$ factor. 
\end{lemma}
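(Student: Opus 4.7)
The strategy is to expand $u$ in homogeneous harmonic modes $r^{\alpha_i}\phi_i$ associated to the eigendata of the cross-section $Y$ of $C(Y)=\mathbb{R}^k\times C(Z)$, and then use the almost linear growth of $u$ to kill every mode above degree one. The spectral gap at $\lambda=n-1$ provided by Proposition~\ref{p:eigenvalue_cone} will force the surviving modes to lie exactly in the pullback of linear functions from the $\mathbb{R}^k$ factor.

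First apply Proposition~\ref{p:harmonic_cone} to $u|_{B_1(x_\infty)}$ to obtain a $W^{1,2}$ expansion
\begin{align}
u \;=\; \sum_{i=0}^{\infty} b_i\, r^{\alpha_i}\phi_i,
\end{align}
where $\{\phi_i\}$ is an $L^2(Y)$-orthonormal basis of eigenfunctions of $-\Delta_Y$ with eigenvalues $\lambda_i = \alpha_i(n-2+\alpha_i)$ and $\alpha_i \geq 0$. Since the cone metric $g=dr^2+r^2g_Y$ is self-similar under radial dilation, applying the same proposition on $B_R(x_\infty)$ yields an expansion of $u$ whose coefficients, by uniqueness on the overlap with $B_1$, must equal $b_i R^{\alpha_i}$; thus the single formula above converges in $W^{1,2}(B_R)$ for every $R > 0$ with the same intrinsic coefficients $b_i$.

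Integrating $u^2$ over $B_R(x_\infty)$ against the renormalized limit volume measure, Fubini together with the $L^2(Y)$-orthonormality of $\{\phi_i\}$ gives
\begin{align}
\sum_{i=0}^{\infty} b_i^2\,\frac{R^{n+2\alpha_i}}{n+2\alpha_i} \;=\; \int_{B_R(x_\infty)} u^2 \;\leq\; C\,R^{n+2(1+\epsilon)}
\end{align}
for $R$ large, by the growth hypothesis $|u|(y)\leq C\,d(y,x_\infty)^{1+\epsilon}+C$. Each index with $\alpha_i > 1+\epsilon$ therefore satisfies $b_i^2 \leq C\,(n+2\alpha_i)\,R^{2(1+\epsilon)-2\alpha_i}$, and letting $R\to\infty$ forces the coefficient to vanish. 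Proposition~\ref{p:eigenvalue_cone}, combined with the non-$(k+1,\eta)$-symmetry of $B_{10}(x_i)$, yields a gap $\lambda_{k+1} \geq (n-1)+\tau(n,\rv,\eta)$, equivalently $\alpha_{k+1}\geq 1+\theta(n,\rv,\eta)$ for some $\theta>0$. Setting $\epsilon(n,\rv,\eta):=\theta/2$ kills every mode with $i\geq k+1$ and leaves $u = b_0 + \sum_{i=1}^{k} b_i\, r\phi_i$. Since $n-1$ has multiplicity exactly $k$ and the $k$ coordinate functions on the $\mathbb{R}^k$ factor are themselves harmonic of degree one, the surviving sum is a linear combination of those coordinates; after a rotation of $\mathbb{R}^k$ this is a linear function induced from a single $\mathbb{R}$ factor (plus the constant $b_0$), as required.

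The main technical hurdle is the globalization of the eigenfunction expansion: Proposition~\ref{p:harmonic_cone} is stated only on $B_1(x_\infty)$, and care is needed to verify, via the self-similarity of the cone and uniqueness of the expansion on $B_1$, that the coefficients $b_i$ are intrinsic and that the same formula converges on every $B_R(x_\infty)$. Once this is in place, the remainder is a direct asymptotic analysis of the modal expansion combined with a straightforward appeal to the spectral gap.
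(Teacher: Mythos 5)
Your proof is correct and follows essentially the same route as the paper: expand $u$ in homogeneous harmonic modes via Proposition~\ref{p:harmonic_cone}, compare the $L^2$ growth of the expansion over $B_R$ with the almost-linear growth hypothesis to kill every coefficient with $\alpha_i>1+\epsilon$, and invoke the spectral gap of Proposition~\ref{p:eigenvalue_cone} to conclude that only the constant and degree-one modes from the $\mathbb{R}^k$ factor survive. The only cosmetic difference is that the paper first subtracts the linear part $\sum_{i=0}^k b_i r^{\alpha_i}\phi_i$ and shows the remainder vanishes, whereas you work with $u$ directly; your explicit attention to globalizing the expansion from $B_1$ to all of $C(Y)$ is a point the paper leaves implicit.
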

\begin{proof}
To begin with it follows from Proposition \ref{p:harmonic_cone} that a harmonic function on $C(Y)$ has the form
	\begin{align}\label{e:harmonic_expansion}
		u=\sum_{i=0}^\infty b_i \cdot r^{\alpha_i}\phi_i\, ,
	\end{align}
{where the convergence is in $W^{1,2}$ on compact subsets}. 

	By using the eigenvalue estimate in Proposition \ref{p:eigenvalue_cone} and noting that 
$\alpha_0=0$, we have 
	$$
	1=\alpha_1=\cdots= \alpha_k<1+\beta(n,\eta,\Vol(Z))\le \alpha_{k+1}\, .
	$$
	
	If we put $u_0=u-\sum_{i=0}^k b_ir^{\alpha_i}\phi_i$, then we still have $|u_0|(y)\le Cd(y,x_\infty)^{1+\epsilon}+C$.

To finish the proof, it suffices to show that $\epsilon\le \beta/2$ implies $u_0=0$. For  this, we consider the $L^2$ integral of $u_0$ over $B_R(x_\infty)$. Since $r^{\alpha_i}\phi_i$ are orthogonal in $L^2(\partial B_r(x_\infty))$ for each $r$ we have 
\begin{align}
	\sum_{i=k+1}^\infty b_i^2 \Vol(Y)^{-1} \frac{n}{n+2\alpha_i} R^{2\alpha_i}
=\fint_{B_R(x_\infty)}|u_0|^2\le C+CR^{2+2\epsilon}.
\end{align}
Since $R$ is arbitrary, it follows that $b_i=0$ for all $i\ge k+1$ if $\epsilon\le \beta/2$. Indeed,
since $\alpha_i>1+\epsilon$ for $i\ge k+1$ and $b_i^2 \Vol(Y)^{-1} \frac{n}{n+2\alpha_i} R^{2\alpha_i}\le C+CR^{2+2\epsilon}$ for any $R$ we have $b_i=0$ for $i\ge k+1$. This implies $u_0=0$, which completes  the proof of Lemma \ref{l:harmonic_holder}.
	\end{proof}

\begin{proof} (Of Proposition \ref{p:transfor_prop})
We will argue by contradiction.  
Make the following assumptions:
\begin{itemize}
\item
There exists $\epsilon_0<<1$ and $(M_i,g_i,x_i)$ such that $B_{\delta_i^{-1}r}(x_i)$ is $(k,\delta^2_i)$ splitting but $B_r(x_i)$ is not $(k+1,\eta)$-splitting for all $r_i\le r\le 1$. Let $u_i: B_{2}(x_i)\to \mathbb{R}^k$ be a $(k,\delta_i)$-splitting map with $\delta_i\to 0$.   
\vskip2mm

\item
There exists $s_i>r_i\to 0$ such that for all $1\ge r\ge s_i$, there exists a lower triangle matrix $T_{x_i,r}$ such that $T_{x_i,r}u_i$ is a $(k,\epsilon_0)$ splitting on $B_r(x_i)$ with $ \fint_{B_r(x_i)}\langle \nabla (T_{x_i,r}u)^a,\nabla (T_{x_i,r}u)^b\rangle=\delta^{ab}$. 
\vskip2mm

\item
No such mapping $T_i=T_{x_i,s_i/10}$ exists on $B_{s_i/10}(x_i)$.  
(Note that since $\delta_i\to 0$ we have trivially that $s_i\to 0$.)
\end{itemize}
\vskip2mm

We will contradict  the assumption that $s_i>r_i$.
\vskip3mm
 
To complete the proof of  Proposition \ref{p:transfor_prop}, will need the  following 
lemma. It states  that  as long as they exist, the transformation matrices, 
$T_s$, change slowly.
\vskip2mm

Let $|\, \cdot\,  |$ denote the $L^\infty$-norm on matrices.

 \begin{lemma}
 \label{l:Cholesky}
 There exists $C(n)$ such that for all $1\ge r\ge s_i$,
 $$
 |T_{x_i,r}\circ T^{-1}_{x_i,2r}-I|\le C\sqrt{\epsilon_0}\, .
 $$   
\end{lemma}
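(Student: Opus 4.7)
The plan is to translate the claim into a linear-algebraic statement via the normalization condition (2) of Proposition \ref{p:transfor_prop}, and then close the loop using Bishop--Gromov volume doubling plus a Cholesky-type uniqueness argument.

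First I would fix $r\in[s_i,1]$, abbreviate $T:=T_{x_i,r}$, $T':=T_{x_i,2r}$, and set $S:=T\circ(T')^{-1}$. By the construction in Lemma \ref{l:claim11} and Proposition \ref{p:transfor_prop}, both $T$ and $T'$ are lower triangular with positive diagonal entries, hence so is $S$. Put $v:=T'u$ and $w:=Tu=Sv$. The normalization $\fint_{B_r(x_i)}\langle\nabla w^a,\nabla w^b\rangle=\delta^{ab}$ then reads
\[
S\,H\,S^T=I,\qquad H^{ab}:=\fint_{B_r(x_i)}\langle\nabla v^a,\nabla v^b\rangle,
\]
which is equivalent to $S^TS=H^{-1}$.

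Next I would control $H$. Since $v=T'u$ is a $(k,\epsilon_0)$-splitting map on $B_{2r}(x_i)$, Definition \ref{d:splitting_function} gives
\[
\fint_{B_{2r}(x_i)}\bigl|\langle\nabla v^a,\nabla v^b\rangle-\delta^{ab}\bigr|<\epsilon_0.
\]
Bishop--Gromov volume comparison applied with $\Ric_{M_i^n}\ge-(n-1)\delta_i^2$ and $r,\delta_i\le 1$ yields $\Vol(B_{2r}(x_i))/\Vol(B_r(x_i))\le C(n)$, so passing to the smaller ball costs only a dimensional factor:
\[
\fint_{B_r(x_i)}\bigl|\langle\nabla v^a,\nabla v^b\rangle-\delta^{ab}\bigr|\le C(n)\,\epsilon_0.
\]
By the triangle inequality, $|H-I|\le C(n)\epsilon_0$, and for $\epsilon_0\le\epsilon_0(n)$ inversion gives $|S^TS-I|=|H^{-1}-I|\le C(n)\epsilon_0$.

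The final step is purely algebraic. For a lower triangular matrix $S$ with positive diagonal entries, the map $S\mapsto S^TS$ is, by direct computation or by the implicit function theorem on symmetric positive definite matrices (Cholesky-type uniqueness), a local diffeomorphism near $I$. Thus $|S^TS-I|\le C(n)\epsilon_0$ forces $|S-I|\le C(n)\epsilon_0\le C\sqrt{\epsilon_0}$ for $\epsilon_0$ small, which is the required estimate. The only structural point one must be careful about is the lower-triangular-with-positive-diagonal property of $T$, $T'$, hence of $S$; without it, $S^TS$ close to $I$ would only yield the weaker $|S-I|\lesssim\sqrt{\epsilon_0}$ via an $SS^T$-type square-root estimate, so Cholesky uniqueness is the key input that makes the argument tight. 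Every other ingredient (Bishop--Gromov, the splitting definition, the normalization) is standard and appears in the preceding subsections.
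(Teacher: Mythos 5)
Your proof is correct and follows essentially the same route as the paper: volume doubling to transfer the splitting estimate for $T_{x_i,2r}u$ from $B_{2r}(x_i)$ to $B_r(x_i)$, the normalization on $B_r(x_i)$ to reduce the claim to linear algebra, and uniqueness/stability of the Cholesky-type factorization for lower triangular matrices with positive diagonal. The paper merely packages the last step differently, first producing a lower triangular $A_{2r}$ close to $I$ by Gram--Schmidt and then identifying $A_{2r}=T_{x_i,r}\circ T_{x_i,2r}^{-1}$ via exact Cholesky uniqueness, whereas you invoke the quantitative local invertibility of $S\mapsto S^{T}S$ directly; both are equivalent.
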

\begin{proof}
By volume doubling and noting that $T_{x_i,2r}u:B_{2r}(x_i)\to \mathbb{R}^k$ is $(k,\epsilon_0)-$splitting we have 
\begin{align}
	\fint_{B_{r}(x_i)}\Big|\langle \nabla (T_{x_i,2r}u)^a,\nabla (T_{x_i,2r}u)^b\rangle-\delta^{ab}\Big|
\le C(n)\fint_{B_{2r}(x_i)}\Big|\langle \nabla (T_{x_i,2r}u)^a,\nabla (T_{x_i,2r}u)^b\rangle-\delta^{ab}\Big|\le C(n)\sqrt{\epsilon_0}.
\end{align}
Thus, there exists lower trianglular matrix $A_{2r}$ with $|A_{2r}-I|\le C(n)\sqrt{\epsilon_0}$ such that $\tilde{T}_{x_i,2r}:=  A_{2r}T_{x_i,2r}$ satisfies 
$$ 
\fint_{B_r(x_i)}\langle \nabla (\tilde{T}_{x_i,2r}u)^a,\nabla (\tilde{T}_{x_i,2r}u)^b\rangle
=\delta^{ab}\, .
$$

\noindent
By the normalization, we have 
$ \fint_{B_r(x_i)}\langle \nabla (T_{x_i,r}u)^a,\nabla (T_{x_i,r}u)^b\rangle=\delta^{ab}$. 
\vskip2mm

Define a symmetric bilinear form $B(f,h)$, on $C^\infty(B_{2r}(x_i))$ by
$$
B(f,h):=  \fint_{B_r(x_i)}\langle \nabla f,\nabla h\rangle\, .
$$
 Denote the associated positive definite symmetric $k\times k$ matrix by  $B:=   (B_{ab}):=  (B(u^a,u^b))$. Thus,
  we have 
  $$
  {T}_{x_i,r}B{T}_{x_i,r}^*=I=\tilde{T}_{x_i,2r}B\tilde{T}_{x_i,2r}^*\, .
  $$ 
  In particular,
  $$
  {T}_{x_i,r}^{-1}({T}_{x_i,r}^{-1})^*=B=\tilde{T}_{x_i,2r}^{-1}(\tilde{T}_{x_i,2r}^{-1})^*\, .
  $$
  Since ${T}_{x_i,r}$ and $\tilde{T}_{x_i,2r}$ are lower triangle matrices with positive diagonal entries, the uniqueness of Cholesky decomposition (see \cite{GV}) implies that $\tilde{T}_{x_i,2r}^{-1}={T}_{x_i,r}^{-1}$. Therefore, we have $A_{2r}T_{x_i,2r}=T_{x_i,r}$. In particular, 
  $$
  |T_{x_i,r}T_{x_i,2r}^{-1}-I|=|A_{2r}-I|\le C(n)\sqrt{\epsilon_0}\, .
  $$ 
This completes the proof of Lemma \ref{l:Cholesky}. 
\end{proof}
\vskip2mm

Now we can complete the proof of Proposition \ref{p:transfor_prop}.
For $k\times k$ matrices $A_1,A_2$ and the $L^\infty$-norm for matrices, we have by a simple triangle inequality that
\begin{align}\label{e:matrix_inequality}
|A_1A_2-I|\le |A_1-I|+|A_2-I|+k|A_1-I|\cdot |A_2-I|.
\end{align}
 By Lemma \ref{l:Cholesky} and \eqref{e:matrix_inequality} and an induction argument
we have 
\begin{align}
	|T_{x_i,r}^{-1}\circ T_{x_i,r/2^\ell}-I|\le \Big(1+(k+1)C\sqrt{\epsilon_0}\Big)^\ell-1\,. 
\end{align}

Therefore
	\begin{align}
	|T_{x_i,r}^{-1}\circ T_{x_i,r/2^\ell}|\le  \Big(1+(k+1)C\sqrt{\epsilon_0}\Big)^\ell.
		\end{align}
For simplicity we still denote $(k+1)C$ by $C$. 	Hence for all $r\ge s_i$, we have
	\begin{align}\label{e:matrix_holdergrowth}
	|T_{x_i,r}^{-1}\circ T_{x_i,s_i}|\le \left(\frac{r}{s_i}\right)^{\log(1+C\sqrt{\epsilon_0})/\log 2}\le \left(\frac{r}{s_i}\right)^{C\sqrt{\epsilon_0}}.
		\end{align}
	Define $v_i=s_i^{-1}T_{x_i,s_i}(u_i-u_i(x_i))$ on the rescaled space $(M_i,s_i^{-2}g_i,x_i)$.
	Since $\delta_i\to 0$ and $B_{\delta^{-1}_i s_i}(x_i)$ is $(k,\delta^2_i)$-symmetric, we know  that $(M_i,s_i^{-2}g_i,x_i)$ converges to a cone $C(Y)=\mathbb{R}^k\times C(Z)$. 
	By the H\"older growth estimate on $T_{x_i,r}^{-1}$ as in \eqref{e:matrix_holdergrowth} and noting that
	 $T_{x_i,r}(u_i-u_i(x_i))$ is a $(k,\epsilon_0)$ splitting map at scale $r$, we have for all $x$ with $s_i^{-1}\ge d(x,x_i)= R>1$,
$$	
|\nabla v_i(x)|\le C\cdot R^{C\sqrt{\epsilon_0}} \implies |\nabla v_i(x)|\le C\cdot d(x,x_i)^{C\sqrt{\epsilon_0}}+C\, .
$$
	Also, by Proposition \ref{p:convergenc_function},  the sequence
	$v_i$ converges in the local $W^{1,2}$ sense to a harmonic function $v$ in $C(Y)$ with H\"older growth on the gradient, i.e., $|\nabla v|(x)\le CR^{C\sqrt{\epsilon_0}}$ for $|x|\le R$. Therefore, if the $\epsilon_0$ is small as in Lemma \ref{l:harmonic_holder}, then we have $v: C(Y)\to\mathbb{R}^k$ is actually linear. Moreover, by using $W^{1,2}$ convergence in Proposition \ref{p:convergenc_function}, and noting that the energy is quadratic we have
		\begin{align}
		\fint_{B_1(x_\infty)}\langle \nabla v^a,\nabla v^b\rangle =  \delta^{ab}\, .
	\end{align}
Hence $v=(v^1,\cdots,v^k)$ forms a basis of linear functions on $\mathbb{R}^k$.  Without loss of generality we can assume $v=(x^1,\ldots, x^k)$ are the standard coordinates.  By 
$W^{1,2}$-convergence of $v_i$ as Proposition \ref{p:convergenc_function} and Proposition \ref{p:fgconvergence} we have:
\begin{align}
\lim_{i\to \infty} 4\fint_{B_1(x_i)}|\langle \nabla v_i^a,\nabla v_i^b\rangle-\delta^{ab}|
&=\lim_{i \to \infty}\fint_{B_1(x_i)}\Big||\nabla v_i^a+\nabla v_i^b|^2-|\nabla v_i^a-\nabla v_i^b|^2-4\delta^{ab}\Big|\notag\\
&=\lim_{i\to\infty} \fint_{B_1(x_\infty)}\Big||\nabla x^a+\nabla x^b|^2-
|\nabla x^a-\nabla x^b|^2-4\delta^{ab}\Big|\notag\\
&=0\,.
	\end{align}
Here we have used 
	$|\nabla x^a+\nabla x^b|^2=|{\rm Lip}(x^a+x^b)|^2=2=|{\rm Lip}(x^a-x^b)|^2=|\nabla x^a-\nabla x^b|^2$. Hence, $v_i$ satisfies 
	\begin{align}
		\lim_{i\to\infty}\fint_{B_1(x_i)}|\langle \nabla v_i^a,\nabla v_i^b\rangle -\delta^{ab}|=0\, .
	\end{align}
\vskip1mm

\noindent
Thus, by  Bochner's formula \eqref{e:bochner}, the function
$v_i$ is a $(k,\epsilon_i)$-splitting function on $B_1(x_i)$ with $\epsilon_i\to 0$. Hence for each $1/10\le r\le 1$ and sufficiently large $i$ we have a rotation $A_{r,i}$ such that $|A_{r,i}-I|\le \epsilon_i$  and 
\begin{align}
\fint_{B_r(x_i)}\langle \nabla (A_{r,i}v_i)^a,\nabla (A_{r,i}v_i)^b\rangle =\delta^{ab}.
\end{align}
 In particular, this implies for large $i$ that $A_{r,i}v_i: B_r(x_i)\to \mathbb{R}^k$ is a $(k,\epsilon_0/100)$-splitting for $1/10\leq r\leq 1$ and satisfies the orthonormal condition (2), which contradicts with the existences of a minimal $s_i>r_i$.  This finishes the proof of the existence of transformation matrices.  The matrix estimate (3) comes from the transformation estimates in 
 Lemma \ref{l:Cholesky} by choosing $\delta$ small.  
 This completes the proof of Proposition \ref{p:transfor_prop}. 
 \end{proof}

\vspace{.2cm}
\subsection{A canonical Reifenberg theorem}
\label{ss:Reifenberg}
Prior to giving the proof of (2) of Theorem \ref{t:transformation},
we will make a brief  digression and  a non-metric proof of the Reifenberg Theorem, which first proved by Cheeger-Colding in \cite{ChCoI}.  Although this result is not used elsewhere in the paper it
 seems to be of independent interest. In fact, it is  
a (much) easier instance of the sort of argument we will give 
 when we we eventually study the higher singular strata, see 
 Theorem \ref{t:approximate_neck}, and thus is a good motivator.
 
\begin{theorem}[Canonical Reifenberg Theorem]
\label{t:reifenbergcanonical}
	Let $(M^n,g,p)$ satisfy $\Ric_{M^n}\ge -\delta(n-1)$ and $d_{GH}(B_4(p),B_4(0^n))\le \delta$ with $0^n\in \mathbb{R}^n$. For any $\epsilon>0$ if $\delta\le \delta(n,\epsilon)$, then there exists a harmonic map $u: B_1(p)\to \mathbb{R}^n$ such that 
\begin{enumerate}
\item For any $x,y\in B_1(p)$ we have $(1-\epsilon) d(x,y)^{1+\epsilon}\le |u(x)-u(y)|\le (1+\epsilon) d(x,y)$,
\item For any $x\in B_1(p)$ we have that $du:T_xM\to \dR^n$ is nondegenerate.
\end{enumerate}
In particular, $u$ is a diffeomorphism that is uniformly bi-H\"older onto its image $u(B_1(p))$.
\end{theorem}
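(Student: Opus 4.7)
The plan is to invoke the Geometric Transformation Theorem \ref{t:transformation} in the top-stratum case $k=n$. The hypothesis $d_{GH}(B_4(p),B_4(0^n))\le\delta$ makes $B_4(p)$ into an $(n,\delta)$-symmetric ball, so by Theorem \ref{t:splitting_function} there is a harmonic $\delta_1$-splitting map $u:B_2(p)\to\mathbb{R}^n$ with $\delta_1=\delta_1(n,\delta)\to 0$. By Theorem \ref{t:subball_n_symmetric}, every subball $B_r(x)\subset B_3(p)$ is $(n,\delta_2)$-symmetric for some $\delta_2=\delta_2(n,\delta)\to 0$. The key structural observation is that when $k=n$ the hypothesis ``not $(k+1,\eta)$-symmetric'' required by Proposition \ref{p:transfor_prop} is vacuous, since an $n$-dimensional ball cannot admit $n+1$ independent translational symmetries. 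Proposition \ref{p:transfor_prop} therefore applies at every $x\in B_1(p)$ and every scale $r\in(0,1]$, producing lower triangular matrices $T_{x,r}$ with positive diagonal such that $T_{x,r}u:B_r(x)\to\mathbb{R}^n$ is an $(n,\epsilon')$-splitting and $|T_{x,r}\circ T_{x,2r}^{-1}-I|\le \epsilon'$, where $\epsilon'=\epsilon'(n,\delta)$ can be made arbitrarily small by taking $\delta$ small. Iterating the scale-to-scale estimate exactly as in Lemma \ref{l:Cholesky} (see \eqref{e:matrix_holdergrowth}) yields $|T_{x,r}|,\,|T_{x,r}^{-1}|\le r^{-C\epsilon'}$ for every $r\in(0,1]$.

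\textbf{Bi-H\"older estimate.} The upper bound $|u(x)-u(y)|\le(1+\epsilon)d(x,y)$ is immediate from the pointwise gradient bound $|\nabla u|\le 1+\delta_1$ in Definition \ref{d:splitting_function}. For the lower bound, fix $x\neq y\in B_1(p)$ and set $r:=d(x,y)$. Since $T_{x,r}u$ is an $(n,\epsilon')$-splitting on $B_r(x)$, the telescoping argument indicated after \eqref{e:outline:distance_distortion} yields $|T_{x,r}u(x)-T_{x,r}u(y)|\ge(1-\epsilon')d(x,y)$. Combining this with $|T_{x,r}^{-1}|\le r^{-C\epsilon'}$ gives
\begin{equation*}
|u(x)-u(y)|\ge |T_{x,r}^{-1}|^{-1}(1-\epsilon')\,d(x,y)\ge (1-\epsilon')\,d(x,y)^{1+C\epsilon'},
\end{equation*}
so choosing $\delta$ small enough that $C\epsilon'\le\epsilon$ and $\epsilon'\le\epsilon$ delivers (1).

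\textbf{Non-degeneracy of $du$ and the main obstacle.} For (2), fix $x_0\in B_1(p)$, take $r_j=2^{-j}\to 0$, and consider the rescaled harmonic maps $v_j(y):=r_j^{-1}\bigl(T_{x_0,r_j}u(y)-T_{x_0,r_j}u(x_0)\bigr)$ on the rescaled balls $(B_1(x_0),r_j^{-2}g)$. By construction each $v_j$ is a harmonic $(n,\epsilon')$-splitting, and the rescaled pointed spaces converge in GH sense to $(\mathbb{R}^n,0^n)$. By Proposition \ref{p:convergenc_function}, along a subsequence $v_j\to v_\infty$ in $W^{1,2}$, where $v_\infty:B_1(0^n)\to\mathbb{R}^n$ is a harmonic Euclidean $\epsilon'$-splitting, necessarily affine with $dv_\infty(0^n)$ within $O(\epsilon')$ of an orthogonal transformation, hence nondegenerate. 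Since $M^n$ is smooth at $x_0$ and the rescaled metrics become locally close to Euclidean, the uniform sup bound $|\nabla v_j|\le 1+\epsilon'$ together with Cheng--Yau gradient estimates for harmonic functions under lower Ricci bounds upgrades the $W^{1,2}$ convergence to pointwise convergence $dv_j(x_0)=T_{x_0,r_j}du(x_0)\to dv_\infty(0^n)$; invertibility of each $T_{x_0,r_j}$ then forces $du(x_0)$ to be nondegenerate, giving (2). The delicate step—and the principal obstacle one must address carefully—is precisely this upgrade from $W^{1,2}$ to pointwise gradient convergence at the single base point $x_0$, since the Hessian of $v_j$ is only $L^2$-controlled at scale $r_j$; this is the same analytic mechanism underlying Section \ref{s:nondegeneration}, although here the top-stratum hypothesis precludes any genuine degeneration and so makes the argument dramatically simpler than its general $k$-stratum counterpart.
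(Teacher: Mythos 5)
Your setup and your proof of part (1) follow the paper's own argument essentially verbatim: a harmonic $\delta'$-splitting map via Theorems \ref{t:subball_n_symmetric} and \ref{t:splitting_function}, the Transformation Proposition \ref{p:transfor_prop} at every point and scale (with the correct observation that "not $(n+1,\eta)$-symmetric" is vacuous when $k=n$), the growth bound $|T_{x,r}|\le r^{-C\epsilon'}$ iterated from Lemma \ref{l:Cholesky}, and the GH-closeness of $T_{x,r}u$ at the scale $r=d(x,y)$. (A small slip: the lower bound should be derived as $|u(x)-u(y)|\ge |T_{x,r}|^{-1}\,|T_{x,r}(u(x)-u(y))|$ rather than via $|T_{x,r}^{-1}|^{-1}$, which is not a valid lower bound for $|w|$ in terms of $|T_{x,r}w|$; the conclusion is unaffected since both matrix norms are bounded by $r^{-C\epsilon'}$.)

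There is, however, a genuine gap in your argument for (2). You reduce non-degeneracy of $du(x_0)$ to the claim that $dv_j(x_0)=T_{x_0,r_j}du(x_0)$ converges to the (nondegenerate) differential of the affine limit $v_\infty$, and you propose to obtain this pointwise gradient convergence from the uniform sup bound $|\nabla v_j|\le 1+\epsilon'$ together with Cheng--Yau gradient estimates. This does not close the argument: Cheng--Yau provides only an \emph{upper} bound on $|\nabla v_j|$, and $W^{1,2}$ convergence plus a uniform Lipschitz bound gives no control whatsoever on the value of $\nabla v_j$ \emph{at a single point} — precisely the failure mode exhibited in Example \ref{example:cone_neck}, where a harmonic splitting on a Ricci-limit space has $|\nabla u|\to 0$ along a ray. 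What rescues the statement is the smoothness of $M^n$, and it must enter quantitatively through elliptic regularity, not through lower-Ricci gradient estimates. The paper's route (and the natural fix for yours) is to work at the single fixed scale $2r=r_h(x_0)>0$: in harmonic coordinates the metric is $C^{1,\alpha}$-controlled, so interior Schauder estimates for the harmonic functions $T_{x_0,r}u^a$ convert the integral bound $\fint_{B_r(x_0)}\big|\langle\nabla T_{x_0,r}u^a,\nabla T_{x_0,r}u^b\rangle-\delta^{ab}\big|<\epsilon'$ into the \emph{pointwise} bound $\big|\langle\nabla T_{x_0,r}u^a,\nabla T_{x_0,r}u^b\rangle(x_0)-\delta^{ab}\big|<\epsilon$, whence $\det\big(d(T_{x_0,r}u)\big)(x_0)\neq 0$ and, since $T_{x_0,r}$ is invertible, $\det(du)(x_0)\neq 0$. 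Once this is done at the harmonic-radius scale, the entire blow-up along $r_j\to 0$ is unnecessary.
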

\begin{remark}
Consider limit space $(M^n_i,g_i,p_i)\to (X,d,p)$ with $d_{GH}(B_4(p),B_4(0^n))\le \delta$, and a converging  sequence of harmonic maps $u_i:B_1(p_i)\to\mathbb{R}^n$. Then by
Theorem \ref{t:reifenbergcanonical},
 we get that  $B_1(p)$ is bi-H\"older to $\mathbb{R}^n$.
	\end{remark}

\begin{proof}  (Of Theorem \ref{t:reifenbergcanonical})
Let  $\delta'>0$. By Theorem \ref{t:subball_n_symmetric}, if $\delta\le \delta(n,\delta')$, then  every sub-ball $B_r(x)\subset B_4(p)$ is $(n,\delta')$-symmetric. Moreover, there exists  $\delta'$-splitting map $u:B_{3}(p)\to \mathbb{R}^n$. 
\vskip1mm

By the Transformation Proposition \ref{p:transfor_prop},
for any $\epsilon'>0$, $x\in B_3(p)$ and $r\le 1$ if $\delta' \le \delta'(\epsilon',n)$ then there exists a $n\times n$ lower triangle matrix $T_{x,r}$, such that $T_{x,r}u: B_r(x)\to \mathbb{R}^n$ is an $\epsilon'$-splitting map. Moreover, by the transformation estimate (3),
 $|T_{x,r}|\le r^{-\epsilon'}$. We will see that these estimates imply Theorem \ref{t:reifenbergcanonical}.
 \vskip1mm
 
  First, we will prove a H\"older estimate on $u$. Let $x,y\in B_{3/2}$ with $d(x,y)=r$.  Since $T_{x,r}u: B_r(x)\to \mathbb{R}^n$ is an $\epsilon'$-splitting map, and in particular $T_{x,r}u$ is an 
  $\epsilon r$-GH map if $\epsilon'\le \epsilon'(\epsilon,n)$, we have that
\begin{align}
	|T_{x,r}u(x)-T_{x,r}u(y)|\ge (1-\epsilon) d(x,y).
\end{align}
By the matrix growth estimate $|T_{x,r}|\le r^{-\epsilon'}$
 we then have $$
 |u(x)-u(y)|\ge (1-\epsilon)d(x,y)^{1+\epsilon}\qquad {\rm for}\,\, d(x,y)=r\, .
 $$
  Since $r$ is arbitrary, by using the gradient bound $|\nabla u|\le 1+\delta'$ for splitting 
  maps $u$
   we conclude for that any $x,y\in B_{3/2}(p)$ 
\begin{align}
	(1-\epsilon)d(x,y)^{1+\epsilon}\le |u(x)-u(y)|\le (1+\epsilon)d(x,y)\, .
\end{align}
 Therefore $u$ is an injective map which, in particular, implies $u$ is  bi-H\"older to its image.  
 
 Next we show that $du:T_xM\to \dR^n$ is nondegenerate, from which it follows 
 that $u$ is a diffeomorphism.  Essentially, this is because $du(x) = T_{x,0}^{-1}$ .  In more detail, let $2r=r_h(x)$ be the harmonic radius at $x$, see Definition \ref{d:harmonic_radius}.  Then by smooth elliptic estimates the splitting map $T_{x,r}u$ satisfies the pointwise bound $|\langle \nabla T_{x,r}u^a,\nabla T_{x,r}u^b\rangle-\delta^{ab}|<\epsilon$.  In particular, this gives that $\det(du)(x)\neq 0$, as claimed.
\end{proof}
\vspace{.1cm}


\subsection{Hessian decay estimates on  limit cones}
\label{ss:hessian_decay_limit_cone}
The main result
of this subsection is Proposition \ref{p:decay_cone}, the key Hessian decay estimate for harmonic functions on limit cones. In  the next subsection, it will
be promoted to the Hessian decay estimate on manifolds, and after that, to statement
(2) of Theorem \ref{t:transformation}. Since a priori, we can't define the Hessian directly, we employ
 Bochner's formula \eqref{e:bochner}.
This will allow us to work with with a weak version.
\vskip3mm

\noindent
{\bf Notation.} Let $\varphi:\mathbb{R}\to\mathbb{R}$ denote a smooth cutoff function
such that $\varphi:=  1$ if $r\le 1$ and $\varphi:=  0$ if $r\ge 2$. 
In Proposition \ref{p:decay_cone}, we will 
consider a limit cone $(C(Y),d,x_\infty)$. We put $r=d(x,x_\infty)$ and
$\psi_s(x)=\varphi(r^2/s^2)$.
\vskip3mm

\begin{proposition}[Main decay estimate for cones]
\label{p:decay_cone}
There exists $\beta=\beta(n,\eta,\rv)>0$ with the following property.
Let $(M_i^n,g_i,x_i)\to (C(Y),d,x_\infty)=(\mathbb{R}^k\times C(Y),d,x_\infty)$ satisfy  
$\Ric_{M^n_i}\ge-\delta_i\to 0$ and $\Vol(B_1(x_i))\ge \rv>0$. Let 
$u: B_{10}(x_\infty)\subset C(Y)\to 
\mathbb{R}$ be a harmonic function and assume $B_{10}(x_\infty)$ is not 
$(k+1,\eta)$-symmetric.  Then
for all $0<s\le t\le 2$
\begin{align}
\label{e:decay_psi_cone}
s^{2-n}\int_{\mathbb{R}^k\times C(Y)}
|\nabla u|^2\Delta \psi_s \le \left(\frac{t}{s}\right)^{-\beta} 
t^{2-n}\int_{\mathbb{R}^k\times C(Y)} |\nabla u|^2\Delta \psi_t\, .
\end{align}
\end{proposition}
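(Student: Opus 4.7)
My plan is to prove the decay estimate by explicitly decomposing $u$ into eigenfunction components via Proposition \ref{p:harmonic_cone} and showing that each component contributes a term scaling as a definite power of $s$, with the spectral gap from Proposition \ref{p:eigenvalue_cone} forcing every nontrivial term to decay at rate at least $\beta = 2(\alpha_{k+1}-1) > 0$.

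Concretely, first I would write $u = \sum_{i\ge 0} b_i r^{\alpha_i} \phi_i$ as in Proposition \ref{p:harmonic_cone}, where $\{\phi_i\}$ is an $L^2(Y)$-orthonormal basis of eigenfunctions of $-\Delta_Y$ with eigenvalues $\lambda_i = \alpha_i(n-2+\alpha_i)$. Since $\psi_s$ depends only on $r$, the radial Laplacian identity on the cone gives $\Delta\psi_s(r) = \psi_s''(r) + \tfrac{n-1}{r}\psi_s'(r)$, so $\Delta\psi_s$ is a radial, compactly supported function. For any two terms $u_i = r^{\alpha_i}\phi_i$, $u_j = r^{\alpha_j}\phi_j$, the metric $g = dr^2 + r^2 g_Y$ yields
\begin{equation*}
\langle \nabla u_i, \nabla u_j \rangle = r^{\alpha_i+\alpha_j-2}\bigl(\alpha_i\alpha_j\,\phi_i\phi_j + \langle \nabla^Y\phi_i, \nabla^Y\phi_j\rangle_Y\bigr),
\end{equation*}
so by Fubini on $C(Y)$ (and using $\int_Y \phi_i\phi_j = \delta_{ij}$, $\int_Y \langle\nabla^Y\phi_i,\nabla^Y\phi_j\rangle_Y = \lambda_i\delta_{ij}$), the cross terms integrated against the radial $\Delta\psi_s$ vanish. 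Hence
\begin{equation*}
\int_{C(Y)} |\nabla u|^2\,\Delta\psi_s = \sum_i b_i^2\,(\alpha_i^2+\lambda_i) \int_0^\infty r^{n+2\alpha_i-3}\,\Delta\psi_s(r)\,dr.
\end{equation*}

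Next, a double integration by parts in the radial integral together with the substitution $u=r^2/s^2$ gives the identity
\begin{equation*}
\int_0^\infty r^{n+2\alpha_i-3}\,\Delta\psi_s(r)\,dr = (2\alpha_i-2)(n+2\alpha_i-4)\cdot \tfrac{1}{2}\,s^{n+2\alpha_i-4}\,I(\alpha_i),
\end{equation*}
where $I(\alpha_i) := \int_0^\infty u^{(n+2\alpha_i-6)/2}\varphi(u)\,du > 0$ since $\varphi\ge 0$. Setting $c_i := (\alpha_i^2+\lambda_i)(2\alpha_i-2)(n+2\alpha_i-4)I(\alpha_i)/2$, we obtain
\begin{equation*}
s^{2-n}\int |\nabla u|^2\,\Delta\psi_s = \sum_i b_i^2\,c_i\,s^{2\alpha_i-2}.
\end{equation*}
The factor $(2\alpha_i-2)$ kills the $\alpha_i=1$ modes (linear $\mathbb{R}^k$-coordinates), the factor $(\alpha_i^2+\lambda_i)$ kills the constant mode $\alpha_0=0$, and $(n+2\alpha_i-4)\ge 0$ for all surviving $\alpha_i\ge 1$; therefore every $c_i \ge 0$, and only the modes with $\alpha_i \ge \alpha_{k+1}$ contribute. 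By Proposition \ref{p:eigenvalue_cone} there exists $\gamma_0(n,\rv,\eta)>0$ with $\alpha_{k+1} \ge 1+\gamma_0$, so setting $\beta := 2\gamma_0$ and using $(s/t)^{2\alpha_i-2} \le (s/t)^{\beta}$ for $s\le t$ on each term yields the desired inequality \eqref{e:decay_psi_cone}.

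The main obstacle will be the technical justification of the separation-of-variables computation on the limit cone, since the cross-section $Y$ is only an $\mathrm{RCD}(n-2,n-1)$ space. I would handle this by first showing that the partial sums $u_N := \sum_{i\le N} b_i r^{\alpha_i}\phi_i$ converge to $u$ in $W^{1,2}(B_4(x_\infty))$ (Proposition \ref{p:harmonic_cone}) and approximating $\psi_s$ by a slightly mollified radial cutoff so that $\Delta\psi_s$ is realized by the Laplacian on $C(Y)$ in the distributional sense guaranteed by Theorem \ref{t:laplacian_metric_cone}; the cross-term cancellation then follows from the $L^2(Y)$-orthogonality of the $\{\phi_i\}$ and of the $\{\nabla^Y\phi_i\}$, which is valid in the $\mathrm{RCD}$ setting. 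Passage to the limit $N\to\infty$ uses the $W^{1,2}$-convergence to pass the quadratic form $\int|\nabla \,\cdot\,|^2\Delta\psi_s$ to $u$, together with monotone convergence of the nonnegative series $\sum b_i^2 c_i s^{2\alpha_i-2}$ (finiteness of which follows a posteriori from the finiteness of the $N$-truncated expressions and $s\le t\le 2$).
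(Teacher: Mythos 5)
Your proposal is correct and follows essentially the same route as the paper: eigenfunction expansion via Proposition \ref{p:harmonic_cone}, cross-term cancellation by $L^2(Y)$-orthogonality, radial integration by parts producing the factor $(\alpha_i^2+\lambda_i)(2\alpha_i-2)(n+2\alpha_i-4)$ and the scaling $s^{2\alpha_i-2}$ (this is exactly Lemma \ref{l:computation_cone} and the ensuing computation), followed by nonnegativity of each term and the spectral gap $\alpha_{k+1}\ge 1+\gamma_0$ from Proposition \ref{p:eigenvalue_cone}. The only cosmetic difference is the order of the substitution $u=r^2/s^2$ versus direct rescaling of $\int_0^\infty\varphi(r^2)r^{n+2\alpha_i-5}\,dr$, and your closing remarks on justifying the computation via $W^{1,2}$-convergence of partial sums match the paper's treatment.
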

\vskip2mm


The proof of Proposition \ref{p:decay_cone} is given at the end of this subsection. Ultimately, it
is a consequence of the eigenvalue estimates in Subsection \ref{ss:eigenvalue_limit_cone}. 
We will begin with some preliminary computations. 
\vskip2mm

According to  Proposition \ref{p:harmonic_cone}, any harmonic function $u$ can be 
written as  $u=\sum b_ir^{\alpha_i}\phi_i$,
 where the convergence is in $W^{1,2}$-sense.
By Theorem \ref{t:laplacian_metric_cone}, we have 
\vskip-2mm

$$
|\nabla\phi_i|^2(r,y)=|\Lip \phi_i|^2(r,y)=r^{-2}|\Lip \phi_i|^2(y)=r^{-2}|\nabla\phi_i|^2_Y\, .
$$ 
\vskip-2mm
\begin{align}
|\nabla u|^2=\sum_{i,j} b_ib_j \alpha_i\alpha_j r^{\alpha_i+\alpha_j-2}\phi_i\phi_j+\sum_{i,j}b_ib_jr^{\alpha_i+\alpha_j-2}\langle \nabla\phi_i,\nabla\phi_j\rangle_Y,
\end{align}	

Let $\varphi: \mathbb{R}\to [0,1]$ be a standard cutoff function such that the function $\psi :B_{10}(x_\infty)\to [0,1]$ defined by $\psi(x)=\varphi(d^2(x,x_\infty))$ satisfying ${\rm supp } \,\psi\subset B_{10}(x_\infty)$. Then 
\begin{align}
	\Delta\psi= \varphi'(r^2)\Delta r^2+\varphi''(r^2)|\nabla r^2|^2=2n\varphi'(r^2)+4r^2\varphi''(r^2).
\end{align}	
In particular, $|\Delta\psi|\le C(n)$.

\begin{lemma}\label{l:computation_cone}
Let $(M_i^n,g_i,x_i)\to (C(Y),d,x_\infty)=(\mathbb{R}^k\times C(Z),d,x_\infty)$ satisfy 
$\Ric_{M^n_i}\ge-\delta_i\to 0$ and $\Vol(B_1(x_i))\ge \rv>0$.  Assume $u=\sum b_ir^{\alpha_i}\phi_i$ is a harmonic function on $B_{10}(x_\infty)\subset C(Y)$ where the convergence is in the $W^{1,2}$-sense.  Then
	\begin{align}
\label{e:computation_cone}
	\int_{C(Y)} |\nabla u|^2\Delta\psi =\sum_{\alpha_i>1} \Big(b_i^2\alpha_i^2+b_i^2\lambda_i\Big)(2\alpha_i-2)(n+2\alpha_i-4)\int_0^{\infty}\varphi(r^2)r^{n+2\alpha_i-5}dr.
	\end{align}
\end{lemma}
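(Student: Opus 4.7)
The plan is to reduce everything to one-dimensional radial integrals by exploiting the eigenfunction expansion, orthogonality on the cross-section, and integration by parts in the radial variable.

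First I would work out $|\nabla u|^2$ in geodesic polar coordinates $(r,y)$ on $C(Y)$, using Theorem \ref{t:laplacian_metric_cone} and the warped product structure $dr^2 + r^2 g_Y$. Since $u$ depends on $r$ through $r^{\alpha_i}$ and on $y$ through $\phi_i$, the radial and spherical pieces split: for the pure term $u_i = b_i r^{\alpha_i}\phi_i$ one has
\begin{align*}
|\nabla u_i|^2 = b_i^2 \alpha_i^2 r^{2\alpha_i - 2}\phi_i^2 + b_i^2 r^{2\alpha_i - 2}|\nabla_Y \phi_i|^2,
\end{align*}
and bilinearity gives the full cross-term expansion for $u=\sum b_i r^{\alpha_i}\phi_i$. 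The $W^{1,2}$-convergence of the series (Proposition \ref{p:harmonic_cone}), together with the bounded, compactly supported factor $\Delta\psi$, justifies integrating term by term.

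Next I would write the cone integral using Fubini with volume element $r^{n-1}\,dr\, dvol_Y$ and exploit the orthonormality $\int_Y \phi_i \phi_j = \delta_{ij}$ together with its companion $\int_Y \langle \nabla_Y\phi_i,\nabla_Y\phi_j\rangle = \lambda_i\,\delta_{ij}$ (obtained by integration by parts on $Y$ and the eigenvalue equation $-\Delta_Y\phi_i = \lambda_i \phi_i$). All off-diagonal terms vanish, leaving
\begin{align*}
\int_{C(Y)}|\nabla u|^2 \Delta\psi = \sum_i b_i^2(\alpha_i^2 + \lambda_i)\int_0^\infty r^{n+2\alpha_i - 3}\Delta\psi(r)\, dr.
\end{align*}

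The final and most delicate step is to evaluate the radial integral. Using $\Delta\psi = 2n\varphi'(r^2) + 4r^2\varphi''(r^2)$ and the substitution $s = r^2$, I would apply integration by parts twice on $[0,\infty)$; the boundary terms at $0$ and $\infty$ vanish because $\varphi$ is compactly supported and, at the origin, because $n+2\alpha_i - 3 > 0$ (recall $\alpha_i \geq 0$, with care for $\alpha_i = 0$ which will drop out anyway since the prefactor $\alpha_0^2 + \lambda_0$ vanishes). A direct computation yields
\begin{align*}
\int_0^\infty r^{n+2\alpha_i - 3}\Delta\psi(r)\, dr = (2\alpha_i - 2)(n+2\alpha_i - 4)\int_0^\infty \varphi(r^2)\, r^{n+2\alpha_i - 5}\, dr.
\end{align*}

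The prefactor $(2\alpha_i - 2)$ kills every term with $\alpha_i = 1$, which is precisely the collection $i = 1,\dots,k$ corresponding to the linear splitting directions (by Proposition \ref{p:eigenvalue_cone}); the $i=0$ term vanishes from $\alpha_0^2 + \lambda_0 = 0$. Thus only $\alpha_i > 1$ contributes, yielding the claimed identity. The main (mild) obstacle is bookkeeping the two integration-by-parts steps cleanly and verifying the vanishing of the boundary terms uniformly in $i$; everything else is algebraic manipulation and orthogonality.
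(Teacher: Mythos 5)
Your proposal is correct and follows essentially the same route as the paper: expand $|\nabla u|^2$ in polar coordinates, use $W^{1,2}$-convergence of the partial sums to integrate term by term, apply Fubini and the orthogonality relations $\int_Y\phi_i\phi_j=\delta_{ij}$, $\int_Y\langle\nabla_Y\phi_i,\nabla_Y\phi_j\rangle=\lambda_i\delta_{ij}$ to reduce to radial integrals, and integrate by parts twice using $\Delta\psi=2n\varphi'(r^2)+4r^2\varphi''(r^2)$, with the $i=0$ term killed by $\alpha_0^2+\lambda_0=0$ and the $\alpha_i=1$ terms killed by the factor $(2\alpha_i-2)$. The only point to keep tidy is that the second integration by parts should be performed only after restricting to $\alpha_i>1$, so that the boundary term $r^{n+2\alpha_i-4}\to 0$ at the origin — exactly as the paper does.
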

\begin{proof}
	Consider $u_\ell:=  \sum_{i=0}^\ell b_ir^{\alpha_i}\phi_i$. By Proposition \ref{p:harmonic_cone}, $u_\ell$ converges in the $W^{1,2}$-sense to $u$. Also, since $|\Delta\psi|\le C(n)$, we have 
	\begin{align}
	\int |\nabla u|^2\Delta\psi &=\lim_{\ell\to\infty} \int |\nabla u_\ell|^2\Delta\psi\, .
	\end{align}
It now suffices to compute $\int |\nabla u_\ell|^2\Delta\psi$. We have
	\begin{align}
	\int |\nabla u_\ell|^2\Delta\psi&=\int_0^{\infty}r^{n-1}\int_Y |\nabla u_\ell|^2\Delta\psi 	d\mu_Y dr\notag\\
	&=\int_0^{\infty}r^{n-1} \Big(2n\varphi'(r^2)+4r^2\varphi''(r^2)\Big)\int_Y|\nabla u_\ell|^2d\mu_Ydr\notag\\
	&=\int_0^{\infty}r^{n-1} \Big(2n\varphi'(r^2)+4r^2\varphi''(r^2)\Big)\sum_{i=0}^\ell \Big(b_i^2\alpha_i^2+b_i^2\lambda_i\Big)r^{2\alpha_i-2}dr\notag\\
	&=\int_0^{\infty} \Big(2n\varphi'(r^2)+4r^2\varphi''(r^2)\Big)\sum_{i=0}^\ell \Big(b_i^2\alpha_i^2+b_i^2\lambda_i\Big)r^{n+2\alpha_i-3}dr\, .
	\end{align}
	Since $\alpha_0=\lambda_0=0$ we can integrate by parts to get 
	\begin{align}\nonumber
	\int |\nabla u_\ell|^2\Delta\psi&=\int_0^{\infty} \sum_{i=1}^\ell\Big(b_i^2\alpha_i^2+b_i^2\lambda_i\Big)2n \varphi'(r^2)r^{n+2\alpha_i-3}dr +\int_0^{\infty}\sum_{i=1}^\ell \Big(b_i^2\alpha_i^2+b_i^2\lambda_i\Big)4\varphi''(r^2)r^{n+2\alpha_i-1}dr\notag\\ \nonumber
	&=\int_0^{\infty} \sum_{i=1}^\ell \Big(b_i^2\alpha_i^2+b_i^2\lambda_i\Big)2n \varphi'(r^2)r^{n+2\alpha_i-3}dr -\int_0^{\infty}\sum_{i=1}^\ell \Big(b_i^2\alpha_i^2+b_i^2\lambda_i\Big)2(n+2\alpha_i-2)\varphi'(r^2)r^{n+2\alpha_i-3}dr\\
	&=\sum_{i=1}^\ell \Big(b_i^2\alpha_i^2+b_i^2\lambda_i\Big)(2\alpha_i-2)\int_0^{\infty}-2\varphi'(r^2)r^{n+2\alpha_i-3}dr\notag\\
	&=\sum_{\alpha_i>1}^\ell \Big(b_i^2\alpha_i^2+b_i^2\lambda_i\Big)(2\alpha_i-2)\int_0^{\infty}-2\varphi'(r^2)r^{n+2\alpha_i-3}dr\notag\\
	&=\sum_{\alpha_i>1}^\ell \Big(b_i^2\alpha_i^2+b_i^2\lambda_i\Big)(2\alpha_i-2)(n+2\alpha_i-4)\int_0^{\infty}\varphi(r^2)r^{n+2\alpha_i-5}dr\, .
		\end{align}
In the last integration by parts, we have used the fact that $\alpha_i>1$ and $n\ge 2$ to deduce that \hbox{$\lim_{r\to 0}r^{n+2\alpha_i-4}=0$}.
\end{proof}
\vskip-3mm

Now we can complete the proof of Proposition \ref{p:decay_cone}:
\vskip-2mm

\begin{proof} 
(Of Proposition \ref{p:decay_cone})
Let
 $\varphi: \mathbb{R}\to [0,1]$ be such that $\varphi:=  1$ if $r\le 1$, $\varphi:=  0$ if $r\ge 2$, and $|\varphi'|+|\varphi''|\le 100$. For any scale $s\le 1$, define $\psi_s(x):= \varphi_s(r^2):= \varphi(r^2/s^2)$ with $r=d(x,x_\infty)$. Thus $\psi_s$ has support contained in $B_{2s}(x_\infty)$. 
 
 By Proposition \ref{p:harmonic_cone} we can write the harmonic function $u=\sum b_ir^{\alpha_i}\phi_i$, where the convergence is $W^{1,2}$. 
Applying Lemma \ref{l:computation_cone} gives
\begin{align}
	\int |\nabla u|^2\Delta\psi_s&=\sum_i \Big(b_i^2\alpha_i^2+b_i^2\lambda_i\Big)(2\alpha_i-2)(n+2\alpha_i-4)\int_0^{\infty}\varphi(r^2/s^2)r^{n+2\alpha_i-5}dr\notag\\
	&=\sum_{\alpha_i>1} \Big(b_i^2\alpha_i^2+b_i^2\lambda_i\Big)(2\alpha_i-2)(n+2\alpha_i-4)s^{n+2\alpha_i-4}\int_0^{\infty}\varphi(r^2)r^{n+2\alpha_i-5}dr
\end{align}
 Therefore, for any $0<s\le t\le 2$, we have
\begin{align}
\label{e:sum_s_nablau1}
		&s^{2-n}\int |\nabla u|^2\Delta\psi_s=\sum_{\alpha_i>1} \Big(b_i^2\alpha_i^2+b_i^2\lambda_i\Big)(2\alpha_i-2)(n+2\alpha_i-4)s^{2\alpha_i-2}\int_0^{\infty}\varphi(r^2)r^{n+2\alpha_i-5}dr\, .
\end{align}
\vskip-6mm
\begin{align}
		\label{e:sum_s_nablau2}
		&t^{2-n}\int |\nabla u|^2\Delta\psi_t=\sum_{\alpha_i>1} 	
		\Big(b_i^2\alpha_i^2+b_i^2\lambda_i\Big)(2\alpha_i-2)(n+2\alpha_i-4)t^{2\alpha_i-2}\int_0^{\infty}\varphi(r^2)r^{n+2\alpha_i-5}dr.
	\end{align}
	By the eigenvalue estimates in Proposition \ref{p:eigenvalue_cone} we have  
	$\alpha_i>1+\beta(n,\rv,\eta)>1$ for $\alpha_i\ne 1$. Hence, each of the terms in the sums \eqref{e:sum_s_nablau1} and \eqref{e:sum_s_nablau2} are nonnegative. It follows that
	 \begin{align}
	 	s^{2\alpha_i-2}\cdot \Big(b_i^2\alpha_i^2+b_i^2\lambda_i\Big)&\cdot (2\alpha_i-2)
		\cdot (n+2\alpha_i-4)\cdot \int_0^{\infty}\varphi(r^2)r^{n+2\alpha_i-5}dr\notag\\
	 	&= \left(\frac{t}{s}\right)^{2-2\alpha_i} t^{2\alpha_i-2}\Big(b_i^2\alpha_i^2+b_i^2\lambda_i\Big)(2\alpha_i-2)(n+2\alpha_i-4)\int_0^{\infty}\varphi(r^2)r^{n+2\alpha_i-5}dr\notag\\
	 	&\le \left(\frac{t}{s}\right)^{-2\beta}\cdot t^{2\alpha_i-2}\cdot \Big(b_i^2\alpha_i^2+b_i^2\lambda_i\Big)\cdot (2\alpha_i-2)\cdot (n+2\alpha_i-4)
		\cdot \int_0^{\infty}\varphi(r^2)r^{n+2\alpha_i-5}dr
	 \end{align}
This gives
\eqref{e:decay_psi_cone} i.e. the conclusion of  
Proposition \ref{p:decay_cone}:
$$
		s^{2-n}\int |\nabla u|^2\Delta \psi_s 
		\le \left(\frac{t}{s}\right)^{-\beta} t^{2-n}\int |\nabla u|^2\Delta \psi_t\, .
$$
\end{proof}
\vskip2mm


\subsection{The Hessian Decay Estimate on Manifolds}
\label{ss:hessian_decay_manifold}
In this subsection, we will prove Proposition \ref{p:decay_hessian_transformation},
which is a Hessian decay estimate for splitting maps. 
As explained at the beginning of this section,
the proof is obtained by showing that if the conclusion were to fail, then
Proposition \ref{p:decay_cone} would be contradicted.
The proof of Proposition \ref{p:decay_hessian_transformation} will be given at the end of Subsection \ref{ss:hessian_decay_manifold} which would depend on several decay estimates in Subsubsection \ref{sss:hessian_decay_general_harmonic} and \ref{sss:Hessian_decay_with_pinching}.


\begin{proposition}
\label{p:decay_hessian_transformation}
	Let $(M^n,g,x)$ satisfy $\Ric_{M^n}\ge -(n-1)\delta^2$, $\Vol(B_1(x))\ge \rv>0$. Let $\eta,\alpha>0$.  Let $u: B_{2}(x)\to \mathbb{R}^k$ be a $(k,\delta)$ splitting map. Assume:
\begin{itemize} 
	\item $B_{\delta^{-1}r}(x)$ is $(k,\delta^2)$-symmetric for all $r_0\le r\le 1$.
	\vskip2mm
	\item $B_r(x)$ is not $(k+1,\eta)$-symmetric for all $r_0\le r\le 1$. 
	\end{itemize}
	
For all $\epsilon>0$ if $\delta\le \delta(n,\rv,\epsilon,\eta,\alpha)$, then there exists $0<c(n,\rv,\eta)<1$, $C(n,\rv)>0$ and a $k\times k$ lower triangular matrix $T_r$ such that $T_ru: B_r(x)\to \mathbb{R}^k$ is a $(k,\epsilon)$-splitting map, and if $r_0\le r\le 1$ with $cs/2\le r\le cs$, then 
\begin{align}
\label{e:decay_hessian_transformation} 
		r^{2-n}\int_{B_{r}(x)}&\Big(|\nabla^2T_{r}u|^2+\Ric(\nabla T_{r}u,\nabla T_{r}u)+2\delta^2(n-1) |\nabla T_{r}u|^2\Big)\notag\\
		&\le \frac{1}{2}s^{2-n}\int_{B_s(x)}\Big(|\nabla^2T_su|^2+\Ric(\nabla T_su,\nabla T_su)+2\delta^2(n-1) |\nabla T_su|^2\Big)+C\cE_{s}^{k}(x).
	\end{align}
	
\end{proposition}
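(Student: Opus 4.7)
The strategy is a two-step combination: the existence of the transformation matrices is already handed to us by the Transformation Proposition \ref{p:transfor_prop} (with the rigidity hypothesis that no scale is $(k+1,\eta)$-symmetric), so the entire content of Proposition \ref{p:decay_hessian_transformation} is the Hessian decay estimate \eqref{e:decay_hessian_transformation}. I would prove this decay estimate by a blow-up/contradiction argument, reducing it to the sharp cone decay of Proposition \ref{p:decay_cone} and using the Sharp Cone-Splitting Theorem \ref{t:sharp_splitting} to absorb the pinching error.

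The first step is to reformulate the quantity to be estimated in a form which is compatible with $W^{1,2}$-convergence. Picking a cutoff $\psi_t(y) = \varphi(d(x,y)^2/t^2)$ as in Subsection \ref{ss:hessian_decay_limit_cone}, Bochner's formula \eqref{e:bochner} applied to the harmonic map $T_t u$ together with integration by parts yields
\begin{align*}
\int_{M^n}\bigl(|\nabla^2 T_t u|^2+\Ric(\nabla T_t u,\nabla T_t u)\bigr)\psi_t \;=\; \tfrac{1}{2}\int_{M^n}|\nabla T_t u|^2\,\Delta\psi_t,
\end{align*}
which, together with Bishop–Gromov and standard smoothing, controls $t^{2-n}\int_{B_t}\bigl(|\nabla^2 T_t u|^2+\Ric(\nabla T_t u,\nabla T_t u)+2\delta^2(n-1)|\nabla T_t u|^2\bigr)$ up to constants by $t^{2-n}\int |\nabla T_t u|^2\Delta\psi_t$ and a harmless $\delta^2$-error. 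This weak-Hessian form is the one that passes to limits under the $W^{1,2}$-convergence of Proposition \ref{p:convergenc_function}.

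Next I would argue by contradiction: fix the target constant $c=c(n,\rv,\eta)$ (to be chosen later from $\beta$ in Proposition \ref{p:decay_cone}) and suppose there are $\epsilon_0>0$ and sequences $\delta_i\to 0$, $(M^n_i,g_i,x_i)$, $(k,\delta_i)$-splitting maps $u_i$, scales $s_i\ge r_{0,i}$, and $r_i\in[cs_i/2,cs_i]$ for which \eqref{e:decay_hessian_transformation} fails with the constant $C$ sent to infinity. Rescale by $s_i^{-2}$ so that $s_i=1$; the hypotheses on $k$-symmetry/non-$(k+1,\eta)$-symmetry propagate through all scales $\in[r_{0,i}/s_i,\delta_i^{-1}]$ which exhausts $(0,\infty)$. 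By Gromov compactness one extracts a subsequential limit $(C(Y),d,x_\infty)=(\mathbb{R}^k\times C(Z),d,x_\infty)$ which is $(k,0)$-symmetric but not $(k+1,\eta)$-symmetric. Part (1) of Proposition \ref{p:transfor_prop} tells us that each $T_1 u_i$ is a $(k,\epsilon)$-splitting on $B_1$; by Proposition \ref{p:convergenc_function} a subsequence converges in $W^{1,2}$ to a harmonic map $v:B_{3/2}(x_\infty)\to\mathbb{R}^k$, and the weak Hessians converge as well because $|\nabla v_i|$ is uniformly bounded and $\Delta\psi_t$ is continuous. Proposition \ref{p:decay_cone} then yields, for $r\in[c/2,c]$,
\begin{align*}
r^{2-n}\int |\nabla v|^2\,\Delta\psi_r \;\le\; c^{\,\beta}\,\int |\nabla v|^2\,\Delta\psi_1,
\end{align*}
and choosing $c=c(n,\rv,\eta)$ small so that $c^{\beta}<\tfrac14$ contradicts the assumed failure $r_i^{2-n}\int|\nabla T_{r_i}u_i|^2\Delta\psi_{r_i}\ge \tfrac12\int|\nabla T_{s_i}u_i|^2\Delta\psi_{s_i}$ in the limit, provided the pinching terms vanish in the limit.

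The main obstacle, and the reason the $C\cE_s^k(x)$ error is present in the statement, is the case where the pinching does not vanish: we must absorb any fixed positive fraction of $\cE_{s_i}^k(x_i)$ into the right-hand side. I would handle this by splitting into two cases. If $\cE_{s_i}^k(x_i)\to 0$, then by Theorem \ref{t:sharp_splitting} the sequence $T_{s_i}u_i$ can be compared to the best splitting on $B_{s_i}$, whose scale-invariant Hessian tends to $0$, so in the limit $v$ is an actual $k$-symmetric splitting of the cone and Proposition \ref{p:decay_cone} closes the argument as above. If instead $\liminf\cE_{s_i}^k(x_i)>0$, then the Sharp Splitting Theorem \ref{t:sharp_splitting} (applied at scale $s_i$) provides a splitting $\ell_i$ on $B_{s_i}$ with scale-invariant squared-Hessian energy $\le C\cE_{s_i}^k(x_i)$, and by the Cholesky-uniqueness argument used in Lemma \ref{l:Cholesky} together with the matrix smallness estimate $|T_{s_i}\circ T_{2s_i}^{-1}-I|\le\epsilon$, the transformed map $T_{s_i}u_i$ differs from $\ell_i$ by a map of comparable energy; summing telescopically across dyadic scales between $r_i$ and $s_i$ (each such sub-step contributing the $\cE_{s_j}^k(x_i)$ of that scale, controlled by $\cE_{s_i}^k(x_i)$ up to geometric decay factors from the weak-estimate monotonicity of the $T_r$'s) gives the $C\cE_s^k(x)$ bound. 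This telescoping, together with the cone contradiction above, is what produces the factor $\tfrac12$ on the first term and the linear pinching on the second. The hard part is this telescoping: one needs the \emph{linear}, not square-root, dependence on $\cE_s^k$, and this is exactly why the sharp linear Hessian bound of Theorem \ref{t:sharp_splitting} (rather than the older $\sqrt{\cdot}$ version implicit in \cite{ChCoAlmost}) is essential.
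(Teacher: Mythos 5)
Your proposal assembles the right ingredients---reduction to the cone decay of Proposition \ref{p:decay_cone} by blow-up, and Theorem \ref{t:sharp_splitting} to produce the linear pinching error---but the way you combine them does not close, and it differs structurally from the paper. The paper never runs a contradiction argument on the full statement containing the pinching term. It first proves (Lemma \ref{l:decay_tildeu}) an unconditional decay, with factor $1/4$ and \emph{no} error term, for the harmonic function $\tilde u = u-\sum_\ell a_\ell v^\ell$ obtained by subtracting the gradient-$L^2$ projection of $u$ onto the sharp splitting $v$ of Theorem \ref{t:sharp_splitting}. That contradiction argument is clean because $\tilde u$ is normalized by $\fint_{B_1}|\nabla\tilde u|^2=1$ and, crucially, the non-$(k+1,\eta)$-symmetry forces a definite lower bound $\fint_{B_1}|\nabla^2\tilde u|^2\ge\eta'(n,\rv,\eta)$, so that a lower bound at scale $r$ and an upper bound at scale $\approx 1$ both survive in the limit and contradict Proposition \ref{p:decay_cone} once $c$ is fixed in terms of $\eta'$ and $\beta$. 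Proposition \ref{p:decay_hessian} then recovers the decay for $u$ itself by Cauchy-Schwarz, paying $C\cE^k_s$ for the Hessian of $\sum a_\ell v^\ell$, and the present proposition follows by applying that to $T_su$ and using $|T_r\circ T_s^{-1}-I|\le 10^{-10n}$ to trade $T_su$ for $T_ru$ at the cost of a factor $3/2$ (whence $1/3$ becomes $1/2$).

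Your contradiction setup has concrete problems. Negating the statement ``with $C$ sent to infinity'' does not extract the factor $1/2$: since $T_ru$ is a $(k,\epsilon)$-splitting, the scale-invariant left-hand side is uniformly bounded, so the case $\liminf\cE^k_{s_i}>0$ is vacuously contradictory, while in the case $\cE^k_{s_i}\to 0$ both sides of the inequality may tend to zero and no normalization survives the limit; dividing by the scale-one Hessian energy to normalize destroys control of the pinching term. Moreover, your claim that $\cE^k_{s_i}\to 0$ forces the limit $v$ to be ``an actual $k$-symmetric splitting'' is false: small pinching guarantees that \emph{some} good splitting exists, not that $T_{s_i}u_i$ converges to one, and if $v$ were linear both sides would vanish and there would be nothing to prove. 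The correct mechanism---that $\int|\nabla v|^2\Delta\psi_s$ automatically annihilates the linear modes (Lemma \ref{l:computation_cone})---is exactly what the subtraction of $\sum a_\ell v^\ell$ exploits, and you do not invoke it. Finally, the ``telescoping across dyadic scales between $r_i$ and $s_i$'' is out of place here, since $r$ and $s$ differ by the fixed factor $c$; the genuine multi-scale summation happens afterwards, in Subsection \ref{ss:Proving_Transformation}, where this one-step decay is iterated.
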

\vspace{4mm}
\begin{remark}
Recall that the constant $\alpha$ in Proposition \ref{p:decay_hessian_transformation} appears in the Definition \ref{d:kalphadeltapinching} of $\cE_s^k(x)=\cE_s^{k,\alpha,\delta}(x)$.
\end{remark}


\subsubsection{The Hessian decay for general harmonic functions}
\label{sss:hessian_decay_general_harmonic}

In this subsubsection, as an essential step in the proof of Proposition \ref{p:decay_hessian_transformation}
we will prove a decay estimate for general harmonic functions. It states that after subtracting off the linear terms, the $L^2$ Hessian has H\"older decay.  Before giving the result,
 we will need some terminology. 
\vskip2mm

\noindent
\textbf{Notation:} Let $v=(v^1,\cdots, v^k): B_{10}(x)\to\mathbb{R}^k$ be a $(k,\delta)$- splitting map which would be constructed later by  Theorem \ref{t:sharp_splitting}. For harmonic function $u: B_{10}(x)\to \mathbb{R}$ we define 
\begin{align}\label{e:u-uk}
\tilde{u}=u-\sum_{\ell=1}^k a_\ell v^\ell
\end{align}
by stipulating  that the coefficients are chosen to minimize
\begin{align}
\fint_{B_1(x)}|\nabla \tilde{u}|^2=\min_{(b_\ell)\in \mathbb{R}^k} \fint _{B_1(x)}|\nabla u-\sum_{\ell=1}^k b_\ell\nabla v^\ell|^2. 
\end{align}
\vskip2mm
After having subtracted off the 'linear' term we can prove the following decay estimate for the harmonic function $\tilde{u}$:

\begin{lemma}
\label{l:decay_tildeu}
There exists $0<c(n,\rv,\eta)<1$ such that the following holds. 
Let $\delta<\delta(n,\rv,\eta)$ and let
$(M^n,g,x)$ satisfy $\Ric_{M^n}\ge -(n-1)\delta^2$ and $\Vol(B_1(x))\ge \rv>0$.  Assume
$B_{\delta^{-1}}$ is $(k,\delta^2)$-symmetric but that $B_1(x)$ is not $(k+1,\eta)$-symmetric.  Then if $u: B_{2}(x)\to \mathbb{R}$ denotes a harmonic function with $\tilde{u}$ defined as in \eqref{e:u-uk} and
 $c/2\le r\le c$, the following holds:
	\begin{align}
		r^{2-n}\int_{B_{r}(x)}\Big(|\nabla^2\tilde{u}|^2+\Ric(\nabla \tilde{u},\nabla \tilde{u})+2\delta^2(n-1) |\nabla \tilde{u}|^2\Big)\le \frac{1}{4}\int_{B_1(x)}\Big(|\nabla^2\tilde{u}|^2+\Ric(\nabla \tilde{u},\nabla \tilde{u})+2\delta^2(n-1) |\nabla \tilde{u}|^2\Big).
	\end{align}
\end{lemma}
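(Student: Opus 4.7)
The plan is to argue by contradiction and reduce to the cone decay estimate of Proposition \ref{p:decay_cone}. Assume the conclusion fails for every $c > 0$. Then one obtains a sequence $(M_i^n,g_i,x_i)$ with $\delta_i \to 0$, $\delta_i$-splitting maps $v_i=(v_i^1,\ldots,v_i^k)\colon B_{10}(x_i)\to\mathbb{R}^k$, harmonic functions $u_i\colon B_2(x_i)\to\mathbb{R}$ with associated $\tilde u_i$ as in \eqref{e:u-uk}, and scales $r_i\in[c_i/2,c_i]$ with $c_i\to 0$, for which the reverse of the claimed inequality holds. By the hypothesis that $B_{\delta_i^{-1}}(x_i)$ is $(k,\delta_i^2)$-symmetric while $B_1(x_i)$ is not $(k+1,\eta)$-symmetric, Gromov compactness gives a subsequence converging to a noncollapsed metric cone $\mathbb{R}^k\times C(Z)$ that is not $(k+1,\eta)$-symmetric, and Theorem \ref{t:splitting_function} together with Proposition \ref{p:convergenc_function} forces $v_i^\ell$ to converge in the $W^{1,2}$-sense to the standard Euclidean coordinates $y^\ell$ on the $\mathbb{R}^k$ factor.

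After normalizing so that, say, $\fint_{B_1(x_i)}|\nabla\tilde u_i|^2=1$, the harmonic gradient estimate and uniform $W^{1,2}$-bound from the Poincaré inequality of Theorem \ref{t:poincare} allow us to extract a further subsequence $\tilde u_i\to\tilde u_\infty$ converging uniformly and in $W^{1,2}_{\mathrm{loc}}$ to a harmonic function $\tilde u_\infty$ on $\mathbb{R}^k\times C(Z)$ via Proposition \ref{p:convergenc_function}. The defining minimality condition $\fint_{B_1(x_i)}\langle\nabla\tilde u_i,\nabla v_i^\ell\rangle=0$ for $\ell=1,\ldots,k$ is preserved in the limit by Proposition \ref{p:fgconvergence}(2), producing $\int_{B_1}\langle\nabla\tilde u_\infty,\nabla y^\ell\rangle=0$. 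Expanding $\tilde u_\infty=\sum b_i r^{\alpha_i}\phi_i$ via Proposition \ref{p:harmonic_cone}, and invoking Proposition \ref{p:eigenvalue_cone} which identifies the modes with $\alpha_i=1$ as exactly the $k$ linear functions $y^1,\ldots,y^k$ and gives the spectral gap $\alpha_{k+1}\geq 1+\beta$ for some $\beta=\beta(n,\rv,\eta)>0$, we conclude that every nontrivial mode of $\tilde u_\infty$ satisfies $\alpha_i\geq 1+\beta$. Proposition \ref{p:decay_cone} then yields
\begin{equation*}
s^{2-n}\int|\nabla\tilde u_\infty|^2\Delta\psi_s\;\leq\;(t/s)^{-\beta}\,t^{2-n}\int|\nabla\tilde u_\infty|^2\Delta\psi_t
\end{equation*}
for all $0<s\leq t\leq 2$.

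To close the contradiction, we convert the Hessian-Bochner integrand on both sides of the claim into the weighted form $\int|\nabla\tilde u|^2\Delta\psi$. Choosing a standard cutoff $\psi_r$ with $\mathbf{1}_{B_r}\leq\psi_r\leq\mathbf{1}_{B_{2r}}$ and integrating Bochner's identity $\tfrac12\Delta|\nabla\tilde u_i|^2=|\nabla^2\tilde u_i|^2+\Ric(\nabla\tilde u_i,\nabla\tilde u_i)$ against $\psi_r$ produces
\begin{equation*}
\int\bigl(|\nabla^2\tilde u_i|^2+\Ric(\nabla\tilde u_i,\nabla\tilde u_i)+2(n-1)\delta_i^2|\nabla\tilde u_i|^2\bigr)\psi_r=\tfrac12\int|\nabla\tilde u_i|^2\Delta\psi_r+2(n-1)\delta_i^2\int|\nabla\tilde u_i|^2\psi_r.
\end{equation*}
The pointwise bound $\Ric\geq-(n-1)\delta_i^2 g$ makes the left-hand integrand nonnegative, so $\int_{B_r}(\cdots)\leq\int(\cdots)\psi_r$; the $\delta_i^2$-term is $o(1)$ as $i\to\infty$. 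Using the $L^1$-convergence $|\nabla\tilde u_i|^2\to|\nabla\tilde u_\infty|^2$ to pass both sides to the limit and applying the cone decay above with $s\sim c_i\to 0$ and $t=1$ produces a contradiction to the assumed failure of the lemma, provided $c=c(n,\rv,\eta)$ is chosen small enough that $(1/c)^{-\beta}$ beats the constant $1/4$ after the bounded cutoff losses are accounted for.

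\textbf{Main obstacle.} The subtlest step is the passage of the orthogonality $\fint_{B_1(x_i)}\langle\nabla\tilde u_i,\nabla v_i^\ell\rangle=0$ to the limit with the correct constant, which hinges on genuine $W^{1,2}$-convergence (rather than merely weak $L^2$) for both $\nabla\tilde u_i$ and $\nabla v_i^\ell$; without this, the limit mode $\tilde u_\infty$ might still carry a linear component on the $\mathbb{R}^k$ factor and one loses the spectral gap that drives the decay. A related technical point is uniformly bounding the minimizing coefficients $a_\ell^{(i)}$, which uses the near-orthonormality of $\nabla v_i$ from the $(k,\delta_i)$-splitting property to ensure $\tilde u_i$ remains a nondegenerate blow-up and is not absorbed into its subtracted linear part.
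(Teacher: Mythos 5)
Your overall strategy (contradiction, blow-up to a limit cone, and an appeal to Proposition \ref{p:decay_cone}) is the same as the paper's, but as written the argument does not close, for two reasons.

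First, the quantifier structure is off. You take scales $r_i\in[c_i/2,c_i]$ with $c_i\to 0$, and then at the end ask that ``$c$ be chosen small enough.'' These are incompatible: the constant $c=c(n,\rv,\eta)$ must be \emph{fixed} before the compactness argument (the paper fixes $c=\tfrac{1}{10}(\eta'/C(n)^2)^{1/\beta}$ only at the very end, but the contradiction sequence lives at scales $r\in[c/2,c]$ for that fixed $c$). If instead $r_i\to 0$, the quantity $r_i^{2-n}\int_{B_{r_i}(x_i)}(\cdots)$ is evaluated at vanishing scales, and the local $W^{1,2}$-convergence of Proposition \ref{p:convergenc_function} gives no control of such integrals in the limit; ``applying the cone decay with $s\sim c_i\to 0$'' to the limit function $\tilde u_\infty$ says nothing about the manifold integrals at scale $r_i$.

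Second, and more seriously, you never establish the lower bound that actually produces the contradiction. The assumed failure reads $r^{2-n}\int_{B_r}(H_i)>\tfrac14\int_{B_1}(H_i)$ with $H_i=|\nabla^2\tilde u_i|^2+\Ric(\nabla\tilde u_i,\nabla\tilde u_i)+2\delta_i^2(n-1)|\nabla\tilde u_i|^2$; this is perfectly consistent with \emph{both} sides tending to zero, in which case comparing $c^\beta$ with $1/4$ yields nothing (the limiting ratio is $0/0$). The paper closes the loop by proving, on the manifold and before passing to the limit, that $\fint_{B_1(x_i)}|\nabla^2\tilde u_i|^2\ge\eta'(n,\rv,\eta)>0$: since $\tilde u_i$ is $L^2$-orthogonal to the gradients of the splitting map and normalized, a small Hessian would make $(v_i^1,\dots,v_i^k,\tilde u_i)$ a $(k+1)$-splitting map, contradicting that $B_1(x_i)$ is not $(k+1,\eta)$-symmetric. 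This definite lower bound, combined with the failure assumption, gives $C(n)^{-1}\eta'\le r^{2-n}\int|\nabla u|^2\Delta\psi_r$ on the limit cone, while the normalization gives the absolute upper bound $s^{2-n}\int|\nabla u|^2\Delta\psi_s\le C(n)$ at $s\sim 1/8$; Proposition \ref{p:decay_cone} then forces $C(n)^{-1}\eta'\le C(n)(8c)^\beta$, which fails for the chosen $c$. Your use of the non-$(k+1,\eta)$-symmetry only through the spectral gap of Proposition \ref{p:eigenvalue_cone} supplies the decay exponent $\beta$ but not this quantitative nondegeneracy, so the contradiction is missing its essential ingredient. (One could in principle extract a lower bound on the limit from the orthogonality plus the surviving normalization $\fint_{B_1}|\nabla\tilde u_\infty|^2=1$ and the mode expansion, but that argument is not in your write-up and would still require $r$ to stay at a fixed scale.)
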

\begin{proof}
The constant $c(n,\rv,\eta)$ will be fixed at the end of the proof. 
\vskip1mm

The existence of $\delta(n,\rv,\eta) >0 $ will shown by arguing by contradiction.
Therefore,  assume there exists $\delta_i\to 0$ and $(M_i^n,g_i,x_i)$ 
with $\Ric_{M^n_i}\ge -(n-1)\delta^2_i$ and $\Vol(B_1(x_i))\ge \rv>0$.  Assume further that
the ball $B_{\delta_i^{-1}}(x_i)$ is $(k,\delta_i^2)$-symmetric, $B_1(x_i)$ is not $(k+1,\eta)$-symmetric, and $u_i: B_{2}(x_i)\to \mathbb{R}$ is a harmonic function with corresponding $\tilde{u}_i$ defined in \eqref{e:u-uk} such that for some $c/2\le r\le c$, 
\begin{align}\label{e:decay_converse_direction}
		r^{2-n}\int_{B_{r}(x_i)}\Big(|\nabla^2\tilde{u}_i|^2+\Ric(\nabla \tilde{u}_i,\nabla \tilde{u}_i)+2\delta_i^2(n-1) |\nabla \tilde{u}_i|^2\Big)> \frac{1}{4}\int_{B_1(x_i)}\Big(|\nabla^2\tilde{u}_i|^2+\Ric(\nabla \tilde{u}_i,\nabla \tilde{u}_i)+2\delta_i^2(n-1) |\nabla \tilde{u}_i|^2\Big) 
	\end{align}

Normalize $\tilde{u}_i$ such that $\fint_{B_1(x_i)}|\nabla \tilde{u}_i|^2=1$ and 
$\fint_{B_1(x_i)}\tilde{u}_i=0$. Then by the Poincar\'e inequality, we have 
\begin{align}\label{e:u_iL2}
\fint_{B_1(x_i)}\tilde{u}_i^2\le C(n).
\end{align}
By the definition of $\tilde{u}_i$, we have $\fint_{B_1(x_i)}\langle \nabla v_{i,\alpha}, \nabla \tilde{u}_i\rangle =0$ for any $\alpha=1,\cdots, k$ and $v_{i,\alpha}$ are the $k$ splitting maps for $B_{2}(x_i)$.   Since $B_{1}(x_i)$ is not $(k+1,\eta)$ splitting, we have 
\begin{align}
\fint_{B_1(x_i)}|\nabla^2 \tilde{u}_i|^2\ge \eta'(n,\rv,\eta).
\end{align}

Choose a cutoff function $\varphi_i$ as in Theorem \ref{t:cutoff} with $\varphi_i:=  1$ on $B_{1/4}(x_i)$ and $\varphi_i:=  0$ away from $B_{1/2}(x_i)$.  By the Bochner formula we have 
\begin{align}
\int_{B_{1/4}(x_i)}\Big(|\nabla^2\tilde{u}_i|^2+\Ric(\nabla \tilde{u}_i,\nabla \tilde{u}_i)+2\delta_i^2(n-1)|\nabla \tilde{u}_i|^2\Big)&\le \int \Big(|\nabla^2\tilde{u}_i|^2+\Ric(\nabla \tilde{u}_i,\nabla \tilde{u}_i)+2\delta_i^2(n-1) |\nabla \tilde{u}_i|^2\Big)\varphi_i\notag\\
&
=\frac{1}{2}\int \Big(\Delta |\nabla \tilde{u}_i|^2+4\delta_i^2(n-1) |\nabla \tilde{u}_i|^2\Big)\varphi_i\notag\\
&
\le 2\delta_i^2(n-1) \int_{B_1(x_i)}|\nabla\tilde{u}_i|^2+\int_{B_1(x_i)} |\nabla \tilde{u}_i|^2|\Delta \varphi_i| 
\notag\\
&
\le C(n)\int_{B_1(x_i)}|\nabla \tilde{u}_i|^2\le C(n)\, .
\end{align}
Therefore, from \eqref{e:decay_converse_direction}, we get 
\begin{align}
\label{e:W22_tildeu2}
&\fint_{B_1(x_i)}|\nabla \tilde{u}_i|^2=1\, \\
 &\fint_{B_1(x_i)}\tilde{u}_i^2\le C(n)\, ,\\
&
\int_{B_{1/4}(x_i)}\Big(|\nabla^2\tilde{u}_i|^2+\Ric(\nabla \tilde{u}_i,\nabla \tilde{u}_i)+2\delta_i^2(n-1) |\nabla \tilde{u}_i|^2\Big)\le C(n)\, ,
\\
&
\frac{\eta'(n,\rv,\eta)}{4}\le r^{2-n}\int_{B_{r}(x_i)}\Big(|\nabla^2\tilde{u}_i|^2+\Ric(\nabla \tilde{u}_i,\nabla \tilde{u}_i)+2\delta_i^2(n-1) |\nabla \tilde{u}_i|^2\Big),\qquad (\text{for some }c/2\le r\le c).
\end{align}
\vskip2mm

To complete the contradiction argument, we will show 
that one can pass to the limit and get a contradiction to the decay estimate Proposition \ref{p:decay_cone} in the limit cone.

Choose a cutoff function $\varphi: \mathbb{R}\to [0,1]$ such that $\varphi:=  1$ if $t\le 1$ and $\varphi:=  0$ if $t\ge 2$, and $|\varphi'|+|\varphi''|\le 100$. For any scale $c/2\le s\le 1/8$, define $\psi_{s,i}(x):= \varphi(h_i/s^2)$, where $\Delta h_i=2n$ such that $h$ approximates  $d(x_i,x)^2$ as in Theorem \ref{t:sharp_conical} or from \cite{ChCoAlmost}. Thus $\psi_{s,i}(x)$ has support contained in $B_{2s}(x_i)\subset B_{1/4}(x_i)$ and $\psi_{s,i}:=  1$ on $B_{s/2}(x_i)$. Moreover, by the gradient estimates for $h_i$, we have that $s^2|\Delta \psi_{s,i}|+s^2|\nabla \psi_{s,i}|^2\le C(n,\rv)$. 
\vskip1mm

Consider the quantity 
\begin{align}
	s^{2-n}\int |\nabla \tilde{u}_i|^2\Delta \psi_{s,i}&=s^{2-n}\int \Delta |\nabla \tilde{u}_i|^2\psi_{s,i}\notag\\
	 &=s^{2-n}\int 2\Big(|\nabla^2\tilde{u}_i|^2+\Ric(\nabla\tilde{u}_i,\nabla\tilde{u}_i)\Big)\psi_{s,i}.
\end{align} 
For $\delta_i$ small enough, by using \eqref{e:W22_tildeu2} 
 we can conclude that 
\begin{align}
	C(n)^{-1}\eta'\le r^{2-n}\int |\nabla \tilde{u}_i|^2\Delta \psi_{r,i},\qquad ~~\mbox{ for some $c/2\le r\le c$,}\\
	s^{2-n}\int |\nabla \tilde{u}_i|^2\Delta \psi_{s,i}\le C(n), ~~\qquad \mbox{ for all $1/16\le s\le 1/8$.}
\end{align}

By letting $i\to\infty$, we obtain a limit cone $(C(Y),d,x_\infty)=\mathbb{R}^k\times C(Z)$ and a harmonic function $u$ in $B_1(x_\infty)$. Moreover, by Proposition \ref{p:convergenc_function},
 $\tilde{u}_i\to u$ in $W^{1,2}$ sense in $B_{9/10}(x_\infty)$.  By Proposition \ref{p:convergenc_function},
 $$
 \Delta \psi_{s,i}=\varphi' \frac{2n}{s^2}+\varphi'' \frac{|\nabla h_i|^2}{s^4}\, .
 $$ 
 Also, both uniformly and in $W^{1,2}$ we  have
 $$
 h_i\to d(x,x_\infty)^2:=  d(x)^2\, .
 $$
 
On the limit cone, put $\psi_s(x)=\varphi(d(x)^2/s^2)$.  Then by Proposition \ref{p:convergenc_function}, 
 for any $c/2\le s\le 1/8$, we get
\begin{align}
	\lim_{i\to\infty} s^{2-n}\int |\nabla \tilde{u}_i|^2\Delta \psi_{s,i}&=\lim_{i\to\infty} s^{2-n}\int |\nabla \tilde{u}_i|^2\Big(\varphi'(h_i/s^2) \frac{2n}{s^2}+\varphi''(h_i/s^2) \frac{|\nabla h_i|^2}{s^4}\Big)\notag\\
	&=s^{2-n}\int |\nabla {u}|^2\Big(\varphi'(d(x)^2/s^2) \frac{2n}{s^2}+\varphi''(d(x)^2/s^2) \frac{|\nabla d(x)^2|^2}{s^4}\Big)\notag\\
	&=s^{2-n}\int |\nabla u|^2\Delta \psi_s.
\end{align}
In particular,  we have 
\begin{align}\label{e:eta_Cn}
		C(n)^{-1}\eta'(n,\rv,\eta)\le r^{2-n}\int |\nabla u|^2\Delta \psi_{r}\qquad  ~~\mbox{ for some $c/2\le r\le c$}\, .\\
	s^{2-n}\int |\nabla u|^2\Delta \psi_{s}\le C(n)\, , \qquad ~~\mbox{ for all $1/16\le s\le 1/8$}\, .
\end{align}

Now we can fix the value of $c=c(n,\rv,\eta)$ by choosing $c=c(n,\rv,\eta)= \frac{1}{10} \left(\frac{\eta'}{C(n)^2}\right)^{1/\beta}$ where $\beta$ is the constant in Proposition \ref{p:decay_cone} and $\eta',C(n)$ are in \eqref{e:eta_Cn}.
Then by the decay estimates in 
Proposition \ref{p:decay_cone}, 
we obtain a contradiction.  In fact, applying Proposition \ref{p:decay_cone} to $s=r\in [c/2,c]$ and $t=1/8$, gives
\begin{align}
	C(n)^{-1}\eta'(n,\rv,\eta)\le r^{2-n}\int |\nabla u|^2\Delta \psi_{r}\le (8r)^{\beta}8^{n-2}\int |\nabla u|^2\Delta \psi_{1/8}\le C(n)(8c)^{\beta}\, ,
\end{align}
which contradicts to $c=\frac{1}{10} \left(\frac{\eta'}{C(n)^2}\right)^{1/\beta}$.
This completes the proof of Lemma \ref{l:decay_tildeu}.
\end{proof}
\vskip2mm


\subsubsection{Hessian Decay with $k$-Pinching}
\label{sss:Hessian_decay_with_pinching}
In this subsubsection, by combining the sharp cone-splitting estimates of
Theorem \ref{t:sharp_splitting} of Section \ref{s:Sharp_splitting}
with  the Hessian decay estimate in Lemma  \ref{l:decay_tildeu},
 we will prove a decay estimate for harmonic functions which
 does not require that we subtract off the $k$-splitting map.  For this,
 we need to include an error term which is measured by $\cE_{s}^{k}(x)$. The main result is the following proposition.

\begin{proposition}
\label{p:decay_hessian}
	Let $(M^n,g,x)$ satisfy $\Ric_{M^n}\ge -(n-1)\delta^2$, $\Vol(B_1(x))\ge \rv>0$ and let $\alpha,\eta>0$.  Assume $B_{\delta^{-1}s}$ is $(k,\delta^2)$-symmetric but $B_s(x)$ is not $(k+1,\eta)$-symmetric for some fixed $s\le 1$. Let $u: B_{2s}(x)\to \mathbb{R}$ be a harmonic function with $\fint_{B_s(x)}|\nabla u|^2=1$.  If $\delta\le \delta(n,\rv,\eta,\alpha)$ then there exist constants $0<c(n,\rv,\eta)<1$ and $C(n,\rv)>0$ such that for any  $cs/2\le r\le cs$:
	\begin{align}
		r^{2-n}\int_{B_{r}(x)}&\Big(|\nabla^2u|^2+\Ric(\nabla u,\nabla u)+2\delta^2(n-1) |\nabla u|^2\Big)\\
		&\le \frac{1}{3}s^{2-n}\int_{B_s(x)}\Big(|\nabla^2u|^2+\Ric(\nabla u,\nabla u)+2\delta^2(n-1) |\nabla u|^2\Big)+C\cE_{s}^{k}(x).
	\end{align}	
\end{proposition}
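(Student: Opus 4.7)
The plan is to reduce the statement to the orthogonal case already handled by Lemma \ref{l:decay_tildeu}, using the Sharp Cone-Splitting Theorem \ref{t:sharp_splitting} to absorb the residual linear part into the error term $\cE_s^k(x)$. After rescaling we may assume $s=1$. First I would apply Theorem \ref{t:sharp_splitting} to produce a $(k,\delta')$-splitting map $v=(v^1,\ldots,v^k):B_2(x)\to\mathbb{R}^k$ satisfying the sharp Hessian bound
\begin{align*}
\fint_{B_2(x)}\!\Big(|\nabla^2 v|^2+\Ric(\nabla v,\nabla v)+2(n-1)\delta^2|\nabla v|^2\Big)\le C(n,\rv,\alpha)\,\cE_1^k(x),
\end{align*}
where the choice of $\delta\le \delta(n,\rv,\eta,\alpha)$ ensures the hypotheses of both Theorem \ref{t:sharp_splitting} and Lemma \ref{l:decay_tildeu} are met.

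Next I would decompose $u=\tilde u+\sum_\ell a_\ell v^\ell$ as in \eqref{e:u-uk}, with the coefficients $a_\ell$ chosen so that $\fint_{B_1(x)}\langle\nabla \tilde u,\nabla v^\ell\rangle=0$ for all $\ell$. Because $v$ is an almost-orthonormal splitting, the Gram matrix $G_{ij}=\fint_{B_1(x)}\langle\nabla v^i,\nabla v^j\rangle$ is within $\delta'$ of the identity, hence invertible with $\|G^{-1}\|\le 2$. Then $a_\ell=\sum_j (G^{-1})_{\ell j}\fint_{B_1(x)}\langle\nabla u,\nabla v^j\rangle$, and Cauchy–Schwarz combined with $\fint_{B_1(x)}|\nabla u|^2=1$ and $\fint_{B_1(x)}|\nabla v^j|^2\le 1+\delta'$ yields $|a_\ell|\le C(n)$. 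In particular $\fint_{B_1(x)}|\nabla\tilde u|^2\le C(n)$, so Lemma \ref{l:decay_tildeu} applies and gives, for every $r\in [c/2,c]$ with $c=c(n,\rv,\eta)$,
\begin{align*}
r^{2-n}\!\int_{B_r(x)}\!\!\Big(|\nabla^2\tilde u|^2+\Ric(\nabla\tilde u,\nabla\tilde u)+2\delta^2(n-1)|\nabla\tilde u|^2\Big)\le \tfrac{1}{4}\!\int_{B_1(x)}\!\!\Big(|\nabla^2\tilde u|^2+\Ric(\nabla\tilde u,\nabla\tilde u)+2\delta^2(n-1)|\nabla\tilde u|^2\Big).
\end{align*}

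To return from $\tilde u$ to $u$, I would use the elementary inequality $|A+B|^2\le (1+\eta')|A|^2+C(\eta')|B|^2$ (and its reverse) applied to $\nabla^2 u=\nabla^2\tilde u+\nabla^2 V$ with $V:=\sum_\ell a_\ell v^\ell$, and likewise to the gradient pieces. The cross terms involving $\Ric$ are handled via $|\Ric(X,Y)|\le C\delta^2|X||Y|$ from the lower Ricci bound combined with Cauchy–Schwarz, so they can be absorbed into either side modulo multiplicative factors $(1+\eta')$ and additive terms controlled by $\int|\nabla V|^2+|\nabla^2 V|^2$. Using $|a_\ell|\le C(n)$, together with the sharp bound from Step 1 and volume comparison, gives
\begin{align*}
r^{2-n}\!\int_{B_r(x)}\!\!\big(|\nabla^2 V|^2+\Ric(\nabla V,\nabla V)+2\delta^2(n-1)|\nabla V|^2\big)\le C(n,\rv,\alpha)\,\cE_1^k(x),
\end{align*}
and the same bound for the annular integrand on $B_1(x)$. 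Choosing $\eta'$ small enough so that $(1+\eta')^2\cdot\tfrac14<\tfrac13$, and composing the two inequalities, yields the desired estimate with constant $\tfrac13$ and error $C\,\cE_1^k(x)$.

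The main obstacle is the bookkeeping needed to convert the $\frac14$ decay for the orthogonal piece $\tilde u$ into a $\frac13$ decay for $u$ itself, while keeping the error term linear in $\cE_s^k(x)$ and not in $\sqrt{\cE_s^k(x)}$. The losing factor $(1+\eta')^2$ in the triangle-inequality step is comfortably accommodated by the slack between $\frac14$ and $\frac13$, and the choice of $\delta$ small (depending on $\eta'$, hence on $n,\rv,\eta,\alpha$) makes $v$ a sufficiently good splitting for all cross-term estimates to close. After unrescaling the inequality to scale $s$, the stated form of the proposition follows.
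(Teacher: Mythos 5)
Your proposal follows essentially the same route as the paper: rescale to $s=1$, construct the sharp splitting map $v$ via Theorem \ref{t:sharp_splitting}, subtract the gradient-orthogonal projection $\sum a_\ell v^\ell$ with $|a_\ell|\le C(n)$, apply Lemma \ref{l:decay_tildeu} to the remainder $\tilde u$ to get the $\tfrac14$ decay, and then trade back to $u$ using the slack between $\tfrac14$ and $\tfrac13$, with the residual controlled linearly by $\cE_1^k(x)$ through the sharp Hessian bound on $v$.

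One local slip worth correcting: you claim the cross terms are handled via $|\Ric(X,Y)|\le C\delta^2|X||Y|$, but a \emph{lower} Ricci bound gives no upper bound on $|\Ric(X,Y)|$. The correct device (and the one the paper uses) is to apply Cauchy--Schwarz to the \emph{nonnegative} bilinear form $\Ric+(n-1)\delta^2 g$; the extra $(n-1)\delta^2|\nabla\cdot|^2$ present in the integrand is exactly what keeps this form nonnegative and lets the cross terms be absorbed with the $(1+\eta')$ losses you describe. With that substitution your argument closes as intended.
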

\begin{proof}
By scaling, it suffices to prove the result for $s=1$.
Let $\tilde{u}=u-\sum a_i v^i:=  u-u_k$ as in \eqref{e:u-uk}. By Lemma \ref{l:decay_tildeu} for $\delta\le \delta_0(n,\rv,\eta)$ and $c(n,\rv,\eta)$ small, we have for any $c/2\le r\le c$ that 
\begin{align}\label{e:decay_u-uk}
		r^{2-n}\int_{B_{r}(x)}\Big(|\nabla^2\tilde{u}|^2+\Ric(\nabla \tilde{u},\nabla \tilde{u})+2\delta^2(n-1) |\nabla \tilde{u}|^2\Big)\le \frac{1}{4}\int_{B_1(x)}\Big(|\nabla^2\tilde{u}|^2+\Ric(\nabla \tilde{u},\nabla \tilde{u})+2\delta^2(n-1) |\nabla \tilde{u}|^2\Big).
	\end{align}

By using he Schwartz inequality on the nonnegative inner product $\Ric+(n-1)\delta^2 g$, we get 
\begin{align}\nonumber
r^{2-n}\int_{B_{r}(x)}\Big(|\nabla^2{u}|^2+\Ric(\nabla {u},\nabla {u})+2\delta^2(n-1) |\nabla {u}|^2\Big)\le &\frac{1001}{1000} r^{2-n}\int_{B_{r}(x)}\Big(|\nabla^2\tilde{u}|^2+\Ric(\nabla \tilde{u},\nabla \tilde{u})+2\delta^2(n-1) |\nabla \tilde{u}|^2\Big)\\
&+Cr^{2-n}\int_{B_{r}(x)}\Big(|\nabla^2{u_k}|^2+\Ric(\nabla {u_k},\nabla {u_k})+2\delta^2(n-1) |\nabla {u_k}|^2\Big) \notag\\
\le &\frac{1001}{1000} r^{2-n}\int_{B_{r}(x)}\Big(|\nabla^2\tilde{u}|^2+\Ric(\nabla \tilde{u},\nabla \tilde{u})+2\delta^2(n-1) |\nabla \tilde{u}|^2\Big) \notag \\
&+Cr^{2-n}\int_{B_{r}(x)}\Big(|\nabla^2{v}|^2+\Ric(\nabla {v},\nabla {v})+2\delta^2(n-1) |\nabla {u_k}|^2\Big)\, ,
\end{align}
where we have used the fact that $|a_i|\le C(n)$ from the definition of $\tilde{u}$ in \eqref{e:u-uk}. Similarly, we have
\begin{align}
\int_{B_{1}(x)}\Big(|\nabla^2\tilde{u}|^2+\Ric(\nabla \tilde{u},\nabla \tilde{u})+2\delta^2(n-1) |\nabla \tilde{u}|^2\Big)\le  &\frac{1001}{1000}\int_{B_{1}(x)}\Big(|\nabla^2{u}|^2+\Ric(\nabla {u},\nabla {u})+2\delta^2(n-1) |\nabla {u}|^2\Big)\notag\\ 
&+C\int_{B_{1}(x)}\Big(|\nabla^2{u_k}|^2+\Ric(\nabla {u_k},\nabla {u_k})+2\delta^2(n-1) |\nabla {u_k}|^2\Big)  \notag\\
\le &\frac{1001}{1000}\int_{B_{1}(x)}\Big(|\nabla^2{u}|^2+\Ric(\nabla {u},\nabla {u})+2\delta^2(n-1) |\nabla {u}|^2\Big)\notag\\ 
&+C\int_{B_{1}(x)}\Big(|\nabla^2{v}|^2+\Ric(\nabla {v},\nabla {v})+2\delta^2(n-1) |\nabla {v}|^2\Big)\, .
\end{align}

By combining the above with \eqref{e:decay_u-uk} we get
	\begin{align}\nonumber
		r^{2-n}\int_{B_{r}(x)}\Big(|\nabla^2{u}|^2+\Ric(\nabla {u},\nabla {u})+2\delta^2(n-1) |\nabla {u}|^2\Big)\le &\frac{1}{3}\int_{B_1(x)}\Big(|\nabla^2{u}|^2+\Ric(\nabla {u},\nabla {u})+2\delta^2(n-1)|\nabla {u}|^2\Big)\\ \nonumber
&+Cr^{2-n}\int_{B_{r}(x)}\Big(|\nabla^2{v}|^2+\Ric(\nabla {v},\nabla {v})+2\delta^2(n-1) |\nabla {v}|^2\Big)\\
&+C\int_{B_{1}(x)}\Big(|\nabla^2{v}|^2+\Ric(\nabla {v},\nabla {v})+2\delta^2(n-1) |\nabla {v}|^2\Big).
	\end{align}
Since $r\ge c(n,\rv,\eta)>0$, we have
	\begin{align}\nonumber
		r^{2-n}\int_{B_{r}(x)}\Big(|\nabla^2{u}|^2+\Ric(\nabla {u},\nabla {u})+2\delta^2(n-1) |\nabla {u}|^2\Big)\le &\frac{1}{3}\int_{B_1(x)}\Big(|\nabla^2{u}|^2+\Ric(\nabla {u},\nabla {u})+2\delta^2(n-1) |\nabla {u}|^2\Big)\\
&+C\int_{B_{1}(x)}\Big(|\nabla^2{v}|^2+\Ric(\nabla {v},\nabla {v})+2\delta^2(n-1) |\nabla {v}|^2\Big).
	\end{align}
On the other hand, the Sharp Cone-splitting Theorem \ref{t:sharp_splitting} gives
\begin{align}
\int_{B_{1}(x)}\Big(|\nabla^2{v}|^2+\Ric(\nabla {v},\nabla {v})+2\delta^2(n-1) |\nabla v|^2\Big)\le C(n,\rv,\alpha)\cE_{1}^{k}(x).
\end{align}
Therefore,
	\begin{align}
		r^{2-n}\int_{B_{r}(x)}\Big(|\nabla^2{u}|^2+&\Ric(\nabla {u},\nabla {u})+2\delta^2(n-1) |\nabla {u}|^2\Big)\notag\\
		&\leq 
		\frac{1}{3}\int_{B_1(x)}\Big(|\nabla^2{u}|^2
		+\Ric(\nabla {u},\nabla {u})
		+2\delta^2(n-1) |\nabla {u}|^2\Big)
		+C\cE_{1}^{k}(x)\, .
	\end{align}
This completes the proof  of Proposition \ref{p:decay_hessian}
\end{proof}

\vspace{.3cm}
\subsubsection{The proof of Proposition \ref{p:decay_hessian_transformation}}
\label{ss:decay_hessian_transformation}

Let $\epsilon>0$ small be fixed later.  By Proposition \ref{p:transfor_prop} (1), which has been proven at this stage, if $\delta\leq \delta(n,\rv,\eta,\epsilon)$ then for each $r_0\le r\le 1$ we have a $k\times k$ lower triangle matrix $T_r$ such that $T_ru$ is $(k,\epsilon)$-splitting map on $B_r(x)$ with $|T_{r/2}\circ T_r^{-1}-I|\le \epsilon$.   Applying Proposition \ref{p:decay_hessian} to $T_su$, we get for all  $cs/2\le r\le cs$ that 
	\begin{align}
		r^{2-n}\int_{B_{r}(x)}&\Big(|\nabla^2T_{s}u|^2+\Ric(\nabla T_{s}u,\nabla T_{s}u)+2\delta^2(n-1) |\nabla T_{s}u|^2\Big)\notag\\
		&\le \frac{1}{3}s^{2-n}\int_{B_s(x)}\Big(|\nabla^2T_su|^2+\Ric(\nabla T_su,\nabla T_su)+2\delta^2(n-1) |\nabla T_su|^2\Big)+C\cE_{s}^{\alpha,\delta,k}(x).
	\end{align}
Fix $\epsilon\le \epsilon(n,\rv,\eta)$ such that $|T_r\circ T_s^{-1}-I|\le 10^{-10n}$. We have 
	\begin{align}
		r^{2-n}\int_{B_{r}(x)}&\Big(|\nabla^2T_{r}u|^2+\Ric(\nabla T_{r}u,\nabla T_{r}u)+2\delta^2(n-1) |\nabla T_{r}u|^2\Big)\notag\\
		&\le \frac{3}{2}r^{2-n}\int_{B_{r}(x)}\Big(|\nabla^2T_{s}u|^2+\Ric(\nabla T_{s}u,\nabla T_{s}u)+2\delta^2(n-1) |\nabla T_{s}u|^2\Big)\notag\\
		&\le \frac{1}{2}s^{2-n}\int_{B_s(x)}\Big(|\nabla^2T_su|^2+\Ric(\nabla T_su,\nabla T_su)+2\delta^2(n-1) |\nabla T_su|^2\Big)+C\cE_{s}^{k}(x).
	\end{align}
This completes the proof of Proposition \ref{p:decay_hessian_transformation}. \qed

\vspace{.1cm}
\subsection{Proof of the Geometric Transformation Theorem}\label{ss:Proving_Transformation}
For any $0<\delta'<\epsilon$ if $\delta\le \delta(n,\rv,\eta,\delta')$, then by the Transformation Proposition \ref{p:transfor_prop}.1 we have for each scale $r\le s\le 1$ a lower triangle matrix $T_s$ such that $T_su: B_s(x)\to\mathbb{R}^k$ is a $(k,\delta')$-splitting map. In particular, $T_su: B_s(x)\to\mathbb{R}^k$ is $(k,\epsilon)$ splitting. Therefore, it suffices to estimate the hessian for $T_su$.  

First we choose $\delta'\le \delta'(n,\rv,\eta,\epsilon)<\epsilon$ small such that Proposition \ref{p:decay_hessian_transformation} holds.
Therefore, by Proposition \ref{p:decay_hessian_transformation}, for any $r\le s\le 1$ we have 
\begin{align}
		(cs)^{2-n}\int_{B_{cs}(x)}&\Big(|\nabla^2T_{cs}u|^2+\Ric(\nabla T_{cs}u,\nabla T_{cs}u)+2\delta^2(n-1) |\nabla T_{cs}u|^2\Big)\notag\\
		&\le \frac{1}{2}s^{2-n}\int_{B_s(x)}\Big(|\nabla^2T_su|^2+\Ric(\nabla T_su,\nabla T_su)+2\delta^2(n-1) |\nabla T_su|^2\Big)+C\cE_{{s}}^{k}(x),\notag\\
		&\le c^{\gamma} s^{2-n}\int_{B_s(x)}\Big(|\nabla^2T_su|^2+\Ric(\nabla T_su,\nabla T_su)+2\delta^2(n-1) |\nabla T_su|^2\Big)+C\cE_{{s}}^{k}(x),
	\end{align}

where we can take $c=2^{-i_0}$ for some integer $i_0(n,\rv,\eta)$ and $\gamma=i_0^{-1}$.  Thus, for $s_\ell=c^\ell$, we have 
	\begin{align}\label{e:hessian_Trc}
		&s_{\ell}^{2-n}\int_{B_{s_\ell}(x)}\Big(|\nabla^2T_{s_\ell}u|^2+\Ric(\nabla T_{s_\ell}u,\nabla T_{s_\ell}u)+2\delta^2(n-1) |\nabla T_{s_\ell}u|^2\Big)\notag\\
		&\le \left(\frac{s_0}{s_{\ell}}\right)^{-\gamma} s_0^{2-n}\int_{B_{s_0}(x)}\Big(|\nabla^2T_{s_0}u|^2+\Ric(\nabla T_{s_0}u,\nabla T_{s_0}u)+2\delta^2(n-1) |\nabla T_{s_0}u|^2\Big)+C\sum_{j=0}^{\ell-1} \left(\frac{s_{j+1}}{s_{\ell}}\right)^{-\gamma}\cE_{{s}_j}^{k}(x)\notag\\
		&\le C\sum_{j=0}^{\ell-1} c^{\gamma(\ell-j))}\Big(\cE_{{s}_j}^{k}(x)+{s}_j^{2}\delta^2\Big):=   \tilde{\cE}_{s_\ell}^k(x),
	\end{align}
	where in the last inequality we have used the fact that 
$$
s_0^{2-n}\int_{B_{s_0}(x)}\Big(|\nabla^2T_{s_0}u|^2+\Ric(\nabla T_{s_0}u,\nabla T_{s_0}u)+2\delta^2(n-1) |\nabla T_{s_0}u|^2\Big)\le \delta^2\,  .
$$
 For general $s>r$ with $c^{\ell+1}<s\le c^\ell$, we have 
	\begin{align}
	s^{2-n}\int_{B_{s}(x)}\Big(|\nabla^2T_{s}u|^2+&\Ric(\nabla T_{s}u,\nabla T_{s}u)+2\delta^2(n-1) |\nabla T_{s}u|^2\Big)\notag\\
	&\le Cs^{2-n}\int_{B_{s}(x)}\Big(|\nabla^2T_{s_\ell}u|^2+\Ric(\nabla T_{s_\ell}u,\nabla T_{s_\ell}u)+2\delta^2(n-1) |\nabla T_{s_\ell}u|^2\Big)\notag\\
	&\le s_{\ell}^{2-n}\int_{B_{s_\ell}(x)}\Big(|\nabla^2T_{s_\ell}u|^2+\Ric(\nabla T_{s_\ell}u,\nabla T_{s_\ell}u)+2\delta^2(n-1) |\nabla T_{s_\ell}u|^2\Big)\notag\\
	&\le C\tilde{\cE}_{s_\ell}^k(x),
	\end{align}
	where we use the estimate $|T_s\circ T_{s_\ell}^{-1}-I|\le \epsilon$ in the first inequality. 
This completes  the proof of Theorem \ref{t:transformation}, the 	
Geometric Transformation Theorem. 
	
\vspace{1cm}

\section{Nondegeneration of $k$-Splittings}
\label{s:nondegeneration}

In this section we state and prove Theorem \ref{t:nondegeneration},
which is our
 our main result for $k$-splitting maps
$u:B_2(p)\to \dR^k$.   
Theorem \ref{t:nondegeneration}
is a crucial ingredient in the proof of Theorem \ref{t:neck_region2}.
\vskip2mm

Essentially Theorem \ref{t:nondegeneration} is obtained by
combining Theorem \ref{t:sharp_splitting}, the Sharp Cone-Splitting Theorem,
 Theorem \ref{t:transformation}, the Transformation Theorem,
with Proposition \ref{p:telescope_harmonic}, 
and a telescoping estimate for harmonic functions
 based on a monotonicity property.  This telescoping estiamte is much sharper
than the corresponding more general telescoping estimate 
 for $W^{1,p}$ functions. In the proof of
Theorem \ref{t:nondegeneration}, this is essential. It
allows us to adequately control the sum over arbitrarily many scales of the Hessian estimates in Theorem \ref{t:sharp_splitting} and Theorem \ref{t:transformation}.

Recall that $\cE^{k,\alpha,\delta}$ is the entropy pinching defined in Definition \ref{d:kalphadeltapinching}.

\begin{theorem}[Nondegeneration of $k$-Splittings]
\label{t:nondegeneration}
Given $\epsilon,\eta,\alpha>0$ and $\delta < \delta(n,\rv,\eta,\alpha,\epsilon)$ we have the following.  Let $(M^n,g,p)$ satisfy $\Ric_{M^n}\geq -(n-1)\delta^2$, $\Vol(B_1(p))>v>0$, and let $u:B_{2}(p)\to \dR^k$ denote a $(k,\delta)$-splitting function.  Assume:

\begin{itemize}
\item $B_{\delta^{-1}s}(p)$, is $(k,\delta^2)$-symmetric but $B_s(p)$ is not $(k+1,\eta)$-symmetric for all $r\leq s\leq 1$. 
 
 \item $\sum_{r_j\geq r} \cE^{k,\delta,\alpha}_{r_j}(p) < \delta$ where $r_j = 2^{-j}$.
 \end{itemize}   

Then   $u:B_s(p)\to \dR^k$ is an $\epsilon$-splitting function  for every $r\leq s\leq 1$. 
\end{theorem}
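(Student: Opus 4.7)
The plan is to reduce the statement to a quantitative estimate on how close the normalizing matrices $T_s$ from the Geometric Transformation Theorem \ref{t:transformation} are to the identity. Indeed, applying Theorem \ref{t:transformation} with error parameter $\epsilon/2$, for each $r\leq s\leq 1$ we obtain a lower-triangular $k\times k$ matrix $T_s$ with $T_1$ close to $I$ (since $u$ is already a $(k,\delta)$-splitting) such that $T_s u:B_s(p)\to\mathbb{R}^k$ is a $(k,\epsilon/2)$-splitting, and moreover the strong Hessian bound
\begin{align}
s^2\fint_{B_s(p)}\bigl(|\nabla^2 T_s u|^2+\Ric(\nabla T_s u,\nabla T_s u)+2(n-1)\delta^2|\nabla T_s u|^2\bigr)\leq C\sum_{s\leq r_j\leq 1}\bigl(s/r_j\bigr)^{\gamma}\cE^{k}_{r_j}(p)+C\delta^{2}\notag
\end{align}
holds. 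If we can show that $|T_s-I|<\epsilon'=\epsilon'(\epsilon,n)$ for all such $s$, then since $|\nabla u|\leq 1+\delta$ and $T_s u$ is an $\epsilon/2$-splitting, a direct algebraic computation gives that $u$ itself is a $(k,\epsilon)$-splitting on $B_s(p)$, which is the desired conclusion.

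The core estimate is the following scale-to-scale \emph{squared} bound: for consecutive dyadic scales $r_{j+1}=r_j/2$,
\begin{align}
\bigl|T_{r_{j+1}}\circ T_{r_j}^{-1}-I\bigr|\;\leq\; C\,r_j^{2}\fint_{B_{r_j}(p)}|\nabla^{2}T_{r_j}u|^{2}\;+\;C\,\delta^{2}\,r_j^{\gamma}.\notag
\end{align}
This is where the \emph{square gain} occurs, and it is the reason the harmonicity of $u$ enters crucially. I would establish this by the monotonicity argument described in Remark \ref{r:notation} of the outline: since $T_{r_j}^{-1}(T_{r_j}^{-1})^{*}$ is, up to error, the matrix $G_{ab}(r_j^{2})=\int\langle \nabla(T_{r_j}u)^{a},\nabla(T_{r_j}u)^{b}\rangle\rho_{r_j^{2}}(p,dy)$, and for harmonic functions Bochner's formula \eqref{e:bochner} gives
\begin{align}
\tfrac{d}{dt}G_{ab}(t)=2\int\bigl(\langle\nabla^{2}u^{a},\nabla^{2}u^{b}\rangle+\Ric(\nabla u^{a},\nabla u^{b})\bigr)\rho_t(p,dy),\notag
\end{align}
integrating from $r_{j+1}^{2}$ to $r_j^{2}$, using the heat kernel upper/lower bounds from Theorem \ref{t:heat_kernel} to compare the integral against a ball average, and invoking the gradient control $|\nabla T_{r_j}u|\leq 1+\epsilon/2$ together with Cauchy-Schwarz yields the claimed quadratic control by $\int|\nabla^2 T_{r_j}u|^2\rho$. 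The Cholesky uniqueness argument from Lemma \ref{l:Cholesky} then converts this gradient-matrix change into a bound for $T_{r_{j+1}}T_{r_j}^{-1}-I$. I expect this is what Proposition \ref{p:telescope_harmonic} referenced in the text is designed to formalize.

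With this squared bound in hand, the telescope is routine: writing $T_{r_\ell}=\prod_{j=0}^{\ell-1}(T_{r_{j+1}}T_{r_j}^{-1})\cdot T_{1}$ and using $|AB-I|\leq |A-I|+|B-I|+k|A-I||B-I|$, one gets $|T_{r_\ell}-I|\leq \exp\bigl(\sum_j|T_{r_{j+1}}T_{r_j}^{-1}-I|\bigr)-1+|T_1-I|$. Substituting the strong Hessian estimate from Theorem \ref{t:transformation} into the squared scale-to-scale bound and exchanging order of summation gives
\begin{align}
\sum_{j}\bigl|T_{r_{j+1}}T_{r_j}^{-1}-I\bigr|\;\leq\; C\sum_{j}\Bigl(\sum_{\ell\geq j}(r_j/r_{\ell})^{\gamma}\cE^{k}_{r_{\ell}}(p)+\delta^{2}r_j^{\gamma}\Bigr)\;=\;C\sum_{\ell}\cE^{k}_{r_{\ell}}(p)\sum_{j\leq\ell}(r_j/r_{\ell})^{\gamma}+C\delta^{2},\notag
\end{align}
and the inner geometric sum is uniformly bounded by $C(\gamma)$. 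Applying the hypothesis $\sum_{r_j\geq r}\cE^{k,\delta,\alpha}_{r_j}(p)<\delta$ bounds the whole thing by $C\delta+C\delta^{2}<\epsilon'$ once $\delta\leq\delta(n,\rv,\eta,\alpha,\epsilon)$. Filling in scales $s\in[r_{j+1},r_j]$ uses the slow-variation estimate $|T_s T_{r_j}^{-1}-I|\leq \epsilon/2$ from Proposition \ref{p:transfor_prop}.

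The main obstacle, and what makes this theorem substantive beyond the Transformation Theorem, is obtaining the quadratic, rather than merely linear (in $\sqrt{\cE}$), control in the scale-to-scale estimate: a naive telescope using only that $T_s u$ is a good splitting map would yield $\sum_j \sqrt{\cE^k_{r_j}}$, which is not summable under a hypothesis of the form $\sum\cE^k_{r_j}<\delta$. Establishing the squared estimate is what forces the use of harmonicity via Bochner and heat-kernel monotonicity, rather than general $W^{1,2}$ manipulations, and it is precisely this improvement that later allows the nondegeneracy to be spread to a large $\mu$-measure subset of the neck in the proof of Theorem \ref{t:harmonic_splitting_neckregion} via the averaging computation sketched at the end of Section \ref{s:ouline_proof_neckstructure}.
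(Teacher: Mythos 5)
Your proposal is correct and follows essentially the same route as the paper: reduce to showing $|T_s-I|<\epsilon'$, obtain the squared scale-to-scale bound on $|T_{r_{j+1}}T_{r_j}^{-1}-I|$ via Bochner/heat-kernel monotonicity for harmonic functions (this is exactly what Proposition \ref{p:telescope_harmonic} formalizes, as you guessed), convert the gradient-matrix normalization into a matrix estimate via Cholesky uniqueness, and then telescope and exchange the order of summation against the pinching hypothesis. The only cosmetic difference is that the paper normalizes $T_{s_i}$ against the heat-kernel measure and tracks the Hessian contribution from all larger annuli in the weighted sums $\chi_i$ (Proposition \ref{p:global_integral_heatkernel}), whereas you state the intermediate estimate with a single ball average; your final double-sum exchange absorbs this in any case.
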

\vspace{.2cm}


From the Transformation Theorem \ref{t:transformation}, we know that for some lower triangular matrix $T_r=T(p,r)$ that the compososition $T_ru: B_r(p)\to\mathbb{R}^k$ is a $\delta$-splitting function.  Our goal then is to show that under the above hypotheses that $T_r$ remains close to the identity.  Proposition \ref{p:telescope_harmonic} below provides suitable control of the difference $|T_r\circ T_{2r}^{-1}-I| $.
 From this the Nondegeneration Theorem, \ref{t:nondegeneration} will easily
 follow.  
 
 
\subsection{Hessian Estimates with respect to the heat kernel density}
The purpose of this subsection is to prove some technical results which convert ball-average estimates on $Tu$ into estimates with respect to the heat kernel measure, which is important due to our use of entropy as the monotone quantity.  
\vskip2mm

\noindent
{\bf Notation.}  Throughout this section $\varphi$ denotes a cutoff function as in \eqref{e:cutoff}, with support in $B_{1}(x)$, with
 $\varphi:=  1$ on $B_{1/2}(x)$ and such that
 $
 |\Delta \varphi|+|\nabla \varphi|^2\le C(n)\, .
 $
 \vskip2mm

 The main result of this subsection is the following technical 
 proposition. 
\begin{proposition}
\label{p:global_integral_heatkernel}
Given $\alpha,\eta>0$ and $\epsilon>0$ there exist $\delta\leq \delta(n,\rv,\eta,\alpha,\epsilon)$,
 $\gamma=\gamma(n,\rv,\eta)>0$, $C(n,\rv,\eta,\alpha)$, $C(n,\rv)$,
with the following properties.
	Let $(M^n,g,x)$ satisfy $\Ric_{M^n}\ge -(n-1)\delta^2$, $\Vol(B_1(x))\ge \rv>0$, and let $u: B_{2}(x)\to \mathbb{R}^k$ be a $(k,\delta)$- splitting map.  Assume
\begin{itemize}
\item $B_{\delta^{-1}s}(x)$ is $(k,\delta^2)$-symmetric and $B_s(x)$ is not $(k+1,\eta)$-symmetric for all $r\le s\le 1$. 
\end{itemize}
	  Then for each $r\le s_i\le 1$ there exists a $k\times k$ lower triangular matrix $T_{s_i}$ such that $T_{s_i}u: B_{s_i}(x)\to \mathbb{R}^k$ is 
a $(k,\epsilon)$-splitting map with
\begin{align}
\label{e:onheat}
		\int_{M^n}\langle \nabla ({T}_{s_i}u)^a,\nabla ({T}_{s_i}u)^b\rangle \varphi^2\rho_{s_i^2}(x,dy)&=\delta^{ab}\, ,
	\end{align}
	and the Hessian estimate of $T_{s_i}u$ satisfies 
\begin{align}\label{e:global_integral}
		s_i^{2}\int_{M}\Big(|\nabla^2T_{s_i}u|^2+\Ric(\nabla T_{s_i}u,\nabla T_{s_i}u)+2\delta^2(n-1) |\nabla T_{s_i}u|^2\Big)\varphi^2 \rho_{4s_i^2}(x,dy)\le C(n,\rv)\sum_{j=0}^i \epsilon_j 2^{j-i}\,
\end{align}
where
\begin{align}
\label{e:sumest}
\epsilon_i= C(n,\rv,\eta,\alpha)
\cdot\sum_{j=0}^i 2^{-\gamma (i-j)}\Big(\cE_{s_j}^{k}(x)+\delta s_j^2\Big)\, .
\end{align}
\end{proposition}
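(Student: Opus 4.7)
The plan is to proceed in three steps: first produce a matrix $\hat{T}_{s_i}$ at each scale from the Geometric Transformation Theorem \ref{t:transformation}, second correct it to a nearby matrix $T_{s_i}$ so that the heat kernel orthonormalization \eqref{e:onheat} holds, and third promote the ball-averaged Hessian bound into the heat-kernel weighted integral \eqref{e:global_integral}.

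First I would fix some $\epsilon' \ll \epsilon$ and apply Theorem \ref{t:transformation} at each scale $s_i$ to produce a lower triangular $\hat{T}_{s_i}$ with $\hat{T}_{s_i} u : B_{s_i}(x) \to \R^k$ a $(k, \epsilon')$-splitting map satisfying the ball-averaged Hessian estimate
\begin{align}
s_i^2 \fint_{B_{s_i}(x)} \Big(|\nabla^2 \hat{T}_{s_i} u|^2 + \Ric(\nabla \hat{T}_{s_i} u, \nabla \hat{T}_{s_i} u) + 2\delta^2(n-1)|\nabla \hat{T}_{s_i} u|^2\Big) \le C\epsilon_i,
\end{align}
together with the scale-to-scale estimate $|\hat{T}_{r}\hat{T}_{2r}^{-1} - I| \le \epsilon'$ from Proposition \ref{p:transfor_prop}. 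Since the heat kernel at time $s_i^2$ is concentrated on $B_{Cs_i}(x)$ with exponential decay outside (by Theorem \ref{t:heat_kernel}) and $|\nabla \hat{T}_{s_i} u| \le 1 + \epsilon'$ everywhere, the matrix $\int \langle \nabla(\hat{T}_{s_i} u)^a, \nabla(\hat{T}_{s_i} u)^b\rangle \varphi^2 \rho_{s_i^2}(x, dy)$ differs from $\delta^{ab}$ by $O(\epsilon')$. A Gram--Schmidt correction (which preserves lower triangularity and costs only a rotation of size $O(\epsilon')$) then produces the desired $T_{s_i}$ satisfying \eqref{e:onheat}, while $T_{s_i} u$ remains a $(k, \epsilon)$-splitting on $B_{s_i}(x)$ once $\epsilon'$ is chosen small enough.

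The heart of the proof is step three. I would decompose $\mathrm{supp}(\varphi)$ into the inner ball $B_{s_i}(x)$ and dyadic annuli $A_j := B_{s_j}(x) \setminus B_{s_{j+1}}(x)$ for $0 \le j < i$. On $B_{s_i}(x)$, the heat kernel lower bound combined with the ball-averaged bound from step one directly yields a contribution of size $C \epsilon_i$, which is the $j = i$ term of \eqref{e:global_integral}. On the annulus $A_j$, write $T_{s_i} u = (T_{s_i} T_{s_j}^{-1})(T_{s_j} u)$ so that $|\nabla^2 T_{s_i} u|^2 \le |T_{s_i} T_{s_j}^{-1}|^2 \, |\nabla^2 T_{s_j} u|^2$, where the iterated transformation estimate gives $|T_{s_i} T_{s_j}^{-1}| \le (1+\epsilon')^{i-j} \le (s_j/s_i)^{C\epsilon'}$. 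The Transformation Theorem at scale $s_j$ then controls $\int_{A_j} |\nabla^2 T_{s_j} u|^2$ by $Cs_j^{n-2}\epsilon_j$, while the Gaussian bound $\rho_{4s_i^2}(x, y) \le C s_i^{-n} e^{-c\,d(x,y)^2/s_i^2}$ gives, on $A_j$, a factor $e^{-c\,4^{i-j}}$. This Gaussian factor dominates any polynomial $(s_j/s_i)^{C}$ growth, producing a contribution bounded by $C \epsilon_j\, 2^{j-i}$ (and in fact much more rapid decay, which we discard to obtain the stated form). Summing over $j$ then yields \eqref{e:global_integral}, and the $\Ric$ and $\delta^2|\nabla|^2$ terms are handled identically.

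The main obstacle will be step three, specifically ensuring that the ball-averaged Hessian bound on each annulus can be combined with the matrix norm growth $|T_{s_i}T_{s_j}^{-1}|$ across many scales without losing more than a polynomial factor. This requires careful bookkeeping: the exponential Gaussian decay of the heat kernel must beat the polynomial-in-$(s_j/s_i)$ growth coming both from the volume $|A_j| \sim s_j^n$ and from the iterated transformation matrices. A secondary (routine) difficulty is handling the cutoff $\varphi$ at scale $1$, but since the heat kernel at time $4s_i^2 \le 4$ has Gaussian tail outside $B_1(x)$, this contributes only a negligible error absorbed into the $C(n,\rv)\delta s_j^2$ terms of \eqref{e:sumest}.
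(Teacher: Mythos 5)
Your proposal is correct and follows essentially the same route as the paper's proof: the Geometric Transformation Theorem at each scale for the ball-averaged Hessian bound, a Gram--Schmidt correction of the lower triangular matrix to achieve the heat-kernel orthonormalization \eqref{e:onheat}, and a decomposition into $B_{s_i}(x)$ plus dyadic annuli on which the H\"older growth of $|T_{s_i}T_{s_j}^{-1}|$ is beaten by the Gaussian decay of $\rho_{4s_i^2}$. The only loose point is in your second step: the pointwise bound $|\nabla \hat{T}_{s_i}u|\le 1+\epsilon'$ by itself does not yield near-orthonormality of the gradients on the larger ball $B_{Rs_i}(x)$ where the heat kernel mass actually sits — for that you must invoke the scale-to-scale matrix estimate (which you do have from step one) to see that $\hat{T}_{s_i}u$ remains an approximate splitting there, exactly as in the paper's Lemma \ref{l:claim1}.
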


\begin{proof} (of Proposition \ref{p:global_integral_heatkernel})
Note that by Theorem \ref{t:transformation}, for any $\epsilon'$ if $\delta\le \delta(n,\rv,\eta,\epsilon')$ then there exists $\tilde{T}_{s_i}$ such that $\tilde{T}_{s_i}: B_{s_i}(x)\to\mathbb{R}^k$ is a $(k,\epsilon')$-splitting map and the hessian satisfies 
	\begin{align}\label{e:tildeTsiHessian}
		s_i^{2-n}\int_{B_{s_i}(x)}\Big(|\nabla^2\tilde{T}_{s_i}u|^2+\Ric(\nabla \tilde{T}_{s_i}u,\nabla \tilde{T}_{s_i}u)+2\delta^2(n-1) |\nabla \tilde{T}_{s_i}u|^2\Big)\le C(n,\rv,\eta,\alpha)\sum_{j=0}^i 2^{-\gamma (i-j)}\Big(\cE_{s_j}^{k}(x)+\delta s_j^2\Big)\, .
		\end{align}
		Denote 
		\begin{align}
\label{e:epsilon}
	\epsilon_i:=  C(n,\rv,\eta,\alpha)\sum_{j=0}^i 2^{-\gamma (i-j)}\Big(\cE_{s_j}^{k}(x)+\delta s_j^2\Big)\, .
\end{align}
In order to make sure matrix $\tilde{T}_{s_i}$ satisfies \eqref{e:onheat}, we need to do a rotation as the following Lemma \ref{l:claim1} and then we can fix $\epsilon'=\epsilon'(n,\epsilon,v)$ so that 
$T_{s_i}u: B_{s_i}(x)\to\mathbb{R}^k$ is $(k,\epsilon)$-splitting. 
	
	
\begin{lemma}
\label{l:claim1}
 For any $\epsilon>0$ if $\epsilon'\le\epsilon'(n,\epsilon,\rv)$ and $\delta\le \delta(n,\rv,\eta,\alpha,\epsilon)$, then there exists lower triangle matrix $A_{i}$ with $|A_i-I|\le C(n)\epsilon$ such that $T_{s_i}=A_i\circ \tilde{T}_{s_i}$ satisfies 	
 \begin{align}
 \label{e:claim1}
 \int_{M^n}\langle \nabla ({T}_{s_i}u)^a,\nabla ({T}_{s_i}u)^b\rangle \varphi^2\rho_{s_i^2}(x,dy)=\delta^{ab}\, ,
 \end{align}
 and $T_{s_i}u: B_{s_i}(x)\to \mathbb{R}^k$ is $(k,\epsilon)$-splitting. 
 \end{lemma}
\begin{proof}	
For any $\epsilon$, by the exponential heat kernel decay estimate 
in Theorem \ref{t:heat_kernel} and the matrix estimate in Proposition \ref{p:transfor_prop}, there exists $R(n,\rv,\epsilon)$ such that
		\begin{align}
		\int_{B_1(x)\setminus B_{Rs_i}(x)}|\langle \nabla (\tilde{T}_{s_i}u)^a,\nabla (\tilde{T}_{s_i}u)^b\rangle -\delta^{ab}|\cdot\rho_{s_i^2}(x,dy)<\epsilon/2.	
 \end{align}
		Also, by Proposition \ref{p:transfor_prop}, for any $\epsilon'>0$ if $\delta\le \delta(\epsilon',n,\rv,\eta)$ then we have the matrix growth estimate 
$$
|\tilde{T}_{s_i}\tilde{T}^{-1}_{s_j}-I|\le \left(\frac{s_j}{s_i}\right)^{\epsilon'}-1\, ,
$$ 
for any $s_i\le s_j\le 1$. Therefore, if $\delta\le \delta(\epsilon,n,\rv,\eta)$, we have  
\begin{align}
		\fint_{B_{Rs_i}(x)}|\langle \nabla (\tilde{T}_{s_i}u)^a,\nabla (\tilde{T}_{s_i}u)^b\rangle -\delta^{ab}|<\epsilon/2.
  \end{align}
These two estimates imply 
$$
\int_{M}\Big|\langle \nabla (\tilde{T}_{s_i}u)^a,\nabla (\tilde{T}_{s_i}u)^b\rangle-\delta^{ab}\Big| \varphi^2\cdot\rho_{s_i^2}(x,dy)\le \epsilon\, .
$$ 
By using the Gram-Schmidt process, there exists lower triangle matrix $A_i$ satisfying 
\eqref{e:claim1}. This completes the proof of 
Lemma \ref{l:claim1}. 
\end{proof}
	

To finish the proof of Proposition \ref{p:global_integral_heatkernel}, it suffices to prove  \eqref{e:global_integral}. 

 Since $T_{s_i}=A_i\circ \tilde{T}_{s_i}$ with bounded $A_i$, we have from \eqref{e:tildeTsiHessian} that  
		\begin{align}\label{e:scal_ri_integral}
		s_j^{2-n}\int_{B_{s_j}(x)}\Big(|\nabla^2{T}_{s_j}u|^2+\Ric(\nabla {T}_{s_j}u,\nabla {T}_{s_j}u)+2\delta^2(n-1) |\nabla {T}_{s_j}u|^2\Big)\le C(n)\epsilon_j\, .
		\end{align}
		To prove \eqref{e:global_integral}, we only need to use the estimates \eqref{e:scal_ri_integral} for each scale $s_j\ge s_i$ and the heat kernel estimates in Theorem \ref{t:heat_kernel}.  By the H\"older growth estimate for transformation matrices in Proposition \ref{p:transfor_prop}, if $\delta\le \delta(n,\rv,\eta)$ is small then we have $|T_{s_i}T_{s_j}^{-1}| \le 2^{(i-j)/100}$. Therefore,  for $s_i\le s_j\le 1$, 
we have 
\begin{align}
\int_{A_{s_{j+1},s_j}(x)}\Big(|\nabla^2T_{s_i}u|^2+\Ric(\nabla T_{s_i}u,\nabla T_{s_i}u)+2\delta^2(n-1) |\nabla T_{s_i}u|^2\Big)\le s_j^{n-2} 2^{(i-j)/10} \epsilon_j.
\end{align}
In particular, by the heat kernel estimates of Theorem \ref{t:heat_kernel}, we have 
\begin{align}
{}\!\!\!\!\!\!\!\!\!\!\!\!\!\!\!\!\!\!\!\!\!\!\!\!\!\!\!\!\!\!\!\!
\int_{A_{s_{j+1},s_j}(x)}\Big(|\nabla^2
&T_{s_i}u|^2+\Ric(\nabla T_{s_i}u,\nabla T_{s_i}u)+2\delta^2(n-1) |\nabla T_{s_i}u|^2\Big)\cdot 
\rho_{4s_i^2}(x,dy)\notag\\
&\le C(n,\rv) s_i^{-n} e^{-\frac{s_j^2}{20s_i^2}}s_j^{n-2} 2^{(i-j)/10} \epsilon_j
\,\,\le \, \, C(n,\rv) s_j^{-2} 2^{(i-j)/10} \epsilon_j\, .
\end{align}

Thus, 
\begin{align}
		s_i^2\int_{B_{1}(x)}&\Big(|\nabla^2T_{s_i}u|^2+\Ric(\nabla T_{s_i}u,\nabla T_{s_i}u)+2\delta^2(n-1) |\nabla T_{s_i}u|^2\Big)\cdot\rho_{{4s_i^2}}
	(x,dy)\notag\\
&\le s_i^2\left( \int_{B_{s_i}(x_i)}+\sum_{j=0}^{i-1}\int_{A_{s_{j+1},s_j}(x)}\right)\Big(|\nabla^2T_{s_i}u|^2+\Ric(\nabla T_{s_i}u,\nabla T_{s_i}u)+2\delta^2(n-1) |\nabla T_{s_i}u|^2\Big)\rho_{4s_i^2}(x,dy)\notag\\
&\le C(n,\rv)\epsilon_i+ C(n,\rv)\sum_{j=0}^{i-1}2^{2(j-i)} 2^{(i-j)/10}\epsilon_j
\notag\\
&\le C(n,\rv)\sum_{j=0}^i \epsilon_j 2^{j-i}.
	\end{align}
This implies \eqref{e:global_integral}, which completes the proof
of Proposition \ref{p:global_integral_heatkernel}.
\end{proof}

\subsection{A telescope estimate for harmonic functions}
In this subsection, we prove a telescoping type estimate, Proposition
\ref{p:telescope_harmonic}, for harmonic functions
in  which  {\it the squared $L^2$-norm of the Hessian linearly controls the difference of the norm of the gradient on concentric balls}; see \eqref{e:telescope_estimate1}.
  For a function
 which is not harmonic, 
 the squared $L^2$-norm would have to 
 be replaced by the $L^2$ norm itself.
 This weaker estimate would
 not suffice for our purposes.
 \vskip2mm


Let $\varphi$ be a cutoff function with support in $B_{1}(x)$ and $\varphi:=  1$ in $B_{1/2}(x)$ such that $|\Delta\varphi|+|\nabla\varphi|\le C(n)$.

\begin{proposition}
\label{p:telescope_harmonic}
			Let $(M^n,g,x)$ satisfy $\Ric_{M^n}\ge -(n-1)\delta^2$, $\Vol(B_1(x))\ge \rv>0$ and $0<s<1$. Assume $u_1,u_2: B_{2}(x)\to \mathbb{R}$ are two harmonic functions satisfying polynomial growth condition $\sup_{B_r(x)}\Big(|\nabla u_1|(y)+|\nabla u_2|(y)\Big)\le K(1+s^{-1}r)$ for all $0<r\le 2$\footnote{After rescaling $B_s(x)$ to $B_1(x)$, this condition just means that $|\nabla u|$ has linear growth in $B_{2s^{-1}}(x)$.}.  Then 
\begin{align}
\label{e:telescope_estimate1}
				&\Big|\int_{M}\langle \nabla u_1,\nabla u_2\rangle \varphi^2\rho_{s^2}(x,dy)-\int_{M}\langle \nabla u_1,\nabla u_2\rangle \varphi^2\rho_{4s^2}(x,dy)\Big|\notag\\
				&\le C(n)\sum_{i=1}^2s^2\int_M\left( |\nabla^2 u_i|^2+\Ric(\nabla u_i,\nabla u_i)+2(n-1)\delta^2|\nabla u_i|^2\right)\varphi^2\rho_{8s^2}(x,dy)+C(n,\rv,K) e^{-\frac{1}{100s^2}}\, .
\end{align}
\end{proposition}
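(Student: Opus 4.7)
The plan is to express the difference on the left-hand side as $F(4s^2)-F(s^2)=\int_{s^2}^{4s^2}F'(t)\,dt$, where
$$F(t):=\int_{M}\langle\nabla u_1,\nabla u_2\rangle\,\varphi^2\,\rho_t(x,dy),$$
and then to show that $F'(t)$ is controlled, via the Bochner identity, by exactly the quadratic Hessian quantity appearing on the right-hand side. The heart of the argument is thus the ``square gain'' that comes from two integrations by parts: a \emph{single} factor of $|\nabla^2 u|$ would come out by differentiating $|\nabla u|^2$ once against the heat kernel; a \emph{square} $|\nabla^2 u|^2$ emerges because $\rho_t$ itself solves the heat equation and a second integration by parts turns $\partial_t\rho_t$ into $\Delta(\langle\nabla u_1,\nabla u_2\rangle\varphi^2)$ acting on the harmonic inner product.

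First, I would use $\partial_t\rho_t=\Delta_y\rho_t$, integrate by parts twice (the compact support of $\varphi$ kills boundary contributions), and apply Bochner's formula \eqref{e:bochner}, polarized in the two harmonic functions $u_1,u_2$. Using $\Delta u_i=0$ this yields
$$\tfrac{1}{2}\Delta\langle\nabla u_1,\nabla u_2\rangle=\langle\nabla^2 u_1,\nabla^2 u_2\rangle+\Ric(\nabla u_1,\nabla u_2).$$
Expanding $\Delta(\langle\nabla u_1,\nabla u_2\rangle\varphi^2)$ produces this ``main'' term together with two ``cutoff'' terms, namely $\langle\nabla u_1,\nabla u_2\rangle\,\Delta(\varphi^2)$ and $2\langle\nabla\langle\nabla u_1,\nabla u_2\rangle,\nabla\varphi^2\rangle$. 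On the main term I would apply the pointwise Cauchy--Schwarz inequality
$$2|\langle\nabla^2 u_1,\nabla^2 u_2\rangle|\le |\nabla^2 u_1|^2+|\nabla^2 u_2|^2,$$
together with the nonnegativity of $\Ric+(n-1)\delta^2 g$ and the parallel inequality $2|\Ric(\nabla u_1,\nabla u_2)+(n-1)\delta^2\langle\nabla u_1,\nabla u_2\rangle|\le \sum_i\big(\Ric(\nabla u_i,\nabla u_i)+(n-1)\delta^2|\nabla u_i|^2\big)$, to obtain a pointwise bound by the integrand on the right-hand side of \eqref{e:telescope_estimate1}.

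Next, for each fixed $t\in[s^2,4s^2]$ I would replace $\rho_t$ by $\rho_{8s^2}$ inside the support of $\varphi$ using the standard two-sided heat kernel comparison of Theorem \ref{t:heat_kernel}: on $B_1(x)$ the ratio $\rho_t(x,\cdot)/\rho_{8s^2}(x,\cdot)$ is bounded by $C(n,\rv)$ for $t\in[s^2,4s^2]$. Integrating the resulting inequality over $t\in[s^2,4s^2]$ gives precisely the factor $s^2$ in front of the Hessian term in \eqref{e:telescope_estimate1}. Finally, for the ``cutoff'' error terms I would use that $\nabla\varphi$ and $\Delta\varphi$ are supported on $B_1(x)\setminus B_{1/2}(x)$, where the polynomial growth hypothesis gives $|\nabla u_i|\le C(K)s^{-1}$, while the Gaussian upper bound of Theorem \ref{t:heat_kernel} (part (1)) yields $\rho_t(x,y)\le C(n,\rv)t^{-n/2}e^{-d(x,y)^2/(5t)}\le C(n,\rv)s^{-n}e^{-1/(20s^2)}$ on that annulus for $t\le 4s^2$. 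Combining and absorbing the polynomial factors into the exponential produces the error term $C(n,\rv,K)e^{-1/(100s^2)}$.

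The main obstacle I expect is simply being careful with the Bochner computation in the non-smooth setting implicit in the paper's conventions (both $u_i$ are smooth here, so this is routine) and with the bookkeeping of heat kernel comparisons on the annulus that yields the sharp exponential error. The essential conceptual point—which must be highlighted—is the \emph{quadratic} appearance of $|\nabla^2 u_i|^2$ on the right-hand side, which is not available from a generic telescoping argument over scales and is the reason harmonicity is indispensable: it is Bochner's identity applied to harmonic functions that upgrades a single Hessian factor (as would arise from differentiating $\langle\nabla u_1,\nabla u_2\rangle$ once) into a squared Hessian.
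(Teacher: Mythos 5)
Your proposal is correct and follows essentially the same route as the paper: differentiate $F(t)=\int\langle\nabla u_1,\nabla u_2\rangle\varphi^2\rho_t$ in $t$, use $\partial_t\rho_t=\Delta\rho_t$ and two integrations by parts to invoke the polarized Bochner identity for harmonic functions (the source of the squared Hessian), absorb the Ricci cross term via nonnegativity of $\Ric+(n-1)\delta^2 g$, control the cutoff terms on the annulus $A_{1/2,1}(x)$ by the linear growth hypothesis and the Gaussian heat kernel bounds, and finally integrate over $t\in[s^2,4s^2]$ using $\rho_t\le C(n,\rv)\rho_{8s^2}$ on $B_1(x)$ to produce the factor $s^2$ and the exponential error. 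No gaps.
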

\begin{remark}
We will apply Proposition \ref{p:telescope_harmonic} with $u_1,u_2$ different components of $T_{s}u, T_{s}u$ from Proposition \ref{p:global_integral_heatkernel}, which asserts that
\begin{align}
\sup_{B_r(x)}|\nabla T_su|\le C(n) (1+\frac{r}{s}),~~~ \mbox{ for all $0<r\le 2$.}
\end{align}
\end{remark}

\begin{proof}
	From Bochner's formula,  we get
\begin{align}
\label{e:bochnerheat}
		\Big|\partial_t \int_{M}\langle \nabla u_1,\nabla u_2\rangle \varphi^2\rho_{t}(x,dy)\Big|
		&=\Big|\int_{M}\langle \nabla u_1,\nabla u_2\rangle \varphi^2\Delta \rho_{t}(x,dy)\Big|\notag\\ 
		&=\Big|\int_M \left(\Delta \langle \nabla u_1,\nabla u_2\rangle \varphi^2+\Delta \varphi^2\langle \nabla u_1,\nabla u_2\rangle+ 2\varphi \langle \nabla \varphi,\nabla \langle \nabla u_1,\nabla u_2\rangle \rangle\right)\rho_t(x,dy)\Big|  \notag\\	
&\le C(n)\sum_{i=1}^2\int_M\left
(
|\nabla^2 u_i|^2
+\Ric(\nabla u_i,\nabla u_i)
+2(n-1)\delta^2|\nabla u_i|^2
\right
)
\varphi^2 \rho_{t}(x,dy)\notag\\
&{}\qquad\qquad\qquad+C(n)\sum_{i=1}^2\int_{A_{1/2,1}(x)} |\nabla u_i|^2 \rho_t(x,dy)\, ,
	\end{align}
where in the last inequality we used 	
\begin{align}
|\Ric(\nabla u_1,\nabla u_2)|
\le C(n)\sum_{i=1}^2\left(\Ric(\nabla u_i,\nabla u_i)+2(n-1)\delta^2|\nabla u_i|^2\right)\, .\notag
		\end{align}
By using the heat kernel estimate in Theorem \ref{t:heat_kernel}, 
we can control the last term on the right-hand side of the last line of  \eqref{e:bochnerheat}. Namely, for all $t\le 1$, we have
	\begin{align}
		\sum_{i=1}^2\int_{A_{1/2,1}(x)}|\nabla u_i|^2 \rho_t(x,dy)\le C(n,\rv,K)s^{-2} t^{-n/2}e^{-\frac{1}{20 t}}.
	\end{align}
Therefore: 
	\begin{align}
		&\Big|\int_{M^n}\langle \nabla u_1,\nabla u_2\rangle \varphi^2 \rho_{s^2}(x,dy)-\int_{M^n}\langle \nabla u_1,\nabla u_2\rangle \varphi^2 \rho_{4s^2}(x,dy)\Big|\notag\\
		&=\Big|\int_{s^2}^{4s^2}\partial_t \int_{M^n}\langle \nabla u_1,\nabla u_2\rangle \varphi^2 \rho_{t}(x,dy)dt\Big|\notag\\ 
		&\le C(n)\int_{s^2}^{4s^2}\sum_{i=1}^2\int_{M^n}\left( |\nabla^2 u_i|^2+\Ric(\nabla u_i,\nabla u_i)+2(n-1)\delta^2|\nabla u_i|^2\right)\varphi^2 \rho_{t}(x,dy)dt+\notag\\
&{}\qquad\qquad\qquad\qquad\qquad+C(n,\rv,K)\delta^2 \int_{s^2}^{4s^2}s^{-2}t^{-n/2}e^{-\frac{1}{20 t}}dt\notag
\end{align}
Hence 
\begin{align}
&\Big|\int_{M^n}\langle \nabla u_1,\nabla u_2\rangle \varphi^2 \rho_{s^2}(x,dy)-\int_{M^n}\langle \nabla u_1,\nabla u_2\rangle \varphi^2 \rho_{4s^2}(x,dy)\Big|\notag\\
&\le C(n)\int_{s^2}^{4s^2}\sum_{i=1}^2\int_M\left( |\nabla^2 u_i|^2+\Ric(\nabla u_i,\nabla u_i)+2(n-1)\delta^2|\nabla u_i|^2\right)\varphi^2 \rho_{t}(x,dy)dt+C(n,\rv,K)s^{-n}e^{-\frac{1}{80s^2}}\notag\\
&\le C(n)\sum_{i=1}^2s^2\int_{M^n}\left( |\nabla^2 u_i|^2+\Ric(\nabla u_i,\nabla u_i)+2(n-1)\delta^2|\nabla u_i|^2\right)\varphi^2 \rho_{8s^2}(x,dy)+C(n,\rv,K)e^{-\frac{1}{100s^2}},
			\end{align}
	where we have used the heat kernel estimate in Theorem \ref{t:heat_kernel} to conclude that $\rho_t(x,y)\le C(n,\rv)\rho_{8s^2}(x,y)$ for any $s^2\le t\le 4s^2$ and $y\in B_{1}(x)$. This completes the proof of Proposition \ref{p:telescope_harmonic}.
\end{proof}


\vspace{.2cm}
\subsection{Proof of Theorem \ref{t:nondegeneration}}

By Proposition \ref{p:global_integral_heatkernel} for any $\epsilon'$ if $\delta\le \delta(n,\rv,\eta,\alpha,\epsilon')$ then for each $s_i=2^{-i}$, there exists a lower triangular $k\times k$ matrix $T_{s_i}$ such that $T_{s_i}u: B_{s_i}(x)\to \mathbb{R}^k$ is $(k,\epsilon')$ splitting with 
	\begin{align}		&s_i^{2}\int_{M}\left(|\nabla^2T_{s_i}u|^2+\Ric(\nabla T_{s_i}u,\nabla T_{s_i}u) +2\delta^2(n-1) |\nabla T_{s_i} u|^2 \right)\varphi^2\rho_{4s_i^2}(x,dy)\le C(n,\rv)\sum_{j=0}^i \epsilon_j 2^{j-i}:=   \chi_i,\\ 
	\label{e:ksplitting_Ti1}
		&\epsilon_i= C(n,\rv,\eta) \sum_{j=0}^i 2^{-\gamma(i-j)} \Big(\cE_{s_j}^{k}(x)+\delta s_j^2\Big)\, ,\\ \label{e:nablaTipq}
		&\int_{M}\langle \nabla (T_{s_i}u)^a,\nabla (T_{s_i}u)^b\rangle \varphi^2 \rho_{s_i^2}(x,dy)=\delta^{ab},
	\end{align}
	where $\gamma(n,\rv,\eta)>0$ and $\varphi$ is cutoff function with support in $B_{1}(x)$ and $\varphi:=  1$ on $B_{1/2}(x)$.
	By the estimate \eqref{e:ksplitting_Ti1} for $\epsilon_i$ we get 
	\begin{align}
	&\sum_{i=0}^m \epsilon_i\le C(n,\rv,\eta)\sum_{i=0}^m\, \sum_{j=0}^i 2^{-\gamma(i-j)}\Big( \cE_{s_j}^{k}(x)+\delta s_j^2\Big)\le C(n,\rv,\eta)\sum_{j=0}^m\Big(\cE_{s_j}^{k}(x)+\delta s_j^2\Big)\le C(n,\rv,\eta)\delta.		\\
&\sum_{i=0}^m\chi_i\le C(n,\rv)\sum_{i=0}^m\,\sum_{j=0}^i\epsilon_j 2^{j-i}\le C(n,\rv)\sum_{j=0}^m\epsilon_j\le C(n,\rv,\eta)\delta.
	\end{align}
\vskip2mm

\begin{lemma}
\label{l:t}
For any $\epsilon'$, let $\delta\le \delta(n,\rv,\eta,\epsilon',\alpha)$.  Then $|T_{s_m}-I|\le \epsilon'$ for any $m\ge 1$ such that $s_m\ge r$.
\end{lemma}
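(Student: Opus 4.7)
The plan is to bound $|T_{s_m}-I|$ by telescoping $|T_{s_m}-I|\le \prod_{i=0}^{m-1}(1+|T_{s_{i+1}}\circ T_{s_i}^{-1}-I|)-1$ and to show that each multiplicative increment $|T_{s_{i+1}}\circ T_{s_i}^{-1}-I|$ is controlled linearly (not only by the square root) by $\chi_{i+1}$. Since the summability $\sum_i \chi_i\le C(n,\rv,\eta)\delta$ has already been established just before the statement of the lemma, such a linear bound at each scale will close the argument: choosing $\delta$ small we will get $|T_{s_m}-I|\le C(n,\rv,\eta)\delta\le \epsilon'$. The main obstacle (and the reason for working with entropy and harmonic splittings in the first place) is precisely to obtain the linear, not square-root, gain; a naive telescoping that only uses $|T_{s_{i+1}}\circ T_{s_i}^{-1}-I|\lesssim \sqrt{\chi_{i+1}}$ would not be summable, and this is exactly what would fail for a non-harmonic splitting function.

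The first step is to compare the two orthonormality conditions \eqref{e:nablaTipq} at consecutive scales through the same reference map $T_{s_i}u$. Since $s_i^2=4s_{i+1}^2$, Proposition \ref{p:telescope_harmonic} applied with $s=s_{i+1}$ and $u_1=(T_{s_i}u)^a$, $u_2=(T_{s_i}u)^b$ (whose gradients satisfy the required linear-growth bound on $B_{2s_{i+1}^{-1}\cdot s_{i+1}}(x)$ thanks to the $(k,\epsilon')$-splitting property of $T_{s_i}u$) yields
\begin{align*}
\Big|M_{i+1}^{ab}-\delta^{ab}\Big|
&=\Big|\int\!\langle\nabla(T_{s_i}u)^a,\nabla(T_{s_i}u)^b\rangle\varphi^2\rho_{s_{i+1}^2}-\int\!\langle\nabla(T_{s_i}u)^a,\nabla(T_{s_i}u)^b\rangle\varphi^2\rho_{s_i^2}\Big|\\
&\le C(n)\sum_{c}s_{i+1}^2\int\!\Big(|\nabla^2(T_{s_i}u)^c|^2+\Ric(\nabla(T_{s_i}u)^c,\nabla(T_{s_i}u)^c)+2(n-1)\delta^2|\nabla(T_{s_i}u)^c|^2\Big)\varphi^2\rho_{8s_{i+1}^2}+\text{exp. small},
\end{align*}
where $M_{i+1}^{ab}:=\int\langle\nabla(T_{s_i}u)^a,\nabla(T_{s_i}u)^b\rangle\varphi^2\rho_{s_{i+1}^2}$. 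Here I have used that $\rho_{8s_{i+1}^2}=\rho_{2s_i^2}$ is comparable (up to $C(n,\rv)$) to $\rho_{4s_i^2}$ on $B_1(x)$ by the heat kernel bounds in Theorem \ref{t:heat_kernel}. Applying the global Hessian estimate \eqref{e:global_integral} from Proposition \ref{p:global_integral_heatkernel} to the right-hand side gives $|M_{i+1}-I|\le C(n,\rv)\chi_i$; this is the crucial \emph{squared} gain.

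The second step turns the bound on $M_{i+1}$ into a bound on $T_{s_{i+1}}\circ T_{s_i}^{-1}$. The construction of $T_{s_{i+1}}$ in Proposition \ref{p:global_integral_heatkernel} is precisely the lower-triangular Cholesky/Gram--Schmidt factor that orthonormalizes the gradient inner product matrix at scale $s_{i+1}$. Applying the uniqueness of Cholesky decomposition (exactly as in the proof of Lemma \ref{l:Cholesky}) to the bilinear form $B_{i+1}(f,h):=\int\langle\nabla f,\nabla h\rangle\varphi^2\rho_{s_{i+1}^2}$, one concludes $T_{s_{i+1}}=A_{i+1}\circ T_{s_i}$ for a unique lower-triangular $A_{i+1}$, and a direct perturbative calculation on the Cholesky factorization of $M_{i+1}$ gives $|A_{i+1}-I|\le C(n)\,|M_{i+1}-I|\le C(n,\rv)\chi_i$, so
\begin{equation*}
|T_{s_{i+1}}\circ T_{s_i}^{-1}-I|\le C(n,\rv)\chi_i.
\end{equation*}

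The final step is the telescoping. Iterating the matrix inequality \eqref{e:matrix_inequality} and using the bound just obtained gives
\begin{equation*}
|T_{s_m}\circ T_{s_0}^{-1}-I|\le \prod_{i=0}^{m-1}\bigl(1+C(n,\rv)\chi_i\bigr)-1\le \exp\Bigl(C(n,\rv)\sum_{i=0}^{m-1}\chi_i\Bigr)-1\le C(n,\rv,\eta)\delta,
\end{equation*}
since $\sum_i\chi_i\le C(n,\rv,\eta)\delta$ as established above. Because $u$ is assumed to be a $(k,\delta)$-splitting on $B_2(p)$, the initial transformation obeys $|T_{s_0}-I|\le C(n)\delta$ (by the same Gram--Schmidt argument applied at scale $1$). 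Combining, $|T_{s_m}-I|\le C(n,\rv,\eta)\delta$, and choosing $\delta\le\delta(n,\rv,\eta,\alpha,\epsilon')$ small yields $|T_{s_m}-I|\le\epsilon'$. With Lemma \ref{l:t} in hand, the Nondegeneration Theorem \ref{t:nondegeneration} follows at once: $T_{s_m}u$ is $(k,\epsilon')$-splitting by Proposition \ref{p:global_integral_heatkernel}, and $|T_{s_m}-I|\le\epsilon'$ then upgrades $u$ itself to a $(k,\epsilon)$-splitting on each $B_{s_m}(p)$.
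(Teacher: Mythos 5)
Your overall strategy is the same as the paper's: use Proposition \ref{p:telescope_harmonic} together with the global Hessian bound \eqref{e:global_integral} to control the heat-kernel Gram matrix drop between consecutive scales, identify $T_{s_{i+1}}\circ T_{s_i}^{-1}$ with the Cholesky/Gram--Schmidt correction via uniqueness of the Cholesky factorization, and telescope. However, there is a genuine gap in your treatment of the error term from Proposition \ref{p:telescope_harmonic}. That proposition produces, in addition to the squared-Hessian term, an error $C(n,\rv,K)\,e^{-1/(100 s_{i+1}^2)}$. You first record this as ``exp.\ small'' and then silently absorb it into the claim $|M_{i+1}-I|\le C(n,\rv)\chi_i$; this absorption is false for small $i$. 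Indeed, for $s_{i+1}=2^{-(i+1)}$ the term equals $C\,e^{-4^{\,i+1}/100}$, which is of order $1$ for $i=0,1,2$ (e.g.\ $e^{-0.04}\approx 0.96$), whereas $\chi_i\lesssim\delta$. Consequently the quantity you actually need to telescope is $\sum_i\bigl(C\chi_i+C e^{-4^{\,i+1}/100}\bigr)\le C\delta + C_0$, where $C_0=C_0(n,\rv)$ is a fixed constant of definite size independent of $\delta$. Your final bound then becomes $|T_{s_m}\circ T_{s_0}^{-1}-I|\le e^{C(\delta+C_0)}-1$, which does not tend to $0$ as $\delta\to 0$, so the conclusion $|T_{s_m}-I|\le\epsilon'$ does not follow.

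The paper closes exactly this gap by splitting the scales. One first fixes $N=N(\epsilon'',n,\rv)$ so large that the exponential tail $\sum_{j\ge N}e^{-1/(100 s_j^2)}$ is at most $\epsilon''/2$; then for $i\ge N$ one has $\sum_{j=N}^{i}\tilde\chi_j\le\epsilon''$ (with $\tilde\chi_j:=C(n)\chi_j+C(n,\rv)e^{-1/(100 s_j^2)}$), and the Cholesky/telescoping argument you describe gives $|T_{s_{i+1}}\circ T_{s_N}^{-1}-I|\le C\epsilon''$. The finitely many large scales $i\le N$ are handled separately: since $N$ depends only on $(\epsilon'',n,\rv)$, the H\"older growth estimate for the transformation matrices from Proposition \ref{p:transfor_prop} (i.e.\ $|T_{s_i}\circ T_{s_0}^{-1}-I|\le 2^{\,iC\sqrt{\epsilon_0}}-1$) yields $|T_{s_i}-I|\le\epsilon'/10$ for all $i\le N$ once $\delta$ is chosen small depending on $N$ and $\epsilon'$. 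You need to add this two-regime argument; without it the step from the per-scale estimate to the uniform bound on $|T_{s_m}-I|$ does not go through.
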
 
\begin{proof}
 First note that by Proposition \ref{p:transfor_prop} that $|\nabla T_{s_i}u|$ satisfies H\"older growth estimates (see also \eqref{e:matrix_holdergrowth}). Thus, we can apply Proposition \ref{p:telescope_harmonic} to obtain
\begin{align}
\Big|\int_{M}\langle \nabla ({T}_{s_i}u)^a,\nabla ({T}_{s_i}u)^b\rangle \varphi^2 \rho_{s_{i+1}^2}(x,dy)-\delta^{ab}\Big|\le C(n)\chi_i+C(n,\rv)e^{-\frac{1}{100s_i^2}}:=  \tilde{\chi}_i.
\end{align}
For any $\epsilon''$, there exists integer $N(\epsilon'',n,\rv)$ such that if $i\ge N$ and $\delta\le \delta(n,\rv,\eta,\alpha,\epsilon')$,  we have 
\begin{align}
	\sum_{j=N}^i\tilde{\chi}_j\le \epsilon''.
\end{align}
By using the Gram-Schmidt process, there exists lower triangle matrix $\tilde{A}_i$ with $|\tilde{A}_i-I|\le C(n)\tilde{\chi}_i$ such that $\hat{T}_{s_i}=\tilde{A}_i\circ {T}_{s_i}$ satisfies 
\begin{align}
\int_{M^n}\langle \nabla (\hat{T}_{s_i}u)^a,\nabla (\hat{T}_{s_i}u)^b\rangle \varphi^2 \rho_{s_{i+1}^2}(x,dy)=\delta^{ab}.
\end{align}
Since
\begin{align}
\int_{M}\langle \nabla ({T}_{s_{i+1}}u)^a,\nabla ({T}_{s_{i+1}}u)^b\rangle \varphi^2 \rho_{s_{i+1}^2}(x,dy)=\delta^{ab},
\end{align}
 the uniqueness of Cholesky decompositions (see also \cite{GV}) for positive definite symmetric matrices implies  that $\hat{T}_{s_i}={T}_{s_{i+1}}$.  In particular, we get 
 ${T}_{s_{i+1}}\circ {T}_{s_i}^{-1}=\tilde{A}_i$. Thus 
 \begin{align}
 	|{T}_{s_{i+1}}\circ {T}_{s_i}^{-1}-I|\le C(n)\tilde{\chi}_i.
 \end{align}
Recall that $T_{s_i}$ is a $k\times k$ matrix. Hence for all $i\ge N$ we have
\begin{align}\label{e:s_Ns_i1}
	|{T}_{s_{i+1}}\circ {T}_{s_N}^{-1}-I|\le \prod_{j=N}^i (1+(k+1)C(n)\tilde{\chi}_j)-1\le e^{\sum_{j=\ell}^i kC(n)\tilde{\chi}_j}-1\le C(n)\sum_{j=N}^i \tilde{\chi}_j\le C\epsilon''.
\end{align}
If $\delta\le \delta(\epsilon',\rv,n,\eta)$ and $\epsilon''\le \epsilon''(n,\rv,\epsilon')$, we have for all $i\le N$ that 
\begin{align}
	|T_{s_i}-I|\le \epsilon'/10.
\end{align}
Therefore, by \eqref{e:s_Ns_i1}, for any $i\ge N$, we have 
\begin{align}
	|{T}_{s_{i}}-I|\le \epsilon'.
\end{align}
This completes the proof of Lemma \ref{l:t}.
\end{proof}

Now we can complete the proof of Theorem \ref{t:nondegeneration} as follows.  

Since $T_{s_i}u: B_{s_i}(x)\to \mathbb{R}^k$ is $\epsilon'$-splitting when $\delta\le \delta(n,\rv,\epsilon',\eta,\alpha)$, to show $u:B_{s_i}(x)\to \mathbb{R}^k$ is $\epsilon$-splitting, it suffices to prove $T_{s_i}$ is bounded and then fix $\epsilon'=\epsilon'(n,\epsilon,\rv)$. The later has been proven in Lemma \ref{l:t}. Therefore we complete the proof of Theorem \ref{t:nondegeneration}.

\vspace{1cm}

\section{$(k,\delta,\eta)$-Neck Regions}
\label{s:neck_region}
This is the first of the two sections which constitute the fourth and last part of
the paper.  
In it we give the proof of the Neck Structure Theorem \ref{t:neck_region2} which for 
convenience, we have restated below.
Recall that neck regions are defined in Definition \ref{d:neck2}.
\vskip4mm

\noindent
{\bf Theorem} {\it (Theorem \ref{t:neck_region2} restated)
\label{t:neck_structure}
Fix $\eta>0$ and  $\delta\leq \delta(n,\rv,\eta)$.
Then if $\cN = B_2(p)\setminus \overline B_{r_x}(\cC)$ is a $(k,\delta,\eta)$-neck region, the following hold:

\begin{enumerate}
	\item For each $x\in \cC$ and $B_{2r}(x)\subset B_2(p)$ the induced packing measure $\mu$ is Ahlfors regular:
\begin{align}	
\label{e:structure:ar}
	 A(n)^{-1}r^{k}<\mu(B_r(x)) <A(n)r^{k} \, .
\end{align}
	
	\item $\cC_0$ is $k$-rectifiable.
\end{enumerate}
}
\vskip3mm


Results rectifiability of singular sets
obtained via cone-splitting 
 were first introduced in
\cite{NaVa_Rect_harmonicmap} in the context of nonlinear harmonic maps, and the notion of neck regions was first formally introduced and studied in \cite{JiNa_L2}.  As was discussed in Sections   \ref{s:outline_proof}, 
\ref{s:ouline_proof_neckstructure},
 in order to conclude the structural results we will need to build a map from the center points $\cC\to \dR^k$.  In \cite{NaVa_Rect_harmonicmap}, the relevant splitting map $u$ was built by hand using a Reifenberg construction.
This approach required new estimates on harmonic maps and a new bi-Lipschitz Reifenberg theorem.  As we have emphasized, for the case of lower Ricci curvature bounds,
the bi-Lipschitz Reifenberg ideas of \cite{NaVa_Rect_harmonicmap} {\it do not work}.
Attempting to implement them gives rise to additional error terms which are not 
summable over scales. Essentially, this is because
approximating a subset $W\subseteq X^n$ by \hbox{$k$-dimensional} subspace also involves
 approximating $X^n$ itself by a splitting. Instead, we  rely
 on the results of Sections 
 \ref{s:Sharp_splitting}--\ref{s:nondegeneration}, especially
   the Nondegeneration Theorem \ref{t:nondegeneration}.
\vskip2mm


In \cite{JiNa_L2},
results on structure and existence of
$(n-4)$-neck regions were proved under the assumption of a $2$-sided
bound on Ricci curvature.  In order to prove the final estimates in \cite{JiNa_L2} the authors introduced a new estimate which was termed a {\it superconvexity estimate}. This definitely requires a \hbox{$2$-sided} bound on the Ricci curvature.  The estimates of this paper are entirely different. As mentioned in the introduction, they give a new proof of the 
$(n-4)$-finiteness conjecture for limits with bounded Ricci curvature, first proved in \cite{JiNa_L2}.\\

We refer the reader to Section \ref{s:ouline_proof_neckstructure} for an outline of the  strategy 
for proving Theorem \ref{t:neck_structure}.
\vskip2mm

\subsection{The basic assumptions}
\label{s:basic_assumptions}
 Below, we will refer to the following standard assumptions.
 \vskip2mm
 
 Fix $\delta,\delta',\eta,B>0$.  We will assume:
\begin{align}
	\text{(S1)}\qquad& \Vol(B_1(p))>\rv>0 \text{ and }\Ric_{M^n}\ge -\delta^2(n-1).\qquad\qquad{}\notag\\
	\text{(S2)}\qquad& \cN=B_2(p)\setminus \bar B_{r_x}(\cC)\text{ is a $(k,\delta,\eta)$-neck region with associated packing measure $\mu$.}\notag\\
	\text{(S3)}\qquad& \text{\! For any $x\in \cC$ and $B_{2r}(x)\subset B_2(p)$ with $r\geq r_x$ we have}:
	\notag
\end{align}
\vskip-9mm
\begin{align}	
\label{e:B}	
		B^{-1}r^k\le \mu(B_r(x))\le Br^k\, .
\end{align}		
{}\qquad\,\,\,\,\,\,\,\,\,\,\,\text{(S4)} \,\,\,\, \,\,\,\,\,\,$u: B_4(p)\to \mathbb{R}^k$ is a $\delta'$-splitting map.
\vskip3mm

\begin{remark}
Recall from Section \ref{s:ouline_proof_neckstructure} that (S3) is connected to the strategy that we will prove the theorem inductively.  In particular, with $B>>A(n)$ we will want to eventually see that with $\delta$ sufficiently small (S3) automatically implies the stronger Ahlfors regularity estimate \eqref{e:structure:ar}. 
\end{remark}

\begin{remark}
\label{r:delta_delta'}
By the definition of neck regions and the Cone-Splitting Theorem \ref{t:splitting_function}, we can and will assume that  $ \delta(n,\rv,\eta,\delta')>0$ has been
chosen sufficiently small so that there exists a $\delta'$-splitting map $u: B_4(p)\to \mathbb{R}^k$. Then in actuality, the existence of $u$ as in (S4) is actually  a consequence of (S2).
\end{remark}
\vspace{1mm}

\subsection{Bi-Lipschitz structure of the set of centers of a
 neck region}\label{ss:BiLipschitz_neck}
This subsection is devoted to proving Proposition
 \ref{p:bilipschitz_structure}. 
Given a  $(k,\delta,\eta)$-neck region 
$\cN=B_2(p)\setminus \overline{B}_{r_x}(\cC)$,
Proposition
 \ref{p:bilipschitz_structure} implies the existence of  
 a subset set $\cC_\epsilon\subset \cC$, which is almost
 all of $\cC$, such that a splitting map $u:B_{2s}(x)\to \dR^k$ is $(1+\epsilon)$-bi-Lipschitz on $\cC_\epsilon$.  This is the key step
 which is used to improve the weak Ahlfors regularity estimate (S3) to the strong one \eqref{e:structure:ar} and to show that the singular set is rectifiable.  The results of the previous sections play a key role in the proof of
 Proposition \ref{p:bilipschitz_structure}; compare the outline in Section \ref{s:ouline_proof_neckstructure}:
 
 \begin{proposition}
\label{p:bilipschitz_structure}
For any $B,\epsilon, \eta>0$ if (S1)-(S4) hold with $\delta'\leq  \delta'(n,\rv,\eta, B,\epsilon)$ and $\delta\leq \delta(n,\rv,\eta,\delta',B,\epsilon)$, then there exists $\cC_\epsilon \subset \cC\cap B_{15/8}(p)$ such that:
	\begin{itemize}
		\item[(1)] $\mu\big(\cC_\epsilon \cap B_{15/8}(p)\big)\ge (1-\epsilon) \mu \big(\cC\cap B_{15/8}(p)\big)$.
\vskip2mm
		
\item[(2)] $u$ is $(1+\epsilon)$-bi-Lipschitz on $\cC_\epsilon$, i.e., for  any $x,y\in  \cC_\epsilon$:
\vskip-2mm

$$
(1+\epsilon)^{-1}\cdot d(x,y)\le |u(x)-u(y)|\le (1+\epsilon)\cdot d(x,y)\, .
$$		
\vskip-1mm

\item[(3)] For any $x\in \cC_\epsilon$ and $r\ge r_x$ with $B_{2r}(x)\subset B_2(p)$, the map $u: B_r(x)\to\mathbb{R}^k$ is  a $(k,\epsilon)$-splitting function.
\vskip2mm
		
\item[(4)] For any $x\in\cC_\epsilon$:
$$
\sum_{r_x\le r_i\le 2^{-5}} \fint_{B_{r_i}(x)}|\cW_{r_i^2/2}^\delta(y)-\cW_{2r_i^2}^\delta(y)|\, d\mu(y)\le \epsilon\, .
$$
\vskip3mm
		
		\item[(5)] $u: \cC\to \mathbb{R}^k$ is a bi-H\"older map onto its image i.e. for all $x,y\in B_{15/8}(p)\cap \cC$: 	
$$
(1-\epsilon)\cdot d(x,y)^{1+\epsilon}\le |u(x)-u(y)|\le (1+\epsilon)\cdot d(x,y)\, .
$$ 
	\end{itemize} 
\end{proposition}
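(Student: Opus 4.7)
The plan is to pass through an intermediate average-entropy condition: I will define $\cC_\epsilon$ as the points of $\cC\cap B_{15/8}(p)$ at which the ball-averaged, scale-summed entropy drop is small (this is exactly property (4)), show by a Fubini-type swap against the full neck entropy drop (n2) that this set has almost full $\mu$-measure, and then invoke the Nondegeneration Theorem \ref{t:nondegeneration} to upgrade this scalar control into (2), (3). The bi-Hölder estimate (5) on all of $\cC$ will be obtained separately, using only the weak conclusion of the Geometric Transformation Theorem \ref{t:transformation} together with the Cholesky-type matrix growth estimate from Lemma \ref{l:Cholesky}.

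Concretely, with $\epsilon_0=\epsilon_0(n,\rv,\eta,B,\epsilon)$ to be chosen, I would set
\[
\cC_\epsilon := \Bigl\{ x\in\cC\cap B_{15/8}(p) \ :\ \sum_{r_x\le r_j\le 2^{-5}} \fint_{B_{r_j}(x)} \bigl|\cW^\delta_{r_j^2/2}(y)-\cW^\delta_{2r_j^2}(y)\bigr|\,d\mu(y) \le \epsilon_0 \Bigr\},
\]
which makes (4) tautological. For (1), integrate this sum over $x\in\cC\cap B_{15/8}(p)$ and swap the order of integration exactly as in the outline of Section \ref{s:ouline_proof_neckstructure}; the inner integral $\int 1_{B_{r_j}(x)}(y)\,d\mu(x)$ is bounded by $Br_j^k$ by (S3), so the double sum collapses to $C(B)\int_{\cC\cap B_2(p)} |\cW^\delta_{1/2}(y)-\cW^\delta_{2r_y^2}(y)|\,d\mu(y)$, which is $\le C(B)\delta\cdot\mu(\cC\cap B_2(p))$ by the neck condition (n2). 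Markov and (S3) then give $\mu(\cC\setminus\cC_\epsilon)\le \epsilon\mu(\cC\cap B_{15/8}(p))$ provided $\delta\le \delta(B,\epsilon_0,\epsilon)$.

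For (3), the point is to convert the averaged entropy drop at $x\in\cC_\epsilon$ into a pointwise bound on the $k$-pinching $\cE^{k,\alpha,\delta}_{r_j}(x)$. Here I would use Ahlfors regularity (S3) to find, at each scale $r_j\ge r_x$, a set of $k+1$ centers of $\cC$ inside $B_{r_j}(x)$ that are $(k,\alpha)$-independent for some $\alpha=\alpha(n,B)>0$ (such a configuration must exist since otherwise $\cC\cap B_{r_j}(x)$ would sit in a $C\alpha r_j$-neighbourhood of a $(k-1)$-plane, contradicting the Ahlfors bound). A pigeonhole together with (S3) then yields
\[
\cE^{k,\alpha,\delta}_{r_j}(x) \le C(n,B)\fint_{B_{r_j}(x)}\bigl|\cW^\delta_{r_j^2/2}(y)-\cW^\delta_{2r_j^2}(y)\bigr|\,d\mu(y),
\]
and hence $\sum_{r_j\ge r_x}\cE^{k,\alpha,\delta}_{r_j}(x)\le C(n,B)\epsilon_0$. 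Since (n3) supplies the cone-symmetry/non-symmetry hypotheses of Theorem \ref{t:nondegeneration} at all scales $r_x\le s\le 1$, applying that theorem with a small enough $\epsilon_0$ yields that $u$ is itself an $\epsilon$-splitting on every $B_s(x)$, which is (3). Property (2) then follows in the standard way: at $x,y\in\cC_\epsilon$ with $r=d(x,y)$, the $(k,\epsilon)$-splitting $u\colon B_r(x)\to\dR^k$ is an $\epsilon r$-GH map modulo an isometric $\dR^k$-factor, so $(1-\epsilon)d(x,y)\le |u(x)-u(y)|\le (1+\epsilon)d(x,y)$, the upper bound also being immediate from $|\nabla u|\le 1+\delta'$.

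For (5), I would argue on arbitrary $x,y\in\cC\cap B_{15/8}(p)$ with $r=d(x,y)\ge r_x$: Theorem \ref{t:transformation}(1) at $x$ on scale $r$ provides a lower-triangular $T_{x,r}$ with $T_{x,r}u$ a $(k,\epsilon)$-splitting on $B_r(x)$, and the telescoping argument of Lemma \ref{l:Cholesky} applied over the dyadic chain between $r$ and $1$ gives $|T_{x,r}|\le r^{-\epsilon/2}$ (for $\delta$ small). Reading the GH-approximation inequality through $T_{x,r}^{-1}$ yields $|u(x)-u(y)|\ge (1+\epsilon)^{-1}d(x,y)^{1+\epsilon}$, and the upper bound is again $|\nabla u|\le 1+\delta'$. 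The hard part of the whole argument is Step~3 above: the inequality controlling $\cE^{k,\alpha,\delta}_{r_j}(x)$ by an average entropy drop depends crucially on Ahlfors regularity (S3), and it is exactly this step that allows the quadratic ``square gain'' of Theorem \ref{t:nondegeneration} to be harvested, producing the bi-Lipschitz (as opposed to merely bi-Hölder) conclusion on $\cC_\epsilon$.
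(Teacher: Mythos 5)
Your proposal follows essentially the same route as the paper: define $\cC_\epsilon$ by the averaged, scale-summed entropy-drop condition (4), prove (1) by the Fubini swap against (n2) using the weak Ahlfors bound (S3) (the paper's Lemma \ref{l:fubini_integral}), prove (3) by extracting $(k,\alpha(n,B))$-independent centers at each scale to dominate $\cE^{k,\alpha,\delta}_{r_j}(x)$ by the averaged drop and then invoking Theorem \ref{t:nondegeneration} (Lemma \ref{l:splitting_neck}), and prove (5) via Theorem \ref{t:transformation}(1) plus the Cholesky growth bound exactly as in Theorem \ref{t:reifenbergcanonical}. The one step you compress — deducing (2) from (3) because ``a $(k,\epsilon)$-splitting is an $\epsilon r$-GH map'' — is in the paper a separate lemma (Lemma \ref{l:GH_approximation_splitting_map} with Lemma \ref{l:claim20}): one must show the harmonic splitting $u$ agrees with the canonical GH coordinates up to an orthogonal transformation and translation, which uses the non-$(k+1,\eta)$-symmetry of $B_r(x)$ to control the transition matrix; you should either cite this or supply that comparison.
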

\noindent
{\bf Note.} In (4), the integral average is taken with respect to  $\mu$.
\vspace{.25cm}

Essentially,  $\cC_\epsilon\subset \cC$  
consists of those points which satisfy (4). We will see, as in (1), that most points of $\cC$
have this property.  Then using Theorem \ref{t:nondegeneration} we will conclude (3).  The estimates (2) and (5) will follow almost verbatim as the argument from Section \ref{ss:Reifenberg}
\vskip2mm

We begin with some technical lemmas which will be used in the proof of 
Proposition \ref{p:bilipschitz_structure}.  The proof of the proposition will be
given at  the end of this subsection, after the proofs of the lemmas have been completed.

The first of these, Lemma \ref{l:fubini_integral} below, will enable us to
conclude that if $\cC_\epsilon\subset \cC$ is defined as indicated above, then
(4) holds.
\begin{lemma}
\label{l:fubini_integral}
	Let $(M^n,g,p)$ satisfy (S1)-(S4) with $\delta''>0$ fixed.  If $\delta\leq \delta(n,\rv,B,\delta'')$ and $\delta'\leq \delta'(n,\rv,B,\eta)$, then the local $\cW$-entropy satisfies:
	\begin{align}
		\fint_{B_{15/8}(p)}\Big(\sum_{r_x\le r_i\le 2^{-5}} \fint_{B_{r_i}(x)}|\cW_{r_i^2/2}^\delta(y)-\cW_{2r_i^2}^\delta (y)|\, d\mu(y)\Big)\, d\mu(x)\le \delta''.
	\end{align}
\end{lemma}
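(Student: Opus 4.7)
The plan is to exchange the order of integration via Fubini, exploit the two-sided Ahlfors regularity hypothesis (S3) to convert ball-averages into the total measure, and then use the monotonicity of $\cW^\delta_t$ together with neck condition (n2) to show that the sum of entropy drops over scales telescopes to something controlled by $\delta^2$.

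First I would use (S3) to replace each inner average by a raw integral: since $\mu(B_{r_i}(x))\ge B^{-1}r_i^k$, we have $\fint_{B_{r_i}(x)}(\,\cdot\,)\,d\mu\le B\,r_i^{-k}\int_{B_{r_i}(x)}(\,\cdot\,)\,d\mu$. Applying Fubini to swap $x$ and $y$ yields
\begin{align}
\int_{B_{15/8}(p)}\!\int_{B_{r_i}(x)}\!\!|\cW_{r_i^2/2}^\delta(y)-\cW_{2r_i^2}^\delta(y)|\,d\mu(y)\,d\mu(x)
=\int |\cW_{r_i^2/2}^\delta(y)-\cW_{2r_i^2}^\delta(y)|\cdot\mu\bigl(B_{r_i}(y)\cap B_{15/8}(p)\bigr)\,d\mu(y).\notag
\end{align}
The second application of (S3) bounds $\mu(B_{r_i}(y))\le B\,r_i^k$, which cancels the $r_i^{-k}$ factor from the first step. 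Restricting $y$ to its own neck centers means only those $r_i\geq r_y$ contribute, so after summing over scales we are left with
\begin{align}
\sum_{r_y\le r_i\le 2^{-5}}|\cW_{r_i^2/2}^\delta(y)-\cW_{2r_i^2}^\delta(y)|.\notag
\end{align}

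The key observation is that the intervals $[r_i^2/2,\,2r_i^2]$ for $r_i=2^{-i}$ are chained end-to-end: the lower endpoint of the $i$-th interval equals the upper endpoint of the $(i+1)$-st. Combined with Theorem \ref{t:cW_local}(1), which ensures $\cW^\delta_t$ is nonincreasing in $t$, every term $|\cW_{r_i^2/2}^\delta(y)-\cW_{2r_i^2}^\delta(y)|$ equals the actual entropy drop over that interval, and the sum telescopes to $\cW^\delta_{r_y^2/2}(y)-\cW^\delta_{2^{-9}}(y)$, which in turn is dominated by $\cW^\delta_{r_y^2/2}(y)-\cW^\delta_1(y)$ by monotonicity again. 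The neck hypothesis (n2) asserts $|\cV_{\delta^{-1}}(y)-\cV_{\delta r_y}(y)|<\delta^2$ for every $y\in\cC$, and Theorem \ref{t:cW_local}(2) then converts this volume pinching into a pointwise entropy pinching of size $\Psi(\delta\mid n,\rv)$ (with $\Psi\to 0$ as $\delta\to 0$) over exactly the range of scales we need.

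Putting the pieces together, the whole integral is controlled by $B^2\cdot\Psi(\delta\mid n,\rv)\cdot\mu(B_2(p))\le C(n,\rv,B)\,\Psi(\delta\mid n,\rv)$. Dividing by $\mu(B_{15/8}(p))\ge B^{-1}(15/8)^k$ converts the outer integral to an average, and choosing $\delta\le\delta(n,\rv,B,\delta'')$ small enough makes the final bound less than $\delta''$. Note that $\delta'$ plays no direct role in this estimate—it appears only through (S4) and is needed elsewhere in Proposition \ref{p:bilipschitz_structure} to ensure existence of a $\delta'$-splitting compatible with $\delta$. The main obstacle one might anticipate is the honest telescoping step: one must verify that the entropy comparison in Theorem \ref{t:cW_local}(2) is valid down to scale $r_y$ (which in turn requires $\delta r_y$-symmetry at scale $\delta^{-1}$, guaranteed by (n3)), so that the pointwise bound at the end really is $O(\delta^2)$ rather than $O(1)$.
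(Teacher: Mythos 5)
Your proposal is correct and follows essentially the same route as the paper's proof: bound the ball-averages using the Ahlfors regularity (S3), swap the order of integration by Fubini, use (S3) again to cancel the $r_i^{-k}$ factor, telescope the sum of entropy drops via the monotonicity of $\cW^\delta_t$, and control the resulting single difference $\cW^\delta_{r_y^2/2}(y)-\cW^\delta_{2}(y)$ pointwise by combining neck condition (n2) with Theorem \ref{t:cW_local}. Your added detail on why the intervals $[r_i^2/2,\,2r_i^2]$ chain end-to-end, and on restricting to scales $r_i\gtrsim r_y$ via the Lipschitz bound on $r_x$, only makes explicit what the paper leaves implicit.
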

\begin{proof}
Recall that under the assumptions of Theorem \ref{t:cW_local},  including 
$\delta \leq \delta (n, \rv,\epsilon)$, we have the following relation between the volume
ratio and local pointed entropy:
$$
|\cW^\delta_{t}(x)-\log \cV^{\delta^2}_{\sqrt{t}}(x)|\leq \epsilon\, .
$$
The proof will utilize this relation together with a Fubini type argument. 
\vskip2mm

 Let $\chi_{|x-y|\le r}(x,y)$ be the characteristic function of set 
 $\{(x,y)\in M^n\times M^n: d(x,y)\le r\}$. We have for $r_i=2^{-i}$:
\begin{align}
\fint_{B_{15/8}(p)}&\Big(\sum_{r_x\le r_i\le 2^{-5}} \fint_{B_{r_i}(x)}|\cW_{r_i^2/2}^\delta(y)-\cW_{2r_i^2}^\delta(y)|\, d\mu(y)\Big)\, d\mu(x)\notag\\ \nonumber
		&\le \frac{1}{\mu(B_{15/8}(p))}\int_{B_{15/8}(p)}\Big(\sum_{r_x\le r_i\le 2^{-5}} \frac{1}{\mu(B_{r_i}(x))}\int_{B_{31/16}(p)}\chi_{\{|x-y|\le r_i\}}(x,y)\cdot |\cW_{r_i^2/2}^\delta(y)
-\cW_{2r_i^2}^\delta(y)|d\mu(y)\Big)d\mu(x)\notag\\
		&\le C(n)\cdot B\int_{B_{31/16}(p)}\sum_{r_x\le r_i\le 2^{-5}} Br_i^{-k}\int_{B_{31/16}(p)}\chi_{\{|x-y|\le r_i\}}(x,y)\cdot |\cW_{r_i^2/2}^\delta(y)-\cW_{2r_i^2}^\delta(y)|\, d\mu(y)\, 
d\mu(x)\notag\\
		&\le C(n)B^2 \sum_{r_x\le r_i\le 2^{-5}} r_i^{-k}\int_{B_{31/16}(p)}\int_{B_{31/16}(p)}\chi_{\{|x-y|\le r_i\}}(x,y)\cdot |\cW_{r_i^2/2}^\delta(y)-\cW_{2r_i^2}^\delta(y)|\, d\mu(y)\, 
		d\mu(x).
		\end{align}
Applying Fubini's Theorem gives
\begin{align}
\fint_{B_{15/8}(p)}&\Big(\sum_{r_x\le r_i\le 2^{-5}} \fint_{B_{r_i}(x)}|\cW_{r_i^2/2}^\delta(y)-\cW_{2r_i^2}^\delta(y)|\, d\mu(y)\Big)\, d\mu(x)\notag\\
&\le C(n)B^2 \sum_{r_x\le r_i\le 2^{-5}} r_i^{-k} \int_{B_{31/16}(p)} \mu(B_{r_i}(y))\cdot
|\cW_{r_i^2/2}^\delta(y)-\cW_{2r_i^2}^\delta(y)|\, d\mu(y)\notag\\
&\le C(n)B^3\sum_{r_x\le r_i\le 1}\int_{B_{31/16}(p)}|\cW_{r_i^2/2}^\delta(y)
-\cW_{2r_i^2}^\delta(y)|\, d\mu(y)\notag\\
&\le C(n)B^3\int_{B_{31/16}(p)}|\cW_{r_y^2/2}^\delta(y)-\cW_{2}^\delta(y)|\,
d\mu(y)\notag\\
&\le C(n)B^3\epsilon'\, .
\end{align}
Here we have used the monotonicity of $\cW^\delta_r$ and, by choosing a small $\delta(n,\rv,\epsilon',\eta)$ in the condition (n2) of $(k,\delta,\eta)$-neck, that we have
 the pointwise estimate $|\cW_{r_y^2/2}^\delta(y)-\cW_{2}^\delta(y)(y)|\le \epsilon'$ as in Theorem \ref{t:cW_local}.
\vskip2mm
	
By fixing $\epsilon'$ sufficiently small, so that $C(n)B^3\epsilon'\le \delta''$, the proof of Lemma \ref{l:fubini_integral} is completed.
\end{proof}
\vskip2mm

The following lemma is a direct consequence of the Nondegeneration Theorem \ref{t:nondegeneration} and the assumed Ahlfors regularity with 
constant $B$ as in (S3).
\begin{lemma}
\label{l:splitting_neck}
	Let $(M^n,g,p)$ satisfy (S1)-(S4) with $\epsilon>0$ fixed.  Assume  $\delta''\le \delta''(n,\rv,\eta,\epsilon)$, $\delta'\le \delta'(n,\rv,\eta,\epsilon)$, $\delta\leq \delta(n,\rv,B,\eta,\epsilon)$, and for some $x\in\cC\cap B_{15/8}(p)$:
\begin{align}
\sum_{s\le r_i\le 2^{-5}} \fint_{B_{r_i}(x)}|\cW_{r_i^2/2}^\delta(y)-\cW_{2r_i^2}^\delta(y)|\,      d\mu(y)\le \delta''.
	\end{align}
	
Then for any $s\le r\le 1$, the map $u: B_r(x)\to\mathbb{R}^k$, is an $\epsilon$-splitting map.
\end{lemma}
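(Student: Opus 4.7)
The plan is to apply the Nondegeneration Theorem \ref{t:nondegeneration} at the center point $x \in \cC$. Given the target $\epsilon$ and the fixed $\eta$, I would first fix $\alpha = \alpha(n)$ (to be discussed below) and then let $\delta_0 = \delta_0(n,\rv,\eta,\alpha,\epsilon)$ be the threshold produced by Theorem \ref{t:nondegeneration}. The goal is to verify at $x$ the two hypotheses of that theorem for the splitting map $u$ on scales between $s$ and $1$: namely, (a) that for each $s \le t \le 1$ the ball $B_{\delta_0^{-1}t}(x)$ is $(k,\delta_0^2)$-symmetric while $B_t(x)$ fails to be $(k+1,\eta)$-symmetric, and (b) that $\sum_{r_i \ge s} \cE^{k,\alpha,\delta_0}_{r_i}(x) < \delta_0$. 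Condition (a) is immediate from condition (n3) in the definition of a $(k,\delta,\eta)$-neck region, provided $\delta \le \delta_0^2$.

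The crux is condition (b), which requires converting the averaged hypothesis of the lemma into a pointwise bound for the $k$-pinching at $x$. Recall
\[
\cE^{k,\alpha,\delta_0}_{r_i}(x) = \inf_{\{x_0,\ldots,x_k\}} \sum_{j=0}^{k} \bigl|\cW^{\delta_0}_{r_i^2/2}(x_j) - \cW^{\delta_0}_{2r_i^2}(x_j)\bigr|,
\]
where the infimum runs over $(k,\alpha)$-independent subsets of $B_{2r_i}(x)$. At each scale $r_i$ I would produce $(k+1)$ such points inside $\cC \cap B_{r_i}(x)$ at which the entropy drop is controlled by its $\mu$-average. The existence of $(k,\alpha)$-independent tuples inside $\cC \cap B_{r_i}(x)$ for a uniform $\alpha = \alpha(n,B) > 0$ follows from combining the Reifenberg condition (n4), which says $\cC \cap B_{r_i}(x)$ is $\tau_n r_i$-Hausdorff close to the $k$-plane $\cL_{x,r_i}$, with the lower Ahlfors regularity (S3), which guarantees that $\mu$ cannot be supported too close to any $(k-1)$-plane inside $\cL_{x,r_i}$. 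Moreover this extraction is robust: by an elementary quantitative argument (see Remark \ref{r: kalpha_independent_Rn}), such a tuple can be chosen even after removing any subset of $\mu$-mass at most $\frac{1}{2}\mu(B_{r_i}(x))$ from $\cC \cap B_{r_i}(x)$.

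Next, with $F_{r_i}(y) := |\cW^{\delta_0}_{r_i^2/2}(y) - \cW^{\delta_0}_{2r_i^2}(y)| \ge 0$ and $\bar F_{r_i} := \fint_{B_{r_i}(x)} F_{r_i}\, d\mu$, Markov's inequality gives $\mu\bigl(\{y \in \cC \cap B_{r_i}(x) : F_{r_i}(y) > 4\bar F_{r_i}\}\bigr) \le \tfrac{1}{4}\mu(B_{r_i}(x))$. Applying the robust selection to the complement produces a $(k,\alpha)$-independent tuple $\{x_0,\ldots,x_k\} \subset \cC \cap B_{r_i}(x)$ with $F_{r_i}(x_j) \le 4\bar F_{r_i}$ for each $j$, hence
\[
\cE^{k,\alpha,\delta_0}_{r_i}(x) \le 4(k+1) \fint_{B_{r_i}(x)} \bigl|\cW^{\delta_0}_{r_i^2/2} - \cW^{\delta_0}_{2r_i^2}\bigr|\, d\mu.
\]
Summing over $s \le r_i \le 2^{-5}$ and using the hypothesis yields a contribution $\le 4(k+1)\delta''$. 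The finitely many scales $2^{-5} < r_i \le 1$ contribute at most $C(n)\delta$ via the neck condition (n2) and the equivalence between volume and entropy pinching given by Theorem \ref{t:cW_local}. Choosing $\delta'' \le \delta_0/(8(k+1))$ and $\delta$ correspondingly small, this verifies (b), and Theorem \ref{t:nondegeneration} immediately yields that $u : B_r(x) \to \mathbb{R}^k$ is an $\epsilon$-splitting for every $s \le r \le 1$.

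The main obstacle is the robust selection of $(k,\alpha)$-independent points inside $\cC$ --- one must be sure that removing the Markov exceptional set does not destroy the independence. This is the step that genuinely uses both (n4) (keeping $\cC$ near $\cL_{x,r_i}$) and (S3) (keeping $\cC$ spread out on $\cL_{x,r_i}$); the splitting of the map $u$ on subballs enters only through the final appeal to the Nondegeneration Theorem, which does the heavy analytic lifting via the sharp cone-splitting and transformation results of the previous two sections.
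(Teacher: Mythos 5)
Your proposal is correct and follows essentially the same route as the paper: reduce to the Nondegeneration Theorem \ref{t:nondegeneration}, use a Markov/Chebyshev selection on $\cC\cap B_{r_i}(x)$ to find points where the entropy drop is comparable to its $\mu$-average, and use the Reifenberg condition (n4) together with the Ahlfors regularity (S3) to guarantee that this good set still contains a $(k,\alpha(n,B))$-independent tuple, giving $\cE^{k,\alpha,\delta}_{r_i}(x)\le C(n,B)\fint_{B_{r_i}(x)}|\cW^\delta_{r_i^2/2}-\cW^\delta_{2r_i^2}|\,d\mu$. Your explicit treatment of the finitely many scales $2^{-5}<r_i\le 1$ via (n2) and Theorem \ref{t:cW_local} is a small point the paper glosses over, but otherwise the arguments coincide.
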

\begin{proof}
	By the Nondegeneration 
	Theorem \ref{t:nondegeneration} it suffices to find a set of
$(k,\alpha)$-independent points\\
 $\{x_0,x_1,\cdots,x_k\}\subset  B_{r_i}(x)\cap \cC$ for some $\alpha(n,B)>0$ such that for each $r_i$ we have the $k$-pinching estimate 
\begin{align}
	\cE_{r_i}^{k,\alpha,\delta}(x)\le \sum_{j=0}^k|\cW_{r_i^2/2}^\delta(x_j)-\cW_{2r_i^2}^\delta (x_j)|\le C(n,B)\fint_{B_{r_i}(x)}|\cW_{r_i^2/2}^\delta(y)-\cW_{2r_i^2}^\delta|(y)|\, 
d\mu(y)\, .
\end{align}
We will show that the existence of such points follows from the assumed Ahlfors regularity of $\mu$ in (S3).
\vskip2mm

First, note that there exists a subset $\cC_{r_i,x}\subset \cC\cap B_{r_i}(x)$  with $\mu(\cC_{r_i,x})\ge r_i^kB/2$ such that for any $y\in \cC_{r_i,x}$ we have
	\begin{align}
|\cW_{r_i^2/2}^\delta(y)-\cW_{2r_i^2}^\delta(y)|\le 
C(n)\fint_{B_{r_i}(x)}|\cW_{r_i^2/2}^\delta(z)-\cW_{2r_i^2}^\delta(z)|\, d\mu(z).
	\end{align} 
	By the Ahlfors regularity of $\mu$ from (S3) let us see that we can find $(k,\alpha)$-independent points in $\cC_{r_i,x}$ for some small $\alpha(n,B)$. First we note that for any $\epsilon'>0$ if $\delta\le \delta(n,\rv,\eta,\epsilon')$ then $\cC_{r_i,x}\subset B_{\epsilon' r_i}\Big(\iota(\mathbb{R}^{k}\times \{(y_c)\})\Big)$ where $\iota :\mathbb{R}^k\times C(X)\to B_{r_i}(x)$ is $\delta r_i$-GH map. Comparing the result in Remark \ref{r: kalpha_independent_Rn} about the $(k,\alpha)$-independent point in $\mathbb{R}^n$,  if there exists no $(k,\alpha)$-independent points  in $\cC_{r_i,x}$ as in Definition \ref{d:independent_points1}, the set $\cC_{r_i,x}$ must be contained in $B_{4\alpha r_i}\Big(\iota(\mathbb{R}^{k-1}\times \{(0,y_c)\})\Big)$ for some $\mathbb{R}^{k-1}$ plane.  Therefore, we have obtained at most $C(n)\alpha^{-k+1}$ many balls $\{B_{8\alpha r_i}(y_j)\}$, with $y_j\in \cC_{r_i,x}$ , which cover $\cC_{r_i,x}$.  Thus, by the Ahlfors regularity of $\mu$ we have
$$
\mu(\cC_{r_i,x})\le C(n)\alpha^{-k+1} B (8\alpha r_i)^k\le C(n,B)\alpha r_i^k\, .
$$
Since $\mu(\cC_{r_i,x})\ge Br_i^k/2$,  by choosing $\alpha=\alpha(n,B)$ small we get a contradiction.  Hence there exist $(k,\alpha)$-independent points in $\cC_{r_i,x}\subset B_r(x)\cap \cC$.  At this point, Lemma \ref{l:splitting_neck}  follows directly from the Nondegeneration Theorem \ref{t:nondegeneration}.
\end{proof}
\vskip2mm

The following Lemma \ref{l:GH_approximation_splitting_map} provides a 
Gromov-Hausdorff-approximation for $\epsilon$-splitting maps which will be used to prove the bi-Lipschitz estimate for $u$. The proof of 
Lemma \ref{l:GH_approximation_splitting_map}
 depends on Lemma 
\ref{l:claim20}. Thus, it will not be completed until after Lemma \ref{l:claim20} has been proved.

\begin{lemma}\label{l:GH_approximation_splitting_map}
	Let $(M^n,g,p)$ satisfy (S1)-(S4) and assume $\delta''\le \delta''(n,\rv,\eta,\epsilon)$, $\delta'\le \delta'(n,\rv,\eta,\epsilon)$ and $\delta\leq \delta(n,\rv,B,\eta,\epsilon)$.  Let $u: B_r(x)\to \mathbb{R}^k$ be a $\delta''$-splitting map for some $x\in\cC$ and all $r_x\le r\le 1$.  Then for any $y\in \cC$,
\begin{align}
\label{e:gha}
\Big|\, |u(x)-u(y)|-d(x,y)\Big|\le \epsilon d(x,y)\, .
\end{align}
\end{lemma}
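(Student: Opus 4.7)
The plan is to use the fact that a sufficiently good harmonic splitting map is essentially a Gromov--Hausdorff projection, combined with the Reifenberg-type condition (n4) which forces all centers in $\cC$ to lie close to the relevant $\R^k$-slice.

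First I would reduce to a natural scale. Let $s:= d(x,y)$. Disjointness property (n1), applied to the balls centered at $x$ and $y$, gives $s\ge \tau_n^2 \max(r_x,r_y)$, so in particular $s\ge r_x$ and $u:B_s(x)\to\R^k$ is a $\delta''$-splitting by hypothesis. The upper bound $|u(x)-u(y)|\le (1+\epsilon)d(x,y)$ is then immediate from the pointwise gradient bound $|\nabla u|\le 1+\delta''$ built into the definition of a $\delta''$-splitting, provided $\delta''\le \epsilon$. All the work goes into the matching lower bound.

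For the lower bound, I would choose a comparison scale $r\in [2s,Cs]$ for a constant $C=C(n)$ and work with the splitting $u:B_r(x)\to\R^k$ on that ball. The converse direction of Theorem \ref{t:splitting_function} produces an $\epsilon' r$-GH approximation $\iota:B_{r}(x)\to\R^k\times Z$ for which the $\R^k$-coordinates of $\iota$ are quantitatively close to $u$. The content of the auxiliary Lemma \ref{l:claim20} (to be proved just below) is exactly the pointwise/$L^\infty$ upgrade of this $L^2$-closeness, which one expects from the $W^{2,p}$ Hessian bounds available to harmonic splittings (cf.\ Sections~\ref{s:Sharp_splitting}--\ref{s:nondegeneration}). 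Taking this upgrade for granted, we have $\big||u(x')-u(y')|-|\pi_{\R^k}\iota(x')-\pi_{\R^k}\iota(y')|\big|\le \epsilon' r\le C\epsilon' s$ for $x',y'\in B_{r/2}(x)$.

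Then I apply the neck structure. By (n3) the ball $B_s(x)$ is $(k,\delta^2)$-symmetric with respect to a subspace $\cL_{x,s}$, which under $\iota$ corresponds to the $\R^k$-factor through $\iota(x)$. By (n4) we have $y\in \cC\cap B_s(x)\subset B_{\tau_n s}(\cL_{x,s})$, so $\pi_{\R^k}\iota(y)$ and $\iota(y)$ differ by at most $O(\tau_n s)$ in the cone metric; since $\iota(x)$ lies on the $\R^k$-slice, $|\pi_{\R^k}\iota(x)-\pi_{\R^k}\iota(y)|$ agrees with $d(\iota(x),\iota(y))$ up to an additive error $O(\tau_n s)$, and this in turn agrees with $d(x,y)$ up to $\epsilon' r$ by the GH property. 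Chaining these comparisons produces $\big||u(x)-u(y)|-d(x,y)\big|\le (C\epsilon'+\tau_n)\cdot s$, and choosing $\delta''$ small enough so that $C\epsilon'+\tau_n\le \epsilon$ concludes \eqref{e:gha}.

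The main obstacle is the passage from the integral splitting bounds to the pointwise comparison $|u-\pi_{\R^k}\circ\iota|\le\epsilon' s$ that feeds Step~3; this is precisely what Lemma \ref{l:claim20} must supply. Everything else is a book-keeping application of Theorem \ref{t:splitting_function} at scale $s$ together with conditions (n1), (n3), (n4) of the definition of a $(k,\delta,\eta)$-neck region. Note that since the hypothesis already gives splittings at \emph{every} scale $r_x\le r\le 1$, one does not need to produce the splitting at scale $s$ from the transformation machinery of Section~\ref{s:Transformation_Theorem}; it is directly available, which is what makes this lemma quick given the preparatory results.
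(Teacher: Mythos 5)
Your overall strategy coincides with the paper's: compare the given splitting $u$ with a reference splitting built from the Gromov--Hausdorff structure of the neck, use that center points lie close to the $\R^k$-slice, and chain the estimates, deferring the alignment of the two splittings to Lemma \ref{l:claim20}. However, there is a genuine quantitative gap in the step where you place $y$ near the slice. You invoke (n4), which only gives $\cC\cap B_r(x)\subseteq B_{\tau_n r}(\cL_{x,r})$ with $\tau_n=10^{-10n}\omega_n$ a \emph{fixed dimensional constant}. Your final error is therefore $(C\epsilon'+\tau_n)s$, and the term $\tau_n s$ cannot be absorbed into $\epsilon\, d(x,y)$ by shrinking $\delta''$: for $\epsilon<\tau_n$ the argument fails, and the lemma (and its bi-Lipschitz application in Proposition \ref{p:bilipschitz_structure}) genuinely needs arbitrary $\epsilon$. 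The paper gets the sharper containment $\cC\cap B_r(x)\subset B_{\epsilon' r}\bigl(\iota(B_r(0^k)\times\{y_c\})\bigr)$ with $\epsilon'=\epsilon'(\delta)\to 0$, by using that each $y\in\cC$ is an almost cone point (condition (n2) plus almost-volume-cone implies almost-metric-cone) inside a ball that is $(k,\delta^2)$- but not $(k+1,\eta)$-symmetric, so quantitative cone-splitting forces $y$ to lie within $\epsilon' r$ of the $k$-symmetric slice. You must replace your use of (n4) by this argument.

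Two smaller points. First, your deduction ``$s\ge\tau_n^2\max(r_x,r_y)$, so in particular $s\ge r_x$'' is false ($\tau_n^2\max(r_x,r_y)$ is much smaller than $r_x$); the scale at which one works must be taken as some $r\ge r_x$ comparable to $d(x,y)$, as in the paper. Second, you describe Lemma \ref{l:claim20} as an $L^2\to L^\infty$ upgrade coming from Hessian bounds, but its real content is the alignment of $u$ with the GH-based splitting $\tilde u$ up to a rotation $O\in O(k)$ and a translation: the $L^2$-gradient closeness $\fint_{B_r(x)}|\nabla(O\tilde u-u)|^2\le\epsilon^{2n}$ is itself nontrivial and again relies on $B_r(x)$ \emph{not} being $(k+1,\eta)$-symmetric (otherwise $u$ and $\tilde u$ could split in independent directions). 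The converse direction of Theorem \ref{t:splitting_function} does not by itself assert that the GH coordinates are close to the particular map $u$ you started with.
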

\begin{proof}
	Pick $r\geq r_x$ so that $r/2\leq d(x,y)\leq r$.  By the definition of a neck region, we know that $B_{10r}(x)$ is $\delta r$-Gromov Hausdorff close to a cone $\mathbb{R}^k\times C(Y)$. Moreover, by  the splitting guaranteed by Theorem \ref{t:splitting_function} if $\delta\le \delta(n,\rv,\eta,B,\epsilon')$, then there exists a $(k,\epsilon')$-splitting map $\tilde{u}: B_{r}(x)\to \mathbb{R}^k:=   \mathbb{R}^k\times \{y_c\}\subset \mathbb{R}^k\times C(Y)$ such that $\tilde{u}\circ \iota : \mathbb{R}^k\times \{y_c\}\to \mathbb{R}^k\times \{y_c\}$ is $\epsilon'\cdot r$ close to the identity map. Here, $\iota: \mathbb{R}^k\times C(Y)\to B_{r}(x)$ is the $\delta r$-GH map in thedefinition of neck region. Since $B_r(x)$ is not $(k+1,\eta)$-symmetric we must have $\cC\cap B_r(x)\subset B_{\epsilon'r}(\iota(B_r(0^k)\times {y_c}))$. Therefore, for any $y\in \cC\cap B_{r}(x)$ we have  
\begin{align}\label{e:GH_tildeu}
\Big|\, |\tilde{u}(x)-\tilde{u}(y)|-d(x,y)\Big|\le 100\epsilon'r.	
\end{align}
\vskip2mm

In order to use \eqref{e:GH_tildeu} (which holds for $\tilde u$) to prove \eqref{e:gha}
(which pertains to $u$), the following
lemma is required.

\begin{lemma}
\label{l:claim20}
For any $\epsilon$ if $\epsilon'\le \epsilon'(\epsilon,n,\rv,\eta)$ and $\delta''\le \delta''(n,\rv,\eta,\epsilon)$ then there exist a rotation $O\in O(k)$ and a vector $Z\in \mathbb{R}^k$, such that $\sup_{B_{r}(x)}|O\tilde{u}-u-Z|\le \epsilon r/10$.\\
\end{lemma}
\vskip-12mm
\begin{proof}
The proof of Lemma \ref{l:claim20} is via a contradiction argument.
\vskip2mm

First we will show that after composing with a suitable orthogonal transformation, if necessary, the $L^2$ gradients are close.

\begin{sublemma}
\label{sl:2}
If $\epsilon'\le \epsilon'(\epsilon,n,\rv,\eta)$ and $\delta''\le \delta''(n,\rv,\eta,\epsilon)$, then there exists $O\in O(k)$ such that
\begin{align}\label{e:matrix_O}
	\fint_{B_{r}(x)}|\nabla (O\tilde{u}-u)|^2\le \epsilon^{2n}.
\end{align}
\end{sublemma}
\begin{proof}
Without loss of generality assume $\fint_{B_{r}(x)}\langle \nabla \tilde{u}^j,\nabla \tilde{u}^i\rangle=\delta^{ij}=\fint_{B_{r}(x)}\langle \nabla {u}^j,\nabla {u}^i\rangle$. Let us define a $k\times k$ matrix $A=(a_{ij})$ by 
\begin{align}\label{e:aij_define}
a_{ij}=\fint_{B_r(x)} \langle \nabla u^i,\nabla \tilde{u}^j\rangle.
\end{align}
We will see for $\epsilon'\le \epsilon'(\epsilon,n,\rv,\eta)$ and $\delta''\le \delta''(n,\rv,\eta,\epsilon)$ that
\begin{align}\label{e:a_ij_deltaij}
|\sum_{\ell=1}^k a_{i\ell}a_{j\ell}-\delta_{ij}|\le \epsilon^{7n},~~~i,j=1,\cdots, k.
\end{align}

Let us first assume \eqref{e:a_ij_deltaij} and finish the proof of the sublemma.

 By \eqref{e:a_ij_deltaij} we have 
 \begin{align}\label{e:Atildeu_u}
 \fint_{B_r(x)}|\nabla (A\tilde{u}-u)|^2\le \epsilon^{3n}.
 \end{align}
Moreover, by \eqref{e:a_ij_deltaij} we can use Gram-Schmidt process to produce a matrix $O\in O(k)$ with $|O-A|\leq C(k)\epsilon^{4n}$. Combining this with \eqref{e:Atildeu_u} would implies \eqref{e:matrix_O} i.e. the sublemma.

Now we begin the proof of \eqref{e:a_ij_deltaij}. 
Since $\fint_{B_{r}(x)}\langle \nabla \tilde{u}^j,\nabla \tilde{u}^i\rangle=\delta^{ij}
=\fint_{B_{r}(x)}\langle \nabla {u}^j,\nabla {u}^i\rangle$, it will suffice to prove 
\begin{align}\label{e:a_ii_deltaii}
|\sum_{\ell=1}^k a_{i\ell}a_{i\ell}-1|\le \epsilon^{10n},~~~i=1,\cdots, k.
\end{align}
Assume \eqref{e:a_ii_deltaii} doesn't hold for some $i=i_0\le k$ and $\epsilon=\epsilon_0>0$ 
with $\epsilon'\to 0$ and $\delta''\to 0$.  Let us consider the following $k+1$ harmonic functions
 $v^0=u^{i_0}-\sum_{j=1}^ka_{i_0 j}\tilde{u}^j$, $\tilde{u}^1$,$ \cdots$, $\tilde{u}^k$.
 From the definition of $a_{ij}$ in \eqref{e:aij_define}, we have that 
$\fint_{B_r(x)}\langle \nabla v^0,\nabla \tilde{u}^j\rangle=0$. Moreover,  By the contradiction assumption we have
\begin{align}
\fint_{B_r(x)}|\nabla v^0|^2=1-\sum_{j=1}^ka_{i_0j}^2\ge \epsilon_0^{10n}.
\end{align}
Normalize $v^0$ to $\tilde{u}^0$ such that $\tilde{u}^0$ has unit $L^2$ gradient norm. 
Therefore, for $\epsilon'$ and $\delta''$ sufficiently small, the map $(\tilde{u}^0,\tilde{u}^1,\cdots,\tilde{u}^k): B_r(x)\to \dR^{k+1}$ is a $(k+1,\eta/10)$-splitting map,  which contradicts with the fact that $B_r(x)$ is not $(k+1,\eta)$-symmetric.  
This completes the proof of \eqref{e:a_ii_deltaii} and \eqref{e:a_ij_deltaij}, hence,
the proof of Sublemma \ref{sl:2}. 
\end{proof}
 \vskip2mm
 
 Now by using the Poincar\'e inequality in Theorem \ref{t:poincare} we get 
\begin{align}\label{e:OtildeuL2}
	\fint_{B_{r}(x)}\Big|O\tilde{u}-u-\fint_{B_{r}(x)}(O\tilde{u}-u)\Big|^2
\le C(n)r^2\fint_{B_{r}(x)}|\nabla (O\tilde{u}-u)|^2\le C(n)r^2\epsilon^{2n}.
\end{align}
Denote $Z=\fint_{B_{r}(x)}(O\tilde{u}-u)\in \mathbb{R}^k$. At this point, the proof of 
Lemma \ref{l:claim20}
 follows now from 
 \eqref{e:OtildeuL2} and the gradient estimate $\sup_{B_{r}(x)}|\nabla (O\tilde{u}-u)|\le 1+\epsilon$.   
This completes the proof of Lemma \ref{l:claim20}.
\end{proof}
\vskip2mm

The proof of Lemma \ref{l:GH_approximation_splitting_map}
 can now be completed by observing
 that for any $y\in \cC\cap B_r(x)$ with $d(x,y)\ge  r/2$, we have
\begin{align}\label{e:GH_u}
	\Big|{u}(x)-{u}(y)|-d(x,y)\Big|&\le \Big||O(\tilde{u}(x)-\tilde{u}(y))|-d(x,y)\Big|
+\Big|O\tilde{u}(x)-u(x)-Z\Big|+\Big|O\tilde{u}(y)-u(y)-Z\Big| \notag\\
	&\le \Big|\tilde{u}(x)-\tilde{u}(y)|-d(x,y)\Big|+\epsilon r/5\le \epsilon r/2\le \epsilon d(x,y)\, .
\end{align}
This completes the proof of  Lemma \ref{l:GH_approximation_splitting_map}.
\end{proof}
\vskip2mm

\begin{proof} (of Proposition \ref{p:bilipschitz_structure})
Now we can finish the proof of Proposition \ref{p:bilipschitz_structure}.
For this, note that for all $\epsilon''>0$ there exists $\delta'(n,B,\rv,\eta,\epsilon'')$
 and $\delta(n,B,\rv,\eta,\epsilon'')$ such that by Lemma \ref{l:fubini_integral}: 
		\begin{align}
		\fint_{B_{15/8}(p)}\Big(\sum_{r_x\le r_i\le 2^{-5}} \fint_{B_{r_i}(x)}|\cW_{r_i^2/2}^\delta-\cW_{2r_i^2}^\delta|(y)d\mu(y)\Big)\, d\mu(x)\le \epsilon''.
	\end{align}

For all $\delta''>0$, define the set $\cC_{\delta''}\subset \cC\cap B_{15/8}(p)$ such that $x\in \cC_{\delta''}$ if 
	\begin{align}
\sum_{r_x\le r_i\le 2^{-5}} \fint_{B_{r_i}(x)}|\cW_{r_i^2/2}^\delta-\cW_{2r_i^2}^\delta|(y)d\mu(y)\le \delta''.	
\end{align}

	If $\epsilon''\le \epsilon''(n,B,\delta'')$, then by the Ahlfors regularity estimate $(S3)$ for $\mu$, we have  
$$
\mu(\cC_{\delta''})\ge (1-\delta'')\mu(\cC\cap B_{15/8}(p))\, .
$$
\vskip1mm

Given $\epsilon'>0$, if $\delta''\le \delta''(n,\epsilon')$, then  by Lemma \ref{l:splitting_neck}  for any $x\in \cC_{\delta''}$ and $r_x\le r\le 1$ we have  $u:B_{r}(x)\to \mathbb{R}^k$ is $(k,\epsilon')$-splitting map.  Thus, by fixing $\epsilon'\le \epsilon'(n,\rv,B,\eta,\epsilon)$ and putting $\cC_{\delta''}=\cC_{\epsilon}$, we obtain
 (1), (3) and (4) of Proposition \ref{p:bilipschitz_structure}.
  \vskip2mm
  
    To prove the bi-Lipschitz estimate, (2),  note that for any $x,y\in \cC_{\delta''}$ if $\epsilon'\le \epsilon'(n,\rv,\eta,B,\epsilon)$,  then
  Lemma \ref{l:GH_approximation_splitting_map} 
    gives \eqref{e:gha}, 
$$
\Big|\, |u(x)-u(y)|-d(x,y)\Big|\le \epsilon d(x,y)\, .
$$
This implies the bi-Lipschitz estimate (2) of Proposition \ref{p:bilipschitz_structure}.  
By using the Transformation Proposition \ref{p:transfor_prop}, the Transformation Proposition,
the proof of bi-H\"older estimate for $u$ can be completed in the verbatim manner
as in the proof of the 
Canonical Reifenberg Theorem, \ref{t:reifenbergcanonical}. 
This completes the proof of Proposition
\ref{p:bilipschitz_structure}.
\end{proof}

\subsection{Ahlfors regularity for the packing measure}
\label{ss:Ahlforsestimates}
In this subsection, we will show if a neck region satisfies a weak Ahlfors regularity estimate as in (S3), 
then for $\delta$ sufficiently small the neck region automatically satisfies a stronger universal Ahlfors regularity estimate. 
This is based on the bi-Lipschitz structure proved in Proposition
\ref{p:bilipschitz_structure}. It is the key to the inductive scheme:

\begin{proposition}
\label{p:Ahlfors_regular}
	Let $(M^n,g,p)$ satisfy (S1)-(S4) with $\delta\leq \delta(n,\rv,B,\eta)$ and $\delta'\leq \delta'(n,\rv,B,\eta)$.  Then there exists $A(n)$ such that
for any $x\in \cC\cap B_{2}(p)$, with $r\ge r_x$ and $B_{2r}(x)\subset B_{2}(p)$, we have 
	\begin{align}\label{e:Ahlfors_bounded}
		A(n)^{-1}r^k\le \mu(B_r(x))\le A(n)r^k.
	\end{align}
\end{proposition}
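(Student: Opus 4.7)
The strategy is to use Proposition \ref{p:bilipschitz_structure} to transport the packing measure $\mu$ to $\dR^k$ via the splitting map $u$, where the Euclidean structure automatically provides volume bounds depending only on $n$, independent of the auxiliary constant $B$ appearing in (S3).

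\textbf{Setup.} Fix $x \in \cC$ and $r \ge r_x$ with $B_{2r}(x) \subset B_2(p)$. By Remark \ref{r:properties}(7), the restriction of $\cN$ to $B_{2r}(x)$ is again a $(k,\delta,\eta)$-neck region, and by Remark \ref{r:delta_delta'}, after rescaling $B_{2r}(x)$ to unit scale, there is a $\delta'$-splitting map $u : B_4(x) \to \dR^k$ for any prescribed $\delta' > 0$ once $\delta$ is small. Fix a small universal $\epsilon = 1/100$, choose $\delta'$ and $\delta$ accordingly (depending on $n, \rv, \eta, B$), and invoke Proposition \ref{p:bilipschitz_structure} to extract $\cC_\epsilon \subset \cC \cap B_{15/8}(x)$ on which $u$ is $(1+\epsilon)$-bi-Lipschitz and is bi-Hölder on all of $\cC$, with $\mu(\cC_\epsilon) \ge (1-\epsilon)\mu(\cC \cap B_{15/8}(x))$.

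\textbf{Upper bound.} For distinct $y, y' \in \cC_+ \cap \cC_\epsilon$, condition (n1) gives $d(y,y') \ge \tfrac{\tau_n^2}{2}(r_y + r_{y'})$, so bi-Lipschitz transport yields pairwise disjoint Euclidean balls $\{B_{c(n) r_y}(u(y))\} \subset B_2(u(x)) \subset \dR^k$; comparing Lebesgue volumes gives $\sum_{y \in \cC_+ \cap \cC_\epsilon} r_y^k \le C(n)$. Similarly, bi-Lipschitz invariance of $k$-dimensional Hausdorff measure yields $\cH^k(\cC_0 \cap \cC_\epsilon) \le C(n)$. Adding these contributions and using $\mu(\cC_\epsilon) \ge (1-\epsilon)\mu(\cC \cap B_{15/8}(x))$ produces $\mu(B_r(x)) \le A(n) r^k$ after rescaling back.

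\textbf{Lower bound.} Condition (n4) says the approximating affine subspace $\cL_{x,1}$ lies in $B_{\tau_n}(\cC)$, and Theorem \ref{t:splitting_function} combined with the gradient bound $|\nabla u| \le 1+\delta'$ shows that $u \circ \iota$ is a $C(n)\delta$-Gromov--Hausdorff approximation on $\cL_{x,1} \cap B_{7/4}(x)$. Consequently $u(\cC \cap B_{7/4}(x))$ is $C(n)\tau_n$-dense in the Euclidean ball $B_{5/3}(u(x)) \subset \dR^k$. Writing $\cC = \cC_\epsilon \cup (\cC \setminus \cC_\epsilon)$ and using the upper bound applied to the exceptional part to control $\cH^k(u(\cC \setminus \cC_\epsilon)) \le C(n)\epsilon$, the image $u(\cC_\epsilon \cap B_{7/4}(x))$ must itself be $C(n)\tau_n$-dense in a Euclidean ball of definite radius. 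Bi-Lipschitz transport then gives $\cH^k(u(\cC_\epsilon \cap B_{7/4}(x))) \ge c(n)$, and pulling back through the bi-Lipschitz map yields $\mu(B_r(x)) \ge A(n)^{-1} r^k$.

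\textbf{Main obstacle.} The technical heart of the argument is the lower bound, and specifically the step of showing that the exceptional set $\cC \setminus \cC_\epsilon$ cannot account for the density in $\dR^k$. It is essential that Proposition \ref{p:bilipschitz_structure}(1) controls the $\mu$-mass (not merely the $\cH^k$-measure) of the exceptional set. The argument is mildly self-referential, since the upper bound is used to bound $\cH^k(u(\cC \setminus \cC_\epsilon))$ while proving the lower bound; this can be handled either by applying the upper bound first (which requires no lower bound input) or inductively on the scale $r$. The crucial structural feature—that the bi-Lipschitz portion $\cC_\epsilon$ occupies a $(1-\epsilon)$-fraction of $\mu$ regardless of the size of $B$, provided $\delta$ is taken sufficiently small—is precisely what allows the estimate to close with a universal constant $A(n)$ in place of the original $B$, and is therefore the mechanism that drives the induction described in Lemma \ref{l:inductive2}.
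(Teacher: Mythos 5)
Your upper bound is essentially the paper's argument: restrict to the bi-Lipschitz set $\cC_\epsilon$ from Proposition \ref{p:bilipschitz_structure}, push the Vitali-disjoint balls $\{B_{\tau_n^2 r_y}(y)\}$ forward to disjoint Euclidean balls, and compare Lebesgue volumes; combined with $\mu(\cC_\epsilon)\ge(1-\epsilon)\mu(\cC\cap B_{15/8})$ this gives $\mu(B_r(x))\le A(n)r^k$. That part is correct.

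The lower bound has a genuine gap, in two places. First, the step ``$u(\cC)$ is $C(n)\tau_n$-dense, the exceptional set has small measure, hence $u(\cC_\epsilon)$ is still dense'' does not follow: the control you have on $\cC\setminus\cC_\epsilon$ is on its $\mu$-mass, not on its spatial distribution, so the exceptional centers could be exactly those sitting over some sub-ball of $\dR^k$, leaving a hole in $u(\cC_\epsilon)$. Second, and more fundamentally, density of a point set does not bound its Hausdorff measure from below. In a smooth manifold $\cC_0=\emptyset$ (Remark \ref{r:properties}(6)), so $\cC_\epsilon\subseteq\cC_+$ is a countable set and $\cH^k(u(\cC_\epsilon))=0$; your inequality $\cH^k(u(\cC_\epsilon\cap B_{7/4}(x)))\ge c(n)$ is simply false, and in any case $\cH^k$ of the image point set is not the quantity that controls $\mu=\sum r_y^k\delta_y+\cH^k|_{\cC_0}$. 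What must be shown is that the \emph{balls} $B_{r_y}(u(y))$ (suitably transformed) of radius $\sim r_y$ cover a set of definite Lebesgue measure in $\dR^k$, so that $C(k)\le\sum_y \Vol_k(B_{r_y}(u(y)))\le C_k\sum_y r_y^k$. Single-scale $\tau_n$-density of the centers does not give this, because a center with tiny $r_y$ contributes to density but contributes only $r_y^k$ to $\mu$.

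The paper closes this gap with a multi-scale covering argument (Lemma \ref{l:claim21}): using the Geometric Transformation Theorem it produces, for every $y\in\cC$ and every $r_y\le s\le1$, a matrix $T_{y,s}$ with $T_{y,s}u$ a $(k,\epsilon)$-splitting and $|T_{y,r_y}^{-1}|\le2$, and shows the ellipsoids $T_{y,r_y}^{-1}(B_{r_y}(u(y)))$ cover $B_{1/8}(0^k)$. The proof is by contradiction on the minimal scale $\bar s$ at which an uncovered point $w$ first enters some $T_{y,s}^{-1}B_s(u(y))$: condition (n4) at scale $\bar s$ produces a new center within $\tau_n\bar s$, and the slow (Hölder) variation of $T_{y,s}$ in $s$ then forces $w$ to be covered already at scale $\bar s/2$, contradicting minimality. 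It is this descent through \emph{all} scales $s\ge r_y$ — not density at the top scale — that converts the Reifenberg condition (n4) into a genuine covering by balls of radius $r_y$, and hence into the lower bound $\mu(B_r(x))\ge A(n)^{-1}r^k$. If you want to repair your argument you must replace the density step with such a covering statement.
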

\begin{proof}
Let us assume without any loss that $x=p$ and $r=1$.  We will show that $\mu(B_1(p))$ satisfies the upper and lower bound in \eqref{e:Ahlfors_bounded}.  
\vskip2mm	
	
Consider the map $u: B_2(p)\to \dR^k$ and assume $0^k=u(p)$ and recall that $\tau=\tau_n=10^{-10n}\omega_n$.
\vskip1mm

We will begin by proving the upper bound for $\mu(B_1(p))$. 
For this, note that for any $\epsilon$ if $\delta\le \delta(n,\rv,\epsilon,B,\eta)$, then by the bi-Lipschitz estimate in Proposition \ref{p:bilipschitz_structure} the balls
$\{B_{\tau^3r_x}(u(x))\subset \dR^k\}$ are mutually disjoint for $x\in\cC_\epsilon$ and in addition,
$\cC_\epsilon\subset \cC\cap B_{15/8}(p)$ satisfies 
$$
\mu(\cC_\epsilon\cap B_{15/8})\ge (1-\epsilon)\cdot \mu(\cC\cap B_{15/8}(p))\, .
$$
By the Lipschitz bound on $u$, we have 
 $|u(x)|=|u(x)-u(p)|\le 4$.  Let $\Vol_k$ is the volume form of $\dR^k$.   Then  
\begin{align}
\mu(\cC_\epsilon\cap B_{15/8}(p))
&\le \sum_{x\in\cC_\epsilon\cap B_{15/8}}r_x^k\notag\\
&\le C(n)\cdot \sum_{x\in\cC_\epsilon\cap B_{15/8}}\Vol_k(B_{\tau^3r_x}(u(x)))\notag\\
&\le C(n)\cdot\Vol_k(B_5(0^k))\notag\\
&\le C(n)\, .
	\end{align}
 By combining the above with the estimate $\mu(\cC_\epsilon\cap B_{15/8})\ge (1-\epsilon)\mu(\cC\cap B_{15/8}(p))$, this gives the upper bound of $\mu(B_1(p))$.
\vskip2mm

The lower bound for $\mu(B_1(p))$  will follow from a covering argument. 
\vskip2mm

The Geometric Transformation Theorem \ref{t:transformation} implies that
for any $\epsilon>0$ and $\delta\le\delta(n,\rv,\epsilon,\eta)$,  there exists for $x\in\cC$ and $r_x\le s\le 1$ a $k\times k$ matrix $T_{x,s}$ such that 
the map,
$T_{x,s}u: B_s(x)\to \mathbb{R}^k$ is a $(k,\epsilon)$-splitting map.  
Since $|\nabla u|\le 1+\delta'$, we have $|T_{x,s}|\ge 1/2$. 
 The lower bound estimate in \eqref{e:Ahlfors_bounded} will follow from the next
 lemma.	

\begin{lemma}
\label{l:claim21}
Let 
$$
T_{x,r_x}^{-1}(B_{r_x}(u(x))):=  u(x)+T_{x,r_x}^{-1}\Big(B_{r_x}(0^k)\Big)\, .
$$
Then a covering of $B_{1/8}(0^k)\subset \dR^k$, is provided by 
the  collection of ellipsoids:
$$
\big\{T_{x,r_x}^{-1}(B_{r_x}(u(x)))\,|\, x\in\cC\cap B_1(p)\big\}\, .
$$ 
\end{lemma}
\begin{proof}
Assume there exists $w\in B_{1/8}(0^k)$ not in the covering. For every $x\in \cC\cap B_1(p)$ define 
\begin{align}
s_x &:=  \inf\{s\ge r_x: w\in T_{x,s}^{-1}B_s(u(x))\}\notag\\ 
\bar s &:=  s_{\bar x}:=  \min_{x\in\cC\cap B_1(p)} s_x\, .
\end{align}
  Then $\bar s> r_{\bar x}$ and 
 $$
  w\in T_{\bar x,2\bar s}^{-1} B_{2\bar s}(u(\bar x))\, .
 $$ 
 This implies  
 $$
 T_{\bar x,2\bar s}w\in B_{2\bar s}(T_{\bar x,2\bar s}u(x))\, . 
 $$
 
 On the other hand, the map is $(k,\epsilon)$-splitting:
 $$
 T_{\bar x,2\bar s}u: B_{2\bar s}(\bar x)\to \mathbb{R}^k\, .
 $$ 
 From the covering property (n5) in the neck region definition \ref{d:neck2}, 
 there exists some $y\in B_{2\bar s}(\bar x)\cap \cC$ such that 
\begin{align}
	|T_{\bar x,2\bar s}w-T_{\bar x,2\bar s}u(y)|\le 3\tau \bar s. 
\end{align}

	By the H\"older growth estimate for $T_{x,s}$, with respect to $s$ in the Transformation Proposition \ref{p:transfor_prop}, 
	we have 
$$
|T_{y,2\bar s}T_{\bar x,2\bar s}^{-1}-I|\le C(n)\epsilon\, .
$$ 
This follows since $|T_{\bar x,2\bar s}T_{\bar x,5\bar s}^{-1}-I|\le \epsilon$ and $|T_{y,2\bar s}T_{\bar x,5\bar s}^{-1}-I|\le \epsilon$ due to the fact that $T_{\bar x,5\bar s}u: B_{2\bar s}(y)\to \mathbb{R}^k$ is also a $C\epsilon$-splitting map. Therefore,
	\begin{align}
		|T_{y,2\bar s}w-T_{y,2\bar s}u(y)|\le 4\tau \bar s.
	\end{align}

	Again by the H\"older growth estimate for $T_{y,s}$ in the Transformation Proposition \ref{p:transfor_prop} we have $|T_{y,\bar s/2}w-T_{y,\bar s/2}u(y)|\le 5\tau \bar s$. Since $w\in T_{y,\bar s/2}^{-1}B_{\bar s/2}(u(y))$, this 
 contradicts  the definition of $\bar s$.
This concludes the proof of Lemma \ref{l:claim21}.
\end{proof}

From Lemma \ref{l:claim21} we obtain
	\begin{align}
		C(k)\le \sum_{x\in\cC\cap B_1(p)} \Vol_{k}(T_{x,r_x}^{-1}B_{r_x}(u(x)))\le \sum_{x\in\cC\cap B_1(p)} C_k r_x^k |T_{x,r_x}^{-1}|\le C_k\sum_{x\in\cC\cap B_1(p)}r_x^k= C_k\, \mu(B_1(p))\, 
	\end{align}
By using the estimate $|T_{x,r_x}^{-1}|\le 2$, this provides
 a lower bound for $\mu(B_1(p))$, where we use the estimate $|T_{x,r_x}^{-1}|\le 2$. This completes the proof of Proposition \ref{p:Ahlfors_regular}.
\end{proof}
\vskip2mm

\subsection{Proof of the Neck Structure Theorem for smooth
 manifolds}
\label{ss:neck_structure_smooth_manifold}
In present subsection,
we will prove the Ahlfors regularity estimate for the case of smooth riemannian manifolds. 
The Ahlfors regularity estimate in the general
case will be reduced to this one via a careful approximation argument.
\vskip2mm

 In the case of smooth Riemann manifolds,  neck regions satisfy $\cC_0=\emptyset$ and $\inf r_x>0$. 
Thus, it suffices to prove the following lemma
\begin{lemma}[The smooth case of  Theorem \ref{t:neck_region2}]
\label{l:smoothcase}
For all $\eta>0$ there exists $\delta=\delta(n,\rv,\eta)>0$ and $A(n)$ such that if $\cN\subset  B_2(p)\subset M^n$ 
is a $(k,\delta,\eta)$-neck, 
 then for all $s\ge r_x$ with $B_{2s}(x)\subset B_{2}(p)$
\begin{align}
\label{e:j}
A(n)^{-1}s^k\le \mu(B_s(x))\le A(n)s^k\, .
\end{align}
\end{lemma}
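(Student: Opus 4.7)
The plan is to deduce Lemma \ref{l:smoothcase} from Proposition \ref{p:Ahlfors_regular} via a bootstrap induction on the depth $N := \lceil \log_2(1/\inf_{x\in\cC} r_x)\rceil$ of the neck, which is finite precisely because we are in the smooth setting where $\cC_0 = \emptyset$ and $\inf r_x > 0$.

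First I would fix the universal constant $A = A(n)$ supplied by Proposition \ref{p:Ahlfors_regular}, along with the corresponding threshold $\delta_\star := \delta(n,\rv,A,\eta)$ from that proposition, and set $\delta := \delta_\star$. Remark \ref{r:delta_delta'} then furnishes the $\delta'$-splitting map $u:B_4(p)\to\dR^k$ automatically, so assumptions (S1), (S2), (S4) all hold. The entire task reduces to verifying the weak Ahlfors condition (S3) with constant $B = A$: once this is in hand, Proposition \ref{p:Ahlfors_regular} returns the universal $A$-bound and the lemma is proved.

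Second, I would establish (S3) with constant $A$ by induction on $N$. The base case $N=0$ is elementary: the disjointness condition (n1) together with Bishop--Gromov forces $|\cC\cap B_2(p)|\le C(n)$ with each $r_x\in[1,4]$, and a direct atom count yields the two-sided bound provided $A$ is chosen large enough. For the inductive step $C(N-1)\Rightarrow C(N)$, given a neck with $\inf r_x\ge 2^{-N}$ and an admissible pair $(x,s)$, I split into two regimes. When $r_x\le s < 2^{-N+1}$, so that $s\le 2r_x$, the Lipschitz bound (n5) forces $r_y\in[r_x/2,2r_x]$ for every $y\in\cC\cap B_s(x)$ and (n1) combined with volume comparison gives $|\cC\cap B_s(x)|\le C(n)$, so that $\mu(B_s(x)) = \sum_{y\in\cC\cap B_s(x)} r_y^k$ satisfies the two-sided bound with a purely dimensional constant. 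When $s\ge 2^{-N+1}$, I restrict to the sub-ball $B_{2s}(x)$: by Remark \ref{r:properties}(7), $\cN\cap B_{2s}(x)$ rescaled to the unit ball is a fresh $(k,\delta,\eta)$-neck whose rescaled minimum radius is at least $2^{-N}/s$, hence whose depth is strictly smaller than $N$; the induction hypothesis then yields the universal bound on the rescaled neck, and scale-invariance of $\mu(B_s(x))/s^k$ transports it back to $(x,s)$.

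The main obstacle is ensuring the Ahlfors constant does not degrade across the iterations. A naive covering argument for scales near the boundary $s\sim 2^{-N}$ typically introduces a multiplicative drift of order $C(n)$ at each inductive level, producing a dependence like $C(n)^N$ that destroys uniformity. Proposition \ref{p:Ahlfors_regular} is precisely the tool that eliminates this drift: it asserts that once (S3) holds with \emph{any} constant $B$ for which $\delta\le\delta(n,\rv,B,\eta)$, the bound sharpens to the universal $A$, so by taking $B = A$ throughout and choosing $\delta$ once and for all at the threshold $\delta_\star$, no constant growth occurs across levels. Once this scale-invariant organization is in place, the remaining verifications --- that the rescaled sub-neck inherits (n1)--(n5) with identical parameters via Remark \ref{r:properties}(7), that $u$ restricts via Theorem \ref{t:splitting_function} to a splitting map on the sub-ball, and that (n4) combined with (n1) gives the sharp Vitali count in the small-scale regime --- are routine.
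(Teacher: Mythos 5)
Your overall strategy — induction on the depth of the neck, with Proposition \ref{p:Ahlfors_regular} used to prevent the Ahlfors constant from drifting — is the same as the paper's, and your base case and your treatment of the bottom scales $s\le 2r_x$ are fine. But the inductive step has a genuine gap at the top scales. You claim that whenever $s\ge 2^{-N+1}$, the rescaled neck $\cN\cap B_{2s}(x)$ has depth strictly smaller than $N$. This is false for $s\in(1/2,1]$: the rescaled minimum radius is $\inf r_y/s\ge 2^{-N}/s$, so the rescaled depth is $\lceil N+\log_2 s\rceil$, which equals $N$ for any $s>1/2$ (and in the extreme case $x=p$, $s=1$ you are applying the induction hypothesis to the very neck you are trying to bound). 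So your induction does not close precisely on the balls of radius between $1/2$ and $1$, which is the only nontrivial case.

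This is exactly the case the paper handles differently, and the mechanism matters. For $s>1/2$ the paper covers $B_{7/4}(p)$ by a Vitali family of at most $C_0(n)$ balls $B_{1/16}(x_j)$ with $x_j\in\cC$; each of these, after rescaling, has strictly smaller depth, so the induction hypothesis gives the universal bound on each, and summing yields (S3) only with the \emph{worse} constant $B=C_0(n)16^kA(n)$. Proposition \ref{p:Ahlfors_regular} is then invoked to upgrade this weak bound back to the universal $A_1(n)$ — and for this to work, $\delta$ must be fixed in advance at the threshold $\delta(n,\rv,\eta,B)$ for that larger $B$, not for $A$ as you propose. Your formulation, in which you try to verify (S3) with constant $A$ directly and treat Proposition \ref{p:Ahlfors_regular} as returning a bound you already have, never produces the intermediate weak constant and therefore never actually engages the upgrade step; as written, the top-scale case is simply unproved. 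To repair the argument, replace your second regime by: for $r_x\le s\le 1/2$ apply the induction hypothesis to the rescaled sub-neck (depth genuinely drops); for $s>1/2$ run the Vitali covering argument to get (S3) with $B=C_0(n)16^kA(n)$ and then apply Proposition \ref{p:Ahlfors_regular} with $\delta\le\delta(n,\rv,\eta,B)$.
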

\begin{proof}${}$

\noindent
{\bf Terminology.} We will say {\it statement $(j)$ holds} if the lemma holds for all neck regions which satisfy $\inf r_x\geq 2^{-j}>0$.  The proof will be by induction on $j$.  
\vskip3mm

We begin with the base step.  Note that if $j\le 10$ and $\delta\le 10^{-10n}$ then $\#\cC\leq C(n)$.  In particular, the statement $(j)$ holds for some universal constant $A_0(n)$.
\vskip2mm

Denote the universal constant $A(n)$ in Proposition \ref{p:Ahlfors_regular} by $A_1(n)$. 
We will show that $(j)$ holds for all $j$ when  $A(n):=  A_0(n)+A_1(n)$ and 
$\delta(n,\rv,\eta)=\delta(n,\rv,\eta,B)$, where $\delta(n,\rv,\eta,B)$ is the constant 
in Proposition \ref{p:Ahlfors_regular} with $B=A(n)C(n)$, where $C(n)$ is given by 
$C(n)=C_0(n)16^k$ with $C_0(n)$ is the cardinality of maximal number of disjoint balls 
$\{B_{2^{-5}}(x_i)\, |\, x_i\in B_2(0^k)\}$ with center in $B_2(0^k)$.  Therefore, $B=A(n)C(n)$ 
is a universal constant.  
\vskip2mm

Note that if we take $\delta\le \delta(n,\rv,B,\eta,\delta')$ sufficiently small
then by the structure of neck region and cone splitting theorem \ref{t:content_splitting} 
there exists an $(k,\delta')$-splitting map $u: B_2(p)\to \mathbb{R}^k$; see also Remark \ref{r:delta_delta'}.
 Therefore, the constant $\delta'(n,\rv,B,\eta)$ in Proposition \ref{p:Ahlfors_regular} automatically holds if
 we choose $\delta\le \delta(n,\rv,B,\eta,\delta')$.  
\vskip2mm

Now let us assume statement $(j)$ holds, then we need to see that $(j+1)$ holds. 
 So let $\cN\subset B_{2}(p)$ be a $(k,\delta,\eta)$-neck region with $\min_x r_x\ge 2^{-j-1}$
 and  associated center points $\cC$. By Proposition \ref{p:Ahlfors_regular}, it suffices to obtain a 
weak Ahlfors regularity bound for 
$\mu$ with $B=A(n)C(n)$. 
\vskip2mm

Let $B_{2s}(x)\subset B_{2}(p)$. If $s\le 1/2$, then $\cN\cap B_{2s}(x)\subset B_{2s}(x)$ 
is a new $(k,\delta,\eta)$-neck which satisfies $(j)$ by rescaling $B_{2s}(x)$ to $B_2$.  
In particular we have by our inductive hypothesis that $A^{-1}(n)\le \mu(B_s(x))\le A(n)s^k$. 
If $s> 1/2$ then, in particular, we have $x\in B_{3/2}(p)\cap \cC$ and $B_s(x)\subset B_{7/4}(p)$. 
Choose a Vitali covering $\{B_{1/16}(x_j),x_j\in \cC\cap B_{7/4}(p)\}$ of $B_{7/4}(p)$ with cardinality
 at most $C_0(n)$. Since $B_{1/8}(x_j)\subset B_{2}(p)$, by using the inductive assumption again we have 
\begin{align}
	16^{-k}A^{-1}(n)\le \mu(B_{1/16}(x_j))\le A(n) 16^{-k}.
\end{align}
From this, it follows easily that 
\begin{align}
	16^{-k}A^{-1}(n)s^k\le \mu(B_s(x))\le C_0(n)16^{-k}2^{k}A(n)s^k.
\end{align}

Thus, we have proved $\mu$ satisfies weak Ahlfors regularity estimate with constant $B=C_0(n)16^{k}A(n)$. By Proposition \ref{p:Ahlfors_regular},  if $\delta\le \delta(n,\rv,\eta,B)=\delta(n,\rv,\eta)$ then in fact we have the stronger estimate $A_1(n)^{-1}{s}^k\le \mu(B_s({x}))\le A_1(n){s}^k$. In particular $A(n)^{-1}{s}^k\le \mu(B_s({x}))\le A(n){s}^k$.  This completes the proof of Lemma
\ref{l:smoothcase} i.e. Ahlfors regularity for the case of smooth manifolds.
\end{proof}

\subsection{Approximating limit neck regions by smooth neck
 regions}
\label{ss:alnrfsnr}

As mentioned in the previous subsection, to prove the neck structure theorem for neck regions for which
 $\cC_0\neq \emptyset$, we will approximate general neck regions $\cN$ by neck regions 
$\cN_j$ for which $\inf r_{x,j}>0$.  This will be carried out  in the present subsection.  In the following 
subsection, we will complete the 
proof of the Neck Structure Theorem \ref{t:neck_region2}.
\vskip2mm

Our main result in this subsection is the following:
\begin{theorem}
\label{t:approximate_neck}
	Let $(M^n_j,g_j,p_j)\togh (X^n,d,p)$ satisfy $\Vol(B_1(p_j))>v>0$ and $\Ric_i\ge-(n-1)\delta^2$ 
and let $\cN = B_2(p)\setminus \overline B_{r_x}(\cC)$ be a $(k,\delta,\eta)$-neck region.  
Then there exists $(k,\delta_j,\eta_j)$-neck region $\cN_j=B_2(p_j)\setminus \overline B_{r_{x,j}}(\cC_j)$ 
such that the following hold
	\begin{itemize}
\item[(1)] $\delta_j\to \delta$ and $\eta_j\to\eta$.
\vskip1mm
		
\item[(2)] If $\phi_j :B_2(p_j)\to B_2(p)$ are the approximating Gromov-Hausdorff maps then 
$\phi_j(\cC_j)\to \cC$ in the Hausdorff sense.
\vskip1mm
		
\item[(3)] $r_{x,j}\to r_x: \cC\to \mathbb{R}_{+}$ uniformly.
\vskip1mm

\item[(4)] If $\mu_j,\mu$ are the packing measure of $\cN_j$ and $\cN$, 
respectively, then if we consider the limit $\mu_i\to \mu_\infty$ in measure sense then we have 
the measure estimates $\mu\le C(n)\mu_\infty$.
\vskip1mm

\item[(5)] If $\cC_0\subset \cC$ is $k$-rectifiable, we have $\mu_\infty\le C(n)\mu$.
	\end{itemize}
\end{theorem}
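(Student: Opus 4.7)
The plan is to pull the center set $\cC \subseteq B_2(p)$ back to $M^n_j$ through inverse Gromov--Hausdorff approximations $\psi_j: B_2(p) \to B_2(p_j)$, truncating the radius function at an auxiliary scale $\epsilon_j \to 0$ so that the lifted center set is finite and has strictly positive radii (as required in a smooth manifold, where no $\cC_0$-part can occur). Concretely, fix $\epsilon_j \to 0$ with $\epsilon_j^{1/2}$ much larger than the GH error $d_{GH}(B_2(p_j), B_2(p))$. Set $\tilde r_x := \max(r_x, \epsilon_j^{1/2})$ on $\cC$, choose a maximal subcollection $\cC^{(j)} \subseteq \cC$ with $\{B_{\tau_n^2 \tilde r_x/2}(x)\}_{x \in \cC^{(j)}}$ mutually disjoint (which is a finite set), and define $\cC_j := \psi_j(\cC^{(j)}) \subseteq M^n_j$ with $r_{x_j,j}$ equal to the corresponding $\tilde r_x$.

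\textbf{Verification of (n1)--(n5) and of (1)--(3).} I would go through the neck conditions one by one. The disjointness (n1) is forced by the Vitali selection and the fact that $\psi_j$ is an $\epsilon_j$-isometry. The entropy-pinching (n2) transfers to $\cN_j$ from $\cN$ because, via the equivalence with the volume ratio (Theorem \ref{t:cW_local}) and the volume convergence theorem of Colding--Cheeger, the local $\cW^\delta$-entropy is continuous under measured-GH convergence. The positive half of (n3), i.e.\ the $(k,\delta^2)$-symmetry of each scale ball, is stable under GH perturbation; the negative half, non-$(k{+}1,\eta)$-symmetry, passes to $\cN_j$ by a subsequential contradiction argument. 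The Reifenberg condition (n4) passes because the approximating $k$-subspaces $\cL^{(j)}_{x,r}$ on $M^n_j$ can be chosen as pullbacks of those on $X$, and $\cC_j \to \cC^{(j)}$ in Hausdorff sense under $\psi_j$. Condition (n5), the Lipschitz bound, follows from the Lipschitz property of $\tilde r_x$ (preserved under $\max$ with a constant) transported by the $\epsilon_j$-isometry $\psi_j$. With these verifications in hand, statements (1) (parameter convergence), (2) (Hausdorff convergence of $\phi_j(\cC_j)$ to $\cC$, using that $\cC^{(j)} \to \cC$ as $\epsilon_j \to 0$ since $\cC$ is closed), and (3) (uniform convergence $r_{x,j} \to r_x$) are all immediate.

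\textbf{Measure bounds (4) and (5).} Because $\inf r_{x,j} > 0$ and $M^n_j$ is smooth, Lemma \ref{l:smoothcase} applies and gives Ahlfors regularity of $\mu_j$ with the universal constant $A(n)$: $A(n)^{-1}r^k \leq \mu_j(B_r(x)) \leq A(n) r^k$ for $x \in \cC_j$ and $B_{2r}(x) \subseteq B_2(p_j)$. After passing to a subsequence, $\mu_j \rightharpoonup \mu_\infty$ weakly as measures on $B_2(p)$, and $\mu_\infty$ inherits Ahlfors regularity on $\cC$. To prove (4), I would note that for $x \in \cC_+$,
\begin{equation*}
\mu_\infty(B_{r_x}(x)) \geq A(n)^{-1} r_x^k = A(n)^{-1} \mu(\{x\}),
\end{equation*}
while on $\cC_0$ the Ahlfors upper bound $\mu_\infty(B_r(z)) \leq A(n) r^k$ combined with a Vitali/Besicovitch comparison yields $\cH^k\restriction_{\cC_0} \leq C(n)\, \mu_\infty$; summing gives $\mu \leq C(n)\,\mu_\infty$. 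For (5), rectifiability of $\cC_0$ provides Lipschitz charts $f_i: A_i \to \cC_0$ with $A_i \subseteq \R^k$, and on such charts the Ahlfors lower bound $A(n)^{-1} r^k \leq \mu_\infty(B_r(z))$ transfers to $\mu_\infty\restriction_{\cC_0} \leq C(n)\,\cH^k\restriction_{\cC_0}$ via the density-formula/Besicovitch argument; the atomic part of $\mu_\infty$ on $\cC_+$ is controlled against $r_x^k$ by the Ahlfors upper bound. Together this gives $\mu_\infty \leq C(n)\,\mu$.

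\textbf{Main obstacle.} The trickiest technical point is the preservation of condition (n5) after the truncation and the discrete Vitali selection: the truncation $\tilde r_x = \max(r_x, \epsilon_j^{1/2})$ keeps the radius function $\delta$-Lipschitz, but one must carefully interpolate when passing from $\cC^{(j)}$ to $\cC_j$ through $\psi_j$ without inflating the Lipschitz constant. A second, more conceptual, obstacle is the measure comparison on $\cC_0$: without rectifiability one cannot in general expect $\mu_\infty \leq C\,\cH^k\restriction_{\cC_0}$, which is precisely why (5) requires rectifiability while (4) does not. The asymmetry between the two inequalities is what makes the inductive step of the Neck Structure Theorem delicate, and motivates the simultaneous proof of Ahlfors regularity and rectifiability in Section \ref{s:neck_region}.
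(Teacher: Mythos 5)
Your construction of the $\cN_j$ is the same as the paper's: truncate the radius function at a scale $s>0$ (so that the center set becomes finite with strictly positive radii), reselect a maximal Vitali subcollection to restore (n1), transfer through the Gromov--Hausdorff maps, and diagonalize in $s=\epsilon_j^{1/2}$ versus the GH error. The verification of (n1)--(n5) and of (1)--(3) is at the same level of detail as the paper's ("it is easy to check"), and is fine.

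Where you genuinely diverge is in (4) and (5), and here there are two issues. First, your justifications have the two halves of Ahlfors regularity systematically swapped. To get $\cH^k\restriction_{\cC_0}\le C\,\mu_\infty$ (the $\cC_0$-part of (4)) you must use the \emph{lower} bound $\mu_\infty(B_r(z))\ge A(n)^{-1}r^k$ at $z\in\cC_0$, via a $5r$-covering of a subset of $\cC_0$ by balls centered in it; the upper bound you cite gives the opposite inequality. Dually, $\mu_\infty\restriction_{\cC_0}\le C\,\cH^k\restriction_{\cC_0}$ (the $\cC_0$-part of (5)) follows from the \emph{upper} density bound $\Theta^{*k}(\mu_\infty,z)\le C$ at $z\in\cC_0$ via the standard comparison $\mu(A)\le 2^k\lambda\,\cH^k(A)$, not from the lower bound you cite. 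Since you have established both bounds (granting Lemma \ref{l:smoothcase}, which forces the extra hypothesis $\delta\le\delta(n,\rv,\eta)$ not present in the statement, though it is harmless in every application), the conclusions are recoverable, but as written both implications are false. Second, notice that the upper-density comparison on $\cC_0$ needs no rectifiability at all, so your argument, once corrected, proves (5) unconditionally; your closing paragraph asserting that (5) is impossible without rectifiability contradicts your own proof and misidentifies why the paper imposes that hypothesis.

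The paper's route for (4)--(5) is quite different and does not invoke Lemma \ref{l:smoothcase}: it compares $\tilde\mu_s$ with the ambient volume measure of the tube $B_s(\cC_0)$ by elementary counting (near $\cC_0$ every selected center has $\tilde r=s$, so $\tilde\mu_s(B_s(x_i^s))\asymp s^k$ with dimensional constants), and letting $s\to 0$ this sandwiches $\tilde\mu_\infty\restriction_{\cC_0}$ between dimensional multiples of the upper Minkowski $k$-content $\cM^k$ of $\cC_0$. Then (4) is immediate from $\cH^k\le C(k)\cM^k$, while (5) requires the converse inequality $\cM^k\le C\,\cH^k$, which is Federer's Theorem 3.2.39 and is exactly where closed $k$-rectifiability enters. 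What the paper's route buys is that (4) is elementary and available for all $\delta$ before any of the deep splitting machinery; what your route buys, once the density comparisons are stated correctly, is a cleaner (5) that bypasses Minkowski content and the rectifiability hypothesis entirely, at the cost of importing the full strength of the smooth Neck Structure Theorem into the approximation statement.
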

\begin{proof}
	Consider first  the case $\inf r_x>0$. This implies that $\cC_0=\emptyset$
 and in addition, that $\cC$ is a finite set. 
\vskip2mm	
	
	Let $\psi_j: B_2(p)\to B_2(p_j)$ be the $\epsilon_j$-Gromov Hausdorff maps. 
For $j$ sufficiently large with $\epsilon_j<<\inf r_x$, let $\cC_j:=\{\psi_j(x),~x\in\cC\}$ 
and $r_{x,j}:=r_{\psi_j^{-1}(x)}$.  Then it is easy to check that 
$\cN_j:=B_2(p_j)\setminus \bar B_{r_{x,j}}(\cC_j)$ are $(k,\delta_j,\eta_j)$-neck 
regions which satisfy the criteria of the theorem. In fact, we can prove the limit $\mu_j\to \mu_\infty=\mu$ in this case.
\vskip2mm

Next, for the case in which $\inf r_x=0$, we construct a $(k,\delta,\eta)$-neck region $$
\tilde{\cN}_{s}=B_2(p)\setminus \bar B_{\tilde{r}_x}(\tilde{\cC})\, ,
$$ 
with $\inf \tilde{r}_x\ge s>0$. Given $s>0$,  we define $\tilde{r}_x$ on $\cC$ by $\tilde{r}_x:=  \max\{r_x,s\}$. 
Then $|\text{Lip }\tilde{r}_x|\le \delta$ and all
of the remaining properties of a neck region are satisfied with $\cC$ and $\tilde{r}_x$, apart from the Vitali condition (n1). 
\vskip2mm

To fix this, choose a maximal subset $\tilde{\cC}_s:=   \{x^s_i\}\subset \cC$ such that the 
balls $\{B_{\tau^2\tilde{r}_{x^s_i}}(x_i^s)\}$ are disjoint. 
It is easy to check that $\tilde{\cN}_s:=  B_2(p)	\setminus \bar B_{\tilde{r}_x}(\tilde{\cC})$ 
is a $(k,\delta,\eta)$-neck region for which $\inf \tilde{r}_x\ge s>0$. 
If we let $s\to 0$, then $\tilde{\cN}_s$ converges to $\cN$ in Hausdorff sense. 
\vskip2mm

Consider the limit packing measure $\tilde{\mu}_s\to \tilde{\mu}_\infty$. On $\cC_{+}$, 
we have $\tilde{\mu}_\infty=\mu$. If $y\in \cC_0$, then for all $s<r$, by the Vitali covering property of $\tilde{\cN}_s$, it will follow that
		\begin{align}\label{e:volume_mu_s_upperbound}
		s^{k-n}\Vol\Big(\bar B_r(y)
\cap B_s(\cC_0)\Big)\le C(n)\tilde{\mu}_s\Big(\bar B_{2s+r}(y)\cap B_{3s}(\cC_0)\Big).
	\end{align}
To see this, consider the covering $\{B_{s}(x_i^s),x_i^s\in \tilde{\cC}_s\cap B_s(\cC_0)\}$ 
of $\cC_0\cap B_r(y)$. 
Since $B_{\tau_n^2s}(x_i^s)$ are disjoint and $\tilde{\mu}_s(B_s(x_i^s))\ge s^k$, using the estimate 
of the cardinality of  $\{B_{s}(x_i^s)\, |\, x_i^s\in \tilde{\cC}_s\cap B_s(\cC_0)\}$ by 
$s^{-k}\tilde{\mu}_s\Big(B_{2s+r}(y)\cap B_{3s}(\cC_0)\Big)$ we can get \eqref{e:volume_mu_s_upperbound}. 
\vskip2mm
	
By letting $s\to 0$ in \eqref{e:volume_mu_s_upperbound}, we get the upper Minkowski $k$ content bound
\begin{align}
\label{e:upper}
\cM^k(\bar B_r(y)\cap \cC_0)\le C(n)\tilde{\mu}_\infty(\bar B_r(y)\cap \cC_0)\, .
\end{align}
In particular this implies 
\begin{align}
\mu(\bar B_r(y)\cap \cC_0)=\cH^k(\bar B_r(y)\cap \cC_0)\le C(k)\cM^k(\bar B_r(y)\cap \cC_0)
\le C(n)\tilde{\mu}_\infty(\bar B_r(y)\cap \cC_0)\, .
\end{align}
 Therefore, we get the weaker estimate $\mu\le C(n)\tilde{\mu}_\infty$. 
 \vskip2mm
 
 On the other hand, we claim that 
	\begin{align}\label{e:volume_mu_s_lowerbound}
		\tilde{\mu}_s\Big( B_{r}(y)
\cap B_{s}(\cC_0)\Big)\le C(n)s^{k-n}\Vol\Big( B_{r+2s}(y)\cap B_{3s}(\cC_0)\Big).
	\end{align}
To see this, consider the covering $\{B_{s}(x_i^s)\, |\, x_i^s\in \tilde{\cC}_s\cap B_s(\cC_0)\}$ of $\cC_0\cap B_r(y)$. Since $B_{\tau_n^2s}(x_i^s)$ are disjoint and 
$\tilde{\mu}_s(B_s(x_i^s))\le A(n)s^k$ 
the estimate \eqref{e:volume_mu_s_lowerbound} follows easily from the estimate of the cardinality of $\{B_{s}(x_i^s),x_i^s\in \tilde{\cC}_s\cap B_s(\cC_0)\}$ by $s^{-n}\Vol\Big(B_{r+2s}(y)\cap B_{3s}(\cC_0)\Big)$.
	By letting $s\to 0$, it follows that the upper Minkowski $k$ content satisfies 
	\begin{align}
	C(n)\cM^k( B_r(y)\cap \cC_0)\ge \tilde{\mu}_\infty( B_r(y)\cap \cC_0) \, . 
	\end{align}

	 To prove (5) of Theorem \ref{t:approximate_neck},
   we will initially make the  assumption that $\cC_0$ is $k$-rectifiable.
This will be  proved  in Lemma \ref{l:claim34}, the proof of which is completely independent of (5).
\vskip2mm

 By a standard geometric measure theory argument 
(see Theorem 3.2.39 of \cite{Fed}), Hausdorff measure and Minkowski content are equivalent. Thus, 
 \begin{align}
C(n)\mu( B_r(y)\cap \cC_0)\ge \tilde{\mu}_\infty( B_r(y)\cap \cC_0)\, ,
\end{align}
 and in particular,  $C(n)\mu\ge \tilde{\mu}_\infty$. 
\vskip2mm
	
Finally, for each $\tilde{\cN}_s$, we have $(k,\delta_j,\eta_j)$ neck region $\tilde{\cN}_{s,j}=B_2(p_j)\setminus \bar B_{\tilde{r}_{x,j}}(\tilde{\cC}_j)$  approximating $\tilde{\cN}_s$ with $\tilde{\mu}_{s,j}\to \tilde{\mu}_s$. By a standard diagonal argument, we finish the proof by taking a diagonal subsequence of $\tilde{\cN}_{s,j}$ to approximate $\cN$.
This completes the proof of Theorem \ref{t:approximate_neck}.
\end{proof}

\subsection{Proof of the Neck Structure Theorem
 \ref{t:neck_region2}.}
\label{ss:pns2}
We will  now complete the proof of Theorem \ref{t:neck_structure}.
\vskip2mm

\begin{proof}(of the Neck Structure Theorem \ref{t:neck_region2} )
	Given a $(k,\delta,\eta)$-neck region $\cN=B_2(p)\setminus \bar B_{r_x}(\cC)$, 
we have by the Approximation Theorem \ref{t:approximate_neck} a sequence of 
 $(k,\delta_j,\eta_j)$-neck regions $\cN_j=B_2(p_j)\setminus \bar B_{r_{x,j}}(\cC_j)\subset M_j$. 
	By the Ahlfors regularity estimates in Subsection \ref{ss:neck_structure_smooth_manifold}
 for smooth neck regions, we have for $\delta\le \delta(n,\rv,\eta)/10$ that if $B_{2r}(x_j)\subset B_2(p_j)$ 
and $x_j\in\cC_j$ then for $j$ sufficiently large
	\begin{align}
		A(n)^{-1}r^k\le \mu_j(B_r(x_j))\le A(n)r^k.
	\end{align}
Thus, by Theorem \ref{t:approximate_neck} 
we have for all $B_{2r}(x)\subset B_2(p)$ with $x\in\cC$ that the limit $\mu_j\to \mu_\infty$ satisfies 
\begin{align}\label{e:limit_packing}
	A(n)^{-1}r^k\le \mu_\infty(B_r(x))\le A(n)r^k.
\end{align}
By Theorem \ref{t:approximate_neck}, since 
$\mu\le C(n)\mu_\infty$, we directly get the 
upper bound estimates of $\mu(B_r(x))\le \tilde{A}(n)r^k$ for a universal constant $\tilde{A}(n)=A(n)C(n)$.  
\vskip2mm

In order to prove the lower measure bound, 
we will first prove $\cC_0$ is $k$-rectifiable. 
Then we can use (5) from the approximation 
Theorem \ref{t:approximate_neck} to deduce the lower bound.  
The main lemma needed for this result is the following.
\vskip2mm

\begin{lemma}
\label{l:claim32}
	For each $\epsilon>0$ if $\delta\le \delta(n,\rv,\epsilon,\eta)$,
 then for any $x\in\cC_0$ and $B_{2r}(x)\subset B_2(p)$ there exists
 a closed subset $\cR_\epsilon(B_r(x))\subset \cC_0\cap B_r(x)$ such 
that $\cR_\epsilon$ is bi-Lipschitz to a subset of $\mathbb{R}^k$ and $\mu\Big(B_r(x)\cap (\cC_0\setminus \cR_\epsilon)\Big)	<\epsilon r^k$.
\end{lemma}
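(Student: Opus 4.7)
The plan is to reduce to the smooth case via the approximation Theorem~\ref{t:approximate_neck} and then apply the bi-Lipschitz structure result, Proposition~\ref{p:bilipschitz_structure}, on each smooth approximant before passing to the limit. First, by the sub-neck property (Remark~\ref{r:properties}(7)) and rescaling $B_{2r}(x)$ to $B_2$, I may assume without loss of generality that $x=p$ and $r=1$, so the goal becomes producing a closed $\cR_\epsilon\subset \cC_0\cap \bar B_1(p)$ with the stated bi-Lipschitz and measure properties.

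I next apply Theorem~\ref{t:approximate_neck} to obtain smooth $(k,\delta_j,\eta_j)$-neck regions $\cN_j\subset B_2(p_j)\subset M^n_j$ converging to $\cN$, with $\delta_j\to \delta$, $\eta_j\to \eta$, and packing measures $\mu_j\to \mu_\infty$ satisfying $\mu\le C(n)\mu_\infty$. By the smooth case (Lemma~\ref{l:smoothcase}), each $\cN_j$ is Ahlfors regular with universal constant $A(n)$, so hypothesis (S3) of Proposition~\ref{p:bilipschitz_structure} holds with $B=A(n)$. The Cone-Splitting Theorem~\ref{t:splitting_function} produces $(k,\delta''_j)$-splitting maps $u_j:B_4(p_j)\to\mathbb{R}^k$ with $\delta''_j\to 0$, and by Proposition~\ref{p:convergenc_function} I may pass to a subsequence so that $u_j\to u:B_4(p)\to\mathbb{R}^k$ uniformly and in $W^{1,2}$ on compact subsets.

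For an auxiliary parameter $\epsilon'>0$ to be chosen later, Proposition~\ref{p:bilipschitz_structure} applied to $\cN_j$ produces closed $\cC_{\epsilon',j}\subset \cC_j\cap B_{15/8}(p_j)$ on which $u_j$ is $(1+\epsilon')$-bi-Lipschitz, with
\begin{equation*}
\mu_j\bigl(\cC_j\cap B_{15/8}(p_j)\setminus \cC_{\epsilon',j}\bigr)\le \epsilon'\,\mu_j\bigl(\cC_j\cap B_{15/8}(p_j)\bigr)\le C(n)\epsilon'.
\end{equation*}
Passing to a further subsequence, $\cC_{\epsilon',j}$ converges in Hausdorff sense to a closed set $\cC^\infty_{\epsilon'}\subset \cC\cap \bar B_{15/8}(p)$. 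Since the bi-Lipschitz constants are uniform and $u_j\to u$ uniformly, a straightforward passage to the limit shows $u$ is $(1+\epsilon')$-bi-Lipschitz on $\cC^\infty_{\epsilon'}$, so setting $\cR_\epsilon:=\cC_0\cap \cC^\infty_{\epsilon'}\cap \bar B_1(p)$ gives a closed subset of $\cC_0\cap B_1(p)$ that is bi-Lipschitz onto its $u$-image in $\mathbb{R}^k$.

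It remains to control $\mu(\cC_0\cap B_1\setminus \cR_\epsilon)$. Using that $\mu\rvert_{\cC_0}=\cH^k\rvert_{\cC_0}$ and $\mu\le C(n)\mu_\infty$, this reduces to bounding $\mu_\infty(\cC\cap B_1\setminus \cC^\infty_{\epsilon'})$ by $\liminf_{j\to\infty}\mu_j(\cC_j\cap B_1\setminus \cC_{\epsilon',j})\le C(n)\epsilon'$; choosing $\epsilon'\le \epsilon/C(n)^2$ then yields the claim. The main obstacle will be making this passage to the limit rigorous: Hausdorff convergence $\cC_{\epsilon',j}\to \cC^\infty_{\epsilon'}$ combined with the weak-type convergence $\mu_j\to \mu_\infty$ does not formally imply this inequality for free, since $\mu_\infty$-mass could in principle concentrate along $\partial B_1(p)$ or along the boundary of $\cC^\infty_{\epsilon'}$ in the limit. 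A Vitali covering of the complementary set by small balls centered at points of $\cC_j\setminus \cC_{\epsilon',j}$, combined with the uniform Ahlfors regularity of $\mu_j$ on the smooth approximants and a slight open thickening argument (shrinking an auxiliary parameter $\sigma\to 0$ after passing to the limit in $j$), should close this step cleanly.
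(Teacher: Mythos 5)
Your proposal is correct and follows essentially the same route as the paper: rescale, approximate by smooth neck regions via Theorem~\ref{t:approximate_neck}, invoke the smooth Ahlfors regularity so that Proposition~\ref{p:bilipschitz_structure} applies with $B=A(n)$, pass the bi-Lipschitz sets and splitting maps to the limit by Ascoli, and compare $\mu$ with $\mu_\infty$. The ``obstacle'' you flag in the last step is resolved exactly as you suspect: since the complementary set is open, one uses the standard lower semicontinuity of weakly convergent measures on open sets (proved in the paper by the dual upper bound $\limsup_j\mu_j(D_j)\le\mu_\infty(B_t(D))$ for closed $D$ and letting $t\to 0$), which is the thickening argument you sketch.
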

\begin{proof}
For each $B_{2r}(x)\subset B_2(p)$ with $x\in\cC_0$ the set
$$
\cN_r:= B_{2r}(x)\setminus \bar B_{r_x}(\cC_r)
$$
is a $(k,\delta,\eta)$-neck region with associated $\cC_r=\cC\cap B_{2r}(x)$ 
and packing measure $\mu_r:= \mu|_{\cC_r}$. By the approximation
 Theorem \ref{t:approximate_neck}, there exists a $(k,\delta_j,\eta_j)$-neck region 
$$
\cN_{r,j}:=  B_{2r}(x_j)\setminus \bar B_{r_{x,j}}(\cC_{r,j})\subset M_j
$$
which approximate $\cN_r$. 
\vskip2mm

By Theorem \ref{t:splitting_function}, there exist $\delta_j'$-splitting maps 
$u_{r,j}: B_{2r}(x_j)\to \mathbb{R}^k$ with $\delta_j'=\delta_j'(n,\rv,\eta,\delta_j)$.  
Additionally, by the Ahlfors regularity estimate for smooth neck 
$\cN_{r,j}$ in subsection \ref{ss:neck_structure_smooth_manifold}, we have for any
 $B_{2s}(x_{r,j})\subset B_{2r}(x_j)$ and $x_{r,j}\in\cC_{r,j}$ that
\begin{align}
A(n)^{-1}s^k\le \mu(B_s(x_{r,j}))\le A(n)s^k\, .
\end{align}

By applying Proposition \ref{p:bilipschitz_structure} 
with $B=A(n)$ and $\delta\le \delta(n,\rv,\epsilon,\eta)$,
 there exists $\cC_{r,j,\epsilon}\subset \cC_{r,j}$ such that 
$u_{r,j} :\cC_{r,j,\epsilon}\to\mathbb{R}^k$ is $(1+\epsilon)$-bi-Lipschitz
 and $\mu_{r,j}(B_r(x_j)\setminus \cC_{r,j,\epsilon})\le \epsilon^2 r^{k}$. 
Let $j\to\infty$, denote the Gromov-Hausdorff limit 
by $\cC_{r,\epsilon}:=  \lim \cC_{r,\epsilon,j}$ and let $\mu_{r,\infty}$ 
denote the limit measure $\mu_{r,j}\to \mu_{r,\infty}$. 
\vskip2mm

On the other hand, since $B_r(x)\setminus \cC_{r,\epsilon}$ is an open set,
 standard measure convergence argument implies 
\begin{align}\label{e:open_measure_convergence}
		\mu_{r,\infty}(B_r(x)\setminus \cC_{r,\epsilon})
\le \lim \inf \mu_{r,j}(B_{r}(x_j)\setminus \cC_{r,j,\epsilon})\le \epsilon^2 r^k.
	\end{align}
	Indeed, for any closed set $D\subset \bar B_r(x)\subset X$ with 
$D_i\subset M_i\to D$ in GH-sense, by measure convergence we have for any $t>0$
	\begin{align}
		\lim \sup \mu_{r,j}(D_i)\le \mu_{r,\infty}(B_t(D)).
	\end{align}
	
By letting $t\to 0$, using the monotone convergence theorem for 
measures and that $D$ is a closed set, it follows that
$$
\lim \sup \mu_{r,j}(D_i)\le \mu_{r,\infty}(D)\, .
$$ 
This implies \eqref{e:open_measure_convergence}.
Hence we have $\cC_{r,\epsilon}\subset \cC_r\subset \cC$ and the following estimate 
	\begin{align}\label{e:cC_repsilon}
		\mu(B_r(x)\setminus \cC_{r,\epsilon})=\mu_r(B_r(x)
\setminus \cC_{r,\epsilon})\le C(n)\mu_{r,\infty}(B_r(x)\setminus \cC_{r,\epsilon})\le C(n)\epsilon^2 r^k\le \epsilon r^k\, .
	\end{align}
Here, we have used Theorem \ref{t:approximate_neck} in the first inequality.	
\vskip2mm

Moreover, since $u_{r,j}$ is Lipschitz, by  Ascoli's theorem,  we have a uniform limit $u_r:B_{2r}(x)\to\mathbb{R}^k$ such that $u_r:\cC_{r,\epsilon}\to \mathbb{R}^k$ is $(1+\epsilon)$-bilipschitz.  From the estimate \eqref{e:cC_repsilon}, the set $\cR_\epsilon(B_r(x)):=  \cC_0\cap \cC_{r,\epsilon}$ is our desired set. This finishes the proof of Lemma \ref{l:claim32}.
\end{proof}

Now we can prove the rectifiability of $\cC_0$.
	
\begin{lemma}
\label{l:claim34}
 $\cC_0$ is rectifiable. 
 \end{lemma}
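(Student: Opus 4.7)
The plan is to use Lemma \ref{l:claim32} together with a Vitali covering argument and a countable iteration to exhaust $\cC_0$ up to $\cH^k$-measure zero by $k$-rectifiable pieces. A key preliminary observation is that by Definition \ref{d:mu_measure2}, on $\cC_0$ we have $\mu = \cH^k|_{\cC_0}$, and by the upper Ahlfors bound $\mu(B_r(x))\leq \tilde A(n) r^k$ (already established in this subsection), the set $\cC_0\cap B_{15/8}(p)$ has finite $\cH^k$-measure, so standard measure-theoretic covering techniques apply. It will suffice to prove $k$-rectifiability of $\cC_0\cap B_{15/8}(p)$, since the conclusion for all of $\cC_0$ then follows by a trivial exhaustion.

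Fix $\epsilon>0$ with $\delta\le\delta(n,\rv,\epsilon,\eta)$ as in Lemma \ref{l:claim32}. For each $x\in \cC_0\cap B_{15/8}(p)$ and each sufficiently small $r$ with $B_{2r}(x)\subset B_{2}(p)$, Lemma \ref{l:claim32} furnishes a closed subset $\cR_\epsilon(B_r(x))\subset\cC_0\cap B_r(x)$ which is bi-Lipschitz onto a subset of $\dR^k$ and satisfies $\mu\big(B_r(x)\cap\cC_0\setminus\cR_\epsilon(B_r(x))\big)<\epsilon\, r^k$. Since $\mu$ is Ahlfors $k$-regular on $\cC$ by the estimates already proved (hence doubling), the Vitali covering theorem produces a countable disjoint family $\{B_{r_i}(x_i)\}$ with $x_i\in\cC_0$ and $r_i$ arbitrarily small, covering $\mu$-almost all of $\cC_0\cap B_{15/8}(p)$. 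Setting $\cR^{(\epsilon)}:=\bigcup_i \cR_\epsilon(B_{r_i}(x_i))$, this set is $k$-rectifiable as a countable union of bi-Lipschitz images of subsets of $\dR^k$, and using Ahlfors regularity $\mu(B_{r_i}(x_i))\geq A(n)^{-1} r_i^k$ gives
\begin{align}
\mu\bigl(\cC_0\cap B_{15/8}(p)\setminus \cR^{(\epsilon)}\bigr)
&\le \sum_i \mu\bigl(B_{r_i}(x_i)\cap\cC_0\setminus\cR_\epsilon(B_{r_i}(x_i))\bigr) \notag\\
&\le \epsilon\sum_i r_i^k \le \epsilon\, A(n)\sum_i \mu(B_{r_i}(x_i))
\le \epsilon\, A(n)\,\mu\bigl(\cC_0\cap B_{15/8}(p)\bigr).
\end{align}

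Now iterate: choosing $\epsilon_j=2^{-j}/A(n)$ and applying the previous step for each $j$ gives a sequence of $k$-rectifiable sets $\cR^{(\epsilon_j)}\subseteq\cC_0\cap B_{15/8}(p)$ with $\mu\bigl(\cC_0\cap B_{15/8}(p)\setminus \cR^{(\epsilon_j)}\bigr)\le 2^{-j}\mu(\cC_0\cap B_{15/8}(p))$. The countable union $\cR_\infty:=\bigcup_j \cR^{(\epsilon_j)}$ is then itself $k$-rectifiable, and the complement $\cC_0\cap B_{15/8}(p)\setminus\cR_\infty$ is contained in every $\cC_0\cap B_{15/8}(p)\setminus\cR^{(\epsilon_j)}$, hence has $\mu$-measure zero. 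Since $\mu|_{\cC_0}=\cH^k|_{\cC_0}$, this shows $\cH^k\bigl(\cC_0\cap B_{15/8}(p)\setminus \cR_\infty\bigr)=0$, which is precisely the definition of $k$-rectifiability.

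The main obstacle, already handled by Lemma \ref{l:claim32}, is producing even a single bi-Lipschitz chart inside each small ball $B_r(x)$; this is where the full machinery of Sections \ref{s:Sharp_splitting}--\ref{s:nondegeneration} is invoked through the approximating smooth neck regions and Proposition \ref{p:bilipschitz_structure}. Once that one-scale chart is available, the remaining argument presented above is purely measure-theoretic and relies only on Ahlfors regularity of $\mu$ and a standard Vitali-plus-iteration scheme; no further geometric input is required.
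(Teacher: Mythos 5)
Your overall strategy — exhaust $\cC_0$ by the bi-Lipschitz pieces $\cR_\epsilon(B_r(x))$ from Lemma \ref{l:claim32} and show the leftover is $\cH^k$-null — is the same as the paper's, but the mechanism you use to kill the leftover set contains a circularity. At the point where Lemma \ref{l:claim34} is proved, only the \emph{upper} bound $\mu(B_r(x))\le \tilde A(n)r^k$ has been established; the \emph{lower} Ahlfors bound for $\mu$ (and hence the doubling property of $\mu$, and of $\cH^k|_{\cC_0}$) is deduced only \emph{afterwards}, precisely from the rectifiability of $\cC_0$ via item (5) of Theorem \ref{t:approximate_neck}. Your argument invokes exactly that lower bound twice: once to justify the Vitali covering theorem (which in a general metric space requires the measure to be doubling, or some Besicovitch-type property that is not available here), and once in the chain $\sum_i r_i^k\le A(n)\sum_i\mu(B_{r_i}(x_i))\le A(n)\mu(\cC_0\cap B_{15/8}(p))$. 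So as written the proof assumes what it is trying to establish. The gap is repairable: the limit measure $\mu_\infty$ \emph{is} two-sided Ahlfors regular at this stage (see \eqref{e:limit_packing}) and satisfies $\cH^k|_{\cC_0}\le\mu\le C(n)\mu_\infty$, so one can run your covering and summation with respect to $\mu_\infty$ and then transfer "$\mu_\infty$-a.e." back to "$\cH^k$-a.e." by absolute continuity — but you must say this, since the statement "$\mu$ is Ahlfors $k$-regular by the estimates already proved" is false at this point in the development.

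For comparison, the paper avoids covering theorems altogether. It fixes a countable dense set $\{x_i\}\subset\cC_0$ and a single $\epsilon=\epsilon(n)$, sets $\cR:=\bigcup_{r\in\mathbb{Q},\,i}\cR_\epsilon(B_r(x_i))$ (rectifiable as a countable union), and argues by contradiction: if $\cH^k(\cC_0\setminus\cR)>0$, the upper-density theorem for Hausdorff measure (Theorem 3.6 of \cite{Simon}, which needs only $\cH^k(\cC_0)<\infty$, i.e.\ the already-proved upper bound) produces a point and a scale with $\cH^k\bigl(B_s(x)\cap(\cC_0\setminus\cR)\bigr)\ge \epsilon_k s^k/2$; a nearby ball $B_r(x_i)$ from the countable family then violates the defining property $\mu\bigl(B_r(x_i)\cap(\cC_0\setminus\cR_\epsilon(B_r(x_i)))\bigr)<\epsilon r^k$ once $\epsilon<C(k)\epsilon_k$. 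Your iteration over $\epsilon_j\to 0$ is also unnecessary under that scheme — a single sufficiently small dimensional $\epsilon$ suffices.
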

 \begin{proof}
 Let $\{x_i\}\subset \cC_0$ be a countable dense subset of $\cC_0$ and for any $\epsilon>0$, consider the set 
	\begin{align}
		\cR:=  \bigcup _{B_{2r}(x_i): 1\ge r\in \mathbb{Q}}\cR_\epsilon(B_{r}(x_i)).
	\end{align}
	
By definition, we have $\cR\subset \cC_0$. In addition,
since $\cR$ is a countable union of rectifiable sets, it
is rectifiable. To finish the proof, we only need to choose a small $\epsilon$ and show that $\mu (\cC_0\setminus \cR)=\cH^k(\cC_0\setminus \cR)=0$. So assume $\cH^k(\cC_0\setminus \cR)>0$, then by a standard geometric measure theory argument, there exist $x\in \cC_0\setminus\cR$, $r_a\to 0$ and a dimensional constant $\epsilon_k>0$ (see Theorem 3.6 of \cite{Simon}) such that 
	\begin{align}
		\lim_{r_a\to 0} \frac{\cH^k\Big(B_{r_a}(x)\cap (\cC_0\setminus\cR)\Big)}{r_a^k}>\epsilon_k>0.
	\end{align} 
	In particular, there exists $s>0$ such that $\cH^k\Big(B_{s}(x)\cap (\cC_0\setminus\cR)\Big)\ge s^k\epsilon_k/2$. Since $\{x_i\}$ is dense, there exists some $x_i$ and $r\in\mathbb{Q}$ such that $s\le r\le 2s$ and $B_s(x)\subset B_r(x_i) $. Therefore, we have 
	$\cH^k(B_r(x_i)\cap (\cC_0\setminus \cR))\ge C(k)\epsilon_k r^k$. By choosing $\epsilon=\epsilon(n)$ small, we contradict the definition of $\cR_\epsilon$ in 
	Lemma \ref{l:claim32}.  Thus, for $\delta\le \delta(n,\rv,\eta,\epsilon)=\delta(n,\rv,\eta)$, the set $\cR\subset \cC_0$ has full measure. This completes the proof of Lemma \ref{l:claim34}.  
\end{proof}	
\vskip2mm

At this point, we can obtain the lower bound for the packing measure $\mu$
and hence, complete the proof of Theorem \ref{t:neck_region2}. 
Since $\cC_0$ is $k$-rectifiable, by Theorem \ref{t:approximate_neck} we have $\mu\ge C(n)\cdot\mu_\infty$ in \eqref{e:limit_packing}. Therefore, the Ahlfors regularity estimate for $\mu_\infty$ in \eqref{e:limit_packing} gives us the desired lower bound for $\mu$.  This completes the proof of the Neck Structure Theorem \ref{t:neck_region2}. 	
\end{proof}

\vspace{1cm}


\section{Neck Decomposition Theorem}
\label{s:decomposition}
In this section we prove the Neck Decomposition Theorem \ref{t:decomposition2}.  
Neck regions and their associated decomposition theorems were introduced in \cite{JiNa_L2},
 where the focus was on the top $(n-4)$-stratum of the singular set for limits with $2$-sided Ricci curvature bounds.  
This was an important ingredient in the proof of the a priori $L^2$ curvature bound for such spaces.  
This section follows very closely the constructions of \cite{JiNa_L2}, relying on the estimates provided by 
the Neck Structure Theorem \ref{t:neck_region2}.  The main result of this section is Theorem \ref{t:decomposition2}, 
which for convenience is recalled below.

\noindent
{\bf Theorem} {\it (Theorem \ref{t:decomposition2} restated)
\label{t:neck_decomposition}
		Let $(M^n_i,g_i,p_i)\to (X^n,d,p)$ satisfy $\Vol(B_1(p_i))>\rv>0$ 
		and $\Ric_i\geq -(n-1)$.  Then for each $\eta>0$ and $\delta\le \delta(n,\rv,\eta)$ we can write:
	\begin{align}
		&B_1(p)\subseteq \bigcup_a \big(\cN_a\cap B_{r_a}\big) \cup \bigcup_b B_{r_b}(x_b) \cup \cS^{k,\delta,\eta}\, ,\\
		&\cS^{k,\delta,\eta}\subseteq \bigcup_a \big(\cC_{0,a}\cap B_{r_a}\big)\cup \tilde \cS^{k,\delta,\eta}\, ,
	\end{align}
	such that:
	\begin{enumerate}
		\item 
		For all $a$, the set, $\cN_a=B_{2r_a}(x_a)\setminus \overline B_{r_x}(\cC)$,
		is a $(k,\delta,\eta)$-neck region.
		\vskip2mm
		
		\item The balls $B_{2r_b}(x_b)$ are $(k+1,2\eta)$-symmetric; hence
		$x_b\not\in S^k_{2\eta,r_b}$.		\vskip2mm
		
		\item $\sum_a r_a^{k} + \sum_b r_b^{k} + \cH^{k}
		\big(\cS^{k,\delta,\eta}
		\big) \leq C(n,\rv,\delta,\eta)$.		
		\vskip2mm
		
		\item $\cC_{0,a}\subseteq B_{2r_a}(x_a)$ is the $k$-singular set associated to $\cN_a$.		\vskip2mm
		
		\item $\tilde{\cS}^{k,\delta,\eta}
		$ satisfies $\cH^{k}\big(\tilde \cS^{k,\delta,\eta}
		\big)=0$.		\vskip2mm
		
		\item $\cS^{k,\delta,\eta}$
		is $k$-rectifiable.		
		\vskip2mm
		
		\item For any $\epsilon$ if $\eta\le \eta(n,\rv,\epsilon)$ and $\delta\le \delta(n,\rv,\eta,\epsilon)$ we have 
		$S_\epsilon^k
		\subset \cS^{k,\delta,\eta}
		$.
	\end{enumerate}
}

\begin{remark}
As previously mentioned, in the special case of smooth Riemannian manifolds  $M^n$
only (1)--(3) carry nontrivial information. 
\end{remark}

\subsection{Proof of Theorem \ref{t:decomposition2} modulo Proposition \ref{p:inductive_decomposition}}

The proof of Theorem \ref{t:decomposition2} proceeds via an iterative recovering argument.
In Proposition \ref{p:inductive_decomposition} of this subsection, we will 
introduce a rougher decomposition which also includes a third type of ball, 
indexed by a subscript denoted by $v$.  Then, a recovering argument will lead
to a definite decrease in the volume of the $v$-balls.
Thus, after applying  this recovering argument a definite
number of times,  the $v$-balls will no longer present. This gives  
the decomposition in Theorem \ref{t:decomposition2}.
\vskip2mm

The remaining subsections will  be devoted to 
establishing Proposition \ref{p:inductive_decomposition}, which is the primary work in the proof.
 Initially, this will involve the introduction of coverings in which additional types of balls indexed 
by $c,d,e$ will appear.  The goal will be to produce covering
s for each type of ball that will cover
 {\it most} of each ball by either $a$, $b$ or $v$ balls, which will eventually lead to 
Proposition \ref{p:inductive_decomposition} itself.  Several subsections will be required for 
this process; for additional details, see Subsection \ref{ss:lemma_implies_prop}.
\vskip2mm

To avoid confusion, we recall that in \eqref{e:vbardef} below, the subscript $1$ indicates 
radius $1$.  Set
\begin{align}
\label{e:vbardef}
\bar V:=  \inf_{y\in B_4(p)}\cV_1(y)\ge \rv>0, .
\end{align}
\vskip1mm

\begin{proposition}[Induction step decomposition]
\label{p:inductive_decomposition}
 For all $\eta>0$ and $\delta\le \delta(n,\rv,\eta)$, there exists 
 $$v^0(n,\rv,\delta,\eta)>0$$ such that if 
$(M^n_i,g_i,p_i)\stackrel{d_{GH}}{\longrightarrow} (X^n,d,p)$ satisfies 
$\Ric_{M^n_i}\ge -(n-1)$ and $\Vol(B_1(p_i))>v>0$, then 
\begin{align}
		B_1(p)\subset \bigcup_a (\cC_{0,a}\cup \cN_a\cap B_{r_a}(x_a))
\cup \bigcup_b B_{r_b}(x_b)\cup \bigcup_v B_{r_v}(x_v)\cup \tilde{\cS}^k\, ,
	\end{align}
such that the following hold: 
\vskip2mm

	\begin{itemize}
	\item[(1)] $\cN_a\subset B_{2r_a}(x_a)$ are $(k,\delta,\eta)$-neck regions with associated singular set of centers $\cC_{0,a}$.
\vskip2mm	
	
	\item[(2)] Each $b$-ball $B_{2r_b}(x_b)$ is $(k+1,2\eta)$-symmetric.
\vskip2mm

	\item[(3)] $\bar V_v\ge \bar V+v^0$ \,\, (where $\bar V_v:=\inf_{y\in B_{4r_v}(x_v)}\cV_{r_v}(y)$).
\vskip2mm

\item[(4)] $\tilde{\cS}^k\subset S(X^n)$ and $\cH^k(\tilde{\cS}^k)=0$.
\vskip2mm

\item[(5)]
$\sum_a r_a^{k}+\sum_b r_b^{k}+\sum_v r_v^{k}\le C(n,\rv,\delta,\eta)\, .$
\end{itemize}
\end{proposition}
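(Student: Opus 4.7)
\textbf{Plan of proof for Proposition \ref{p:inductive_decomposition}.}

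The plan is to construct the decomposition by a greedy global neck construction with carefully specified stopping rules, followed by a single Vitali-type covering of $B_1(p)$ that produces the three families of balls. The threshold $v^0 = v^0(n,\rv,\delta,\eta)$ will be chosen small enough that the stopping rule for $v$-balls is triggered only when a quantitatively definite gain in volume ratio has occurred; its precise value is fixed at the end of the argument. This $v^0$ is what makes the outer iteration from Proposition \ref{p:inductive_decomposition} to Theorem \ref{t:decomposition2} terminate after a universally bounded number of steps, since the infimum volume ratio is bounded above by $1$ and is monotone under Bishop--Gromov.

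I would proceed in four steps. First I would specify the candidate center set: fix the cone-splitting parameter $\xi = \xi(n,\rv,\delta,\eta)$ from Theorem \ref{t:content_splitting}, and define $\cC = \cC_0 \cup \cC_+$ in $B_2(p)$ by choosing, for each $x$, the first scale $r_x$ at which one of the neck conditions (n2), (n3), (n5) of Definition \ref{d:neck2} first fails on $B_{r_x}(x)$, subject to the Vitali disjointness (n1). Here $\cC_0$ consists of those $x$ for which no such failure occurs at any positive scale, and $\cC_+$ of the genuine stopping points, chosen by selecting a maximal Vitali subfamily so that (n1) is enforced. Second, I would analyze the failure modes at each $x \in \cC_+$ to attach it to either a $b$-ball or a $v$-ball. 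If the failure is that $B_s(x)$ is $(k+1, 2\eta)$-symmetric for some $r_x \leq s \leq \delta^{-1}$, the corresponding ball enters the $b$-family. If instead the failure is loss of $(k, \delta^2)$-symmetry at some scale $r_x \leq s \leq \delta^{-1}$, or failure of (n2), then by the Sharp Cone-Splitting Theorem \ref{t:sharp_splitting} combined with the volume--entropy equivalence in Theorem \ref{t:cW_local} the volume pinching $|\cV_{\delta s}(x) - \cV_{\delta^{-1} s}(x)|$ must be bounded below by a definite $\eta_0 = \eta_0(n,\rv,\delta,\eta)$, which by Bishop--Gromov monotonicity forces $\inf_{B_{4s}(y)} \cV_s(y) \geq \bar V + v^0$ for a suitable $v^0$, producing a $v$-ball.

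Third, the points of $\cC_0$ together with their surrounding good neighborhoods form the $a$-balls. I would cover the support of the packing measure $\mu := \cH^k|_{\cC_0} + \sum_{x \in \cC_+} r_x^k \, \delta_x$ by a second Vitali family of balls $B_{r_a}(x_a)$ centered in $\cC_0$, each giving rise to a $(k,\delta,\eta)$-neck region $\cN_a = B_{2r_a}(x_a) \setminus \overline B_{r_x}(\cC \cap B_{2r_a}(x_a))$ in the sense of Definition \ref{d:neck2}. The Neck Structure Theorem \ref{t:neck_region2} then yields Ahlfors regularity $A(n)^{-1} r_a^k \leq \mu(B_{r_a}(x_a)) \leq A(n) r_a^k$, and a Vitali packing argument produces $\sum_a r_a^k \leq C(n,\rv,\delta,\eta)$, while simultaneously giving that $\cC_{0,a}$ is $k$-rectifiable. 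Fourth, the residual set $\tilde{\cS}^k$ is defined as the complement in $B_1(p)$ of the union of the $a$-, $b$-, and $v$-ball interiors. Its points admit no $(k+1, 2\eta)$-symmetric neighborhood on any scale and no neck construction of positive radius, so $\tilde{\cS}^k \subset S(X^n)$, and $\cH^k(\tilde{\cS}^k) = 0$ will follow from the fact that $\tilde{\cS}^k$ is covered at arbitrarily small scales by tubes whose $k$-content is controlled by the already-proven bounds on the three ball families, refined by a diagonal Vitali argument.

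The main obstacle is the content bound $\sum_v r_v^k \leq C(n,\rv,\delta,\eta)$, because $v$-balls carry no intrinsic higher-symmetry structure (unlike the $b$-balls) and are not themselves pieces of a neck region (unlike the $a$-balls); they are characterized purely by a quantitative jump in the volume ratio. The argument must therefore extract this bound from a two-sided comparison: the centers of $v$-balls lie in the pinched set $\cF_{\, \cdot \,, \xi}$ at the relevant scale, which has bounded $k$-content by the scale-invariant form of Theorem \ref{t:content_splitting}, while Vitali disjointness controls overlaps. Threading this dichotomy through a single-scale decomposition without appealing to Proposition \ref{p:inductive_decomposition} itself, and doing so while the cone-splitting parameter $\xi$ is coupled to $\delta$ and $\eta$, will constitute the core technical labor of the proof.
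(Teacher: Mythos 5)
Your step 2 contains the decisive gap: the dichotomy you propose at a stopping point $x\in\cC_+$ — either $B_s(x)$ becomes $(k+1,2\eta)$-symmetric (a $b$-ball) or else the failure of $(k,\delta^2)$-symmetry/(n2) "forces $\inf_{y\in B_{4s}(x)}\cV_s(y)\ge \bar V+v^0$" (a $v$-ball) — is false, and the second implication does not follow from Theorems \ref{t:sharp_splitting} and \ref{t:cW_local}. Failure of (n2) gives a definite volume-ratio gain \emph{at the single point} $x$, and failure of $(k,\delta^2)$-symmetry gives no volume information at $x$ at all (the vertex of a cone $C(Y)$ that is $0$- but not $(k,\delta^2)$-symmetric has constant volume ratio and no gain whatsoever). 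Neither implies a gain of the \emph{infimum} of $\cV_s$ over the four-times-larger ball, which is what condition (3) of the Proposition requires. There is a genuine intermediate case your scheme has nowhere to put: balls whose pinched set $\cF_{s,\xi}(x)=\{y\in B_{4s}(x):\cV_{\xi s}(y)\le \bar V+\xi\}$ is nonempty but has small $k$-content. This is exactly why the paper introduces \emph{four} ball types ($b$, $c$, $d$, $e$) and proves the Proposition via a coupled recursion: Proposition \ref{p:c_ball_decomposition} turns a $c$-ball (large pinched content, hence $(k,\delta^2)$-symmetric by Theorem \ref{t:content_splitting}) into one neck region plus $b,c,d,e$-balls with the new $c$-content bounded by $C(n,\rv)\epsilon$; Proposition \ref{p:d_ball_decomposition} recovers a $d$-ball by $b,c,e$-balls again with $c$-content $\le C(n,\rv)\epsilon$; iterating makes the $c$-content decay geometrically, and the residual limit set is what gives $\tilde\cS^k$ with $\cH^k=0$. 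Only the $e$-balls (empty pinched set) become $v$-balls, with $v^0=\xi$, because emptiness of $\cF_{r_e,\xi}(x_e)$ means \emph{every} $y\in B_{4r_e}(x_e)$ satisfies $\cV_{\xi r_e}(y)>\bar V+\xi$, which is what survives passage to the infimum over sub-balls of radius $\xi r_e$.

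Two further points would also fail as written. First, your bound on $\sum_v r_v^k$ invokes "the scale-invariant form of Theorem \ref{t:content_splitting}" to control the content of the pinched set, but that theorem runs in the opposite direction (large pinched content $\Rightarrow$ symmetry); in the paper the $v$-content bound is inherited from the $e$-ball content, which comes from plain Vitali counting at each fixed covering scale, while it is the $c$- and $d$-balls (whose centers lie near pinched points) that receive the small $C(n,\rv)\epsilon$ bound from the $d$-ball hypothesis $\Vol(B_{\gamma r}(\cF_{r,\xi}))<\epsilon\gamma^{n-k}r^n$. Second, your single global stopping-time center set does not address condition (n4) (the Reifenberg condition tying $\cC$ to $\cL_{x,r}$ at every scale) or (n5) (a stopping-time radius function is not Lipschitz; the paper constructs $r_x$ by an explicit regularization in the proof of Proposition \ref{p:c_ball_decomposition} precisely for this reason), and it presumes $B_2(p)$ admits a $(k,\delta^2)$-symmetric structure at the top scale, which need not hold and is only produced after locating a $c$-ball via the content cone-splitting theorem.
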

\vskip2mm

Assuming temporarily Proposition \ref{p:inductive_decomposition}, let us complete the proof of
Theorem \ref{t:decomposition2}:

\begin{proof}[Proof of Theorem \ref{t:decomposition2}]

 Fix $\eta>0$, $\delta\le \delta(n,\rv,\eta)$ as in Theorem \ref{t:neck_region2} and
  $v^0(n,\rv,\delta,\eta)>0$ as in Proposition \ref{p:inductive_decomposition}. 
  \vskip2mm
  
  By applying Proposition \ref{p:inductive_decomposition} 
to the limit ball $B_1(p)$, 
  we get the following decomposition in which the subscript $1$ indicates the first step in the inductive argument below.
    \vskip-2mm 
    
\begin{align}
B_1(p)\subset \tilde{\cS}_1^k\cup\bigcup_{a_1} (\cC_{0,a_1}\cup\cN_{a_1}\cap 
B_{r_{a_1}}(x_{a_1}))\cup \bigcup_{b_1} B_{r_{b_1}}(x_{b_1})\cup \bigcup_{v_1} 
B_{r_{v_1}}(x_{v_1})\, ,
\end{align}
where, 
\begin{align}
\label{e:v1}
&\bar V_{v_1} := \inf_{y\in B_{4r_{v_1}}(x_{v_1}) }\cV_{r_{v_1}}(y)\ge \bar V+\rv^0\, \\
&\cH^k(\tilde{\cS}_1^k) =0\, ,	\\
&\sum_{{a_1}} \cH^k(\cC_{0,{a_1}})+\sum_{{a_1}} (r_{a_1})^k+\sum_{{b_1}} (r_{b_1})^k+\sum_{v_1} (r_{v_1})^k
	\le C(n,\rv,\eta,\delta)\, .
\end{align}
\vskip4mm

\noindent
Next, by applying Proposition \ref{p:inductive_decomposition} to each $v_1$-ball 
$B_{r_{v_1}}(x_{v_1})$ we arrive at 
\begin{align}
B_1(p)\subset \bigcup_{j =1}^2\left(\tilde{\cS}_j^k\cup\bigcup_{a_j} (\cC_{0,a_j}\cup\cN_{a_2}\cap 
B_{r_{a_j}}(x_{a_j}))\cup \bigcup_{b_2} B_{r_{b_j}}(x_{b_j})\right)\cup \bigcup_{v_2} 
B_{r_{v_2}}(x_{v_2})\, ,
\end{align}
where, 
\begin{align}
\label{e:v2}
&\bar V_{v_2}:=  \inf_{y\in B_{4r_{v_2}}(x_{v_2})}\cV_{r_{v_2}}(y)\ge \bar V+2\rv^0\, \\
&\cH^k(\tilde{\cS}_1^k) +  \cH^k(\tilde{\cS}_2^k)  =0\, ,\\	
&\sum_{j=1}^2\left(\sum_{{a_2}} \cH^k(\cC_{0,{a_2}})+\sum_{{a_2}} (r_{a_2})^k+\sum_{{b_2}} (r_{b_2})^k\right) \leq C(n,\rv,\eta,\delta)+  C(n,\rv,\eta,\delta)^2  \, .\\
	&\sum_{v_2}(r_{v_2})^k\le  C(n,\rv,\eta,\delta)^2  \, .
\end{align}
\vskip2mm
 
Note that $\bar V+2\rv^0$ in \eqref{e:v1}, has been replaced by $\bar V+2\rv^0$ 
in \eqref{e:v2}, where as in Proposition \ref{p:inductive_decomposition},  $v^0=v^0(n,\rv,\delta,\eta)$. Therefore, this process of recovering the $v$-balls can  be interated at most $i=i(n,\rv,\delta,\eta)$ times before no $v$-balls exist; otherwise, we would contradict the noncollapsing assumption
\eqref{e:lvb}. By doing  the iteration the maximal number of times,
we  obtain the following decomposition in which the $v$-balls are no longer present:
\begin{align}
B_1(p)\subset \bigcup_{j =1}^i\left(\tilde{\cS}_j^k\cup\bigcup_{a_j} (\cC_{0,a_j}\cup\cN_{a_j}\cap B_{r_{a_j}}(x_{a_j}))\cup \bigcup_{b_j}B_{r_{b_j}}(x_{b_j})\right)\, ,
\end{align}
where, $i=i(n,\rv,\delta,\eta)$ and
\begin{align}
&\cH^k(\tilde{\cS}_1^k) + \cdots  \cH^k(\tilde{\cS}_i^k)  =0\, ,\\ \label{e:vi}	
&\sum_{j=1}^i\left(\sum_{{a_j}} \cH^k(\cC_{0,{a_j}})+\sum_{{a_j}} (r_{a_j})^k+\sum_{{b_j}} (r_{b_j})^k\right) \leq C'(n,\rv,\eta,\delta)  \, .
\end{align}
Set 
\begin{align}
\tilde{\cS}^{k,\delta,\eta} := \bigcup_{j=1}^i\tilde{S}^k_j\cap B_1(p)\, ,\qquad
\cS^{k,\delta,\eta}
:=\bigcup_{j=1}^i\left(\tilde{S}^k_j \cup \bigcup_{a_j} \cC_{0,a_j}\right)\cap B_1(p)\, ,
\end{align}
\vskip2mm

\noindent
Since by the Neck Structure Theorem \ref{t:neck_region2}, 
each set $\cC_{0,a_j}$ is $k$-rectifiable, it follows that
$\cS^{k,\delta,\eta}$ is $k$-rectifiable and by \eqref{e:vi} that $\cH^k(\cS^{k,\delta,\eta})\le C(n,\rv,\eta,\delta)$. 
This gives the decomposition whose existence is asserted in Theorem
\ref{t:decomposition2}. Moreover, from our decomposition,  (1)-(6) in Theorem \ref{t:decomposition2} 
are satisfied, where the content estimate is in \eqref{e:vi} and $\cH^k(\tilde{\cS}^{k,\delta,\eta})=0$. 
\vskip2mm

Finally, we will show that if $\eta\leq \eta(n,\rv,\epsilon)$,
$\delta\leq \delta(n,\rv,\eta,\epsilon)$, then 
 $S^k_{\epsilon}\subset \cS^{k,\delta,\eta}$ which is the last statement (7) in Theorem \ref{t:decomposition2}
 \vskip2mm
 
 First, note that if $y\in \cN_a$, with  $r=d(y,\cC_a)$
 and $\delta\leq \delta(n,\rv,\eta,)$,
 then by the Cone-Splitting Theorem \ref{t:content_splitting}, the ball
 $B_{r/2}(y)$ has a $(k+1,2\eta)$-splitting.
 For any $\epsilon$, by the Almost Volume Cone implies
 Almost Metric Cone Theorem \ref{t:almostmetriccone}, it follows that
for some, $s=s(\epsilon,\rv)\cdot r$, the ball,
 $B_s(y)$ is $(0,\epsilon^3)$-symmetric.
 If in addition, $\eta\leq \eta(n,\rv,\epsilon)$,
this implies that $B_s(y)$ is $(k+1,\epsilon^2)$symmetric.
Hence, $y\not\in S^k_\epsilon$.
\vskip2mm

Similarly, suppose $y\in B_{r_b}(x_b)$ and
 $B_{2r_b}(x_b)$ is $(k+1,\eta)$-symmetric. If in addition,
 $\eta\leq \eta'(n,\rv,\epsilon)$, then it clear that
 $B_{r_b}(x_b)$ has a $(k+1,\eta')$-splitting.  Then the same argument as above shows that if 
 $\eta'\leq \eta'(n,\rv,\epsilon)$, the $y\not\in S^k_\epsilon$. Since $S^k_\epsilon$ is covered by union of $\cN_a, B_{r_b}$ and $\cS^{k,\delta,\eta}$, we see that $S_\epsilon^k\subset \cS^{k,\delta,\eta}$.
 This completes the proof of Theorem \ref{t:decomposition2},
 modulo the proof of Proposition \ref{p:inductive_decomposition}. 
 \end{proof}

The remainder of this section will now be devoted
to proving Proposition \ref{p:inductive_decomposition}. 
\vskip-2mm

\subsection{Notation:  Constants and Types of balls}
\label{ss:notation}

Throughout the remainder of this section we will consider  constants  $\xi,\delta,\gamma,\epsilon$,   which will in general satisfy
\begin{align}
	0<\xi<<\delta<\gamma<\epsilon<\epsilon(n).
\end{align}
We will assume through out that $\Ric_{M^n}\ge-(n-1)\xi $. The general case,
can be achieved by a standard covering argument and rescaling.\footnote{Given $\xi<<\delta$, choose a Vitali covering, $\{B_{\xi}(y_f)\}$, of $B_1(p)$, such that $B_{\xi/5}(y_f)$ are disjoint. By relative volume comparison,  the cardinality of such covering is less than $C(n,\rv,\xi)$. Finding the desired decomposition for $B_1(p)$ is then reduced to finding the corresponding decomposition for each $B_{\xi}(y_f)$.} 
\vskip2mm

As in Definition \ref{d:smallvolpinch}, we  define
the set of points with small volume pinching by:
\begin{align}
\label{e:hatV8}
\bar V :=\inf_{x\in B_4(p)}\cV_{\xi^{-1}}(x))\, .
\end{align}
In what follows, the set with small volume pinching is defined to be:
\begin{align}
\label{e:cFset18}
\cF_{r,\xi}(x):=\{y\in B_{4r}(x):~\cV_{\xi r}(y)\le \bar V+\xi\}\, .
\end{align} 
 \vskip2mm

 The constants, $\epsilon,\gamma>0$ will denote the constants in
 the Cone-Splitting Theorem based on $k$-content,  Theorem \ref{t:content_splitting}: 
 If $\Vol(B_{\gamma}(\cF_{1,\xi}(p)))\ge \epsilon \gamma^{n-k} $ with $0<\delta,\epsilon\le \delta(n,\rv)$,  $\gamma\le \gamma(n,\rv,\epsilon)$, $\xi\le \xi(\delta,\epsilon,\gamma,n,\rv)$,
then there exists $q\in B_4(p)$ such that $B_{\delta^{-1}}(q)$ is $(k,\delta^2)$-symmetric.\\

 Next we introduce the various ball types which appear in the proof. These are indexed by
  $a,\, b,\, c,\, d,\, e$.
Every ball $B_r(x)$ is one (or more) of these types. The balls indexed by $a,\, b$ are of the
 type as in Proposition \ref{p:inductive_decomposition}.
\vskip2mm
 
\begin{itemize}
	\item[(a)] A ball $B_{r_a}(x_a)$  is associated to a $(k,\delta,\eta)$-neck region $\cN_a\subset B_{2r_a}(x_a)$.
\vskip2mm

	\item[(b)] A ball $B_{r_b}(x_b)$  is  $(k+1,2\eta)$-symmetric.
	\vskip2mm

	\item[(c)] A ball $B_{r_c}(x_c)$  is not a $b$-ball and satisfies:\newline
$$
\Vol\Big(B_{\gamma r_c}(\cF_{r_c,\xi}(x_c))\Big)\ge \epsilon \gamma^{n-k} r_c^n\, .
$$
	\vskip2mm

	\item[(d)] A ball $B_{r_d}(x_d)$  is any ball with $\cF_{r_d,\xi}(x_d)\ne \emptyset$
	satisfying:
$$
\Vol\Big(B_{\gamma r_d}(\cF_{r_d,\xi}(x_d))\Big)< \epsilon \gamma^{n-k} r_d^n\, .
$$
	\vskip2mm

	\item[(e)] A ball $B_{r_e}(x_e)$ satisfies  $\cF_{r_e,\xi}(x_e)=\emptyset$.
	\end{itemize}
\vskip3mm

\subsection{Statements of Proposition \ref{p:d_ball_decomposition} and Proposition \ref{p:c_ball_decomposition}}
\label{ss:2propositions}

The first proposition stated in this subsection, asserts that a 
$d$-ball can be recovered using only
 $b$, $c$, and $e$-ball's.  A key point is that  in this covering, the content of the $c$-balls
 in the collection can be taken to be small.
  \vskip2mm

\begin{proposition}[d-ball decomposition]
\label{p:d_ball_decomposition}
	Let $(M_i^n,g_i,p_i)\stackrel{d_{GH}}{\longrightarrow} (X^n,d,p)$ satisfy 
$\Vol(B_1(p_i))\ge \rv>0$ and let $\eta>0$ and $\bar V\le \inf_{x\in B_4(p)}\cV_{\xi^{-1}}(x)$. 
 For any $\epsilon\le \epsilon(n,\rv)$, $\gamma\le \gamma(n,\rv,\epsilon)$, $\delta\le \delta(n,\rv,\eta)$ 
and $\xi\le \xi(n,\rv,\epsilon,\gamma,\delta,\eta)$. Then the following holds. If $\Ric_{M^n_i}\ge-(n-1)\xi$ 
and $\Vol(B_{\gamma}(\cF_{1,\xi}(p)))< \epsilon \gamma^{n-k}$, then we have the decomposition: 
	\begin{align}
	B_1(p)\subseteq \tilde {\cS}_d^k\cup \bigcup B_{r_b}(x_b)\cup \bigcup B_{r_c}(x_c)\cup \bigcup B_{r_e}(x_e)
 \end{align}
 where 
	\begin{itemize}
		\item[(b)] Each $b$-ball $B_{2r_b}(x_b)$  is $(k+1,2\eta)$-symmetric.
		\vskip2mm
		
		\item[(c)] $c$-ball $B_{2r_c}(x_c)$ is not a $b$-ball and satisfies 
		$\Vol(B_{\gamma r_c}\cF_{r_c,\xi}(x_c))\ge \epsilon \gamma^{n-k}r_c^n$.
	\vskip2mm
	
		\item[(e)] Each $e$-ball $B_{2r_e}(x_e)$ satisfies $\cF_{r_e,\xi}(x_e)=\emptyset$.		
	\vskip2mm
		
		\item[(s)] $\tilde{\cS}_d^k\subset S(X)$ and 
		$\cH^k( \tilde{\cS}_d^k)=0$.
		\vskip2mm
		
	\end{itemize}
	Furthermore, we have $k$-content estimates 
$\sum_{b}r_b^k+\sum_e r_e^k\le C(n,\gamma)$ and $\sum_c r_c^k\le C(n,\rv)\epsilon$.
\end{proposition}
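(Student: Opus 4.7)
My plan is to prove Proposition \ref{p:d_ball_decomposition} via a stopping-time Vitali covering. For each $y \in B_1(p)$ I set
\[
r_y \,:=\, \sup\bigl\{r \in (0,\tfrac14] : B_{2r}(y) \text{ is a $b$-, $c$-, or $e$-ball at scale } r\bigr\}\, ,
\]
and define the singular set $\tilde{\cS}_d^k := \{y \in B_1(p): r_y = 0\}$. A standard Vitali extraction produces a family $\{B_{r_{y_i}/5}(y_i)\}$ of disjoint balls whose blowups $\{B_{2r_{y_i}}(y_i)\}$ cover $B_1(p) \setminus \tilde{\cS}_d^k$, and by construction each such ball is $b$, $c$, or $e$ at its stopping scale. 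A key preliminary observation, based on monotonicity of $\cV_r$, is that if $\liminf_{r\to 0^+}\cV_r(y) > \bar V + \xi$ then $\cF_{r,\xi}(y) = \emptyset$ for all sufficiently small $r$, so $B_{2r}(y)$ is automatically an $e$-ball and $r_y > 0$; consequently $\tilde{\cS}_d^k$ consists only of genuinely pinched points, and in particular $\tilde{\cS}_d^k \subseteq \cF_{1,\xi}(p)$.

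The $c$-ball content bound $\sum_c r_{c,i}^k \leq C(n,\rv)\epsilon$ will come from the inclusion $\cF_{r_{c,i},\xi}(x_{c,i}) \subseteq \cF_{1,\xi}(p)$, which holds because for $z$ in the former one has $\cV_\xi(z) \leq \cV_{\xi r_{c,i}}(z) \leq \bar V + \xi$ by monotonicity and $r_{c,i} \leq 1$. Hence the enlarged content sets $B_{\gamma r_{c,i}}(\cF_{r_{c,i},\xi}(x_{c,i}))$ lie in $B_\gamma(\cF_{1,\xi}(p))$; combining the Vitali disjointness at scale $r_{c,i}/5$ with the inclusion $B_{\gamma r_{c,i}}(\cF_{r_{c,i},\xi}(x_{c,i})) \subseteq B_{5r_{c,i}}(x_{c,i})$ yields bounded overlap multiplicity, and then the $c$-ball content lower bound together with the $d$-ball hypothesis produces the desired packing estimate.

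The bound $\sum_b r_b^k + \sum_e r_e^k \leq C(n,\gamma)$ will follow from Vitali packing and volume non-collapsing, together with a uniform lower bound $r_{y_i} \geq c(n,\rv,\gamma,\eta) > 0$ for $b$- and $e$-centers: for $e$-centers, the almost volume cone Theorem \ref{t:almostmetriccone} shows that $\cF_{r,\xi}(y)$ must empty at a scale controlled only by $\gamma$ once $\cV_\xi(y) - \bar V$ is definite; for $b$-centers, cone-splitting by content (Theorem \ref{t:content_splitting}) combined with the cone-splitting Theorem \ref{t:cone_splitting} produces $(k+1,2\eta)$-symmetry at a uniform scale whenever $k$-symmetry first emerges. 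Finally, the vanishing $\cH^k(\tilde{\cS}_d^k) = 0$ I plan to prove by iterating the $d$-ball hypothesis: every $y \in \tilde{\cS}_d^k$ has $B_{2r}(y)$ of $d$-type at every scale $r \leq 1$, so the content bound $\Vol(B_{\gamma r}(\cF_{r,\xi}(y))) < \epsilon\gamma^{n-k}r^n$ persists at every scale, and a Minkowski-to-Hausdorff conversion yields the desired measure bound.

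The principal obstacle I anticipate is ensuring the proper conversion from the Vitali volume estimate $\sum r_{y_i}^n \leq C$ to the required $k$-content bounds; this is straightforward for $b$- and $e$-balls once the uniform lower bound on $r_{y_i}$ is established, but for $c$-balls it is delicate because the stopping time could in principle produce $c$-balls at arbitrarily small scales, and for $k < n$ this would break the $k$-content estimate. The resolution should come from a scale analysis showing that a $c$-ball appearing at scale $r$ inside a $d$-ball parent must have its pinching content accounted for in the top-scale pinching set $\cF_{1,\xi}(p)$ with the right scaling, which is precisely the role of the factor $\gamma^{n-k}$ in the content condition, combined with exploitation of the parameter hierarchy $0 < \xi \ll \delta \ll \gamma \ll \epsilon$ so that monotonicity slack does not accumulate across scales. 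A secondary technical point is verification of the overlap multiplicity estimate for the enlarged content sets with varying radii $r_{c,i}$, which requires a Besicovitch-type argument rather than a same-scale Vitali.
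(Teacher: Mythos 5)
There is a genuine gap, and it sits exactly where you flagged your own worry: the passage from packing/volume estimates to $k$-content estimates. Your single Vitali extraction at the stopping scale $r_y$ only yields $\sum_i r_{y_i}^n \le C(n)$ from disjointness and non-collapsing, and for $k<n$ this says nothing about $\sum_i r_{y_i}^k$. Your two proposed repairs both fail. For the $b$- and $e$-balls, the claimed uniform lower bound $r_{y_i}\ge c(n,\rv,\gamma,\eta)>0$ is false: a point $y$ lying very close to a deeply pinched point $z$ (with $\cV_{\xi r}(z)\le \bar V+\xi$ down to tiny scales) only becomes an $e$-center at scale $\sim d(y,z)/4$, and points become $(k+1,2\eta)$-symmetric only at scales comparable to their distance from the effective singular set; neither Theorem \ref{t:almostmetriccone} nor Theorems \ref{t:content_splitting}/\ref{t:cone_splitting} gives symmetry at a scale independent of the point, since no definite gap $\cV_\xi(y)-\bar V\ge c>0$ is available. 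For the $c$-balls, your overlap argument applied to $B_{\gamma r_{c,i}}(\cF_{r_{c,i},\xi}(x_{c,i}))\subseteq B_\gamma(\cF_{1,\xi}(p))$ together with the lower bound $\Vol(B_{\gamma r_{c,i}}\cF_{r_{c,i},\xi}(x_{c,i}))\ge \epsilon\gamma^{n-k}r_{c,i}^n$ again only produces $\sum_i r_{c,i}^n\le C(n)$, not $\sum_i r_{c,i}^k\le C(n,\rv)\epsilon$; the factor $\gamma^{n-k}$ cancels on both sides and cannot rescue the estimate when the $r_{c,i}$ are small and variable.

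The paper's proof avoids this by never performing a stopping-time extraction at all. It covers $B_1(p)$ by balls of the \emph{fixed} radius $\gamma$, classifies them, and observes that at a single fixed scale the count of $c$- and $d$-balls is at most $C(n,\rv)\epsilon\gamma^{-k}$ (because each contributes a definite $\gamma^n$ of volume to $B_\gamma(\cF_{1,\xi}(p))$, whose volume is $<\epsilon\gamma^{n-k}$), hence their $k$-content at that scale is $\le C(n,\rv)\epsilon$. It then recovers \emph{only the $d$-balls} at scale $\gamma^2$, and iterates. The engine is the geometric decay $\sum_d \gamma^{jk}\le (C(n,\rv)\epsilon)^j$ of the $d$-content across stages: the $b$- and $e$-content at stage $j$ is $C(n,\gamma)$ times the $d$-content at stage $j-1$, so everything sums after fixing $\epsilon$ with $C(n,\rv)\epsilon\le 1/10$, and $\cH^k(\tilde\cS_d^k)=0$ follows because $\tilde\cS_d^k$ is covered at scale $\gamma^i$ with $k$-content $\le (C(n,\rv)\epsilon)^i\to 0$ (a single-scale Minkowski bound, as you propose, would only give $\cH^k\le C\epsilon$, not zero). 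Your stopping-time extraction discards precisely this multi-scale bookkeeping, and I do not see how to recover the $k$-content estimates without reinstating it.
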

\vskip1mm

\begin{remark}
In this proposition, the ball types and the pinching set $\cF_{r,\xi}$ 
are with respect to the given $\bar V\le \inf_{x\in B_4(p)}\cV_{\xi^{-1}}(x)	$ above.
\end{remark}
\vskip1mm

\begin{proposition}[$c$-ball decomposition]
\label{p:c_ball_decomposition}
	Let $(M_i,g_i,p_i)\to (X,d,p)$ satisfy $\Vol(B_1(p_i))\ge \rv>0$ and 
let $\eta>0$ and $\bar V\le \inf_{x\in B_4(p)}\cV_{\xi^{-1}}(x)$. For any 
$\epsilon\le \epsilon(n,\rv)$, $\gamma\le \gamma(n,\rv,\epsilon)$,
 $\delta\le \delta(n,\rv,\eta)$ and $\xi\le \xi(n,\rv,\epsilon,\gamma,\delta,\eta)$. 
Then the following holds. If $\Ric_{M^n_i}\ge-(n-1)\xi$, 
$\Vol(B_{\gamma}(\cF_{1,\xi}(p)))\ge \epsilon \gamma^{n-k}$ and $B_2(p)$ is not $(k+1,2\eta)$-symmetric, 
	then we have the decomposition:
	\begin{align}
 	B_1(p)\subset \Big(\cC_0\cup \cN\cap B_1(p)\Big)\cup \bigcup_b B_{r_b}(x_b)\cup 
\bigcup_c B_{r_c}(x_c)\cup \bigcup_d B_{r_d}(x_d)\cup \bigcup_e B_{r_e}(x_e)\, ,
 \end{align}
	where 
	\begin{itemize}
	    \item[(a)] $\cN=B_2(p)\setminus \Big(\cC_0\cup \bigcup_b B_{r_b}(x_b)\cup \bigcup_c B_{r_c}(x_c)\cup \bigcup_d B_{r_d}(x_d)\cup \bigcup_e B_{r_e}(x_e)\Big)$ is a $(k,\delta,\eta)$-neck region.
	    \vskip2mm
	    
		\item[(b)] Each $b$-ball $B_{2r_b}(x_b)$ is $(k+1,2\eta)$-symmetric.
		    \vskip2mm

		\item[(c)] Each $c$-ball $B_{2r_c}(x_c)$ is not 
		$(k+1,2\eta)$-symmetric and satisfies $\Vol(B_{\gamma r_c}\cF_{r_c,\xi}(x_c))\ge \epsilon \gamma^{n-k}r_c^n$. 
    \vskip2mm
	    
\item[(d)] Each $d$-ball $B_{2r_d}(x_d)$ satisfies 
$\Vol(B_{\gamma r_d}\cF_{r_d,\xi}(x_d))< \epsilon \gamma^{n-k}r_d^n$. 
    \vskip2mm
    
		\item[(e)] Each $e$-ball $B_{2r_e}(x_e)$ satisfies  $\cF_{r_e,\xi}(x_e)=\emptyset$. 
	\end{itemize}
	Furthermore, we can build this decomposition so that we have the $k$-content estimates 
	\begin{align}
		&\sum_{x_b\in B_{3/2}(p)}r_b^k+\sum_{x_d\in B_{3/2}(p)} r_d^k+\sum_{x_e\in B_{3/2}(p)} r_e^k+\cH^k(\cC_0\cap B_{3/2}(p))\le C(n,\rv)\, ,\\
		 &\sum_{x_c\in B_{3/2}(p)} r_c^k\le C(n,\rv)\epsilon.
	\end{align}
\end{proposition}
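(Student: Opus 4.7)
\textbf{Proof proposal for Proposition \ref{p:c_ball_decomposition}.}
The plan is to build the neck region $\cN$ by a top-down inductive construction over dyadic scales $r_j = 2^{-j}$, and to classify the terminal excised balls $B_{r_x}(x)$ at centers $x\in\cC_+$ according to the precise reason the neck construction had to stop at $x$. This is parallel to the neck decomposition strategy of \cite{JiNa_L2} and \cite{NaVa_YM}, now adapted to the general stratum $k$ and to only a lower Ricci bound by relying on the estimates developed in this paper. The initialization uses the hypothesis that $\Vol\bigl(B_\gamma(\cF_{1,\xi}(p))\bigr)\ge \epsilon\gamma^{n-k}$: by Theorem \ref{t:content_splitting} there exists $q\in B_4(p)$ such that $B_{\delta^{-1}}(q)$ is $(k,\delta^2)$-symmetric with respect to some plane $\cL^k_{p,1}$, and since $B_2(p)$ is not $(k+1,2\eta)$-symmetric neither is any ball above it. This furnishes the root centers for the neck.

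At each scale $r_j$, for every currently active center $x$ I refine by taking a maximal Vitali subset of $\cL^k_{x,r_j}\cap B_{r_j}(x)$ (using the $\tau_n$ disjointness from (n1)) as candidate new centers at scale $r_{j+1}$. For each such candidate $x'$ we test the neck conditions (n2) and (n3): if $B_{r_j}(x')$ is $(k+1,\eta)$-symmetric we freeze $x'$ as a $b$-center with $r_{x'}=r_j$; if $(k,\delta^2)$-symmetry and the pinching (n2) persist but $\cF_{r_j,\xi}(x')=\emptyset$ we freeze as an $e$-center; if $\cF_{r_j,\xi}(x')\ne\emptyset$ but $\Vol(B_{\gamma r_j}\cF_{r_j,\xi}(x'))<\epsilon\gamma^{n-k}r_j^n$ we freeze as a $d$-center; if instead the volume pinching (n2) fails first while the pinching content remains large, we freeze as a $c$-center; and otherwise we keep $x'$ active. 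Points that stay active through all scales constitute $\cC_0$ with $r_x = 0$. The Lipschitz condition (n5) is then enforced by the standard replacement $r_x \leftarrow \inf_{y\in\cC}(r_y+\delta\, d(x,y))$, which only shrinks the excised balls and hence enlarges $\cN$. Conditions (n1)--(n5) are then verified: (n1) is built into the Vitali selection; (n3) holds for active centers throughout the induction by cone-splitting propagation (Theorem \ref{t:cone_splitting}); (n2) holds by the freezing criterion together with the equivalence between $\cV_r$ and $\cW^\delta_t$ in Theorem \ref{t:cW_local}; (n4) follows from the Reifenberg-type control of the approximating planes $\cL^k_{x,r}$ induced by iterated $(k,\delta^2)$-symmetry and the maximal Vitali choice at each scale; (n5) is the enforced Lipschitz estimate.

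Once $\cN$ has been built, the main content estimate
$$\sum_b r_b^k + \sum_d r_d^k + \sum_e r_e^k + \cH^k(\cC_0\cap B_{3/2}(p)) \le C(n,\rv)$$
follows directly from the Neck Structure Theorem \ref{t:neck_region2}: the packing measure $\mu = \sum_{\cC_+} r_x^k\delta_x + \cH^k|_{\cC_0}$ is Ahlfors regular with $\mu(B_r(x))\le A(n)r^k$, so $\mu(B_2(p))\le C(n)$, and the $b$, $d$, $e$, and $\cC_0$ contributions to this packing measure are exactly what appears on the left. At this point only the $c$-ball estimate remains, and every terminal center has a definite classification, so the covering statement of the proposition is established as soon as the $c$-sum is controlled.

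\textbf{The main obstacle} is the sharp bound $\sum_c r_c^k \le C(n,\rv)\,\epsilon$, which is what allows the whole decomposition scheme in Proposition \ref{p:inductive_decomposition} and Theorem \ref{t:decomposition2} to terminate. The $c$-centers are precisely those centers where the cumulative volume pinching (n2) fails first while the local $\gamma$-content of the pinching set is still large, so at each $c$-center one has $\Vol\bigl(B_{\gamma r_c}(\cF_{r_c,\xi}(x_c))\bigr)\ge \epsilon\gamma^{n-k}r_c^n$. The idea is to dominate this volume by a Vitali/Fubini argument against $\Vol\bigl(B_\gamma(\cF_{1,\xi}(p))\bigr)$. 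Concretely, choosing $\xi\ll\delta$ and using the monotonicity of $\cV_r$ together with Theorem \ref{t:cW_local}, one shows that $\cF_{r_c,\xi}(x_c)\subseteq \cF_{1,C\xi}(p)$ for every $c$-center, and that the $c$-balls are essentially disjoint at each dyadic scale by the $\tau_n$-Vitali property (n1) inherited from the neck. Summing then gives
\begin{align*}
\epsilon\gamma^{n-k}\sum_c r_c^k
&\le \sum_c r_c^{k-n}\Vol\bigl(B_{\gamma r_c}(\cF_{r_c,\xi}(x_c))\bigr) \\
&\le C(n)\,\Vol\bigl(B_\gamma(\cF_{1,C\xi}(p))\bigr) \cdot \gamma^{-n} \cdot \gamma^{n-k},
\end{align*}
where the middle step uses the bounded-overlap of the $c$-balls at each scale together with Bishop--Gromov to compare $r_c^{k-n}\Vol(B_{\gamma r_c})$ to $\gamma^{-n}\gamma^{n-k}$ per unit of $\cF$-measure. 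The Bishop--Gromov ambient bound $\Vol(B_\gamma\cF_{1,C\xi}(p))\le C(n,\rv)$ then gives $\sum_c r_c^k \le C(n,\rv)\,\epsilon$ as required. Carrying out this estimate rigorously is the technical heart of the proposition: one must carefully propagate the pinching-set inclusion between scales (using the sharp cone-splitting of Theorem \ref{t:sharp_splitting} to convert entropy pinching into volume pinching) and organize the $c$-balls by scale so that the disjointness carries through dyadic groupings without incurring a $\log$ loss.
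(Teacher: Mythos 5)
Your overall architecture (follow the approximating $k$-planes down through the scales, freeze terminal balls according to which condition fails, then read off the $b,d,e,\cC_0$ content from the Ahlfors regularity of the neck packing measure) is in the same spirit as the paper's construction, and the first half of your proposal is broadly sound. The proof fails, however, exactly at the point you identify as the technical heart: the bound $\sum_c r_c^k\le C(n,\rv)\epsilon$.

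The Vitali/Fubini estimate you propose goes in the wrong direction. The hypothesis at a $c$-center is a \emph{lower} bound $\Vol\bigl(B_{\gamma r_c}(\cF_{r_c,\xi}(x_c))\bigr)\ge\epsilon\gamma^{n-k}r_c^n$, so your first inequality reads $\epsilon\gamma^{n-k}\sum_c r_c^k\le\sum_c r_c^{k-n}\Vol(\cdot)$, and you then need an \emph{upper} bound on the right-hand side. But the weight $r_c^{k-n}$ blows up as $r_c\to 0$, and the sets $B_{\gamma r_c}(\cF_{r_c,\xi}(x_c))$ for $c$-balls at \emph{different} scales can all contain essentially the same pinched points (this is precisely what happens near a point of $\cC_0$). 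Bounded overlap at a single dyadic scale only controls each scale separately; summed over infinitely many scales, $\sum_c r_c^{k-n}\mathds{1}_{S_c}(y)$ behaves like $\sum_j 2^{j(n-k)}=\infty$ at such a point, so the right-hand side is not controlled by $\Vol\bigl(B_\gamma(\cF_{1,C\xi}(p))\bigr)$. Even in the best case your chain of inequalities would yield $\sum_c r_c^k\lesssim\epsilon^{-1}$, not $\lesssim\epsilon$. More structurally, your freezing rule makes $c$-centers terminal whenever (n2) fails with large pinching content, and there is no mechanism in your argument preventing a set of such centers of large $k$-content.

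The paper avoids this by never terminating at a $\tilde c$-ball in the primary construction: balls that are not $(k+1)$-symmetric and still have large pinching content are always re-covered at the next scale (they support a fresh content-splitting), so in the limit they contribute only to $\cC_0$, whose $\cH^k$-measure is controlled by the Neck Structure Theorem. Terminal $c$-balls appear only as a byproduct of the radius regularization needed for (n5), and their content is bounded by showing they are confined to two small regions: inside the $\tilde d$-balls, where the \emph{small} volume of $\cF_{\tilde r_d,\xi}(\tilde x_d)$ forces a single-scale disjoint-ball count of at most $\epsilon C\gamma^{-k}$ (Lemma \ref{l:claim81}); and inside thin annuli $A_{(1-\tilde\delta)\tau^3\tilde r_b,\tau^3\tilde r_b}(\tilde x_b)$ of the $\tilde b$-balls, since a $c$-ball inside a $(k+1,3\eta/2)$-symmetric ball must have tiny radius and the radius function then localizes its center to the annulus, whose $\mu$-measure is $\lesssim\tilde\delta\,\tilde r_b^k$ by a covering of the nearby $k$-plane (Lemma \ref{l:claim82}). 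You would need an argument of this confinement type; the global pinching-volume Fubini cannot produce the factor $\epsilon$.
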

\begin{remark}
In this Proposition the ball types and the pinching set $\cF_{r,\xi}$ are defined with respect to the given $\bar V\le \inf_{x\in B_4(p)}\cV_{\xi^{-1}}(x)	$ above.
\end{remark}
\vskip1mm

\subsection{Proof of Proposition \ref{p:inductive_decomposition} modulo
Propositions \ref{p:d_ball_decomposition} and \ref{p:c_ball_decomposition}}
\label{ss:lemma_implies_prop}
In this subsection we will state and prove Lemma \ref{l:n9}.
The proof involves using iteratively the decompositions of Proposition \ref{p:c_ball_decomposition} and Proposition \ref{p:d_ball_decomposition}.
 Then by using Lemma \ref{l:n9} a definite number of times we are able to
 remove all the $c$-balls and $d$-balls, thereby proving Proposition \ref{p:inductive_decomposition}. This proves Theorem \ref{t:neck_decomposition}
 modulo the proofs of Proposition \ref{p:c_ball_decomposition} and Proposition \ref{p:d_ball_decomposition}. 
These two propositions will be proved in the
 remaining two subsections.

 \begin{lemma}
\label{l:n9}
	Let $(M_i,g_i,p_i)\togh (X^n,d,p)$ satisfy 
$\Vol(B_1(p_i))\ge \rv>0$ with $\eta>0$ and $\bar V:= \inf_{x\in B_4(p)}\cV_{\xi^{-1}}(x)$.
Then for $\delta\le \delta(n,\rv,\eta)$ and $\xi\le \xi(n,\rv,\delta,\eta)$, if 
$\Ric_{M^n_i}\ge-\xi(n-1)$, we have 
	\begin{align}
		B_1(p)\subset \bigcup_a (\cC_{0,a}\cup \cN_a\cap B_{r_a}(x_a))\cup \bigcup_b B_{r_b}(x_b)\cup \bigcup_e B_{r_e}(x_e)\cup \tilde{\cS}^k\, ,	
		\end{align}
	where 
	\begin{itemize}
	\item[(1)] $\cN_a\subset B_{2r_a}(x_a)$ are $(k,\delta,\eta)$-neck regions with associated singular set $\cC_{0,a}$.
\vskip2mm
	\item[(2)] Each $b$-ball $B_{2r_b}(x_b)$ is $(k+1,2\eta)$-symmetric.
\vskip2mm

	\item[(3)] For each $e$-ball $B_{2r_e}(x_e)$ we have $\cF_{r_e,\xi}(x_e)=\emptyset$ where $\cF_{r_e,\xi}(x_e)
:=\{y\in B_{4r_e}(x_e): \cV_{\xi r_e}(y)\le \bar V+\xi\}$.
\vskip2mm
	\item[(4)] $\tilde{\cS}^k\subset S(X)$ and $\cH^k(\tilde{\cS}^k)=0$.
\end{itemize}
Moreover, we have content estimate
$$
\sum_a r_a^{k}+\sum_b r_b^{k}+\sum_e r_e^{k}\le C(n,\rv)\, .
$$
\end{lemma}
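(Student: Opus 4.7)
The plan is to build the asserted decomposition by iteratively applying the $c$-ball Decomposition Proposition \ref{p:c_ball_decomposition} and the $d$-ball Decomposition Proposition \ref{p:d_ball_decomposition}, and exploiting the fact that at each stage the $k$-content of the remaining $c$-balls contracts by a factor of $\epsilon$. Choose $\epsilon \le \epsilon(n,\rv)$, $\gamma \le \gamma(n,\rv,\epsilon)$, $\delta \le \delta(n,\rv,\eta)$ as in Propositions \ref{p:d_ball_decomposition} and \ref{p:c_ball_decomposition}, and further require $\epsilon$ so small that $C(n,\rv)\epsilon \le 1/2$, where $C(n,\rv)$ is the common constant appearing in the $c$-content estimates of those two propositions. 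Finally take $\xi \le \xi(n,\rv,\epsilon,\gamma,\delta,\eta)$ small enough for both propositions to apply.

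First I would perform the initial step: the ball $B_1(p)$ itself is either already a $b$-ball or an $e$-ball (in which case we are trivially done), or else it qualifies as a $c$-ball or a $d$-ball according to whether $\Vol(B_\gamma(\cF_{1,\xi}(p))) \ge \epsilon\gamma^{n-k}$ or not; in the remaining case $\cF_{1,\xi}(p)=\emptyset$ it is an $e$-ball. Apply the relevant proposition to $B_1(p)$ to obtain a covering $\cB_1$ of $B_1(p)$ by balls of types $a,b,c,d,e$, a collection of neck regions attached to the $a$-balls, together with a residual $\cH^k$-zero set $\tilde\cS^k_1$. The content estimates give $\sum_{B \in \cB_1^{a,b,d,e}} r_B^k \le C(n,\rv)$ and $\sum_{B\in \cB_1^{c}} r_B^k \le C(n,\rv)\epsilon$.

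Inductively, assume after $N$ rounds we have a covering $\cB_N$ whose $a,b,e$-balls are already of the type demanded by the lemma, together with $c$-balls and $d$-balls of total $k$-content at most $(C(n,\rv)\epsilon)^N \cdot C(n,\rv)$ (for the $c$-balls) and $C(n,\rv)\cdot \sum_{\text{prev }c,d\text{-balls}} r^k$ (for the $d$-balls, produced only by the $c$-ball proposition in the previous round). For each remaining $d$-ball apply Proposition \ref{p:d_ball_decomposition} at its scale; for each remaining $c$-ball apply Proposition \ref{p:c_ball_decomposition} at its scale. This produces $\cB_{N+1}$, with new $a,b,e$-content bounded by $C(n,\rv)$ times the total $r^k$ of the parent $c,d$-balls, and the new $c$-content bounded by $C(n,\rv)\epsilon$ times that same quantity. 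Because $C(n,\rv)\epsilon \le 1/2$, the total $c$-content decays geometrically and the cumulative $a,b,e$-content stays bounded by a geometric series summing to $C'(n,\rv)$. The residual singular sets $\tilde\cS^k_j$ produced at each step are countable in number and each has vanishing $k$-dimensional Hausdorff measure.

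In the limit $N\to\infty$, define $\tilde\cS^k$ to be the union of all $\tilde\cS^k_j$ together with the intersection $\bigcap_N \bigcup \{B\in\cB_N^{c,d}\}$ of the nested families of remaining $c$ and $d$-balls; since the $k$-content of this latter family tends to zero and the former is a countable union of $\cH^k$-null sets, we obtain $\cH^k(\tilde\cS^k)=0$, and the containment $\tilde\cS^k \subseteq S(X)$ follows because any point in it either lies in one of the $\tilde\cS^k_j \subseteq S(X)$ or lies in the nested intersection of $c,d$-balls, each of which is centered on the pinching set $\cF$ and so does not admit a $(k+1)$-symmetric tangent cone; by the Cone-Splitting Theorem \ref{t:cone_splitting} together with the Almost Volume Cone Theorem \ref{t:almostmetriccone} such points belong to $S(X)$. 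The surviving $a,b,e$-balls together with $\tilde\cS^k$ then cover $B_1(p)$ and satisfy conditions (1)--(4) of the lemma, with the desired bound $\sum r_a^k + \sum r_b^k + \sum r_e^k \le C(n,\rv)$. The main obstacle is not the bookkeeping of the iteration but checking carefully that the nested intersection of $c,d$-balls indeed has vanishing $\cH^k$-measure and is contained in the singular set; this is where both the geometric decay of $c$-content and the almost-cone characterization of $\cF_{r,\xi}$ are essential.
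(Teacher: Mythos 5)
Your proposal is correct and follows essentially the same route as the paper: iterate the $c$-ball and $d$-ball decompositions, use the fact that each round multiplies the remaining $c$-content (and hence the freshly generated $d$-content) by $C(n,\rv)\epsilon\le 1/2$, sum the geometric series for the $a,b,e$-content, and define $\tilde\cS^k$ as the union of the null sets from each round together with the nested limit of the shrinking $c$- and $d$-ball families, whose $\cH^k$-measure vanishes by the content decay. One small caveat: your justification that the nested intersection lies in $S(X)$ — "each ball is centered on the pinching set $\cF$ and so does not admit a $(k+1)$-symmetric tangent cone" — is not right as stated, since a point of $\cF_{r,\xi}$ is merely an almost cone vertex and could perfectly well be regular; the correct argument (used in the paper's proof of Proposition \ref{p:d_ball_decomposition}) is that a regular point would force, via volume convergence, $\bar V\ge 1-\xi$, so that $\cF_{1,\xi}(p)$ would fill $B_2(p)$ and contradict the small-pinching-volume hypothesis defining the $d$-balls.
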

\begin{proof}
	 Fix $\epsilon\le \epsilon(n,\rv)$, $\gamma\le \gamma(n,\rv,\epsilon)$ and $\delta\le \delta(n,\rv,\eta)$ such that Proposition \ref{p:c_ball_decomposition} and Proposition \ref{p:d_ball_decomposition} hold.
 \vskip2mm
 
We can assume $B_2(p)$ is not a $b$-ball or $e$-ball. Otherwise, there is nothing to prove. 
\vskip2mm

So assume one of the following two cases holds.

\begin{itemize}
\item[1)]
 $B_2(p)$ is a $c$-ball with $\Vol(B_{\gamma}\cF_{1,\xi}(p))\ge \epsilon \gamma^{n-k}$ and with $B_2(p)$ is not $(k+1,2\eta)$-symmetric.
 \vskip2mm
 
\item[2)]$B_2(p)$ is a $d$-ball with $\Vol(B_{\gamma}\cF_{1,\xi}(p))< \epsilon \gamma^{n-k}$. 
\end{itemize}
\vskip1mm

It will be evident that up to reversing the order of which decomposition we apply first, the argument is the same in both cases. Therefore, without essential loss of generality, we will assume that $B_2(p)$ is a $c$-ball. 
 \vskip2mm
 
 By the $c$-ball decomposition Proposition \ref{p:c_ball_decomposition}, if
  $\xi\le \xi(n,\rv,\delta,\epsilon,\eta)$, then we have:
 \vskip-2mm
  
\begin{align}
		B_1(p)\subseteq (\cC_{0}\cup \cN\cap B_1(p))\cup \bigcup_b B_{r_b}(x_b)\cup \bigcup_c B_{r_c}(x_c)\cup \bigcup_d B_{r_d}(x_d)\cup \bigcup_e B_{r_e}(x_e)\, ,
	\end{align}
and in addition  the following $k$-content estimates hold
		\begin{align}
&\sum_{b}r_b^k+\sum_d r_d^k+\sum_e r_e^k+\cH^k(\cC_{0})\le C(n)\, ,\\
&\sum_c r_c^k \le C(n,\rv)\epsilon\, .
\end{align}

By applying the $d$-ball decomposition of Proposition \ref{p:d_ball_decomposition} to each $d$-ball $B_{2r_d}(x_d)$, we arrive at 
\begin{align}
	B_1(p)\subseteq \tilde\cS^k_1\cup (\cC_{0}\cup \cN\cap B_1(p))\cup \bigcup_b B_{r_b}(x_b)\cup \bigcup_c B_{r_c^1}(x_c^1)\cup \bigcup_e B_{r_e}(x_e)\, ,
\end{align}
where $\tilde{\cS}_1^k =\bigcup_{d}\tilde{\cS}_{d}^k$ is a countable union of $k$-Hausdorff meansure zero sets, and thus $\cH^k(\tilde{\cS}_1^k)=0$. 
Moreover, we have content estimates 
\begin{align}
	&\sum_c (r_c^1)^k\le C(n,\rv)\epsilon+C(n)C(n,\rv)\epsilon\le \bar C(n,\rv)\epsilon,\\
	&\sum_{b}r_b^k+\sum_e r_e^k+\cH^k(\cC_{0})\le C(n)+C(n)C(n,\gamma)\le \bar C(n,\gamma).
\end{align}

Next, we repeat the above process verbatim, except that we  first apply the $c$-ball decomposition of Proposition \ref{p:c_ball_decomposition} to each $c$-ball above and then apply the $d$-ball decomposition of Proposition \ref{p:d_ball_decomposition} to each remaining $d$-ball.  The result is: 
\begin{align}
	B_1(p)\subseteq \tilde\cS^k_2\cup \bigcup_a (\cC_{0,a}\cup \cN_a\cap B_{r_a}(x_a))\cup \bigcup_b B_{r_b}(x_b)\cup \bigcup_c B_{r_c^2}(x_c^2)\cup \bigcup_e B_{r_e}(x_e)\, ,
\end{align}
with content estimates $\cH^{k}(\tilde{\cS}_2^k)=0$ and
\begin{align}
	&\sum_a r_a^{k}\le 1+\bar C(n,\rv)\epsilon,~~~\sum_c (r_c^2)^k\le \Big(\bar C(n,\rv)\epsilon\Big)^2,\\
	&\sum_b r_b^k+\sum_e r_e^k+\sum_a\cH^k(\cC_{0,a})\le \bar C(n,\gamma)\Big(1+\bar C(n,\rv)\epsilon\Big).
\end{align}

After repeating this process $i$ times we arrive at
\begin{align}\label{e:B1_strategy_i}
	B_1(p)\subseteq \tilde\cS^k_i\cup \bigcup_a (\cC_{0,a}\cup \cN_a\cap B_{r_a}(x_a))\cup \bigcup_b B_{r_b}(x_b)\cup \bigcup_c B_{r_c^i}(x_c^i)\cup \bigcup_e B_{r_e}(x_e)\, ,
\end{align}
with content estimates 	$\cH^{k}(\tilde{\cS}^k_i)=0$ and
\begin{align}
	&\sum_a r_a^{k}\le \sum_{j=0}^i\Big(\bar C(n,\rv)\epsilon\Big)^j,~~~\sum_c (r_c^i)^k\le \Big(\bar C(n,\rv)\epsilon\Big)^i,\\ \label{e:content_bea}
	&\sum_b r_b^k+\sum_e r_e^k+\sum_a\cH^k(\cC_{0,a})\le \bar C(n,\gamma)\sum_{j=0}^i\Big(\bar C(n,\rv)\epsilon\Big)^j.
\end{align}

Consider the discrete set $\tilde{\cS}_{c}^i:= \{x_c^i\}$. 
By the construction, we have 
\begin{align}
B_{2r_c^{i+1}}\tilde{\cS}_c^{i+1}\subset B_{2r_c^i}(\tilde{\cS}_c^i)\, ,
\end{align}
 where 
\begin{align}
B_{2r_c^i}(\tilde{\cS}_c^i):=  \cup_c B_{2r_c^i}(x_c^i)\, .
\end{align}
 Define the set limit by:
\begin{align}\label{e:cS_c_setlimit}
	\tilde{\cS}_c:=  \bigcap_{i\ge 1}\bigcup_{j\ge i}B_{2r_c^i}(\tilde{\cS}_c^i).
\end{align}
It is clear from the construction that $\tilde{\cS}_c\subset S(X^n)$. 
Set $\delta_i:=2\max_c r_c^i$. Since $\tilde{\cS}_c\subset B_{2r_c^i}(\tilde{\cS}_c^i)$, 
we have by the definition of Hausdorff measure,
\begin{align}
	H^k_{\delta_i}(\tilde{\cS}_c):=  \inf\Big\{\sum_{\alpha} r_{\alpha}^k,~~~\text{ where $r_{\alpha}\le \delta_i$ and } \tilde{\cS}_c\subset \cup_{\alpha}B_{r_{\alpha}}(y_\alpha) \Big\}\le 2^k\sum_c (r_c^i)^k\le 2^k\Big(\bar C(n,\rv)\epsilon\Big)^i,
\end{align}
which implies $\cH^k(\tilde{\cS}_c)=0$ . 
\vskip2mm

Set $	\tilde{\cS}^k:= \tilde{\cS}_c\cup \bigcup_{i\ge 1}\tilde{\cS}_i^k.$
Then $\cH^k(\tilde{\cS}^k)=0$ and $\tilde{\cS}^k\subset S(X)$. 
\vskip2mm

Fix $\epsilon=\epsilon(n,\rv)$ and $\gamma=\gamma(n,\rv)$ such that $\bar C(n,\rv)\epsilon\le 1/10$.  Then by taking the limit as $i\to\infty$, we will arrive at the decomposition 
\begin{align}
\label{e:B1_limit_covering}
	B_1(p)\subset \tilde\cS^k\cup \bigcup_a (\cC_{0,a}\cup \cN_a\cap B_{r_a}(x_a))\cup \bigcup_b B_{r_b}(x_b)\cup \bigcup_e B_{r_e}(x_e)\, .
\end{align}
To see \eqref{e:B1_limit_covering},  if $y\in B_1(p)\setminus \tilde{\cS}^k$ then by \eqref{e:cS_c_setlimit} we must have $y\notin B_{2r_c^i}(\tilde{\cS}_c^i)$ for some $i$ which in particular implies by \eqref{e:B1_strategy_i} that $y$ belongs to the set on the right-hand side of \eqref{e:B1_limit_covering}. 
\vskip2mm

By letting $i\to \infty$ we have by \eqref{e:content_bea} the following content estimates 
\begin{align}
	&\sum_a r_a^{k}\le 2\, ,\\
       &\sum_b r_b^k+\sum_e r_e^k+\sum_a\cH^k(\cC_{0,a})
	\le  C(n,\rv)\, .
\end{align}
This  completes the proof of Lemma \ref{l:n9}.
\end{proof}
\vskip2mm

Now we can prove the inductive decomposition of Proposition \ref{p:inductive_decomposition}.
\begin{proof}[Proof of Proposition \ref{p:inductive_decomposition}]
	For any $\eta$ and $\delta\le \delta(n,\rv,\eta)$, fix $\xi=\xi(n,\rv,\delta,\eta)$ as in Lemma  
\ref{l:n9}.	
	Consider a Vitali covering $\{B_{\xi^2}(x_f)\}$ of $B_2(p)$ such that $B_{\xi^2/5}(x_f)$ are disjoint. Thus, by volume comparison, the number of such balls is bounded by a constant $L(n,\rv,\xi)$. By scaling the ball $B_{\xi^2}(x_f)$ to a unit ball, we arrive at a  unit ball satisfying all the condition of Lemma \ref{l:n9} with 
$$
\bar V_f:= \inf_{y\in B_{4\xi^2}(x_f)}\, , \qquad\cV_{\xi}(y)\ge  \inf_{y\in B_4(p)}\cV_1(y):= \bar V
$$.  If apply the decomposition of Lemma \ref{l:n9} to each ball $B_{\xi^2}(x_f)$ in order, we arrive at the covering
	\begin{align}
		B_1(p)\subset \tilde{\cS}^k\cup\bigcup_a (\cC_{0,a}\cup \cN_a\cap B_{r_a}(x_a))\cup \bigcup_b B_{r_b}(x_b)\cup \bigcup_e B_{r_e}(x_e)\, ,
	\end{align}
	with $r_a,r_b,r_e\le \xi^2$ and 
$$
\sum_a r_a^k+\sum_b r_b^k+\sum_e r_e^k\le C(n,\rv)L(n,\rv,\xi)\le C(n,\rv,\delta)
$$ 
and 
$$
\cH^k(\tilde{\cS}^k)=0\, .
$$ 
\vskip2mm	
	
	To finish the proof, it suffices to recover each $e$-ball by $v$-balls.  
In fact, for each $e$-ball $B_{r_e}(x_e)\subset B_{2\xi^2}(x_f)$, consider the 
Vitali covering $\{B_{\xi r_e}(x_e^j)\}$ of $B_{r_e}(x_e)$ with $x_e^j\in B_{r_e}(x_e)$ 
such that $B_{\xi r_e/5}(x_e^j)$ are disjoint. We will show that $B_{\xi r_e}(x_e^j)$ are 
$v$-ball for $v_0=\xi$\footnote{Recall $v$-balls are defined with respect to the background parameter $v_0$}. 
\vskip2mm
	
	Since $\cF_{r_e,\xi}(x_e):=
 \{y\in B_{4r_e}(x_e): \cV_{\xi r_e}(y)\le \Bar V_f+\xi\}=\emptyset$, 
we have for all $y\in B_{4r_e}(x_e)$ that $\cV_{\xi r_e}(y)\ge \Bar V_f+\xi\ge \Bar V+\xi$. 
On the other hand, we have $B_{4\xi r_e}(x_e^j)\subset B_{2r_e}(x_e)$.
 Therefore, $\inf_{y\in B_{4\xi r_e}(x_e^j)}\cV_{\xi r_e}(y)\ge \Bar V+\xi$. 
Setting $v_0:=\xi$ we have that $B_{\xi r_e}(x_e^j)$ is a $v$-ball as in 
Proposition \ref{p:inductive_decomposition}. The content estimate for $v$-balls
follows easily from the content estimate of $e$-balls and the Vitali covering.  
This completes the proof of  Proposition \ref{p:inductive_decomposition}.
\end{proof}
\vskip2mm

\subsection{Proof of $d$-ball covering Proposition \ref{p:d_ball_decomposition}}
\label{ss:d}


\begin{proof}[Proof of Proposition \ref{p:d_ball_decomposition}]
	For any $0<\epsilon,\gamma\le 1/10$, let us first consider a Vitali covering $\{B_{\gamma}(x_f^1),~x_f^1\in B_1(p)\}$ of $B_1(p)$ such that $B_{\gamma/5}(x_f^1)$ are disjoint.
	Let us seperate $\{B_{\gamma}(x_f^1)\}$ into $b$-balls, $c$-balls, $d$-balls and $e$-ball's from subsection
 \ref{ss:notation}:
\begin{align}
	B_1(p)\subseteq \bigcup_{b=1}^{N_b^1}B_{\gamma}(x_b^1)\cup \bigcup_{c=1}^{N_c^1}B_{\gamma}(x_c^1)
\cup \bigcup_{d=1}^{N_d^1}B_{\gamma}(x_d^1)\cup \bigcup_{e=1}^{N_e^1}B_{\gamma}(x_e^1)\, ,
\end{align}
where $B_{2\gamma}(x_b^1)$ is $(k+1,2\eta)$-symmetric, $B_{2\gamma}(x_c^1)$ is not $(k+1,2\eta)$-symmetric
 and satisfies $\Vol(B_{\gamma^2}\cF_{\gamma,\xi}(x_c^1))\ge \epsilon \gamma^{n-k}\gamma^n$, and $\Vol(B_{\gamma^2}\cF_{\gamma,\xi}(x_d^1))< \epsilon \gamma^{n-k}\gamma^n$, and $\cF_{\gamma,\xi}(x_e^1)=\emptyset$ with 
$\cF_{r,\xi}(x):=  \{y\in B_{4r}(x): \cV_{r\xi}(y)\le \bar V+\xi\}$ and $\bar V:=  \inf_{y\in B_4(p)}\cV_{\xi^{-1}}(y)$.  
By volume doubling we have 
	\begin{align}
		\sum_{b=1}^{N_b^1} \gamma^{k}+\sum_{e=1}^{N_e^1}\gamma^k\le C(n,\gamma)\gamma^k
\le C(n,\gamma)\, .
	\end{align}
	
	Let us prove a slightly more refined content estimate for the $c$-balls and $d$-balls.  Since 
$B_{2\gamma}(x_c^1),B_{2\gamma}(x_d^1)\subset B_2(p)$, we have $\cF_{\gamma,\xi}(x_c),\cF_{\gamma,\xi}(x_d)\subset \cF_{1,\xi}(p)$ where we should notice that in our setting a $d$-ball is not an $e$-ball. The following content estimates for $c$-balls and
 $d$-balls depend only on the fact that $\cF_{\gamma,\xi}(x_c)$ and $\cF_{\gamma,\xi}(x_d)$ are nonempty. 
We will only discuss the content estimate for $d$-balls, since the case of $c$-balls is no different from this one.
  Indeed, for each ball $B_{\gamma}(x_d^1)$, there exists a point $y_d^1\in B_{2\gamma}(x_d^1)\cap \cF_{1,\xi}(p)$ 
which in particular implies $B_{\gamma}(y_d^1)\subset B_{\gamma}\cF_{1,\xi}(p)$. The ball $B_{\gamma}(y_d^1)$ may
 overlap with other balls $B_{\gamma}(y_{d'}^1)$. Due to the Vitali covering property and volume doubling, the balls
 overlap at most $C(n)$ times.   By standard covering argument and noting 
$\Vol(B_{\gamma}(\cF_{1,\xi}(p)))< \epsilon \gamma^{n-k}$, we can now conclude that 
	\begin{align}
		\sum_{c=1}^{N_c^1} \gamma^{k}+\sum_{d=1}^{N_d^1}\gamma^k \le C(n,\rv)\epsilon\, .
	\end{align}
	For each $d$-ball $B_{\gamma}(x_d^1)$, let us repeat this decomposition. We get 
	\begin{align}
		\bigcup_{d=1}^{N_d^1}B_{\gamma}(x_d^1)\subset \bigcup_{b=1}^{N_b^2}B_{\gamma^2}(x_b^2)\cup \bigcup_{c=1}^{N_c^2}B_{\gamma^2}(x_c^2)\cup \bigcup_{d=1}^{N_d^2}B_{\gamma^2}(x_d^2)\cup \bigcup_{e=1}^{N_e^2}B_{\gamma^2}(x_e^2)\, .
	\end{align}
	Furthermore, by the same arguments as above we have the content estimates
	\begin{align}
		\sum_{b=1}^{N_b^2} \gamma^{2k}+\sum_{e=1}^{N_e^2}\gamma^{2k}\le C(n,\gamma)\sum_{d=1}^{N_d^1}\gamma^k\le C(n,\gamma)C(n,\rv)\epsilon\\
		\sum_{d=1}^{N_d^2} \gamma^{2k}+\sum_{c=1}^{N_c^2}\gamma^{2k}\le C(n,\rv)\epsilon\sum_{d=1}^{N_d^1}\gamma^k\le \Big(C(n,\rv)\epsilon\Big)^2\, .
			\end{align}
	Therefore, we arrive at the decomposition 
	\begin{align}
		B_1(p)\subset \bigcup_{d=1}^{N_d^2}B_{\gamma^2}(x_d^2)\cup \bigcup_{j=1}^2\bigcup_{b=1}^{N_b^j}B_{\gamma^j}(x_b^j)\cup \bigcup_{j=1}^2\bigcup_{c=1}^{N_c^j}B_{\gamma^j}(x_c^j)\cup \bigcup_{j=1}^2\bigcup_{e=1}^{N_e^j}B_{\gamma^j}(x_e^j)\, ,
	\end{align}
	with content estimates
	\begin{align}
		&\sum_{d=1}^{N_d^2}\gamma^{2k}\le \Big(C(n,\rv)\epsilon\Big)^2\, ,\\
		&\sum_{j=1}^2\sum_{b=1}^{N_b^j}\gamma^{jk}+\sum_{j=1}^2\sum_{e=1}^{N_e^j}
\gamma^{jk}\le  C(n,\gamma)+C(n,\gamma)C(n,\rv)\epsilon\le C(n,\gamma)\Big(1+C(n,\rv)\epsilon\Big)\, ,\\
		&\sum_{j=1}^2\sum_{c=1}^{N_c^j}\gamma^{jk}\le C(n,\rv)\epsilon+\Big(C(n,\rv)\epsilon)\Big)^2\, .
	\end{align}
	If we repeat this $d$-ball decomposition for each $B_{\gamma^2}(x_d^2)$,
 then after $i$ iterations of the decomposition we get 
	\begin{align}
		B_1(p)\subset \bigcup_{d=1}^{N_d^i}B_{\gamma^2}(x_d^2)\cup
 \bigcup_{j=1}^i\bigcup_{b=1}^{N_b^j}B_{\gamma^j}(x_b^j)\cup
 \bigcup_{j=1}^i\bigcup_{c=1}^{N_c^j}B_{\gamma^j}(x_c^j)\cup 
\bigcup_{j=1}^i\bigcup_{e=1}^{N_e^j}B_{\gamma^j}(x_e^j)\, ,
	\end{align}
	with content estimates
		\begin{align}
		&\sum_{d=1}^{N_d^i}\gamma^{ik}\le \Big(C(n,\rv)\epsilon\Big)^i\, ,\\
		&\sum_{j=1}^i\sum_{b=1}^{N_b^j}\gamma^{jk}+\sum_{j=1}^i\sum_{e=1}^{N_e^j}\gamma^{jk}\le  C(n,\gamma)\sum_{j=0}^{i-1}\Big(C(n,\rv)\epsilon\Big)^j\, ,\\ \label{e:summ_Ceps}
		&\sum_{j=1}^i\sum_{c=1}^{N_c^j}\gamma^{jk}\le \sum_{j=1}^i\Big(C(n,\rv)\epsilon)\Big)^j\, .
	\end{align}
	Let $\epsilon\le \epsilon(n,\rv)$ and $\gamma\le \gamma(n,\rv,\epsilon)$ be such that $\gamma$ and $\epsilon$ satisfies Theorem \ref{t:content_splitting} and $C(n,\rv)\epsilon\le 1/10$. 
\vskip2mm

Consider the discrete set $\tilde{\cS}_i^k:=
 \{x_d^i\}$. By construction, we have that $\tilde{\cS}_{i+1}^k\subset B_{\gamma^i}\tilde{\cS}_i^k$. Moreover, 
	\begin{align}\label{e:content_cS_dball}
		\Vol(B_{\gamma^i}\tilde{\cS}_i^k)\le \sum_{d=1}^{N_d^i}\Vol(B_{\gamma^i}(x_d^i))\le C(n)\sum_{d=1}^{N_d^i}\gamma^{in}\le C(n)\Big(C(n,\rv)\epsilon\Big)^i\gamma^{i(n-k)}\, .
	\end{align}
	Letting $i\to \infty$, we can define the Hausdorff limit of $\tilde{\cS}_i^k$ by 
		$\tilde{\cS}_d^k:=\lim_{i\to \infty}\tilde{\cS}_i^k.$
	By \eqref{e:content_cS_dball} and $\tilde{\cS}_{i+1}^k\subset B_{\gamma^i}\tilde{\cS}_i^k$, we have for any $i\ge 1$
	\begin{align}
		\Vol(B_{\gamma^i}\tilde{\cS}_d^k)\le C(n)10^{-i}\gamma^{i(n-k)},
	\end{align}
	which implies $\cH^k(\tilde{\cS}_d^k)=0$. Moreover, we have $\tilde{\cS}_d^k\subset S(X^n)$. To see this,  assume there exists $x\in \tilde{S}_d^k\setminus S(X)$, this implies for any $\epsilon'>0$ there exists $r_{x,\epsilon'}>0$ such that $d_{GH}(B_{r_{x,\epsilon'}}(x), B_{r_{x,\epsilon'}}(0^n))\le \epsilon' r_{x,\epsilon'}$. On the other hand, since $x\in\tilde{\cS}^k_d$ we have $\cF_{r_{x,\epsilon'},\xi}(x)$ is nonempty. Hence, applying the volume convergence in \cite{Co1}  and \cite{Cheeger01} to $B_{r_{x,\epsilon'}}(x)$  would imply $\bar V+\xi \ge 1-\epsilon''$ providing $\epsilon'\le \epsilon'(n,\rv,\epsilon'')$. Therefore we arrive at $\bar V\ge 1-\xi$ which implies $B_2(p)\subset \cF_{1,\xi}(p)$. In particular $\Vol(B_\gamma(\cF_{1,\xi}(p)))\ge \Vol(B_2(p))\ge \rv>0$ which contradicts with the $d$-ball assumption if $\epsilon\le \rv$.
\vskip2mm
	
	On the other hand, since $C(n,\rv)\epsilon\le 1/10$, the content estimates \eqref{e:summ_Ceps} is finite. Therefore, we arrive at the desired decomposition. This completes the proof of Proposition \ref{p:d_ball_decomposition}.
\end{proof}
\vskip2mm

\subsection{Proof of the $c$-ball covering Proposition \ref{p:c_ball_decomposition}}
\label{ss:c}

In this subsection we prove Proposition \ref{p:c_ball_decomposition}, which
is concerned with the  decomposition of a $c$-ball.
We will construct a neck region on $B_1(p)$ which is GH-close to a ball in some some cone $\mathbb{R}^k\times C(Y)$. 
\vskip2mm

\begin{proof}[Proof of Proposition \ref{p:c_ball_decomposition}]
		Recall that in the definition of neck region
		$\tau=\tau_n=10^{-10n}\omega_n$.
Fix  $\epsilon>0$ and $\gamma\le \gamma(n,\rv,\epsilon)$ such that the cone-splitting based on $k$-content of Theorem \ref{t:content_splitting} holds. By Theorem \ref{t:content_splitting} we have that $B_{{\delta'}^{-1}}(q)$ is $(k,\delta'^2)$-symmetric for some $q\in B_4(p)$. In particular, $B_{\delta'^{-1}}(q)$ is $\delta'^2$ close to a metric cone $\mathbb{R}^k\times C(Y)$. 
\vskip2mm

Denote the $\delta'^2$-GH map $\iota_{q,1}: B_{\delta'^{-1}}(0^k,y_c)\to B_{\delta'^{-1}}(q)$ and consider the approximate singular set $\cL_{q,1}:=  \iota_{q,1}(\mathbb{R}^k\times \{y_c\})\cap B_{4}(p)$. Choose a Vitali covering $\{B_{\gamma\tau^2}(x_f^1),~~x_f^1\in \cL_{q,1}\}$ of $\cL_{q,1}$ such that $B_{\gamma\tau^3}(x_f^1)$ are disjoint. 
\vskip2mm

We denote the balls $B_{2\gamma}(x_f^1)$ by:
\begin{itemize}
\item[1)]
$\tilde{b}$-balls if $B_{2\gamma}(x_f^1)$ is $(k+1,3\eta/2)$-symmetric, 
\vskip2mm

\item[2)]$\tilde{c}$-balls if $B_{2\gamma}(x_f^1)$ is not $(k+1,3\eta/2)$-symmetric and  $\Vol(B_{\gamma\cdot \gamma}\cF_{\gamma,\xi}(x_f^1))\ge \epsilon \gamma^{n-k}\gamma^n$,
\vskip2mm

\item[3)] $\tilde{d}$-balls if $\Vol(B_{\gamma\cdot \gamma}\cF_{\gamma,\xi}(x_f^1))< \epsilon \gamma^{n-k}\gamma^n$. 
\end{itemize}
\vskip2mm

We have 
	\begin{align}
		\cL_{q,1}\subset \bigcup_b B_{\tau^2\gamma}(\tilde{x}_b^1)\cup \bigcup_c B_{\tau^2\gamma}(\tilde{x}_c^1)\cup \bigcup_d B_{\tau^2\gamma}(\tilde{x}_d^1)\, .
	\end{align}
	Therefore we arrive at an approximate neck region $\tilde{\cN}^1$:
	\begin{align}
		\tilde{\cN}^1:=  B_2(p)\setminus \Big(\bigcup_b B_{\tau^2\gamma}(\tilde{x}_b^1)\cup \bigcup_c B_{\tau^2\gamma}(\tilde{x}_c^1)\cup \bigcup_d B_{\tau^2\gamma}(\tilde{x}_d^1)\Big)\, .
	\end{align}

The approximate neck 
 $\tilde{\cN}^1$ is not yet the one we are looking for, since $c$-ball content is not small.  
Therefore, we continue to refine the construction by redecomposing the $\tilde{c}$-balls in the decomposition.  Once again, by applying content splitting of Theorem \ref{t:content_splitting} to each $\tilde{c}$-ball, we have the approximate singular set $\cL_{\tilde{x}_c^1,\gamma}:=  \iota_{\tilde{x}_c^1,\gamma}(\mathbb{R}^k\times \{y_c\})\cap B_{4\gamma}(\tilde{x}_c^1)$ associated with a $\delta'^2 \gamma$-GH map $\iota_{\tilde{x}_c^1,\gamma}: B_{\gamma \delta'^{-1}}(0^k,y_c)\to B_{\gamma \delta'^{-1}}(\tilde{x}_c^1)$. 
\vskip2mm

Consider the Vitali covering $\{B_{\tau^2\gamma^2}(x_f^2)\}$ of 
	\begin{align}\label{e:tau3_cL}
		\bigcup_c\cL_{\tilde{x}_c^1,\gamma}\setminus \Big(\bigcup_b B_{\tau^3\gamma}(\tilde{x}_b^1)\cup \bigcup_d B_{\tau^3\gamma}(\tilde{x}_d^1)\Big)\, ,
	\end{align}
 such that $B_{\tau^4\gamma^2}(x_f^2)$ are disjoint and 
 \begin{align}
 	x_f^2\in \bigcup_c\cL_{\tilde{x}_c^1,\gamma}\setminus\Big(\bigcup_b B_{\tau^3\gamma}(\tilde{x}_b^1)\cup \bigcup_d B_{\tau^3\gamma}(\tilde{x}_d^1)\Big)\, .
 \end{align}
  In particular, if $\gamma\le 10^{-10}$ the balls $B_{\tau^4\gamma^2}(x_f^2)$ are also mutually disjoint with $B_{\tau^4\gamma}(\tilde{x}_b^1)$ and $B_{\tau^4\gamma}(\tilde{x}_d^1)$. 
 \vskip2mm
  
  We denote the ball $B_{2\gamma^2}(x_f^2)$ by $B_{2\gamma^2}(\tilde{x}_b^2)$, $B_{2\gamma^2}(\tilde{x}_c^2)$ and $B_{2\gamma^2}(\tilde{x}_d^2)$ according to the same scheme as above. Thus, we have 
  \begin{align}
  	\tilde{\cN}^2:=  B_2(p)\setminus  \left(\bigcup_c \bar B_{\gamma^i}(\tilde{x}_c^i)\cup \bigcup_{1\le j\le 2}\Big( \bigcup_b \bar B_{\gamma^j}(\tilde{x}_b^j)\cup \bigcup_d \bar B_{\gamma^j}(\tilde{x}_d^j)\Big)\right)\,.
  \end{align}

  After repeating this decomposition $i$ times to each $\tilde{c}$-ball,
  we get an approximate neck region given by
	\begin{align}
		\tilde{\cN}^i:=  B_2(p)\setminus  \left(\bigcup_c \bar B_{\gamma^i}(\tilde{x}_c^i)\cup \bigcup_{1\le j\le i}\Big( \bigcup_b \bar B_{\gamma^j}(\tilde{x}_b^j)\cup \bigcup_d \bar B_{\gamma^j}(\tilde{x}_d^j)\Big)\right)\,.
	\end{align}
	
Set 
$$
\tilde{\cC}_c^i:=\{\tilde{x}_c^i\}\, .
$$
	 By construction we have 
$$
\tilde{\cC}_c^{i+1}\subset B_{\gamma^i}(\tilde{\cC}_c^i)\, .
$$
 Therefore, we can define the Hausdorff limit 
	\begin{align}
		\tilde{\cC}_0:= \lim_{i\to\infty}\tilde{\cC}_c^{i}\, .
	\end{align}
By letting $i\to\infty$, we get
	\begin{align}
		\tilde{\cN}:= B_2(p)\setminus \left(\tilde{\cC}_0\cup
 \bigcup_b \bar B_{\tilde{r}_b}(\tilde{x}_b)\cup \bigcup_d \bar B_{\tilde{r}_d}(\tilde{x}_d)\right)\, .
	\end{align}
	
Set
 $$
 \tilde{\cC}_+:= \{\tilde{x}_d,\tilde{x}_b\}\, .
 $$
 By  construction, the balls $B_{\tau^4\tilde{r}_{\tilde{x}}}(\tilde{x})$ are disjoint 
for $\tilde{x}\in \tilde{\cC}_+$ and in addtion,
 \begin{align}\label{e:tilde_x_r_y}
 	\tilde{x}\notin \bigcup_{\tilde{y}\in \tilde{\cC}_{+}, \tilde{r}_{\tilde{y}}>
\tilde{r}_{\tilde{x}}}B_{\tau^3 \tilde{r}_{\tilde{y}}}(\tilde{y}).
 	 \end{align}

 Moreover, $\tilde{\cC}:=\tilde{\cC}_+\cup \tilde{\cC}_0$
 is a closed set. 
\vskip2mm

  It is easy to check that $\tilde{\cN}$ satisfies all the condition of a 
  $(k,\delta',\eta)$-neck except (n5) i.e. $\Lip\, \tilde{r}_x\leq \delta'$.
  Therefore, our construction requires some additional refinement. 
  \vskip2mm

    In the following construction, in which we refine our covering in order
 to get the desired $(k,\delta,\eta)$-neck,
    we will use $\tilde{x}_b,\tilde{x}_d\in\tilde{\cC}_+$ to denote the center 
of a $\tilde{b}$-ball and $\tilde{d}$-ball of $\tilde{\cN}$, and the associated radius $\tilde{r}_{\tilde{x}_b}$ and 
 $\tilde{r}_{\tilde{x}_d}$, respectively. 
\vskip2mm
	

	By the construction of $B_{\tilde{r}_{\tilde{x}}}(\tilde{x})$ with $\tilde{x}\in \tilde{\cC}$, there exists some $\tilde{c}$-ball $B_{\gamma^{-1}\tilde{r}_{\tilde{x}}}(\tilde{x}_c)$ which is $(k,\delta'^2)$-symmetric with respect to $\cL_{\tilde{x}_c,\gamma^{-1}\tilde{r}_{\tilde{x}}}$ such that $B_{\tilde{r}_{\tilde{x}}}(\tilde{x})\subset B_{2\gamma^{-1}\tilde{r}_{\tilde{x}}}(\tilde{x}_c)$ and $\tilde{x}\in \cL_{\tilde{x}_c,\gamma^{-1}\tilde{r}_{\tilde{x}}}$ . It is easy to see that for any $\gamma >r\ge \tilde{r}_{\tilde{x}}$ the ball $B_{\gamma^{-1}r}(\tilde{x}_c)$ is also $(k,\delta'^2)$-symmetric with respect to a set $\cL_{\tilde{x}_c,\gamma^{-1}r}$. This follows from the volume pinching estimate 
$$
|\cV_{\xi \tilde{r}_{\tilde{x}}}(\tilde{x}_c)-\cV_{\xi^{-1}}(\tilde{x}_c)|\le \xi\, ,
$$ 
and the fact that $B_{\gamma^{-1}r}(\tilde{x}_c)$ is $(k,\delta'^3)$-splitting since this ball is contained in a $(k,\delta'^3)$-symmetric ball with comparable radius. 
\vskip2mm

\noindent
{\bf Notation.}	
	For our convenience sake we will make the following notion:
\begin{align}
\label{e:associated_x}
\text{ For any $\tilde{x}\in\tilde{\cC}$ denote $\tilde{x}_c$ to be the associated center of $\tilde{c}$-ball satisfying the above properties.}
\end{align}

To refine the approximate neck $\tilde{\cN}$, let us build a good approximate singular set $\tilde{\cS}$. 
Indeed we define $\tilde{\cS}$ to be a subset of 
$\cup_{\tilde{x}\in\tilde{\cC}} B_{ \tau^3\tilde{r}_{\tilde{x}}}(\tilde{x})$ such that $y\in \tilde{\cS}$ if and only if one of the following holds
\begin{itemize}\label{item_cS}
	\item[(1)] $y\in \cL_{\tilde{x}_c,\gamma^{-1}\tilde{r}_{\tilde{x}}}$ with $d(y,\tilde{\cC})=d(y,\tilde{x})\le \tilde{r}_{\tilde{x}}$,
	\item[(2)]$y\in \cL_{\tilde{x}_c,\gamma^{-1}r}$ with $r :=  d(y,\tilde{\cC})=d(y,\tilde{x})>\tilde{r}_{\tilde{x}}$. 
 \end{itemize}
\vskip2mm

 Now we define a radius function on $\tilde{\cS}$ such that 
 \begin{align} \nonumber
 	 &\text{$r_x:=  \delta^2\tau^4\tilde{r}_{\tilde{x}}$ if $d(x,\tilde{\cC})=d(x,\tilde{x})\le \tau^4 \tilde{r}_{\tilde{x}}$,}\\ \label{e:definition_ry}
 	 &\text{$r_x:=  \delta^2 d(x,\tilde{\cC})$ otherwise.}
 \end{align}
 It is obvious that $|\Lip r_x|\le \delta^2$ and $\tilde{\cC}\subset \tilde{\cS}$. Choose a maximal disjoint collection $\{B_{\tau^2 r_x}(x),x\in \tilde{\cS}\}$ such that the center set $\cC_{+}\subset \tilde{\cS}$ containing $\tilde{\cC}$.  This allows us to build a neck region
\begin{align}
	\cN:=  B_2(p)\setminus \Big(\tilde{\cC}_0\cup \bigcup_{x\in\cC_{+}}\bar B_{r_x}(x)\Big).
\end{align}
\vskip2mm



\noindent
{\bf Notation.} In order to make the notations consistent, we put 
$\cC_0:=  \tilde{\cC}_0$ and $\cC:=\cC_{+}\cup \cC_0$.
\vskip2mm	
	
Next, we will check that $\cN$ is a $(k,\delta,\eta)$-neck if $\delta'\le \delta'(\delta,\gamma,\eta)$ small. 
\vskip2mm

The Lipschitz condition $(n5)$ and the Vitali condition $(n1)$ in the neck region are satisfied by the construction. If $\delta'\le \delta'(\delta,\gamma,\eta)$,  let us check the volume ratio condition (n2). In fact,  for any $x\in\cC$, let $\tilde{x}\in\tilde{\cC}$ be such that $d(x,\tilde{\cC})=d(x,\tilde{x})$. Denote $\tilde{x}_c$ the associated center point of $\tilde{c}$-ball such that $B_{\tilde{r}_{\tilde{x}}}(\tilde{x})\subset B_{\gamma^{-1}\tilde{r}_{\tilde{x}}}(\tilde{x}_c)$ .  For $\delta'\le \delta'(n,\rv,\delta)$ small and $y\in B_{\delta'r_x}\cL_{\tilde{x}_c,\delta^{-3}r_x}$, we always have $|\cV_{\delta r_x}(x)-\cV_{\delta r_x}(y)|\le \delta^{100}$ since $B_{r_x}(x)$ and $B_{r_x}(y)$ are $\delta'r_x$-close to the same cone at scale $r_x$.  On the other hand, by the definition of a $\tilde{c}$-ball, there exists 
 $$
 y\in  B_{\delta'r_x}\cL_{\tilde{x}_c,\delta^{-3}r_x}\cap \cF_{\delta^{-3}r_x,\xi}(\tilde{x}_c)\, .
$$ 
 This implies $|\cV_{\xi \delta^{-3}r_x}(y)-\bar V|\le \xi$. Therefore, if $\xi\le \delta^{20}$ we conclude that $|\cV_{\delta r_x}(x)-\bar V|\le \delta^{15}$.  By \eqref{e:monotone}, the monotonicity of the volume ratio, 
 we finally get 
 $$
 |\cV_{\delta r_x}(x)-\cV_{\delta^{-1}}(x)|\le \delta^{10}\, .
 $$
 Thus, the volume ratio condition (n2) is satisfied.
\vskip2mm

	 Next, note that if $\delta'\le \delta'(\delta,\gamma,\eta)$, then by the definition of $\tilde{\cS}$  
we will have for $x\in\cC$ that $B_{r}(x)$ is not $(k+1,\eta)$-symmetric and $B_r(x)$ is $(k,\delta^2)$-symmetric 
for all $\delta^{-1}\ge r\ge r_x$. To see this, first note that $B_r(x)$ is $(0,\delta^3)$-symmetric by the volume 
pinching estimate (n2) for $\delta'\le \delta'(n,\rv,\eta,\delta)$. On the other hand, $B_r(x)$  is a subset 
of $B_{\gamma^{-1}r}(\tilde{x}_c)$ with comparable sizes, which is $(k,\delta'^2)$-symmetric with respect 
to $\cL_{\tilde{x}_c,\gamma^{-1}r}$ but not $(k+1,3\eta/2)$-symmetric.
 From this we  conclude that $B_{r}(x)$ is not $(k+1,\eta)$-symmetric and $B_r(x)$ is $(k,\delta^2)$-symmetric.  
Hence we prove the condition (n3).
\vskip2mm
	
The covering condition (n4), which says that the approximate singular set $\cL_{x,r}$ with $r\ge r_x$ is
 covered by $B_{\tau r}(\cC)$, is satisfied by the construction of $\tilde{\cN}$ and $\cN$. To see this, for
 each $x\in \cC$, denote the associated $\tilde{x}\in \tilde{\cC}$ with $d(x,\tilde{\cC})=d(\tilde{x},x)$. 
Moreover, denote $\tilde{x}_c$ the associated center point of $\tilde{c}$-ball such that 
$B_{\tilde{r}_{\tilde{x}}}(\tilde{x})\subset B_{\gamma^{-1}\tilde{r}_{\tilde{x}}}(\tilde{x}_c)$. 
Since $B_{\gamma^{-1}r}(\tilde{x}_c)$ is a $\tilde{c}$-ball, by the cone-splitting Theorem 
\ref{t:cone_splitting} and since $B_{\gamma^{-1}r}(\tilde{x}_c)$ is not $(k+1,3\eta/2)$-symmetric, 
we have $\cL_{x,r}\subset B_{\tau r/4}(\cL_{\tilde{x}_c,\gamma^{-1}r})$.

 On the other hand, by the construction of the approximate neck $\tilde{\cN}$, we have that 
$\cL_{\tilde{x}_c,\gamma^{-1}r}\subset B_{\tau r/4}(\tilde{\cC})$. Noting from the construction
 of $\cC$ that $\tilde{\cC}\subset \cC$, we arrive at 
\begin{align}
\cL_{x,r}\subset B_{\tau r/4}\cL_{\tilde{x}_c,\gamma^{-1}r}\subset B_{\tau r/2}(\tilde{\cC})\subset B_{\tau r/2}(\cC),
\end{align}
which proves the condition (n4).  Therefore, we have shown that if $\xi\le \xi(n,\rv,\gamma,\delta,\eta)$, then $\cN$ is a $(k,\delta,\eta)$-neck.
 \vskip2mm

	We now focus on the content estimates of Proposition \ref{p:c_ball_decomposition}. 
\vskip2mm

\noindent
{\bf Notation.}	
Denote the ball $B_{r_x}(x)$ with $x\in\cC$ by $B_{r_b}(x_b)$, $B_{r_c}(x_c)$, $B_{r_d}(x_d)$ and $B_{r_e}(x_e)$, where $B_{2r_b}(x_b)$ is $(k+1,2\eta)$-symmetric, $B_{2r_c}(x_c)$ is not $(k+1,2\eta)$-symmetric and $\Vol(B_{\gamma r_c}\cF_{r_c,\xi}(x_c))\ge \epsilon \gamma^{n-k}r_c^n$, $B_{2r_d}(x_d)$ satisfies $\Vol(B_{\gamma r_d}\cF_{r_d,\xi}(x_d))< \epsilon \gamma^{n-k}r_d^n$ and $B_{2r_e}(x_e)$ satisfies $\cF_{r_e,\xi}(x_e)=\emptyset$. 
\vskip2mm
	
	The content estimates rely on the neck structure of Theorem \ref{t:neck_region2}:
\begin{align}
\label{e:content9}
		\cH^k({\cC}_0\cap B_{15/8})+\sum_{x_b\in B_{15/8}}r_b^{k}+\sum_{x_c\in B_{15/8}} r_c^k+\sum_{x_d\in B_{15/8}} r_d^k+\sum_{x_e\in B_{15/8}} r_e^k\le C(n)\, .
	\end{align}
	In order to finish the proof, it suffices to show the content of $c$-balls is small. This is reasonable since the approximate neck $\tilde{\cN}$ doesn't contain any $\tilde{c}$-ball's at all.  Thus, we need to verify that our process of going from $\tilde \cN$ to $\cN$ did not create too many $c$-balls.
\vskip2mm

	Denote the center of the $c$-balls $B_{r_c}(x_c)$ by a subset $\cC_c\subset\cC\cap B_{3/2}(p)$. From the construction of the approximate neck $\tilde{\cN}$ and the definition of $\tilde{\cS}$ it follows $\tilde{\cS}\subset \cup_{\tilde{x}\in \tilde{\cC}}B_{\tau \tilde{r}_{\tilde{x}}}(\tilde{x})$. To consider $\mu(\cC_c)$ \footnote{As usual, let $\mu=\sum_{x\in\cC}r_x^k\delta_x+\cH^k|_{{\cC}_0}$ is the packing measure associated with the $(k,\delta,\eta)$-neck region $\cN$.}we will restrict to each $B_{\tilde{r}_{\tilde{x}}}(\tilde{x})$ with $\tilde{x}\in \tilde{\cC}$. Let us first consider the content of $\cC_c$ in $B_{\tilde{r}_d}(\tilde{x}_d)$. Since $\cF_{\tilde{r}_d,\xi}(\tilde{x}_d)$ has small volume  we have the following lemma:
\vskip2mm

\begin{lemma}
\label{l:claim81}	
$$\mu\Big(\cC_c\cap \bigcup_{\tilde{x}_d\in\tilde{\cC}}B_{3\tilde{r}_d/2}(\tilde{x}_d)\Big)\le C(n,\rv)\epsilon \, ,
$$ 
\end{lemma}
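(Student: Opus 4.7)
\textbf{Proof plan for Lemma \ref{l:claim81}.} The plan is to establish the scale-by-scale estimate
\begin{align*}
\mu\!\left(\cC_c \cap B_{3\tilde r_d/2}(\tilde x_d)\right) \le C(n,\rv)\,\epsilon\, \tilde r_d^{\,k}
\end{align*}
for each individual $\tilde d$-ball, and then sum against the $k$-content bound $\sum_{\tilde x_d\in\tilde{\cC}} \tilde r_d^{\,k}\le C(n)$ already available from \eqref{e:content9}. The overall strategy is to use the opposing volume conditions built into the $c$- and $\tilde d$-ball definitions: a $c$-ball demands a definite amount of volume pinching at its own small scale, while a $\tilde d$-ball forbids much volume pinching at the ambient larger scale. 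The Ahlfors regularity of $\mu$ from Theorem \ref{t:neck_region2} converts this volume contrast into the desired $\mu$-content bound.

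First I would record the inclusion $\cF_{r_{x_c},\xi}(x_c)\subset \cF_{\tilde r_d,\xi}(\tilde x_d)$ for every $c$-ball center $x_c\in \cC_c\cap B_{3\tilde r_d/2}(\tilde x_d)$. The spatial part $B_{4r_{x_c}}(x_c)\subset B_{4\tilde r_d}(\tilde x_d)$ reduces to a bound $r_{x_c}\le C(n)\delta^2\tilde r_d$, and the volume-ratio part transfers by Bishop--Gromov since $\xi r_{x_c}\le \xi\tilde r_d$. The bound on $r_{x_c}$ is obtained by using the explicit radius formula \eqref{e:definition_ry} together with the Vitali property of $\tilde{\cC}_+$: if $\tilde x\in\tilde{\cC}$ is a point realizing $d(x_c,\tilde{\cC})$, then the disjointness of $B_{\tau^2\tilde r_{\tilde x}}(\tilde x)$ and $B_{\tau^2\tilde r_d}(\tilde x_d)$ forces $\tilde r_{\tilde x}\le C(n)\tilde r_d$, and then \eqref{e:definition_ry} yields $r_{x_c}\le C(n)\delta^2\tilde r_d$. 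Since $\delta\ll\gamma$ (both depend only on $n,\rv,\eta$), we obtain $4r_{x_c}\ll \gamma\tilde r_d$.

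Next I would combine this with the nonemptiness of $\cF_{r_{x_c},\xi}(x_c)$ (which follows from its $c$-ball volume lower bound) to place each $c$-ball center in the thickening
\begin{align*}
T \;:=\; B_{\gamma\tilde r_d}\!\left(\cF_{\tilde r_d,\xi}(\tilde x_d)\right).
\end{align*}
Indeed, picking any $y\in\cF_{r_{x_c},\xi}(x_c)\subset\cF_{\tilde r_d,\xi}(\tilde x_d)$ gives $d(x_c,y)\le 4r_{x_c}<\gamma\tilde r_d$, hence $x_c\in T$. The $\tilde d$-ball hypothesis then bounds the ambient volume by $\Vol(T)\le \epsilon\gamma^{n-k}\tilde r_d^{\,n}$, which is the single geometric input carrying the $\epsilon$ gain.

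The final step is a covering estimate: choose a Vitali covering of $T$ by balls $\{B_{\gamma\tilde r_d}(y_i)\}$ whose concentric fifths are disjoint, so that by noncollapsing and the bound on $\Vol(T)$ one has at most $N\le C(n,\rv)\,\epsilon\,\gamma^{-k}$ such balls. Invoking the Ahlfors regularity of the packing measure from Theorem \ref{t:neck_region2}, each satisfies $\mu(B_{\gamma\tilde r_d}(y_i))\le A(n)(\gamma\tilde r_d)^k$, and therefore
\begin{align*}
\mu\!\left(\cC_c\cap B_{3\tilde r_d/2}(\tilde x_d)\right)\;\le\; \mu(T)\;\le\; N\cdot A(n)(\gamma\tilde r_d)^k\;\le\; C(n,\rv)\,\epsilon\,\tilde r_d^{\,k}.
\end{align*}
Summing over $\tilde x_d\in\tilde{\cC}\cap B_{3/2}(p)$ and using $\sum\tilde r_d^{\,k}\le C(n)$ finishes the proof. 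The most delicate point in the argument is the pointwise radius bound $r_{x_c}\le C(n)\delta^2\tilde r_d$ from step one, since it is what guarantees that \emph{all} $c$-ball centers falling inside $B_{3\tilde r_d/2}(\tilde x_d)$ are captured by the small set $T$; once this bound is secured, the rest is a clean Ahlfors-regularity covering argument.
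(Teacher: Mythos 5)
Your overall strategy is the same as the paper's: prove a per-ball estimate $\mu(\cC_c\cap B_{3\tilde r_d/2}(\tilde x_d))\le C(n,\rv)\,\epsilon\,\tilde r_d^{\,k}$ by exploiting the volume contrast between the $c$-ball condition (definite pinching at scale $r_{x_c}$) and the $\tilde d$-ball condition (small volume of $B_{\gamma\tilde r_d}(\cF_{\tilde r_d,\xi}(\tilde x_d))$), convert volume into $\mu$-content via the Ahlfors regularity of Theorem \ref{t:neck_region2}, and then sum. Your first step (the bound $r_{x_c}\le C(n)\delta^2\tilde r_d$ from \eqref{e:definition_ry} and the resulting inclusion of every $c$-center into the thickened pinching set) is exactly what the paper does.

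There is, however, one step that does not quite work as written: the counting of the Vitali cover of $T=B_{\gamma\tilde r_d}(\cF_{\tilde r_d,\xi}(\tilde x_d))$. If the centers $y_i$ of your cover lie in $T$, the disjoint fifths $B_{\gamma\tilde r_d/5}(y_i)$ are contained only in $B_{(6/5)\gamma\tilde r_d}(\cF_{\tilde r_d,\xi}(\tilde x_d))$, and the $\tilde d$-ball hypothesis controls the volume of the $\gamma\tilde r_d$-neighborhood of $\cF$ only — not of any enlargement of it — so the bound $N\le C(n,\rv)\epsilon\gamma^{-k}$ does not follow directly. The paper avoids this by taking the maximal disjoint collection $\{B_{\gamma\tilde r_d/20}(x_i')\}$ centered at the $c$-ball centers themselves and checking that each such ball sits inside $B_{\gamma\tilde r_d/10}(y)\subset B_{\gamma\tilde r_d}(\cF_{\tilde r_d,\xi}(\tilde x_d))$ (using $d(x_i',y)\le 4r_{x_i'}\ll\gamma\tilde r_d$), so the disjoint balls really do lie in the set whose volume is small; alternatively you could center your disjoint balls at points of $\cF$ itself and cover $T$ by their doubles. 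A second, cosmetic point: the summation $\sum_{\tilde x_d}\tilde r_d^{\,k}\le C(n)$ is not literally \eqref{e:content9} (which concerns the balls of the final neck $\cN$, not the $\tilde d$-balls of the approximate neck $\tilde\cN$); it follows instead from the disjointness of the balls $B_{\tau^4\tilde r_d}(\tilde x_d)$ together with the lower Ahlfors bound $\mu(B_{\tau^4\tilde r_d}(\tilde x_d))\ge A(n)^{-1}(\tau^4\tilde r_d)^k$ and $\mu(B_{15/8}(p))\le C(n)$, which is how the paper closes the argument.
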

\begin{proof} We will see that  it suffices to prove that for each $\tilde{d}$-ball $B_{\tilde{r}_d}(\tilde{x}_d)$ 
with $\tilde{x}_d\in\tilde{\cC}$, we have
\begin{align}\label{e:cC_c_tilde_d}
	\mu\Big(\cC_c\cap B_{3\tilde{r}_d/2}(\tilde{x}_d)\Big)\le C(n,\rv)\epsilon\, 
\mu\Big(B_{\tau^4\tilde{r}_d}(\tilde{x}_d)\Big)\, .
\end{align}  In fact,  since $B_{\tau^4\tilde{r}_d}(\tilde{x}_d)$ are disjoint and $\mu$ is a doubling measure
 with $\tau=\tau_n$, we have by \eqref{e:cC_c_tilde_d} that
\begin{align}
	\mu\Big(\cC_c\cap \bigcup_{\tilde{x}_d\in\tilde{\cC}}B_{3\tilde{r}_d/2}(\tilde{x}_d)\Big)&
\le \sum_{\tilde{x}_d\in\tilde{\cC}}\mu\Big(\cC_c\cap B_{3\tilde{r}_d/2}(\tilde{x}_d)\Big)\le C(n,\rv)\epsilon
 \sum_{\tilde{x}_d\in\tilde{\cC}}\mu\Big(B_{\tau^4\tilde{r}_d}(\tilde{x}_d)\Big)\le C(n,\rv)\epsilon \mu(B_{15/8}(p))
\le C(n,\rv)\epsilon\, ,
\end{align}
where we have used the Neck Structure Theorem \ref{t:neck_region2} in the last inequality.  
Thus,  we only need to prove \eqref{e:cC_c_tilde_d}. 
\vskip2mm

By the definition of $r_y$ in \eqref{e:definition_ry}, we have for any $x_c\in\cC_c\cap 
B_{3\tilde{r}_d/2}(\tilde{x}_d)$ that $r_c\le 10\delta^2\tilde{r}_d$. Since $B_{r_c}(x_c)$ is a $c$-ball, there 
exists  $y\in B_{4r_c}(x_c)$ such that $|\cV_{\xi r_c}(y)-\bar V|\le \xi$. In particular, this implies 
$y\in \cF_{\tilde{r}_d,\xi}(\tilde{x}_d)\cap B_{5\tilde{r}_d/3}(\tilde{x}_d)$ and 
$B_{\gamma \tilde{r}_d/10}(y)\subset B_{\gamma \tilde{r}_d}\cF_{\tilde{r}_d,\xi}(\tilde{x}_d)$. 
On the other hand, since $r_c\le 10\delta^2\tilde{r}_d$ and $d(y,x_c)\le 4r_c$ we have 
$B_{\gamma \tilde{r}_d/20}(x_c)\subset B_{\gamma \tilde{r}_d/10}(y)
\subset B_{\gamma \tilde{r}_d}\cF_{\tilde{r}_d,\xi}(\tilde{x}_d)$.  
\vskip2mm

Consider a maximal disjoint collection $\{B_{\gamma \tilde{r}_d/20}(x'_i),~x'_i\in 
\cC_c\cap B_{3\tilde{r}_d/2}(\tilde{x}_d)\}$ with cardinality $N$. We have 
\begin{align}
	NC(n,\rv)\gamma^n\tilde{r}_d^n\le \sum_{x'_i} \Vol(B_{\gamma \tilde{r}_d/20}(x'_i))
\le \Vol\Big( B_{\gamma \tilde{r}_d}\cF_{\tilde{r}_d,\xi}(\tilde{x}_d)\Big)<\epsilon \gamma^{n-k}\tilde{r}_d^n\, .
\end{align}
Therefore,  we have $N\le \epsilon C(n,\rv) \gamma^{-k}$ and so,
\begin{align}
	\mu\Big(\cC_c\cap B_{3\tilde{r}_d/2}(\tilde{x}_d)\Big)\le \sum_{x_i'}\mu\Big(\cC_c\cap B_{\gamma \tilde{r}_d/5}(x_i')\Big)\le C(n)N \gamma^k\tilde{r}_d^k\le \epsilon C(n,\rv)\tilde{r}_d^k\le \epsilon C(n,\rv)\mu (B_{\tau^4\tilde{r}_d}(\tilde{x}_d))\, .
\end{align}
This  finishes the proof of \eqref{e:cC_c_tilde_d}. 
Thus,  the proof of Lemma \ref{l:claim81} is complete.
\end{proof}
\vskip2mm
	
	Having controlled the content of $\cC_c$ in $\tilde{d}$-balls in Lemma
\ref{l:claim81}, we will now consider the content of $\cC_c$ in $\tilde{b}$-balls. 
However, unlike the case of $\tilde{d}$-balls, there exists no  a priori small volume set. Thus, we will need to argue in a different way from in Lemma \ref{l:claim81}. 
\vskip2mm

\begin{remark}
\label{r:outline}
Prior to beginning the proof proper, we will give a brief
indication of the argument.
\vskip2mm

\vskip2mm
	
		Let $x_c\in B_{\tilde{r}_b}(\tilde{x}_b)$. In the definition of $\tilde{\cS}$ (see \eqref{e:definition_ry}) we
 require that $\tilde{\cS}$ is a subset of $\cup_{\tilde{x}\in\tilde{\cC}}B_{\tau^3\tilde{r}_{\tilde{x}}}(\tilde{x})$. 
Therefore, we may assume $x_c\in B_{\tau^3\tilde{r}_b}(\tilde{x}_b)$. Since each $\tilde{b}$-ball is 
$(k+1,3\eta/2)$-symmetric and each $c$-ball is not $(k+1,2\eta)$-symmetric, this will force the $c$-ball 
$B_{r_c}(x_c)\subset B_{2\tilde{r}_{b}}(\tilde{x}_b)$ to have small radius $r_c<<\tilde{r}_b$. From 
the definition of $r_x$ in \eqref{e:definition_ry} there must exist some $\tilde{x}\in\tilde{\cC}$ with 
$d(x_c,\tilde{x})=d(x_c,\tilde{\cC})$ such that $\tilde{r}_{\tilde{x}}<< \tilde{r}_b$. By \eqref{e:tilde_x_r_y}
 we have that $\tilde{x}\notin B_{\tau^3\tilde{r}_b}(\tilde{x}_b)$.  Thus one sees from the definition of $r_x$ 
in \eqref{e:definition_ry} that $x_c\in A_{\tau^3\tilde{r}_b (1-\delta),\tau^3\tilde{r}_b}(\tilde{x}_b)$; see 
\eqref{e:cCdelta} below for further details. Therefore the content estimate of $\cC_c\cap B_{\tilde{r}_b}(\tilde{x}_b)$
 would be controlled by the content estimate of  $\cC_c\cap A_{\tau^3\tilde{r}_b (1-\delta),\tau^3\tilde{r}_b}(\tilde{x}_b)$ 
which is small by a simple covering argument.  
\end{remark}
\vskip2mm

Now we begin in the proof the content estimate of $\cC_c\cap B_{\tilde{r}_b}(\tilde{x}_b)$ more carefully.  For $0<\tilde{\delta}<\epsilon^3$, we define a subset of $\cC_c$ by those points with small radius compared with a $\tilde{b}$-ball by 
	\begin{align}\label{e:cCdelta}
		\cC_{\tilde{\delta}}:=  \bigcup_{\tilde{x}_b\in\tilde{\cC}_+}\{y\in \cC_c\cap B_{\tilde{r}_b}(\tilde{x}_b):~~r_y\le \delta^4\tilde{\delta}\tau^4 \tilde{r}_b\}\,.
	\end{align}
	We will see below that $\cC_{\tilde{\delta}}$ contains all the centers of $c$-balls inside $\tilde{b}$-balls.
\vskip2mm
	
	Indeed, note that if $\xi\le \xi(\tilde{\delta},\delta,\epsilon,n,\rv,\gamma)$ and $r_y\ge \delta^4\tau^4\tilde{\delta}\tilde{r}_b$ with $y\in\cC\cap B_{\tilde{r}_b}(\tilde{x}_b)$, then the ball $B_{2r_y}(y)$ is $(k+1,5\eta/3)$-symmetric which in particular implies that $B_{2r_y}(y)$ is not a $c$-ball. Therefore, to estimate the content of $c$-ball in $\tilde{b}$-balls, it will suffice to consider the set $\cC_{\tilde{\delta}}$. 
\vskip2mm
	
From the definition of $\cC_{\tilde{\delta}}$, we can see that $\cC_{\tilde{\delta}}\cap
 B_{(1-\tilde{\delta})\tau^3\tilde{r}_b}(\tilde{x}_b)=\emptyset$. In fact, if 
$y\in \cC_{\tilde{\delta}}\cap B_{(1-\tilde{\delta})\tau^3\tilde{r}_b}(\tilde{x}_b)$, then 
by the definition of $r_y$ there must exist $\tilde{x}\in\tilde{\cC}$ such that 
$d(y,\tilde{x})=d(y,\tilde{\cC})$ with $\tilde{r}_{\tilde{x}}<\tilde{r}_b$, which
 implies $\tilde{x}\notin B_{\tau^3\tilde{r}_b}(\tilde{x}_b)$ by \eqref{e:tilde_x_r_y}. 
This is enough to deduce a contradiction with $\delta^2 d(y,\tilde{x})=r_y
\le \delta^4\tau^4\tilde{\delta}\tilde{r}_b$ since $d(y,\tilde{x})\ge \tilde{\delta}\tau^3\tilde{r}_b$. Therefore, we have showed that 
	\begin{align}\label{e:tilde_delta}
		\cC_{\tilde{\delta}}\cap B_{(1-\tilde{\delta})\tau^3\tilde{r}_b}(\tilde{x}_b)=\emptyset.
	\end{align}
\vskip2mm
	
	By removing the points in $\tilde{d}$-balls we have the following content estimate of $\cC_{\tilde{\delta}}$:

Let   $B_{\tilde{r}_d}(\tilde{x}_d)$ denote the $\tilde{d}$-balls with $\tilde{x}_d\in \tilde{\cC}$ in the approximate neck region $\tilde{\cN}$.
\vskip2mm
	
	By taking out the points in $\tilde{d}$-balls we have the following content estimate of $\cC_{\tilde{\delta}}$

\begin{lemma}
\label{l:claim82} Let $\tilde{\delta}\le \tilde{\delta}(\gamma,\epsilon)\le \epsilon^3$ and $\xi\le \xi(\tilde{\delta},\delta,n,\rv,\gamma,\epsilon,\eta)$.
\vskip2mm

Then
	\begin{align}
		\mu(\cC_{\tilde{\delta}}\setminus \Big(\bigcup_d B_{\tilde{r}_d}(\tilde{x}_d)\Big)\le \epsilon^2\, .
	\end{align}	
\end{lemma}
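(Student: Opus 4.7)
The plan is to execute the strategy sketched in Remark \ref{r:outline}: show that points of $\cC_{\tilde\delta}$ lying outside every $\tilde d$-ball are trapped in a thin annular shell around some $\tilde b$-ball center, and then apply Ahlfors regularity from Theorem \ref{t:neck_region2} to bound the total $\mu$-content of these thin annuli. First I would nail down the annular confinement. Since $\cC_{\tilde\delta}\subseteq \tilde\cS\subseteq \bigcup_{\tilde x\in\tilde\cC}B_{\tau^3\tilde r_{\tilde x}}(\tilde x)$, any $y\in \cC_{\tilde\delta}\cap B_{\tilde r_b}(\tilde x_b)$ satisfies $y\in B_{\tau^3\tilde r_{\tilde x}}(\tilde x)$ for some $\tilde x\in\tilde\cC$. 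A short case analysis, using \eqref{e:tilde_x_r_y} together with the disjointness of $\{B_{\tau^2\tilde r_{\tilde x}}(\tilde x)\}$, shows that such $\tilde x$ is either $\tilde x_b$ itself or another $\tilde b$-ball center (if $\tilde x$ were a $\tilde d$-ball center, then $y\in B_{\tilde r_d}(\tilde x_d)$, which is excluded). After relabeling by the $\tilde b$-ball actually containing $y$, we may assume $\tilde x=\tilde x_b$, and then \eqref{e:tilde_delta} places $y$ in the annulus
\begin{align*}
A_b := A_{(1-\tilde\delta)\tau^3\tilde r_b,\ \tau^3\tilde r_b}(\tilde x_b).
\end{align*}

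Second, I would produce an efficient covering of $\cC\cap A_b$ by small balls. By the construction of $\tilde\cN$, the associated $\tilde c$-ball center $\tilde x_{b,c}$ of \eqref{e:associated_x} has the property that $B_{\gamma^{-1}\tau^3\tilde r_b}(\tilde x_{b,c})$ is $(k,\delta'^2)$-symmetric with respect to $\cL_{\tilde x_{b,c},\gamma^{-1}\tau^3\tilde r_b}$, so $\cC\cap B_{\tau^3\tilde r_b}(\tilde x_b)\subseteq B_{\delta'\tau^3\tilde r_b}(\cL_{\tilde x_{b,c},\gamma^{-1}\tau^3\tilde r_b})$ by the neck condition (n4). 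In the model $\mathbb{R}^k\times C(Y)$, the annulus $A_{(1-\tilde\delta)\tau^3\tilde r_b,\,\tau^3\tilde r_b}(0^k)\subseteq \mathbb{R}^k$ is covered by at most $C(n)\tilde\delta^{-(k-1)}$ balls of radius $\tilde\delta\tau^3\tilde r_b$; transferring this covering through the $\delta'\tau^3\tilde r_b$-GH map (using $\delta'\ll\tilde\delta$) yields a covering of $\cC\cap A_b$ by $C(n)\tilde\delta^{-(k-1)}$ balls of radius $2\tilde\delta\tau^3\tilde r_b$ with centers in $\cC$. Applying the Ahlfors regularity \eqref{e:structure:ar} of Theorem \ref{t:neck_region2} to each, each ball has $\mu$-content at most $A(n)(2\tilde\delta\tau^3\tilde r_b)^k$, giving
\begin{align*}
\mu\bigl(\cC_{\tilde\delta}\cap B_{\tilde r_b}(\tilde x_b)\bigr)\ \leq\ C(n)\,\tilde\delta\,\tilde r_b^{\,k}.
\end{align*}

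Third, I would sum over the $\tilde b$-balls. Since $\tilde\cC\subseteq \cC$, the Ahlfors lower bound gives $\tilde r_b^{\,k}\leq A(n)\,\mu(B_{\tau^4\tilde r_b}(\tilde x_b))$; combined with the disjointness of $\{B_{\tau^4\tilde r_b}(\tilde x_b)\}$ and the global bound $\mu(B_2(p))\leq C(n,\rv)$, this yields the standard content estimate $\sum_b \tilde r_b^{\,k}\leq C(n,\rv)$. Therefore
\begin{align*}
\mu\Bigl(\cC_{\tilde\delta}\setminus \bigcup_d B_{\tilde r_d}(\tilde x_d)\Bigr)\ \leq\ \sum_b C(n)\,\tilde\delta\,\tilde r_b^{\,k}\ \leq\ C(n,\rv)\,\tilde\delta,
\end{align*}
and choosing $\tilde\delta=\tilde\delta(\gamma,\epsilon)$ with $C(n,\rv)\tilde\delta\leq \epsilon^2$ (while keeping $\tilde\delta\leq \epsilon^3$) gives the conclusion, provided $\xi$ is taken small enough so that the Neck Structure Theorem, the $(k,\delta'^2)$-symmetry of the enveloping $\tilde c$-balls, and the almost-cone identifications all hold on every relevant scale down to $\tilde\delta\tau^3\tilde r_b$. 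The main obstacle I anticipate is the rigorous bookkeeping in the second step: the covering of $A_b\cap \cC$ is transparent in the Euclidean model but requires carefully tracking the alignment of $\cC$ with the $\mathbb{R}^k$-slice $\cL_{\tilde x_{b,c},\gamma^{-1}\tau^3\tilde r_b}$ through the (n4) Reifenberg-type condition, and in particular demands the strict hierarchy $\xi\ll\delta'\ll\tilde\delta\ll\gamma\ll\epsilon$ consistent with the parameter choices of the proposition.
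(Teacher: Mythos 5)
Your overall mechanism coincides with the paper's Step 1: confine the surviving points of $\cC_{\tilde\delta}$ to a $\tilde\delta$-thin annulus about $\partial B_{\tau^3\tilde r_b}(\tilde x_b)$ via \eqref{e:tilde_delta}, cover that annulus by about $\tilde\delta^{1-k}$ balls of radius $\sim\tilde\delta\tau^3\tilde r_b$ aligned with the $k$-dimensional spine, apply the Ahlfors regularity of Theorem \ref{t:neck_region2}, and sum over the $\tilde b$-balls using disjointness of $\{B_{\tau^4\tilde r_b}(\tilde x_b)\}$. The gap is in how you justify the alignment. You assert $\cC\cap B_{\tau^3\tilde r_b}(\tilde x_b)\subseteq B_{\delta'\tau^3\tilde r_b}(\cL_{\tilde x_{b,c},\gamma^{-1}\tau^3\tilde r_b})$ ``by the neck condition (n4)''. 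But (n4) only gives closeness at scale $\tau_n r$ with $\tau_n=10^{-10n}\omega_n$ a \emph{fixed} dimensional constant; with only $\tau_n$-alignment the annulus requires on the order of $\tilde\delta^{-(k-1)}(\tau_n/\tilde\delta)^{n-k}$ balls of radius $\tilde\delta\tau^3\tilde r_b$, and the factor $\tilde\delta$ you must win is destroyed. Nor does ``$\delta'\ll\tilde\delta$'' repair this: in the lemma's parameter hierarchy $\tilde\delta\le\tilde\delta(\gamma,\epsilon)$ is fixed \emph{after} $\delta'$, and only $\xi$ is permitted to depend on $\tilde\delta$. The paper's estimate \eqref{e:GH_r_b_cC} comes from a different source: the points of $\cC_{\tilde\delta}$ are almost cone vertices by (n2), the enveloping $\tilde c$-ball $B_{\gamma^{-1}\tilde r_b}(\tilde x_c)$ is not $(k+1,3\eta/2)$-symmetric, and cone-splitting (Theorem \ref{t:cone_splitting}) then forces these points into a $\tilde\delta^2\gamma^{-1}\tau^3\tilde r_b$-neighborhood of $\cL_{\tilde x_c,\gamma^{-1}\tilde r_b}$ once $\xi\le\xi(\tilde\delta,\ldots)$. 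You need this argument; (n4) does not deliver it.

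A second, structural point: you dismiss the points lying outside every $B_{\tau^3\tilde r_{b'}}(\tilde x_{b'})$ in one line, by invoking $\cC_{\tilde\delta}\subseteq\tilde\cS\subseteq\bigcup_{\tilde x}B_{\tau^3\tilde r_{\tilde x}}(\tilde x)$ and concluding that such a point sits in a $\tilde d$-ball and is already removed. The paper does not treat this as free: its entire Step 2 proves $\cC_{\tilde\delta,2}\setminus\bigcup_d B_{\tilde r_d}(\tilde x_d)=\emptyset$ by a nearest-center analysis combining the definition \eqref{e:definition_ry} of $r_y$ with the covering property \eqref{e:tau3_cL} of the sets $\cL_{\tilde y_c,\gamma^{-1}s}$. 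If you intend to read the containment of $\tilde\cS$ in $\bigcup B_{\tau^3\tilde r_{\tilde x}}(\tilde x)$ as an imposed definitional constraint, say so and check it is compatible with clause (2) of the definition of $\tilde\cS$ (which describes points at distance greater than $\tilde r_{\tilde x}$ from $\tilde\cC$); as written, the shortcut skips a step the construction does not obviously hand you. Relatedly, \eqref{e:tilde_delta} is established using the smallness $r_y\le\delta^4\tilde\delta\tau^4\tilde r_b$ relative to the particular $\tilde b$-ball appearing in \eqref{e:cCdelta}, so your ``relabeling'' to the $\tilde b$-ball whose $\tau^3$-core actually contains $y$ needs a sentence of justification before \eqref{e:tilde_delta} can be applied to that ball.
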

\begin{proof}
We will divide the proof into two steps. 
\vskip2mm

In the first step, we consider content of $\cC_{\tilde{\delta}}\cap B_{\tau^3\tilde{r}_b}(\tilde{x}_b)$ which is actually equal to the content of $\cC_{\tilde{\delta}}\cap A_{(1-\tilde{\delta})\tau^3\tilde{r}_b,\tau^3\tilde{r}_b}(\tilde{x}_b)$ 
by \eqref{e:tilde_delta}.
\vskip2mm

 In the second step,  we will consider content of the remainder $\cC_{\tilde{\delta}}\cap 
A_{\tau^3\tilde{r}_b,\tilde{r}_b}(\tilde{x}_b)$ which is zero after taking out all the points of $B_{\tilde{r}_d}(\tilde{x}_d)$. 
\vskip2mm

 The reason for  this division of cases based on the radius $\tau^3\tilde{r}_b$  is that
 the construction of the approximate neck $\tilde{\cN}$ satisfies \eqref{e:tau3_cL} and \eqref{e:tilde_x_r_y}. 
\vskip2mm	
	
	\textbf{Step 1:}  Denote $\cC_{\tilde{\delta},1}:=
\cC_{\tilde{\delta}}\cap \left(\bigcup_{\tilde{x}_b\in\tilde{\cC}} 
B_{\tau^3\tilde{r}_b}(\tilde{x}_b)\right)$. We will show that $\mu(\cC_{\tilde{\delta},1}
\setminus \Big(\bigcup_d B_{\tilde{r}_d}(\tilde{x}_d)\Big))\le C(n,\gamma)\tilde{\delta}$. 
By the Ahlfors regularity of measure $\mu$, it will suffice to prove 
$\mu(\cC_{\tilde{\delta}}\cap B_{\tau^3\tilde{r}_b}(\tilde{x}_b))
\le C(n,\gamma)\tilde{\delta}\mu(B_{\tau^4\tilde{r}_b}(\tilde{x}_b))$ 
for each $B_{\tilde{r}_b}(\tilde{x}_b)$. Since $\cC_{\tilde{\delta}}\cap 
B_{(1-\tilde{\delta})\tau^3\tilde{r}_b}(\tilde{x}_b)=\emptyset$ in \eqref{e:tilde_delta}, we will only need to prove
	\begin{align}\label{e:delta_mu_measure}
		\mu\Big(\cC_{\tilde{\delta}}\cap 
A_{(1-\tilde{\delta})\tau^3\tilde{r}_b,(1+\tilde{\delta})\tau^3\tilde{r}_b}(\tilde{x}_b)\Big)
		\le C(n,\gamma)\cdot
		\tilde{\delta}
		\cdot
		\mu(B_{\tau^4\tilde{r}_b}(\tilde{x}_b))\, .
	\end{align}
	Let us prove \eqref{e:delta_mu_measure}. In fact, by the construction of $\tilde{\cN}$, there exists a $\tilde{c}$-ball $B_{2\gamma^{-1}\tilde{r}_b}(\tilde{x}_c)$ as in \eqref{e:associated_x} which is not $(k+1,3\eta/2)$-symmetric such that $B_{\tilde{r}_b}(\tilde{x}_b)\subset B_{2\gamma^{-1}\tilde{r}_b}(\tilde{x}_c)$.  Let $\cL_{\tilde{x}_c,\gamma^{-1}\tilde{r}_b}$ be the set the $\tilde{c}$-ball $B_{\gamma^{-1}\tilde{r}_b}(\tilde{x}_c)$ is $(k,\delta')$-symmetric with respect to.  For $\xi\le \xi(\tilde{\delta},\delta,\epsilon,\rv,n,\gamma,\eta)$, we must have 
	\begin{align}
	\label{e:GH_r_b_cC}
		\cC_{\tilde{\delta}}\cap A_{(1-10\tilde{\delta})\tau^3\tilde{r}_b,(1+10\tilde{\delta})\tau^3\tilde{r}_b}(\tilde{x}_b)\subset \Big(B_{\tilde{\delta}^2\gamma^{-1}\tau^3\tilde{r}_b}\cL_{\tilde{x}_c,\gamma^{-1}\tilde{r}_b} \cap A_{(1-10\tilde{\delta})\tau^3\tilde{r}_b,(1+10\tilde{\delta})\tau^3\tilde{r}_b}(\tilde{x}_b)\Big)\, .
	\end{align}
	Otherwise, there will be another splitting factor for $B_{\gamma^{-1}\tilde{r}_b}(\tilde{x}_c)$ which would contradict with the fact that $B_{\gamma^{-1}\tilde{r}_b}(\tilde{x}_c)$ is not $(k+1,3\eta/2)$-symmetric. 
\vskip2mm

Now consider a collection of maximal disjoint balls 
	\begin{align}
		\{B_{\tilde{\delta}\tau^3\tilde{r}_b}(x_c),~~x_c\in \cC_{\tilde{\delta}}\cap A_{(1-\tilde{\delta})\tau^3\tilde{r}_b,(1+\tilde{\delta})\tau^3\tilde{r}_b}(\tilde{x}_b) \}.
	\end{align}
 Denote this set by $\{B_{\tilde{\delta}\tau^3\tilde{r}_b}(x_i), i=1,\cdots, K\}$ with cardinality $K$. 
By the covering property in \eqref{e:GH_r_b_cC}, we have  $K\le C(n,\gamma)\tilde{\delta}^{1-k}$. Therefore, we arrive at
\begin{align}
	\mu\Big(\cC_{\tilde{\delta}}\cap A_{(1-\tilde{\delta})\tau^3\tilde{r}_b,(1+\tilde{\delta})\tau^3\tilde{r}_b}(\tilde{x}_b)\Big)
	&\le \sum_{i=1}^K\mu \Big(B_{3\tilde{\delta}\tau^3\tilde{r}_b}(x_i)\Big)
	\notag\\
	&\le K\cdot A(n)\cdot \tau^{3k}\cdot \tilde{\delta}^k\cdot \tilde{r}_b^k \notag\\
	&\le C(n,\gamma)\cdot \tilde{\delta}\cdot \tilde{r}_b^k\notag\\
	&\le C(n,\gamma)\cdot \tilde{\delta}\cdot 
	\mu(B_{\tau^4\tilde{r}_b}(\tilde{x}_b))\, ,
\end{align}
where we have used the Ahlfors regularity of $\mu$ for $(k,\delta,\eta)$-neck regions in Theorem \ref{t:neck_structure}. Thus, we have proved \eqref{e:delta_mu_measure}.  Since $B_{\tau^4\tilde{r}_b}(\tilde{x}_b)$ are disjoint and $\cC_{\tilde{\delta}}\cap B_{(1-\tilde{\delta})\tau^3\tilde{r}_b}(\tilde{x}_b)=\emptyset$, and noting the definition of $\cC_{\tilde{\delta},1}$, we have
\begin{align}
\mu(\cC_{\tilde{\delta},1}\setminus \Big(\bigcup_d B_{\tilde{r}_d}(\tilde{x}_d)\Big))&\le \sum_{\tilde{x}_b\in \tilde{\cC}}\mu (B_{\tau^3\tilde{r}_b}(\tilde{x}_b)\cap \cC_{\tilde{\delta}})\le \sum_{\tilde{x}_b\in \tilde{\cC}}\mu (A_{(1-\tilde{\delta})\tau^3\tilde{r}_b,\tilde{r}_b}(\tilde{x}_b)\cap \cC_{\tilde{\delta}})\\
&\le C(n,\gamma)\tilde{\delta}\sum_{\tilde{x}_b\in \tilde{\cC}}\mu (B_{\tau^4\tilde{r}_b}(\tilde{x}_b))\le C(n,\gamma)\tilde{\delta}\mu(B_2(p))\le C(n,\gamma)\tilde{\delta}\, .
\end{align}
\vskip2mm

	\textbf{Step 2:}   Denote $\cC_{\tilde{\delta},2}:= \cC_{\tilde{\delta}}\setminus \cC_{\tilde{\delta},1}$ to be the centers of $c$-balls outside $B_{\tau^3\tilde{r}_b}(\tilde{x}_b)$. We will show that $\cC_{\tilde{\delta},2}\setminus \Big(\bigcup_d B_{\tilde{r}_d}(\tilde{x}_d)\Big)=\emptyset$ which will be good enough to conclude the estimate in Claim 2 by combining with the estimate in step 1.  To this end, one key ingredient is the construction of the approximate neck satisfying \eqref{e:tau3_cL} and \eqref{e:tilde_x_r_y}, which roughly implies there exits no approximating singular set outside $B_{\tau^3\tilde{r}_{\tilde{x}}}(\tilde{x})$ with $\tilde{x}\in \tilde{\cC}$. 
\vskip2mm
	
	For a given $\tilde{b}$-ball $B_{\tilde{r}_b}(\tilde{x}_b)$ consider
		$\cC_{\tilde{\delta},b}:=  \Big(\cC_{\tilde{\delta},2}\cap B_{\tilde{r}_b}(\tilde{x}_b)\Big)\setminus  \left(\bigcup_{x\in\tilde{\cC},\tilde{r}_x< \tilde{r}_b}B_{\tilde{r}_x/2}(\tilde{x})\right)$. Let us see that it will suffice to prove $\cC_{\tilde{\delta},b}=\emptyset$ for any $\tilde{x}_b\in\tilde{\cC}$. Indeed, assume $\cC_{\tilde{\delta},b}=\emptyset$ for any $\tilde{x}_b\in\tilde{\cC}$, then we will show that $\cC_{\tilde{\delta},2}\setminus \Big(\bigcup_d B_{\tilde{r}_d}(\tilde{x}_d)\Big)= \emptyset$.  Assume there exists $y\in \cC_{\tilde{\delta},2}\setminus \Big(\bigcup_d B_{\tilde{r}_d}(\tilde{x}_d)\Big)\ne \emptyset$, then let $B_{\tilde{r}_b}(\tilde{x}_b)$ be the minimal sized ball such that $y\in B_{\tilde{r}_b}(\tilde{x}_b)$. We must have $\tilde{r}_b>0$ since otherwise $y\in\tilde{\cC}_0$, which is not a point in $\cC_c$.  Therefore, we have $y\in \cC_{\tilde{\delta},b}$.  But this is a contradiction as $\cC_{\tilde{\delta},b}=\emptyset$. 			
\vskip2mm			
			
	We will now prove that $\cC_{\tilde{\delta},b}=\emptyset$.  For a given $\tilde{b}$-ball $B_{\tilde{r}}(\tilde{x}_b)$ if there exists $y\in \cC_{\tilde{\delta},b}$, then by the definitions of $r_y$,  $\cC_{\tilde{\delta},b}$ and $\cC_{\tilde{\delta}}$ there must exist $\tilde{y}\in\tilde{\cC}$ such that 
		\begin{align}
			\delta\tilde{\delta}\tau^4\tilde{r}_b>s :=  d(y,\tilde{y})=d(y,\tilde{\cC})\ge \tilde{r}_{\tilde{y}}/2\, .
		\end{align}
Here, the last inequality follows from the definition of $\cC_{\tilde{\delta},b}$, while the first inequality follows from the definitions of $r_y$ and 
$\tilde{\cC}_{\tilde{\delta}}$.
\vskip2mm

 Let $B_{\gamma^{-1}\tilde{r}_{\tilde{y}}}(\tilde{y}_c)$ be the associated $\tilde{c}$-ball covering $B_{\tilde{r}_{\tilde{y}}}(\tilde{y})$ as in \eqref{e:associated_x}. Then by the definition of $\tilde{\cS}$ we have $y\in \cL_{\tilde{y}_c,\gamma^{-1}s}$. By the construction of the approximating neck $\tilde{\cN}$ through \eqref{e:tau3_cL} and $s\le \delta\gamma^2\tilde{r}_b$, we have 
\begin{align}
	\cL_{\tilde{y}_c,\gamma^{-1}s}\subset \bigcup_{\tilde{x}\in\tilde{\cC}}B_{\tau^3\tilde{r}_{\tilde{x}}}(\tilde{x})\cup \bigcup_{\tilde{x}\in\tilde{\cC},\tilde{r}_{\tilde{x}}<\tilde{r}_b}B_{\tau s}(\tilde{x}).
\end{align}
Since $y\notin B_{\tau^3\tilde{r}_{\tilde{x}}}(\tilde{x})$ by the definition of $\cC_{\tilde{\delta},2}$, there exists $\tilde{x}\in \tilde{\cC}$ such that $d(y,\tilde{x})\le \tau s<s$ which contradicts that $d(y,\tilde{\cC})=s$. Therefore we finish step two.
\vskip2mm

Fix $\tilde{\delta}\le \tilde{\delta}(n,\gamma,\epsilon)$. Since 
$\cC_{\tilde{\delta}}=\cC_{\tilde{\delta},1}\cup\cC_{\tilde{\delta},2}$ 
by combining Step 1 and Step 2, we complete the proof of Lemma \ref{l:claim82}.
\end{proof}

The bound on content of $c$-balls
 follows easily from Lemma \ref{l:claim81} and Lemma \ref{l:claim82}.
This completes the proof of Proposition \ref{p:c_ball_decomposition}
and hence, of Proposition \ref{p:inductive_decomposition} and Theorem \ref{t:decomposition2}
as well.
\end{proof}

\bibliographystyle{plain}

\end{document}